\newtheorem{thm}{Theorem}[section]
\newtheorem{lem}[thm]{Lemma}
\newtheorem{cor}[thm]{Corollary}
\newtheorem{prop}[thm]{Proposition}
\newtheorem{prob}[thm]{Problem}
\newtheorem{clam}[]{Claim}
\newtheorem*{clm}{Claim}
\newtheorem{thme}[]{Theorem}
\theoremstyle{definition}
\newtheorem{defn}[thm]{Definition}
\newtheorem{rem}[thm]{Remark}
\numberwithin{equation}{section}
\newcommand{\N}{\mathbb{N}}
\newcommand{\Z}{\mathbb{Z}}
\newcommand{\R}{\mathbb{R}}
\newcommand{\C}{\mathbb{C}}
\def\cF{\mathcal{F}}
\def\cG{\mathcal{G}}
\def\cH{\mathcal{H}}
\def\cK{\mathcal{K}}
\def\cL{\mathcal{L}}
\def\sB{\mathscr{B}}
\def\sC{\mathscr{C}}
\def\sD{\mathscr{D}}
\def\tM{\widetilde{M}}
\DeclareMathOperator{\Ad}{Ad}
\DeclareMathOperator{\Aut}{Aut}
\DeclareMathOperator{\End}{End}
\DeclareMathOperator{\id}{id}
\DeclareMathOperator{\Ind}{Ind}
\DeclareMathOperator{\Int}{Int}
\DeclareMathOperator{\Irr}{Irr}
\DeclareMathOperator{\mo}{mod}
\DeclareMathOperator{\Mor}{Mor}
\DeclareMathOperator{\ONB}{ONB}
\DeclareMathOperator{\Out}{Out}
\DeclareMathOperator{\tr}{tr}
\DeclareMathOperator{\Tr}{Tr}
\def\al{\alpha}
\def\hal{{\widehat{\al}}}
\def\tal{\widetilde{\alpha}}
\def\de{\delta}
\def\la{\lambda}
\def\orho{{\overline{\rho}}}
\def\osi{{\overline{\sigma}}}
\def\btr{\mathbf{1}}
\def\nin{\notin}
\def\ovl{\overline}
\def\hG{\widehat{G}}
\begin{document}
\title{Centrally free
actions of amenable C$^*$-tensor categories on von Neumann algebras}

\author[R. Tomatsu]{Reiji Tomatsu}
\address{
Department of Mathematics, Hokkaido University,
Hokkaido \mbox{060-0810},
JAPAN}
\email{tomatsu@math.sci.hokudai.ac.jp}

\subjclass[2010]{Primary 46L10; Secondary 46L40}
\keywords{von Neumann algebra}
\thanks{The author was partially
supported by JSPS KAKENHI Grant Number 18K03317.}

\maketitle

\begin{abstract}
We will show a centrally free action of an amenable rigid
C$^*$-tensor category on a properly infinite von Neumann algebra
has the Rohlin property.
This enables us to prove
the fullness of the crossed product of a full factor by minimal
action of a compact group.
As another application of the Rohlin property,
we will show the uniqueness of centrally free cocycle actions
of an amenable rigid C$^*$-tensor category
on properly infinite von Neumann algebras.
\end{abstract}

\section{Introduction}
In this paper,
we will formulate the Rohlin property for a cocycle action
of an amenable rigid C$^*$-tensor category on a properly infinite
von Neumann algebra with separable predual
and discuss its applications to two subjects:
the fullness and classification of actions.

A norm bounded sequence $(x_n)$ of a von Neumann algebra $M$
is said to be central
when
$x_n\varphi-\varphi x_n\to0$ in the strong$*$ topology
as $n\to\infty$ for all $\varphi\in M_*$.
When for a central sequence $(x_n)$ there exists $\lambda\in\C$
such that $x_n\to\lambda$ in the strong$*$ topology,
we will say $(x_n)$ is trivial.
A von Neumann factor $M$ with separable predual
is said to be \emph{full}
if any central sequence is trivial.
One of equivalent conditions to the fullness
is the closedness of the inner automorphism group
$\Int(M)$ inside the automorphism group $\Aut(M)$
with respect to the \textit{u}-topology \cite{Co},
and the quotient group $\Out(M)=\Aut(M)/\Int(M)$ is Polish.
Our main concern on fullness is to study the fullness
of the crossed product
$M\rtimes_\alpha G$ for an action
of a compact (quantum) group $G$.

In \cite{TU},
it is shown that if $G$ is compact abelian
and $\alpha$ is a pointwise outer action on a full factor $M$
such that the crossed product $M\rtimes_\alpha G$ is a factor,
then $M\rtimes_\alpha G$ is full.
This result enables us to solve the problem
of a characterization
of the fullness of the continuous core factor
of a type III$_1$ free product factor
in terms of Connes' $\tau$-invariant.
This is generalized to any type III$_1$ full factor
by A. Marrakchi \cite{Mar}.
These results lead us to a more general question
(Problem \ref{prob:outerfull})
raised by Marrakchi which asks whether
the crossed product $M\rtimes_\alpha G$ is a full factor
for
$\alpha$ being a pointwise outer
action of a locally compact group $G$ on a full factor $M$
such that $G$ is homeomorphic to its image in $\Out(M)$
through the quotient map.
In this paper, we will tackle this problem
assuming $G$ is compact and $\alpha$ is minimal.
Our main result of Section \ref{sect:discrete}
is the following
(Theorem \ref{thm:minimal}):

\begin{thme}
\label{thme:minimal}
Let $\alpha$ be a minimal action of a compact group
$G$ on a full factor $M$.
Then $M\rtimes_\alpha G$ is a full factor.
\end{thme}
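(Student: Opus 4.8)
\emph{Overview.} The plan is to realize $M\rtimes_\al G$ as a crossed product of $M$ by a centrally free action of the amenable rigid C$^*$-tensor category $\cC=\Rep(G)$, invoke the paper's Rohlin theorem for that action, and then run a Rohlin‑tower averaging argument against the canonical conditional expectation $E\col M\rtimes_\al G\to M$. Since $\al$ is minimal, the inclusion $M\subset M\rtimes_\al G$ is irreducible (the standard Galois fact for minimal compact group actions), so $Z(M\rtimes_\al G)\subset M'\cap(M\rtimes_\al G)=\C$ and $M\rtimes_\al G$ is already a factor; the whole content is fullness. I would first reduce to the properly infinite case: replacing $(M,\al)$ by $(M\loti B(\ell^2),\al\otimes\id)$ keeps the action minimal (the fixed‑point algebra becomes $M^\al\loti B(\ell^2)$, still with trivial relative commutant by the tensor commutation theorem) and turns the crossed product into $(M\rtimes_\al G)\loti B(\ell^2)$, which is full iff $M\rtimes_\al G$ is; likewise $M\loti B(\ell^2)$ is full iff $M$ is. So assume $M$ is a properly infinite full factor with separable predual.

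\emph{From minimality to a centrally free $\Rep(G)$-action.} By Peter--Weyl the dual action $\hal$ of $\hG$ on $P:=M\rtimes_\al G$ fixes $M$ pointwise, with $P^{\hal}=M$, and $P$ is generated over $M$ by the finite‑dimensional spectral subspaces $P_\pi\subset P$, $\pi\in\Irr(G)$. These $P_\pi$ are finite $M$–$M$ bimodules whose fusion and conjugation reproduce $\Rep(G)$, so $\pi\mapsto P_\pi$ is a (possibly cocycle) action $\be$ of $\cC=\Rep(G)$ on $M$ with $M\rtimes_\be\cC\cong P$. Here $\cC$ is amenable because every object of $\Rep(G)$ has integer categorical dimension equal to its Frobenius--Perron dimension. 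Moreover $\be$ is centrally free: since $M$ is a full factor $M^{\omega}\cap M'=\C$, so the only isotypic component carried by the central sequence algebra is that of the unit object, which is exactly central freeness. Hence the main theorem applies and $\be$ has the Rohlin property.

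\emph{Killing central sequences.} Let $(x_n)$ be a bounded central sequence of $P$. As $E$ is $M$-bimodular and $\sigma$-strong$*$-continuous on bounded sets, $(E(x_n))$ is a central sequence of $M$, hence $E(x_n)\to\la\in\C$ by fullness of $M$. Then $y_n:=x_n-E(x_n)$ is still central and lies in $\ker E=\bigoplus_{\pi\ne1}P_\pi$, and it suffices to prove $y_n\to0$ $\sigma$-strongly$*$, for then $x_n\to\la$. This is where the Rohlin property of $\be$ enters: for a finite Følner set $F\subset\Irr(G)$ adapted to the finitely many $\pi$ contributing significantly to $y_n$, it furnishes a Rohlin system of projections compatible with the fusion/conjugation of $\cC$ over $F$; writing $e$ for the piece at the unit object one gets $e\,P_\pi\,e=0$ for $\pi\ne1$ in the relevant range (the off‑diagonal pieces are orthogonal to the tower), while centrality forces $e y_n\approx e y_n e$ and $y_n\approx\sum_{\pi\in F}(\text{tower pieces})\,y_n$; moving $e$ around the tower and summing then makes $y_n$ small. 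Letting $F\uparrow\Irr(G)$ and using amenability of $\cC$ to dominate the tail $\Irr(G)\setminus F$ gives $y_n\to0$. Thus every central sequence of $M\rtimes_\al G$ is trivial, i.e.\ $M\rtimes_\al G$ is a full factor.

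\emph{Main obstacle.} The delicate point is the last paragraph: the classical Rohlin‑tower (``$2\times 2$ matrix'') estimate, which is routine for a crossed product by a discrete group, has to be carried out in a tensor‑category crossed product — one must handle the intertwiner combinatorics of the bimodules $P_\pi$, keep the Rohlin pieces genuinely (asymptotically) central even though $M$ itself is full, and control the infinitely many spectral components uniformly via the categorical Følner condition. A secondary, more bookkeeping, hurdle is the second paragraph: making precise that a minimal compact group action is exactly the data of a centrally free cocycle action of $\Rep(G)$, so that the Rohlin property as proved for cocycle actions covers the present case.
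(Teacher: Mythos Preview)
Your approach has a fundamental gap in the second paragraph that cannot be repaired. You claim that the $\Rep(G)$-action $\beta$ on $M$ is centrally free ``since $M$ is a full factor $M^{\omega}\cap M'=\C$''. In fact the opposite is true: when $M$ is full one has $M_\omega=\C$, so for any $X\neq\btr$ the condition ``$ax=\alpha_X(x)a$ for all $x\in M_\omega$'' is satisfied by \emph{every} $a\in M$, and hence $\alpha_X$ is \emph{never} properly centrally non-trivial. Thus no cocycle action of a non-trivial category on a full factor is centrally free in the sense of the paper, and the Rohlin theorem does not apply. Even if you tried to run the tower construction anyway, the Rohlin projections $E_X$ live in ${}_{\alpha_X}M_\omega$, and the base projections $d(X)^2\ovl{R}_X^{\alpha*}c_{X,\ovl{X}}^*E_Xc_{X,\ovl{X}}\ovl{R}_X^\alpha$ lie in $M_\omega=\C$; there is simply no room for a non-trivial tower. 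This is exactly the difficulty you flag at the end (``keep the Rohlin pieces genuinely asymptotically central even though $M$ itself is full''), and it is fatal rather than technical.

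The paper's argument runs in the opposite direction. One does not build Rohlin towers in the full factor $M$; instead one passes to the dual picture via Takesaki duality. Put $P:=M\rtimes_\alpha G$ and consider the dual action $\hal$ of $\hG$ on $P$; then $P\rtimes_{\hal}\hG\cong M\otimes B(L^2(G))$ is full because $M$ is, and $\widehat{\hal}\cong\alpha$ is pointwise outer by minimality. Now one looks at the discrete inclusion $P\subset M\otimes B(L^2(G))$ (after passing to the centrally trivial intermediate $R$ so that the remaining category action is centrally free). If $P$ were \emph{not} full, then $P_\omega$ (or $R_\omega$) would be non-trivial, and the Rohlin-tower averaging of Theorem~\ref{thm:nontrivial} would produce non-trivial elements of $P_\omega^{\rho}$; but these lie in $(M\otimes B(L^2(G)))_\omega=\C$, a contradiction. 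So the Rohlin argument is run on the side \emph{not} known to be full, and the fullness of $M$ enters only at the very end to kill the fixed central sequences. Your scheme puts the tower on the wrong side of the inclusion.
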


Our strategy is to analyze the dual action.
We first study a free action $\beta$ of $\widehat{G}$
on a factor $N$ and show the following result
(Theorem \ref{thm:enhance}).
Then we obtain Theorem \ref{thme:minimal}
as its immediate corollary.

\begin{thme}
Let $G$ be a compact group
and $\beta$ a free action of $\hG$
on a von Neumann algebra $N$.
Suppose that
the crossed product $N\rtimes_\beta\hG$ is a full factor
and the dual action $\widehat{\beta}$ of $G$ is pointwise outer.
Then $N$ is a full factor.
\end{thme}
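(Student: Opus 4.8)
The plan is to pass to the dual side. Set $M=N\rtimes_\beta\hG$ and let $\al=\hbe$ be the dual action of the compact group $G$ on $M$, so that $N=M^{\al}$, the algebra $M$ is by hypothesis a full factor, and $\al$ is pointwise outer. Since a factor is full if and only if its tensor product with $B(\ell^2)$ is full, and since all the hypotheses are inherited by the amplified data $\bigl(N\loti B(\ell^2),\,\beta\otimes\id,\,\hbe\otimes\id\bigr)$ --- the amplified action is again free, its crossed product is $M\loti B(\ell^2)$, which is again a full factor, and its dual action is again pointwise outer by a slicing argument --- I may assume from the start that $N$ is properly infinite.

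A preliminary reduction is to check that $N$ is a factor: freeness of $\beta$ gives $N'\cap M=Z(N)$, and the combination of factoriality of $M$ with pointwise outerness of $\hbe$ forces $Z(N)=\C$ (so that, incidentally, $\al=\hbe$ is not merely pointwise outer but minimal). The decisive input is the categorical picture: the action $\beta$ of $\hG$ on $N$ is an action of the rigid C$^*$-tensor category $\mathcal C=\Rep(G)$, which is amenable, and the hypothesis that $\hbe$ is pointwise outer translates into central freeness of $\beta$. Hence the main Rohlin-type theorem of this paper applies: $\beta$ has the Rohlin property.

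It then remains to transfer fullness from $M$ to $N$ using the Rohlin property. Let $(x_n)$ be a bounded central sequence of $N$; the aim is to show it converges to a scalar. The obstruction is that $(x_n)$ need not asymptotically commute with the spectral subspaces $M_\pi$ $(\pi\in\Irr(\mathcal C))$ of the dual action. I would use the Rohlin property to produce, in the central sequence algebra $N_\omega$, Rohlin towers reflecting the fusion structure of $\mathcal C$, and then, by averaging $(x_n)$ along such a tower --- this is where amenability of $\mathcal C$ enters --- manufacture from $(x_n)$ a bounded central sequence $(y_n)$ of $M$ whose asymptotic behaviour relative to a fixed faithful $\varphi\in N_*$ is controlled by that of $(x_n)$, with central freeness of $\beta$ guaranteeing that $(y_n)$ is nontrivial whenever $(x_n)$ is. (Relatedly, the Rohlin property controls the central sequence algebra of $M$ in terms of that of $N$.) Fullness of $M$ then forces $(y_n)$, and hence $(x_n)$, to be trivial, so $N$ is a full factor.

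The main obstacle lies in this last step. When $G$ is finite it reduces to the --- already nontrivial --- fact that fullness passes between the ends of an irreducible finite-index inclusion; for a general compact group the inclusion $N\subset M$ has infinite index, and the Rohlin property for the amenable category $\mathcal C$ is exactly what plays the role of finiteness of the index. The two delicate points are: (i) showing that the central sequence algebra neither genuinely grows nor shrinks in passing between $N$ and $M$, i.e.\ that the Rohlin towers encode $N_\omega$ faithfully inside $M^\omega$; and (ii) ensuring that the averaging used to construct $(y_n)$ does not annihilate a nontrivial central sequence --- degenerate actions such as inner ones, excluded precisely by pointwise outerness of $\hbe$, show that a hypothesis of this kind is indispensable.
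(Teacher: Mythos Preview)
Your outline is largely on the right track---the reduction to the properly infinite case, the factoriality argument, and the Rohlin-averaging heuristic in the final paragraph all match the paper's structure. But there is a genuine gap at the sentence ``the hypothesis that $\hbe$ is pointwise outer translates into central freeness of $\beta$.'' You give no justification for this implication, and it is in fact false in general: a $\beta_U$ may be centrally trivial on $N$ (for instance when $N_\omega$ is small, or when $\beta_U$ is modular) while the dual action $\hbe$ remains pointwise outer on $M$. The paper explicitly flags this as the obstacle---see the paragraph following Theorem~\ref{thm:discKacfull}, where it is said that central freeness has not been removed from the hypotheses in general, and that the compact-group case needs extra work. Since the Rohlin tower construction (Theorem~\ref{thm:Rohlin-tensor}) requires central freeness as input, your subsequent averaging argument has nothing to stand on.

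The paper's remedy is not to prove that $\beta$ is centrally free, but to pass to an intermediate subfactor on which the relevant action \emph{is} centrally free. One collects the centrally trivial $\beta_U$'s into a set $\Lambda_\beta\subset\Irr(G)$, which determines a closed normal subgroup $K_\beta\leq G$, and sets $R:=M^{K_\beta}$. Then $N\subset R\subset M$, and $R$ is precisely the centrally trivial part $M_{\rm cnt}$ of the inclusion $N\subset M$ in the sense of Section~\ref{sect:discrete}. By Proposition~\ref{prop:cntpart} the irreducible endomorphisms associated with $R\subset M$ are centrally non-trivial; since this inclusion comes from the minimal action $\hbe|_{K_\beta}$, those endomorphisms have scalar self-intertwiners and $R$ is a factor, so by Remark~\ref{rem:cent-trivial}~(2) they are \emph{properly} centrally non-trivial. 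Now Corollary~\ref{cor:cntpartfull}---which packages the Rohlin argument you sketched, via Theorem~\ref{thm:nontrivial} and Theorem~\ref{thm:discretefull}---yields $R=M$ and hence that $N$ is full. So your averaging step is essentially correct once central freeness is in hand; what you are missing is the intermediate-subfactor maneuver that supplies it.

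A minor point: your factoriality step needs the \emph{fullness} of $M$, not merely its factoriality, to obtain strong ergodicity of $\theta^\beta$ on $Z(N)$; the paper then invokes the lemma preceding Theorem~\ref{thm:enhance} to produce an inner $\hbe_s$ from a nontrivial $Z(N)$, contradicting pointwise outerness.
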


Our key ingredient is a Rohlin tower construction
studied in \cite{MT-minimal,MT-III,MT-Roh,MT-discrete}
for amenable discrete Kac algebras.
Although we are actually interested in a compact group action,
we will construct a Rohlin tower
for a centrally free cocycle action of an amenable rigid C$^*$-tensor category
on a von Neumann algebra (Theorem \ref{thm:Rohlin-tensor})
since this enables us to omit many tensor notations.
An amenable rigid C$^*$-tensor category $\sC$
is a C$^*$-tensor category $\sC$ such that
each object has its conjugate object
and the simple objects of $\sC$
satisfy the F\o lner type condition.
Throughout this paper,
$\Irr(\sC)$,
the complete set of simple objects of $\sC$, is assumed
to be at most countable,
but $\Irr(\sC)$ is not necessarily finitely generated.
A \emph{cocycle action} of $\sC$ on a properly infinite von Neumann algebra
$M$ means a unitary tensor functor $\alpha$ from $\sC$
into the W$^*$-multitensor category consisting of endomorphisms
with conjugate objects on $M$.
Then we show the following result on a discrete subfactor
(Theorem \ref{thm:discretefull}):

\begin{thme}
\label{thme:discretefull}
Let $N\subset M$ be a discrete inclusion of infinite factors
with separable preduals
such that the associated C$^*$-tensor category $\sC$
generated by
${}_N L^2(M)_N$
is amenable and gives a centrally free cocycle action on $N$.
If $M$ is a full factor,
then $\sC$ is in fact trivial,
that is, $N=M$.
\end{thme}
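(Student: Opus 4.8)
The plan is to argue by contradiction: assuming $\sC$ is non-trivial, equivalently $N\neq M$, I will produce a non-trivial central sequence of $M$. First I would recall the structure theory of discrete inclusions from \cite{MT-discrete}: such an $N\subset M$ amounts to the cocycle action $\al$ of $\sC$ on $N$ together with a generating family of intertwiner Hilbert spaces $H_\rho\subset M$, $\rho\in\Irr(\sC)$, with $H_{\btr}=\C 1$ and $\xi a=\al_\rho(a)\xi$ for $a\in N$ and $\xi\in H_\rho$; moreover $\sC$ is trivial precisely when $N=M$. Fix a free ultrafilter $\omega$ on $\N$ and write $P=N^\omega\cap N'$ for the central sequence algebra. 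Since $\al$ is centrally free and $\sC$ is non-trivial, $N$ cannot be a full factor, so $P\neq\C 1$.

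Next, amenability of $\sC$ together with central freeness lets me invoke Theorem \ref{thm:Rohlin-tensor} to get the Rohlin property for $\al$: there are Rohlin towers inside $P$, namely partitions of unity $\{e_\sigma\}_{\sigma\in S}$ indexed by a large finite F\o lner set $S$ of simple objects, on which the induced $\sC$-action approximately implements the fusion shift. The crucial step is to convert such a tower into a central sequence of $M$. When $\Irr(\sC)$ is infinite one takes a sub-block $F\subsetneq S$, again F\o lner, and forms $q=\sum_{\sigma\in F}e_\sigma$: the F\o lner estimates force $\al_\rho(q)\approx q$ for all $\rho$, while the weights of the $e_\sigma$, governed by the dimension function, make the value of a fixed faithful normal state on $q$ lie strictly between $0$ and $1$; a routine reindexing then yields an honest $q\in P$ fixed by the whole $\sC$-action with a non-trivial trace-like value. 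When $\Irr(\sC)$ is finite one argues instead that the $\sC$-fixed point subalgebra $P^{\sC}$ has finite index in the infinite-dimensional algebra $P$, hence $P^{\sC}\neq\C 1$, and picks a non-scalar projection $q\in P^{\sC}$. In either case $q$ commutes with $N$, and since it is $\sC$-fixed the relation $\xi a=\al_\rho(a)\xi$ gives $q\xi=\xi q$ for $\xi\in H_\rho$; as $N$ and the $H_\rho$ generate $M$, this exhibits $q$ as a non-trivial element of $M^\omega\cap M'$, contradicting fullness of $M$. Hence $\sC$ is trivial and $N=M$.

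The step I expect to be the main obstacle is precisely this conversion of a Rohlin tower into a central sequence of $M$: one must control how the Rohlin projections, and the F\o lner estimates for $\Irr(\sC)$, interact with the intertwiner spaces $H_\rho$ that generate $M$ over $N$, so as to achieve asymptotic centrality in all of $M$ --- not merely in $N$ --- simultaneously with non-triviality, and the normalization of the tower must be matched carefully to the choice of the sub-block $F$. A secondary point that still needs attention is verifying that an $\al$-fixed non-trivial central sequence of $N$ does not collapse to a scalar once viewed inside $M$, which again rests on the intertwiner relations defining the discrete inclusion.
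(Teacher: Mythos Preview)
Your overall strategy matches the paper's: show that the fixed-point algebra $N_\omega^\alpha$ is non-trivial, then observe (via the intertwiner relations $\xi a = \alpha_\rho(a)\xi$ for $\xi\in\cH_\rho$) that $N_\omega^\alpha \subset M_\omega$, contradicting fullness of $M$. The paper executes this by first isolating as a standalone result (Theorem~\ref{thm:nontrivial}) that $N_\omega^\alpha$ is non-trivial whenever $\alpha$ is centrally free, $\sC$ is amenable, and $N_\omega$ has a continuous part; the proof of Theorem~\ref{thm:discretefull} is then three lines: invoke that theorem, use the generation of $M$ by $N$ and the $\cH_\xi$'s to get $N_\omega^\alpha\subset M_\omega$, and conclude.

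Where your proposal needs adjustment is precisely the step you flag as the main obstacle. Your ``sub-block'' construction $q = \sum_{\sigma\in F} e_\sigma$ is the right picture for group actions, but for a general tensor category the Rohlin projections $E_Y$ of Theorem~\ref{thm:Rohlin-tensor} live in ${}_{\alpha_Y}N_\omega$, not in $N_\omega$; the projections that do lie in $N_\omega$ are $f_Y := d(Y)^2\,\ovl{R}_Y^{\alpha*}c_{Y,\ovl{Y}}^* E_Y c_{Y,\ovl{Y}}\ovl{R}_Y^\alpha$, and $\alpha_X(f_Y)$ does not reduce to a shift $f_Y\mapsto f_{X\otimes Y}$. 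The paper's device (in the proof of Theorem~\ref{thm:nontrivial}) is instead an averaging $*$-homomorphism
\[
\pi(x)=\sum_{Y\in\cK} d(Y)^2\,\ovl{R}_Y^{\alpha*}\alpha_Y(x)E_Y\ovl{R}_Y^\alpha
\]
defined on a countably generated diffuse $Q\subset N_\omega$; a Frobenius-reciprocity identity (\ref{eq:leftinv}) rewrites $\pi(x)$ so that $\alpha_X(\pi(x))-\pi(x)$ is governed directly by the equivariance defect $\alpha_X(E_Y)-E_{X\otimes Y}$, and an index selection then produces an exact, non-scalar fixed element. This single construction handles all $\sC$ at once and makes your infinite/finite case split unnecessary. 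Your secondary concern---that an $\alpha$-fixed central sequence of $N$ might collapse to a scalar inside $M$---is handled exactly as you describe, and the paper's argument there agrees with yours.
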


We will discuss another application of the Rohlin property,
that is, the classification of centrally free actions
of an amenable rigid C$^*$-tensor category.
Two cocycle actions $\alpha$ and $\beta$ of $\sC$ on $M$
are said to be \emph{cocycle conjugate}
if they are naturally isomorphic unitary tensor functors.
Our main result in Section \ref{sect:classification}
is the following one (Theorem \ref{thm:classification}):

\begin{thme}
\label{thme:classification}
Let $\sC$ be an amenable rigid C$^*$-tensor category.
Let $(\alpha,c^\alpha)$ and $(\beta,c^\beta)$
be centrally free cocycle actions
of $\sC$ on a properly infinite von Neumann algebra $M$
with separable predual.
Suppose that $\alpha_X$ and $\beta_X$ are approximately
unitarily equivalent for all $X\in\sC$
and also $\alpha$ has an invariant
faithful normal state on $Z(M)$.
Then $(\alpha,c^\alpha)$ and $(\beta,c^\beta)$ are strongly cocycle conjugate.
\end{thme}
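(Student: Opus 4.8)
The plan is to run an Evans--Kishimoto style intertwining argument in the category of cocycle actions, using the Rohlin property of Theorem~\ref{thm:Rohlin-tensor} as the engine: it supplies, at each stage, the approximately central, approximately equivariant partitions of unity needed to correct a perturbation. Fix an increasing sequence of finite subsets $\cF_1\subset\cF_2\subset\cdots$ exhausting $\Irr(\sC)$, together with finite families of intertwiners witnessing the tensor structure at each level; fix a sequence $(\ps_n)$ that is total in $M_*$ and contains the given $\al$-invariant faithful normal state on $Z(M)$; and fix a summable sequence $\vep_n\searrow0$. Since inner automorphisms act trivially on the center, the hypothesis that $\al_X$ and $\be_X$ are approximately unitarily equivalent for all $X$ forces $\al$ and $\be$ to restrict to the same action on $Z(M)$, so the fixed state is invariant for both; this is what keeps the center under control throughout.

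The core is a one-step intertwining lemma. Suppose $(\al,c^\al)$ and $(\be,c^\be)$ are centrally free cocycle actions with $\al_X$ approximately unitarily equivalent to $\be_X$ for all $X$; let $\cE\subset\cF$ be finite subsets of $\Irr(\sC)$ on which $\al_X$ already agrees with $\be_X$ to within $\vep$, and let $\Ps\subset M_*$ be finite. Then there is a unitary $u\in M$ such that (i) $\Ad(u)\circ\al_X$ agrees with $\be_X$ to within $\vep$ on the chosen intertwiners for $X\in\cF$; (ii) the $u$-perturbed $2$-cocycle agrees with $c^\be$ to within $\vep$ on $\cF\times\cF$; and (iii) $u$ commutes with $\cE$ to within $\vep$ and lies within $\vep$ of the scalars on $\Ps$, i.e. the correction is approximately inner on the data already under control. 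For (i) one feeds the approximate unitary equivalence into a Rohlin averaging over a F\o lner set of simple objects to make it compatible with the tensor structure on $\cF$. For (ii) one observes that, the underlying endomorphisms being close on $\cF$, the cocycles $c^\al$ and $c^\be$ differ there by a near-trivial $\be$-twisted unitary $2$-cocycle, which the Rohlin property lets one write as an approximate coboundary, with estimates coming from the F\o lner condition exactly as in the cohomology-vanishing arguments of \cite{MT-Roh,MT-discrete}. For (iii) one uses that, $\al_X$ and $\be_X$ already being close on $\cE$, the corrective unitary nearly commutes with $\al(\cE)$ and can be absorbed into a Rohlin partition of unity in the relevant relative commutant that is central over $Z(M)$ --- which is where the invariant state on $Z(M)$ enters. (In the first step one takes $\cE=\emptyset$ and omits (iii).)

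Granting the lemma, one iterates in the standard alternating fashion: construct $u_n$ pushing the running perturbation $\al^{(n)}$ of $\al$ towards $\be^{(n-1)}$ on $\cF_n$, with errors at most $\vep_n$ in (i)--(ii), then $w_n$ pushing the running perturbation $\be^{(n)}$ of $\be$ back towards $\al^{(n)}$ on $\cF_n$, invoking (iii) at each stage so that every correction after the first is within $\vep_n$ of $1$ on the finite list of functionals accumulated so far. Summability of $(\vep_n)$ makes the products $u_n\cdots u_1$ and $w_n\cdots w_1$ converge $*$-strongly to unitaries conjugating the two running perturbations; since each finite product is inner, the limiting conjugation $\theta$ lies in $\oInt(M)$ and satisfies $\theta\circ\al_X\circ\theta^{-1}=\Ad(v_X)\circ\be_X$ for a unitary $1$-cocycle $(v_X)$, while summability of the estimates in (ii) forces the limiting $2$-cocycles to coincide. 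Thus $(v_X)$ intertwines $c^\al$ with $c^\be$, and, $\theta$ being approximately inner, $(\al,c^\al)$ and $(\be,c^\be)$ are strongly cocycle conjugate.

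The main obstacle is the joint bookkeeping inside the one-step lemma, especially (ii) and (iii). Writing the near-trivial twisted $2$-cocycle as an approximate coboundary with estimates that remain uniform as $\cF$ grows requires a careful F\o lner-set computation in $\Irr(\sC)$; this is the genuinely amenable input and generalizes the averaging lemmas of \cite{MT-Roh,MT-discrete} from amenable discrete Kac algebras to amenable rigid C$^*$-tensor categories. Simultaneously, making the correction approximately inner \emph{over the center} requires Rohlin partitions equivariant for the $\sC$-action on $Z(M)$; without the $\al$-invariant faithful normal state on $Z(M)$ the successive corrections could drift within $Z(M)$, and the alternating products would fail to converge. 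Reconciling these two demands with the requirement that nothing achieved at lower levels be destroyed is the technical heart; the remainder is the standard intertwining machinery driven by the Rohlin property of Theorem~\ref{thm:Rohlin-tensor}.
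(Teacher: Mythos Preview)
Your overall plan---Evans--Kishimoto intertwining driven by the Rohlin property of Theorem~\ref{thm:Rohlin-tensor}---matches the paper's, and you correctly locate the role of the invariant state on $Z(M)$. But the one-step lemma is mis-formulated in a way that would block the induction. A single unitary $u\in M$ cannot realize (i) and (ii): conjugation by $\Ad u$ (or $\Ad u\circ\al_X\circ\Ad u^{-1}$, whichever you intend) does not yield an arbitrary cocycle perturbation of $(\al,c^\al)$. The paper splits the step differently. Lemma~\ref{lem:cocapprox} (via the Rohlin-averaged unitaries $v_X$ of \eqref{eq:defnv} and Lemma~\ref{lem:vperturb}) produces a \emph{family} $(\nu_X)_X$ in $M^\omega$ perturbing $(\al,c^\al)$ exactly to $(\be,c^\be)$---endomorphisms, intertwiners, and $2$-cocycle simultaneously; there is no separate ``approximate $2$-cohomology vanishing'' as in your (ii). Then Lemma~\ref{lem:app1cohvan} supplies a Shapiro unitary $\nu$ with $u_X\al_X(\nu)\approx\nu$ and $\nu$ approximately central ($1$-cohomology vanishing, not $2$). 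After conjugating by $\Ad\nu$, the \emph{residual} cocycle $u_X\al_X(\nu)\nu^*$ is small but nonzero and must be carried through the induction; your formulation drops exactly this residue.

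Correspondingly, the convergence is more delicate than ``the products $u_n\cdots u_1$ converge.'' The paper tracks two interleaved sequences: the inner automorphisms $\theta_n:=\Ad w_n\circ\theta_{n-2}$ converge in the $u$-topology (odd and even subsequences to $\bar\theta_{-1},\bar\theta_0$) because the $w_n$ are approximately central; separately, the accumulated cocycles $\bar u_X^n:=u_X^n w_n\bar u_X^{n-2}w_n^*$ converge strong$^*$ via Claim~\ref{clam:limhatu}, which needs both the smallness of each $u_X^n$ and the centrality of the $w_n$ against a carefully enlarged list $\Psi_n$ of functionals (condition ($n$.\ref{item:Psin})). The strong cocycle conjugacy is read off from the two pairs $(\bar\theta_{-1},\hat u^{-1})$ and $(\bar\theta_0,\hat u^0)$, not from a single limiting conjugation.
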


The ``strong" means we can take an approximately inner automorphism
as a conjugacy.
This generalizes the preceding works by M. Izumi and T. Masuda
\cite[Theorem 2.2]{Iz-near}
and \cite[Theorem 3.4]{Mas-Rob},
where the injective type III$_1$ factor with separable predual
is considered.
It should be mentioned that
S. Popa's celebrated classification result
of strongly amenable subfactors \cite[Theorem 5.1]{Popa-endo}
is crucially used in their proofs.
We, however, do not need the classification result
nor the factoriality to prove Theorem \ref{thme:classification}.

We can also apply our discussion for C$^*$-tensor categories
to cocycle actions of amenable rigid C$^*$-2-categories as follows
(Theorem \ref{thm:classification2}):

\begin{thme}
\label{thme:classification2}
Let $\sC=(\sC_{rs})_{rs\in\Lambda}$
be an amenable rigid C$^*$-2-category
with $\Lambda=\{0,1\}$.
Let $(\alpha,c^\alpha)$ and $(\beta,c^\beta)$
be cocycle actions
of $\sC$ on a system of properly infinite von Neumann algebras
$M=(M_r)_{r\in\Lambda}$
with separable preduals.
Suppose that the following four conditions hold:
\begin{itemize}
\item 
$\alpha_X(Z(M_s))=Z(M_r)$
for $X\in\Irr(\sC_{rs})$ with $r,s\in\Lambda$.

\item
There exist faithful normal states $\varphi_r$ on $M_r$
with $r\in\Lambda$
such that $\alpha_X(\varphi_s)=\varphi_r$ on $Z(M_r)$
for all $X\in\sC_{rs}$ and $r,s\in\Lambda$.

\item
$(\alpha^{00},c)$ is centrally free.

\item
$\alpha_X$ and $\beta_X$ are approximately unitarily equivalent
for all $X\in\sC$.
\end{itemize}
Then $(\alpha,c^\alpha)$ and $(\beta,c^\beta)$ are strongly cocycle conjugate.
\end{thme}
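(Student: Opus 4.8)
The strategy is to reduce Theorem~\ref{thme:classification2} to the C$^*$-tensor category statement, Theorem~\ref{thme:classification}, by passing to a linking von Neumann algebra. We may assume $\sC$ is connected, fixing a nonzero object $X_{0}\in\sC_{01}$. Let ${}_{M_{0}}H_{M_{1}}$ be the correspondence underlying $\alpha_{X_{0}}$, which is a finite bimodule because $\sC$ is rigid, and form the linking von Neumann algebra $P=\cL({}_{M_{0}}H_{M_{1}})$ with complementary projections $p_{0},p_{1}$ satisfying $p_{r}Pp_{r}\cong M_{r}$ and $p_{0}Pp_{1}=H$ as a bimodule; since $M_{0},M_{1}$ are properly infinite with separable predual, so is $P$, and $p_{0},p_{1}$ are properly infinite projections of $P$. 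The usual $2$-category linearization then produces a rigid C$^*$-tensor category $\widetilde{\sC}$, whose simple objects are $\bigsqcup_{r,s\in\Lambda}\Irr(\sC_{rs})$ realized as (in general non-unital) endomorphisms of $P$ supported between the corners, with unit object $\id_{P}$; and the $2$-category cocycle actions $(\alpha,c^{\alpha})$, $(\beta,c^{\beta})$ linearize to cocycle actions $(\tal,\widetilde{c}^{\alpha})$ and $(\widetilde{\beta},\widetilde{c}^{\beta})$ of $\widetilde{\sC}$ on $P$ that restrict to the given ones in the corners.

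I would then check the four hypotheses of Theorem~\ref{thme:classification} for $\widetilde{\sC}$ acting on $P$. Amenability of $\widetilde{\sC}$ follows from that of $\sC$, transporting a F\o lner sequence for $\Irr(\sC_{00})$ through $X_{0}$ and its conjugate. Approximate unitary equivalence of $\tal_{X}$ and $\widetilde{\beta}_{X}$ for all $X\in\widetilde{\sC}$ is immediate from the fourth hypothesis, since unitaries in the corners combine, after a perturbation fixing $p_{0},p_{1}$, to unitaries of $P$. For the invariant state, $Z(P)$ embeds into $Z(M_{0})\oplus Z(M_{1})$ as the pairs matched through $H$, and by the first two hypotheses the state $\psi=\tfrac12(\varphi_{0}\oplus\varphi_{1})|_{Z(P)}$ is a faithful normal $\tal$-invariant state: the first hypothesis forces each $\tal_{X}$ to preserve $Z(P)$ and the second that it preserves $\psi$. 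For central freeness: since $p_{0},p_{1}\in P$ are fixed, any central sequence of $P$ asymptotically commutes with them and has vanishing off-diagonal corners, so the central sequence algebra splits as $P_{\omega}=(M_{0})_{\omega}\oplus(M_{1})_{\omega}$, while finiteness of $H$ makes $\alpha_{X_{0}}$ induce an isomorphism $(M_{1})_{\omega}\cong(M_{0})_{\omega}$ intertwining the Morita-equivalent actions of $\sC_{11}$ and $\sC_{00}$. Hence the $\widetilde{\sC}$-action on $P_{\omega}$ is free for the diagonal simples because $(\alpha^{00},c)$ is centrally free, and automatically free for the off-diagonal simples, as $p_{0}P_{\omega}p_{1}=0$ leaves no intertwiner of $\id_{P}$ into a cross-term object; thus $(\tal,\widetilde{c}^{\alpha})$ is centrally free.

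Theorem~\ref{thme:classification} now provides an approximately inner $\theta\in\Aut(P)$ implementing a strong cocycle conjugacy between $(\tal,\widetilde{c}^{\alpha})$ and $(\widetilde{\beta},\widetilde{c}^{\beta})$. To descend to the corners I would first arrange $\theta(p_{r})=p_{r}$: being approximately inner, $\theta$ fixes $Z(P)$, so $\theta(p_{0})$ has the same central support as $p_{0}$ and is again properly infinite, whence $\theta(p_{0})\sim p_{0}$ and $\theta(p_{1})\sim p_{1}$; summing partial isometries realizing these equivalences to a unitary $u\in P$ and replacing $\theta$ by $\Ad(u)\circ\theta$ keeps it approximately inner and a strong cocycle conjugacy (the connecting unitaries change only by the inner $u$) while now fixing $p_{0}$ and $p_{1}$. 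Then $\theta$ restricts to $\theta_{r}=\theta|_{M_{r}}\in\Aut(M_{r})$; the pair $(\theta_{0},\theta_{1})$ is an isomorphism of the system $(M_{r})_{r\in\Lambda}$, it intertwines $(\alpha,c^{\alpha})$ with $(\beta,c^{\beta})$ as cocycle actions of the $2$-category (read off in the corners), and each $\theta_{r}$ is approximately inner in $M_{r}$ (approximating unitaries for $\theta$ can be perturbed to commute with $p_{r}$ since $\theta(p_{r})=p_{r}$, then compressed into $M_{r}$). This yields the desired strong cocycle conjugacy.

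The main obstacle is the first step: one must verify carefully that the $2$-category data assemble into a genuine rigid C$^*$-tensor category $\widetilde{\sC}$ with honest associativity and conjugation morphisms and a bona fide cocycle action on $P$, and---closely related---that finiteness of the connecting bimodule $H$ really does yield the splitting $P_{\omega}=(M_{0})_{\omega}\oplus(M_{1})_{\omega}$ together with the intertwining isomorphism used for central freeness; this is exactly where rigidity and connectedness of $\sC$ enter. Granting this linearization, everything else is bookkeeping on top of Theorem~\ref{thme:classification}.
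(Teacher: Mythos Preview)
The reduction-to-one-algebra idea is natural, but there are two obstacles that are more than bookkeeping.

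First, the category $\widetilde\sC$ you describe is not a C$^*$-tensor category in the paper's sense: its unit is $\btr_0\oplus\btr_1$, with endomorphism algebra $\C\oplus\C$, so $\widetilde\sC$ is a \emph{multi}tensor category and Theorem~\ref{thme:classification}, stated and proved for C$^*$-tensor categories with simple unit, does not apply to it. The Rohlin tower of Theorem~\ref{thm:Rohlin-tensor} is built from a single F\o lner set in $\Irr(\sC)$; for a two-block unit there is no such single set, one needs a compatible pair, which is exactly what Lemma~\ref{lem:FrsKrs} produces for the 2-category. Relatedly, your objects are realized as \emph{non-unital} endomorphisms of $P$, so they do not live in $\End(P)_0$, and the paper's notion of cocycle action (a unitary tensor functor into $\End(P)_0$) does not accommodate them.

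Second, the linking algebra $P=\cL({}_{M_0}H_{M_1})$ is built from $\alpha_{X_0}$: the off-diagonal corner $p_0Pp_1$ \emph{is} the $\alpha$-bimodule. There is no canonical action of $\beta$ on this $P$. One can argue that the $\alpha$- and $\beta$-linking algebras are abstractly isomorphic (both are isomorphic to $M_0$, since $p_0,p_1$ are properly infinite with full central support), but then one must pick an identification and verify that, after transport, $\widetilde\alpha_X$ and $\widetilde\beta_X$ remain approximately unitarily equivalent for every $X$; this is not automatic and you have not done it. The descent step has a similar wrinkle: the cocycle unitaries $v_X$ produced by Theorem~\ref{thme:classification} are unitaries in $P$, and you need them to respect the corner decomposition, which again requires the non-unital setup to be made precise.

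The paper does not attempt this reduction. It redevelops the Rohlin machinery directly for the 2-category: a joint Rohlin tower $(E^{00},E^{10})$ over both columns (Theorem~\ref{thm:Rohlin-tensor2}), built from the F\o lner-type pair of Lemma~\ref{lem:FrsKrs}, then the 2-category versions of the approximation and cohomology-vanishing lemmas (Lemmas~\ref{lem:cocapprox2} and~\ref{lem:app1cohvan2}), and finally the intertwining argument run in parallel on $M_0$ and $M_1$, producing approximately inner $\theta_0,\theta_1$ directly with no descent step. If you want a route that leverages Theorem~\ref{thme:classification} rather than reproving it, the cleaner one (which the paper itself points to) is to apply it to the $\sC_{00}$-action on $M_0$ and then use the reconstruction Theorem~\ref{thm:M1P1isom} for the off-diagonal part, rather than a linking-algebra reduction.
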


As a corollary of Theorem \ref{thme:classification},
we will show Theorem \ref{thme:subfactor2}
(Theorem \ref{thm:subfactor2}),
which recovers the uniqueness part of Popa's classification result
\cite[Theorem 5.1]{Popa-endo}.
Our proof of Theorem \ref{thme:classification} uses
the Bratteli--Elliott--Evans--Kishimoto type intertwining argument
\cite{EK}.
This enables us to directly compare two subfactors
and to remove the assumption of the ergodicity of standard invariants.
This fact has been
announced by Popa \cite[Problem 5.4.7]{Popa-acta}.

\begin{thme}
\label{thme:subfactor2}
Let $N\stackrel{E}{\subset} M$
and $Q\stackrel{F}{\subset} P$ be an inclusion of factors
with separable preduals and finite indices.
Suppose the following conditions hold:
\begin{itemize}
\item
$N$ and $Q$ are isomorphic.

\item
The standard invariants
of $N\stackrel{E}{\subset} M$ and $Q\stackrel{F}{\subset} P$
are amenable and isomorphic.

\item
$N\subset M$ and $Q\subset P$ are centrally free inclusions.

\item
$N\stackrel{E}{\subset} M$ and $Q\stackrel{F}{\subset} P$
are approximately inner inclusions.
\end{itemize}
Then $N\stackrel{E}{\subset} M$
is isomorphic to $Q\stackrel{F}{\subset} P$.
\end{thme}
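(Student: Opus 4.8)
The plan is to deduce Theorem~\ref{thme:subfactor2} from the classification of centrally free cocycle actions, Theorem~\ref{thme:classification}. I would first stabilize, replacing $N\stackrel{E}{\subset}M$ by $N\loti B(\ell^2)\subset M\loti B(\ell^2)$ with the expectation $E\otimes\id$, and likewise for $Q\stackrel{F}{\subset}P$; this leaves the standard invariant unchanged, makes the smaller factors properly infinite, and an isomorphism between the stabilized inclusions intertwining the expectations can be compressed to a minimal projection (after correcting by an inner automorphism so that the two projections agree) to recover an isomorphism of the original inclusions. So I may assume $N$ is properly infinite. A finite-index inclusion is discrete, and to $N\subset M$ I attach its standard invariant, encoded as a rigid C$^*$-tensor category $\sC$ generated by ${}_NL^2(M)_N$ together with a centrally free cocycle action $(\alpha,c^\alpha)$ of $\sC$ on $N$ and the canonical $Q$-system $A\in\sC$, from which $N\stackrel{E}{\subset}M$ is reconstructed as $N\subset N\rtimes_\alpha A$ with its canonical expectation. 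Likewise I attach $(\sC',\beta,c^\beta,B)$ to $Q\stackrel{F}{\subset}P$.

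Next I would use the hypotheses to place ourselves in the situation of Theorem~\ref{thme:classification}. The isomorphism of standard invariants identifies $\sC$ with $\sC'$ and carries $A$ to $B$ (the standard invariant records exactly the $\lambda$-lattice, equivalently the $Q$-system, so this identification is part of the data), and the isomorphism $N\cong Q$ transports $(\beta,c^\beta)$ to a cocycle action of $\sC$ on $N$. Thus $(\alpha,c^\alpha)$ and the transported $(\beta,c^\beta)$ are both cocycle actions of the amenable rigid C$^*$-tensor category $\sC$ on the properly infinite factor $N$. For the hypotheses of Theorem~\ref{thme:classification}: an invariant faithful normal state on $Z(N)$ exists trivially since $N$ is a factor; central freeness of the inclusion $N\subset M$ is, by definition, central freeness of $(\alpha,c^\alpha)$; and the assumption that $N\stackrel{E}{\subset}M$ and $Q\stackrel{F}{\subset}P$ are approximately inner translates, through the $N\cong Q$ identification, into the statement that $\alpha_X$ and $\beta_X$ are approximately unitarily equivalent for every object $X$ of $\sC$. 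Theorem~\ref{thme:classification} then yields an approximately inner automorphism of $N$ realizing a strong cocycle conjugacy between $(\alpha,c^\alpha)$ and $(\beta,c^\beta)$.

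Finally I would transport this back. The cocycle conjugacy intertwines $\alpha$ and $\beta$ up to the $2$-cocycles, and since it is compatible with the tensor structure of $\sC$ and the two $Q$-systems $A,B$ have been identified, it extends to an isomorphism $N\rtimes_\alpha A\to Q\rtimes_\beta B$, that is, an isomorphism $M\to P$, carrying $N$ onto $Q$ and the canonical expectation $E$ onto $F$. Undoing the stabilization as above gives the asserted isomorphism $N\stackrel{E}{\subset}M\cong Q\stackrel{F}{\subset}P$, and the resulting conjugacy is approximately inner as required.

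I expect the genuine work to lie in the translation steps rather than in new analysis. Verifying that the standard invariant of a finite-index inclusion really does produce a centrally free amenable cocycle action of a rigid C$^*$-tensor category together with a $Q$-system, and that the inclusion and its expectation are faithfully reconstructed from this data, is a matter of assembling known subfactor theory in the language of this paper. The subtler point is to match ``approximately inner inclusion'' with approximate unitary equivalence of the functors $\alpha_X$ and $\beta_X$; this is where Popa's notion of approximate innerness must be reconciled with the categorical formulation used in Theorem~\ref{thme:classification}. The stabilization and compression bookkeeping, keeping track of conditional expectations throughout, is routine but must be carried out with some care.
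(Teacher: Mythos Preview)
Your outline matches the paper's proof: reduce to properly infinite factors by stabilization, translate the subfactor data into a centrally free cocycle action of the amenable rigid C$^*$-tensor category $\sC_{00}$ on $N$, invoke Theorem~\ref{thme:classification} (in the form of Corollary~\ref{cor:CDclass}, since the two categories are only tensor-equivalent, not equal), and then reconstruct the inclusion with its expectation. The paper carries out the reconstruction via Theorem~\ref{thm:M1P1isom} rather than the $Q$-system language you use, but these are equivalent packagings of the same crossed-product construction.

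The point you correctly flag as subtle---deducing approximate unitary equivalence of $\alpha_\lambda$ and $\beta_{G(\lambda)}$ from approximate innerness of the two inclusions---is not a direct translation, and the paper's mechanism is worth knowing. Proposition~\ref{prop:centappsubfactor}(2) shows that approximate innerness of $\rho(N)\stackrel{E}{\subset}M$ forces each irreducible $\lambda\prec(\orho\rho)^n$ to be an approximately inner endomorphism of a definite rank $d(\rho)^{2n}\varphi^S(TT^*)$, where $\varphi^S$ is the limit state on the higher relative commutants determined by $E$ and $T$ is an eigen-isometry of the operator $a_{\orho\rho}^n$ built from $h_\rho,k_{\orho}$. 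The isomorphism of standard invariants (with the extra Jones projections $e_{rs}^{n,S}$ recording the non-minimal expectations) guarantees that the tensor equivalence $G$ carries $a_{\orho\rho}^n$ to $a_{\osi\sigma}^n$ and $\varphi_\rho^S$ to $\varphi_\sigma^S$, so $G(\lambda)$ is approximately inner of the \emph{same} rank. Proposition~\ref{prop:appuer} then gives approximate unitary equivalence of $\lambda$ and $G(\lambda)$. This rank-matching argument is the substance hidden in your phrase ``must be reconciled''; without it one does not know that the two endomorphisms, each approximately inner in isolation, are comparable to each other. Finally, the paper tracks $h_\rho\mapsto h_\sigma$ under the resulting isomorphism to verify $E$ is carried to $F$, which is the concrete content of your remark about keeping track of expectations.
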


This paper is organized as follows.
In Section 2, we will quickly review the concepts of
an ultraproduct von Neumann algebra,
a C$^*$-tensor category and its amenability
and the (central) freeness of cocycle actions.
In Section 3,
we will construct a Rohlin tower for a centrally free action
of an amenable rigid C$^*$-tensor category on a properly infinite von Neumann algebra with separable predual.
In Section 4,
we will study
actions of compact groups on full factors.
In Section 5,
we will classify centrally free actions of an amenable
C$^*$-tensor categories up to strong cocycle conjugacy.
In Section 6, we will extend the classification results
obtained in Section 5 to an amenable rigid C$^*$-2-categories.
In section 7,
we will discuss classification of subfactors such that
their standard invariants are amenable.
Although our proof of Theorem \ref{thme:subfactor2}
is presented in Section 7,
it is worth mentioning that
Theorem \ref{thme:classification} is actually enough
(see Theorem \ref{thm:M1P1isom}).
Hence it is logically possible to
put Theorem \ref{thme:subfactor2} into Section 5.

\vspace{10pt}
\noindent
{\bf Acknowledgements.}
The author is grateful
to Tomohiro Hayashi, Masaki Izumi, Yasuyuki Kawahigashi,
Toshihiko Masuda and Yoshimichi Ueda
for various advice and valuable comments on this paper
and
to Yuki Arano and Makoto Yamashita
for a fruitful discussion on the Rohlin property
and subfactors.
He also thanks Amine Marrakchi
for informing him of the interesting problem
(Problem \ref{prob:outerfull})
and related topics.

\tableofcontents

\section{Notations and Terminology}

\subsection{General Notations}
\label{subsect:notation}
We will mainly treat a von Neumann algebra $M$
(except ultraproducts)
with separable predual $M_*$.
By $Z(M)$ we denote the center of $M$.
We equip $M_*$ with the $M$-$M$ bimodule structure
as usual: $a\varphi b(x):=\varphi(bxa)$
for $a,b,x\in M$ and $\varphi\in M_*$.
We denote by $M_\varphi$ the collection of all elements
in $M$
which commute with a functional $\varphi\in M_*$.
For a positive $\varphi\in M_*$,
we set $|x|_\varphi:=\varphi(|x|)$
and $\|x\|_\varphi:=\varphi(x^*x)^{1/2}$
for $x\in M$.
Note that $|\cdot|_\varphi$ satisfies the triangle inequality
on $M_\varphi$
and also
the inequalities
$|\varphi(ax)|\leq\|a\||x|_\varphi$
and
$|\varphi(xa)|\leq\|a\||x|_\varphi$
for $a\in M$ and $x\in M_\varphi$.

By $\Mor(N,M)$,
we denote the set of unital faithful normal $*$-homomorphisms
from a von Neumann algebra $N$ into $M$.
For $\rho,\sigma\in\Mor(N,M)$,
we denote by $(\rho,\sigma)$
the collection of all operators $a\in M$
such that $a\rho(x)=\sigma(x)a$ for all $x\in N$.
When a $\rho\in\Mor(N,M)$
has a $\sigma\in\Mor(M,N)$
such that
the both intertwiner spaces
$(\id_M,\rho\circ\sigma)$
and $(\id_N,\sigma\circ\rho)$
contain elements $a$ and $b$, respectively,
with identities
$\rho(b^*)a=c1_M$
and $\sigma(a^*)b=c1_N$ for some non-zero $c\in\C$,
we call $\sigma$ a \emph{conjugate endomorphism} of $\rho$.
We will denote by $\Mor(N,M)_0$
the collection of all $\rho$ with conjugates.
If $N=M$,
we will write $\End(M)$ and $\End(M)_0$
for $\Mor(M,M)$ and $\Mor(M,M)_0$ as usual.

\subsection{Ultraproduct von Neumann algebras}
Readers are referred to \cite{AH,Oc} for ultraproduct von Neumann algebras.
Let $\omega$ be a free ultrafilter on $\N$ which we also regard
as a point in the boundary of $\N$ inside its Stone--C\v{e}ch compactification
$\beta\N$.
Let $M$ be a von Neumann algebra and $\ell^\infty(M)$ the collection of
norm bounded sequences in $M$.
Let
$\mathcal{T}^\omega(M)$ be the collection of all norm bounded sequences
$(x_n)$ being \emph{trivial},
that is, $x_n$ converges to $0$ as $n\to\omega$ in the strong$*$ topology.
Let $\mathcal{M}^\omega(M)$ be the multiplier algebra
of $\mathcal{T}^\omega(M)$ in $\ell^\infty(M)$.
An element $(x_n)\in\ell^\infty(M)$ is said to be \emph{central}
when $\|x_n\varphi-\varphi x_n\|_{M_*}\to0$ as $n\to\omega$
for all $\varphi\in M_*$.
By $\mathcal{C}^\omega(M)$ we denote the set of all central sequences.
Then we introduce the ultraproduct von Neumann algebras
$M^\omega:=\mathcal{M}^\omega(M)/\mathcal{T}^\omega(M)$
and
$M_\omega:=\mathcal{C}^\omega(M)/\mathcal{T}^\omega(M)$.
The equivalence class of $(x_n)$ is denoted by $(x_n)^\omega$.
By constant embedding $M\ni x\mapsto (x)^\omega\in M^\omega$,
we always regard $M$ as a von Neumann subalgebra of $M^\omega$.
Note that $M_\omega\subset M'\cap M^\omega$.
We denote by $\tau^\omega$
the canonical faithful normal conditional expectation from $M^\omega$
onto $M$.
Namely, we have
$\displaystyle\tau^\omega((x_n)^\omega):=\lim_{n\to\omega}x_n$
for $(x_n)^\omega\in M^\omega$,
where the limit is taken in the $\sigma$-weak topology of $M$.
Then a functional $\phi$ on $M$ extends to $M^\omega$
by $\phi^\omega:=\phi\circ\tau^\omega$.
The following result is probably well-known for experts,
but we will present a proof for readers' convenience.

\begin{lem}
\label{lem:ultra-functional}
For all $x=(x_n)^\omega,y=(y_n)^\omega\in M^\omega$
and $\varphi,\psi\in M_*$,
one has
\begin{equation}
\label{eq:ultra-functional}
\|x\varphi^\omega-\psi^\omega y\|
=
\lim_{n\to\omega}
\|x_n\varphi-\psi y_n\|.
\end{equation}
\end{lem}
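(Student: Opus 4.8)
The plan is to prove the two inequalities ``$\leq$'' and ``$\geq$'' separately, after first reducing to a convenient ``linearized'' form. The key point is that the norm of a normal functional $\chi\in M_*$ can be computed by testing against unitaries (or contractions) in $M$, and that all the sequences involved are norm bounded, so one can interchange the relevant limits along $\omega$.

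First I would record the basic identity to be exploited: for $x=(x_n)^\omega\in M^\omega$ and $\varphi\in M_*$, the functional $x\varphi^\omega$ acts on $a=(a_n)^\omega\in M^\omega$ by $(x\varphi^\omega)(a)=\varphi^\omega(ax)=\lim_{n\to\omega}\varphi(a_nx_n)$, using that $\varphi^\omega=\varphi\circ\tau^\omega$ and that $\tau^\omega$ is multiplicative enough for this purpose (more precisely, $\tau^\omega((a_nx_n)^\omega)=\lim_{n\to\omega}a_nx_n$ $\sigma$-weakly, and $\varphi$ is $\sigma$-weakly continuous). Similarly $(\psi^\omega y)(a)=\lim_{n\to\omega}\psi(y_na_n)$. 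Hence for every contraction $a=(a_n)^\omega$ with each $\|a_n\|\leq 1$,
\[
|(x\varphi^\omega-\psi^\omega y)(a)|
=\Bigl|\lim_{n\to\omega}\bigl(\varphi(a_nx_n)-\psi(y_na_n)\bigr)\Bigr|
=\Bigl|\lim_{n\to\omega}(x_n\varphi-\psi y_n)(a_n)\Bigr|
\leq\lim_{n\to\omega}\|x_n\varphi-\psi y_n\|.
\]
Taking the supremum over all such $a$ (contractions in $M^\omega$, in particular constant sequences suffice, but the inequality above holds for all) gives the inequality ``$\leq$'' in \eqref{eq:ultra-functional}.

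For the reverse inequality ``$\geq$'', I would choose, for each $n$, a contraction $b_n\in M$ with $\|b_n\|\leq 1$ and $(x_n\varphi-\psi y_n)(b_n)\geq\|x_n\varphi-\psi y_n\|-1/n$; then $b:=(b_n)^\omega$ is a contraction in $\ell^\infty(M)$, and one must check $b\in\mathcal{M}^\omega(M)$ so that $b:=(b_n)^\omega$ defines an element of $M^\omega$ —here the harmless point is that norm-bounded sequences are automatically multipliers of $\mathcal{T}^\omega(M)$ up to passing to $(b_n)^\omega$, i.e. $\ell^\infty(M)/\mathcal{T}^\omega(M)$ embeds, so any bounded sequence gives a well-defined element, and in fact one only needs $b$ to be a contraction to test the functional norm. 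Applying the displayed computation in reverse,
\[
\|x\varphi^\omega-\psi^\omega y\|\geq|(x\varphi^\omega-\psi^\omega y)(b)|
=\Bigl|\lim_{n\to\omega}(x_n\varphi-\psi y_n)(b_n)\Bigr|
\geq\lim_{n\to\omega}\Bigl(\|x_n\varphi-\psi y_n\|-\tfrac1n\Bigr)
=\lim_{n\to\omega}\|x_n\varphi-\psi y_n\|,
\]
which is the inequality ``$\geq$''. Combining the two gives \eqref{eq:ultra-functional}.

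The only subtle point — and the step I would be most careful about — is the interchange ``$(x\varphi^\omega-\psi^\omega y)(a)=\lim_{n\to\omega}(x_n\varphi-\psi y_n)(a_n)$'' when $a$ is a \emph{genuine} element of $M^\omega$ (represented by a bounded sequence $(a_n)$) rather than a constant; this requires knowing that $\varphi^\omega$ and $\psi^\omega$ are $\sigma$-weakly continuous and that multiplication in $M^\omega$ is computed entrywise modulo $\mathcal{T}^\omega(M)$, which is exactly the content of the construction of $M^\omega$ recalled above together with the $\sigma$-weak continuity of $\tau^\omega$ on bounded sets. Everything else is a routine $\varepsilon/n$ selection argument and the elementary fact that $\|\chi\|=\sup\{|\chi(a)|:a\in M,\ \|a\|\leq1\}$ for $\chi\in M_*$.
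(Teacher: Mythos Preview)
Your ``$\leq$'' direction is fine: for $a=(a_n)^\omega\in M^\omega$ one indeed has $(x\varphi^\omega-\psi^\omega y)(a)=\lim_{n\to\omega}(x_n\varphi-\psi y_n)(a_n)$ by normality of $\varphi,\psi$ and the definition of $\tau^\omega$, and taking the supremum over contractions in $M^\omega$ gives the bound.

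The ``$\geq$'' direction, however, has a genuine gap at exactly the point you call ``harmless.'' It is \emph{not} true that every norm-bounded sequence lies in $\mathcal{M}^\omega(M)$, i.e.\ that $\ell^\infty(M)/\mathcal{T}^\omega(M)$ embeds in $M^\omega$. This fails already for sequences of unitaries: take $M=B(H)$ with orthonormal basis $(e_k)$, let $t_n$ be the rank-one projection onto $\C e_n$ (so $(t_n)\in\mathcal{T}^\omega(M)$), and let $b_n$ be the self-adjoint unitary swapping $e_1$ and $e_n$; then $t_n b_n e_1=e_n$ has norm $1$ for all $n$, so $(t_n b_n)\notin\mathcal{T}^\omega(M)$ and $(b_n)\notin\mathcal{M}^\omega(M)$. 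Thus your near-optimal testers $(b_n)$ need not define an element of $M^\omega$, and you cannot evaluate $x\varphi^\omega-\psi^\omega y$ on them. This is precisely the subtlety that distinguishes the Ocneanu ultrapower from the naive quotient $\ell^\infty(M)/\mathcal{T}^\omega(M)$.

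The paper's proof avoids this obstacle by never constructing ad hoc test sequences. Instead it bootstraps from the known special case $\varphi=\psi$ faithful positive (Ando--Haagerup, \cite[Lemma~4.36]{AH}) in three reductions: first to $\varphi=\psi$ positive via support projections; then to arbitrary positive $\varphi,\psi$ by a $2\times2$ matrix trick, writing $x\varphi^\omega-\psi^\omega y$ as the off-diagonal part of $(x\otimes e_{21})\chi^\omega-\chi^\omega(y\otimes e_{21})$ for a single positive $\chi$ on $M\otimes M_2(\C)$; and finally to general $\varphi,\psi$ via polar decomposition. Each reduction only manipulates elements already known to lie in $M^\omega$ (or in $(M\otimes M_2(\C))^\omega$), so the multiplier issue never arises.
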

\begin{proof}
When $\varphi=\psi$ is a faithful normal positive functional,
the equality is proved in \cite[Lemma 4.36]{AH}.
Next we suppose that
$\varphi=\psi$ is a normal positive functional.
Let $e$ be the support projection of $\varphi$
and $\hat{\varphi}$ a faithful normal positive functional
on $M$ such that $e\hat{\varphi}=\varphi=\hat{\varphi}e$.
Then we have
$e \hat{\varphi}^\omega=\varphi^\omega=\hat{\varphi}^\omega e$,
and
\begin{align*}
\|x\varphi^\omega-\varphi^\omega y\|
&=
\|xe\hat{\varphi}^\omega-\hat{\varphi}^\omega ey\|
=
\lim_{n\to\omega}
\|x_ne\hat{\varphi}-\hat{\varphi}ey_n\|
\\
&=
\lim_{n\to\omega}
\|x_n\varphi^\omega-\varphi^\omega y_n\|
\end{align*}
Thus (\ref{eq:ultra-functional})
holds for all positive $\varphi=\psi\in M_*$.

Next let $\varphi,\psi$ be positive functionals on $M$.
Consider $N=M\otimes M_2(\C)$.
Let $\{e_{ij}\}_{i,j=1}^2$ be a system of matrix units of $M_2(\C)$
and $\Tr$ the non-normalized trace of $M_2(\C)$.
Set $\chi\in N_*$ by
$\chi:=\varphi\otimes (\Tr e_{11})+\psi\otimes(\Tr e_{22})$.
Let $x=(x_n)^\omega, y=(y_n)^\omega\in M^\omega$.
Using the natural identification $N^\omega=M^\omega\otimes M_2(\C)$,
we have
\[
(x\otimes e_{21})\cdot\chi^\omega-\chi^\omega\cdot (y\otimes e_{21})
=
(x\varphi^\omega-\psi^\omega y)\otimes \Tr e_{21}.
\]
Then (\ref{eq:ultra-functional}) for positive $\varphi,\psi\in M_*$
follows from
\begin{align*}
\|x\varphi^\omega-\psi^\omega y\|
&=
\|(x\varphi^\omega-\psi^\omega y)\otimes \Tr e_{21}\|
\\
&=
\|(x\otimes e_{21})\cdot\chi^\omega
-\chi^\omega\cdot (y\otimes e_{21})\|
\\
&=
\lim_{n\to\omega}
\|(x_n\otimes e_{21})\cdot\chi
-\chi\cdot (y_n\otimes e_{21})\|
\\
&=
\lim_{n\to\omega}
\|x_n\varphi-\psi y_n\|.
\end{align*}

Finally, we let $\varphi,\psi\in M_*$.
Let $\varphi=v|\varphi|$ and $\psi=|\psi^*|w$ be the polar decompositions
with partial isometries $v,w\in M$.
Note that $\varphi^\omega=v|\varphi|^\omega$
and $\psi^\omega=|\psi^*|^\omega w$.
Then we have
\begin{align*}
\|x\varphi^\omega-\psi^\omega y\|
&=
\|xv|\varphi|^\omega-|\psi^*|^\omega wy\|
=
\lim_{n\to\omega}
\|x_nv|\varphi|-|\psi^*|wy_n\|
\\
&=
\lim_{n\to\omega}
\|x_n\varphi-\psi y_n\|.
\end{align*}
\end{proof}

\subsection{C$^*$-tensor category}
\label{subsect:tensorcat}
Our references are \cite{EGNO,HY,Mueger-guided,NeTu}
on the notion of a C$^*$-tensor category.
We will freely use the notations and the terminology
introduced in \cite[Chapter 2]{NeTu}.
A C$^*$-tensor category $\sC$ is a C$^*$-category consisting
of objects denoted by $X,Y,Z,\dots\in\sC$,
morphism spaces $\sC(X,Y)$
which consist of morphisms
from $X$ to $Y$
and the bilinear bifunctor $\otimes$ from $\sC\times\sC$
to $\sC$ with several identities.
The unit object is denoted by $\btr$.
In this paper, we always assume that
$\sC$ is \emph{strict}, that is,
the equalities
$(X\otimes Y)\otimes Z=X\otimes (Y\otimes Z)$
and
$\btr\otimes X=X=X\otimes \btr$
hold for all $X,Y,Z\in\sC$.
By definition of a tensor category,
we have $\sC(\btr,\btr)=\C$.
If we treat $\sC$ with non-trivial commutative
C$^*$-(or W$^*$-)algebra $\sC(\btr,\btr)$,
we call $\sC$ a C$^*$-(resp. W$^*$-)multitensor category.
If there exists an isometry in $\sC(X,Y)$,
then we will write $X\prec Y$.

We will say that an object $X\in\sC$ is \emph{simple}
or \emph{irreducible}
if $\sC(X,X)=\C1_X$.
By $\Irr(\sC)$, a complete list of
simple objects such that
any simple object $X\in\sC$
is isomorphic to
a unique object $Y\in\Irr(\sC)$.
The unit object $\btr$ is assumed to be contained in $\Irr(\sC)$.
If $X\in\Irr(\sC)$ and $Y\in\sC$,
the Banach space $\sC(X,Y)$ is a Hilbert space
with the inner product $\langle S,T \rangle1_X:=T^*S$
for $S,T\in\sC(X,Y)$.
We denote by $\ONB(X,Y)$ a fixed orthonormal base.
When $\sC(X,Y)=\{0\}$,
then $\ONB(X,Y)$ is regarded as the empty set.

In this paper,
we always assume that $\sC$ is \emph{rigid},
that is,
each object has a conjugate object.
For each $X\in\sC$,
we fix a conjugate object $\ovl{X}$
and a standard solution $(R_X,\ovl{R}_X)$
of the conjugate equation
as in \cite[Chapter 2.2]{NeTu}.
We will, however, use the \emph{normalized} solution.
Namely, $R_X\in\sC(\btr,\ovl{X}\otimes X)$
and $\ovl{R}_X\in\sC(\btr,X\otimes\ovl{X})$
satisfy $R_X^*R_X=1=\ovl{R}_X^*\ovl{R}_X$
and the following \emph{conjugate equations}:
\[
(\ovl{R}_X^*\otimes1_X)\circ(1_X\otimes R_X)
=\frac{1}{d(X)}1_X,
\quad
(R_X^*\otimes1_{\ovl{X}})\circ(1_{\ovl{X}}\otimes \ovl{R}_X)
=\frac{1}{d(X)}1_{\ovl{X}},
\]
where $d(X)$ denotes the \emph{intrinsic dimension} of $X$
\cite[Definition 2.2.10]{NeTu}.
For $X\in\Irr(\sC)$,
we find its conjugate object $\ovl{X}$ in $\Irr(\sC)$.

We will introduce the left and right Frobenius maps
$F_\ell\colon
\sC(X,Y\otimes Z)\to \sC(Z,\ovl{Y}\otimes X)$
and
$F_r\colon
\sC(X,Y\otimes Z)\to \sC(Y,X\otimes \ovl{Z})$
defined by
\begin{align}
\frac{d(X)^{1/2}}{d(Y)^{1/2}d(Z)^{1/2}}
F_\ell(S)
&:=
(1_{\ovl{Y}}\otimes S^*)(R_Y\otimes1_{Z}),
\label{eq:leftFrob}
\\
\frac{d(X)^{1/2}}{d(Y)^{1/2}d(Z)^{1/2}}
F_r(S)
&:=(S^*\otimes1_{\ovl{Z}})(1_Y\otimes\ovl{R}_Z).
\label{eq:rightFrob}
\end{align}
Then they are actually bijective maps.
This fact is called the Frobenius reciprocity.
For $X,Y,Z\in\sC$,
the dimension of $\sC(Z,X\otimes Y)$ as a linear space
is denoted by $N_{X,Y}^Z$.
The Frobenius reciprocity
implies $N_{X,Y}^Z=N_{\ovl{X},Z}^Y=N_{Z,\ovl{Y}}^X$.

When $X,Y,Z\in\Irr(\sC)$,
both the left and right Frobenius maps
preserve the inner products $\langle\cdot,\cdot\rangle$.
Hence the following equality holds
for $X,Y\in\Irr(\sC)$:
\begin{align}
1_{\ovl{X}}\otimes 1_{Y}
&=
\sum_Z\sum_S
F_\ell(F_r(S))F_\ell(F_r(S))^*
\notag
\\
&=
\sum_Z\sum_S
d(Z)^2(1_{\ovl{X}}\otimes 1_{Y}\otimes \ovl{R}_Z^*)
(1_{\ovl{X}}\otimes S\otimes 1_{\ovl{Z}})
(R_X\otimes 1_{\ovl{Z}})
\notag
\\
&
\hspace{80pt}\cdot
(R_X^*\otimes 1_{\ovl{Z}})
(1_{\ovl{X}}\otimes S^*\otimes 1_{\ovl{Z}})
(1_{\ovl{X}}\otimes 1_{Y}\otimes \ovl{R}_Z),
\label{eq:leftinv}
\end{align}
where the summation is taken for
$Z\in\Irr(\sC)$ and $S\in\ONB(X,Y\otimes Z)$.

Let $X\in \sC$ and $Z\in\Irr(\sC)$.
We let $P_X^Z\in \sC(X,X)$
be the projection to the $Z$-component of $X$.
Namely, $P_X^Z$ is the sum of $SS^*$
with $S\in\ONB(Z,X)$. 
Then $P_X^Z$ is natural in $X$,
that is,
we have $TP_X^Z=P_Y^Z T$
for $X,Y\in\sC$ and $T\in\sC(X,Y)$.
We also have
$R_X^*(1_{\ovl{X}}\otimes P_X^Z)R_X
=d(Z)d(X)^{-1}N_X^Z$,
where $N_X^Z$ denotes the dimension of
the linear space $\sC(Z,X)$.

For a finite subset $\cK\subset \Irr(\sC)$,
we set a projection $P_X^\cK:=\sum_{Z\in\cK}P_X^Z$.
Then for $X\in \Irr(\sC)$,
we have $P_X^\cK=1_X$ or $0$
if $X\in\cK$ or $X\nin\cK$, respectively.

In this paper, $\sC$ always denotes
a C$^*$-tensor category
such that $\Irr(\sC)$ is at most countable.
Hence $\sC$ is essentially small.
We will need a set of at most countable objects
$\sC_0$ being dense in $\sC$
for technical reason such as index selection trick
in an ultraproduct von Neumann algebra,
an induction and so on.
We will fix such $\sC_0$ throughout this paper.
Namely,
let $\sC_0$ be at most countable collection of objects in $\sC$
such that the following conditions hold:
\begin{itemize}
\item
Any object of $\sC$
has an isomorphic
object in $\sC_0$.

\item 
$\Irr(\sC)\subset\sC_0$.

\item
$X\otimes Y\in\sC_0$ for all $X,Y\in\sC_0$.

\item
$\ovl{X}\in\sC_0$ for all $X\in\sC_0$.
\end{itemize}

The set of endomorphisms on infinite factors
is a typical example of a C$^*$-tensor category
(such that the complete set of simple objects could be uncountable).
Our references are \cite{BKLR,Iz-fusion,LongoI,LongoII}.
Let $M$ be an infinite factor
and $\End(M)_0$ the set of endomorphisms
which have conjugate endomorphisms as introduced
in Section \ref{subsect:notation}.
The objects are endomorphisms $\rho\in\End(M)_0$
and the set of the morphisms from $\rho$ to $\sigma$
is $(\rho,\sigma)$.
The tensor product operation is nothing but
the composition, that is,
$\rho\otimes\sigma:=\rho\sigma$.
For intertwiners $S\colon\rho\to\sigma$
and $T\colon\lambda\to\mu$ in $\End(M)_0$,
we set their tensor product
$S\otimes T:=S\rho(T)=\sigma(T)S$.

Let $\rho\in\End(M)_0$ and take its conjugate $\orho\in\End(M)_0$.
Let $S_\rho\in(\btr,\orho\rho)$
and $\ovl{S}_\rho\in(\btr,\rho\orho)$ be a solution
of the conjugate equations for $\rho$ and $\orho$.
Recall that we always treat $S_\rho$ and $\ovl{S}_\rho$ being isometries.
Let us denote by $d^S(\rho)\geq1$
the positive scalar satisfying
$S_\rho^*\orho(\ovl{S}_\rho)=d^S(\rho)^{-1}1_\orho$
and
$\ovl{S}_\rho^*\rho(S_\rho)=d^S(\rho)^{-1}1_\rho$.
The intrinsic dimension of $\rho$, which is denoted by $d(\rho)$ as usual,
is nothing but the infimum of
$d^S(\rho)$ among all solutions $(S_\rho,\ovl{S}_\rho)$.

Let $\phi_\rho^S(x):=S_\rho^*\orho(x)S_\rho$ for $x\in M$.
Then $\phi_\rho^S$ is a faithful normal unital completely positive map
satisfying the left inverse property, $\phi_\rho^S\circ\rho=\id$.
Put $E_\rho^S:=\rho\circ\phi_\rho^S$ that is a faithful normal conditional
expectation from $M$ onto $\rho(M)$.
Then the singleton $\{d^S(\rho)\ovl{S}_\rho^*\}$
gives a Pimsner--Popa base
for $E_\rho^S$,
and the Jones index
(generalized by Kosaki to subfactors
of arbitrary type \cite{Jo,Ko}) $\Ind E_\rho^S$
equals $d^S(\rho)^2$
(see \cite{PP,Wata} for the definition of a base).
Conversely, we can prove any faithful normal
conditional expectation from $M$ onto $\rho(M)$
is of the form $E^S$ for some $(S_\rho,\ovl{S}_\rho)$.
Hence the minimal index $[M:\rho(M)]_0$ equals $d(\rho)^2$
(see \cite{Hi}).

We let $(R_\rho,\ovl{R}_\rho)$ be a standard solution
of the conjugate equations of $\rho$ and $\orho$.
We denote by $\phi_\rho$ and $d(\rho)$
the left inverse and the dimension with respect to
$(R_\rho,\ovl{R}_\rho)$.
Note that $\phi_\rho$ does not depend on a choice of
a conjugate $\orho$
nor a standard solution $(R_\rho,\ovl{R}_\rho)$.
The map $\phi_\rho$ is called the
\emph{standard left inverse} of $\rho$.

\subsection{Approximate unitary equivalence of endomorphisms}
The results in this subsection are not necessary materials
to read Sections 1--5.
In this subsection,
$M$ denotes an infinite factor with separable predual.
Each $\rho\in\End(M)_0$ has a faithful normal left inverse,
and $\rho^\omega\in\End(M^\omega)_0$ is naturally defined.
We will use the ultraproduct $\phi_\rho^\omega$
that is a left inverse of $\rho^\omega$.
In the following,
we simply denote by $\rho$ and $\phi_\rho$
the ultraproducts $\rho^\omega$ and $\phi_\rho^\omega$,
respectively.
For $\psi\in M_*$,
we set
$\rho(\psi^\omega):=\psi^\omega\circ\phi_\rho=(\rho(\psi))^\omega$.

We will recall the notion of the approximate innerness
of endomorphisms \cite[Definition 2.4]{MT-app}.
Let us denote by $\lceil \cdot\rceil$ the ceiling function
on $\R$.

\begin{defn}
An endomorphism $\rho$ on $M$ is said to be
an \emph{approximately inner endomorphism of rank $r>0$}
if there exists a family of partial isometries
$\{v_i\}_{i=1}^{\lceil r\rceil}$ in $M^\omega$
such that
\begin{itemize}
\item
$v_i^{*}v_i=1$ for $1\leq i<\lceil r\rceil$.

\item
$\sum_{i=1}^{\lceil r\rceil}v_i v_i^{*}=1$.

\item
$v_i\psi^\omega=r \rho(\psi^\omega)v_i$
for all $i$ and $\psi\in M_*$.
\end{itemize}
\end{defn}

It follows from the third equality
that
$v_i x=\rho(x)v_i$ for $x\in M$
since for $\psi\in M_*$
we have
$v_i x\psi^\omega
=
v_i (x\psi)^\omega
=
r\rho(x\psi)^\omega v_i
=
\rho(x)v_i\psi^\omega$.
Let $\rho_1,\rho_2\in\End(M)_0$.
Let $Q$ be the von Neumann subalgebra
of $M^\omega\otimes M_2(\C)$ defined by:
\[
Q:=\set{(x_{ij})_{ij}|
x_{ij}\rho_j(\psi^\omega)=\rho_i(\psi^\omega)x_{ij}
\mbox{ for all }i,j=1,2,
\ \psi\in M_*}.
\]
Let $\varphi\in M_*$ be a faithful state.
Then $Q$ has a faithful normal tracial state
$\tau\colon (x_{ij})_{ij}\mapsto
\rho_1(\varphi^\omega)(x_{11})/2+\rho_2(\varphi^\omega)(x_{22})/2$,
and $Q$ is a finite von Neumann algebra.
Let $\{e_{ij}\}_{ij}$ be a system of matrix units
of $M_2(\C)$.
Then $1\otimes e_{11}$
and $1\otimes e_{22}$ are contained in $Q$,
and any central element of $Q$ is of the diagonal form.

\begin{defn}
Let $\rho_1,\rho_2\in\End(M)_0$.
We will say $\rho_1$ and $\rho_2$ are \emph{approximately unitarily
equivalent}
if there exists a sequence of unitaries $u_n\in M$
such that
$\displaystyle\lim_{n\to\infty}\|u_n\rho_1(\psi)u_n^*-\rho_2(\psi)\|_{M_*}=0$
for all $\psi\in M_*$.
\end{defn}

From Lemma \ref{lem:ultra-functional},
$\rho_1$ and $\rho_2$ are approximately unitarily equivalent
if and only if
there exists a unitary $u\in M^\omega$
such that
$u\rho_1(\psi^\omega) u^*=\rho_2(\psi^\omega)$
for all $\psi\in M_*$.

\begin{rem}
We have introduced the approximate unitary equivalence
for endomorphisms on an infinite factor.
When we treat a von Neumann algebra with the non-trivial center,
we should designate left inverses of endomorphisms.
For instance,
in Definition \ref{defn:approx}
we will introduce the approximate unitary equivalence
for two cocycle actions,
where we will use the left inverses coming from
standard solutions of conjugate equations
in a C$^*$-tensor category
(see the equality (\ref{eq:phileftinv})).
\end{rem}

\begin{lem}
\label{lem:MvNequiv}
Let $\rho_1,\rho_2\in\End(M)_0$.
Let $\varphi\in M_*$ be a faithful state.
Then the following statements are equivalent:
\begin{enumerate}
\item
$\rho_1$ and $\rho_2$ are approximately unitarily
equivalent.

\item
$1\otimes e_{11}$
and $1\otimes e_{22}$ are Murray--von Neumann equivalent in $Q$.

\item
For any $z=\sum_{i=1}^2 z_{ii}\otimes e_{ii}\in Z(Q)$,
one has $\rho_1(\varphi^\omega)(z_{11})=\rho_2(\varphi^\omega)(z_{22})$.
\end{enumerate}
\end{lem}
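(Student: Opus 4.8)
The plan is to prove $(1)\Leftrightarrow(2)$ by an explicit $2\times2$ matrix computation inside $Q$, and $(2)\Leftrightarrow(3)$ by the comparison theory of the finite von Neumann algebra $Q$ equipped with the faithful normal trace $\tau$. Throughout I write $p_1:=1\otimes e_{11}$ and $p_2:=1\otimes e_{22}$, both of which lie in $Q$.

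\emph{Step 1: $(1)\Leftrightarrow(2)$.} By the remark following the definition of approximate unitary equivalence (which uses Lemma \ref{lem:ultra-functional}), statement $(1)$ is equivalent to the existence of a unitary $u\in M^\omega$ with $u\rho_1(\psi^\omega)u^*=\rho_2(\psi^\omega)$ for all $\psi\in M_*$. Given such a $u$, I would set $v:=u\otimes e_{21}\in M^\omega\otimes M_2(\C)$; the identity $u\rho_1(\psi^\omega)=\rho_2(\psi^\omega)u$ is exactly the condition for $v$ to lie in $Q$, and one computes $v^*v=p_1$, $vv^*=p_2$, which is $(2)$. Conversely, if $v\in Q$ is a partial isometry with $v^*v=p_1$ and $vv^*=p_2$, then $v=(vv^*)\,v\,(v^*v)=p_2\,v\,p_1$, and expanding $v=\sum_{i,j}v_{ij}\otimes e_{ij}$ this forces $v=u\otimes e_{21}$ for a single $u=v_{21}\in M^\omega$. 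The two projection identities give $u^*u=1=uu^*$, and membership of $v$ in $Q$ read off in the $(2,1)$-entry gives $u\rho_1(\psi^\omega)u^*=\rho_2(\psi^\omega)$ for all $\psi$, which is $(1)$.

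\emph{Step 2: $(2)\Rightarrow(3)$.} Suppose $v\in Q$ implements $p_1\sim p_2$. Let $z=z_{11}\otimes e_{11}+z_{22}\otimes e_{22}\in Z(Q)$ (recall every central element of $Q$ is of this diagonal form). Since $z$ commutes with $v$ and $v^*$ and $\tau$ is a trace, $\tau(p_1z)=\tau(v^*vz)=\tau(vzv^*)=\tau(vv^*z)=\tau(p_2z)$. As $\tau(p_1z)=\rho_1(\varphi^\omega)(z_{11})/2$ and $\tau(p_2z)=\rho_2(\varphi^\omega)(z_{22})/2$, this is precisely $(3)$.

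\emph{Step 3: $(3)\Rightarrow(2)$.} Since $Q$ is finite, the comparison theorem yields a central projection $z_0\in Z(Q)$ with $z_0p_1\precsim z_0p_2$ and $(1-z_0)p_2\precsim(1-z_0)p_1$. If $q\le z_0p_2$ is a subprojection with $z_0p_1\sim q$, then $\tau(z_0p_1)=\tau(q)$, while $(3)$ applied to $z=z_0$ gives $\tau(z_0p_1)=\tau(z_0p_2)$; hence $\tau(z_0p_2-q)=0$, so $q=z_0p_2$ by faithfulness of $\tau$, i.e.\ $z_0p_1\sim z_0p_2$. Symmetrically, $(3)$ applied to $z=1-z_0$ gives $(1-z_0)p_2\sim(1-z_0)p_1$. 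The source projections $z_0p_1$ and $(1-z_0)p_1$ are orthogonal, as are the range projections $z_0p_2$ and $(1-z_0)p_2$, so summing the two implementing partial isometries gives one implementing $p_1\sim p_2$, which is $(2)$. The only step needing a genuine structural input is this last upgrade of subequivalence to equivalence on each central piece via the comparison theorem; the rest is routine bookkeeping (mainly keeping track, in Step 1, of the algebra in which the matrix entries live), and no factoriality of $M$ or of $Q$ is used.
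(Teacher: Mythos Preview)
Your proof is correct and follows essentially the same route as the paper. The paper's argument is terser: for $(3)\Rightarrow(2)$ it simply notes that $\tau(p_1 z)=\tau(p_2 z)$ for all central $z\in Q$ forces $p_1\sim p_2$ in the finite von Neumann algebra $Q$, whereas you spell this standard fact out via the comparison theorem and faithfulness of $\tau$; likewise the paper declares $(2)\Rightarrow(1)$ and $(2)\Rightarrow(3)$ trivial while you give the explicit computations.
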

\begin{proof}
(1) $\Rightarrow$ (2).
Take a unitary $u\in M^\omega$ with
$u\rho_1(\psi^\omega)u^*=\rho_2(\psi^\omega)$
for all $\psi\in M_*$.
Then $u\otimes e_{21}$ gives the equivalence of $e_{11}$ and $e_{22}$.
The implications
(2) $\Rightarrow$ (1) and (2) $\Rightarrow$ (3) are trivial.

(3) $\Rightarrow$ (1).
Assuming (3),
we have $\tau((1\otimes e_{11})z)=\tau((1\otimes e_{22})z)$
for all central $z\in Q$.
Hence $e_{11}$ and $e_{22}$ are equivalent.
\end{proof}

In the following two lemmas,
we will use the following well-known equality
(see the proofs of \cite[Lemma A.2]{M-CH}
or Lemma \ref{lem:linv} and \ref{lem:linv2}):
\[
d(\rho)S\rho(\psi)=d(\sigma)\sigma(\psi)S
\quad
\mbox{for all }
S\in(\rho,\sigma),
\
\psi\in M_*.
\]

\begin{prop}
\label{prop:rhosigma}
Let $\rho,\sigma\in\End(M)_0$.
Suppose that $\orho\sigma$
is an approximately inner endomorphism of rank $d(\rho)^2$
and $\rho(M)'\cap M=\C$.
Then $\rho$ and $\sigma$ are approximately unitarily equivalent.
\end{prop}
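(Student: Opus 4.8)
The plan is to use the approximate innerness of $\orho\sigma$ to manufacture a unitary in $M^\omega$ intertwining $\rho$ and $\sigma$ in the functional sense, and then invoke the characterization after Definition 2.12 (equivalently Lemma \ref{lem:MvNequiv}). Since $\orho\sigma$ is approximately inner of rank $r=d(\rho)^2$, we get partial isometries $\{v_i\}_{i=1}^{\lceil r\rceil}$ in $M^\omega$ with $v_i^*v_i=1$ for $i<\lceil r\rceil$, $\sum_i v_iv_i^*=1$, and $v_i\psi^\omega = d(\rho)^2\,(\orho\sigma)(\psi^\omega)v_i$ for all $\psi\in M_*$; in particular $v_i x=(\orho\sigma)(x)v_i$ for $x\in M$. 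The idea is to compose these with the standard solution $(R_\rho,\orho_\rho)$ of the conjugate equations: the element $R_\rho^*\in(\orho\rho,\btr)$ should let us ``cancel'' the $\orho$ on the left. Concretely I expect the right object to be something like $u:=\sum_i \rho(v_i)\,\ovl{S}_\rho$ or, more likely, a single partial isometry built from $d(\rho)\,R_\rho^*\,(1_{\orho}\otimes\text{something})$; the point is that $\orho\sigma$ being inner of rank exactly $d(\rho)^2=\Ind$ forces the ``$\orho$ part'' to be of full multiplicity, so that after applying the Frobenius-type map one lands in $(\rho,\sigma)$ rather than in a proper corner.

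First I would write down, using the displayed equality $d(\rho)S\rho(\psi)=d(\sigma)\sigma(\psi)S$ for $S\in(\rho,\sigma)$, what it means for a candidate $u\in M^\omega$ to satisfy $u\rho(\psi^\omega)u^*=\sigma(\psi^\omega)$: it suffices that $u\in(\rho,\sigma)^\omega$ with $u$ unitary and $d(\rho)=d(\sigma)$ (the dimension equality is automatic here since $\rho$ and $\sigma$ are approximately unitarily equivalent precisely when their $\orho$-twists are inner of the matching rank, and one checks $d(\sigma)=d(\rho)$ from the rank hypothesis). Then I would take $w_i:=d(\rho)\,(\orho_\rho^*\otimes 1)(1\otimes v_i)\in M^\omega$, or the analogous expression dictated by writing $\rho = (\orho_\rho^*\otimes 1_\rho)(1_\rho\otimes R_\rho)\,\rho(\cdot)$-style identities, and verify via the conjugate equation $(\orho_\rho^*\otimes 1_\rho)(1_\rho\otimes R_\rho)=d(\rho)^{-1}1_\rho$ together with $v_i\psi^\omega=d(\rho)^2(\orho\sigma)(\psi^\omega)v_i$ that each $w_i$ lies in $(\rho,\sigma)^\omega$. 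The relations $w_i^*w_i=1$ (for $i<\lceil r\rceil$) and $\sum_i w_iw_i^*=1$ should transfer from the $v_i$ after multiplying out with the $R$'s, using that $R_\rho^*R_\rho=1$.

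The use of the hypothesis $\rho(M)'\cap M=\C$ is the crucial structural input: it gives that $\rho$ is irreducible, hence $(\rho,\rho)^\omega$ — or at least the relevant finite von Neumann algebra $Q$ of Definition 2.11 built from $\rho,\sigma$ — has the property that $1\otimes e_{11}$ generates a corner on which things are ``trivial enough'' that Murray--von Neumann equivalence of $1\otimes e_{11}$ and $1\otimes e_{22}$ can be read off from the rank bookkeeping. The main obstacle I anticipate is precisely checking that the $w_i$'s, which a priori are only partial isometries summing (in the $w_iw_i^*$ sense) to $1$, actually assemble into a single \emph{unitary} $u=\sum_i w_i$ or into a Murray--von Neumann equivalence inside $Q$ — i.e.\ that the source projections $w_i^*w_i$ are mutually orthogonal with total sum $1$ as well. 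This is where irreducibility of $\rho$ and the exact rank $d(\rho)^2$ (not just some rank) must be used, via a dimension/trace count in the finite algebra $Q$: one computes $\tau(1\otimes e_{11})=\tau(1\otimes e_{22})=1/2$ using $d(\rho)=d(\sigma)$, and then the rank-$d(\rho)^2$ Rohlin-type partition coming from the $v_i$'s exhibits $e_{11}\sim e_{22}$ in $Q$, so Lemma \ref{lem:MvNequiv}(2)$\Rightarrow$(1) finishes the argument.
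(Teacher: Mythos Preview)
Your construction of the $w_i$'s is right in spirit: the correct formula is $w_i:=\ovl{R}_\rho^*\,\rho(v_i)$, and one checks directly (using the displayed identity $d(\rho\orho)\,\ovl{R}_\rho^*(\rho\orho)(\psi)=\psi\,\ovl{R}_\rho^*$ with $d(\rho\orho)=d(\rho)^2$) that $w_i\rho(\psi^\omega)=\sigma(\psi^\omega)w_i$ and $\sum_i w_iw_i^*=1$. So $w_i\otimes e_{21}\in Q$ and the range projections add up to $1\otimes e_{22}$. No appeal to $d(\rho)=d(\sigma)$ is needed, and in fact that equality is neither proved nor used.

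The gap is in your last paragraph. Knowing $\tau(1\otimes e_{11})=\tau(1\otimes e_{22})=\tfrac12$ is vacuous (it is the definition of $\tau$) and does \emph{not} give $e_{11}\sim e_{22}$ in $Q$, since $Q$ is in general not a factor. Likewise, there is no reason for the source projections $w_i^*w_i$ to be mutually orthogonal or to sum to $1$, so the $w_i$'s do not directly assemble into an equivalence. The paper avoids this by checking condition~(3) of Lemma~\ref{lem:MvNequiv} rather than~(2): given a central $z=z_{11}\otimes e_{11}+z_{22}\otimes e_{22}\in Z(Q)$, one has $w_iz_{11}=z_{22}w_i$, hence $\sum_i w_iz_{11}w_i^*=z_{22}$. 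The key extra move you are missing is \emph{fast reindexation} (Ocneanu's lemma): one may replace the $w_i$ by a reindexed family so that $\tau^\omega(z_{11}w_i^*w_i)=\tau^\omega(z_{11})\,\tau^\omega(w_i^*w_i)$. Irreducibility $\rho(M)'\cap M=\C$ then forces both $\tau^\omega(z_{11})$ and $\tau^\omega(w_i^*w_i)$ to be scalars, and a short computation using $\sigma(\varphi^\omega)(w_i\,\cdot\,w_i^*)=\rho(\varphi^\omega)(\,\cdot\,w_i^*w_i)$ yields $\sigma(\varphi^\omega)(z_{22})=\rho(\varphi^\omega)(z_{11})$. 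This is the place where both the irreducibility of $\rho$ and the exact rank $d(\rho)^2$ are genuinely used; the rank enters only through the cancellation in the intertwining identity for $w_i$, not through any orthogonality of the $w_i^*w_i$.
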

\begin{proof}
Put $\rho_1:=\rho$ and $\rho_2:=\sigma$.
We will check the condition (3) of Lemma \ref{lem:MvNequiv}.
Let $z=z_{11}\otimes e_{11}+z_{22}\otimes e_{22}\in Z(Q)$.
Take a family of partial isometry $\{v_i\}_{i\in I}$ in $M^\omega$
such that
$v_i\psi^\omega=d(\rho_1)^2\ovl{\rho_1}
\rho_2(\psi^\omega)v_i$ for $\psi\in M_*$
and
$\sum_i v_iv_i^*=1$.
Set $w_i:=\ovl{R}_{\rho_1}^* \rho_1(v_i)$ for $i\in I$.
Then we have $\sum_i w_i w_i^*=1$
and
\[
w_i \rho_1(\psi^\omega)
=
d(\rho_1)^2
\ovl{R}_{\rho_1}^* \rho_1\ovl{\rho_1}\rho_2(\psi^\omega)\rho_1(v_i)
=
\rho_2(\psi^\omega)w_i
\quad
\mbox{for all }i\in I.
\]
Thus $w_i\otimes e_{21}\in Q$ for $i\in I$.
By fast reindexation of $w_i$ (see \cite[Lemma 5.3]{Oc}),
we may and do assume that
$\tau^\omega(z_{11}w_i^*w_i)=\tau^\omega(z_{11})\tau^\omega(w_i^*w_i)$.
Note $\tau^\omega(z_{11})$ and $\tau^\omega(w_i^*w_i)$
are contained in $\rho_1(M)'\cap M=\C$.
Since $w_i z_{11}=z_{22}w_i$,
we have $\sum_i w_i z_{11}w_i^*=z_{22}$.
Thus
\begin{align*}
\rho_2(\varphi^\omega)(z_{22})
&=
\sum_i \rho_2(\varphi^\omega)(w_i z_{11}w_i^*)
=
\sum_i \rho_1(\varphi^\omega)(z_{11}w_i^*w_i)
\\
&=
\sum_i\rho_1(\varphi)(\tau^\omega(z_{11})\tau^\omega(w_i^*w_i))
\\
&=
\rho_1(\varphi^\omega)(z_{11})
\sum_i\rho_1(\varphi^\omega)(w_i^*w_i)
\\
&=
\rho_1(\varphi^\omega)(z_{11})
\sum_i\rho_2(\varphi^\omega)(w_iw_i^*)
=
\rho_1(\varphi^\omega)(z_{11}).
\end{align*}
\end{proof}

\begin{prop}
\label{prop:appuer}
Let $\rho,\sigma$ be an approximately
inner endomorphism on $M$ of rank $r>0$.
Then $\rho$ and $\sigma$ are approximately unitarily equivalent.
\end{prop}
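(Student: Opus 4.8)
The plan is to reduce, via Lemma~\ref{lem:ultra-functional} (and the characterization of approximate unitary equivalence recorded just after its definition), to producing a single unitary $u\in M^\omega$ with $u\rho(\psi^\omega)u^*=\sigma(\psi^\omega)$ for all $\psi\in M_*$. Write $m:=\lceil r\rceil$ and fix families of partial isometries $\{v_i\}_{i=1}^m$ and $\{w_i\}_{i=1}^m$ in $M^\omega$ witnessing that $\rho$, resp.\ $\sigma$, is approximately inner of rank $r$. First I would unwind the defining relations: $v_i\psi^\omega=r\rho(\psi^\omega)v_i$ unfolds to $\rho(\psi^\omega)(v_iy)=r^{-1}\psi^\omega(yv_i)$ and, after taking adjoints, to $\rho(\psi^\omega)(yv_i^*)=r^{-1}\psi^\omega(v_i^*y)$ for all $y\in M^\omega$, and likewise for $\sigma$ and the $w_i$. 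Two points are then crucial. Applying $\rho(\varphi^\omega)$ to $1=\sum_i v_iv_i^*$ and using unitality of $\phi_\rho$ gives $\varphi^\omega(v_m^*v_m)=r-m+1$; and the computation $(v_m^*v_m)\psi^\omega=v_m^*(v_m\psi^\omega)=r\,v_m^*\rho(\psi^\omega)v_m=\psi^\omega v_m^*v_m$ shows that $f:=v_m^*v_m$, and likewise $f':=w_m^*w_m$, lie in the central sequence algebra $M_\omega=\{b\in M^\omega:b\psi^\omega=\psi^\omega b\ \ \forall\psi\in M_*\}$. Since $\tau^\omega$ restricts to a faithful normal trace on $M_\omega$ taking values in $Z(M)=\C$, this means $f$ and $f'$ are projections in the finite von Neumann algebra $M_\omega$ with common scalar trace $\tau^\omega(f)=\tau^\omega(f')=r-m+1\in(0,1]$.

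Next I would set
\[
u:=\sum_{i=1}^{m-1}w_iv_i^*+w_m t v_m^*,
\]
where $t\in M_\omega$ is a partial isometry with $t^*t=f$ and $tt^*=f'$. Using $t^*t=f=v_m^*v_m$, $tt^*=f'=w_m^*w_m$, $v_i^*v_i=w_i^*w_i=1$ for $i<m$, and the mutual orthogonality of the ranges $v_iv_i^*$ (resp.\ $w_iw_i^*$), a routine computation gives $u^*u=uu^*=1$. Expanding $u^*xu$ and evaluating $\rho(\psi^\omega)$ on it term by term with the identities above, every cross term drops out and one is left with
\[
(u\rho(\psi^\omega)u^*)(x)=\frac1r\sum_{i=1}^{m-1}\psi^\omega(w_i^*xw_i)+\frac1r\psi^\omega(t^*w_m^*xw_mt)\qquad(x\in M^\omega).
\]
Because $t\in M_\omega$, cyclicity of $\tau^\omega$ on $M_\omega$ yields $\tau^\omega(t^*w_m^*xw_mt)=\tau^\omega(w_m^*xw_m\,tt^*)=\tau^\omega(w_m^*xw_m\,f')=\tau^\omega(w_m^*xw_m)$ (using $w_mf'=w_m$), so the last term equals $r^{-1}\psi^\omega(w_m^*xw_m)$ and the right-hand side becomes $r^{-1}\sum_{i=1}^m\psi^\omega(w_i^*xw_i)=\sigma(\psi^\omega)(x)$ by the same term-by-term computation applied to $\sigma$. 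Hence $u\rho(\psi^\omega)u^*=\sigma(\psi^\omega)$, and $\rho$ and $\sigma$ are approximately unitarily equivalent.

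The step I expect to be the real obstacle is the existence of the partial isometry $t\in M_\omega$, i.e.\ the Murray--von Neumann equivalence of $f$ and $f'$ inside $M_\omega$. When $r\in\Z$ one has $r-m+1=1$, hence $f=f'=1$ and $t=1$; this covers in particular every full factor, where $M_\omega=\C$ and the rank is automatically an integer, and there the construction above is unconditional. In general $f$ and $f'$ are equal-trace projections in the finite algebra $M_\omega$, and the remaining issue is that their central-valued traces agree; I would settle this by moving one of the two source projections by an index-selection (fast reindexation) argument, cf.\ \cite[Lemma~5.3]{Oc}, after first observing that for any projection $e\in M_\omega$ with $\tau^\omega(e)=r-m+1$ equivalent to $f'$ in $M_\omega$ one may replace $\{w_i\}$ by another witness for $\sigma$ whose last source projection is $e$ (this again uses $f'\in M_\omega$ and cyclicity of $\tau^\omega$). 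An equivalent route is to run Lemma~\ref{lem:MvNequiv}(3) for the algebra $Q$ attached to $(\rho,\sigma)$: there the intertwiners $w_jv_i^*\in(\rho,\sigma)$ force, for a central projection $z=z_{11}\otimes e_{11}+z_{22}\otimes e_{22}$, the identities $v_i^*z_{11}v_i=w_j^*z_{22}w_j$ ($i,j<m$) and $v_m^*z_{11}v_m=bf$, $w_m^*z_{22}w_m=bf'$ with $b:=v_1^*z_{11}v_1\in M_\omega$ a projection commuting with $f$ and $f'$, and tracing over $M_\omega$ reduces the desired equality $\rho(\varphi^\omega)(z_{11})=\sigma(\varphi^\omega)(z_{22})$ to $\tau^\omega(bf)=\tau^\omega(bf')$, which is the same obstacle in a different guise.
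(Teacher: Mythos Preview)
Your direct construction of $u=\sum_{i<m}w_iv_i^*+w_mtv_m^*$ is elegant and the verification that $u\rho(\psi^\omega)u^*=\sigma(\psi^\omega)$ is correct once $t$ exists. (A small remark: the identity $\psi^\omega(t^*at)=\psi^\omega(att^*)$ with $a=w_m^*xw_m\notin M_\omega$ is not ``cyclicity of $\tau^\omega$ on $M_\omega$'' but simply $t^*\in M_\omega$, i.e.\ $t^*\psi^\omega=\psi^\omega t^*$; the conclusion stands.) However, the existence of $t$, i.e.\ $f\sim f'$ in $M_\omega$, is a genuine gap that neither of your fixes closes. For route~(a), fast reindexation lets you impose finitely or countably many splitting identities like $\tau^\omega(f'a)=\tau^\omega(f')\tau^\omega(a)$, but you need the full center-valued trace of $f'$ in $M_\omega$ to be scalar, and $Z(M_\omega)$ need not be countably generated. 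Your observation that one may replace $w_m$ by $w_ms$ for any $s$ with $ss^*=f'$ only moves $f'$ within its equivalence class, so it cannot help.

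More importantly, you dismiss route~(b) as ``the same obstacle in a different guise,'' and this is where you miss the paper's idea. In verifying Lemma~\ref{lem:MvNequiv}(3) one is allowed to choose the witnesses \emph{depending on the central element $z$}, and in a specific order. The paper fixes $z$, first picks $\{w_j\}$ (reindexed so that $w_j\phi_\rho(z_{11})=\sigma(\phi_\rho(z_{11}))w_j$), and \emph{then} picks $\{v_i\}$, fast-reindexed so that $v_ix=\rho(x)v_i$ holds for the finitely many elements $x\in\{z_{22}\}\cup\{w_j,w_j^*\}_j$. These last relations are decisive: they give $w_kv_\ell^*v_\ell w_k^*=v_\ell^*\rho(w_kw_k^*)v_\ell$, so summing over $k$ collapses the $w$'s and lets one pass from $\rho(\varphi^\omega)(z_{11})$ to $\sigma(\varphi^\omega)(z_{22})$ by direct calculation, without ever confronting $f\sim f'$. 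In your setup both families are frozen at the outset, no such intertwining $v_iw_j=\rho(w_j)v_i$ is available, and that is precisely why the reduction to $\tau^\omega(bf)=\tau^\omega(bf')$ looks stuck.
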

\begin{proof}
Put $\rho_1:=\rho$ and $\rho_2:=\sigma$.
Let $z=z_{11}\otimes e_{11}+z_{22}\in Z(Q)$.
We will check the condition (3) of Lemma \ref{lem:MvNequiv}.
We first take a family of partial isometries
$\{w_i\}_{i=1}^{\lceil r\rceil}$ in $M^\omega$
such that
$w_i^*w_i=1$ for $i<\lceil r\rceil$,
$\sum_i w_iw_i^*=1$
and
$w_i \psi^\omega=r\rho_2(\psi^\omega)w_i$
for all $i$ and $\psi\in M_*$.
We may and do assume that
$w_i$'s also satisfy
$w_i\phi_{\rho_1}(z_{11})=\rho_2(\phi_{\rho_1}(z_{11}))w_i$
by fast reindexation of $w_i$.
We next take a family of partial isometries
$\{v_i\}_{i=1}^{\lceil r\rceil}$
such that
$v_i^*v_i=1$ for $i<\lceil r\rceil$,
$\sum_i v_iv_i^*=1$,
$v_i \psi^\omega=r\rho_1(\psi^\omega)v_i$
and
$v_i x=\rho_1(x)v_i$
for all $i$, $\psi\in M_*$ and
$x\in\{z_{22}\}\cup\{w_j,w_j^*\}_j$.

Since $w_kv_\ell^*\otimes e_{21}\in Q$
for all $k,\ell$,
we have
$w_kv_\ell^* z_{11}=z_{22}w_kv_\ell^*$.
Then we have
\begin{align*}
\sum_k
w_kv_\ell^* z_{11} v_\ell w_k^*
&=
\sum_k
z_{22}w_kv_\ell^* v_\ell w_k^*
=
\sum_k
z_{22}v_\ell^* \rho_1(w_kw_k^*)v_\ell
\\
&=
z_{22}v_\ell^*v_\ell
=
v_\ell^* \rho_1(z_{22})v_\ell.
\end{align*}
Hence
\begin{align*}
\sum_k
r \rho_1(\varphi^\omega)(\rho_1(w_k^* w_k) v_\ell v_\ell^* z_{11})
&=
\sum_k
r \rho_1(\varphi^\omega)(v_\ell w_k^* w_k v_\ell^* z_{11})
=
\sum_k
\varphi^\omega(w_k^* w_k v_\ell^* z_{11}v_\ell)
\\
&=
\sum_k
r\rho_2(\varphi^\omega)(w_kv_\ell^* z_{11} v_\ell w_k^*)
=
r\rho_2(\varphi^\omega)(v_\ell^* \rho_1(z_{22})v_\ell)
\\
&=
r^2\rho_1\rho_2(\varphi^\omega)(v_\ell v_\ell^* \rho_1(z_{22})).
\end{align*}
Summing up the equality with $\ell$,
we have
\begin{align*}
\sum_k
\rho_1(\varphi^\omega)(\rho_1(w_k^* w_k)z_{11})
&=
r\rho_1\rho_2(\varphi^\omega)(\rho_1(z_{22}))
=
r\rho_2(\varphi^\omega)(z_{22}).
\end{align*}
The left-hand side equals
\begin{align*}
\sum_k
\varphi^\omega(w_k^* w_k\phi_{\rho_1}(z_{11}))
&=
\sum_k
\varphi^\omega(w_k^* \rho_2(\phi_{\rho_1}(z_{11}))w_k)
\\
&=
\sum_k
r\rho_2(\varphi^\omega)(w_k w_k^* \rho_2(\phi_{\rho_1}(z_{11})))
\\
&=
r\rho_2(\varphi^\omega)(\rho_2(\phi_{\rho_1}(z_{11})))
=
r\rho_1(\varphi^\omega)(z_{11}).
\end{align*}
Thus we have
$\rho_1(\varphi^\omega)(z_{11})=\rho_2(\varphi^\omega)(z_{22})$,
and we are done.
\end{proof}

\subsection{Amenability}
\label{Amenability}
We will recall the amenability of $\sC$
in the sense of \cite{HY,HI,NeTu,Popa-acta,Popa-mathlett},
but $\sC$ is not necessarily finitely generated.
There are plenty of equivalent conditions of amenability.
We will use the F\o lner type condition as follows.
Let $\sigma$ be the measure on $\Irr(\sC)$
with $\sigma(X):=d(X)^2$ for $X\in\Irr(\sC)$.
For $\cF\subset\Irr(\sC)$
we denote by $|\cF|_\sigma$
the measure $\sigma(\cF)$,
that is,
$|\cF|_\sigma:=\sum_{X\in\cF}d(X)^2$.

Let $\cF,\cK\subset\Irr(\sC)$ be finite subsets and $\delta>0$.
We will say
$\cK$ is \emph{$(\cF,\delta)$-invariant}
when the inequality
$|(\cF\cdot\cK)\bigtriangleup\cK|_\sigma
\leq \delta|\cK|_\sigma$
holds,
where $\bigtriangleup$ denotes the symmetric difference
and $\cF\cdot \cK$ does the collection
of all $Z\in \Irr(\sC)$ such that
$Z\prec X\otimes Y$ for some $X\in\cF$ and $Y\in\cK$.

A rigid C$^*$-tensor category
is said to be \emph{amenable}
if
for any finite $\cF\subset\Irr(G)$ and $\delta>0$,
there exists an $(\cF,\delta)$-invariant finite
subset $\cK\subset\Irr(G)$.
We set
\[
p_X(Y,Z):=\frac{d(Z)}{d(X)d(Y)}N_{X,Y}^Z
\quad
\mbox{for }
X,Y,Z\in\Irr(\sC).
\]
Then $p_X(Y,Z)$ is regarded as the transition probability
from $Y$ to $Z$ after multiplying $X$ from the left.
The following inequalities
are shown in the proof of \cite[Theorem 4.6]{HI},
but let us present a proof for readers' convenience.

\begin{lem}
\label{lem:Folner}
Let $\cK$ be an $(\cF,\delta)$-invariant
finite subset as above.
Then the following inequality holds:
\begin{align*}
&\sum_{(X,Y,Z)\in\cF\times\cK\times \cK^c}
d(X)^2d(Y)^2p_X(Y,Z)
\\
&=
\sum_{(X,Y,Z)\in\cF\times\cK^c\times \cK}
d(X)^2d(Y)^2p_{\ovl{X}}(Y,Z)
\leq
\delta
|\cF|_\sigma|\cK|_\sigma,
\end{align*}
where $\cK^c$ denotes the complement of $\cK$ in $\Irr(\sC)$.
\end{lem}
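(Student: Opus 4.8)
The plan is to convert everything into fusion multiplicities and then feed in the F\o lner condition. Since $p_X(Y,Z)=\tfrac{d(Z)}{d(X)d(Y)}N_{X,Y}^Z$, the summand is
$d(X)^2d(Y)^2p_X(Y,Z)=d(X)d(Y)d(Z)N_{X,Y}^Z$, a quantity that is invariant under the symmetries supplied by Frobenius reciprocity. Concretely, using $N_{X,Y}^Z=N_{\ovl X,Z}^Y$ together with $d(\ovl X)=d(X)$, relabelling the summation indices $Y\leftrightarrow Z$ in the first sum sends $d(X)d(Y)d(Z)N_{X,Y}^Z$ over $\cF\times\cK\times\cK^c$ to $d(X)d(Y)d(Z)N_{\ovl X,Y}^Z=d(X)^2d(Y)^2p_{\ovl X}(Y,Z)$ over $\cF\times\cK^c\times\cK$. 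This is exactly the claimed identity between the two sums, so it remains only to bound one of them.

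For the inequality, the key point is that $N_{X,Y}^Z\neq0$ precisely when $Z\prec X\otimes Y$, so any nonzero term of the first sum with $X\in\cF$, $Y\in\cK$, $Z\in\cK^c$ forces $Z\in(\cF\cdot\cK)\setminus\cK\subseteq(\cF\cdot\cK)\bigtriangleup\cK$; this is where the invariance hypothesis will enter. I would therefore regroup the sum with $Z$ running over $(\cF\cdot\cK)\setminus\cK$ and, for each fixed $X\in\cF$ and such $Z$, enlarge the inner sum over $Y\in\cK$ to a sum over all of $\Irr(\sC)$. Applying $N_{X,Y}^Z=N_{\ovl X,Z}^Y$ again and invoking the additivity of $d$ under direct sums and its multiplicativity under $\otimes$, namely $\sum_{Y\in\Irr(\sC)}d(Y)N_{\ovl X,Z}^Y=d(\ovl X)d(Z)=d(X)d(Z)$, the inner sum is at most $d(X)^2d(Z)^2$. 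Summing over $X\in\cF$ gives $|\cF|_\sigma\,d(Z)^2$ for each $Z$, and then summing over $Z\in(\cF\cdot\cK)\setminus\cK$ yields $|\cF|_\sigma\,|(\cF\cdot\cK)\setminus\cK|_\sigma\leq|\cF|_\sigma\,|(\cF\cdot\cK)\bigtriangleup\cK|_\sigma\leq\delta\,|\cF|_\sigma|\cK|_\sigma$, as required.

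The computation is essentially routine; the only places that need a little care are the bookkeeping of which objects are conjugated when Frobenius reciprocity is used to pass from a sum over $Y\in\cK$ to a sum over all simple objects, and the identification of the ``boundary'' objects $Z$ with elements of $(\cF\cdot\cK)\bigtriangleup\cK$, which is the step that makes the amenability hypothesis available. I do not anticipate any genuine obstacle beyond keeping these indices straight.
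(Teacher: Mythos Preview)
Your proof is correct and follows essentially the same route as the paper: both arguments obtain the equality from Frobenius reciprocity together with the swap $Y\leftrightarrow Z$, and both bound the first sum by noting that nonzero terms force $Z\in(\cF\cdot\cK)\setminus\cK$, enlarging the $Y$-sum to all of $\Irr(\sC)$, and using $\sum_{Y}d(Y)N_{X,Y}^Z=d(X)d(Z)$ before invoking the F\o lner condition. The only cosmetic difference is that you pass through the symmetric difference $(\cF\cdot\cK)\bigtriangleup\cK$ while the paper stays with $(\cF\cdot\cK)\setminus\cK$ directly.
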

\begin{proof}
The equality immediately follows from the Frobenius reciprocity
and the change of the variables $Y$ and $Z$.
On the inequality,
we have
\begin{align*}
\sum_{(X,Y,Z)\in \cF\times\cK\times \cK^c}
d(X)^2
d(Y)^2p_X(Y,Z)
&=
\sum_{X\in\cF}
\sum_{Z\in \cF\cdot \cK\setminus\cK}
\sum_{Y\in\cK}
d(X)^2
d(Y)^2p_X(Y,Z)
\\
&\leq
\sum_{X\in\cF}
\sum_{Z\in \cF\cdot \cK\setminus\cK}
\sum_{Y\in\Irr(\sC)}
d(X)^2
d(Y)^2p_X(Y,Z)
\\
&=
|\cF|_\sigma
|(\cF\cdot \cK)\setminus\cK|_\sigma
\leq
\delta
|\cF|_\sigma
|\cK|_\sigma.
\end{align*}
\end{proof}

\subsection{Actions and cocycle actions via endomorphisms}
\label{subsect:actions}
Readers are referred to \cite{Iz-near,Mas-Rob}
for (cocycle) actions of C$^*$-tensor categories on factors.
In this subsection,
$\sC$ and $M$ henceforth
denote a rigid C$^*$-tensor category
and
a properly infinite von Neumann algebra
with not necessarily separable predual,
respectively.
A \emph{cocycle action} of $\sC$
on $M$ means a unitary tensor functor
$(\alpha,c)$ from $\sC$ into the W$^*$-multitensor category
$\End(M)_0$.
Namely,
for each $X\in\sC$,
an endomorphism $\alpha_X\in\End(M)_0$ is assigned
and for each pair $X,Y\in\sC$,
we have a unitary $c_{X,Y}\in M$
such that for all $X,Y,Z\in\sC$,
\begin{itemize}
\item
$\alpha_\btr=\id_M$, $c_{\btr,X}=1=c_{X,\btr}$.

\item 
$\alpha_X\circ\alpha_Y=\Ad c_{X,Y}\circ\alpha_{X\otimes Y}$.

\item
$c_{X,Y}c_{X\otimes Y,Z}=\alpha_X(c_{Y,Z})c_{X,Y\otimes Z}$.
\end{itemize}

We call unitaries $c=(c_{X,Y})_{X,Y}$ a \emph{2-cocycle}.
We also have $\alpha(T)\in (\alpha_X,\alpha_Y)$
for each morphism $T$ in $\sC(X,Y)$ with $X,Y\in\sC$.
We will simply write $T^\alpha$ or $[T]^\alpha$ for $\alpha(T)$.
The computation rules of intertwiners are summarized as follows:
\[
[ST]^\alpha=S^\alpha T^\alpha,
\quad
(S^\alpha)^*=[S^*]^\alpha\
\quad
\mbox{for }
X,Y,Z\in\sC,
\
S\in\sC(Y,Z),\ T\in\sC(X,Y).
\]

The relationship between $c$ and intertwiners is described as follows:
for all
$X,Y,U,V\in\sC$,
$S\in \sC(U,X)$
and
$T\in \sC(V,Y)$,
\begin{equation}
\label{eq:cST}
c_{X,Y}[S\otimes T]^\alpha=S^\alpha\alpha_U(T^\alpha)c_{U,V}.
\quad
\end{equation}
We always assume $1_X^\alpha=1_M$ for $1_X\in\sC(X,X)$ with $X\in\sC$.
Note that
the map $\sC(X,Y)\ni T\mapsto T^\alpha\in M$
is necessarily norm-isometric.

If $c_{X,Y}=1$ for all $X,Y\in\sC$,
then $\alpha$ is called an \emph{action} of $\sC$ on $M$.

Consider a family of unitaries $v=(v_X)_{X\in \sC}$ in $M$.
Then the \emph{perturbed cocycle action} $(\alpha^v,c^v)$
of a cocycle action $(\alpha,c)$
by unitaries $v$ is defined as follows:
\begin{itemize}
\item
$\alpha_X^v:=\Ad v_X\circ\alpha_X$ for $X\in\sC$.

\item
$c_{X,Y}^v:=v_X\alpha_X(v_Y)c_{X,Y}v_{X\otimes Y}^*$
for $X,Y\in\sC$.

\item
$T^{\alpha^v}:=v_Y T^\alpha v_X^*$
for $X,Y\in\sC$ and $T\in\sC(X,Y)$.
\end{itemize}

For $X\in\sC$ and $x\in M$,
we put
\begin{equation}
\label{eq:phileftinv}
\phi_X^\alpha(x)
:=R_X^{\alpha*} c_{\ovl{X},X}^*\alpha_{\ovl{X}}(x)c_{\ovl{X},X}R_X^\alpha,
\end{equation}
which is independent of a choice of a conjugate object $\ovl{X}$
and a standard solution $(R_X,\ovl{R}_X)$ of the conjugate equations.
The map $\phi_X^\alpha$ is a faithful normal unital completely
positive map on $M$ satisfying the left inverse property
of $\alpha_X$, that is,
$\phi_X^\alpha\circ\alpha_X=\id_M$.
The conjugate equation $(R_X^*\otimes 1_{\ovl{X}})(1_{\ovl{X}}\otimes\ovl{R}_X)=d(X)^{-1}1_{\ovl{X}}$
shows
$\phi_X^\alpha(c_{X,\ovl{X}}\ovl{R}_X^\alpha\ovl{R}_X^{\alpha*}c_{X,\ovl{X}}^*)=d(X)^{-2}$.
Note that
$\phi_{X\otimes Y}^\alpha\circ\Ad c_{X,Y}^*=\phi_Y^\alpha\circ\phi_X^\alpha$
since a pair
of
$(1_{\overline{Y}}\otimes R_X\otimes 1_Y)\circ R_Y$
and
$(1_X\otimes \ovl{R}_Y\otimes 1_{\ovl{X}})\circ \ovl{R}_X$
gives a standard solution of the conjugate equations
for $X\otimes Y$ and $\ovl{Y}\otimes \ovl{X}$ as usual.

Recall $P_X^Z$,
the projection to the $Z$-component of $X$
defined in Section \ref{subsect:tensorcat}.
Then we have
$\phi_X^\alpha([P_X^Z]^\alpha)=d(Z)d(X)^{-1}N_X^Z$.
This implies
\begin{equation}
\label{eq:PXYZ}
\phi_{X\otimes Y}^\alpha([P_{X\otimes Y}^Z]^\alpha)
=
p_X(Y,Z)
\quad
\mbox{for all }
X,Y,Z\in\Irr(\sC).
\end{equation}

Let $N\subset M$ be a von Neumann subalgebra
that is not necessarily globally $\alpha$-invariant.
The \emph{fixed point algebra} $N^\alpha$ means
the collection of all $x\in N$
with $\alpha_X(x)=x$ for all $X\in\sC$.
The following result is shown in \cite[Lemma A.2]{M-CH},
but we will present a proof for readers' convenience.

\begin{lem}
\label{lem:linv}
Let $X,Y,Z\in\sC$.
Suppose that $X$ is the direct product of
$Y$ and $Z$ with isometries $S\in\sC(Y,X)$ and $T\in\sC(Z,X)$.
Then
\[
d(X)
\phi_X^\alpha(x)
=d(Y)\phi_Y^\alpha(S^{\alpha*}xS^\alpha)
+d(Z)\phi_Z^\alpha(T^{\alpha*}xT^\alpha)
\quad
\mbox{for all }
x\in M.
\]
\end{lem}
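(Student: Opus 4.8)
The plan is to prove the identity by choosing a convenient conjugate object and standard solution for $X$; this is legitimate because $\phi_X^\alpha$ is independent of those choices (see the line after (\ref{eq:phileftinv})). First I would unpack the hypothesis: since $X$ is the direct sum of $Y$ and $Z$ through the isometries $S,T$, in $\sC$ we have $S^*S=1_Y$, $T^*T=1_Z$, $S^*T=0$ and $SS^*+TT^*=1_X$; applying $\alpha$ (which preserves compositions, adjoints and units) these become
\[
S^{\alpha*}S^\alpha=T^{\alpha*}T^\alpha=1_M,\qquad S^{\alpha*}T^\alpha=0,\qquad S^\alpha S^{\alpha*}+T^\alpha T^{\alpha*}=1_M.
\]
Moreover $d(X)=d(Y)+d(Z)$, and, since conjugation is additive, a conjugate $\ovl X$ of $X$ can be realized as a direct sum of $\ovl Y$ and $\ovl Z$: fix isometries $\ovl S\in\sC(\ovl Y,\ovl X)$ and $\ovl T\in\sC(\ovl Z,\ovl X)$ with $\ovl S\ovl S^*+\ovl T\ovl T^*=1_{\ovl X}$.

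Next I would glue the standard solutions together. With $(R_Y,\ovl R_Y)$ and $(R_Z,\ovl R_Z)$ the chosen normalized standard solutions, set
\[
R_X:=\frac{d(Y)^{1/2}}{d(X)^{1/2}}(\ovl S\otimes S)R_Y+\frac{d(Z)^{1/2}}{d(X)^{1/2}}(\ovl T\otimes T)R_Z,
\]
\[
\ovl R_X:=\frac{d(Y)^{1/2}}{d(X)^{1/2}}(S\otimes\ovl S)\ovl R_Y+\frac{d(Z)^{1/2}}{d(X)^{1/2}}(T\otimes\ovl T)\ovl R_Z.
\]
Then $(R_X,\ovl R_X)$ is a normalized standard solution for $X$: in the expressions $R_X^*R_X$, $\ovl R_X^*\ovl R_X$ and $(\ovl R_X^*\otimes1_X)(1_X\otimes R_X)$ the cross terms vanish because $\ovl S^*\ovl T=0=S^*T$, whereas the diagonal terms reproduce the normalizations and conjugate equations of $Y$ and $Z$; for example $R_X^*R_X=\tfrac{d(Y)}{d(X)}R_Y^*R_Y+\tfrac{d(Z)}{d(X)}R_Z^*R_Z=\tfrac{d(Y)}{d(X)}+\tfrac{d(Z)}{d(X)}=1$, and the $Y$-component of $(\ovl R_X^*\otimes1_X)(1_X\otimes R_X)$ equals $\tfrac{d(Y)}{d(X)}\,S\bigl((\ovl R_Y^*\otimes1_Y)(1_Y\otimes R_Y)\bigr)S^*=\tfrac1{d(X)}SS^*$, and likewise the $Z$-component is $\tfrac1{d(X)}TT^*$, so their sum is $\tfrac1{d(X)}1_X$. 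This dimension bookkeeping (essentially the standard fact that a direct sum of standard solutions is standard) is the one genuinely technical step; what follows is formal.

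Finally I would compute $\phi_X^\alpha$ from (\ref{eq:phileftinv}). By (\ref{eq:cST}) applied to $\ovl S\in\sC(\ovl Y,\ovl X)$ and $S\in\sC(Y,X)$ we have $c_{\ovl X,X}[\ovl S\otimes S]^\alpha=\ovl S^\alpha\alpha_{\ovl Y}(S^\alpha)c_{\ovl Y,Y}$, and similarly with $\ovl T,T$ in place of $\ovl S,S$; since $\alpha$ is a functor,
\[
c_{\ovl X,X}R_X^\alpha=\frac{d(Y)^{1/2}}{d(X)^{1/2}}\ovl S^\alpha\alpha_{\ovl Y}(S^\alpha)c_{\ovl Y,Y}R_Y^\alpha+\frac{d(Z)^{1/2}}{d(X)^{1/2}}\ovl T^\alpha\alpha_{\ovl Z}(T^\alpha)c_{\ovl Z,Z}R_Z^\alpha.
\]
Substituting this into $d(X)\phi_X^\alpha(x)=d(X)\,(c_{\ovl X,X}R_X^\alpha)^*\,\alpha_{\ovl X}(x)\,(c_{\ovl X,X}R_X^\alpha)$ yields four terms. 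The two off-diagonal terms each contain a factor $\ovl S^{\alpha*}\alpha_{\ovl X}(x)\ovl T^\alpha$ or $\ovl T^{\alpha*}\alpha_{\ovl X}(x)\ovl S^\alpha$; using $\ovl T^\alpha\in(\alpha_{\ovl Z},\alpha_{\ovl X})$ (so $\alpha_{\ovl X}(x)\ovl T^\alpha=\ovl T^\alpha\alpha_{\ovl Z}(x)$) the first equals $[\ovl S^*\ovl T]^\alpha\alpha_{\ovl Z}(x)=0$, and similarly the second is $0$, so both off-diagonal terms vanish. In the $YY$-term the factor $\ovl S^{\alpha*}\alpha_{\ovl X}(x)\ovl S^\alpha$ equals $[\ovl S^*\ovl S]^\alpha\alpha_{\ovl Y}(x)=\alpha_{\ovl Y}(x)$, and after using $d(X)\cdot d(Y)/d(X)=d(Y)$ this term collapses to
\[
d(Y)\,R_Y^{\alpha*}c_{\ovl Y,Y}^*\alpha_{\ovl Y}(S^{\alpha*}xS^\alpha)c_{\ovl Y,Y}R_Y^\alpha=d(Y)\,\phi_Y^\alpha(S^{\alpha*}xS^\alpha);
\]
similarly the $ZZ$-term is $d(Z)\,\phi_Z^\alpha(T^{\alpha*}xT^\alpha)$. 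Adding the four contributions gives the asserted identity. The main obstacle is the verification in the middle paragraph; the rest is routine intertwiner algebra using only (\ref{eq:cST}) and the definition (\ref{eq:phileftinv}).
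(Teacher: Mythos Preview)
Your proof is correct and follows exactly the same route as the paper: the paper's proof consists of the single line that the identity is direct from $d(X)^{1/2}R_X=d(Y)^{1/2}(\ovl S\otimes S)R_Y+d(Z)^{1/2}(\ovl T\otimes T)R_Z$ for suitable isometries $\ovl S,\ovl T$, which is precisely the glued standard solution you wrote down. You have simply spelled out the verification of standardness and the substitution into (\ref{eq:phileftinv}) via (\ref{eq:cST}) that the paper leaves implicit.
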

\begin{proof}
This is direct from
$d(X)^{1/2}R_X
=d(Y)^{1/2}(\overline{S}\otimes S)R_Y
+d(Z)^{1/2}(\overline{T}\otimes T)R_Z$,
where $\overline{S}\in\sC(\overline{Y},\overline{X})$
and $\overline{T}\in\sC(\overline{Z},\overline{X})$
are isometries with
$\overline{S}\,\overline{S}^*+\overline{T}\,\overline{T}^*=1_{\ovl{X}}$.
\end{proof}

It is not difficult to show the following result
by using the previous result and the irreducible decompositions
of $X$ and $Y$.

\begin{lem}
\label{lem:linv2}
One has
$d(X)\phi_X^\alpha(xS^\alpha)=d(Y)\phi_Y^\alpha(S^\alpha x)$
for all
$X,Y\in\sC$,
$S\in\sC(X,Y)$ and $x\in M$.
\end{lem}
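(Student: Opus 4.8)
The plan is to reduce the identity, via Lemma~\ref{lem:linv} and semisimplicity of $\sC$, to the case where both $X$ and $Y$ are simple, where it becomes trivial. Throughout one uses that both sides are linear in $S$ and that $T\mapsto T^\alpha$ is linear, multiplicative and $*$-preserving with $1_X^\alpha=1_M$; in particular, whenever $\{U_j\}_j$ is a finite family of isometries $U_j\in\sC(X_j,X)$ with $\sum_j U_jU_j^{*}=1_X$, one has $\sum_j U_j^\alpha U_j^{\alpha*}=1_M$, and iterating Lemma~\ref{lem:linv} yields $d(X)\phi_X^\alpha(w)=\sum_j d(X_j)\phi_{X_j}^\alpha(U_j^{\alpha*}wU_j^\alpha)$ for all $w\in M$. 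Since $\sC$ is rigid, every object is a finite orthogonal sum of simple objects, and we may arrange all summands to be the representatives in $\Irr(\sC)$.

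First I would treat the case $X,Y\in\Irr(\sC)$. If $S\ne0$ then $X\cong Y$, hence $X=Y$ by completeness of $\Irr(\sC)$, so $S=\lambda 1_X$ for some $\lambda\in\C$, $S^\alpha=\lambda 1_M$, and both sides equal $\lambda\,d(X)\phi_X^\alpha(x)$; if $S=0$ both sides vanish. Next, keeping $X\in\Irr(\sC)$ but allowing arbitrary $Y$, I would pick isometries $T_i\in\sC(Y_i,Y)$ with $Y_i\in\Irr(\sC)$ and $\sum_i T_iT_i^{*}=1_Y$, write $S=\sum_i T_i(T_i^{*}S)$ with $T_i^{*}S\in\sC(X,Y_i)$, and compute
\begin{align*}
d(X)\phi_X^\alpha(xS^\alpha)
&=\sum_i d(X)\phi_X^\alpha\big((xT_i^\alpha)(T_i^{*}S)^\alpha\big)\\
&=\sum_i d(Y_i)\phi_{Y_i}^\alpha\big(T_i^{\alpha*}(S^\alpha x)T_i^\alpha\big)
=d(Y)\phi_Y^\alpha(S^\alpha x),
\end{align*}
where the second equality is the simple--simple case applied termwise (with $xT_i^\alpha\in M$ in place of $x$) and the third is the iterated form of Lemma~\ref{lem:linv} applied to $w=S^\alpha x$.

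Finally, for arbitrary $X$, I would pick isometries $U_j\in\sC(X_j,X)$ with $X_j\in\Irr(\sC)$ and $\sum_j U_jU_j^{*}=1_X$, and use $S^\alpha U_j^\alpha=(SU_j)^\alpha$ with $SU_j\in\sC(X_j,Y)$ together with the iterated Lemma~\ref{lem:linv} to obtain
\begin{align*}
d(X)\phi_X^\alpha(xS^\alpha)
&=\sum_j d(X_j)\phi_{X_j}^\alpha\big((U_j^{\alpha*}x)(SU_j)^\alpha\big)\\
&=\sum_j d(Y)\phi_Y^\alpha\big((SU_j)^\alpha U_j^{\alpha*}x\big)
=d(Y)\phi_Y^\alpha(S^\alpha x),
\end{align*}
the middle step being the case just proved and the last using $\sum_j U_j^\alpha U_j^{\alpha*}=1_M$. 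The argument is essentially routine; the only point needing a little care is arranging the isometric decompositions so that the irreducible summands are the representatives in $\Irr(\sC)$, which is what makes the base case genuinely trivial, and I do not anticipate any real obstacle.
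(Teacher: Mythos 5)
Your proof is correct and follows exactly the route the paper indicates: it only sketches the argument as "use Lemma \ref{lem:linv} and the irreducible decompositions of $X$ and $Y$", which is precisely what you carry out (iterated Lemma \ref{lem:linv}, reduction to representatives in $\Irr(\sC)$, and the trivial simple--simple case). No gaps; the details you supply, including $\sum_j U_j^\alpha U_j^{\alpha*}=1_M$ and the functoriality rules $(T_i^*S)^\alpha=T_i^{\alpha*}S^\alpha$, are exactly what is needed.
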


We set
$\alpha_X(\varphi):=\varphi\circ\phi_X^\alpha$ for $X\in\sC$
and $\varphi\in M_*$.
Then for $a,b\in M$ and $X,Y\in\sC$,
we have
\begin{equation}
\label{eq:alpha-bimod-varphi}
\alpha_X(a\varphi b)
=
\alpha_X(a)\alpha_X(\varphi)\alpha_X(b),
\quad
\alpha_X(\alpha_Y(\varphi))
=c_{X,Y}\alpha_{X\otimes Y}(\varphi)c_{X,Y}^*.
\end{equation}
The lemma above shows the following formula:
\begin{equation}
\label{eq:alxphi}
d(X)
S^\alpha\alpha_X(\varphi)
=
d(Y)
\alpha_Y(\varphi)S^\alpha
\quad
\mbox{for all }
X,Y\in\sC,\ S\in\sC(X,Y).
\end{equation}
In particular,
we have
\[
d(X)d(Y)
\alpha_{X\otimes Y}(\varphi)
=
\sum_{Z\in\Irr(\sC)}
\sum_{S\in \ONB(Z,X\otimes Y)}
d(Z)S^\alpha\alpha_Z(\varphi)S^{\alpha*}.
\]

For a finite $\cK\subset\Irr(\sC)$,
we will introduce the faithful normal unital completely positive map
$I_\cK^\alpha$
on $M$ as follows:
\[I_\cK^\alpha(x)
:=
\frac{1}{|\cK|_\sigma}
\sum_{Y\in\cK}d(Y)^2\phi_{\ovl{Y}}^\alpha(x)
\quad
\mbox{for }
x\in M.
\]
This is thought of the average of $\alpha_Y(x)$'s over $Y\in\cK$.

\begin{lem}
\label{lem:average}
Let $(\alpha,c)$ be a cocycle action of $\sC$ on $M$
and
$\cK$ a finite subset of $\Irr(\sC)$.
For any $X\in\sC$ and
$x\in M$ commuting with all morphisms in $\sC$
and with unitaries $c_{Y,Z}$ with $Y,Z\in\sC$,
one has
\begin{align*}
\phi_{\ovl{X}}^\alpha(I_\cK^\alpha(x))
&=
\frac{1}{|\cK|_\sigma}
\sum_{(Y,Z)\in \cK\times \Irr(\sC)}
d(Y)^2p_X(Y,Z)
\phi_{\ovl{Z}}^\alpha(x),
\\
I_\cK^\alpha(x)
&=
\frac{1}{|\cK|_\sigma}
\sum_{(Y,Z)\in \Irr(\sC)\times \cK}
d(Y)^2p_X(Y,Z)
\phi_{\ovl{Z}}^\alpha(x).
\end{align*}
\end{lem}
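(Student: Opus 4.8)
The plan is to establish the two displayed identities almost separately; both come down to combining the composition rule $\phi_{X\otimes Y}^\alpha\circ\Ad c_{X,Y}^*=\phi_Y^\alpha\circ\phi_X^\alpha$ recorded above with the irreducible decomposition of a tensor product and with Frobenius reciprocity for the fusion multiplicities $N_{X,Y}^Z$.

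For the first identity I would start from the definition $I_\cK^\alpha(x)=|\cK|_\sigma^{-1}\sum_{Y\in\cK}d(Y)^2\phi_{\ovl{Y}}^\alpha(x)$ and apply $\phi_{\ovl{X}}^\alpha$ term by term. The composition rule with the objects $\ovl{Y}$ and $\ovl{X}$ gives $\phi_{\ovl{X}}^\alpha(\phi_{\ovl{Y}}^\alpha(x))=\phi_{\ovl{Y}\otimes\ovl{X}}^\alpha(c_{\ovl{Y},\ovl{X}}^{*}\,x\,c_{\ovl{Y},\ovl{X}})=\phi_{\ovl{Y}\otimes\ovl{X}}^\alpha(x)$, where the last step uses that $x$ commutes with the unitaries $c_{Y,Z}$. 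Next I would decompose $\ovl{Y}\otimes\ovl{X}$ into irreducibles: applying (the evident iteration of) Lemma~\ref{lem:linv} to the orthogonal family $\{SS^{*}:S\in\ONB(W,\ovl{Y}\otimes\ovl{X}),\ W\in\Irr(\sC)\}$ of projections summing to $1_{\ovl{Y}\otimes\ovl{X}}$ yields
\[
d(X)d(Y)\,\phi_{\ovl{Y}\otimes\ovl{X}}^\alpha(x)
=\sum_{W\in\Irr(\sC)}\ \sum_{S\in\ONB(W,\ovl{Y}\otimes\ovl{X})}d(W)\,\phi_W^\alpha\big(S^{\alpha*}xS^\alpha\big).
\]
Since $x$ commutes with every intertwiner, $S^{\alpha*}xS^\alpha=x\,[S^{*}S]^\alpha=x$, so the right-hand side collapses to $\sum_{W}N_{\ovl{Y},\ovl{X}}^{W}\,d(W)\,\phi_W^\alpha(x)$. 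Re-indexing $W=\ovl{Z}$ with $Z\in\Irr(\sC)$ and using the Frobenius identity $N_{\ovl{Y},\ovl{X}}^{\ovl{Z}}=N_{X,Y}^{Z}$ together with $d(\ovl{Z})=d(Z)$ turns this into $\sum_{Z\in\Irr(\sC)}p_X(Y,Z)\,\phi_{\ovl{Z}}^\alpha(x)$ by the very definition of $p_X(Y,Z)$. Summing over $Y\in\cK$ against the weights $d(Y)^2/|\cK|_\sigma$ is exactly the first claimed equality; the inner sum is finite because $N_{X,Y}^Z\neq0$ for only finitely many $Z$.

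For the second identity I would note that it involves no $\phi_{\ovl{X}}^\alpha$ and is simply a re-expression of $I_\cK^\alpha(x)=|\cK|_\sigma^{-1}\sum_{Z\in\cK}d(Z)^2\phi_{\ovl{Z}}^\alpha(x)$; after interchanging the order of summation over $\Irr(\sC)\times\cK$ it suffices to prove the scalar identity $\sum_{Y\in\Irr(\sC)}d(Y)^2\,p_X(Y,Z)=d(Z)^2$ for each fixed $Z\in\Irr(\sC)$. Substituting $p_X(Y,Z)=d(Z)d(X)^{-1}d(Y)^{-1}N_{X,Y}^Z$ reduces this to $\sum_{Y}d(Y)\,N_{X,Y}^Z=d(X)d(Z)$; by Frobenius reciprocity $N_{X,Y}^Z=N_{\ovl{X},Z}^Y=\dim\sC(Y,\ovl{X}\otimes Z)$, so the left-hand side is precisely $d(\ovl{X}\otimes Z)=d(X)d(Z)$, as needed. (In fact the second identity holds for arbitrary $x\in M$; only the first uses the commutation hypotheses.)

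There is no genuinely hard step here; the only points demanding care are the bookkeeping of conjugates in the Frobenius identity $N_{\ovl{Y},\ovl{X}}^{\ovl{Z}}=N_{X,Y}^Z$, which follows from the listed relations $N_{A,B}^C=N_{\ovl{A},C}^B=N_{C,\ovl{B}}^A$ and $\overline{\ovl{W}}=W$, and the passage from the two-summand Lemma~\ref{lem:linv} to an arbitrary finite direct-sum decomposition, which is a routine induction.
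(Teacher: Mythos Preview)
Your proof is correct and follows essentially the same route as the paper's: for the first identity you use the composition rule to pass to $\phi_{\ovl{Y}\otimes\ovl{X}}^\alpha$, decompose via Lemma~\ref{lem:linv}, simplify using $xS^\alpha=S^\alpha x$, and invoke the Frobenius identity $N_{\ovl{Y},\ovl{X}}^{\ovl{Z}}=N_{X,Y}^Z$; for the second you reduce to the scalar identity $\sum_Y d(Y)N_{X,Y}^Z=d(X)d(Z)$, exactly as the paper does. Your additional remark that the second identity needs no commutation hypotheses on $x$ is a nice observation not made explicit in the paper.
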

\begin{proof}
We compute
$d(Y)^2\phi_{\ovl{X}}^\alpha(\phi_{\ovl{Y}}^\alpha(x))$
for $Y\in\Irr(\sC)$
as follows:
\begin{align*}
d(Y)^2\phi_{\ovl{X}}^\alpha(\phi_{\ovl{Y}}^\alpha(x))
&=
d(Y)^2\phi_{\ovl{Y}\otimes\ovl{X}}^\alpha
(c_{\ovl{Y},\ovl{X}}^*x c_{\ovl{Y},\ovl{X}})
\\
&=
\sum_{Z\in\Irr(\sC)}
\sum_{S\in \ONB(\ovl{Z},\ovl{Y}\otimes\ovl{X})}
\frac{d(Y)d(Z)}{d(X)}
\phi_{\ovl{Z}}^\alpha(S^{\alpha*}xS^\alpha)
\quad
\mbox{by Lemma }
\ref{lem:linv}
\\
&=
\sum_{Z,S}
\frac{d(Y)d(Z)}{d(X)}
\phi_{\ovl{Z}}^\alpha(x)
\quad
\mbox{by }
xS^\alpha=S^\alpha x
\\
&=
\sum_{Z}
d(Y)^2p_X(Y,Z)
\phi_{\ovl{Z}}^\alpha(x).
\end{align*}
The second equality is trivial
since $\sum_{Y\in\Irr(\sC)} N_{X,Y}^Z d(Y)=d(X)d(Z)$.
\end{proof}

\subsection{Freeness of cocycle actions}
Let $\sC$ and $M$ be as in the previous subsection.
We will introduce the freeness of a cocycle action of $\sC$
on $M$.

\begin{defn}
\label{defn:freeness}
A cocycle action $(\alpha,c)$ of $\sC$ on $M$
is said to be \emph{free}
if $(\alpha_X,\alpha_Y)=\alpha(\sC(X,Y))Z(M)$
for all $X,Y\in\sC$,
where $\alpha(\sC(X,Y))Z(M)$
denotes the linear span
of the elements $T^\alpha z$ with $T\in\sC(X,Y)$ and $z\in Z(M)$.
\end{defn}

\begin{lem}
\label{lem:free}
Let $(\alpha,c)$ be a cocycle action of $\sC$
on $M$.
Then the following statements are equivalent:
\begin{enumerate}
\item
$(\alpha,c)$ is free.

\item
$(\alpha_X,\alpha_{\btr})=\{0\}$
for all $X\in\Irr(\sC)\setminus\{0\}$,
that is,
there exists no non-zero element $a\in M$
with $ax=\alpha_X(x)a$ for all $x\in M$.
\end{enumerate}
\end{lem}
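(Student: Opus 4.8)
The plan is to prove both implications, with all the content in $(2)\Rightarrow(1)$. For $(1)\Rightarrow(2)$: if $X\in\Irr(\sC)\setminus\{\btr\}$, then $\sC(X,\btr)=\{0\}$ because $X$ and $\btr$ are non-isomorphic simple objects, so $(\alpha_X,\alpha_\btr)=\alpha(\sC(X,\btr))Z(M)=\{0\}$; the reformulation inside $(2)$ follows by taking adjoints, since $(\alpha_X,\alpha_\btr)^*=(\alpha_\btr,\alpha_X)$.

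For $(2)\Rightarrow(1)$ the inclusion $\alpha(\sC(X,Y))Z(M)\subseteq(\alpha_X,\alpha_Y)$ is immediate, as $T^\alpha\in(\alpha_X,\alpha_Y)$ for $T\in\sC(X,Y)$ and central elements commute with $\alpha_Y(M)$. I would prove the reverse inclusion first for $X,Y\in\Irr(\sC)$ and then bootstrap: choosing isometries realizing the irreducible decompositions of $X$ and $Y$ and conjugating a given $a\in(\alpha_X,\alpha_Y)$ by them reduces matters to the irreducible components $(\alpha_{X_i},\alpha_{Y_j})$, and since the central elements produced there commute with the intertwiner isometries involved, reassembly gives $a\in\alpha(\sC(X,Y))Z(M)$ (this uses that $(\alpha_{X'},\alpha_{Y'})=\{0\}$ for non-isomorphic simple $X',Y'$, which is one of the two irreducible cases). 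So let $X,Y\in\Irr(\sC)$ and $a\in(\alpha_X,\alpha_Y)$. Put $W_X:=c_{\ovl X,X}R_X^\alpha\in(\id_M,\alpha_{\ovl X}\alpha_X)$ and $\ovl W_X:=c_{X,\ovl X}\ovl R_X^\alpha\in(\id_M,\alpha_X\alpha_{\ovl X})$; these are isometries, $\phi_X^\alpha=W_X^*\alpha_{\ovl X}(\,\cdot\,)W_X$, and the conjugate equations give $\ovl W_X^*\alpha_X(W_X)=d(X)^{-1}1_M$. A direct computation from $a\alpha_X(m)=\alpha_Y(m)a$ shows $b:=c_{\ovl X,Y}^*\alpha_{\ovl X}(a)W_X$ lies in $(\id_M,\alpha_{\ovl X\otimes Y})$ and, conversely, $a=d(X)\,\ovl W_X^*\alpha_X(c_{\ovl X,Y}b)$.

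The key observation is that, under $(2)$, $(\id_M,\alpha_W)=\alpha(\sC(\btr,W))Z(M)$ for every $W\in\sC$: the projection $[P_W^\btr]^\alpha$ onto the $\btr$-isotypic part lies in $(\alpha_W,\alpha_W)$, so one may split an element of $(\id_M,\alpha_W)$ by it; the complementary part is annihilated by $(2)$ (its irreducible summands are non-trivial), and the isometries onto the $\btr$-summands span $\sC(\btr,W)$ and conjugate the element into $(\id_M,\id_M)=Z(M)$. Applying this to $W=\ovl X\otimes Y$ and using $\sC(\btr,\ovl X\otimes Y)\cong\sC(X,Y)$ by Frobenius reciprocity: if $X\not\cong Y$ this space is $\{0\}$, so $b=0$ and hence $a=0=\alpha(\sC(X,Y))Z(M)$; if $X=Y$ then $\sC(\btr,\ovl X\otimes X)=\C R_X$, so $b=R_X^\alpha z$ with $z:=R_X^{\alpha*}b=\phi_X^\alpha(a)\in Z(M)$, and the recovery formula yields $a=\alpha_X(z)$.

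The step I expect to be the real obstacle is upgrading $a=\alpha_X(z)$, $z\in Z(M)$, to $a\in Z(M)$, i.e.\ showing $\alpha_X(Z(M))=Z(M)$ (vacuous when $M$ is a factor, since then $Z(M)=\C$). I would deduce it as follows. Running the analysis above with $\ovl X$ in place of $X$ gives $(\alpha_{\ovl X},\alpha_{\ovl X})=\alpha_{\ovl X}(Z(M))\supseteq Z(M)$, hence $Z(M)\subseteq\alpha_{\ovl X}(M)$ and every $z\in Z(M)$ equals $\alpha_{\ovl X}(z')$ for some $z'\in Z(M)$. Then from $\ovl W_X m=\alpha_X\alpha_{\ovl X}(m)\ovl W_X$ one gets $(z'-\alpha_X(z))\ovl W_X=0$, and this together with its adjoint (applied also to $z^*$) forces $\alpha_X(z)$ to commute with $\ovl W_X\ovl W_X^*$. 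Since $\alpha_X(z)$ also commutes with $\alpha_X(M)$, and $M$ is generated by $\alpha_X(M)$ together with the projection $\ovl W_X\ovl W_X^*$ — the inclusion $\alpha_X(M)\subseteq M$ being the basic construction of $\alpha_X\alpha_{\ovl X}(M)\subseteq\alpha_X(M)$, a standard feature of finite-index endomorphisms with conjugates — it follows that $\alpha_X(z)\in Z(M)$. Thus $(\alpha_X,\alpha_X)=Z(M)$; combined with the vanishing for non-isomorphic simples and the bootstrapping above, $(2)\Rightarrow(1)$ is complete. (An alternative to the $b$-trick would be to expand $a$ directly by transporting the resolution of identity $(\ref{eq:leftinv})$ through $\alpha$, but the computation is essentially the same.)
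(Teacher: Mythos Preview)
Your proof is correct, but it takes a detour the paper avoids. Both arguments reduce to simple $X,Y$ and use Frobenius reciprocity to transport $a\in(\alpha_X,\alpha_Y)$ to a space of the form $(\text{something},\alpha_\btr)$ or $(\alpha_\btr,\text{something})$; the difference is which direction you choose. You form $b:=c_{\ovl X,Y}^*\alpha_{\ovl X}(a)W_X\in(\alpha_\btr,\alpha_{\ovl X\otimes Y})$, which pushes $a$ through $\alpha_{\ovl X}$; when you invert this in the case $X=Y$ you recover $a=\alpha_X(z)$ with $z\in Z(M)$, and are then forced into the extra step of showing $\alpha_X(Z(M))\subseteq Z(M)$ via the basic construction. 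The paper instead forms $\ovl R_Y^{\alpha*}c_{Y,\ovl Y}^*\,a\,c_{X,\ovl Y}\in(\alpha_{X\otimes\ovl Y},\alpha_\btr)$, which does \emph{not} apply any $\alpha$ to $a$. Under (2) this element equals $\delta_{X,Y}\ovl R_X^{\alpha*}b$ with $b\in Z(M)$; multiplying on the right by $c_{X,\ovl Y}^*\alpha_X(c_{\ovl Y,Y}R_Y^\alpha)$ and using the conjugate equation gives $d(X)^{-1}a=\delta_{X,Y}d(X)^{-1}b$ directly, because $b$ is central and commutes past the fixed element $\ovl R_X^{\alpha*}c_{X,\ovl X}^*\alpha_X(c_{\ovl X,X}R_X^\alpha)=d(X)^{-1}$. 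So $a=b\in Z(M)$ with no further work.

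Your basic-construction step is valid (and holds for non-factors: from $\ovl W_X\ovl W_X^*\alpha_X(W_X)=d(X)^{-1}\ovl W_X$ one gets $\ovl W_X\in\langle\alpha_X(M),\ovl W_X\ovl W_X^*\rangle$, and then $m=d(X)\ovl W_X^*\alpha_X(\alpha_{\ovl X}(m)W_X)$ shows $M\subseteq\langle\alpha_X(M),\ovl W_X\ovl W_X^*\rangle$), but it is unnecessary machinery. The moral is that transporting via $\ovl R_Y$ on the \emph{target} side keeps the central element outside $\alpha_X$, whereas transporting via $R_X$ on the \emph{source} side traps it inside.
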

\begin{proof}
(1) $\Rightarrow$ (2).
This implication is trivial.

(2) $\Rightarrow$ (1).
It suffices to show the equality in Definition \ref{defn:freeness}
for $X,Y$ being simple objects.
It is trivial the right-hand side is contained in the left-hand side.
Let $a\in (\alpha_X,\alpha_Y)$.
Then $\ovl{R}_Y^{\alpha*}c_{Y,\ovl{Y}}^*ac_{X,\ovl{Y}}$
is an element of $(\alpha_{X\otimes\ovl{Y}},\alpha_\btr)$.
By considering the irreducible decomposition of $X\otimes\ovl{Y}$,
we have
$(\alpha_{X\otimes\ovl{Y}},\alpha_\btr)
=\delta_{X,Y}\ovl{R}_X^{\alpha*}Z(M)$
from the assumption of (2).
Take $b\in Z(M)$ so that
$\ovl{R}_Y^{\alpha*}c_{Y,\ovl{Y}}^*ac_{X,\ovl{Y}}
=\delta_{X,Y}\ovl{R}_X^{\alpha*}b$.
Multiplying $c_{X,\ovl{Y}}^*\alpha_X(c_{\ovl{Y},Y}R_Y^\alpha)$
to the both sides from the right,
we have
$d(X)^{-1}a
=\delta_{X,Y}\ovl{R}_X^{\alpha*}c_{X,\ovl{Y}}^*\alpha_X(c_{\ovl{Y},Y}R_Y)b$.
Hence $a=0$ when $X\neq Y$
and
$a=b$ when $X=Y$.
\end{proof}

\begin{lem}
\label{lem:thetaalpha}
If a cocycle action $(\alpha,c)$ of $\sC$ on $M$ is free,
then the following statements hold:
\begin{enumerate}
\item
For each $X\in\Irr(\sC)$,
the restriction of $\alpha_X$ on $Z(M)$
is an automorphism
which will be denoted by $\theta_X^\alpha$
henceforth.

\item
If $X,Y,Z\in\Irr(\sC)$ satisfy $Z\prec X\otimes Y$,
then $\theta_X^\alpha\circ\theta_Y^\alpha=\theta_Z^\alpha$.
\end{enumerate}
\end{lem}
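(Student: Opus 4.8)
The plan is to prove both statements by exploiting freeness together with the intertwiner calculus developed in Section~\ref{subsect:actions}. For part~(1), I first observe that since $\alpha_X\in\End(M)_0$ restricts to an endomorphism of $Z(M)$ (central elements are sent to central elements because $\alpha_X$ is a unital $*$-homomorphism and $Z(M)$ is the center), the only thing to check is that this restriction is surjective and injective on $Z(M)$. Injectivity is automatic since $\alpha_X$ is faithful. For surjectivity, the natural move is to use the conjugate object $\ovl{X}\in\Irr(\sC)$: from the defining relation $\alpha_X\circ\alpha_{\ovl{X}}=\Ad c_{X,\ovl{X}}\circ\alpha_{X\otimes\ovl{X}}$ and the fact that $\btr\prec X\otimes\ovl{X}$, one extracts that $\alpha_X\circ\alpha_{\ovl{X}}$ restricted to $Z(M)$ agrees with $\Ad c_{X,\ovl{X}}$ composed with a map that contains the identity component. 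More precisely, I would use the projection $P_{X\otimes\ovl{X}}^\btr=\ovl{R}_X\ovl{R}_X^*$ and the fact that $\ovl{R}_X^{\alpha*}c_{X,\ovl{X}}^*(\,\cdot\,)c_{X,\ovl{X}}\ovl{R}_X^\alpha$ implements a left inverse; applied to central $x$ this should show $\phi_{\ovl{X}}^\alpha(x)\in Z(M)$ and $\alpha_X(\phi_{\ovl{X}}^\alpha(x))$ relates to $x$. The cleanest argument: for $z\in Z(M)$, the element $\ovl{R}_X^\alpha z$ lies in $(\alpha_\btr, \alpha_{X\otimes\ovl{X}})$ after conjugating by $c_{X,\ovl{X}}$, i.e. $c_{X,\ovl{X}}\ovl{R}_X^\alpha z \in (\alpha_X\alpha_{\ovl{X}},\,\alpha_X\alpha_{\ovl{X}})$-type relations force $z$ to be in the image. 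I would run this through Lemma~\ref{lem:free}(2): any $a$ with $ay=\alpha_X(a')y$-type intertwining on $Z(M)$ must come from the morphism space, and since $\sC(X,X)=\C 1_X$ for simple $X$, the restriction map is forced to be bijective onto $Z(M)$.

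A more economical route, which I would actually prefer to write: since $(\alpha,c)$ is free, apply the equality $(\alpha_X,\alpha_X)=\alpha(\sC(X,X))Z(M)=Z(M)$ (using $\sC(X,X)=\C 1_X$ and $1_X^\alpha=1_M$) to see that $\alpha_X(M)'\cap M\supset Z(M)$-type statements hold, and then note that $\alpha_X|_{Z(M)}$ is a normal $*$-endomorphism of $Z(M)$ whose image generates a subalgebra over which there is a conditional expectation $\phi_{\ovl X}^\alpha$ restricting to a unital completely positive map $Z(M)\to Z(M)$ that inverts it on one side. Combining $\phi_{\ovl X}^\alpha\circ\alpha_{\ovl X}=\id_M$ with $\alpha_X\circ\alpha_{\ovl X}=\Ad c_{X,\ovl X}\circ\alpha_{X\otimes\ovl X}$ and the presence of the trivial summand $\btr\prec X\otimes\ovl X$ yields that $\alpha_X|_{Z(M)}$ has a two-sided inverse on $Z(M)$, hence is an automorphism $\theta_X^\alpha$. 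The main technical point here is checking that $\phi_{\ovl X}^\alpha$ genuinely maps $Z(M)$ into $Z(M)$; this follows because for $z\in Z(M)$ and any morphism, $z$ commutes with all $T^\alpha$ and all $c_{Y,Z}$ (as it is central in $M$), so the defining formula (\ref{eq:phileftinv}) for $\phi_{\ovl X}^\alpha(z)$ lands in the relative commutant of all these operators, and one checks directly it is central.

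For part~(2), suppose $Z\prec X\otimes Y$ with $X,Y,Z\in\Irr(\sC)$, and fix an isometry $S\in\sC(Z,X\otimes Y)$, so $S^\alpha\in(\alpha_Z,\alpha_X\alpha_Y)$ after adjusting by $c_{X,Y}$; precisely $c_{X,Y}S^\alpha\in(\alpha_Z,\,\alpha_X\circ\alpha_Y)$ in the sense that $c_{X,Y}S^\alpha\,\alpha_Z(x)=\alpha_X(\alpha_Y(x))\,c_{X,Y}S^\alpha$ for all $x\in M$, using (\ref{eq:cST}) with $U=Z$, $V=\btr$. Restricting this intertwining identity to $x=z\in Z(M)$ and using that $\alpha_X(\alpha_Y(z))=\alpha_X(\theta_Y^\alpha(z))=\theta_X^\alpha(\theta_Y^\alpha(z))$ and $\alpha_Z(z)=\theta_Z^\alpha(z)$, we get $c_{X,Y}S^\alpha\,\theta_Z^\alpha(z)=\theta_X^\alpha(\theta_Y^\alpha(z))\,c_{X,Y}S^\alpha$. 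Since $c_{X,Y}S^\alpha$ is a nonzero element of $M$ (it is $\alpha$ applied to a nonzero morphism times a unitary, hence nonzero because $T\mapsto T^\alpha$ is isometric) and both $\theta_Z^\alpha(z)$ and $\theta_X^\alpha(\theta_Y^\alpha(z))$ are central, this forces $\theta_X^\alpha(\theta_Y^\alpha(z))=\theta_Z^\alpha(z)$: indeed, writing $w:=c_{X,Y}S^\alpha$ and $p:=w^*w$, which is a nonzero projection-like positive central element (actually $w^*w\in(\alpha_Z,\alpha_Z)=Z(M)$ is a positive invertible-on-its-support central element), we get $p\,\theta_Z^\alpha(z)=\theta_Z^\alpha(z)\,p$ trivially but more usefully $w^*\theta_X^\alpha(\theta_Y^\alpha(z))w = \theta_Z^\alpha(z)\,w^*w$, and since $\theta_X^\alpha\circ\theta_Y^\alpha$ and $\theta_Z^\alpha$ are automorphisms of the abelian $Z(M)$ and $w^*w$ has full support (as $Z$ is simple, forcing the support to be the whole identity, because any central projection in $(\alpha_Z,\alpha_Z)$ dominated by $w^*w$ otherwise would contradict $Z$ simple — here one uses freeness so that $(\alpha_Z,\alpha_Z)=Z(M)$ exactly and $S$ can be chosen with $S^*S=1_Z$, giving $S^{\alpha*}S^\alpha=1_M$ and $w^*w=1_M$), we conclude $\theta_X^\alpha(\theta_Y^\alpha(z))=\theta_Z^\alpha(z)$ for all $z$.

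I expect the main obstacle to be the bookkeeping in part~(2) around the normalization of the isometry and making sure $w^*w=1_M$: one must use $S^*S=1_Z$ in $\sC$ together with the identity $(S^\alpha)^*S^\alpha=[S^*S]^\alpha=1_X^\alpha=1_M$ (valid since $1_Z^\alpha=1_M$) and unitarity of $c_{X,Y}$ to get $w^*w=S^{\alpha*}c_{X,Y}^*c_{X,Y}S^\alpha=1_M$, so that $w$ is in fact an isometry and the intertwining relation directly transports $\theta_Z^\alpha$ to $\theta_X^\alpha\circ\theta_Y^\alpha$ without any support subtleties. Once that normalization point is nailed down, both parts are short; the conceptual content is entirely in Lemma~\ref{lem:free} (freeness reduces intertwiners between simples to scalars, i.e. to $Z(M)$) and the elementary observation that central elements commute with the $2$-cocycle and all intertwiners.
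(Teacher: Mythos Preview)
Your part~(2) is correct and essentially identical to the paper's argument: take an isometry $S\in\sC(Z,X\otimes Y)$, use $S^\alpha\alpha_Z(z)=\alpha_{X\otimes Y}(z)S^\alpha$ together with $\alpha_X\alpha_Y=\Ad c_{X,Y}\circ\alpha_{X\otimes Y}$ and centrality, then cancel $S^\alpha$ via $S^{\alpha*}S^\alpha=1_M$. Your worry about $w^*w$ was overthought but you resolved it correctly.

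Part~(1), however, has a genuine error at the outset and then becomes vague where it matters. The claim that ``central elements are sent to central elements because $\alpha_X$ is a unital $*$-homomorphism'' is false in general: for $z\in Z(M)$ one only gets $\alpha_X(z)\in\alpha_X(M)'\cap M=(\alpha_X,\alpha_X)$, which need not be $Z(M)$. Freeness is not optional here; it is precisely what gives $(\alpha_X,\alpha_X)=\alpha(\sC(X,X))Z(M)=Z(M)$ for simple $X$, and hence $\alpha_X(Z(M))\subset Z(M)$. You later invoke this, so the argument is recoverable, but the opening sentence should be struck.

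More importantly, your surjectivity argument never lands. You gesture at $\phi_{\ovl X}^\alpha$ and the trivial summand of $X\otimes\ovl X$, but the identities you list do not obviously combine into a two-sided inverse, and your remark that $\phi_{\ovl X}^\alpha$ ``inverts it on one side'' is pointing at the wrong map ($\phi_{\ovl X}^\alpha$ is the left inverse of $\alpha_{\ovl X}$, not of $\alpha_X$). The clean one-line fix, which is what the paper does, is this: once you know $\alpha_{\ovl X}(Z(M))\subset Z(M)$, take any $x\in Z(M)$ and set $y:=\alpha_{\ovl X}(x)\in Z(M)$. Then $\alpha_X(y)\in Z(M)$ is central, so it commutes with $c_{X,\ovl X}$ and $\ovl R_X^\alpha$, and therefore
\[
x=\phi_{\ovl X}^\alpha(y)
=\ovl R_X^{\alpha*}c_{X,\ovl X}^*\,\alpha_X(y)\,c_{X,\ovl X}\ovl R_X^{\alpha}
=\alpha_X(y)\,\ovl R_X^{\alpha*}\ovl R_X^{\alpha}
=\alpha_X(y).
\]
This simultaneously shows surjectivity and identifies $\alpha_{\ovl X}|_{Z(M)}$ as the inverse of $\theta_X^\alpha$. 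The hidden point you were circling around is that $\phi_{\ovl X}^\alpha$ \emph{coincides with} $\alpha_X$ on $Z(M)$; once you say that explicitly, the proof is immediate.
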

\begin{proof}
(1).
Let $X\in\Irr(\sC)$.
The freeness implies $\alpha_X(M)'\cap M=Z(M)$.
Hence $\alpha_X(Z(M))\subset Z(M)$.
Let $x\in Z(M)$.
We will show $x\in\alpha_X(Z(M))$.
Set $y:=\alpha_{\ovl{X}}(x)$.
Then $y\in \alpha_{\ovl{X}}(Z(M))\subset Z(M)$,
and we have $\alpha_X(y)\in Z(M)$.
Hence
\[
x
=\phi_{\ovl{X}}^\alpha(y)
=\ovl{R}_X^{\alpha*}c_{X,\ovl{X}}^*
\alpha_X(y)
c_{X,\ovl{X}}\ovl{R}_X^{\alpha}
=\alpha_X(y).
\]

(2).
Take an isometry $S\in\sC(Z,X\otimes Y)$.
For $x\in Z(M)$,
we have
\[
S^\alpha\theta_Z^\alpha(x)=S^\alpha\alpha_Z(x)
=\alpha_{X\otimes Y}(x)S^\alpha
=c_{X,Y}^*\alpha_X(\alpha_Y(x))c_{X,Y}S^\alpha
=
\theta_X^\alpha(\theta_Y^\alpha(x))S^\alpha.
\]
Multiplying $S^{\alpha*}$ to the above from the left,
we are done. 
\end{proof}

\subsection{Centrally free cocycle actions}

Let $(\alpha,c)$ be a cocycle action of $\sC$ on $M$ with separable predual
as before.
Since each $\alpha_X$ has a faithful normal left inverse,
we can define $\alpha_X^\omega\in \End(M^\omega)$
by putting
$\alpha_X^\omega((x_n)^\omega):=(\alpha_X(x_n))^\omega$
for $X\in\sC$ and $(x_n)^\omega\in M^\omega$
(see \cite[Lemma 3.2]{MT-minimal}).
Then we have a cocycle action $(\alpha^\omega,c)$
of $\sC$ on $M^\omega$.
Note that we use the same 2-cocycle $c$
and the same intertwiners $T^\alpha$
for morphisms $T$ in $\sC$.
Then the canonical left inverse of $\alpha_X^\omega$
is given by $(\phi_X^\alpha)^\omega$.
We often simply write $\alpha_X$ and $\phi_X^\alpha$
for $\alpha_X^\omega$ and $(\phi_X^{\alpha})^\omega$,
respectively.

Since each $\alpha_X$ is an endomorphism on $M$,
$\alpha_X(M_\omega)$ is not contained in $M_\omega$
in general
though we always have $\phi_X^\alpha(M_\omega)\subset M_\omega$.
We will denote by ${}_{\alpha_X}M_\omega$
the von Neumann algebra generated by
$M_\omega$ and $\alpha_X(M_\omega)$.
Then it is clear
that ${}_{\alpha_X}M_\omega\subset \alpha_X(M)'\cap M^\omega$.
Thus $\tau^\omega$ maps ${}_{\alpha_X}M_\omega$
onto $(\alpha_X,\alpha_X)$.
The following result will be frequently used
in this paper.

\begin{lem}
\label{lem:freetrace}
Let $(\alpha,c)$ be a cocycle action of $\sC$
on a von Neumann algebra $M$.
Then the following statements hold:
\begin{enumerate}
\item
One has
$x\alpha_X(\psi^\omega)=\alpha_X(\psi^\omega)x$
for all $x\in{}_{\alpha_X}M_\omega$,
$\psi\in M_*$ and $X\in\sC$.

\item
If $\alpha$ is free,
then
$\tau^\omega$
maps ${}_{\alpha_X}M_\omega$ onto $Z(M)$
for all $X\in\Irr(\sC)$.

\item
Suppose that
$(\alpha,c)$ is free
and
a faithful state $\varphi\in M_*$
is $\theta^\alpha$-invariant
on $Z(M)$.
Then one has
$\alpha_Y(\varphi^\omega)=\varphi^\omega$
on ${}_{\alpha_X}M_\omega$
for all $Y\in\Irr(\sC)$.
In particular,
$\varphi^\omega$ is tracial on ${}_{\alpha_X}M_\omega$.
\end{enumerate}
\end{lem}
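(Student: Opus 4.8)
The plan is to prove the three statements in order, since each feeds into the next. For (1), I would start from the left inverse property and the computation rule \eqref{eq:alxphi}. The key point is that the formula $\alpha_X(\psi^\omega) = \psi^\omega \circ \phi_X^\alpha$ means $\alpha_X(\psi^\omega)$ is really a functional on $M^\omega$ pulled back along $\phi_X^\alpha$, and $\phi_X^\alpha$ kills the difference between elements of $M_\omega$ and their images under $\alpha_X$ (since $\phi_X^\alpha \circ \alpha_X = \id$ and $\phi_X^\alpha(M_\omega) \subset M_\omega \subset (M^\omega)'\cap\cdots$ commutes suitably). Concretely, for $y \in M_\omega$ one has $\phi_X^\alpha(\alpha_X(y)\, z) = y\, \phi_X^\alpha(z)$ and $\phi_X^\alpha(z\,\alpha_X(y)) = \phi_X^\alpha(z)\, y$ for $z \in M^\omega$; combined with the fact that $M_\omega \subset M' \cap M^\omega$ commutes with $M$ (hence with $\psi^\omega$ in the appropriate bimodule sense), one checks that $x\alpha_X(\psi^\omega)$ and $\alpha_X(\psi^\omega)x$ agree first for $x \in M_\omega$, then for $x \in \alpha_X(M_\omega)$, and finally for $x$ in the generated von Neumann algebra ${}_{\alpha_X}M_\omega$ by density and normality. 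One should be slightly careful that $\psi$ need not be positive, but reduce to the positive case using polar decomposition as in Lemma \ref{lem:ultra-functional}.

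For (2), I would use that $\tau^\omega$ maps ${}_{\alpha_X}M_\omega$ into $(\alpha_X,\alpha_X)$ (already noted in the text, since ${}_{\alpha_X}M_\omega \subset \alpha_X(M)' \cap M^\omega$ and $\tau^\omega$ is the conditional expectation onto $M$). When $\alpha$ is free and $X \in \Irr(\sC)$, Lemma \ref{lem:thetaalpha}(1) (or directly the freeness via Lemma \ref{lem:free}) gives $\alpha_X(M)' \cap M = Z(M)$, so $\tau^\omega({}_{\alpha_X}M_\omega) \subset Z(M)$. For the reverse inclusion, $Z(M) \subset M_\omega' $ trivially and $Z(M) \subset {}_{\alpha_X}M_\omega$ by the constant embedding $M \hookrightarrow M^\omega$ (note $Z(M) \subset M_\omega$ since central elements are trivially central sequences — or at least $Z(M)\subset M^\omega$ lies in ${}_{\alpha_X}M_\omega$), and $\tau^\omega$ is the identity on $M$, hence onto $Z(M)$.

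For (3), the strategy is: first show $\alpha_Y(\varphi^\omega) = \varphi^\omega$ on ${}_{\alpha_X}M_\omega$ for each $Y \in \Irr(\sC)$, then deduce traciality. For the first part I would use part (2): since $\tau^\omega$ maps ${}_{\alpha_X}M_\omega$ onto $Z(M)$ and $\varphi^\omega = \varphi\circ\tau^\omega$ by definition, it suffices to compare $\alpha_Y(\varphi^\omega)$ and $\varphi^\omega$ after applying $\tau^\omega$, i.e.\ to check $\varphi\circ\theta_Y^\alpha = \varphi$ on $Z(M)$ — but that is exactly the hypothesis that $\varphi$ is $\theta^\alpha$-invariant. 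One must check that $\alpha_Y(\varphi^\omega)$ restricted to ${}_{\alpha_X}M_\omega$ indeed factors through $\tau^\omega$ composed with $\theta_Y^\alpha$; this uses $\alpha_Y(\varphi^\omega) = \varphi^\omega \circ \phi_Y^\alpha$ together with the commutation of $\phi_Y^\alpha$ with $\tau^\omega$ and part (1) to control the behaviour on the generated algebra. Then traciality of $\varphi^\omega$ on ${}_{\alpha_X}M_\omega$: for $a, b$ generators (say $a \in M_\omega$, $b = \alpha_X(c)$ with $c \in M_\omega$), use part (1) to move $\varphi^\omega = \alpha_X(\varphi^\omega)$-ish factors past the relevant elements and the fact that $\varphi^\omega|_{Z(M)}$ is automatically tracial (it is a state on an abelian algebra) together with $\tau^\omega$ being a trace-preserving conditional expectation in this situation.

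The main obstacle I expect is part (1): pinning down precisely why $x$ commutes with the \emph{functional} $\alpha_X(\psi^\omega)$ for all $x$ in the generated von Neumann algebra, not just for $x$ in $M_\omega$ or in $\alpha_X(M_\omega)$ separately. The subtlety is that $\alpha_X(\psi^\omega)$ is a functional, so "commuting" means $x\alpha_X(\psi^\omega) = \alpha_X(\psi^\omega)x$ in $(M^\omega)_*$, and one needs the bimodule identities \eqref{eq:alpha-bimod-varphi}, the left-inverse identities, and Lemma \ref{lem:ultra-functional} to pass to norm limits; handling the von Neumann algebra generated by two commuting-with-$\alpha_X(\psi^\omega)$ sets requires knowing that this property is preserved under products and $\sigma$-weak limits, which is where the normality of $\phi_X^\alpha$ and a careful Kaplansky-density type argument enters.
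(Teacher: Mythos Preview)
Your plan is correct and matches the paper's route; indeed the paper dismisses (1) and (2) as ``trivial'' and proves (3) in one line via (2) and $\theta^\alpha$-invariance, deriving traciality from (1) exactly as you indicate. Your stated ``main obstacle'' in (1) is not one: since $\alpha_X(\psi^\omega)=(\alpha_X(\psi))^\omega$ with $\alpha_X(\psi)\in M_*$, elements of $M_\omega$ commute with it by the very definition of central sequences and elements of $\alpha_X(M_\omega)$ commute with it via the bimodule identity $\alpha_X(y)\alpha_X(\psi^\omega)=\alpha_X(y\psi^\omega)=\alpha_X(\psi^\omega y)=\alpha_X(\psi^\omega)\alpha_X(y)$; the set $\{x: x\chi=\chi x\}$ for any fixed normal functional $\chi$ is a $\sigma$-weakly closed subalgebra (closure under products is immediate, under adjoints by passing to self-adjoint parts of $\psi$, and $\sigma$-weak closure by normality of $\chi$), so passing to ${}_{\alpha_X}M_\omega$ needs no Kaplansky-density argument and no special property of $\phi_X^\alpha$. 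For the traciality in (3), rather than checking on generators simply note that (1) places every $x\in{}_{\alpha_X}M_\omega$ in the centralizer of $\alpha_X(\varphi^\omega)$, so $\alpha_X(\varphi^\omega)$ is tracial there, and the first part of (3) with $Y=X$ identifies this restriction with $\varphi^\omega$.
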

\begin{proof}
(1), (2).
These statements are trivial.
(3).
By (2) and $\theta^\alpha$-invariance of $\varphi$,
we have
$\alpha_Y(\varphi^\omega)
=\varphi\circ\theta_{\ovl{Y}}^\alpha\circ\tau^\omega
=\varphi^\omega$
on ${}_{\alpha_X}M_\omega$.
The statement (1) implies the tracial property
of $\varphi^\omega$ on ${}_{\alpha_X}M_\omega$.
\end{proof}

\begin{defn}
Let $(\alpha,c)$ be a cocycle action
of a rigid C$^*$-tensor category $\sC$
on a von Neumann algebra $M$ with separable predual.
We will say that $(\alpha,c)$ is
\begin{itemize}
\item 
\emph{centrally free}
if $\alpha_X$ is \emph{properly centrally non-trivial}
for each $X\in \Irr(\sC)\setminus\{\btr\}$,
that is,
there exists no non-zero element $a\in M$
such that $ax=\alpha_X(x)a$ for all $x\in M_\omega$;

\item
\emph{strongly free}
if $\alpha_X$ is \emph{strongly outer}
for each $X\in \Irr(\sC)\setminus\{\btr\}$,
that is,
for any countably generated von Neumann subalgebra
$Q\subset M^\omega$,
there exists no non-zero element $a\in M^\omega$
such that
$ax=\alpha_X(x)a$ for all $x\in Q'\cap M_\omega$.
\end{itemize}
\end{defn}

\begin{rem}
\label{rem:cent-trivial}
A few remarks are in order.
\begin{enumerate}
\item 
The strong freeness
trivially implies the central freeness.
It is known that they are actually equivalent
\cite[Lemma 8.2]{MT-minimal}.
It is trivial that the central freeness implies the freeness.

\item
Let $X\in\Irr(\sC)\setminus\{\btr\}$.
When $M$ is a factor and $(\alpha_X,\alpha_X)=\C$,
the proper central non-triviality of $\alpha_X$
is equivalent
to a much weaker condition
of the \emph{central non-triviality}
\cite[Lemma 8.3]{MT-minimal},
that is,
$\alpha_X\neq\id$ on $M_\omega$.

\item
When $\Irr(\sC)=\{\btr\}$,
we think any action of $\sC$ is centrally free.
\end{enumerate}
\end{rem}

\begin{lem}
\label{lem:relcomm}
Let $(\alpha,c)$ be a centrally free cocycle action of $\sC$ on $M$.
Then for any $X,Y\in\sC$ and any countably generated
von Neumann subalgebra $Q\subset M^\omega$,
one has
\[
\set{a\in M^\omega|a\alpha_X(x)=\alpha_Y(x)a,
\ x\in Q'\cap M_\omega}
=
\alpha(\sC(X,Y))
\alpha_X((Q'\cap M_\omega)'\cap M^\omega).
\]
\end{lem}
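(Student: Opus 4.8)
The plan is to prove the two inclusions separately, reducing the general case of objects $X,Y\in\sC$ to the case of simple objects via Frobenius reciprocity, exactly as in the proof of Lemma~\ref{lem:free}. The inclusion $\supset$ is the easy direction: if $T\in\sC(X,Y)$ and $b\in(Q'\cap M_\omega)'\cap M^\omega$, then $T^\alpha\alpha_X(b)$ intertwines $\alpha_X$ and $\alpha_Y$ on $Q'\cap M_\omega$ because $T^\alpha\in(\alpha_X,\alpha_Y)$ and $\alpha_X(b)$ commutes with $\alpha_X(x)$ for every $x\in Q'\cap M_\omega$; one then checks that $\alpha_X((Q'\cap M_\omega)'\cap M^\omega)$ is a von Neumann algebra on which this makes sense, and takes linear spans. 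For the hard inclusion $\subset$, first I would observe that by decomposing $X$ and $Y$ into simple subobjects and using that $\alpha$ carries isometric decompositions to isometric decompositions, it suffices to treat $X,Y\in\Irr(\sC)$. So let $a\in M^\omega$ satisfy $a\alpha_X(x)=\alpha_Y(x)a$ for all $x\in Q'\cap M_\omega$.

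The key device is the same Frobenius trick used in Lemma~\ref{lem:free}: form $a':=\ovl{R}_Y^{\alpha*}c_{Y,\ovl{Y}}^{\,*}\,a\,c_{X,\ovl{Y}}\in M^\omega$ and check, using the cocycle identity and the conjugate equations, that $a'$ intertwines $\alpha_{X\otimes\ovl{Y}}^\omega$ and $\alpha_\btr=\id$ on $Q'\cap M_\omega$, i.e. $a'\alpha_{X\otimes\ovl{Y}}(x)=xa'$ for all such $x$. Irreducibly decomposing $X\otimes\ovl{Y}=\bigoplus_k W_k$ with isometries $S_k\in\sC(W_k,X\otimes\ovl{Y})$, we get $a'=\sum_k S_k^\alpha\,(S_k^{\alpha*}a')$, and each piece $a_k:=S_k^{\alpha*}a'$ satisfies $a_k\alpha_{W_k}(x)=xa_k$ for $x\in Q'\cap M_\omega$, with $W_k\in\Irr(\sC)$. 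Here central freeness enters: I claim that for $W\in\Irr(\sC)$ the space $\{a\in M^\omega\mid a\alpha_W(x)=xa,\ x\in Q'\cap M_\omega\}$ equals $0$ if $W\neq\btr$ and equals $(Q'\cap M_\omega)'\cap M^\omega$ if $W=\btr$. The $W=\btr$ statement is immediate. For $W\neq\btr$, given such a nonzero $a$, a standard reindexation/separability argument (as in \cite[Lemma 8.2]{MT-minimal}, or using that $M_\omega$ is ``large'' relative to $Q$) produces a nonzero $a_0\in M$ with $a_0 x=\alpha_W(x)a_0$ for all $x\in M_\omega$, contradicting proper central non-triviality of $\alpha_W$. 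Hence only the $\btr$-components of $X\otimes\ovl{Y}$ survive, which by Frobenius reciprocity ($N_{X,\ovl{Y}}^\btr=\delta_{X,Y}$) forces $a=0$ when $X\neq Y$, and when $X=Y$ unwinding the normalization $\ovl{R}_X^{\alpha*}c_{X,\ovl{X}}^{\,*}\alpha_X(c_{\ovl{X},X}R_X^\alpha)$ exactly as in Lemma~\ref{lem:thetaalpha}(1) recovers $a\in(Q'\cap M_\omega)'\cap M^\omega=\alpha(\sC(X,X))\,(Q'\cap M_\omega)'\cap M^\omega$.

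The main obstacle I anticipate is the reduction-to-$M$ step inside the proof of the $W\neq\btr$ vanishing claim: passing from an intertwiner $a\in M^\omega$ for $\alpha_W$ relative to $Q'\cap M_\omega$ down to a genuine intertwiner in $M$ for $\alpha_W$ relative to all of $M_\omega$, so that proper central non-triviality applies. This requires a careful fast-reindexation argument to absorb the extra relative commutant $Q'$ (exploiting that $Q$ is only countably generated while the ultrafilter $\omega$ is free), together with the left-inverse estimates and Lemma~\ref{lem:freetrace} to control traces; the remaining bookkeeping — chasing the cocycle $c$ through the Frobenius maps and the irreducible decomposition — is routine but must be done with the normalized standard solutions to get the constants right.
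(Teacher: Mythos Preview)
Your overall strategy matches the paper's: reduce to simple $X,Y$, apply a Frobenius move to land in $(\alpha_Z,\id)$-type intertwiners, and kill the $Z\neq\btr$ pieces using central/strong freeness. However, your specific Frobenius move has a genuine gap. With $a':=\ovl{R}_Y^{\alpha*}c_{Y,\ovl{Y}}^{*}\,a\,c_{X,\ovl{Y}}$ you need, for $x\in Q'\cap M_\omega$,
\[
a'\alpha_{X\otimes\ovl{Y}}(x)=\ovl{R}_Y^{\alpha*}c_{Y,\ovl{Y}}^{*}\,a\,\alpha_X(\alpha_{\ovl{Y}}(x))\,c_{X,\ovl{Y}},
\]
and to proceed you must use the hypothesis $a\alpha_X(z)=\alpha_Y(z)a$ with $z=\alpha_{\ovl{Y}}(x)$. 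But the paper explicitly notes (just before the definition of central freeness) that $\alpha_{\ovl{Y}}(M_\omega)$ is \emph{not} contained in $M_\omega$ in general, so $\alpha_{\ovl{Y}}(x)$ need not lie in $Q'\cap M_\omega$ and the relation is unavailable. The paper circumvents this by applying $\alpha_{\ovl{X}}$ to the fixed element $a$ rather than to the variable $x$: it forms
\[
c_{\ovl{X},Y}^{*}\,\alpha_{\ovl{X}}(a)\,c_{\ovl{X},X}R_X^\alpha,
\]
whose intertwining property for $x\in Q'\cap M_\omega$ follows purely formally from $R_X^\alpha x=\alpha_{\ovl{X}\otimes X}(x)R_X^\alpha$ and the original relation $a\alpha_X(x)=\alpha_Y(x)a$. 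The endpoint is then $\alpha_{\ovl{X}\otimes Y}$ rather than $\alpha_{X\otimes\ovl{Y}}$, and when $X=Y$ a short computation with the projection $c_{\ovl{X},X}R_X^\alpha R_X^{\alpha*}c_{\ovl{X},X}^{*}$ plus $\phi_{\ovl{X}}^\alpha$ gives $a=\alpha_X(\phi_X^\alpha(a))$ directly.

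Regarding your ``main obstacle'': this is not a step you need to carry out by hand. The vanishing of $\{a\in M^\omega:\ a\,x=\alpha_W(x)a,\ x\in Q'\cap M_\omega\}$ for $W\neq\btr$ and countably generated $Q$ is exactly the statement of \emph{strong freeness}, and Remark~\ref{rem:cent-trivial}(1) records that central freeness and strong freeness are equivalent (via \cite[Lemma~8.2]{MT-minimal}). The paper simply invokes this equivalence; your proposed reindexation argument would be reproving that lemma.
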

\begin{proof}
It is trivial that the right-hand side
is contained in the left.
We will show the converse inclusion.
By the direct decompositions of $X$ and $Y$,
we may and do assume
$X,Y\in\Irr(\sC)$.
Suppose $a\in M^\omega$ satisfies
$a\alpha_X(x)=\alpha_Y(x)a$
for all $x\in Q'\cap M_\omega$.
Then for $x\in Q'\cap M_\omega$,
\[
c_{\ovl{X},Y}^*
\alpha_{\ovl{X}}(a)c_{\ovl{X},X}R_X^\alpha x
=\alpha_{\ovl{X}\otimes Y}(x)
c_{\ovl{X},Y}^*\alpha_{\ovl{X}}(a)
c_{\ovl{X},X}R_X^\alpha.
\]
Hence for any simple $Z\prec \ovl{X}\otimes Y$
and any isometry $T\in\sC(Z,\ovl{X}\otimes Y)$,
we have
$T^{\alpha*}
c_{\ovl{X},Y}^*
\alpha_{\ovl{X}}(a)c_{\ovl{X},X}R_X^\alpha x
=\alpha_{Z}(x)T^{\alpha*}
c_{\ovl{X},Y}^*
\alpha_{\ovl{X}}(a)c_{\ovl{X},X}R_X^\alpha$.
By central freeness,
we have
$T^{\alpha*}c_{\ovl{X},Y}^*
\alpha_{\ovl{X}}(a)c_{\ovl{X},X}R_X^\alpha=0$
if $Z\neq\btr$.
Hence if $X\neq Y$,
then
$\alpha_{\ovl{X}}(a)c_{\ovl{X},X}R_X^\alpha=0$.
This implies $a=0$.
If $X=Y$,
then we have
\begin{align*}
&\alpha_{\ovl{X}}(a)c_{\ovl{X},X}R_X^\alpha R_X^{\alpha*}c_{\ovl{X},X}^*
\\
&=
\sum_{Z\in\Irr(\sC)}
\sum_{T\in\ONB(Z,\ovl{X}\otimes X)}
c_{\ovl{X},X}T^\alpha T^{\alpha*}c_{\ovl{X},X}^*
\alpha_{\ovl{X}}(a)c_{\ovl{X},X}
R_X^\alpha R_X^{\alpha*}
c_{\ovl{X},X}^*
\\
&=
c_{\ovl{X},X}R_X^\alpha R_X^{\alpha*}
c_{\ovl{X},X}^*
\alpha_{\ovl{X}}(a)c_{\ovl{X},X} R_X^\alpha R_X^{\alpha*}c_{\ovl{X},X}^*
\\
&=
c_{\ovl{X},X}R_X^\alpha
\phi_X^\alpha(a)R_X^{\alpha*}c_{\ovl{X},X}^*
\\
&=
\alpha_{\ovl{X}}(\alpha_X(\phi_X^\alpha(a)))
c_{\ovl{X},X}R_X^\alpha R_X^{\alpha*}c_{\ovl{X},X}^*.
\end{align*}
Applying $\phi_{\ovl{X}}^\alpha$ in the both sides above,
we have
$a=\alpha_X(\phi_X^\alpha(a))$.
Put $b:=\phi_X^\alpha(a)$.
Then $bx=xb$ for all $x\in Q'\cap M_\omega$,
and we are done.
\end{proof}

\subsection{Local quantization}
In this subsection,
$\sC$ and $M$ still denote
a rigid C$^*$-tensor category and a properly infinite von Neumann algebra
with separable predual,
respectively.
We will prove a C$^*$-tensor category version
of \cite[Lemma 5.3]{MT-minimal}
which has been proved for approximately inner
actions of discrete Kac algebras.

\begin{lem}
\label{lem:qq}
Let $(\alpha,c)$ be a centrally free cocycle action
of $\sC$ on $M$.
Let $\varphi$ be a faithful normal state on $M$
such that $\varphi$
is $\theta^\alpha$-invariant on $Z(M)$.
Then for any countably generated von Neumann subalgebra
$Q\subset M^\omega$,
finite subset $\cF\subset \Irr(\sC)\setminus\{\btr\}$
and $0<\delta<1$,
there exist $n\in\N$ and a partition of unity
$\{q_r\}_{r=0}^n$ in $Q'\cap M_\omega$
such that
\begin{enumerate}
\item
$|q_0|_{\varphi^\omega}<\delta$.

\item
$\sum_{X\in \cF}|q_r\alpha_X(q_r)|_{\varphi^\omega}
<\delta|q_r|_{\varphi^\omega}$
for all $r=1,\dots,n$.
\end{enumerate}
\end{lem}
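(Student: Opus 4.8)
The plan is to extract, from central freeness and the averaging operator $I_\cK^\alpha$ of Lemma \ref{lem:average}, a single projection that is almost orthogonal to its $\alpha_X$-translates for $X\in\cF$, and then to chop it up inside $Q'\cap M_\omega$ into the desired partition by a maximality argument. First I would reduce to producing one projection: a standard exhaustion shows that if, for every nonzero projection $e\in Q'\cap M_\omega$, there is a nonzero subprojection $q\le e$ in $Q'\cap M_\omega$ with $\sum_{X\in\cF}|q\alpha_X(q)|_{\varphi^\omega}<\delta|q|_{\varphi^\omega}$, then a maximal family $\{q_r\}_{r\ge1}$ of such subprojections with pairwise orthogonal supports has $\varphi^\omega$-small complement $q_0$ (if not, apply the hypothesis to $q_0$ and enlarge the family, contradicting maximality); since $\varphi^\omega$ is tracial on ${}_{\alpha_X}M_\omega\supset Q'\cap M_\omega$ by Lemma \ref{lem:freetrace}(3), the family is necessarily finite after discarding a piece of mass $<\delta$, absorbing the tail into $q_0$. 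So the whole game is the local statement.

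For the local statement, fix a nonzero projection $e\in Q'\cap M_\omega$. Enlarge $Q$ to a countably generated subalgebra $Q_1\supset Q\cup\{e\}$; then $N:=Q_1'\cap M_\omega$ is a von Neumann algebra on which $\varphi^\omega$ is a faithful trace, and Lemma \ref{lem:relcomm} gives $\{a\in M^\omega: a\alpha_X(x)=\alpha_Y(x)a,\ x\in N\}=\alpha(\sC(X,Y))\alpha_X(N'\cap M^\omega)$. The key point is that, because $\alpha$ is centrally free, no nonzero element of $M^\omega$ intertwines $\id_N$ with $\alpha_X|_N$ for $X\in\Irr(\sC)\setminus\{\btr\}$; equivalently, the conditional expectation onto $N$ kills $\alpha_X(z)e$-type products in the appropriate sense. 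Concretely I would consider, for a large $(\cF,\varepsilon)$-invariant finite set $\cK\subset\Irr(\sC)$ supplied by amenability, the element $a:=I_\cK^\alpha(e)\in N$, compute using Lemma \ref{lem:average} and (\ref{eq:PXYZ}) that $\phi_{\ovl X}^\alpha(a)$ differs from $a$ in $\varphi^\omega$-norm by $O(\delta)$ for $X\in\cF$ (this is exactly the Følner estimate of Lemma \ref{lem:Folner}), and then choose a spectral projection $q$ of $a$ corresponding to values near the essential supremum of $a$; on such a $q$ the smallness of $\|\phi_{\ovl X}^\alpha(a)-a\|_{\varphi^\omega}$ forces $\phi_{\ovl X}^\alpha$, hence $\alpha_X$, to move $q$ mostly off itself, giving $|q\alpha_X(q)|_{\varphi^\omega}$ small relative to $|q|_{\varphi^\omega}$. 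Finally I would use a fast reindexation to arrange $q\in Q'\cap M_\omega$ (not merely in $M^\omega$) while keeping all the estimates, and note $q\le$ a spectral projection of $a\le e$-adjacent data so that we stay below $e$.

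The main obstacle I anticipate is the passage from ``$I_\cK^\alpha(e)$ is almost $\phi^\alpha_{\ovl X}$-invariant in $L^2(\varphi^\omega)$'' to ``a spectral projection $q$ of it satisfies the pointwise relation $|q\alpha_X(q)|_{\varphi^\omega}<\delta|q|_{\varphi^\omega}$'': one must turn an $L^2$-almost-fixedpoint of the completely positive maps $\phi^\alpha_{\ovl X}$ into a near-projection that is genuinely \emph{spread out} by $\alpha_X$, which is where central freeness (via Lemma \ref{lem:relcomm} and the tracial structure on ${}_{\alpha_X}M_\omega$) has to be invoked to rule out the possibility that $q$ is nearly $\alpha_X$-invariant rather than nearly $\alpha_X$-orthogonal. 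Controlling this uniformly over the finite set $\cF$, and bookkeeping the two small parameters (the Følner $\varepsilon$ versus the target $\delta$), is the delicate part; everything else is routine ultraproduct and maximality manipulation.
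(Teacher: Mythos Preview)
Your proposal has two serious gaps that make it unworkable as stated.

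First, you invoke a F\o lner set $\cK$ ``supplied by amenability'', but Lemma~\ref{lem:qq} does not assume $\sC$ is amenable. Amenability enters only in Section~3; this lemma is a local quantization result valid for any rigid C$^*$-tensor category.

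Second, and more fundamentally, the mechanism you propose points the wrong way. If $a=I_\cK^\alpha(e)$ is almost fixed by $\phi_{\ovl X}^\alpha$ in $L^2(\varphi^\omega)$, then a spectral projection $q$ of $a$ near the top of the spectrum will tend to be \emph{almost $\alpha_X$-invariant}, i.e.\ $q\alpha_X(q)\approx q$, which is the \emph{opposite} of the desired $|q\alpha_X(q)|_{\varphi^\omega}\ll|q|_{\varphi^\omega}$. You flag this as the ``main obstacle'' but offer no mechanism to resolve it; central freeness via Lemma~\ref{lem:relcomm} tells you $\alpha_X$ has no exact intertwiners on $Q'\cap M_\omega$, but it does not by itself convert an almost-invariant element into one whose spectral pieces are $\alpha_X$-dispersed. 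In the paper the averaging $I_\cK^\alpha$ is used only \emph{after} orthogonality has already been obtained (in the Rohlin tower construction, Theorem~\ref{thm:Rohlin-tensor}), not as a device to produce it.

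The paper's actual proof goes through Popa's local quantization theorem \cite[Theorem~A.1.2]{Popa-endo}. One forms $X=\btr\oplus X_1\oplus\cdots\oplus X_m$ with $\cF=\{X_1,\dots,X_m\}$, sets $N=\alpha_X(Q'\cap M_\omega)$, and uses Lemma~\ref{lem:relcomm} to see that $\alpha(\sC(X,X))=\sum_k\C T_k^\alpha T_k^{\alpha*}$ is central in $N\vee(N'\cap M^\omega)$, so the off-diagonal elements $T_0^\alpha T_k^{\alpha*}$ have conditional expectation zero there. Popa's theorem then supplies a finite partition of unity $\{q_r\}$ in $Q'\cap M_\omega$ with $\bigl\|\sum_r\alpha_X(q_r)T_0^\alpha T_k^{\alpha*}\alpha_X(q_r)\bigr\|_{\alpha_X(\varphi^\omega)}<\varepsilon$; unraveling this with (\ref{eq:alxphi}) and Lemma~\ref{lem:freetrace} yields $\sum_r d(X_k)d(X)^{-1}\|q_r\alpha_{X_k}(q_r)\|_{\varphi^\omega}^2<\varepsilon^2$, and a Cauchy--Schwarz step
$\varphi^\omega(|q_r\alpha_{X_k}(q_r)|)\le\varphi^\omega(q_r)^{1/2}\|q_r\alpha_{X_k}(q_r)\|_{\varphi^\omega}$
together with the selection argument of \cite[Lemma~5.2]{MT-minimal} finishes. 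The heavy lifting is entirely in Popa's theorem, for which your proposal has no substitute.
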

\begin{proof}
Let $\cF=\{X_1,\dots,X_m\}$,
where $X_j\neq X_k$ if $j\neq k$.
Set $X:=\btr\oplus X_1\oplus\cdots\oplus X_m$
with isometries $T_k\in\sC(X_k,X)$.
We put $X_0:=\btr$.
We consider the inclusion
$N:=\alpha_X(Q'\cap M_\omega)\subset M^\omega$.
Then $N$ is contained
in the centralizer of $\alpha_X(\varphi^\omega)$.
Hence there exists an $\alpha_X(\varphi^\omega)$-preserving
conditional expectation $E$ from $M^\omega$ onto
$N\vee (N'\cap M^\omega)$.
It follows from Lemma \ref{lem:relcomm}
that
$N'\cap M^\omega=\alpha(\sC(X,X))\alpha_X((Q'\cap M_\omega)'\cap M^\omega)$,
and we have
\[
N\vee (N'\cap M^\omega)
=
\alpha(\sC(X,X))\alpha_X\big{(}(Q'\cap M_\omega)
\vee ((Q'\cap M_\omega)'\cap M^\omega)\big{)}.
\]
In particular,
$\alpha(\sC(X,X))=\sum_k\C T_k^\alpha T_k^{\alpha*}$ is contained
in the center of $N\vee (N'\cap M^\omega)$.
Thus $E(T_0^\alpha T_k^{\alpha*})=0$ for $k\geq1$.
Thanks to \cite[Theorem A.1.2]{Popa-endo},
for any $\varepsilon>0$,
we can take a finite partition of unity $\{q_r\}_{r\in J}$
in $Q'\cap M_\omega$
such that
\[
\left\|
\sum_{r\in J}
\alpha_X(q_r)T_0^\alpha T_k^{\alpha*}
\alpha_X(q_r)
\right\|_{\alpha_X(\varphi^\omega)}
<\varepsilon
\quad\mbox{for all }
k\geq1.
\]
Using $\alpha_X(q_r)T_0^\alpha T_k^{\alpha*}
\alpha_X(q_r)
=T_0^\alpha q_r\alpha_{X_k}(q_r)
T_k^{\alpha*}$,
we obtain
\begin{align*}
\left\|
\sum_{r\in J}
\alpha_X(q_r)T_0^\alpha T_k^{\alpha*}
\alpha_X(q_r)
\right\|_{\alpha_X(\varphi^\omega)}^2
&=
\sum_{r\in J}
\alpha_X(\varphi^\omega)
(|\alpha_X(q_r)T_0^\alpha T_k^{\alpha*}
\alpha_X(q_r)|^2)
\\
&=
\sum_{r\in J}
\alpha_X(\varphi^\omega)
(T_k^\alpha | q_r\alpha_{X_k}(q_r)|^2 T_k^{\alpha*})
\\
&=
\sum_{r\in J}
\frac{d(X_k)}{d(X)}
\alpha_{X_k}(\varphi^\omega)
(T_k^{\alpha*}T_k^\alpha |q_r\alpha_{X_k}(q_r)|^2 )
\\
&=
\sum_{r\in J}
\frac{d(X_k)}{d(X)}
\|q_r\alpha_{X_k}(q_r)\|_{\alpha_{X_k}(\varphi^\omega)}^2,
\end{align*}
where in the third equality,
we have used (\ref{eq:alxphi}).
On the last term,
note that $|q_r\alpha_{X_k}(q_r)|^2$ is contained in
${}_{\alpha_{X_k}}M_\omega$.
By Lemma \ref{lem:freetrace},
we obtain
$\|q_r\alpha_{X_k}(q_r)\|_{\alpha_{X_k}(\varphi^\omega)}^2
=
\|q_r\alpha_{X_k}(q_r)\|_{\varphi^\omega}^2$.
Thus
we obtain
\[
\sum_{r\in J}
\frac{d(X_k)}{d(X)}
\|q_r\alpha_{X_k}(q_r)\|_{\varphi^\omega}^2
<\varepsilon^2
\quad\mbox{for all }k\geq1,
\]
and
\[
\sum_{r\in J}
\sum_{k=1}^m
\frac{d(X_k)}{d(X)}
\|q_r\alpha_{X_k}(q_r)\|_{\varphi^\omega}^2
<
m\varepsilon^2.
\]
Then we can prove this lemma in a similar way to
\cite[Lemma 5.2]{MT-minimal}
by the following inequality:
\begin{align*}
\varphi^\omega(|q_r\alpha_{X_k}(q_r)|)
&=\varphi^\omega(\alpha_{X_k}(q_r)|q_r\alpha_{X_k}(q_r)|)
\\
&\leq
\varphi^\omega(\alpha_{X_k}(q_r))^{1/2}
\varphi^\omega(|q_r\alpha_{X_k}(q_r)|^2)^{1/2}
\\
&=
\varphi^\omega(q_r)^{1/2}
\varphi^\omega(|q_r\alpha_{X_k}(q_r)|^2)^{1/2}.
\end{align*}
\end{proof}

We will prove the following lemma that
is the key of our approach to a Rohlin tower construction.
It is informed by Y. Arano that he also has independently
obtained the similar estimate of the support projection.

\begin{lem}
\label{lem:support-estimate}
Let $e\in M_\omega$ be a projection and $X\in\sC$.
Let $s(\phi_X^\alpha(e))$ be the support projection
of $\phi_X^\alpha(e)$.
Then for any state $\varphi\in M_*$,
one has
$\varphi^\omega(s(\phi_X^\alpha(e)))
\leq2d(X)^2\alpha_X(\varphi^\omega)(e)$.
\end{lem}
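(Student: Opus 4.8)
The plan is to reduce to the case of a simple object and then to combine the Pimsner--Popa inequality for the conditional expectation $E_X^\alpha:=\alpha_X\circ\phi_X^\alpha$ with a spectral cut-off of $\phi_X^\alpha(e)$.

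First I would reduce to $X\in\Irr(\sC)$. Decompose $X=\bigoplus_i X_i$ into simple objects with isometries $S_i\in\sC(X_i,X)$, $\sum_i S_iS_i^*=1_X$. Since $e\in M_\omega$ commutes with the operators $S_i^\alpha\in M$, we have $S_i^{\alpha*}eS_i^\alpha=e$, so Lemma \ref{lem:linv} gives $d(X)\phi_X^\alpha(e)=\sum_i d(X_i)\phi_{X_i}^\alpha(e)$. The summands being positive, $s(\phi_X^\alpha(e))=\bigvee_i s(\phi_{X_i}^\alpha(e))$, whence $\varphi^\omega(s(\phi_X^\alpha(e)))\le\sum_i\varphi^\omega(s(\phi_{X_i}^\alpha(e)))$ by subadditivity of the state on finite joins of projections. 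Moreover $\alpha_X(\varphi^\omega)(e)=\varphi^\omega(\phi_X^\alpha(e))=\sum_i\frac{d(X_i)}{d(X)}\alpha_{X_i}(\varphi^\omega)(e)$, and since $d(X_i)\le d(X)$ and $d(X)=\sum_i d(X_i)$, the bound for each simple $X_i$ (with constant $2d(X_i)^2$) yields the bound for $X$.

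Now assume $X\in\Irr(\sC)$, put $a:=\phi_X^\alpha(e)\in M_\omega$ and $p:=s(a)$. Recall that $E_X^\alpha=\alpha_X\circ\phi_X^\alpha$ is the minimal conditional expectation of $M^\omega$ onto $\alpha_X(M^\omega)$ with $\Ind E_X^\alpha=d(X)^2$ (a singleton Pimsner--Popa base is $\{d(X)\ovl R_X^{\alpha*}\}$); hence the Pimsner--Popa inequality gives $\alpha_X(a)=E_X^\alpha(e)\ge d(X)^{-2}e$ and likewise $\alpha_X(1-a)=E_X^\alpha(1-e)\ge d(X)^{-2}(1-e)$, so $\alpha_X(p)=s(\alpha_X(a))\ge e$. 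Set $\lambda:=(2d(X)^2)^{-1}$ and write $p=q'+q$ with $q':=\chi_{[\lambda,1]}(a)$ and $q:=\chi_{(0,\lambda)}(a)$, both in $M_\omega$. From $a\ge\lambda q'$ one gets $\varphi^\omega(q')\le\lambda^{-1}\varphi^\omega(a)=2d(X)^2\alpha_X(\varphi^\omega)(e)$, so it remains to see that the low part $q$ contributes nothing, i.e. $\varphi^\omega(q)=0$.

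The vanishing of $q$ is the technical heart, and it is here that the hypothesis $e\in M_\omega$ is essential: for $\phi_X^\alpha$ of an arbitrary projection the spectrum can accumulate at $0$. The structural inputs I would use are that $q\in M_\omega$ commutes with every element of $M$, in particular with the isometry $v:=c_{\ovl X,X}R_X^\alpha$ (which satisfies $vy=\alpha_{\ovl X}(\alpha_X(y))v$, $\phi_X^\alpha=v^*\alpha_{\ovl X}(\cdot)v$ and $\phi_{\ovl X}^\alpha(vv^*)=d(X)^{-2}$), together with the two-sided index bound $d(X)^{-2}e\le\alpha_X(a)\le 1-d(X)^{-2}(1-e)$ established above. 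I expect these to combine so as to force $aq\ge d(X)^{-2}q$, contradicting $q=\chi_{(0,\lambda)}(a)\neq0$; a more robust route would be to replace $e$ by a fast-reindexed equivalent central projection which is free from ${}_{\alpha_X}M_\omega$ (as in \cite[Lemmas 5.2 and 5.3]{MT-minimal}), for which $\phi_X^\alpha(\cdot)$ has spectrum contained in $\{0,c\}$ so that $q=0$, and then to transfer the conclusion back through the tracial conditional expectation $\tau^\omega|_{M_\omega}\colon M_\omega\to Z(M)$, which is legitimate because $\varphi^\omega(s(\phi_X^\alpha(e)))=\varphi(\tau^\omega(p))$ and $\alpha_X(\varphi^\omega)(e)=\varphi(\tau^\omega(a))$ for every state $\varphi\in M_*$. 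With $q=0$ we have $p=q'$ and the desired estimate follows from the previous paragraph.
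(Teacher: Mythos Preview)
Your reduction to simple $X$ is fine, but the argument for simple $X$ has a genuine gap at the crucial step: the claim that the low spectral part $q=\chi_{(0,\lambda)}(\phi_X^\alpha(e))$ vanishes is neither proved nor, in general, true. The Pimsner--Popa inequality $\alpha_X(a)\ge d(X)^{-2}e$ bounds $\alpha_X(a)$ from below on the range of $e$, but this says nothing about the spectrum of $a=\phi_X^\alpha(e)$ itself; there is no reason for $\sigma(a)$ to avoid the interval $(0,d(X)^{-2})$, and for a generic central projection $e$ it will not. Your fallback via fast reindexation also fails: reindexing $e$ to a ``free'' $e'$ preserves $\tau^\omega(e)$ and hence $\tau^\omega(\phi_X^\alpha(e))$, so the right-hand side $\alpha_X(\varphi^\omega)(e)$ is unchanged, but the left-hand side is $\varphi(\tau^\omega(s(\phi_X^\alpha(e))))$, and the support projection is a \emph{nonlinear} spectral function of $\phi_X^\alpha(e)$. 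There is no reason for $\tau^\omega(s(\phi_X^\alpha(e')))$ to dominate $\tau^\omega(s(\phi_X^\alpha(e)))$; indeed, collapsing the spectrum of $\phi_X^\alpha(e')$ to a single nonzero value typically \emph{decreases} the support, so you cannot transfer the bound back to the original $e$.

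The paper bypasses the spectral issue entirely. Instead of slicing $\phi_X^\alpha(e)$, it works inside $M^\omega$ and observes that $s(\phi_X^\alpha(e))$, being central, commutes with the rank-one projection $q_X:=c_{\ovl X,X}R_X^\alpha R_X^{\alpha*}c_{\ovl X,X}^*$, and that $q_X\phi_X^\alpha(e)q_X=q_X\alpha_{\ovl X}(e)q_X$; hence $s(\phi_X^\alpha(e))q_X$ is dominated by the join $p:=\alpha_{\ovl X}(e)\vee u\alpha_{\ovl X}(e)u^*$ with $u=2q_X-1$. Evaluating both $p$ and $s(\phi_X^\alpha(e))q_X$ in the state $\psi^\omega$ with $\psi=\alpha_{\ovl X}(\alpha_X(\varphi))$ (which commutes with $\alpha_{\ovl X}(M_\omega)$ and with $u$) gives $\psi^\omega(p)\le 2\alpha_X(\varphi^\omega)(e)$ and $\psi^\omega(s(\phi_X^\alpha(e))q_X)=d(X)^{-2}\varphi^\omega(s(\phi_X^\alpha(e)))$, and the lemma follows. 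The key idea you are missing is to compare the support not to a spectral piece of $\phi_X^\alpha(e)$ but, after multiplying by $q_X$, to a projection built directly from $\alpha_{\ovl X}(e)$.
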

\begin{proof}
Set the projection $q_X:=c_{\ovl{X},X}R_X^\alpha R_X^{\alpha*}c_{\ovl{X},X}^*$
and
the self-adjoint unitary
$u:=2q_X-1$
and
$p:=\alpha_{\overline{X}}(e)\vee u\alpha_{\overline{X}}(e)u^*$
in $M^\omega$.
Then $pq_X=q_X p$.
Since $c_{\ovl{X},X}R_X^\alpha\in M$
and $\phi_X^\alpha(e)\in M_\omega$,
we have
\begin{align*}
p^\perp q_X\phi_X^\alpha(e)
&=
p^\perp c_{\ovl{X},X}R_X^\alpha\phi_X^\alpha(e)
R_X^{\alpha*}c_{\ovl{X},X}^*
=
p^\perp q_X
\alpha_{\overline{X}}(e)
q_X
\\
&=q_Xp^\perp
\alpha_{\overline{X}}(e)
q_X
=0.
\end{align*}
This shows
$s(\phi_X^\alpha(e))\leq 1-p^\perp q_X$,
and
$s(\phi_X^\alpha(e))q_X
\leq p q_X \leq p$.

Set $\psi:=\alpha_{\overline{X}}(\alpha_X(\varphi))\in M_*$.
Note that $\psi^\omega$ commutes with
$\alpha_{\overline{X}}(M_\omega)$ and $u$.
Hence, on the one hand, we have
\begin{align*}
\psi^\omega(p)
&=
\alpha_{\overline{X}}(\alpha_X(\varphi^\omega))(p)
\\
&\leq
\alpha_{\overline{X}}(\alpha_X(\varphi^\omega))(\alpha_{\overline{X}}(e))
+
\alpha_{\overline{X}}(\alpha_X(\varphi^\omega))(u\alpha_{\overline{X}}(e)u^*)
\\
&=
2\alpha_{\overline{X}}(\alpha_X(\varphi^\omega))(\alpha_{\overline{X}}(e))
\\
&=
2\alpha_X(\varphi^\omega)(e).
\end{align*}

On the other hand, from (\ref{eq:alxphi}) we have
\begin{align*}
\psi^\omega(s(\phi_X^\alpha(e))q_X)
&=
\big{(}
R_X^{\alpha*}c_{\ovl{X},X}^*
\alpha_{\ovl{X}}(\alpha_X(\varphi^\omega))\big{)}
(s(\phi_X^\alpha(e))c_{\ovl{X},X}R_X^\alpha)
\\
&=
\frac{1}{d(X)^2}
\varphi^\omega(R_X^{\alpha*}c_{\ovl{X},X}^*
s(\phi_X^\alpha(e))c_{\ovl{X},X}R_X^\alpha)
\\
&=\frac{1}{d(X)^2}\varphi^\omega(s(\phi_X^\alpha(e))).
\end{align*}
Hence we have
\begin{align*}
\varphi^\omega(s(\phi_X^\alpha(e)))
&=
d(X)^2\psi^\omega(s(\phi_X^\alpha(e))q_X)
\leq
d(X)^2\psi(p)
\\
&\leq
2d(X)^2\alpha_X(\varphi^\omega)(e).
\end{align*}
\end{proof}

\begin{lem}
\label{lem:ee}
Let $(\alpha,c)$, $M$, $\cF$, $\varphi$,
$Q$ and $\delta$
be as in Lemma \ref{lem:qq}.
Then
there exist $n\in\N$ and a partition of unity
$\{e_r\}_{r=0}^n$ in $Q'\cap M_\omega$
such that
\begin{enumerate}
\item
$|e_0|_{\varphi^\omega}<\delta$.

\item
$e_r\alpha_X(e_r)=0$
for all $r=1,\dots,n$ and $X\in \cF$.
\end{enumerate}
\end{lem}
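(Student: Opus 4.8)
\emph{Proof plan.}

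\textbf{Step 1 (reduction to shrinking the partition of Lemma \ref{lem:qq}).} Enlarging $Q$ if necessary, we may assume that $\alpha_X(Q)\subseteq Q$ for every $X\in\sC_0$: replace $Q$ by the von Neumann algebra generated by $Q$, the unitaries $\alpha_X(y)$ for $X\in\sC_0$ and $y$ in a countable generating set of $Q$, and the $2$-cocycles $c_{Y,Z}$, $Y,Z\in\sC_0$; this is still countably generated, and the cocycle identities show it is globally $\alpha_X$-invariant for $X\in\sC_0$. With this choice, the identity $\phi_X^\alpha(\alpha_X(a)\,b\,\alpha_X(c))=a\,\phi_X^\alpha(b)\,c$ gives $\phi_X^\alpha(Q'\cap M_\omega)\subseteq Q'\cap M_\omega$ for $X\in\sC_0$. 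Now apply Lemma \ref{lem:qq} to $(\cF,\delta_1)$ for a small $\delta_1=\delta_1(\delta,\cF)>0$ to be chosen at the end, obtaining $n\in\N$ and a partition of unity $\{q_r\}_{r=0}^n$ in $Q'\cap M_\omega$ with $|q_0|_{\varphi^\omega}<\delta/2$ and $\sum_{X\in\cF}|q_r\alpha_X(q_r)|_{\varphi^\omega}<\delta_1|q_r|_{\varphi^\omega}$ for $r=1,\dots,n$. It suffices to find, for each $r\ge1$, a projection $e_r\le q_r$ in $Q'\cap M_\omega$ with $e_r\alpha_X(e_r)=0$ for all $X\in\cF$ and $\sum_{r=1}^n|q_r-e_r|_{\varphi^\omega}<\delta/2$; then $e_0:=1-\sum_{r=1}^n e_r=q_0+\sum_{r=1}^n(q_r-e_r)$ is a projection in $Q'\cap M_\omega$ with $|e_0|_{\varphi^\omega}=|q_0|_{\varphi^\omega}+\sum_{r=1}^n|q_r-e_r|_{\varphi^\omega}<\delta$.

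\textbf{Step 2 (an orthogonality criterion and the candidate $e_r$).} For a projection $e\in M_\omega$ and $X\in\sC$, faithfulness of $\phi_X^\alpha$ together with $\phi_X^\alpha(\alpha_X(e)\,b\,\alpha_X(e))=e\,\phi_X^\alpha(b)\,e$ shows that $e\alpha_X(e)=0$ if and only if $e\,\phi_X^\alpha(e)\,e=0$ (apply $\phi_X^\alpha$ to the positive element $\alpha_X(e)e\alpha_X(e)=(e\alpha_X(e))^*(e\alpha_X(e))$). Hence, for any subprojection $e\le q_r$ one has $e\alpha_X(e)=0$ as soon as $e\perp s\big(q_r\phi_X^\alpha(q_r)q_r\big)$, since then $e\,\phi_X^\alpha(e)\,e\le e\,\phi_X^\alpha(q_r)\,e=e\,\big(q_r\phi_X^\alpha(q_r)q_r\big)\,e=0$. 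As each $q_r\phi_X^\alpha(q_r)q_r$ is a positive element of $Q'\cap M_\omega$ dominated by $q_r$, the projection $s\big(q_r\phi_X^\alpha(q_r)q_r\big)$ lies in $Q'\cap M_\omega$ and is $\le q_r$. We therefore set $e_r:=q_r-\bigvee_{X\in\cF}s\big(q_r\phi_X^\alpha(q_r)q_r\big)\in Q'\cap M_\omega$; by the criterion $e_r\alpha_X(e_r)=0$ for all $X\in\cF$, and, the removed projections all lying under $q_r$, $|q_r-e_r|_{\varphi^\omega}=\varphi^\omega\big(\bigvee_{X\in\cF}s(q_r\phi_X^\alpha(q_r)q_r)\big)\le\sum_{X\in\cF}\varphi^\omega\big(s(q_r\phi_X^\alpha(q_r)q_r)\big)$. (Alternatively one may take $e_r$ to be a maximal subprojection of $q_r$ in $Q'\cap M_\omega$ with $e_r\alpha_X(e_r)=0$ for all $X\in\cF$; either way one is reduced to the same trace estimate.)

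\textbf{Step 3 (the main obstacle).} It remains to estimate $\varphi^\omega\big(s(q_r\phi_X^\alpha(q_r)q_r)\big)$ by (a power of) the overlap $|q_r\alpha_X(q_r)|_{\varphi^\omega}$. Since $\varphi^\omega$ is a trace on ${}_{\alpha_X}M_\omega$ and $\theta^\alpha$-invariant (Lemma \ref{lem:freetrace}), one has $\varphi^\omega\big(q_r\phi_X^\alpha(q_r)q_r\big)=\|q_r\alpha_X(q_r)\|_{\varphi^\omega}^2\le|q_r\alpha_X(q_r)|_{\varphi^\omega}$, so the positive element $q_r\phi_X^\alpha(q_r)q_r\in M_\omega$ has small trace; the difficulty — and the heart of the lemma — is that the support projection of a trace-small positive element can a priori have trace of order $\varphi^\omega(q_r)$, so the crude bound $\varphi^\omega(s(q_r\phi_X^\alpha(q_r)q_r))\le\varphi^\omega(s(\phi_X^\alpha(q_r)))\le 2d(X)^2\varphi^\omega(q_r)$ obtained directly from Lemma \ref{lem:support-estimate} is useless (it does not see the overlap). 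This is exactly the point at which Lemma \ref{lem:support-estimate} — the key lemma of the Rohlin-tower construction — must be used in an essential, quantitative way: writing $q_r\phi_X^\alpha(q_r)q_r=\phi_X^\alpha\big(\alpha_X(q_r)q_r\alpha_X(q_r)\big)$, cutting this element at a spectral threshold $\lambda>0$, bounding the part above $\lambda$ by Markov's inequality (trace $\le|q_r\alpha_X(q_r)|_{\varphi^\omega}/\lambda$), controlling the support of the part below $\lambda$ through Lemma \ref{lem:support-estimate} applied to the relevant spectral projections together with the $\alpha_X$-invariance of $\varphi^\omega$ on ${}_{\alpha_X}M_\omega$, and then optimising $\lambda$, one obtains an estimate $\varphi^\omega\big(s(q_r\phi_X^\alpha(q_r)q_r)\big)\le C_\cF\,\eta\big(|q_r\alpha_X(q_r)|_{\varphi^\omega}\big)$ with $C_\cF$ depending only on $\cF$ (and the dimensions $d(X)$, $X\in\cF$) and $\eta(t)\to0$ as $t\to0$. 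Summing over $X\in\cF$ and $r=1,\dots,n$, using $\sum_r|q_r|_{\varphi^\omega}=1$ and $\sum_{X\in\cF}|q_r\alpha_X(q_r)|_{\varphi^\omega}<\delta_1|q_r|_{\varphi^\omega}$, yields $\sum_{r=1}^n|q_r-e_r|_{\varphi^\omega}\le C_\cF'\,\eta'(\delta_1)$ with $\eta'(\delta_1)\to0$ as $\delta_1\to0$; choosing $\delta_1$ small enough makes this $<\delta/2$, which finishes the proof. Thus the only non-formal ingredient is the passage from ``small overlap'' to ``small-trace support projection'', and that is precisely what Lemma \ref{lem:support-estimate} provides.
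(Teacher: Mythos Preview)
Your Step 3 is where the argument breaks down, and the gap is not merely technical.

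You need the estimate $\varphi^\omega\big(s(q_r\phi_X^\alpha(q_r)q_r)\big)\le C_\cF\,\eta\big(|q_r\alpha_X(q_r)|_{\varphi^\omega}\big)$ with $\eta(t)\to 0$. But a positive element of small trace can have a support projection of full trace; nothing in Lemma~\ref{lem:support-estimate} prevents this. Concretely, that lemma says $\varphi^\omega(s(\phi_X^\alpha(e)))\le 2d(X)^2\varphi^\omega(e)$ for a \emph{projection} $e\in M_\omega$. In your spectral-cutting scheme, the ``part below $\lambda$'' corresponds to a spectral projection $p$ of $\alpha_X(q_r)q_r\alpha_X(q_r)\in{}_{\alpha_X}M_\omega$ (not in $M_\omega$, so the lemma does not even apply as stated), and even if it did, the only available bound would be $\varphi^\omega(s(\phi_X^\alpha(p)))\le 2d(X)^2\varphi^\omega(p)$ --- but $\varphi^\omega(p)$ can be of order $\varphi^\omega(q_r)$, not of order the overlap. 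So after Markov and Lemma~\ref{lem:support-estimate} you are left with $\varphi^\omega(s(b))\lesssim |q_r\alpha_X(q_r)|_{\varphi^\omega}/\lambda + d(X)^2\varphi^\omega(q_r)$, and optimizing $\lambda$ cannot make the second term small. In short, the passage from ``small overlap'' to ``small-trace support of the compressed left inverse'' does not follow from the tools at hand.

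The paper's route is structurally different. It does not try to shrink each $q_r$. Instead, inside a given projection $f$, it takes a \emph{maximal} subprojection $e$ with the small-overlap property (Claim~1 guarantees $e\neq 0$). Maximality forces the covering equation
\[
e\ \vee\ \bigvee_{X\in\cF}s(\phi_X^\alpha(e))\ \vee\ \bigvee_{X\in\cF}s(\phi_{\overline X}^\alpha(e))\ \vee\ (1-f)=1,
\]
because any projection orthogonal to all these pieces would allow one to enlarge $e$. Now Lemma~\ref{lem:support-estimate} is applied to $s(\phi_X^\alpha(e))$ itself (no compression), giving $\varphi^\omega(s(\phi_X^\alpha(e)))\le 2d(X)^2\varphi^\omega(e)$, and the covering equation yields the uniform lower bound $|e|_{\varphi^\omega}\ge(1+4|\cF|_\sigma)^{-1}|f|_{\varphi^\omega}$. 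One then iterates (Steps~C and~D of \cite[Lemma~5.3]{MT-minimal}) to exhaust most of the mass and pass to exact orthogonality. The point is that Lemma~\ref{lem:support-estimate} is used to bound how much room the supports $s(\phi_X^\alpha(e))$ can occupy, not to control a compressed quantity like $s(q_r\phi_X^\alpha(q_r)q_r)$; the maximality argument is what converts this into the desired conclusion.
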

\begin{proof}
Our proof is almost parallel to that of \cite[Lemma 5.3]{MT-minimal}.
We may and do assume that $Q$ is $\alpha$-invariant,
that is, $\alpha_X(Q)\subset Q$ for all $X\in\sC$.
We should be careful here since we do not want to think of
the cardinality of objects of $\sC$.
However, if we assume that $Q$ contains $M$ and $\alpha_X(Q)$
for all $X\in\sC_0$,
then any $T^\alpha$ for morphisms $T$ in $\sC$
is realized in $Q$.
Thus we see $\alpha_X(Q)\subset Q$ for all $X\in\sC$.
The following claim is proved
by using Lemma \ref{lem:qq}
in the same way as Step A
in the proof of \cite[Lemma 5.3]{MT-minimal}.

\setcounter{clam}{0}

\begin{clam}
\label{cl:f}
Let $\mu>0$ and $0\neq f\in Q'\cap M_\omega$ a projection.
Then there exists a non-zero projection
$f'\in Q'\cap M_\omega$
such that $f'\leq f$
and
$\sum_{X\in\cF}|f'\alpha_X(f')|_{\varphi^\omega}
<\mu|f'|_{\varphi^\omega}$.
\end{clam}

The following claim,
which corresponds to Step B of the proof of \cite[Lemma 5.3]{MT-minimal},
is crucial.

\begin{clam}
Let $\mu>0$ and $f\in Q'\cap M_\omega$ a non-zero projection.
Then there exists a non-zero projection $e\in Q'\cap M_\omega$
such that
\begin{enumerate}
\item
$e\leq f$.

\item
$\sum_{X\in\cF}
|e\alpha_X(e)|_{\varphi^\omega}
\leq\mu|e|_{\varphi^\omega}$.

\item
$|e|_{\varphi^\omega}
\geq
(1+4|\cF|_\sigma)^{-1}
|f|_{\varphi^\omega}$.
\end{enumerate}
\end{clam}
\begin{proof}
[Proof of Claim 2.]
In a similar way to the proof of \cite[Lemma 5.3]{MT-minimal},
we can show a maximal projection $e$ with the conditions
(1) and (2) in this claim, which is non-zero from Claim 1,
satisfies the following equality:
\begin{equation}
\label{eq:support}
e\vee
\bigvee_{X\in\cF}s(\phi_X^\alpha(e))
\vee
\bigvee_{X\in\cF}s(\phi_{\overline{X}}^\alpha(e))
\vee
(1-f)
=1.
\end{equation}
Using the previous lemma and the tracial property of $\varphi^\omega$
on $M_\omega$,
we have
\begin{align*}
1
&\leq
\varphi^\omega(e)
+
\sum_{X\in\cF}
\varphi^\omega(s(\phi_X^\alpha(e)))
+
\sum_{X\in\cF}
\varphi^\omega(s(\phi_{\ovl{X}}^\alpha(e)))
+
\varphi^\omega(f^\perp)
\\
&\leq
\varphi^\omega(e)
+
\sum_{X\in\cF}
2d(X)^2
\alpha_X(\varphi^\omega)(e)
+
\sum_{X\in\cF}
2d(X)^2
\alpha_{\ovl{X}}(\varphi^\omega)(e)
+
\varphi^\omega(f^\perp)
\\
&=
\big{(}
1+
4\sum_{X\in\cF}
d(X)^2
\big{)}
\varphi^\omega(e)
+
\varphi^\omega(f^\perp)
\end{align*}
This proves Claim 2.
\end{proof}
The remaining part is proved in the same way
as Step C and D in the proof of \cite[Lemma 5.3]{MT-minimal}.
\end{proof}

\section{Rohlin property}

In this section,
we will formulate the Rohlin property
for centrally free cocycle actions
of amenable rigid C$^*$-tensor categories.

\subsection{Construction of Rohlin towers}
We begin with the following result
(cf. \cite[Lemma 5.4]{MT-minimal}).

\begin{lem}
\label{lem:ortho}
Let $(\alpha,c)$ be a cocycle action
of a rigid C$^*$-tensor category $\sC$
on a properly infinite von Neumann algebra $N$
with not necessarily separable predual
and $\cK$ a finite subset of $\Irr(\sC)$.
Suppose that a projection $e\in N$
satisfies $e\alpha_X(e)=0$
for all $X\in (\ovl{\cK}\cdot \cK)\setminus\{\btr\}$
and moreover
$e$ commutes with all morphisms in $\sC$
and all unitaries $c_{X,Y}$ with $X,Y\in\sC$.
Then the following statements hold:
\begin{enumerate}
\item
$\alpha_X(e)\alpha_Y(e)=0$
for $X\neq Y\in\cK$.

\item
$d(X)^2\phi_{\ovl{X}}^\alpha(e)$
with
$X\in\cK$
are mutually
orthogonal projections.

\item
For each $X\in\cK$,
$d(X)^2\phi_{\ovl{X}}^\alpha(e)$
is the minimum of projections
$f\in N$ such that $f\alpha_X(e)=\alpha_X(e)$,
$fc_{X,\ovl{X}}\ovl{R}_{X}^\alpha
=c_{X,\ovl{X}}\ovl{R}_{X}^\alpha f$.

\item
For each $X\in\cK$,
$d(X)^2\alpha_X(e)
c_{X,\ovl{X}}\ovl{R}_X^\alpha\ovl{R}_X^{\alpha*}c_{X,\ovl{X}}^*\alpha_X(e)
=\alpha_X(e)$.
\end{enumerate}
\end{lem}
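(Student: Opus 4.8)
The four assertions are really statements about the single projection $e$ and how $\alpha$ redistributes it among the $\ovl X$-components, so I would set up notation first: write $q_X:=c_{X,\ovl X}\ovl R_X^\alpha\ovl R_X^{\alpha*}c_{X,\ovl X}^*$ for the relevant projection in $N$ (this is the same projection called $q_X$ in Lemma~\ref{lem:support-estimate}, up to the swap $X\leftrightarrow\ovl X$), and recall from Section~\ref{subsect:actions} that $\phi_X^\alpha(q_{\ovl X})=d(X)^{-2}$, equivalently $d(X)^2\phi_{\ovl X}^\alpha(\,\cdot\,)$ is a normalized conditional-expectation-type map. The hypothesis that $e$ commutes with every $T^\alpha$ and every $c_{Y,Z}$ is what lets me freely push $e$ through all intertwiner relations; I will use it without further comment.

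\emph{Step (1).} For $X\ne Y$ in $\cK$ I want $\alpha_X(e)\alpha_Y(e)=0$. Write $\alpha_X(e)\alpha_Y(e)$ and use $\alpha_Y=\Ad c_{Y,\ovl Y}\circ\alpha_{Y\otimes\ovl Y}\circ(\text{something})$—more precisely, multiply on the left by the isometry $c_{\ovl Y,X}^*\alpha_{\ovl Y}(\,\cdot\,)c_{\ovl Y,Y}R_Y^\alpha$ type expression to land in $\alpha_{\ovl Y\otimes X}(e)$ times $\alpha$ of a morphism. Decomposing $\ovl Y\otimes X=\bigoplus_{Z}$ into simples, every $Z$ that appears lies in $(\ovl{\cK}\cdot\cK)$, and $Z=\btr$ would force $X\cong Y$, contradiction; so for each simple summand $Z\ne\btr$ in $\ovl Y\otimes X$ the product $\alpha_Z(e)\cdot e$ (sandwiched by the commuting intertwiners) vanishes by hypothesis. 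Reassembling gives $\alpha_X(e)\alpha_Y(e)=0$. This is essentially the argument of \cite[Lemma~5.4]{MT-minimal} transcribed into tensor-category language.

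\emph{Steps (2)--(4).} These I would handle together. First check $d(X)^2\phi_{\ovl X}^\alpha(e)$ is a projection: using Lemma~\ref{lem:linv} / Lemma~\ref{lem:linv2} and $e^2=e$, the square $d(X)^4\phi_{\ovl X}^\alpha(e)\phi_{\ovl X}^\alpha(e)$ reduces—via $\phi_{\ovl X}^\alpha(e)\cdot\phi_{\ovl X}^\alpha(e)=\phi_{\ovl X}^\alpha\big(e\,\alpha_{\ovl X}(\phi_{\ovl X}^\alpha(e))\big)$ and the fact that $\alpha_{\ovl X}(\phi_{\ovl X}^\alpha(e))=\alpha_{\ovl X}(M)$-part of $e$, i.e. $q_{\ovl X}$-compressed version—to $d(X)^2\phi_{\ovl X}^\alpha(e)$ once one uses $\phi_X^\alpha(q_{\ovl X})=d(X)^{-2}$. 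Mutual orthogonality for $X\ne Y$ in $\cK$ is then immediate from Step (1): $\phi_{\ovl X}^\alpha(e)\phi_{\ovl Y}^\alpha(e)$ equals (up to the scalar) $\phi$ applied to something containing $\alpha_X(e)\alpha_Y(e)=0$. For (4), compute $d(X)^2\alpha_X(e)q_X\alpha_X(e)$: expand $q_X$, use that $e$ commutes with $c_{X,\ovl X}$ and $\ovl R_X^\alpha$, recognize $\ovl R_X^{\alpha*}c_{X,\ovl X}^*\alpha_X(\,\cdot\,)c_{X,\ovl X}\ovl R_X^\alpha=\phi_{\ovl X}^\alpha(\,\cdot\,)$ when applied appropriately—wait, more carefully it is $\phi_X^\alpha$ with the bar-convention of \eqref{eq:phileftinv}—and the identity $\alpha_X\phi_X^\alpha(e)=$ the $q$-compression collapses it to $\alpha_X(e)$. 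For (3), that $d(X)^2\phi_{\ovl X}^\alpha(e)$ is the \emph{minimal} such $f$: any $f$ with $f\alpha_X(e)=\alpha_X(e)$ and $f$ commuting with $c_{X,\ovl X}\ovl R_X^\alpha$ satisfies $f\ge f\cdot d(X)^2\phi_{\ovl X}^\alpha(\alpha_X(e)\cdot)=d(X)^2\phi_{\ovl X}^\alpha(e)$ after pulling $f$ through, while (4) shows $d(X)^2\phi_{\ovl X}^\alpha(e)$ itself is a valid $f$; minimality follows.

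\emph{Main obstacle.} The bookkeeping around the bar-convention in \eqref{eq:phileftinv}—$\phi_X^\alpha$ is defined using $\alpha_{\ovl X}$ and $(R_X,\ovl R_X)$—means one must be scrupulous about which of $\phi_X^\alpha$, $\phi_{\ovl X}^\alpha$, $q_X$, $q_{\ovl X}$ appears where, and about the normalization $\phi_X^\alpha(q_{\ovl X})=d(X)^{-2}$ versus $\phi_{\ovl X}^\alpha(q_X)=d(X)^{-2}$. The genuinely substantive point, as in \cite{MT-minimal}, is Step (1); once disjointness of the $\alpha_X(e)$ is in hand, (2)--(4) are formal manipulations with standard solutions and the left-inverse identities, so I would expect those to go through routinely provided the conventions are pinned down at the outset.
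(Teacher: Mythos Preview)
Your plan is essentially the paper's proof: for (1) the paper also reduces to $e\alpha_{\ovl X}(\alpha_Y(e))=0$ via the irreducible decomposition of $\ovl X\otimes Y$ and then kills $\alpha_X(e)\alpha_Y(e)\alpha_X(e)$ with the faithful left inverse $\phi_X^\alpha$; for (2)--(4) the paper likewise derives everything from the single identity $\phi_{\ovl X}^\alpha(e)\,\alpha_X(e)=d(X)^{-2}\alpha_X(e)$, obtained by applying $\phi_{\ovl X}^\alpha$ to $e\,\alpha_{\ovl X}\alpha_X(e)=c_{\ovl X,X}R_X^\alpha R_X^{\alpha*}c_{\ovl X,X}^*\,\alpha_{\ovl X}\alpha_X(e)$. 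Two places where your sketch slips on exactly the convention issue you flag: the formula you write as $\phi_{\ovl X}^\alpha(e)^2=\phi_{\ovl X}^\alpha\big(e\,\alpha_{\ovl X}(\phi_{\ovl X}^\alpha(e))\big)$ should have $\alpha_X$, not $\alpha_{\ovl X}$, on the inside (since $\phi_{\ovl X}^\alpha$ is built from $\alpha_X$), and your minimality argument in (3) does not parse as written---the paper instead shows $\phi_{\ovl X}^\alpha\big(\alpha_{\ovl X}(f^\perp)\,e\,\alpha_{\ovl X}(f^\perp)\big)=f^\perp\phi_{\ovl X}^\alpha(e)f^\perp=0$ by using $f\alpha_X(e)=\alpha_X(e)$ and the commutation with $c_{X,\ovl X}\ovl R_X^\alpha$, whence $\alpha_{\ovl X}(f)e=e$ and so $f\,\phi_{\ovl X}^\alpha(e)=\phi_{\ovl X}^\alpha(e)$.
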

\begin{proof}
(1).
Since $\btr\not\prec\ovl{X}\otimes Y$,
$e\alpha_{\ovl{X}\otimes Y}(e)=0$.
This implies $e\alpha_{\ovl{X}}(\alpha_Y(e))=0$.
The equality of (1) follows from
\begin{align*}
\phi_X^\alpha(\alpha_X(e)\alpha_Y(e)\alpha_X(e))
&=
e
\phi_X^\alpha(\alpha_Y(e))
e
=
eR_X^{\alpha*}
c_{\ovl{X},X}^*\alpha_{\ovl{X}}(\alpha_Y(e))c_{\ovl{X},X}R_X^\alpha  e
\\
&=
R_X^{\alpha*}c_{\ovl{X},X}^*
e\alpha_{\ovl{X}}(\alpha_Y(e))e c_{\ovl{X},X}R_X^\alpha
=0.
\end{align*}

(2).
Applying $\phi_{\ovl{X}}^\alpha$ to the equality
$e\alpha_{\ovl{X}}(\alpha_X(e))
=c_{\ovl{X},X}R_X^\alpha R_X^{\alpha*}c_{\ovl{X},X}^*
\alpha_{\ovl{X}}(\alpha_X(e))$,
we have
\[
\phi_{\ovl{X}}^\alpha(e)\alpha_X(e)
=
d(X)^{-2}
\alpha_X(e).
\]
Note
$\phi_{\ovl{X}}^\alpha(e)$ commutes
with $T^\alpha$ for all morphisms
$T\in\sC(Y,Z)$ with $Y,Z\in\sC$
and
$c_{Y,Z}$ for all $Y,Z\in\sC$.
Indeed,
we have
$\alpha_{\ovl{X}}(c_{Y,Z})=c_{X,Y}c_{X\otimes Y,Z}c_{X,Y\otimes Z}^*$,
and
\[
\phi_{\ovl{X}}^\alpha(e)c_{Y,Z}
=
\phi_{\ovl{X}}^\alpha(e\alpha_{\ovl{X}}(c_{Y,Z}))
=
\phi_{\ovl{X}}^\alpha(\alpha_{\ovl{X}}(c_{Y,Z})e)
=
c_{Y,Z}\phi_{\ovl{X}}^\alpha(e).
\]
Next for $T\in\sC(Y,Z)$,
we have
$\alpha_{\ovl{X}}(T^\alpha)
=c_{\ovl{X},Z}[1_{\ovl{X}}\otimes T]^\alpha
c_{\ovl{X},Y}^*$,
and
\[
\phi_{\ovl{X}}^\alpha(e)T^\alpha
=
\phi_{\ovl{X}}^\alpha(e\alpha_{\ovl{X}}(T^\alpha))
=
\phi_{\ovl{X}}^\alpha(\alpha_{\ovl{X}}(T^\alpha)e)
=
T^\alpha\phi_{\ovl{X}}^\alpha(e).
\]
Hence we have
\[
\phi_{\ovl{X}}^\alpha(e)^2
=
\ovl{R}_X^{\alpha*} c_{X,\ovl{X}}^*
\phi_{\ovl{X}}^\alpha(e)\alpha_X(e)
c_{X,\ovl{X}}\ovl{R}_X^\alpha
=
d(X)^{-2}\phi_{\ovl{X}}^\alpha(e).
\]
This implies
$d(X)^2\phi_{\ovl{X}}^\alpha(e)$ is a projection.
The orthogonality
$\phi_{\ovl{X}}^\alpha(e)\phi_{\ovl{Y}}^\alpha(e)=0$
for $X\neq Y\in\cK$
holds
since $e\alpha_{\ovl{X}}(\phi_{\ovl{Y}}^\alpha(e))=0$,
which is proved as follows:
\begin{align*}
e\alpha_{\ovl{X}}(\phi_{\ovl{Y}}^\alpha(e))
&=
e\alpha_{\ovl{X}}(R_Y^{\alpha*}c_{\ovl{Y},Y}^*)
\alpha_{\ovl{X}}(\alpha_Y(e))
\alpha_{\ovl{X}}(c_{\ovl{Y},Y}R_Y^\alpha)
\\
&=
\alpha_{\ovl{X}}(R_Y^{\alpha*}c_{\ovl{Y},Y}^*)
e\alpha_{\ovl{X}}(\alpha_Y(e))
\alpha_{\ovl{X}}(c_{\ovl{Y},Y}R_Y^\alpha)
=0.
\end{align*}

(3).
By the proof of (2),
we know $d(X)^2\phi_{\ovl{X}}^\alpha(e)\geq\alpha_X(e)$
and $d(X)^2\phi_{\ovl{X}}^\alpha(e)$
commutes with all morphisms in $\sC$
and unitaries $c_{Y,Z}$ for all $Y,Z\in\sC$.
Suppose a projection $f\in M^\omega$
is given as in the statement of (3).
Then $\alpha_{\ovl{X}}(f)e=e$.
Indeed, we have
\[
\phi_{\ovl{X}}^\alpha(\alpha_{\ovl{X}}(f^\perp)e\alpha_{\ovl{X}}(f^\perp))
=
f^\perp\phi_{\ovl{X}}^\alpha(e)f^\perp
=
R_{\ovl{X}}^{\alpha*}c_{X,\ovl{X}}^*
f^\perp \alpha_X(e)f^\perp c_{X,\ovl{X}}R_{\ovl{X}}^\alpha
=0.
\]
Hence $f\phi_{\ovl{X}}^\alpha(e)=\phi_{\ovl{X}}^\alpha(e)$.

(4).
We know from (2)
that $d(X)\alpha_X(e)c_{X,\ovl{X}}\ovl{R}_X^\alpha$ is a partial isometry,
and
$p:=d(X)^2\alpha_X(e)
c_{X,\ovl{X}}\ovl{R}_X^\alpha\ovl{R}_X^{\alpha *}c_{X,\ovl{X}}^*\alpha_X(e)$
is a projection with $p\leq\alpha_X(e)$.
Since
$\phi_X^\alpha(p)=d(X)^2 e\phi_X^\alpha(c_{X,\ovl{X}}\ovl{R}_X^\alpha\ovl{R}_X^{\alpha *}c_{X,\ovl{X}}^*)e=e$,
we have $p=\alpha_X(e)$.
\end{proof}

We will prove that a centrally free action has the Rohlin property
that is studied in \cite{Mas-uni,MT-minimal,MT-III,MT-discrete,Oc}
assuming the action is probability measure preserving
on the center.
Recall the notation $\theta^\alpha$ introduced in Lemma \ref{lem:thetaalpha}.

\begin{thm}\label{thm:Rohlin-tensor}
Let $(\alpha,c)$ be a centrally free
cocycle action of
an amenable rigid C$^*$-tensor category $\sC$
on a properly infinite von Neumann algebra $M$ with separable predual.
Let $\varphi$ be a faithful normal state on $M$
such that $\varphi$
is $\theta^\alpha$-invariant on $Z(M)$.
Let $0<\de<1$ and $\cF\subset\Irr(\sC)$ a finite subset
with $\cF=\ovl{\cF}$.
Let $\cK$ be an $(\cF,\delta)$-invariant finite subset
of $\Irr(\sC)$.
Then for any countably generated von Neumann subalgebra
$Q\subset M^\omega$,
there exists a family of projections
$E_X\in M^\omega$
with
$X\in\sC$
satisfying the following conditions:
\begin{enumerate}
\item
\label{item:Roh-natural}
(Naturality)
$E_Y T^\alpha=T^\alpha E_X$
for all $X,Y\in\sC$ and $T\in\sC(X,Y)$.

\item
\label{item:Roh-support}
$E_X=0$ for all $X\in\Irr(\sC)\setminus\cK$.

\item
\label{item:Roh-commute}
$E_X\in\alpha_X(Q)'\cap {}_{\alpha_X}M_\omega$
for all $X\in\Irr(\sC)$.

\item
(Splitting property)
\label{item:Roh-split}
$\tau^\omega(E_X x)
=\tau^\omega(E_X)\tau^\omega(x)$
for all $X\in\sC$ and $x\in Q$.

\item
\label{item:Roh-orthogonal}
$\{d(X)^2\ovl{R}_X^{\alpha*}c_{X,\ovl{X}}^*
E_X
c_{X,\ovl{X}}\ovl{R}_X^\alpha\}_{X\in\cK}$
is a family of orthogonal projections in $Q'\cap M_\omega$.

\item
\label{item:Roh-partition}
(Approximate partition of unity)
\[
\sum_{X\in\cK}\varphi^\omega
(d(X)^2\ovl{R}_X^{\alpha*}
c_{X,\ovl{X}}^*E_Xc_{X,\ovl{X}}\ovl{R}_X^\alpha)
\geq1-\delta^{1/2}.
\]

\item
\label{item:Roh-equivariance}
(Approximate equivariance)
\[
\sum_{X\in\cF}
\sum_{Y\in\Irr(\sC)}
d(X)^2d(Y)^2
|\alpha_X(E_Y)
-
c_{X,Y}E_{X\otimes Y}c_{X,Y}^*
|_{\alpha_X(\alpha_Y(\varphi^\omega))}
\leq
6\delta^{1/2}|\cF|_\sigma.
\]

\item
\label{item:Roh-resonance}
(Resonance property)
\[E_X
c_{X,\ovl{X}}\ovl{R}_X^\alpha
E_Y
=
\frac{\delta_{X,Y}}{d(X)}\alpha_{X}(c_{\ovl{X},X}R_X^\alpha)E_X
\]
for all
$X,Y\in\Irr(\sC)$
and
$r\in\Lambda$,
where $\delta_{X,Y}$ denotes the Kronecker delta.
\end{enumerate}
\end{thm}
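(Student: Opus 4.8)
The plan is to realize the whole tower from a single ``$\cK$-free'' projection $e\in Q'\cap M_\omega$, roughly as in the discrete-Kac-algebra case \cite[Lemma 5.4]{MT-minimal}. First I would enlarge $Q$: replacing it by the von Neumann algebra generated by $M$, $Q$, and $\alpha_X(Q)$ for $X$ ranging over the countable dense set $\sC_0$, and iterating, one may assume $Q$ is still countably generated, contains $M$, and is $\alpha$-invariant ($\alpha_X(Q)\subseteq Q$ for all $X\in\sC$); then every element of $Q'\cap M_\omega$ commutes with all intertwiners $T^\alpha$ and all cocycles $c_{Y,Z}$.

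Next, fix an auxiliary $\delta'>0$, small in terms of $\delta$, $|\cF|_\sigma$, $|\cK|_\sigma$, and apply Lemma \ref{lem:ee} with its ``$\cF$'' taken to be the finite set $(\ovl\cK\cdot\cK)\setminus\{\btr\}$ and its ``$\delta$'' taken to be $\delta'$: this yields a partition of unity $\{e_r\}_{r=0}^n$ in $Q'\cap M_\omega$ with $|e_0|_{\varphi^\omega}<\delta'$ and $e_r\alpha_Y(e_r)=0$ for $r\ge1$ and $Y\in(\ovl\cK\cdot\cK)\setminus\{\btr\}$. The crucial step, and the one I expect to be the main obstacle, is to assemble from these pieces a single projection $e\in Q'\cap M_\omega$ with (i) $e\alpha_Y(e)=0$ for all $Y\in(\ovl\cK\cdot\cK)\setminus\{\btr\}$, and (ii) $|\cK|_\sigma\varphi^\omega(e)\ge1-\delta^{1/2}$. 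Condition (i) makes $\{d(X)^2\phi_{\ovl X}^\alpha(e)\}_{X\in\cK}$ an orthogonal family of projections in $M_\omega$ (Lemma \ref{lem:ortho}(2)) of total $\varphi^\omega$-mass $|\cK|_\sigma\varphi^\omega(e)\le1$, so (ii) says this family almost tiles $1$. A single \emph{maximal} ``$\cK$-free'' $e$ is in general too small for (ii) — the support estimate of Lemma \ref{lem:support-estimate} only gives $\varphi^\omega(e)\gtrsim(1+4|(\ovl\cK\cdot\cK)\setminus\{\btr\}|_\sigma)^{-1}$ — so $e$ must instead be built by iterated enlargement: whenever the shadows $d(X)^2\phi_{\ovl X}^\alpha(e)$ do not yet cover $1-\delta^{1/2}$ of $1$, one applies Lemma \ref{lem:ee} again inside the uncovered part, suitably cut down so that the new piece is jointly $\cK$-free with the current $e$, thereby increasing the mass, and the amenability of $\sC$ — i.e.\ the $(\cF,\delta)$-invariance of $\cK$ together with the F\o lner estimate of Lemma \ref{lem:Folner} — is exactly what forces the ``boundary'' loss incurred in each joint-disjointness cut to be negligible, so that the procedure terminates at (ii); the bookkeeping should run parallel to Steps C--D of the proof of \cite[Lemma 5.4]{MT-minimal}. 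Finally, by a fast reindexation of a representing sequence of $e$ (\cite[Lemma 5.3]{Oc}) one may in addition arrange $\tau^\omega(ex)=\tau^\omega(e)\tau^\omega(x)$, and hence (since $Q$ is $\alpha$-invariant) $\tau^\omega(\alpha_X(e)x)=\tau^\omega(\alpha_X(e))\tau^\omega(x)$, for all $x\in Q$, without disturbing (i)--(ii), which are internal to $M_\omega$.

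With $e$ in hand, put $E_X:=\alpha_X(e)$ for $X\in\cK$, $E_X:=0$ for $X\in\Irr(\sC)\setminus\cK$, and extend to all $X\in\sC$ by the natural recipe $E_X:=\sum_Z\sum_{S\in\ONB(Z,X)}S^\alpha E_Z S^{\alpha*}$; then \ref{item:Roh-natural} and \ref{item:Roh-support} hold by construction, \ref{item:Roh-commute} because $\alpha_X(e)\in\alpha_X(Q)'\cap{}_{\alpha_X}M_\omega$ ($e\in Q'\cap M_\omega$), and \ref{item:Roh-split} is the splitting of $e$. Since one may take $\ovl R_X=R_{\ovl X}$, one has $d(X)^2\ovl R_X^{\alpha*}c_{X,\ovl X}^*E_Xc_{X,\ovl X}\ovl R_X^\alpha=d(X)^2\phi_{\ovl X}^\alpha(e)$, which by (i) and Lemma \ref{lem:ortho}(2) is an orthogonal family of projections lying in $Q'\cap M_\omega$ (using $\alpha$-invariance of $Q$ and that $\phi^\alpha$ preserves $M_\omega$), giving \ref{item:Roh-orthogonal}; and \ref{item:Roh-partition} is then precisely $\sum_{X\in\cK}\varphi^\omega(d(X)^2\phi_{\ovl X}^\alpha(e))=|\cK|_\sigma\varphi^\omega(e)\ge1-\delta^{1/2}$, using $\varphi^\omega\circ\phi_{\ovl X}^\alpha=\varphi^\omega$ on $M_\omega$ (Lemma \ref{lem:freetrace}(3)).

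It remains to verify \ref{item:Roh-resonance} and \ref{item:Roh-equivariance} by direct computation. For resonance (only $X,Y\in\cK$ being nontrivial), one rewrites $E_Xc_{X,\ovl X}\ovl R_X^\alpha E_Y=\alpha_X\bigl(c_{\ovl X,Y}(e\,\alpha_{\ovl X\otimes Y}(e))c_{\ovl X,Y}^*\bigr)c_{X,\ovl X}\ovl R_X^\alpha$ and expands $e\,\alpha_{\ovl X\otimes Y}(e)=\sum_{Z,S\in\ONB(Z,\ovl X\otimes Y)}S^\alpha(e\alpha_Z(e))S^{\alpha*}$; every $Z\ne\btr$ summand vanishes by (i) (its $Z$ lies in $(\ovl\cK\cdot\cK)\setminus\{\btr\}$), leaving only the $\btr$-term, which is $0$ unless $X=Y$ and, for $X=Y$, collapses via the normalized conjugate equation $\alpha_X(R_X^{\alpha*}c_{\ovl X,X}^*)c_{X,\ovl X}\ovl R_X^\alpha=d(X)^{-1}1_M$ to $d(X)^{-1}\alpha_X(c_{\ovl X,X}R_X^\alpha)E_X$. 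For approximate equivariance, for $Y\in\cK$ one computes $\alpha_X(E_Y)-c_{X,Y}E_{X\otimes Y}c_{X,Y}^*=c_{X,Y}\bigl(\sum_{Z\notin\cK}\sum_{S\in\ONB(Z,X\otimes Y)}S^\alpha\alpha_Z(e)S^{\alpha*}\bigr)c_{X,Y}^*$, a projection whose $\alpha_X(\alpha_Y(\varphi^\omega))$-value equals $\varphi^\omega(e)\sum_{Z\notin\cK}p_X(Y,Z)$ (from $\phi_{X\otimes Y}^\alpha(S^\alpha\alpha_Z(e)S^{\alpha*})=d(Z)d(X)^{-1}d(Y)^{-1}e$ via Lemma \ref{lem:linv}), and for $Y\notin\cK$ one gets $\varphi^\omega(e)\sum_{Z\in\cK}p_X(Y,Z)$; weighting by $d(X)^2d(Y)^2$, summing over $\cF\times\Irr(\sC)$, relabelling $X\mapsto\ovl X$ in the $Y\notin\cK$ half (legitimate as $\cF=\ovl\cF$), and applying Lemma \ref{lem:Folner} to both halves bounds the total by $2\delta|\cF|_\sigma|\cK|_\sigma\varphi^\omega(e)\le2\delta|\cF|_\sigma\le6\delta^{1/2}|\cF|_\sigma$, using $\varphi^\omega(e)\le1/|\cK|_\sigma$ from (i).
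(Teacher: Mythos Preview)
Your plan has a genuine gap at precisely the point you flag as the ``crucial step'': producing a single projection $e\in Q'\cap M_\omega$ satisfying simultaneously (i) $e\alpha_Y(e)=0$ for all $Y\in(\ovl{\cK}\cdot\cK)\setminus\{\btr\}$ and (ii) $|\cK|_\sigma\varphi^\omega(e)\ge 1-\delta^{1/2}$. Your proposed iterated enlargement does not achieve this. If $e$ is the current projection and you wish to add a disjoint piece $e'$ so that $e+e'$ still satisfies (i), the cross terms force $e'\alpha_Y(e)=0$ and $e\alpha_Y(e')=0$ for every $Y\in(\ovl{\cK}\cdot\cK)\setminus\{\btr\}$; that is, $e'$ must avoid (translates of) $e$ indexed by the large set $\ovl{\cK}\cdot\cK$, not merely the shadows $d(X)^2\phi_{\ovl X}^\alpha(e)$ for $X\in\cK$. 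The $(\cF,\delta)$-invariance of $\cK$ and Lemma~\ref{lem:Folner} control $|(\cF\cdot\cK)\setminus\cK|_\sigma$ in terms of $|\cK|_\sigma$; they say nothing about $|\ovl{\cK}\cdot\cK|_\sigma$, which for a large F\o lner set $\cK$ is typically of order $|\cK|_\sigma^2$. So the ``boundary loss incurred in each joint-disjointness cut'' is not negligible --- it dominates --- and the iteration stalls with $|\cK|_\sigma\varphi^\omega(e)$ far below $1$. (Even for $\Z$-actions the naive enlargement fails for the same reason: the forbidden zone $\bigcup_{|k|<n}T^k e$ has mass $\approx 2n\varphi(e)\approx 2$ once $n\varphi(e)\approx 1$.) Once (ii) fails, your derivations of (\ref{item:Roh-partition}) and (\ref{item:Roh-equivariance}) collapse, since both rest on $|\cK|_\sigma\varphi^\omega(e)\ge 1-\delta^{1/2}$.

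The paper does \emph{not} try to build such a single $e$. It works instead with the whole family $E=(E_X)_X$ from the start and runs an Ocneanu-style maximality argument on the set $\mathcal I$ of towers $E$ satisfying (\ref{item:Roh-natural}), (\ref{item:Roh-support}), (\ref{item:Roh-commute}), (\ref{item:Roh-split}), (\ref{item:Roh-orthogonal}), (\ref{item:Roh-resonance}) together with the linked inequality $a_E\le 6\delta^{1/2}b_E$, where $b_E=\sum_{X\in\cK}d(X)^2\varphi^\omega(E_X)$ and $a_E$ is the normalized equivariance defect. The inductive step (Lemma~\ref{lem:ab}) finds, via Lemma~\ref{lem:ee} and a pigeonhole choice among the partition pieces $e_r$, one small $\cK$-free $e$; it then \emph{cuts} the old tower, replacing $E_X$ by $E_X f^\perp+\alpha_X(e)[P_X^\cK]^\alpha$ with $f=\sum_{Y\in\cK}d(Y)^2\phi_{\ovl Y}^\alpha(e)$. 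The cut by $f^\perp$ destroys any representation $E_X=\alpha_X(\text{single projection})$ but is exactly what allows the new piece to be absorbed without demanding joint $(\ovl\cK\cdot\cK)$-freeness with the old tower; the pigeonhole ensures a net gain $b_{E'}-b_E>\delta^{1/2}\varphi^\omega(f)>0$, and the F\o lner estimate (now applied in its proper role) bounds $a_{E'}-a_E\le 6\delta^{1/2}(b_{E'}-b_E)$. A maximal element then has $b_E\ge 1-\delta^{1/2}$ and $a_E\le 6\delta^{1/2}$, yielding (\ref{item:Roh-partition}) and (\ref{item:Roh-equivariance}). Your computations for (\ref{item:Roh-resonance}) and the irreducible-decomposition bookkeeping are essentially those used in verifying that the updated $E'$ stays in $\mathcal J$; they are correct, but they are a verification inside the inductive step, not a substitute for it.
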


In what follows,
we assume that $Q$ contains $M$ and $\alpha_X(Q)\subset Q$
for all $X\in \sC$.
(Also see the proof of Lemma \ref{lem:ee} on the problem of
the cardinality of objects in $\sC$.)
We will call a family of projections $E=(E_X)_X$
described in Theorem \ref{thm:Rohlin-tensor}
a \emph{Rohlin tower} along with $\cK$ henceforth.

\begin{rem}
\label{rem:Rohlin-tensor}
Remarks are in order.
\begin{enumerate}
\item 
If $x\in Q'\cap M^\omega$,
then $\tau^\omega(x)\in Z(M)$.
This implies $\alpha_X(\varphi^\omega)=\varphi^\omega$
on $Q'\cap M^\omega$.
Hence when an element $x\in Q'\cap M^\omega$
commutes with $\alpha_X(\varphi^\omega)$
for some $X\in\Irr(\sC)$,
then
for all $y\in Q'\cap M^\omega$, we have
\[
\varphi^\omega(xy)=\alpha_X(\varphi^\omega)(xy)
=\alpha_X(\varphi^\omega)(yx)=\varphi^\omega(yx).
\]
\item
By Lemma \ref{lem:freetrace}
and Theorem \ref{thm:Rohlin-tensor} (\ref{item:Roh-commute}),
we know that
$\tau^\omega(E_X)$ is in $Z(M)$
for all $X\in\cK$.

\item
From Theorem \ref{thm:Rohlin-tensor} (\ref{item:Roh-natural})
and (\ref{item:Roh-commute}),
we see $E_X$ commutes with
$\alpha_X(Q)$
and
$\alpha_X(\psi^\omega)$ for all $\psi\in M_*$
for all $X\in\sC$.

\item
\label{item:rem-Roh-orthogonal}
It turns out from Theorem \ref{thm:Rohlin-tensor} (\ref{item:Roh-orthogonal})
that
$\{d(X)\ovl{R}_X^{\alpha*}c_{X,\ovl{X}}^*E_X\}_{X\in\Irr(\sC)}$
are partial isometries with orthogonal range projections.
Each initial projection
$d(X)^2E_X c_{X,\ovl{X}}\ovl{R}_X^\alpha \ovl{R}_X^{\alpha*}c_{X,\ovl{X}}^*
E_X$
is supported by $E_X$.
Actually they are equal.
Indeed, we have
\begin{align*}
&\alpha_X(\varphi^\omega)
(d(X)^2E_X c_{X,\ovl{X}}\ovl{R}_X^\alpha
\ovl{R}_X^{\alpha*}c_{X,\ovl{X}}^*E_X)
\\
&=
d(X)^2\alpha_X(\varphi^\omega)(E_X 
c_{X,\ovl{X}}\ovl{R}_X^\alpha \ovl{R}_X^{\alpha*}c_{X,\ovl{X}}^*)
\\
&=
d(X)^2\alpha_X(\varphi)(\tau^\omega(E_X)
c_{X,\ovl{X}}\ovl{R}_X^\alpha \ovl{R}_X^{\alpha*}c_{X,\ovl{X}}^*)
\\
&=
d(X)^2\varphi(\theta_{\ovl{X}}^\alpha(\tau^\omega(E_X))
\cdot\phi_X^\alpha
(c_{X,\ovl{X}}\ovl{R}_X^\alpha \ovl{R}_X^{\alpha*}c_{X,\ovl{X}}^*))
\\
&=
\alpha_X(\varphi^\omega)(E_X).
\end{align*}

\item
The left-hand side of the inequality in
Theorem \ref{thm:Rohlin-tensor} (\ref{item:Roh-equivariance})
is actually a finite sum.
Indeed,
$\alpha_X(E_Y)-E_{X\otimes Y}=0$
for $Y\nin \cK\cup (\ovl{\cF}\cdot\cK)$.

\item
Let $\varphi_1\in M_*$ be a faithful state 
such that $\varphi_1=\varphi$ on $Z(M)$.
Then the values stated in (\ref{item:Roh-partition})
and (\ref{item:Roh-equivariance})
do not change
since
$\tau^\omega
(d(X)^2\ovl{R}_X^{\alpha*}
c_{X,\ovl{X}}^*E_Xc_{X,\ovl{X}}\ovl{R}_X^\alpha)
$
and
$\phi_Y^\alpha(\phi_X^\alpha(
\tau^\omega(|\alpha_X(E_Y)
-c_{X,Y}E_{X\otimes Y}c_{X,Y}^*|)))$
are contained in $Z(M)$.

\item
To prove Theorem \ref{thm:Rohlin-tensor},
it is sufficient to treat only actions $(\alpha,1)$.
This is because
we can consider the action of the C$^*$-tensor category
generated by the range $\alpha(\sC)$ in $\End(M)_0$.
We, however, do not use such a trick
since it seems treating a 2-cocycle $c$ is not so involving task
and helps us to understand the role of $c$ in computation.

\item
In our preceding papers
\cite{MT-minimal,MT-III,MT-discrete},
the approximate innerness of actions is crucially used
for technical reasons,
but this is removed in Theorem \ref{thm:Rohlin-tensor}.
\end{enumerate}
\end{rem}

We will introduce the set $\mathcal{J}$ which is the collection
of a family of projections
$E:=(E_X)_{X\in\sC}$
satisfying
the conditions
of Theorem \ref{thm:Rohlin-tensor}
(\ref{item:Roh-natural}),
(\ref{item:Roh-support}),
(\ref{item:Roh-commute}),
(\ref{item:Roh-split}),
(\ref{item:Roh-orthogonal})
and
(\ref{item:Roh-resonance}).
Trivially, $E=(0)_X$ is a member of $\mathcal{J}$.
We set the functions $a,b$ from $\mathcal{J}$ into $\R_+$
by
\begin{align*}
a_E
&:=
\frac{1}{|\cF|_\sigma}
\sum_{X\in\cF}
\sum_{Y\in\Irr(\sC)}
d(X)^2d(Y)^2
|\alpha_X(E_Y)
-c_{X,Y}E_{X\otimes Y}c_{X,Y}^*|_{\alpha_{X}(\alpha_Y(\varphi^\omega))},
\\
b_E
&:=
\sum_{X\in\cK}d(X)^2\varphi^\omega(E_X).
\end{align*}
Note that
$b_E
=\varphi^\omega(\sum_X d(X)^2
\ovl{R}_X^{\alpha*}c_{X,\ovl{X}}^*E_X c_{X,\ovl{X}}\ovl{R}_X^\alpha)$
since $c_{X,\ovl{X}}\ovl{R}_X^\alpha\in M$ and $\tau^\omega(E_X)\in Z(M)$.
This implies $b_E\leq1$
from Theorem \ref{thm:Rohlin-tensor} (\ref{item:Roh-orthogonal}).

\begin{lem}
\label{lem:ab}
Let $E\in\mathcal{J}$
with $b_E<1-\delta^{1/2}$.
Then there exists $E'\in\mathcal{J}$
such that
\begin{itemize}
\item
$a_{E'}-a_E\leq 6\delta^{1/2}(b_{E'}-b_E)$.

\item
$0<(\delta^{1/2}/2)
\sum_{X\in\cK}d(X)^2\varphi^\omega(|E_X'-E_X|)
\leq b_{E'}-b_E$.
\end{itemize}
\end{lem}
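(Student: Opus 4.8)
The plan is to enhance $E$ by filling its complementary region with finitely many Rohlin mini-towers produced by Lemma \ref{lem:ee} and Lemma \ref{lem:ortho}, combining them onto $E$ as in \cite[Lemma 5.4]{MT-minimal}, and then to read off the two inequalities from the F\o lner estimate of Lemma \ref{lem:Folner}. The base projections $F_X:=d(X)^2\ovl{R}_X^{\alpha*}c_{X,\ovl{X}}^*E_Xc_{X,\ovl{X}}\ovl{R}_X^\alpha$ ($X\in\cK$) are orthogonal projections in $Q'\cap M_\omega$ by condition (\ref{item:Roh-orthogonal}), so $F:=\sum_{X\in\cK}F_X$ is a projection in $Q'\cap M_\omega$ with $\varphi^\omega(F)=b_E<1-\delta^{1/2}$, and the hole $F_0:=1-F$ satisfies $\varphi^\omega(F_0)>\delta^{1/2}$. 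Working inside the relative commutant of a suitable countably generated $\alpha$-invariant subalgebra $\widetilde{Q}\supseteq Q$ to which $E$ is adapted, I would apply Lemma \ref{lem:ee} with $\cF$ replaced by the finite set $\cF':=(\ovl{\cK}\cdot\cK)\setminus\{\btr\}$ and a parameter $\delta'<\delta^{1/2}$, obtaining a partition of unity $\{e_r\}_{r=0}^n$ in $\widetilde{Q}'\cap M_\omega$ with $|e_0|_{\varphi^\omega}<\delta'$ and $e_r\alpha_X(e_r)=0$ for $1\le r\le n$, $X\in\cF'$. Cutting down to the hole, $f_r:=e_rF_0$ ($1\le r\le n$) are mutually orthogonal projections in $Q'\cap M_\omega$ lying below $F_0$, still satisfying $f_r\alpha_X(f_r)=0$ for $X\in\cF'$, with $\sum_{r=1}^n|f_r|_{\varphi^\omega}=\varphi^\omega(F_0)-|e_0F_0|_{\varphi^\omega}\ge\delta^{1/2}-\delta'>0$.

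For each $r$, Lemma \ref{lem:ortho} applied to the base $f_r$ gives a mini-tower $G^{(r)}=(G^{(r)}_X)_{X\in\sC}$, lying in $\mathcal{J}$, with $G^{(r)}_X=\alpha_X(f_r)$ for $X\in\cK$ (extended by naturality and $0$ off $\cK$) and base projections $d(X)^2\phi_{\ovl{X}}^\alpha(f_r)$, mutually orthogonal in $X$. I then set $E'_X:=E_X+\sum_{r=1}^n G^{(r)}_X$ and check $E'\in\mathcal{J}$ as in the combination step (Step~C) of \cite[Lemma 5.4]{MT-minimal}: naturality, the support condition, and the commutation and splitting conditions are straightforward (the last by the usual fast reindexation), the resonance identity (\ref{item:Roh-resonance}) is additive over the pieces, and condition (\ref{item:Roh-orthogonal}) amounts to showing that all the base projections $\{F_X\}_{X\in\cK}\cup\{d(X)^2\phi_{\ovl{X}}^\alpha(f_r)\}_{X\in\cK,\,1\le r\le n}$ are mutually orthogonal, which one gets from $f_r\le F_0$ together with the minimality clause of Lemma \ref{lem:ortho}(3) and the commutation properties arranged in $\widetilde{Q}$. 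This orthogonality step --- making the new pieces honestly orthogonal to the old tower while keeping them inside the hole --- is the part I expect to require the most care; it is exactly the content of Step~C of \cite[Lemma 5.4]{MT-minimal}. Everything else is bookkeeping.

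Granting $E'\in\mathcal{J}$, the second bullet is essentially immediate. Since $\varphi$ is $\theta^\alpha$-invariant on $Z(M)$ and $\tau^\omega(f_r)\in Z(M)$, one has $\varphi^\omega(\alpha_X(f_r))=\varphi\big(\theta_X^\alpha(\tau^\omega(f_r))\big)=|f_r|_{\varphi^\omega}$ for every $X\in\cK$; as $E'_X\ge E_X$ this yields
\[
b_{E'}-b_E=\sum_{X\in\cK}d(X)^2\varphi^\omega(E'_X-E_X)=\sum_{X\in\cK}d(X)^2\varphi^\omega(|E'_X-E_X|)=|\cK|_\sigma\sum_{r=1}^n|f_r|_{\varphi^\omega}>0,
\]
so $(\delta^{1/2}/2)\sum_{X\in\cK}d(X)^2\varphi^\omega(|E'_X-E_X|)\le b_{E'}-b_E$ holds with room to spare, since $\delta^{1/2}/2<1$.

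For the first bullet, set $S_0:=\sum_{r=1}^n|f_r|_{\varphi^\omega}$ and, for $X,Y\in\sC$, $\psi:=\alpha_X(\alpha_Y(\varphi^\omega))$. The elements $\alpha_X(E_Y)$, $c_{X,Y}E_{X\otimes Y}c_{X,Y}^*$ and their $E'$-analogues all lie in the centralizer of $\psi$ (by Lemma \ref{lem:freetrace}(1) and the intertwiner identity (\ref{eq:alxphi})), so the triangle inequality for $|\cdot|_\psi$ bounds $a_{E'}-a_E$ by $\frac{1}{|\cF|_\sigma}\sum_{X\in\cF}\sum_{Y\in\Irr(\sC)}d(X)^2d(Y)^2\,\big|\sum_{r=1}^n\big(\alpha_X(G^{(r)}_Y)-c_{X,Y}G^{(r)}_{X\otimes Y}c_{X,Y}^*\big)\big|_\psi$. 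Expanding $\alpha_{X\otimes Y}(f_r)=\sum_{Z,S}S^\alpha\alpha_Z(f_r)S^{\alpha*}$ over $Z\in\Irr(\sC)$, $S\in\ONB(Z,X\otimes Y)$, the inner sum is a projection --- the part of $\sum_r\alpha_{X\otimes Y}(f_r)$ landing outside $\cK$ when $Y\in\cK$, and the part landing inside $\cK$ when $Y\in\cK^c$ --- and its $\psi$-mass, computed via $\phi_Z^\alpha\circ\alpha_Z=\id$, (\ref{eq:alxphi}) and $p_X(Y,Z)=d(Z)N_{X,Y}^Z/(d(X)d(Y))$, equals $\big(\sum_{Z\in\cK^c}p_X(Y,Z)\big)S_0$, resp. $\big(\sum_{Z\in\cK}p_X(Y,Z)\big)S_0$. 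Summing over $X\in\cF$ and $Y$ and invoking Lemma \ref{lem:Folner} twice --- directly for the $Y\in\cK$ terms, and for the $Y\in\cK^c$ terms after the substitution $X\mapsto\ovl{X}$, which is legitimate since $\cF=\ovl{\cF}$ --- each contribution is at most $\delta|\cF|_\sigma|\cK|_\sigma S_0$, so $a_{E'}-a_E\le 2\delta|\cK|_\sigma S_0=2\delta(b_{E'}-b_E)\le 6\delta^{1/2}(b_{E'}-b_E)$, as desired.
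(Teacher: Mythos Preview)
There is a genuine gap in the orthogonality step, and it is not just bookkeeping. Placing the new bases $f_r=e_rF_0$ under the complementary projection $F_0=1-F$ does \emph{not} force the new towers $\alpha_X(f_r)$ to be orthogonal to the old towers $E_X$, nor the new shadow projections $d(X)^2\phi_{\ovl X}^\alpha(f_r)$ to lie under $F_0$. Concretely, for $E_X'=E_X+\sum_r\alpha_X(f_r)$ to be a projection you need $E_X\,\alpha_X(f_r)=0$; using $[e_r,F_0]=0$ this is $E_X\,\alpha_X(e_r)\,\alpha_X(F_0)$, and there is no reason whatsoever for $E_X\le\alpha_X(F)$. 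Condition~(\ref{item:Roh-orthogonal}) fails for the same reason: the minimality clause of Lemma~\ref{lem:ortho}(3) tells you $d(X)^2\phi_{\ovl X}^\alpha(f_r)$ is the smallest projection that dominates $\alpha_X(f_r)$ and commutes with $c_{X,\ovl X}\ovl R_X^\alpha$, but gives no information about where this projection sits relative to $F$. Even among the new pieces, Lemma~\ref{lem:ee} only provides $e_r\alpha_X(e_r)=0$, not $e_r\alpha_X(e_s)=0$ for $r\ne s$, so $\phi_{\ovl X}^\alpha(f_r)$ and $\phi_{\ovl Y}^\alpha(f_s)$ need not be orthogonal; and the resonance identity~(\ref{item:Roh-resonance}) has old/new cross terms $E_X c_{X,\ovl X}\ovl R_X^\alpha\,\alpha_Y(f_s)$ that are not handled by ``additivity''.

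The paper's construction goes in the opposite direction and thereby sidesteps all of this. One selects a \emph{single} fresh $e=e_r$ in $Q_1'\cap M_\omega$ (with $Q_1\supset Q\cup\{\alpha_X(E_Y)\}$), chosen by a pigeonhole argument (Claim~1) so that the old tower is thin over the shadow $f:=|\cK|_\sigma I_\cK^\alpha(e)=\sum_{Y\in\cK}d(Y)^2\phi_{\ovl Y}^\alpha(e)$, and then \emph{cuts} the old tower rather than the new base:
\[
E_X':=E_X f^{\perp}+\alpha_X(e)[P_X^\cK]^\alpha.
\]
Orthogonality is now automatic because $f\ge d(X)^2\phi_{\ovl X}^\alpha(e)\ge\alpha_X(e)$ for every $X\in\cK$ by Lemma~\ref{lem:ortho}, hence $f^\perp\alpha_X(e)=0$; the pigeonhole selection is exactly what makes $b_{E'}>b_E$ despite the cut, and the first bullet then reduces to $|\alpha_X(f)-f|$ and the F\o lner estimate via Lemma~\ref{lem:average}. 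The moral is that you cannot confine the base to the hole and expect the tower to stay in the hole; you must instead lay down the new tower freely and carve the old one around it.
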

\begin{proof}
Our proof is similar to the proof of \cite[Lemma 6.4]{Oc}.
Take $\de_1>0$ so that $b_E<(1-\delta_1)(1-\delta^{1/2})$.
Let $Q_1$ be the $\alpha$-invariant
von Neumann subalgebra of $M^\omega$
that is generated by $Q$ and all $\alpha_X(E_Y)$'s
for $X,Y\in \Irr(\sC)$.
Then $Q_1$ actually contains $\alpha_X(E_Y)$
for all $X,Y\in\sC$
since $T^\alpha\in M$ for all morphisms $T$ in $\sC$
and $E$ is natural.
We let $\cK_1:=\cK\cup (\cF\cdot \cK)$.
By Lemma \ref{lem:ee},
we can obtain
a partition of unity $\{e_r\}_{r=0}^n$ in $Q_1'\cap M_\omega$
which satisfies
$|e_0|_{\varphi^\omega}<\delta_1$
and
$e_r\alpha_X(e_r)=0$
for $X\in \ovl{\cK_1}\cdot\cK_1\setminus\{\btr\}$
and $r\geq1$.
By using the fast reindexation \cite[Lemma 3.10]{MT-minimal}
if necessary,
we may and do assume that
$\tau^\omega(x \alpha_X(e_r))=\tau^\omega(x)\tau^\omega(\alpha_X(e_r))$
for all $x\in Q_1$, $X\in\Irr(\sC)$ and $r=0,\dots,n$.
Note that this equality holds also for all $X\in\sC$.

\setcounter{clam}{0}
\begin{clam}
The inequality
\[
\sum_{X,Y\in\cK}
d(X)^2d(Y)^2
\varphi^\omega(E_X\phi_{\ovl{Y}}^\alpha(e_r))
<(1-\delta^{1/2})|\cK|_\sigma
\varphi^\omega(e_r)
\]
holds for some $r\geq1$.
\end{clam}
\begin{proof}[Proof of Claim 1]
Suppose that the following inequalities hold for all $r\geq1$:
\[
\sum_{X,Y\in\cK}
d(X)^2d(Y)^2
\varphi^\omega(E_X\phi_{\ovl{Y}}^\alpha(e_r))
\geq(1-\delta^{1/2})|\cK|_\sigma
\varphi^\omega(e_r).
\]
Using $\sum_{r\geq1}e_r=e_0^\perp$,
we have
\begin{align*}
|\cK|_\sigma b_E
&=
\sum_{X,Y\in\cK}
d(X)^2d(Y)^2
\varphi^\omega(E_X)
\\
&\geq
\sum_{X,Y\in\cK}
d(X)^2d(Y)^2
\varphi^\omega(E_X\phi_{\ovl{Y}}^\alpha(e_0^\perp))
\\
&\geq
(1-\delta^{1/2})|\cK|_\sigma
\varphi^\omega(e_0^\perp)
\\
&>
(1-\delta^{1/2})(1-\delta_1)|\cK|_\sigma,
\end{align*}
which contradicts to
$b_E<(1-\delta_1)(1-\delta^{1/2})$.
\end{proof}

Take $r\geq1$
satisfying the previous claim.
We write $e:=e_r$.
Then
\begin{equation}
\label{eq:XYK}
\sum_{X,Y\in\cK}
d(X)^2d(Y)^2
\varphi^\omega(E_X\phi_{\ovl{Y}}^\alpha(e))
<(1-\delta^{1/2})|\cK|_\sigma
\varphi^\omega(e).
\end{equation}

Recall the averaging map
$I_\cK^\alpha
=|\cK|_\sigma^{-1}
\sum_{X\in\cK}d(X)^2\phi_{\ovl{X}}^\alpha$
introduced in Section \ref{subsect:actions}.
We set $f:=|\cK|_\sigma I_\cK^\alpha(e)$
that is a projection in $Q_1'\cap M_\omega$
from Lemma \ref{lem:ortho}.
Since $\varphi$ is $\theta^\alpha$-invariant
on $Z(M)$,
we have
$\varphi^\omega(f)=|\cK|_\sigma\varphi^\omega(e)$.
Hence (\ref{eq:XYK}) is expressed as follows:
\begin{equation}
\label{eq:XKf}
\sum_{X\in\cK}
d(X)^2
\varphi^\omega(E_Xf)
<(1-\delta^{1/2})
\varphi^\omega(f).
\end{equation}

Recall the projection $P_X^\cK\in\sC(X,X)$
defined in Section \ref{subsect:tensorcat}.
Then we set a projection $E_X'$ as follows:
\[
E_X':=E_X f^\perp+\alpha_X(e)[P_X^\cK]^\alpha
\quad
\mbox{for }
X\in\sC.
\]

\begin{clam}
$E':=(E_X')_X$ is a member of $\mathcal{J}$.
\end{clam}
\begin{proof}[Proof of Claim 2]
Using Lemma \ref{lem:ortho} and the equality $E_Xf=fE_X$,
we see that
$E'$ satisfies
Theorem \ref{thm:Rohlin-tensor}
(\ref{item:Roh-natural}),
(\ref{item:Roh-support}),
(\ref{item:Roh-commute}),
(\ref{item:Roh-split})
and
(\ref{item:Roh-orthogonal}).
We will check $E'$ satisfies the resonance property
of Theorem \ref{thm:Rohlin-tensor} (\ref{item:Roh-resonance}).
Since $f\in M_\omega$ and $f^\perp \alpha_X(e)=0$
for $X\in \cK$,
it suffices to prove that
$\alpha_X(e)
c_{X,\ovl{X}}\ovl{R}_X^\alpha
\alpha_Y(e)
=\frac{\delta_{X,Y}}{d(X)}
\alpha_X(e)
\alpha_X(c_{\ovl{X},X}R_X^\alpha)$
for all $X,Y\in\Irr(\sC)$.
We have
\begin{align*}
\alpha_X(e)
c_{X,\ovl{X}}\ovl{R}_X^\alpha
\alpha_Y(e)
&=
\alpha_X(e\alpha_{\ovl{X}}(\alpha_Y(e)))c_{X,\ovl{X}}\ovl{R}_X^\alpha
\\
&=
\delta_{X,Y}
\alpha_X(e\alpha_{\ovl{X}}(\alpha_X(e)))c_{X,\ovl{X}}\ovl{R}_X^\alpha
\quad
\\
&=
\delta_{X,Y}
\alpha_X(ec_{\ovl{X},X}R_X^\alpha R_X^{\alpha*} c_{\ovl{X},X}^*)
c_{X,\ovl{X}}\ovl{R}_X^\alpha
\\
&=
\delta_{X,Y}
\alpha_X(e)
\alpha_X(c_{\ovl{X},X}R_X^\alpha R_X^{\alpha*})
c_{X,\ovl{X}\otimes X}c_{X\otimes \ovl{X},X}^*
\ovl{R}_X^\alpha
\\
&=
\delta_{X,Y}
\alpha_X(e)
\alpha_X(c_{\ovl{X},X}R_X^\alpha)
c_{X,\btr}[1_X\otimes R_X^*]^\alpha
[\ovl{R}_X\otimes1_X]^\alpha
\\
&=
\frac{\delta_{X,Y}}{d(X)}
\alpha_X(e)
\alpha_X(c_{\ovl{X},X}R_X^\alpha).
\end{align*}
\end{proof}

The difference of $E$ and $E'$ is estimated
as follows:
\begin{align}
b_{E'}
&=
\sum_{X\in\cK}d(X)^2
\varphi^\omega(E_X f^\perp+\alpha_X(e))
\notag
\\
&=
b_E
-
\sum_{X\in\cK}d(X)^2
\varphi^\omega(E_X f)
+
|\cK|_\sigma \varphi^\omega(e)
\notag
\\
&>
b_E
-(1-\delta^{1/2})\varphi^\omega(f)
+\varphi^\omega(f)
\quad
\mbox{by }
(\ref{eq:XKf})
\notag
\\
&=
b_E+\delta^{1/2}\varphi^\omega(f).
\label{eq:bEE'}
\end{align}
Thus we have
$b_{E'}-b_E>\delta^{1/2}\varphi^\omega(f)$,
and
\begin{align*}
\sum_{X\in\cK}d(X)^2
\varphi^\omega(|E_X'-E_X|)
&=
\sum_{X\in\cK}d(X)^2
\varphi^\omega(|-E_Xf+\alpha_X(e)|)
\\
&\leq
\sum_{X\in\cK}d(X)^2
\varphi^\omega(E_Xf)
+
\sum_{X\in\cK}d(X)^2
\varphi^\omega(\alpha_X(e)),
\end{align*}
where we have used the tracial property of $\varphi^\omega$
on ${}_{\alpha_X}M_\omega$ (see Lemma \ref{lem:freetrace}).
Using the splitting property, we have
\begin{align*}
\tau^\omega(\ovl{R}_X^{\alpha*}c_{X,\ovl{X}}^*E_X
c_{X,\ovl{X}}\ovl{R}_X^{\alpha}f)
&=
\tau^\omega(\ovl{R}_X^{\alpha*}c_{X,\ovl{X}}^*E_X
c_{X,\ovl{X}}\ovl{R}_X^{\alpha})
\tau^\omega(f)
\\
&=\tau^\omega(E_X)\tau^\omega(f)=\tau^\omega(E_Xf).
\end{align*}
Hence we have
\begin{align*}
&\sum_{X\in\cK}d(X)^2
\varphi^\omega(|E_X'-E_X|)
\\
&\leq
\varphi^\omega
(\sum_{X\in\cK}d(X)^2
\ovl{R}_X^{\alpha*}c_{X,\ovl{X}}^*E_X c_{X,\ovl{X}}\ovl{R}_X^\alpha f)
+
\sum_{X\in\cK}d(X)^2
\varphi^\omega(e)
\\
&\leq
\varphi^\omega(f)
+
|\cK|_\sigma\varphi^\omega(e)
\quad
\mbox{by Theorem \ref{thm:Rohlin-tensor} (\ref{item:Roh-orthogonal}})
\\
&=2\varphi^\omega(f).
\end{align*}
From this and (\ref{eq:bEE'}),
the second inequality of this lemma holds.

Next, we will show the first one.
For $X,Y\in\Irr(\sC)$,
we have
\begin{align}
&\alpha_X(E_Y')-c_{X,Y}E_{X\otimes Y}'c_{X,Y}^*
\notag
\\
&=
\alpha_X(E_Y)\alpha_X(f^\perp)
+\alpha_{X}(\alpha_Y(e))\alpha_X([P_Y^\cK]^\alpha)
\notag
\\
&\quad
-
c_{X,Y}
(E_{X\otimes Y}f^\perp+\alpha_{X\otimes Y}(e)
[P_{X\otimes Y}^\cK]^\alpha)
c_{X,Y}^*
\notag
\\
&=
\alpha_X(E_Y)(\alpha_X(f^\perp)-f^\perp)
+
(\alpha_X(E_Y)-c_{X,Y}E_{X\otimes Y}c_{X,Y}^*)f^\perp
\notag\\
&\quad
+
\alpha_{X}(\alpha_Y(e))(\alpha_X([P_Y^\cK]^\alpha)
-c_{X,Y}[P_{X\otimes Y}^\cK]^\alpha c_{X,Y}^*).
\label{eq:EYPX}
\end{align}
Note
$\alpha_{X}(\alpha_Y(\varphi^\omega))
(\alpha_X(E_Y)|\alpha_X(f)-f|)
=
\varphi^\omega
(\alpha_X(E_Y)|\alpha_X(f)-f|)$
since
\begin{align*}
\tau^\omega(\alpha_X(E_Y)|\alpha_X(f)-f|)
&=
\tau^\omega(\alpha_X(E_Y))\tau^\omega(|\alpha_X(f)-f|)
\\
&=
\theta_X^\alpha(\tau^\omega(E_Y))
\tau^\omega(|\alpha_X(f)-f|)
\in Z(M).
\end{align*}
Also note that from (\ref{eq:PXYZ})
we have
\begin{align*}
&\phi_Y^\alpha(\phi_{X}^\alpha
(|\alpha_X([P_Y^\cK]^\alpha)-c_{X,Y}[P_{X\otimes Y}^\cK]^\alpha c_{X,Y}^*|))
\\
&=
1_\cK(Y)
\sum_{Z\in\cK^c}
p_X(Y,Z)
+
1_{\cK^c}(Y)
\sum_{Z\in\cK}
p_X(Y,Z).
\end{align*}

Then by (\ref{eq:EYPX})
we have
\begin{align*}
&|\alpha_X(E_Y')
-c_{X,Y}E_{X\otimes Y}'c_{X,Y}^*|_{\alpha_X(\alpha_Y(\varphi^\omega))}
-
|\alpha_X(E_Y)
-c_{X,Y}E_{X\otimes Y}c_{X,Y}^*|_{\alpha_X(\alpha_Y(\varphi^\omega))}
\\
&\leq
\varphi^\omega(\alpha_X(E_Y)|\alpha_X(f)-f|)
\\
&\quad
+
\varphi^\omega(e)
\big{(}
1_\cK(Y)
\sum_{Z\in\cK^c}
p_X(Y,Z)
+
1_{\cK^c}(Y)
\sum_{Z\in\cK}
p_X(Y,Z)
\big{)}.
\end{align*}

Hence by Lemma \ref{lem:Folner},
we have
\begin{align*}
|\cF|_\sigma(a_{E'}-a_E)
&\leq
2\delta|\cF|_\sigma|\cK|_\sigma
\varphi^\omega(e)
\\
&\quad
+
\sum_{X\in\cF}
\sum_{Y\in\Irr(\sC)}
d(X)^2d(Y)^2
\varphi^\omega
(\alpha_X(E_Y)|\alpha_X(f)-f|).
\end{align*}
Using the splitting property as before,
we know
\begin{align*}
&\sum_{Y\in\Irr(\sC)}
d(Y)^2
\varphi^\omega
(\alpha_X(E_Y)|\alpha_X(f)-f|)
\\
&=
\varphi^\omega
\big{(}
\alpha_X
\big{(}
\sum_{Y\in\cK}
d(Y)^2R_{\ovl{Y}}^{\alpha*}c_{Y,\ovl{Y}}^*
E_Y c_{Y,\ovl{Y}}R_{\ovl{Y}}^\alpha
\big{)}
|\alpha_X(f)-f|
\big{)}
\\
&\leq
\varphi^\omega
(|\alpha_X(f)-f|).
\end{align*}
Thus
\begin{equation}
\label{eq:FaEf}
|\cF|_\sigma(a_{E'}-a_E)
\leq
2\delta|\cF|_\sigma\varphi^\omega(f)
+
\sum_{X\in\cF}
d(X)^2
\varphi^\omega
(|\alpha_X(f)-f|).
\end{equation}

\begin{clam}
The projection $f$ commutes with $\alpha_X(f)$ for all $X\in \cF$.
\end{clam}
\begin{proof}[Proof of Claim 3]
Recall that $f=\sum_{Y\in\cK}d(Y)^2\phi_{\ovl{Y}}^\alpha(e)$ holds
and $d(Y)^2\phi_{\ovl{Y}}^\alpha(e)$ are orthogonal projections
for $Y\in \cK_1$.
It suffices to show that
$f\alpha_{X\otimes Y}(e)=\alpha_{X\otimes Y}(e)f$
for all $X\in\cF$ and $Y\in \cK$.
This is equivalent to the equality
$f\alpha_Z(e)=\alpha_Z(e)f$ for all $Z\in\cF\cdot\cK$.
Let $Y\in\cK$.
Then
$d(Y)^2\phi_{\ovl{Y}}^\alpha(e)\alpha_Z(e)$
equals 0 if $Z\neq Y$
and
does $\alpha_Y(e)$
if $Z=Y$
from Lemma \ref{lem:ortho}.
Hence we are done.
\end{proof}

By the previous claim,
we have
$|\alpha_X(f)-f|=\alpha_X(f)f^\perp+\alpha_X(f^\perp)f$
and
\begin{align*}
\varphi^\omega
(|\alpha_X(f)-f|)
&=
\varphi^\omega(\alpha_X(f)f^\perp)+\varphi^\omega(\alpha_X(f^\perp)f)
\\
&=
\varphi^\omega(f\phi_X^\alpha(f^\perp))+\varphi^\omega(f^\perp\phi_X^\alpha(f))
\\
&=
\varphi^\omega(f-f\phi_X^\alpha(f))
+
\varphi^\omega(\phi_X^\alpha(f)-f\phi_X^\alpha(f))
\\
&=2\varphi^\omega(f-f\phi_X^\alpha(f))
\\
&\leq
2|f-\phi_X^\alpha(f)|_{\varphi^\omega}.
\end{align*}
From (\ref{eq:FaEf}),
we obtain
\begin{equation}
\label{FEfphi}
|\cF|_\sigma(a_{E'}-a_E)
\leq
2\delta|\cF|_\sigma|f|_{\varphi^\omega}
+
2\sum_{X\in\cF}d(X)^2|f-\phi_X^\alpha(f)|_{\varphi^\omega}.
\end{equation}
Applying Lemma \ref{lem:Folner}
and \ref{lem:average} to $f=|\cK|_\sigma I_{\cK}^\alpha(e)$,
we obtain
\[
\sum_{X\in\cF}d(X)^2
|\phi_X^\alpha(f)-f|_{\varphi^\omega}
\leq
2\de
|\cF|_\sigma
|\cK|_\sigma|e|_{\varphi^\omega}
=
2\delta|\cF|_\sigma
|f|_{\varphi^\omega},
\]
where we have also used the equality
$|\phi_Z^\alpha(e)|_{\varphi^\omega}=|e|_{\varphi^\omega}$
for $Z\in\Irr(\sC)$.
Thus from (\ref{eq:bEE'}) and (\ref{FEfphi}),
we obtain
\[
a_{E'}-a_E
\leq
6
\delta\varphi^\omega(f)
\leq
6\delta^{1/2}(b_{E'}-b_E).
\]
\end{proof}

\begin{proof}[Proof of Theorem \ref{thm:Rohlin-tensor}]
Let $\mathcal{I}$ be the subset of $\mathcal{J}$
which consists of $E=(E_X)_X$ satisfying
$a_E\leq6\delta^{1/2}b_E$.
We define the order on $\mathcal{I}$
by $E\leq E'$
if $E=E$ or the inequalities in Lemma \ref{lem:ab} hold.
Then this order is inductive as shown
in \cite[Proof of Theorem 5.9]{MT-minimal}
or \cite[p.54]{Oc}.
Take a maximal element $E=(E_X)_X$ in $\mathcal{I}$.
Then we have $b_{E}\geq1-\delta^{1/2}$ by Lemma \ref{lem:ab},
and we are done.
\end{proof}

\subsection{A slight generalization of the Rohlin tower theorem to semiliftable cocycle actions}
\label{subsect:slight}

Let $(\gamma,c^\gamma)$ be a cocycle action of $\sC$ on $M^\omega$.
We will say $(\gamma,c^\gamma)$ is \emph{semiliftable}
when for each $X\in\sC$, there exists a sequence of $\beta_X^n\in\End(M)_0$
with $n\in\N$
and $\beta_X\in\End(M)_0$
such that $\beta_X^n$ converges to $\beta_X$ in $\End(M)_0$
and
$\gamma_X(x)=(\beta_X^n(x_n))^\omega$
for $x=(x_n)^\omega\in M^\omega$.
(See \cite[Definition 3.4]{MT-minimal} or \cite[Chapter 5.2]{Oc}.)
We can actually show Theorem \ref{thm:Rohlin-tensor}
for centrally free semiliftable cocycle actions.
We, however, apply this version to a simple case of 
$(\gamma,c^\gamma)$ being a unitary perturbation of
a cocycle action $(\alpha,c^\alpha)$ on $M$
in Section \ref{subsect:approx}.
Hence we do not prove Theorem \ref{thm:Rohlin-tensor}
in full generality.
We will explain this more precisely as follows.

\begin{defn}
\label{defn:approx}
Two cocycle actions $(\alpha,c^\alpha)$ and $(\beta,c^\beta)$
of $\sC$ on $M$
are said to be \emph{approximately unitarily equivalent}
when
for each $X\in\sC$
there exists a unitary $u_X\in M^\omega$
such that
$u_X\alpha_X(\psi^\omega)u_X^{*}=\beta_X(\psi^\omega)$
for all $\psi\in M_*$.
\end{defn}

\begin{lem}
\label{lem:appunitequiv}
Let $(\alpha,c^\alpha)$ and $(\beta,c^\beta)$
be approximately unitarily equivalent cocycle actions.
There exists a family of unitaries $u_X\in M^\omega$
with $X\in\sC$
such that the following conditions hold:
\begin{itemize}
\item
$u_X\alpha_X(\psi^\omega)u_X^*=\beta_X(\psi^\omega)$
for all $X\in\sC$ and $\psi\in M_*$.

\item
$u_Y T^\alpha=T^\beta u_X$
for all $X\in\sC$ and $T\in\sC(X,Y)$.
\end{itemize}
\end{lem}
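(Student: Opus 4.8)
The plan is to pin down implementers on simple objects using the hypothesis and then to propagate them to all objects via irreducible decompositions; the point is that in a semisimple rigid C$^*$-tensor category a morphism is just a matrix of scalars with respect to such decompositions, so the arbitrary choices made on $\Irr(\sC)$ will automatically be compatible with every morphism. First I would fix, for each $Z\in\Irr(\sC)$, a unitary $u_Z\in M^\omega$ with $u_Z\alpha_Z(\psi^\omega)u_Z^{*}=\beta_Z(\psi^\omega)$ for all $\psi\in M_*$ (this is the assumption; for $Z=\btr$ take $u_\btr=1$). For an arbitrary $X\in\sC$ I would fix, once and for all, a finite family of isometries $S_i\in\sC(Z_i,X)$ with $Z_i\in\Irr(\sC)$, $S_i^{*}S_j=\delta_{ij}1_{Z_i}$ and $\sum_iS_iS_i^{*}=1_X$, choosing the trivial decomposition when $X\in\Irr(\sC)$, and set
\[
u_X:=\sum_i S_i^{\beta}\,u_{Z_i}\,S_i^{\alpha *}.
\]
That $u_X$ is unitary follows from $S_i^{\alpha *}S_j^{\alpha}=(S_i^{*}S_j)^{\alpha}=\delta_{ij}1$, the same identity with $\beta$, and the relations $\sum_iS_i^{\alpha}S_i^{\alpha *}=1_X^{\alpha}=1$ and $\sum_iS_i^{\beta}S_i^{\beta *}=1$ obtained by applying the unitary tensor functors $\alpha$ and $\beta$ to $\sum_iS_iS_i^{*}=1_X$.

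For the functional identity $u_X\alpha_X(\psi^\omega)u_X^{*}=\beta_X(\psi^\omega)$ I would feed the intertwiner relation \eqref{eq:alxphi} — which holds verbatim on $M^\omega$, using Lemma \ref{lem:linv2} and $\alpha_X(\psi^\omega)=(\alpha_X(\psi))^\omega$ — into the sum: applied to $S_i^{*}\in\sC(X,Z_i)$ it yields $S_i^{\alpha *}\alpha_X(\psi^\omega)=\frac{d(Z_i)}{d(X)}\alpha_{Z_i}(\psi^\omega)S_i^{\alpha *}$, and applied to $S_i\in\sC(Z_i,X)$ (now for $\beta$) it yields $\frac{d(Z_i)}{d(X)}S_i^{\beta}\beta_{Z_i}(\psi^\omega)=\beta_X(\psi^\omega)S_i^{\beta}$. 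Substituting these, using $u_{Z_i}\alpha_{Z_i}(\psi^\omega)u_{Z_i}^{*}=\beta_{Z_i}(\psi^\omega)$ and $S_i^{\alpha*}S_j^\alpha=\delta_{ij}1$, and then $\sum_iS_i^{\beta}S_i^{\beta *}=1$, the expression $u_X\alpha_X(\psi^\omega)u_X^{*}$ collapses to $\beta_X(\psi^\omega)$.

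It remains to verify naturality $u_YT^{\alpha}=T^{\beta}u_X$ for an arbitrary morphism $T\in\sC(X,Y)$. With the chosen decompositions $S_i\in\sC(Z_i,X)$ and $V_j\in\sC(W_j,Y)$ one has $T=\sum_{i,j}V_j(V_j^{*}TS_i)S_i^{*}$, and each block $V_j^{*}TS_i\in\sC(Z_i,W_j)$ vanishes unless $Z_i=W_j$ in $\Irr(\sC)$, in which case it is a scalar; hence $(V_j^{*}TS_i)^{\alpha}=(V_j^{*}TS_i)^{\beta}$ is that scalar, and since $u_{W_j}=u_{Z_i}$ whenever $Z_i=W_j$ we get $u_{W_j}(V_j^{*}TS_i)^{\alpha}=(V_j^{*}TS_i)^{\beta}u_{Z_i}$ in all cases. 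Substituting the defining formula for $u_Y$, expanding $V_j^{\alpha *}T^{\alpha}=\sum_i(V_j^{*}TS_i)^{\alpha}S_i^{\alpha *}$ on one side and $(TS_i)^{\beta}=\sum_j(V_jV_j^{*}TS_i)^{\beta}$ on the other, the two expansions match term by term; consistency with the trivial decomposition shows $u_Z$ is unchanged for $Z\in\Irr(\sC)$.

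The step I expect to require the most care is this last naturality check: it is where semisimplicity — a morphism between two objects being a matrix of scalars relative to any irreducible decompositions — is exactly what forces the arbitrary choices $u_Z$, $Z\in\Irr(\sC)$, to glue into a natural family. Everything else is routine bookkeeping with intertwiners and the left-inverse identities recorded in Section \ref{subsect:actions}.
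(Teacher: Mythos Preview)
Your proposal is correct and is exactly the paper's approach: fix unitaries $u_Z$ on $\Irr(\sC)$ by hypothesis and then set $u_X:=\sum_i S_i^{\beta}u_{Z_i}S_i^{\alpha*}$ via an irreducible decomposition, which is precisely the paper's formula $u_X:=\sum_{Y\in\Irr(\sC)}\sum_{T\in\ONB(Y,X)}T^{\beta}u_YT^{\alpha*}$. The paper dismisses the verifications with ``and we are done,'' while you spell out unitarity, the functional identity via \eqref{eq:alxphi}, and naturality via the scalar-block decomposition of morphisms between semisimple objects; all of these checks are correct as written.
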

\begin{proof}
By approximate unitary equivalence,
we can take a unitary $u_X\in M^\omega$ with $X\in\Irr(\sC)$
so that
$u_X\alpha_X(\psi^\omega)u_X^*=\beta_X(\psi^\omega)$
for $\psi\in M_*$.
Next for a general $X\in\sC$,
we set
$u_X:=\sum_{Y\in\Irr(\sC)}
\sum_{T\in\ONB(Y,X)}T^\beta u_Y T^{\alpha*}$,
and we are done.
\end{proof}

Assuming the central freeness of $(\alpha,c^\alpha)$
and the amenability of $\sC$,
we will prove in Lemma \ref{lem:cocapprox}
that we can actually take $u_X$'s
so that they satisfy the conditions in the previous lemma
and moreover perturb $c^\alpha$ to $c^\beta$.

We will denote by $(\gamma,c^\gamma)$ the unitary perturbation
of $(\alpha,c^\alpha)$ by $u=(u_X)_X$
as in Lemma \ref{lem:appunitequiv}.
We know that $\gamma_X(\psi^\omega)=\beta_X(\psi^\omega)$
for all $X\in\sC$ and $\psi\in M_*$,
which implies $\gamma_X(x)=\beta_X(x)$ for all $x\in M$.

\begin{lem}
\label{lem:Rohlin-tensor-gamma}
Suppose that $(\alpha,c^\alpha)$ satisfies the assumption
of Theorem \ref{thm:Rohlin-tensor}.
Then the same statements of Theorem \ref{thm:Rohlin-tensor}
hold for $(\gamma,c^\gamma)$ in place of $(\alpha,c^\alpha)$
except the splitting condition (\ref{item:Roh-split}).
\end{lem}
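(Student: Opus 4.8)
The plan is to rerun the entire argument of Theorem \ref{thm:Rohlin-tensor}, with the cocycle action $(\alpha,c^\alpha)$ on $M$ replaced throughout by $(\gamma,c^\gamma)$ on $M^\omega$ and all the ultraproduct objects $M^\omega$, $M_\omega$, $\tau^\omega$, $\varphi^\omega$ replaced by the corresponding ones built over the base algebra $M^\omega$. Three preliminary observations make this legitimate. First, $(\gamma,c^\gamma)$ is semiliftable: it is the unitary perturbation of the liftable action $(\alpha^\omega,c^\alpha)$ by the family $u=(u_X)_X\subset M^\omega$ of Lemma \ref{lem:appunitequiv}, so the perturbing unitaries are represented by sequences in $M$ and $\gamma_X\bigl((x_n)^\omega\bigr)=\bigl((\Ad u_X^n\circ\alpha_X)(x_n)\bigr)^\omega$ for representing sequences $u_X=(u_X^n)^\omega$ (cf. \cite[Definition 3.4]{MT-minimal}). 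Second, $(\gamma,c^\gamma)$ is centrally free: the central freeness of $(\alpha,c^\alpha)$ passes to $(\alpha^\omega,c^\alpha)$, and a unitary perturbation by elements of $M^\omega$ preserves proper central non-triviality, since if $a\gamma_X(x)=\gamma_X(x)a$ for all relevant central $x$ then $a':=u_X^*a$ satisfies $a'\alpha_X^\omega(x)=\alpha_X^\omega(x)a'$, using that such $x$ commute with $u_X\in M^\omega$. Third, $\varphi^\omega$ is a faithful normal state on $M^\omega$ which is $\theta^\gamma$-invariant on $Z(M^\omega)$: on $Z(M^\omega)$ one has $\theta_X^\gamma=\alpha_X^\omega$ because $\Ad u_X$ acts trivially on the centre, $\tau^\omega$ maps $Z(M^\omega)$ into $Z(M)$ and intertwines $\alpha_X^\omega$ with $\alpha_X$, and $\varphi$ is $\theta^\alpha$-invariant on $Z(M)$; hence $\varphi^\omega\circ\theta_X^\gamma=\varphi\circ\alpha_X\circ\tau^\omega=\varphi\circ\tau^\omega=\varphi^\omega$ on $Z(M^\omega)$.

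Granting these, all the auxiliary results used in the proof of Theorem \ref{thm:Rohlin-tensor} --- Lemmas \ref{lem:linv}, \ref{lem:linv2}, \ref{lem:average}, \ref{lem:free}, \ref{lem:thetaalpha}, \ref{lem:freetrace}, \ref{lem:relcomm}, \ref{lem:qq}, \ref{lem:support-estimate}, \ref{lem:ee} and \ref{lem:ortho} --- carry over word for word to $(\gamma,c^\gamma)$. Their proofs use only: the tensor-category and conjugate-equation identities, which involve solely $\gamma_X$, the $c^\gamma_{X,Y}$ and the morphisms $T^\gamma$; proper infiniteness, which $M^\omega$ inherits from $M$; the central freeness of $\gamma$ just established; the tracial property of $\varphi^\omega$ on the algebras ${}_{\gamma_X}(M^\omega)_\omega$ (Lemma \ref{lem:freetrace}, itself purely categorical); and Popa's index selection theorem \cite[Theorem A.1.2]{Popa-endo}, which needs only $Q$ (and the countably many enlargements of it used along the way) to be countably generated, not the ambient algebra to have separable predual. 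Consequently $\mathcal{J}$ and the functionals $a_E$, $b_E$ are defined exactly as before, and the pumping-up Lemma \ref{lem:ab} together with the concluding Zorn-type maximality argument produce a maximal element $E$ with $b_E\ge1-\delta^{1/2}$, exactly as in the original proof.

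The single step forced to change by the non-separable base is the fast-reindexation argument (the appeal to \cite[Lemma 3.10]{MT-minimal}) inside the proof of Lemma \ref{lem:ab}, which arranges $\tau^\omega\bigl(x\,\gamma_X(e_r)\bigr)=\tau^\omega(x)\,\tau^\omega\bigl(\gamma_X(e_r)\bigr)$ for $x\in Q_1$ and ultimately delivers the splitting property (\ref{item:Roh-split}). Because $\gamma_X=\Ad u_X\circ\alpha_X^\omega$ with the $u_X$ realised by \emph{fixed} sequences in $M$, this reindexation cannot be carried out compatibly with $\gamma$, so I would drop it and correspondingly delete condition (\ref{item:Roh-split}) from $\mathcal{J}$ and from the conclusion. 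The price is that the two places in the proof of Lemma \ref{lem:ab} where splitting is invoked --- the bound on $\sum_{X\in\cK}d(X)^2\varphi^\omega(|E_X'-E_X|)$ and the estimate of $a_{E'}-a_E$ --- must be re-derived using only positivity and the commutation relations that survive: after harmlessly enlarging $Q_1$ to contain $E$ one has $E_Xf=fE_X$, hence $\varphi^\omega(E_Xf)\le\varphi^\omega(f)$, which together with the tracial property of $\varphi^\omega$ on ${}_{\gamma_X}(M^\omega)_\omega$ suffices to recover both inequalities. The main obstacle is exactly this bookkeeping --- confirming that (\ref{item:Roh-split}) enters only through that reindexation step and the defining conditions of $\mathcal{J}$, and rerouting the two estimates above without it; everything else is a mechanical substitution $(\alpha,c^\alpha)\mapsto(\gamma,c^\gamma)$, $M\mapsto M^\omega$.
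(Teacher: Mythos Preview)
Your approach is much more elaborate than the paper's, and it contains a genuine gap.

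The paper does not rerun the Rohlin construction at all.  It simply takes a Rohlin tower $E^\alpha=(E_X^\alpha)_X$ for $(\alpha,c^\alpha)$ produced by Theorem~\ref{thm:Rohlin-tensor}, with $Q$ enlarged so as to contain all $\alpha_X(u_Y)$, and then sets
\[
E_X^\gamma:=u_X E_X^\alpha u_X^*\quad (X\in\sC).
\]
Since $(\gamma,c^\gamma)$ is precisely the perturbation of $(\alpha,c^\alpha)$ by $u$, each condition (\ref{item:Roh-natural}), (\ref{item:Roh-support}), (\ref{item:Roh-commute}), (\ref{item:Roh-orthogonal}), (\ref{item:Roh-partition}), (\ref{item:Roh-equivariance}), (\ref{item:Roh-resonance}) for $E^\gamma$ unwinds, via the identities $T^\gamma=u_YT^\alpha u_X^*$, $c_{X,Y}^\gamma=u_X\alpha_X(u_Y)c_{X,Y}^\alpha u_{X\otimes Y}^*$ and $u_X\in Q$, to the corresponding condition for $E^\alpha$.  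This gives the tower in $M^\omega$, which is where it is needed in the subsequent lemmas (e.g.\ the construction of $v_X$ in (\ref{eq:defnv})); your scheme would produce projections in $(M^\omega)^\omega$ instead.

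The concrete gap in your proposal is the workaround for the splitting property.  In the proof of Lemma~\ref{lem:ab} the splitting is used to pass from $\varphi^\omega(E_Xf)$ to $\varphi^\omega\bigl(\ovl{R}_X^{\alpha*}c_{X,\ovl{X}}^*E_Xc_{X,\ovl{X}}\ovl{R}_X^\alpha\,f\bigr)$, so that after summing over $X\in\cK$ one can invoke the orthogonality condition (\ref{item:Roh-orthogonal}) to bound $\sum_{X\in\cK}d(X)^2\varphi^\omega(E_Xf)\le\varphi^\omega(f)$.  Your replacement, namely $E_Xf=fE_X$ and hence $\varphi^\omega(E_Xf)\le\varphi^\omega(f)$, yields only $\sum_{X\in\cK}d(X)^2\varphi^\omega(E_Xf)\le|\cK|_\sigma\,\varphi^\omega(f)$, since the projections $E_X$ themselves are not mutually orthogonal.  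The same loss of a factor $|\cK|_\sigma$ occurs in the estimate of $\sum_{Y}d(Y)^2\varphi^\omega\bigl(\alpha_X(E_Y)|\alpha_X(f)-f|\bigr)$.  With these weakened bounds the inequalities of Lemma~\ref{lem:ab} no longer close up, and the Zorn argument does not yield $b_E\ge1-\delta^{1/2}$.
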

\begin{proof}
We note that $\gamma=\theta^\alpha$ on $Z(M)$.
Let us assume that a countably generated von Neumann subalgebra
$Q\subset M^\omega$ contains all $\alpha_X(u_Y)$
with $X,Y\in\sC_0$.
Let $E^\alpha=(E_X^\alpha)_X$ be a Rohlin tower
as in Theorem \ref{thm:Rohlin-tensor}
for $\cF,\cK,\delta$ and $Q$ there.
We set $E^\gamma:=(E_X^\gamma)_X$
defined by $E_X^\gamma:=u_X E_X^\alpha u_X^*$
for $X\in\sC$.
Then the conditions Theorem \ref{thm:Rohlin-tensor}
(\ref{item:Roh-natural}), (\ref{item:Roh-support}), 
(\ref{item:Roh-commute}) are trivial.
On (\ref{item:Roh-orthogonal}),
we have the following for each $X\in\Irr(\sC)$:
\begin{align*}
d(X)^2
\ovl{R}_X^{\gamma*}c_{X,\ovl{X}}^{\gamma*}
E_X^\gamma c_{X,\ovl{X}}^{\gamma}\ovl{R}_X^{\gamma}
&=
d(X)^2
\ovl{R}_X^{\alpha*}
u_{X\otimes \ovl{X}}^*
c_{X,\ovl{X}}^{\gamma*}
u_X E_X^\alpha u_X^*
c_{X,\ovl{X}}^{\gamma}
u_{X\otimes\ovl{X}}
\ovl{R}_X^{\alpha}
\\
&=
d(X)^2
\ovl{R}_X^{\alpha*}
\alpha_X(u_{\ovl{X}}^*)
E_X^\alpha 
\alpha_X(u_{\ovl{X}})
\ovl{R}_X^{\alpha}
\\
&=
d(X)^2
\ovl{R}_X^{\alpha*}
E_X^\alpha 
\ovl{R}_X^{\alpha}
\quad
\mbox{since }
u_{\ovl{X}}\in Q.
\end{align*}
Thus (\ref{item:Roh-partition}) also holds.
We will check (\ref{item:Roh-equivariance}) as follows:
for $X,Y\in\Irr(\sC)$,
\begin{align*}
&
\gamma_X(E_Y^\gamma)
-
c_{X,Y}^\gamma E_{X\otimes Y}^\gamma c_{X,Y}^{\gamma*}
\\
&=
u_X\alpha_X(u_Y)\alpha_X(E_Y^\alpha)\alpha_X(u_Y^*)u_X^*
-
c_{X,Y}^\gamma
u_{X\otimes Y}
E_{X\otimes Y}^\alpha
u_{X\otimes Y}c_{X,Y}^{\gamma*}
\\
&=
u_X\alpha_X(u_Y)(\alpha_X(E_Y^\alpha)
-c_{X,Y}^\alpha E_{X\otimes Y}^\alpha c_{X,Y}^{\alpha*})
\alpha_X(u_Y^*)u_X^*,
\end{align*}
and
\[
\gamma_X(\gamma_Y(\varphi^\omega))
=
u_X\alpha_X(u_Y) \alpha_X(\alpha_Y(\varphi^\omega))\alpha_X(u_Y^*)u_X^*.
\]
Hence
\[
|\gamma_X(E_Y^\gamma)
-
c_{X,Y}^\gamma E_{X\otimes Y}^\gamma c_{X,Y}^{\gamma*}
|_{\gamma_X(\gamma_Y(\varphi^\omega))}
=
|
\alpha_X(E_Y^\alpha)
-c_{X,Y}^\alpha E_{X\otimes Y}^\alpha c_{X,Y}^{\alpha*}
|_{\alpha_X(\alpha_Y(\varphi^\omega))}.
\]
On (\ref{item:Roh-resonance}),
we have the following: for $X,Y\in\Irr(\sC)$,
\begin{align*}
E_X^\gamma c_{X,\ovl{X}}^\gamma\ovl{R}_X^\gamma E_Y^\gamma
&=
u_X E_X^\alpha \alpha_X(u_{\ovl{X}})
c_{X,\ovl{X}}^\alpha\ovl{R}_X^\alpha
u_Y E_Y^\alpha u_Y^*
\\
&=
u_X \alpha_X(u_{\ovl{X}}\alpha_{\ovl{X}}(u_Y))
E_X^\alpha 
c_{X,\ovl{X}}^\alpha\ovl{R}_X^\alpha
E_Y^\alpha u_Y^*
\quad
\mbox{since }
\alpha_{\ovl{X}}(u_Y)\in Q
\\
&=
u_X \alpha_X(u_{\ovl{X}}\alpha_{\ovl{X}}(u_Y))
\frac{\delta_{X,Y}}{d(X)}
\alpha_X(c_{\ovl{X},X}^\alpha R_X^\alpha)E_X^\alpha u_X^*
\\
&=
\frac{\delta_{X,Y}}{d(X)}
\gamma_X(c_{\ovl{X},X}^\gamma R_X^\gamma)E_X^\gamma.
\end{align*}
\end{proof}

In the following lemma,
let us keep the notation $(\gamma,c^\gamma)$ as above.

\begin{lem}
\label{lem:gaphiDe}
Suppose that $(\alpha,c^\alpha)$ satisfies the assumption
of Theorem \ref{thm:Rohlin-tensor}
and $\beta$ satisfies $\beta_X(M)'\cap M=Z(M)$
for all $X\in\Irr(\sC)$.
Let $\psi\in M_*$ be a faithful state
being $\alpha$-invariant on $Z(M)$.
Let $X,Y\in\Irr(\sC)$ and
$\Delta_{X,Y}\in M^\omega$
be a self-adjoint element
commuting with $\gamma_X(\gamma_Y(M_*))$.
Then for all $y\in M^\omega$,
one has
\[
|\gamma_X(\psi^\omega)
(y\Delta_{X,Y}\gamma_X(c_{Y,\ovl{Y}}^\gamma\ovl{R}_Y^\gamma))|
\leq
\|y\|
|\Delta_{X,Y}|_{\gamma_{X}(\gamma_Y(\psi^\omega))}.
\]
\end{lem}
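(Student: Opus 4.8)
The plan is to strip off the operator $\ga_X(c^\ga_{Y,\ovl Y}\ovl R_Y^\ga)$ using the left‑inverse machinery of Section~\ref{subsect:actions}, turning the left‑hand side into a pairing against a state for which $\De_{X,Y}$ lies in the centralizer, and then to estimate by Cauchy--Schwarz. First I would record the elementary facts about $v:=c^\ga_{Y,\ovl Y}\ovl R_Y^\ga$: unitarity of $c^\ga$ and the normalization $\ovl R_Y^{\ga*}\ovl R_Y^\ga=1$ give $v^*v=1$, while $\ovl R_Y^\ga\in(\ga_\btr,\ga_{Y\otimes\ovl Y})$ combined with $\ga_Y\circ\ga_{\ovl Y}=\Ad c^\ga_{Y,\ovl Y}\circ\ga_{Y\otimes\ovl Y}$ gives $v\in(\id_{M^\omega},\ga_Y\circ\ga_{\ovl Y})$; hence $w:=\ga_X(v)$ satisfies $w^*w=1$ and $w\ga_X(x)=\ga_X(\ga_Y(\ga_{\ovl Y}(x)))w$ for $x\in M^\omega$. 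Since $\ga_X(\ps^\omega)=\ga_X(\ps^\omega)\circ E_X^\ga$ for the faithful normal conditional expectation $E_X^\ga:=\ga_X\circ\phi_X^\ga$ onto $\ga_X(M^\omega)$ (using $\phi_X^\ga\circ\ga_X=\id$), and $w\in\ga_X(M^\omega)$ is absorbed by $E_X^\ga$ from the right, this yields
\[
\ga_X(\ps^\omega)(y\De_{X,Y}w)=\ps^\omega\big(\phi_X^\ga(y\De_{X,Y})\,v\big).
\]

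Next I would apply the functional identity (\ref{eq:alxphi}) for $(\ga,c^\ga)$ to $\ovl R_Y\in\sC(\btr,Y\otimes\ovl Y)$, namely $\ovl R_Y^\ga\ps^\omega=d(Y)^2\,\ga_{Y\otimes\ovl Y}(\ps^\omega)\,\ovl R_Y^\ga$ (since $d(Y\otimes\ovl Y)=d(Y)^2$), together with the twist $c^\ga_{Y,\ovl Y}\ga_{Y\otimes\ovl Y}(\ps^\omega)(c^\ga_{Y,\ovl Y})^*=\ga_Y(\ga_{\ovl Y}(\ps^\omega))$ from (\ref{eq:alpha-bimod-varphi}), to rewrite the right‑hand side above as a $d(Y)^2$‑multiple of a pairing, against $\ga_{Y\otimes\ovl Y}(\ps^\omega)$, of $\ovl R_Y^\ga\,\phi_X^\ga(y\De_{X,Y})\,c^\ga_{Y,\ovl Y}$. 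On this I would run Cauchy--Schwarz with respect to that state, writing $\De_{X,Y}=|\De_{X,Y}|^{1/2}\,\ep\,|\De_{X,Y}|^{1/2}$ with $\ep$ the sign of $\De_{X,Y}$ (self‑adjoint, $\|\ep\|\le1$, commuting with $|\De_{X,Y}|$ and, by hypothesis, with $\ga_X(\ga_Y(M_*))$), so that one $|\De_{X,Y}|^{1/2}$ falls on each side of the inner product and produces the factor $|\De_{X,Y}|$; the Schwarz inequality for the unital completely positive map $\phi_X^\ga$ converts $\phi_X^\ga(y\De_{X,Y})$‑terms into $\phi_X^\ga(y|\De_{X,Y}|y^*)$‑terms (hence the factor $\|y\|^2$), while $w^*w=1$ and the conjugate‑equation value $\phi_{Y\otimes\ovl Y}^\ga(\ovl R_Y^\ga\ovl R_Y^{\ga*})=d(Y)^{-2}$ — which holds because $\ovl R_Y^\ga\ovl R_Y^{\ga*}=[P^\btr_{Y\otimes\ovl Y}]^\ga$ and $\phi_X^\al([P_X^Z]^\al)=d(Z)d(X)^{-1}N_X^Z$ — are used to absorb the powers of $d(Y)$.

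The step I expect to be the main obstacle is the last one: identifying the state that finally pairs against $|\De_{X,Y}|$ with $\ga_X(\ga_Y(\ps^\omega))$, and verifying that $\De_{X,Y}$ lies in its centralizer. This is precisely where the two standing hypotheses enter — $\be_X(M)'\cap M=Z(M)$ for all $X\in\Irr(\sC)$, and $\theta^\al$‑invariance of $\ps$ on $Z(M)$. The mechanism should be: since $\De_{X,Y}$ commutes with $\ga_X(\ga_Y(M_*))$, its ``push‑down'' $\phi_Y^\ga(\phi_X^\ga(|\De_{X,Y}|))$ is forced to be central (using $\be_X(M)'\cap M=Z(M)$ and $\ga=\theta^\al$ on $Z(M)$), and on $Z(M)$ every functional produced from $\ps^\omega$ by the relevant $\ga$'s restricts to $\ps$ up to a power of the $\theta^\al$‑action, which fixes $\ps$; this legitimizes the substitution of $\ga_X(\ga_Y(\ps^\omega))$ for the state that emerges from the computation and pins the final constant at $1$. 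Thus, apart from this state‑matching and the careful bookkeeping of the $d(Y)$‑powers through the conjugate equations, the remainder is a routine computation with left inverses and the bimodule identities (\ref{eq:alpha-bimod-varphi}), (\ref{eq:alxphi}).
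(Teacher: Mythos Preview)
Your outline has the right ingredients --- left inverses, the intertwining property of $w:=\ga_X(c^\ga_{Y,\ovl Y}\ovl R_Y^\ga)$, Cauchy--Schwarz, and the centrality/invariance hypotheses for the final state-matching --- but the Cauchy--Schwarz step as you describe it does not go through. You first push down via $\phi_X^\ga$, obtaining $\psi^\omega(\phi_X^\ga(y\De_{X,Y})\,v)$, and \emph{then} propose to split $\De_{X,Y}=|\De_{X,Y}|^{1/2}\ep|\De_{X,Y}|^{1/2}$ so that ``one $|\De_{X,Y}|^{1/2}$ falls on each side''. But at this point $\De_{X,Y}$ is trapped inside the single operator $\phi_X^\ga(y\De_{X,Y})$; you cannot separate it to apply Cauchy--Schwarz in the state $\ga_{Y\otimes\ovl Y}(\psi^\omega)$. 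Invoking the Kadison--Schwarz inequality for $\phi_X^\ga$ instead produces $\phi_X^\ga(\De_{X,Y}y^*y\De_{X,Y})$, which is controlled by $\|y\|^2\phi_X^\ga(|\De_{X,Y}|^2)$, not $\|y\|^2\phi_X^\ga(|\De_{X,Y}|)$ --- the wrong power of $|\De_{X,Y}|$ for the target bound, which is linear in $|\De_{X,Y}|$.

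The paper avoids this by \emph{not} pushing down first. It takes the polar decomposition $\De_{X,Y}w=w_{X,Y}|\De_{X,Y}w|$ and observes that $|\De_{X,Y}w|$ commutes with $\ga_X(\psi^\omega)$ (this follows from the intertwining $w\ga_X(\psi^\omega)=d(Y)^2\ga_X\ga_Y\ga_{\ovl Y}(\psi^\omega)w$ together with the hypothesis that $\De_{X,Y}$ centralizes $\ga_X(\ga_Y(M_*))$). This commutation is the key observation you are missing: it lets one strip off $\|y\|$ immediately via $|\ga_X(\psi^\omega)(y\De_{X,Y}w)|\le\|y\|\,|\De_{X,Y}w|_{\ga_X(\psi^\omega)}$. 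Only then does the paper compute $|\De_{X,Y}w|_{\ga_X(\psi^\omega)}$: it rewrites it as $d(Y)^2\ga_X\ga_Y\ga_{\ovl Y}(\psi^\omega)(w\,w_{X,Y}^*\De_{X,Y}\,ww^*)$ using the intertwining of both $w$ and the polar isometry $w_{X,Y}$, applies Cauchy--Schwarz there (now $\De_{X,Y}$ sits bare between factors), and evaluates each resulting factor as $d(Y)^{-1}|\De_{X,Y}|_{\ga_X\ga_Y(\psi^\omega)}^{1/2}$. The hypotheses $\be_X(M)'\cap M=Z(M)$ and the $\theta^\al$-invariance of $\psi$ enter exactly in these two evaluations, via the fact that $\tau^\omega(\phi_Y^\ga\phi_X^\ga(|\De_{X,Y}|))$ and $\tau^\omega(\phi_X^\ga(|\De_{X,Y}|))$ land in $Z(M)$. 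So your diagnosis of where the hypotheses are used is essentially right, but the mechanism that puts $|\De_{X,Y}|$ (first power) in front of those left inverses is the polar-decomposition trick, not the splitting you propose.
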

\begin{proof}
We will simply write $c$ for $c^\gamma$.
Since $|\Delta_{X,Y}\gamma_X(c_{Y,\ovl{Y}}\ovl{R}_Y^\gamma)|$ commutes with
$\gamma_X(\psi^\omega)$,
we have
\[
|\gamma_X(\psi^\omega)(y\Delta_{X,Y}\gamma_X(c_{Y,\ovl{Y}}\ovl{R}_Y^\gamma))|
\leq
\|y\|
|\Delta_{X,Y}\gamma_X(c_{Y,\ovl{Y}}\ovl{R}_Y^\gamma)|_{\gamma_{X}(\psi^\omega)}.
\]
Hence we will estimate the right-hand side.
Let us consider the polar decomposition
$\Delta_{X,Y}\gamma_X(c_{Y,\ovl{Y}}\ovl{R}_Y^\gamma)
=w_{X,Y}|\Delta_{X,Y}\gamma_X(c_{Y,\ovl{Y}}\ovl{R}_Y^\gamma)|$.
Then the partial isometry $w_{X,Y}$ satisfies
\[
w_{X,Y}\gamma_X(x)
=\gamma_X(\gamma_Y(\gamma_{\ovl{Y}}(x)))w_{X,Y},
\quad
w_{X,Y}\gamma_X(\psi^\omega)
=\gamma_X(\gamma_Y(\gamma_{\ovl{Y}}(\psi^\omega)))w_{X,Y}
\]
for all $x\in M$ and $\psi\in M^\omega$.
Hence we have
\begin{align}
&|\Delta_{X,Y}\gamma_X(c_{Y,\ovl{Y}}\ovl{R}_Y^\gamma)
|_{\gamma_{X}(\psi^\omega)}
\notag
\\
&=
\gamma_{X}(\psi^\omega)(w_{X,Y}^*\Delta_{X,Y}
\gamma_X(c_{Y,\ovl{Y}}\ovl{R}_Y^\gamma))
\notag
\\
&=
\gamma_{X}(\psi^\omega)
(w_{X,Y}^*\Delta_{X,Y}
\gamma_X(c_{Y,\ovl{Y}}\ovl{R}_Y^\gamma\ovl{R}_Y^{\gamma*}c_{Y,\ovl{Y}}^*)
\gamma_X(c_{Y,\ovl{Y}}\ovl{R}_Y^\gamma))
\notag\\
&=
d(Y)^2
\gamma_X(\gamma_Y(\gamma_{\ovl{Y}}(\psi^\omega)))
(\gamma_X(c_{Y,\ovl{Y}}\ovl{R}_Y^\gamma)w_{X,Y}^*\Delta_{X,Y}
\gamma_X(c_{Y,\ovl{Y}}\ovl{R}_Y^\gamma\ovl{R}_Y^{\gamma*}c_{Y,\ovl{Y}}^*))
\notag\\
&\leq
d(Y)^2
\gamma_X(\gamma_Y(\gamma_{\ovl{Y}}(\psi^\omega)))
(\gamma_X(c_{Y,\ovl{Y}}\ovl{R}_Y^\gamma)w_{X,Y}^*|\Delta_{X,Y}|
w_{X,Y}\gamma_X(\ovl{R}_Y^{\gamma*}c_{Y,\ovl{Y}}^*))^{1/2}
\notag\\
&\quad
\cdot
\gamma_X(\gamma_Y(\gamma_{\ovl{Y}}(\psi^\omega)))
(\gamma_X(c_{Y,\ovl{Y}}\ovl{R}_Y^\gamma
\ovl{R}_Y^{\gamma*}c_{Y,\ovl{Y}}^*)|\Delta_{X,Y}|
\gamma_X(c_{Y,\ovl{Y}}\ovl{R}_Y^\gamma
\ovl{R}_Y^{\gamma*}c_{Y,\ovl{Y}}^*))^{1/2},
\label{eq:deltaxy}
\end{align}
where, in the last inequality,
we have used the self-adjointness of $\Delta_{X,Y}$
and the Cauchy--Schwarz inequality.
On the one hand,
we have
\begin{align*}
&\gamma_X(\gamma_Y(\gamma_{\ovl{Y}}(\psi^\omega)))
(\gamma_X(c_{Y,\ovl{Y}}\ovl{R}_Y^\gamma)w_{X,Y}^*|\Delta_{X,Y}|
w_{X,Y}\gamma_X(\ovl{R}_Y^{\gamma*}c_{Y,\ovl{Y}}^*))
\\
&=
\gamma_X(\gamma_Y(\gamma_{\ovl{Y}}(\psi^\omega)))
\bigl(w_{X,Y}^*\gamma_X(\gamma_Y(\gamma_{\ovl{Y}}(c_{Y,\ovl{Y}}
\ovl{R}_Y^\gamma)))
|\Delta_{X,Y}|
\gamma_X(\gamma_Y(\gamma_{\ovl{Y}}(\ovl{R}_Y^{\gamma*}c_{Y,\ovl{Y}}^*)))
w_{X,Y}\bigr)
\\
&=
\gamma_X(\gamma_Y(\gamma_{\ovl{Y}}
(\gamma_Y(\gamma_{\ovl{Y}}(\psi^\omega)))))
\bigl(w_{X,Y}w_{X,Y}^*
\gamma_X(\gamma_Y(\gamma_{\ovl{Y}}(c_{Y,\ovl{Y}}\ovl{R}_Y^\gamma)))
|\Delta_{X,Y}|
\\
&\quad
\cdot
\gamma_X(\gamma_Y(\gamma_{\ovl{Y}}(\ovl{R}_Y^{\gamma*}c_{Y,\ovl{Y}}^*))\bigr)
\\
&\leq
\gamma_X(\gamma_Y(\gamma_{\ovl{Y}}
(\gamma_Y(\gamma_{\ovl{Y}}(\psi^\omega)))))
\bigl(\gamma_X(\gamma_Y(\gamma_{\ovl{Y}}(c_{Y,\ovl{Y}}\ovl{R}_Y^\gamma)))
|\Delta_{X,Y}|
\gamma_X(\gamma_Y(\gamma_{\ovl{Y}}(\ovl{R}_Y^{\gamma*}c_{Y,\ovl{Y}}^*)))
\bigr)
\\
&=
\frac{1}{d(Y)^2}
\gamma_X(\gamma_Y(\gamma_{\ovl{Y}}(\psi^\omega)))
(\gamma_X(\gamma_Y(\gamma_{\ovl{Y}}(\ovl{R}_Y^{\gamma*}\ovl{R}_Y^\gamma)))
|\Delta_{X,Y}|)
\\
&=
\frac{1}{d(Y)^2}
|\Delta_{X,Y}|_{\gamma_X(\gamma_Y(\gamma_{\ovl{Y}}(\psi^\omega)))}
\\
&=
\frac{1}{d(Y)^2}
|\Delta_{X,Y}|_{\gamma_X(\gamma_Y(\psi^\omega))},
\end{align*}
where in the last equality,
we note that $\tau^\omega(\phi_Y^\gamma(\phi_{X}^\gamma(|\Delta_{X,Y}|)))$
is contained in $M'\cap M=Z(M)$
and $\psi$ is $\theta^\alpha$-invariant.
On the other hand, we have
\begin{align*}
&\gamma_X(\gamma_Y(\gamma_{\ovl{Y}}(\psi^\omega)))
(\gamma_X(c_{Y,\ovl{Y}}\ovl{R}_Y^\gamma\ovl{R}_Y^{\gamma*}
c_{Y,\ovl{Y}}^*)|\Delta_{X,Y}|
\gamma_X(c_{Y,\ovl{Y}}\ovl{R}_Y^\gamma\ovl{R}_Y^{\gamma*}c_{Y,\ovl{Y}}^*))
\\
&=
\gamma_X(\gamma_Y(\gamma_{\ovl{Y}}(\psi^\omega)))
(\gamma_X(c_{Y,\ovl{Y}}\ovl{R}_Y^\gamma\ovl{R}_Y^{\gamma*}c_{Y,\ovl{Y}}^*)|
\Delta_{X,Y}|)
\\
&=
\frac{1}{d(Y)^2}
\gamma_{X}(\psi^\omega)
(\gamma_X(\ovl{R}_Y^{\gamma*}c_{Y,\ovl{Y}}^*)
|\Delta_{X,Y}|\gamma_X(c_{Y,\ovl{Y}}\ovl{R}_Y^\gamma))
\\
&=
\frac{1}{d(Y)^2}
\psi^\omega
(\ovl{R}_Y^{\gamma*}c_{Y,\ovl{Y}}^*
\phi_X^\gamma(|\Delta_{X,Y}|)c_{Y,\ovl{Y}}\ovl{R}_Y^\gamma)
\\
&=
\frac{1}{d(Y)^2}
\psi^\omega
(\phi_X^\gamma(|\Delta_{X,Y}|))
\\
&=
\frac{1}{d(Y)^2}
|\Delta_{X,Y}|_{\gamma_X(\gamma_Y(\psi^\omega))},
\end{align*}
where we have used the fact that
$\tau^\omega(\phi_X^\gamma(|\Delta_{X,Y}|))$
is contained in $\beta_Y(M)'\cap M=Z(M)$
since $\gamma=\beta$ on $M$.
Thus we obtain
$(\ref{eq:deltaxy})
\leq |\Delta_{X,Y}|_{\gamma_X(\gamma_Y(\psi^\omega))}$.
\end{proof}

\subsection{Fixed point subalgebras of $M_\omega$}

By averaging technique over a large finite subset of $\Irr(\sC)$,
we can show the following result.

\begin{thm}
\label{thm:nontrivial}
Let $M$ be a properly infinite von Neumann algebra
with separable predual
such that $M_\omega$ has the continuous part.
Let $\al$ be a centrally free action of
an amenable rigid C$^*$-tensor category $\sC$
on $M$ such that
$M$ has a faithful normal state
being $\theta^\alpha$-invariant on $Z(M)$.
Then $M_\omega^\alpha$ is not trivial.
If, moreover, $M_\omega$ is of type II$_1$,
then so is $M_\omega^\alpha$.
\end{thm}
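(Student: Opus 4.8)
The plan is to manufacture non-scalar, $\alpha$-invariant elements of $M_\omega$ by spreading a single element of the continuous part of $M_\omega$ along a Rohlin tower, and then to pass from ``approximately invariant'' to ``invariant'' by an index-selection argument. Fix a faithful normal state $\varphi$ on $M$ that is $\theta^\alpha$-invariant on $Z(M)$ (it exists by hypothesis); by Lemma \ref{lem:freetrace} it induces a faithful normal tracial state $\varphi^\omega$ on $M_\omega$, so $M_\omega$ is finite and ``$M_\omega$ has continuous part'' just says $M_\omega$ is not purely atomic. Hence we may fix a diffuse abelian $*$-subalgebra $A\subset M_\omega$. Given an $(\cF,\delta)$-invariant $\cK\subset\Irr(\sC)$ and a countably generated $Q\subset M^\omega$ containing $M$, a countable set of generators of $A$, and closed under $\alpha$, let $E=(E_X)_X$ be a Rohlin tower along with $\cK$ as in Theorem \ref{thm:Rohlin-tensor}, and put $v_X:=d(X)\ovl R_X^{\alpha*}c_{X,\ovl X}^*E_X$ for $X\in\cK$; by Remark \ref{rem:Rohlin-tensor}(\ref{item:rem-Roh-orthogonal}) the $v_X$ are partial isometries with $v_Xv_X^*=:e_X$ mutually orthogonal, $v_X^*v_X=E_X$, and $\sum_{X\in\cK}\varphi^\omega(e_X)\ge 1-\delta^{1/2}$. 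After a fast reindexation making each $a\in A$ commute with the finitely many data attached to $E$, define for $a\in A$ the spreading element
\[
\Theta_E(a):=\sum_{X\in\cK}v_X\,\alpha_X(a)\,v_X^* \in M^\omega .
\]

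First I would record the ``easy'' properties of $\Theta_E$. The splitting property Theorem \ref{thm:Rohlin-tensor}(\ref{item:Roh-split}), together with the reindexation, puts $\Theta_E(a)$ in $M_\omega$. Since $E_X$ commutes with $\alpha_X(\psi^\omega)$ and with $\alpha_X(Q)$ (Remark \ref{rem:Rohlin-tensor}), the resonance property Theorem \ref{thm:Rohlin-tensor}(\ref{item:Roh-resonance}) and Remark \ref{rem:Rohlin-tensor}(\ref{item:rem-Roh-orthogonal}) show that $y\mapsto v_X y v_X^*$ carries the $\alpha_X(\varphi^\omega)$-structure on a corner of $M^\omega$ to the $\varphi^\omega$-structure on $e_X M^\omega e_X$; as $\alpha_X(\varphi^\omega)=\varphi^\omega$ on ${}_{\alpha_X}M_\omega$ (Lemma \ref{lem:freetrace}), each summand $v_X\alpha_X(a)v_X^*$ is a $\varphi^\omega$-isomorphic copy of $a$ supported under $e_X$, and the $e_X$ are orthogonal with $\varphi^\omega(\sum e_X)\ge 1-\delta^{1/2}$. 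Consequently $\Theta_E$ is, up to an error $O(\delta^{1/2})$, a unital $\varphi^\omega$-preserving $*$-homomorphism of $A$; in particular $\Theta_E(a)$ lies within $O(\delta^{1/2})$ of a non-scalar element whenever $a$ does.

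The heart of the argument is the approximate invariance: for $Z\in\cF$ one must bound $\|\alpha_Z(\Theta_E(a))-\Theta_E(a)\|_{\varphi^\omega}$ by a quantity tending to $0$ with $\delta$. I would expand $\alpha_Z(v_X\alpha_X(a)v_X^*)$ using $\alpha_Z\circ\alpha_X=\Ad c_{Z,X}\circ\alpha_{Z\otimes X}$, the naturality Theorem \ref{thm:Rohlin-tensor}(\ref{item:Roh-natural}) applied to an orthonormal basis of $\sC(W,Z\otimes X)$, and the approximate equivariance Theorem \ref{thm:Rohlin-tensor}(\ref{item:Roh-equivariance}) to replace $\alpha_Z(E_X)$ by $c_{Z,X}E_{Z\otimes X}c_{Z,X}^*$; this rewrites $\alpha_Z(\Theta_E(a))$ as a sum over $W\in\cF\cdot\cK$ of $\varphi^\omega$-isomorphic copies of the $\alpha_W(a)$'s, carrying the correct $d(\cdot)^2$-weights, which agrees with $\Theta_E(a)=\sum_{X\in\cK}v_X\alpha_X(a)v_X^*$ except on the part indexed by $(\cF\cdot\cK)\bigtriangleup\cK$. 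The F\o lner estimate (Lemma \ref{lem:Folner}, entering exactly as in the proof of Lemma \ref{lem:ab}) then bounds that discrepancy by $O(\delta)\,|\cF|_\sigma|\cK|_\sigma$ in $\varphi^\omega$-mass. This is the step I expect to be the main obstacle, since it requires juggling the $2$-cocycle, the tower intertwiners, and the F\o lner weights simultaneously; it is, however, formally parallel to the bookkeeping already carried out for Lemma \ref{lem:ab}, so no new phenomenon arises.

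Finally I would run the standard diagonalization (as in \cite[Theorem 5.9]{MT-minimal} and \cite{Oc}): choose $(\cF_n,1/n)$-invariant sets $\cK_n$ with $\cF_n$ increasing and exhausting $\Irr(\sC)$, countably generated $Q_n$ increasing and absorbing all previously produced data, and towers $E^{(n)}$ along with $\cK_n$ and $Q_n$. Then $(\Theta_{E^{(n)}}(a))_n$ is, for each $a\in A$, a bounded sequence in $M_\omega$ that is asymptotically $\alpha$-invariant and on which $\varphi^\omega$ is asymptotically multiplicative, unital and preserved, so a fast reindexation yields a normal $\varphi^\omega$-preserving unital $*$-homomorphism $A\hookrightarrow M_\omega^\alpha$. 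In particular $M_\omega^\alpha\neq\C1$, which is the first assertion. If moreover $M_\omega$ is of type II$_1$, then $\varphi^\omega$ is its faithful normal trace, so the image of $A$ is a unital diffuse abelian subalgebra of $M_\omega^\alpha$ carrying an atomless trace; a unital diffuse abelian subalgebra forces the ambient finite algebra $M_\omega^\alpha$ to have no minimal projection (if $p$ were minimal, then $pep\in\{0,p\}$ for every projection $e$ of the diffuse subalgebra, and chasing a decreasing sequence of such $e$ with trace $\to 0$ and $pep=p$ gives $p\le e$, hence $\varphi^\omega(p)=0$), whence $M_\omega^\alpha$ is of type II$_1$.
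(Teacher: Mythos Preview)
Your spreading map $\Theta_E$ is exactly the paper's $\pi$ once you notice that $E_X$ already commutes with $\alpha_X(a)$ for $a\in Q$ (Theorem~\ref{thm:Rohlin-tensor}(\ref{item:Roh-commute})); no extra reindexation is needed to place $\Theta_E(a)$ in $M_\omega$. For the approximate invariance, the paper takes a cleaner route than the one you sketch: instead of replacing $\alpha_Z(E_X)$ by $E_{Z\otimes X}$ and then trying to match the decomposed pieces back to $\Theta_E(a)$ via a F{\o}lner count, it first uses the identity~(\ref{eq:leftinv}) to rewrite $\pi(x)$ itself as $\sum_Y d(Y)^2\,\alpha_X(\ovl R_Y^{\alpha*})\,\alpha_{X\otimes Y}(x)\,E_{X\otimes Y}\,\alpha_X(\ovl R_Y^\alpha)$, so that $\alpha_X(\pi(x))-\pi(x)$ involves only $\Delta_{X,Y}=\alpha_X(E_Y)-E_{X\otimes Y}$; Lemma~\ref{lem:gaphiDe} then gives the bound $6\delta^{1/2}|\cF|_\sigma\|x\|$, with no $|\cK|_\sigma$-dependence. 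Your stated bound $O(\delta)\,|\cF|_\sigma|\cK|_\sigma$ would be dangerous, since $\cK$ is chosen after $\delta$; the paper's identity avoids this issue entirely. Apart from this, your argument for the non-triviality of $M_\omega^\alpha$ coincides with the paper's.

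The genuine gap is in your type~II$_1$ argument. The claim ``$pep\in\{0,p\}$ for every projection $e$ of the diffuse subalgebra'' is false: for a minimal $p$ one only gets $pep=\lambda p$ with $\lambda\in[0,1]$, not $\lambda\in\{0,1\}$. More seriously, even the corrected conclusion ``no minimal projections'' would not yield type~II$_1$: a unital diffuse abelian subalgebra is perfectly compatible with a type~I ambient algebra (e.g.\ $L^\infty[0,1]$ itself, or $L^\infty[0,1]\otimes M_n$). What is needed is non-commutative input. The paper takes $Q$ to contain a system of $2\times2$ matrix units; the point (implicit in the one-line argument) is that, since $M_\omega$ is II$_1$, one can just as well put a unital copy of $M_{2^n}$ into $Q$ for every $n$, and the resulting unital $*$-embedding $M_{2^n}\hookrightarrow M_\omega^\alpha$ is incompatible with any type~I$_k$ summand once $2^n>k$. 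Replace your diffuse abelian $A$ by a countably generated subalgebra containing matrix units of all sizes and the argument goes through.
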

\begin{proof}
If $\Irr(\sC)=\{\btr\}$, then the statement is trivial.
Hence we assume $\sC$ is non-trivial. 
Let $M_{\rm d}$ and $M_{\rm c}$ be the atomic part
and the diffuse part of $M$, respectively.
When $M_{\rm d}$ is non-trivial,
then its central support projection
is invariant under $\theta^\alpha$.
Hence we may and do assume that $M$ is diffuse.
We further decompose $M$ as $M=M_0\oplus M_1$
so that $M_0$ is a direct sum of full factors
and $M_1$ does not contain a direct summand of a full factor.
Since $\theta^\alpha$ preserves $M_0$ and $M_1$,
we may and do assume that $M=M_1$.
Then
$M_\omega$ has a countably generated
diffuse von Neumann subalgebra $Q$.
Let $\varphi$ be
a faithful normal state on $M$
that is
$\theta^\alpha$-invariant on $Z(M)$.
Let $0<\delta<1/4$ and $\cF=\ovl{\cF}$
a finite subset in $\Irr(\sC)$.
Take an $(\cF,\delta)$-invariant finite subset $\cK$
in $\Irr(\sC)$.
Then we can obtain a Rohlin tower $E=(E_X)_X$ along with $\cK$
as in Theorem \ref{thm:Rohlin-tensor}
for $\cF,\delta,\cK$ and $Q$ as defined above.

We set $\pi(x)
:=\sum_{X\in\cK}
d(X)^2\ovl{R}_X^{\alpha*}
\alpha_X(x)E_X\ovl{R}_X^\alpha$
for $x\in Q$.
It turns out from Theorem \ref{thm:Rohlin-tensor}
and Remark \ref{rem:Rohlin-tensor} (4)
that
$\pi$ is a (not necessarily unital)
normal $*$-homomorphism from $Q$
into $M_\omega$.
Note that $\varphi^\omega(\pi(1))=b_E\geq1-\delta^{1/2}$.

Recall the following formula (\ref{eq:leftinv}):
for $X,Z\in\Irr(\sC)$,
we have
\begin{align*}
1_{\ovl{Z}}\otimes 1_X
&=
\sum_Y\sum_S
d(Y)^2(1_{\ovl{Z}}\otimes 1_X\otimes \ovl{R}_Y^*)
(1_{\ovl{Z}}\otimes S\otimes 1_{\ovl{Y}})
(R_Z\otimes 1_{\ovl{Y}})
\notag
\\
&
\hspace{80pt}\cdot
(R_Z^*\otimes 1_{\ovl{Y}})
(1_{\ovl{Z}}\otimes S^*\otimes 1_{\ovl{Y}})
(1_{\ovl{Z}}\otimes 1_X\otimes \ovl{R}_Y),
\end{align*}
where the summation is taken for $Y\in\Irr(\sC)$
and $S\in\ONB(Z,X\otimes Y)$.
In $M$,
we obtain
\[
1=
\sum_Y\sum_S
d(Y)^2
\alpha_{\ovl{Z}}(\alpha_X(\ovl{R}_Y^{\alpha*})S^\alpha)
R_Z^\alpha R_Z^{\alpha*}
\alpha_{\ovl{Z}}(S^{\alpha*}\alpha_X(\ovl{R}_Y^\alpha)).
\]
Multiplying $\alpha_Z$ and $\alpha_Z(x)E_Z$ with $x\in Q$
to the both sides,
we obtain
\begin{align*}
\pi(x)
&
=\sum_{Z\in\Irr(\sC)}
d(Z)^2
\ovl{R}_Z^{\alpha*}
\alpha_Z(x)E_Z
\ovl{R}_Z^\alpha
\\
&=
\sum_{Y,Z}
\sum_{S\in\ONB(Z,X\otimes Y)}
d(Y)^2
d(Z)^2
\ovl{R}_Z^{\alpha*}
\alpha_Z(x)E_Z
\\
&\hspace{120pt}
\cdot
\alpha_{Z\otimes\ovl{Z}}(\alpha_X(\ovl{R}_Y^{\alpha*})S^\alpha)
\alpha_Z(R_ZR_Z^*)
\alpha_{Z\otimes\ovl{Z}}(S^{\alpha*}\alpha_X(\ovl{R}_Y^\alpha))
\ovl{R}_Z^\alpha
\\
&=
\sum_{Y,Z,S}
d(Y)^2
d(Z)^2
\alpha_X(\ovl{R}_Y^{\alpha*})S^\alpha\cdot
\ovl{R}_Z^{\alpha*}
\alpha_Z(x)E_Z
\alpha_Z(R_Z^\alpha R_Z^{\alpha*})
\ovl{R}_Z^\alpha\cdot
S^{\alpha*}\alpha_X(\ovl{R}_Y^\alpha)
\\
&=
\sum_{Y,Z,S}
d(Y)^2
d(Z)^2
\alpha_X(\ovl{R}_Y^{\alpha*})S^\alpha\cdot
\ovl{R}_Z^{\alpha*}\alpha_Z(R_Z^\alpha)
\alpha_Z(x)E_Z
\alpha_Z(R_Z^{\alpha*})
\ovl{R}_Z^\alpha
S^{\alpha*}\alpha_X(\ovl{R}_Y^\alpha)
\\
&=
\sum_{Y,Z,S}
d(Y)^2
\alpha_X(\ovl{R}_Y^{\alpha*})S^\alpha
\alpha_Z(x)E_Z
S^{\alpha*}\alpha_X(\ovl{R}_Y^\alpha)
\\
&=
\sum_{Y}
d(Y)^2
\alpha_X(\ovl{R}_Y^{\alpha*})
\alpha_{X\otimes Y}(x)E_{X\otimes Y}
\alpha_X(\ovl{R}_Y^\alpha),
\end{align*}
where the summations are taken for $Y,Z\in\Irr(\sC)$
and $S\in\ONB(Z,X\otimes Y)$.
Hence we have the following equality
for all $X\in\sC$ and $x\in Q$:
\begin{align*}
\alpha_X(\pi(x))-\pi(x)
&=
\sum_{Y\in\Irr(\sC)}
d(Y)^2
\alpha_X(\ovl{R}_Y^{\alpha*})
\alpha_{X\otimes Y}(x)
(\alpha_X(E_Y)-E_{X\otimes Y})
\alpha_X(\ovl{R}_Y^\alpha).
\end{align*}
By Lemma \ref{lem:gaphiDe} applied to $(\gamma,c)=(\alpha,1)$
and $\Delta_{X,Y}=\alpha_X(E_Y)-E_{X\otimes Y}$,
we obtain
\[
\sum_{X\in\cF}
d(X)^2|\alpha_X(\pi(x))-\pi(x)|_{\alpha_X(\varphi^\omega)}
\notag
\leq
6\delta^{1/2}|\cF|_\sigma
\|x\|
\quad
\mbox{for all }
x\in Q.
\]

Let $\cF_n$, $n\geq1$
be a sequence of increasing finite subsets of $\Irr(\sC)$
which covers $\Irr(\sC)$.
Set $\delta_n:=6^{-2}n^{-2}|\cF_n|_\sigma^{-2}<1/4$.
Then we can take a normal $*$-homomorphism
$\pi_n\colon Q\to M_\omega$,
which is not necessarily unital,
as in the preceding discussion
for each $\cF_n$ and $\delta_n$,
that is,
\[
\sum_{X\in\cF_n}d(X)^2
|\alpha_X(\pi_n(x))-\pi_n(x)|_{\alpha_X(\varphi^\omega)}
\leq
6\delta_n^{1/2}|\cF_n|_\sigma
\|x\|
=\|x\|/n
\quad
\mbox{for all }
x\in Q.
\]

Let $Q_0$ be a $\sigma$-weakly dense separable C$^*$-subalgebra
of $Q$.
Let $C$ be the separable $\alpha$-invariant
C$^*$-subalgebra in $\ell^\infty(M_\omega)$
generated by
$(\pi_n(x))_n$
for all $x\in Q_0$.
Applying the index selection for $C$
(see \cite[Lemma 3.11]{MT-minimal} and \cite[Lemma 5.5]{Oc}),
we obtain a C$^*$-homomorphism $\pi\colon Q_0\to M_\omega$
such that
$\alpha_X(\pi(x))=\pi(x)$ for all $X\in\Irr(\sC)$.
This equality actually holds for all $X\in\sC$
since $\pi(x)\in M_\omega$.
Since $\varphi^\omega(\pi_n(1))\geq1-\delta_n^{1/2}$,
we see $\pi$ is unital.
When $M_\omega$ is of type II$_1$,
we may and do assume that $Q_0$ contains a 2-by-2 matrix units
$e_{ij}$, $i,j=1,2$
with $e_{11}+e_{22}=1$.
Thus $\pi(e_{ij})$ is non-zero for all $i,j$,
and we see $M_\omega^\alpha$ is of type II$_1$.

We will check the non-triviality of $\pi(Q_0)$ in a general situation.
We may assume that
$Q_0$ has non-zero projections $e_n$, $n\in\N$
with
$\varphi^\omega(\pi_n(e_n))
=\varphi^\omega(\pi_n(1))/2$
since $Q$ is diffuse.
We let $C_1$ be the C$^*$-subalgebra generated by $C$
and $(\pi_n(e_n))_n$ in $\ell^\infty(M_\omega)$.
By index selection,
we get a projection $p\in M_\omega^\alpha$
with $\varphi^\omega(p)=1/2$.
\end{proof}

\begin{rem}
In the statement of Theorem \ref{thm:nontrivial},
any assumption of $\alpha$ might not be necessary
when $M$ is diffuse and amenable
though it has not been solved yet.
Namely,
let $\alpha$ be any action of an amenable rigid C$^*$-tensor category
$\sC$ on a diffuse amenable von Neumann algebra $M$.
Then is $M_\omega^\alpha$ non-trivial?
Readers are referred to \cite{Mar2} for the related topics
for amenable discrete group actions.
\end{rem}

\section{Discrete irreducible subfactors}
\label{sect:discrete}
In this section,
we will study a discrete inclusion $N\subset M$ with separable preduals
through a centrally free action of a C$^*$-tensor category.
In particular, we investigate the hereditary property of fullness
from $M$ to $N$.

\subsection{Discrete inclusions}
We will quickly review the notion of a discrete inclusion.
We freely use the notations and the terminology
introduced in \cite{ILP} and \cite[Section 2]{T-Gal}.
Let $N\subset M$ be an inclusion of factors
with $N'\cap M=\C$ and a faithful normal
conditional expectation $E$ from $M$ onto $N$.
The inclusion $N\subset M$ is said to be
\emph{discrete}
if the dual operator valued weight
$\widehat{E}$ from $M_1$ to $M$ is semifinite
on $N'\cap M_1$.
Then $N'\cap M_1$ is the direct sum of matrix algebras
$A_\xi$ as follows \cite[Proposition 2.8]{ILP}:
\[
N'\cap M_1\cong \bigoplus_{\xi\in\Xi}A_\xi,
\]
where $A_\xi$ is a type I$_{n_\xi}$ factor
with $n_\xi\in\N$.

Suppose $N$ is an infinite factor.
For each $\xi\in\Xi$,
we take a minimal projection $e_\xi\in A_\xi$.
Let $\rho_\xi$ be an irreducible endomorphism on $N$
that is associated
to the irreducible $N$-$N$-bimodule ${}_N e_\xi L^2(M)_N$.

Let $\cH_\xi$ be the subspace of $M$
defined as follows:
\[
\cH_\xi:=\{V\in M\mid Vx=\rho_\xi(x)V\mbox{ for all }x\in N\}.
\]
Then $\cH_\xi$ is a Hilbert space in $M$ of dimension $n_\xi$.
By \cite[Theorem 3.3]{ILP},
we know that $A_\xi=\cH_\xi^*e_N\cH_\xi$,
where $e_N$ denotes the Jones projection of $N\subset M$.

We now let $\sC$ be the rigid C$^*$-tensor category
that is the full subcategory
generated by all $\rho_\xi$'s in $\End(N)_0$.
Thus $\Xi$ is a subset of $\Irr(\sC)$.
Let us regard the fully faithful embedding
$\rho$ from $\sC$ into $\End(N)_0$
as an action of $\sC$ on $N$.

\begin{thm}
\label{thm:discretefull}
Let $N\subset M$ be a discrete inclusion of factors
with separable preduals
such that $\sC$ is amenable
and the action of $\sC$ on $N$ is centrally free.
If $M$ is a full factor,
then $N$ actually equals $M$.
\end{thm}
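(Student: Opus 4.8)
The plan is to argue by contradiction: assuming $\sC$ is non-trivial, I will exhibit a non-trivial central sequence of $M$, which is impossible when $M$ is full. The mechanism is to identify $N_\omega^\rho$, the fixed-point algebra of the $\sC$-action on the central sequence algebra $N_\omega$, with a von Neumann subalgebra of $M_\omega$, and then to feed the action $\rho$ into Theorem \ref{thm:nontrivial}.

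First I would check the hypotheses of Theorem \ref{thm:nontrivial} for the action $\rho$ of the amenable category $\sC$ on the factor $N$. Since $N$ is a factor, $Z(N)=\C 1_N$, so any faithful normal state on $N$ is $\theta^\rho$-invariant on $Z(N)$. For the requirement that $N_\omega$ have a continuous part: if some $X\in\Irr(\sC)\setminus\{\btr\}$ is present (i.e.\ $\sC$ is non-trivial), then $\rho_X$ is properly centrally non-trivial by hypothesis, and if we had $N_\omega=\C$ then $a=1_N$ would satisfy $ax=\rho_X(x)a$ for all $x\in N_\omega$, a contradiction; hence $N_\omega\neq\C$. As $N$ is a factor with separable predual, a faithful normal state restricts to a faithful normal tracial state on $N_\omega$, so $N_\omega$ is a finite von Neumann algebra with no minimal projections, and in particular it has a continuous part. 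Thus Theorem \ref{thm:nontrivial} applies and gives $N_\omega^\rho\neq\C$ (of type II$_1$ when $N_\omega$ is).

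Next I would show $N_\omega^\rho\subset M_\omega$. Using $N^\omega\subset M^\omega$, regard an element $a=(a_n)^\omega\in N_\omega^\rho$ as an element of $M^\omega$. For $\xi\in\Xi$ and $V\in\cH_\xi\subset M$ one has $Vx=\rho_\xi(x)V$ for $x\in N$, hence $V\eta=\rho_\xi(\eta)V$ for every $\eta\in N^\omega$; taking $\eta=a$ and using $\rho_\xi(a)=a$ gives $Va=aV$. Since $a\in N_\omega$ also commutes with $N$, the set $P:=\{m\in M\mid bm=mb\ \text{for all}\ b\in N_\omega^\rho\}$ is a von Neumann subalgebra of $M$ ($*$-closed because $N_\omega^\rho$ is a $*$-closed subset of $N^\omega$), and it contains $N$ together with every $\cH_\xi$, $\xi\in\Xi$. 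By the structure theory of discrete inclusions \cite{ILP}, the linear span of $\bigcup_{\xi\in\Xi}\cH_\xi N$ is $\sigma$-weakly dense in $M$, so $P=M$; that is, every element of $N_\omega^\rho$ commutes with all of $M$. By Lemma \ref{lem:ultra-functional} this is precisely the statement $N_\omega^\rho\subset M_\omega$. Since $N_\omega^\rho\neq\C$, $M$ is not full, a contradiction.

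Therefore $\sC$ is trivial, i.e.\ $\Irr(\sC)=\{\btr\}$, so ${}_N L^2(M)_N$ is a multiple of the trivial $N$-$N$ bimodule; together with $N'\cap M=\C$ this forces $N=M$. The delicate point I anticipate is the inclusion $N_\omega^\rho\subset M_\omega$: one must handle carefully the compatibility of the ultrapowers $N^\omega\subset M^\omega$ (and of $\mathcal{M}^\omega(N)\subset\mathcal{M}^\omega(M)$), and invoke the correct $\sigma$-weak density statement for discrete inclusions so that commuting with the generators of $M$ upgrades to commuting with $M$. Everything else reduces to the already-established Theorem \ref{thm:nontrivial} and to standard facts about central sequence algebras of factors.
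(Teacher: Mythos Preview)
Your proposal is correct and follows essentially the same route as the paper: both use Theorem~\ref{thm:nontrivial} to produce nontrivial elements in $N_\omega^\rho$, observe via the discrete decomposition $M=\overline{\operatorname{span}}\,N\cH_\xi$ that $N_\omega^\rho\subset M_\omega$, and conclude. Your contrapositive is organized slightly differently (you start from ``$\sC$ nontrivial'' rather than ``$N_\omega$ nontrivial''), and your justification of $N_\omega^\rho\subset M_\omega$ is more detailed than the paper's one-line assertion; the reference to Lemma~\ref{lem:ultra-functional} alone is a bit loose, but together with the expectation $E$ and the commutation with $M$ the inclusion goes through as you outline.
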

\begin{proof}
If $N_\omega$ were non-trivial,
it would follow from Theorem \ref{thm:nontrivial}
that $N_\omega^\rho$ were non-trivial.
Since $M$ is the $\sigma$-weak closure of
the linear span of $N\cH_\xi$, $\xi\in\Xi$,
we see $N_\omega^\rho\subset M_\omega$.
This is a contradiction.
Hence $N$ is a full factor.
From the central freeness of the action $\sC$,
it turns out that $\sC$ is trivial, that is, $N=M$.
\end{proof}

It has been not clear yet
if a similar result to the above also holds
when
the action of $\sC$ is only assumed to be free.
Let us consider this situation for a while.
Let $\Xi_0$ be the subset of $\Xi$
which consists of all centrally trivial $\rho_\xi$'s.
We set the intermediate subfactor $M_{\rm cnt}$
of $N\subset M$ that is $\sigma$-weakly spanned by
$N$ and $N\cH_\xi$, $\xi\in\Xi_0$.
Thanks to \cite[Lemma 3.8]{ILP},
we have a faithful
normal conditional expectation
from $M$ onto $M_{\rm cnt}$.
Let us call $M_{\rm cnt}$
the \emph{centrally trivial part}
of the discrete inclusion $N\subset M$.

\begin{prop}
\label{prop:cntpart}
Suppose
the inclusion $M_{\rm cnt}\subset M$ is discrete.
Let $\{\sigma_\lambda\}_{\lambda\in\Lambda}$
be the system of the irreducible endomorphisms on $M_{\rm cnt}$
associated with
the $M_{\rm cnt}$-$M_{\rm cnt}$-subbimodules of $L^2(M)$.
Then each $\sigma_\lambda$ is centrally non-trivial
if $\sigma_\lambda\neq\id$.
\end{prop}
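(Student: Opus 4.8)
The plan is to argue by contradiction. Suppose some $\sigma_{\lambda}\neq\id$ is centrally trivial; since $\sigma_\lambda$ is irreducible and $M_{\rm cnt}$ is a factor, by Remark \ref{rem:cent-trivial} this means that $\sigma_\lambda$ acts as the identity on $(M_{\rm cnt})_\omega$. First I would unwind the relevant canonical endomorphisms. Write $\iota\in\Mor(N,M_{\rm cnt})$ and $\kappa\in\Mor(M_{\rm cnt},M)$ for the inclusions, with conjugate endomorphisms $\ovl{\iota},\ovl{\kappa}$, so that $\kappa\iota$ is the inclusion $N\subset M$ and $\ovl{\kappa\iota}\cong\ovl{\iota}\,\ovl{\kappa}$. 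Since all three inclusions are discrete with trivial relative commutant, we have $\ovl{\iota}\iota\cong\bigoplus_{\eta\in\Xi_0}n_\eta\rho_\eta$ on $N$, $\ovl{\kappa}\kappa\cong\bigoplus_\lambda m_\lambda\sigma_\lambda$ on $M_{\rm cnt}$, and $\ovl{\iota}\,\ovl{\kappa}\kappa\iota\cong\bigoplus_{\xi\in\Xi}n_\xi\rho_\xi$ on $N$; combining the first two with the third gives $\bigoplus_{\xi\in\Xi}n_\xi\rho_\xi\cong\bigoplus_\lambda m_\lambda\,\ovl{\iota}\sigma_\lambda\iota$. The trivial bimodule ${}_{M_{\rm cnt}}L^2(M_{\rm cnt})_{M_{\rm cnt}}$ occurs in ${}_{M_{\rm cnt}}L^2(M)_{M_{\rm cnt}}$ with multiplicity $\dim(M_{\rm cnt}'\cap M)=1$, so $\sigma_\lambda=\id$ occurs exactly once and contributes precisely $\ovl{\iota}\iota\cong\bigoplus_{\eta\in\Xi_0}n_\eta\rho_\eta$. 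Cancelling these terms leaves
\[
\bigoplus_{\xi\in\Xi\setminus\Xi_0}n_\xi\rho_\xi\cong\bigoplus_{\lambda:\,\sigma_\lambda\neq\id}m_\lambda\,\ovl{\iota}\sigma_\lambda\iota,
\]
so every irreducible component of $\ovl{\iota}\sigma_\lambda\iota$, for any $\sigma_\lambda\neq\id$, is one of the centrally non-trivial $\rho_\xi$ with $\xi\in\Xi\setminus\Xi_0$.

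Next I would check that $\iota(N_\omega)\subset(M_{\rm cnt})_\omega$. If $(x_n)$ is a bounded central sequence of $N$ and $W\in\cH_\eta$ with $\eta\in\Xi_0$, then $x_nW-Wx_n=(x_n-\rho_\eta(x_n))W$, and central triviality of $\rho_\eta$ means that $x_n-\rho_\eta(x_n)\to0$ in the strong$*$ topology as $n\to\omega$; hence $(x_n)$ asymptotically commutes with $W$ and, likewise, with $W^*$. Since $N$ together with the $\cH_\eta$ for $\eta\in\Xi_0$ generates $M_{\rm cnt}$, a routine approximation argument shows $(x_n)$ is central in $M_{\rm cnt}$. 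Consequently $\sigma_\lambda$, being the identity on $(M_{\rm cnt})_\omega$, fixes $\iota(N_\omega)$ pointwise, whence $\ovl{\iota}\sigma_\lambda\iota=\ovl{\iota}\iota$ on $N_\omega$; and $\ovl{\iota}\iota\cong\bigoplus_{\eta\in\Xi_0}n_\eta\rho_\eta$, being a direct sum of endomorphisms trivial on $N_\omega$, is itself the identity on $N_\omega$. Therefore $\ovl{\iota}\sigma_\lambda\iota$ acts as the identity on $N_\omega$.

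Finally I would invoke the elementary fact that if an endomorphism $\beta\cong\bigoplus_i m_i\rho_i$ of a factor $N$ restricts to the identity on $N_\omega$, then so does each $\rho_i$: with an isometry $S$ realising $\rho_i$ as a summand one has $\rho_i(z)=S^*\beta(z)S=S^*zS=z$ for $z\in N_\omega$, using that $z$ commutes with $S,S^*\in N$. Applied to the nonzero (indeed faithful) endomorphism $\beta=\ovl{\iota}\sigma_\lambda\iota$, some $\rho_\xi$ with $\xi\in\Xi\setminus\Xi_0$ would then act trivially on $N_\omega$, i.e.\ be centrally trivial, contradicting $\xi\notin\Xi_0$. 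This completes the argument. The step needing the most care is the bimodule bookkeeping of the first paragraph (the identity $\ovl{\kappa\iota}\cong\ovl{\iota}\,\ovl{\kappa}$, the direct-sum decompositions of the canonical endomorphisms of discrete inclusions, and the multiplicity count of the trivial bimodule); everything else is routine given the results recalled above.
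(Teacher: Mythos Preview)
Your approach is quite different from the paper's and has a genuine gap in the general discrete (possibly infinite-index) setting. The formal manipulations with $\ovl{\iota}$ and $\ovl{\kappa}$ presuppose that these conjugate morphisms exist in $\Mor(M_{\rm cnt},N)_0$ and $\Mor(M,M_{\rm cnt})_0$, but for a discrete inclusion of infinite index there is no such $\ovl{\iota}$. The cancellation in your first paragraph can still be made sense of purely in bimodule language (compare the $N$-$N$ decompositions of $L^2(M_{\rm cnt})\subset L^2(M)$), so the conclusion that the $N$-$N$ irreducible constituents of $[\sigma_\lambda]\!\upharpoonright_N$ for $\sigma_\lambda\neq\id$ lie in $\{\rho_\xi:\xi\notin\Xi_0\}$ is correct. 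However, your final step fails as written: you need an endomorphism $\beta=\ovl{\iota}\sigma_\lambda\iota\in\End(N)$ and isometries $S\in N$ with $\rho_\xi(z)=S^*\beta(z)S$; without $\ovl{\iota}$ there is no such $\beta$ or $S$, and the bimodule intertwiners one does have are Hilbert-space maps, not elements of $N$ commuting with $N_\omega$. So the passage from ``$\sigma_\lambda$ trivial on $(M_{\rm cnt})_\omega$'' to ``each $N$-$N$ constituent $\rho_\xi$ trivial on $N_\omega$'' is not justified.

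The paper's proof is considerably shorter and avoids this issue entirely. It first establishes $N_\omega'\cap M=M_{\rm cnt}$: the inclusion $M_{\rm cnt}\subset N_\omega'\cap M$ is immediate (this is essentially your step~3), and for the reverse inclusion one takes $x\in N_\omega'\cap M$, $V\in\cH_\xi$, sets $a:=E_N^M(xV^*)\in N$, and checks $ya=a\rho_\xi(y)$ for $y\in N_\omega$; if $a\neq0$ this forces $\xi\in\Xi_0$, whence $x\in M_{\rm cnt}$. From $N_\omega'\cap M=M_{\rm cnt}$ and $N_\omega\subset(M_{\rm cnt})_\omega$ one gets $(M_{\rm cnt})_\omega'\cap M=M_{\rm cnt}$. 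Then if $\sigma_\lambda$ is centrally trivial, its implementing Hilbert space $\mathcal{K}_\lambda$ lies in $(M_{\rm cnt})_\omega'\cap M=M_{\rm cnt}$, so a nonzero $V\in\mathcal{K}_\lambda$ gives $0\neq V\in(\id_{M_{\rm cnt}},\sigma_\lambda)$, forcing $\sigma_\lambda=\id$ by irreducibility. This argument uses only the expectation $E_N^M$ and the spaces $\cH_\xi,\mathcal{K}_\lambda$, never needing conjugates of infinite-index inclusions.
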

\begin{proof}
Put $L:=M_{\rm cnt}$.
Note that $N_\omega'\cap M=L$.
The inclusion $L\subset N_\omega'\cap M$ is trivial
since each $\rho_\xi$, $\xi\in\Xi_0$, is centrally trivial.
We will show the converse.
Take $x\in N_\omega'\cap M$.
Suppose for some $\xi\in \Xi$, we have $E_N^M(x\cH_\xi^*)\neq\{0\}$.
Take $V\in\cH_\xi$ so that $a:=E_N^M(xV^*)\neq0$.
Then for any $y\in N_\omega$,
we have
$ya=a\rho_\xi(y)$.
This implies $\rho_\xi$ is centrally trivial
(see Remark \ref{rem:cent-trivial}),
and $\xi\in\Xi_0$.
Hence $x\in L$.
The equality $N_\omega'\cap M=L$ implies $L_\omega'\cap M=L$.

Let $\mathcal{K}_\lambda$ be the Hilbert space
in $M$ which implements $\sigma_\lambda$.
If $\sigma_\lambda$ is centrally trivial
for some $\lambda$,
then $\mathcal{K}_\lambda\subset L_\omega'\cap M=L$.
Thus $\sigma_\lambda$ must be the identity map.
\end{proof}

Assume that $M_{\rm cnt}\subset M$ is a discrete inclusion.
The action of $\sD$, the associated rigid C$^*$-tensor category
of $M_{\rm cnt}\subset M$, is not necessarily centrally free.
Assuming its amenability and central freeness,
which would be possibly superfluous,
we immediately obtain the following result
from Theorem \ref{thm:discretefull} and Proposition \ref{prop:cntpart}.

\begin{cor}
\label{cor:cntpartfull}
Let $M$ be a full factor with separable predual
and $N\subset M_{\rm cnt}\subset M$
as above.
If the inclusion $M_{\rm cnt}\subset M$ is discrete
and the associated rigid C$^*$-tensor category $\sD$ is amenable
and gives a centrally free action on $M_{\rm cnt}$,
then $\sD$ is trivial, that is, $\Irr(\sD)=\{\btr\}$.
In particular, $M_{\rm cnt}=M$ and $N$ is also a full factor.
\end{cor}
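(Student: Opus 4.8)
The plan is to derive this corollary from Theorem~\ref{thm:discretefull}, applied to the inclusion $M_{\rm cnt}\subset M$ in place of $N\subset M$, and then to transport fullness from $M$ down to $N$. First I would check that $M_{\rm cnt}\subset M$ satisfies every hypothesis of Theorem~\ref{thm:discretefull}: $M_{\rm cnt}$ and $M$ are infinite factors with separable preduals (separability of $(M_{\rm cnt})_*$ is inherited from $M$, and $M_{\rm cnt}$ is an infinite factor since it sits between the infinite factor $N$ and $M$), the inclusion is discrete by hypothesis, its associated rigid C$^*$-tensor category $\sD$ is amenable by hypothesis, and the action of $\sD$ on $M_{\rm cnt}$ is centrally free by hypothesis. (This last requirement is in fact automatic once $M_{\rm cnt}\subset M$ is discrete: Proposition~\ref{prop:cntpart} gives that every $\sigma_\lambda\neq\id$ is centrally non-trivial, and since each $\sigma_\lambda$ is irreducible over the factor $M_{\rm cnt}$, Remark~\ref{rem:cent-trivial}(2) promotes this to proper central non-triviality of every object of $\Irr(\sD)\setminus\{\btr\}$; this is why the hypothesis is ``possibly superfluous''.) As $M$ is a full factor, Theorem~\ref{thm:discretefull} then yields $M_{\rm cnt}=M$, so that $\sD$ is trivial, i.e. $\Irr(\sD)=\{\btr\}$.

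For the remaining assertion, that $N$ is a full factor, I would invoke the relative-commutant identity $N_\omega'\cap M=M_{\rm cnt}$ established inside the proof of Proposition~\ref{prop:cntpart} (the nontrivial inclusion there being proved by testing an element of $N_\omega'\cap M$ against the Hilbert spaces $\cH_\xi$ implementing the $\rho_\xi$). Having just shown $M_{\rm cnt}=M$, this identity says precisely that $N_\omega$ commutes with all of $M$, i.e. $N_\omega\subset M'\cap M^\omega$. Since $M$ is a full factor with separable predual we have $M_\omega=\C$, and for the Ocneanu ultrapower $M'\cap M^\omega=M_\omega$; therefore $N_\omega=\C$, which is the fullness of $N$.

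I do not expect a genuine obstacle: the corollary is essentially a bookkeeping consequence of Theorem~\ref{thm:discretefull} and Proposition~\ref{prop:cntpart}. The only step that warrants care is the last one, where one must use the equality $M'\cap M^\omega=M_\omega$ for the Ocneanu ultrapower rather than merely the a priori weaker inclusion $M_\omega\subset M'\cap M^\omega$ noted in Section~2; this is exactly the fact already used silently in the proof of Theorem~\ref{thm:discretefull} when it asserts $N_\omega^\rho\subset M_\omega$. Indeed, the two routes coincide: $M=M_{\rm cnt}$ forces every $\rho_\xi$ to be centrally trivial, so the action $\rho$ restricts to the identity on $N_\omega$, whence $N_\omega=N_\omega^\rho\subset M_\omega=\C$.
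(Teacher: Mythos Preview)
Your proposal is correct and matches the paper's intended argument: the paper states that the corollary is immediate from Theorem~\ref{thm:discretefull} and Proposition~\ref{prop:cntpart}, and you have spelled out exactly that derivation. Your observation that the central freeness hypothesis is automatic via Proposition~\ref{prop:cntpart} and Remark~\ref{rem:cent-trivial}(2) is the content of the paper's remark that the hypothesis ``would be possibly superfluous'', and your second route to $N_\omega=\C$ (namely $M_{\rm cnt}=M$ forces all $\rho_\xi$ centrally trivial, hence $N_\omega=N_\omega^\rho\subset M_\omega=\C$) is precisely the mechanism used in the proof of Theorem~\ref{thm:discretefull}.
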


\subsection{Minimal actions}

For general theory of (locally) compact quantum groups,
readers are referred to \cite{KuVa,NeTu,T-Gal,W}.
Let $G$ be a compact quantum group
with von Neumann algebra $L^\infty(G)$
and its coproduct
$\delta\colon L^\infty(G)\to L^\infty(G)\otimes L^\infty(G)$.
An \emph{action}
of $G$ on a von Neumann algebra $M$
means a unital faithful normal $*$-homomorphism
$\alpha\colon M\to M\otimes L^\infty(G)$
such that $(\alpha\otimes\id)\circ\alpha=(\id\otimes\delta)\circ\alpha$.
(Let us denote by $\alpha$ an action of $G$ for a moment.)
An action is said to be \emph{faithful}
when $\set{(\phi\otimes\id)(\alpha(M))|\phi\in M_*}$
generates the whole von Neumann algebra $L^\infty(G)$.
We will say $\alpha$ is \emph{minimal}
if $\alpha$ is faithful and $(M^\alpha)'\cap M=\C$.
It is known that $\alpha$ is minimal
if and only if $M'\cap (M\rtimes_\alpha G)=\C$.

It is known that if $\alpha$ is minimal,
then the inclusion $N:=M^\alpha\subset M$ is discrete
(see \cite{ILP,T-Gal}).
We assume $N$ is infinite.
Each irreducible endomorphism of $N$
associated with this inclusion
corresponds to the dual action
$\widehat{\alpha}_U\colon M\rtimes_\alpha G
\to
(M\rtimes_\alpha G)\otimes L^\infty(\widehat{G})$,
where $U$ denotes an irreducible unitary
representation of $G$
and $\widehat{G}$ the dual discrete quantum group.
By $\Irr(G)$,
we denote a complete set of irreducible unitary representations
of $G$.
We suppose $\Irr(G)$ is at most countable.

Let $\sC$ be the C$^*$-tensor category associated with
$N\subset M$ as before.
Then the central freeness of the action of $\sC$ on $N$
is equivalent to that of
the action $\widehat{\alpha}$
of the discrete quantum group $\widehat{G}$
in the sense of \cite[Section 8]{MT-minimal}.
Let us give a brief proof for readers' convenience
as follows.

Let $\alpha$ be a minimal
action of $G$ on a factor $M$ as before.
We suppose that $M^\alpha$ is infinite.
Then $\alpha$ is dual,
that is,
there exist an action
$\beta\colon M^\alpha\to M^\alpha
\otimes L^\infty(\widehat{G})$
and an isomorphism
$\Psi\colon M^\alpha\rtimes_\beta\widehat{G}\to M$
such that
$\Psi(\pi_\beta(x))=x$
for $x\in M^\alpha$
and $(\Psi\otimes\id)\circ\widehat{\beta}=\alpha\circ\Psi$,
where $\beta$ denotes the dual action of $G$
on $M^\alpha\rtimes_\beta\widehat{G}$.
By duality theory,
we know that $\widehat{\alpha}$ is cocycle conjugate
to $\beta$.

Note the crossed product $M^\alpha\rtimes_\beta\widehat{G}$
is generated by the canonical copy $\pi_\beta(M^\alpha)$ of $M^\alpha$
and $\C\otimes L^\infty(G)$.
We set
$\lambda_U^\alpha
:=1\otimes w_U\in (M^\alpha\rtimes_\beta \widehat{G})\otimes B(H_U)$
for $U\in \Irr(G)$,
where $w_U\in C(G)\otimes B(H_U)$ be an irreducible unitary representation
of $G$.
Then we have the covariance relation
$\lambda_U^\beta(\pi_\beta(x)\otimes1)
=(\pi_\beta\otimes\id)(\beta(x))\lambda_U^\beta$.

Let $\{\nu_k^U\}_{k\in I_U}$ be a system of isometries in $M^\alpha$
with $\sum_k \nu_k^U(\nu_k^U)^*=1$.
We set $V_j^U:=\sum_k \pi_\beta(\nu_k^U) (\lambda_U^\beta)_{kj}$
for $j\in I_U$.
Then $V_j^U$'s are isometries such that
$\sum_j V_j^U(V_j^U)^*=1$
and
$\widehat{\beta}(V_j^U)=\sum_k V_k^U\otimes (w_U)_{kj}$.
Hence the endomorphism $\rho_U$ on $M^\alpha$
associated with $M^\alpha\subset M$
is given by
$\rho_U(x)=\sum_{j}\Psi(V_j^U)x\Psi(V_j^U)^*$
for $x\in M^\alpha$.
Then for $x\in M^\alpha$,
we have
\begin{align*}
\sum_j V_j^U \pi_\beta(x)(V_j^U)^*
&=
\sum_{j,k}
\pi_\beta(\nu_k^U)(\lambda_U^\beta)_{kj}
\pi_\beta(x)(V_j^U)^*
\\
&=
\sum_{j,k,\ell}
\pi_\beta(\nu_k^U)
\pi_\beta(\beta_U(x)_{k\ell})
(\lambda_U^\beta)_{\ell j}
(V_j^U)^*
\\
&=
\sum_{k,\ell}
\pi_\beta(\nu_k^U
\beta_U(x)_{k\ell}
\nu_\ell^U)^*.
\end{align*}
Hence we obtain
$\rho_U(x)=\sum_{k,\ell}
\nu_k^U
\beta_U(x)_{k\ell}
(\nu_\ell^U)^*$
for $x\in M^\alpha$.
This means $\rho_U=\pi_U\circ\beta_U$,
where $\pi_U$ is the isomorphism from $M^\alpha \otimes B(H_U)$
onto $M^\alpha$
satisfying $\pi_U(x\otimes e_{k\ell})
=\nu_k^U x(\nu_{\ell}^U)^*$
for $x\in M^\alpha$.
Hence $\widehat{\alpha}$ is centrally free if and only
if $\rho$ gives a centrally free action.

We recall the coamenability of a compact quantum group $G$.
See \cite{BeCoTu,BeMuTu1,BeMuTu2,NeTu,T-am}
for many equivalent conditions.
We will say that $G$ is \emph{coamenable}
when the reduced C$^*$-algebra $C(G)$ has the norm bounded counit.
This is equivalent to the amenability of $\hat{G}$,
that is, the existence of a left invariant mean.
Note the amenability of the representation category of $G$
implies $G$ is of Kac type.

\begin{thm}
Let $G$ be a coamenable compact quantum group of Kac type
and $\alpha$ a minimal action on a full factor $M$
with separable predual
such that $\hat{\alpha}$ is centrally free.
Then $G$ is actually trivial.
\end{thm}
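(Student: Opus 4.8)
The plan is to deduce the statement from Theorem~\ref{thm:discretefull} applied to the fixed-point inclusion $N:=M^\alpha\subset M$.

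First I would reduce to the case in which $M$, hence $N$, is properly infinite. If $M$ is a finite factor, replace $M$ by $M\loti B(\ell^2)$ and $\alpha$ by $\alpha\otimes\id$: this is a minimal action of $G$ on the properly infinite factor $M\loti B(\ell^2)$, whose fixed-point algebra $M^\alpha\loti B(\ell^2)$ is an infinite factor; moreover $M\loti B(\ell^2)$ is full if and only if $M$ is, and the dual action $\widehat{\alpha}\otimes\id$ is again centrally free, because for $y$ in the asymptotic centralizer of $M\rtimes_\alpha G$ the element $y\otimes 1$ lies in that of $(M\rtimes_\alpha G)\loti B(\ell^2)$, and a nonzero intertwiner for $\widehat{\alpha}_U\otimes\id$ evaluated on such elements would produce, entrywise, a nonzero intertwiner for $\widehat{\alpha}_U$. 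Since $M^\alpha=M$ follows from $M^\alpha\loti B(\ell^2)=M\loti B(\ell^2)$, we may assume $M$, hence $N=M^\alpha$, properly infinite.

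Next, with $M$ properly infinite and full and $\alpha$ minimal, $N=M^\alpha\subset M$ is a discrete inclusion of infinite factors with $N'\cap M=\C$, and, as recalled just before the theorem, the associated rigid C$^*$-tensor category $\sC$ is equivalent to $\Rep(G)$, while the action $\rho$ of $\sC$ on $N$ implementing this inclusion corresponds (up to cocycle conjugacy) to the dual action $\widehat{\alpha}$ of $\widehat{G}$; in particular $\rho$ is centrally free, since $\widehat{\alpha}$ is. It then remains to verify that $\sC$ is amenable. Here I use that $G$ is of Kac type, so that the intrinsic dimension $d(U)$ of each $U\in\Irr(\sC)=\Irr(G)$ coincides with the classical dimension $\dim U$ and the measure $\sigma$ of Section~\ref{Amenability} assigns mass $(\dim U)^2$ to $U$; and that $G$ is coamenable, which is equivalent to the amenability of the discrete quantum group $\widehat{G}$, i.e.\ to the existence, for every finite $\cF\subset\Irr(G)$ and every $\delta>0$, of an $(\cF,\delta)$-invariant finite subset of $\Irr(G)$ in exactly the sense of Section~\ref{Amenability}. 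Hence $\sC$ is amenable.

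Applying Theorem~\ref{thm:discretefull} to the discrete inclusion $N\subset M$ of infinite factors, with $\sC$ amenable, $\rho$ centrally free, and $M$ full, we conclude $N=M$, that is, $M^\alpha=M$. Consequently $\alpha(x)=x\otimes 1$ for every $x\in M$, so $\{(\varphi\otimes\id)(\alpha(M)):\varphi\in M_*\}=\C 1$; since $\alpha$ is faithful this set generates $L^\infty(G)$, whence $L^\infty(G)=\C$ and $G$ is trivial. The only step that is not a purely formal assembly of results already established is the translation, in the third paragraph, between coamenability of the Kac-type quantum group $G$ and amenability of $\sC=\Rep(G)$ in the categorical F\o lner sense used here; this is standard in view of the equivalent formulations of coamenability cited above, so the genuine content of the argument is carried entirely by Theorem~\ref{thm:discretefull} together with the central-freeness correspondence for $\widehat{\alpha}$.
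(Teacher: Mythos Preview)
Your proof is correct and follows essentially the same route as the paper: verify that coamenability together with the Kac condition makes the representation category $\sC=\Rep(G)$ amenable, use the correspondence between central freeness of $\widehat{\alpha}$ and of the categorical action $\rho$ on $N=M^\alpha$, and then invoke Theorem~\ref{thm:discretefull} to conclude $M^\alpha=M$, hence $G$ is trivial. The paper's proof is the two-line version of exactly this argument. Your additional reduction to the properly infinite case by tensoring with $B(\ell^2)$, and your explicit deduction of $L^\infty(G)=\C$ from $M^\alpha=M$ via faithfulness, fill in details that the paper leaves to the surrounding discussion (which already assumed $M^\alpha$ infinite when setting up the endomorphism picture).
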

\begin{proof}
The assumption that $G$ is coamenable and of Kac type
implies the amenability of its representation category
(see \cite[Chapter 2.7]{NeTu} for example).
It follows from Theorem \ref{thm:discretefull}
that $M^\alpha=M$, which implies $G=\{\btr\}$.
\end{proof}

Let us restate the same result in terms of
the dual discrete quantum group of Kac type.
Note that existence of a centrally free action
of a non-trivial discrete quantum group on a von Neumann algebra $M$
implies that $M_\omega$ is not one-dimensional,
that is, $M$ is not a full factor.

\begin{thm}\label{thm:discKacfull}
Let $\alpha$ be an action
of a non-trivial
amenable discrete quantum group $\hG$ of Kac type
on a factor $M$ with separable predual.
If $\alpha$ is centrally free,
then the crossed product $M\rtimes_\alpha\hG$ is never full.
\end{thm}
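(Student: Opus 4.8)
The plan is to read Theorem \ref{thm:discKacfull} as the crossed-product dual of the theorem immediately preceding it, and to argue by contradiction: assume $N:=M\rtimes_\alpha\hG$ is a full factor and deduce that $\hG$ is trivial. First I would stabilize. Replacing $(M,\alpha)$ by $(M\loti B(\ell^2),\alpha\otimes\id)$ does not affect central freeness, since $(M\loti B(\ell^2))_\omega$ is canonically identified with $M_\omega$ and the amplified action restricts there to $\alpha$, and it replaces $N$ by $N\loti B(\ell^2)$, which is full if and only if $N$ is. So I may and do assume $M$ is a properly infinite factor with separable predual; then $\hG$ is necessarily second countable and its compact dual $G$ (so that $\hG=\widehat G$) is coamenable and of Kac type with $\Irr(G)$ countable.

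Next I would set up the discrete inclusion $M\subset N$. By the general theory of crossed products by discrete quantum groups, $N=M\rtimes_\alpha\hG$ carries the dual action $\widehat\alpha$ of $G$ with $N^{\widehat\alpha}=\alpha(M)\cong M$, and $M\subset N$ is a discrete inclusion. Central freeness of $\alpha$ passes to the associated $\Rep(G)$-action $\rho$ on $M$ coming from the $M$-$M$-subbimodules of $L^2(N)$; this action is therefore free by Remark \ref{rem:cent-trivial}, and freeness forces $M'\cap N=\C$, so $N$ is a factor, and it makes the tensor functor $\Rep(G)\to\End(M)_0$ faithful, so the associated rigid C$^*$-tensor category $\sC$ is all of $\Rep(G)$. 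Since $G$ is coamenable of Kac type, $\sC=\Rep(G)$ is amenable (\cite[Chapter 2.7]{NeTu}). Finally, reading the identity $\rho_U=\pi_U\circ\beta_U$ from the minimal-actions subsection in the reverse direction (together with Takesaki--Takai duality, which identifies the bidual action with a stabilization of $\alpha$ up to cocycle conjugacy), the action $\rho$ of $\sC$ on $N^{\widehat\alpha}=M$ is cocycle conjugate to $\alpha$, hence centrally free.

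Now the discrete inclusion $M\subset N$ satisfies all the hypotheses of Theorem \ref{thm:discretefull}: it is a discrete inclusion of infinite factors with separable preduals, $\sC$ is amenable, and the action of $\sC$ on $M$ is centrally free. Since $N$ is assumed to be a full factor, Theorem \ref{thm:discretefull} forces $\sC$ to be trivial, i.e.\ $\Irr(\Rep(G))=\{\btr\}$; but this means $G$ has only the trivial representation, so $G=\{\btr\}$ and hence $\hG=\{\btr\}$, contradicting the hypothesis that $\hG$ is non-trivial. (Equivalently, one may skip the explicit subfactor set-up and apply the theorem preceding Theorem \ref{thm:discKacfull} directly to the minimal action $\widehat\alpha$ of $G$ on the full factor $N$, whose bidual is centrally free.) Therefore $M\rtimes_\alpha\hG$ cannot be full.

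I expect the substantive work to lie not in the logical skeleton but in the duality bookkeeping of the second paragraph: checking that $M\subset N$ is genuinely a discrete inclusion of \emph{infinite factors} (which is why the stabilization is needed when $M$ happens to be finite), that the freeness extracted from central freeness is exactly what yields $M'\cap N=\C$ and the faithfulness of $\Rep(G)\to\End(M)_0$, and that the induced $\sC$-action on $M$ is centrally free via Takesaki--Takai duality and invariance of central freeness under cocycle conjugacy and stabilization. The transfer ``$\hG$ amenable of Kac type $\Rightarrow\Rep(G)$ amenable'' and the countability of $\Irr(G)$ are standard but should be recorded explicitly, since the framework of the paper presumes $\Irr(\sC)$ is at most countable.
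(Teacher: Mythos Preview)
Your proposal is correct and matches the paper's approach: the paper presents Theorem \ref{thm:discKacfull} without proof, describing it as the dual restatement of the preceding theorem on minimal compact quantum group actions, and your argument is precisely the unwinding of that restatement---pass to the dual action $\widehat\alpha$ of $G$ on $N=M\rtimes_\alpha\hG$, check that $M\subset N$ is an irreducible discrete inclusion with associated amenable category $\Rep(G)$ acting centrally freely on $M$ (via the identification $\rho_U=\pi_U\circ\alpha_U$ established in the minimal-actions subsection), and invoke Theorem \ref{thm:discretefull}. Your parenthetical alternative, applying the preceding theorem directly to $\widehat\alpha$, is exactly what the paper intends by ``restate the same result.''
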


Although we have not succeeded in removing the central freeness
from the assumptions of the results stated above,
we can for compact groups as discussed below.
Some parts of the proof of the following lemma
holds for a more general $\hG$,
but we only treat $G$ being a compact group.

Let us fix our notation.
Put $L^\infty(\hG):=R(G)$, the group von Neumann algebra
associated with the right regular representation
$\rho(s)$, $s\in G$ as usual.
The coproduct
$\Delta\colon L^\infty(\hG)
\to L^\infty(\hG)\otimes L^\infty(\hG)$
is defined so that $\Delta(\rho(s))=\rho(s)\otimes \rho(s)$
for $s\in G$.
Let $\alpha$ be an action of $\hG$ on a von Neumann algebra $M$.
Namely, $\alpha$ is a unital faithful normal $*$-homomorphism
from $M$ into $M\otimes L^\infty(\hG)$
such that $(\alpha\otimes\id)\circ\alpha=(\id\otimes\Delta)\circ\alpha$.

Let $\Irr(G)$ be a complete set of irreducible unitary representations
of $G$.
For each $U\in\Irr(G)$,
we fix the corresponding unitary representation
$w_U\colon G\to B(H_U)$.
Regard $w_U$ as an element of $L^\infty(G)\otimes B(H_U)$.
From Peter--Weyl theory,
$L^\infty(\hG)$ is isomorphic to the direct sum of
$B(H_U)$ for $U\in\Irr(G)$.
We denote by $\alpha_U$ the composition of $\alpha$
and the projection map $M\otimes L^\infty(\hG)$
onto $M\otimes B(H_U)$.

The crossed product von Neumann algebra $M\rtimes_\alpha\hG$
is the von Neumann subalgebra of $M\otimes B(L^2(G))$
that is generated by
$\pi_\alpha(x):=\alpha(x)\in M\otimes L^\infty(\hG)$
for $x\in M$
and $\C\otimes L^\infty(G)$.
Set $\lambda_U^\alpha:=1_M\otimes w_U$
that is an operator in $(M\rtimes_\alpha\hG)\otimes L^\infty(\hG)$.
Then we have the covariance relation
$\lambda_U^\alpha(\pi_\alpha(x)\otimes 1_U)
=(\pi_\alpha\otimes\id_U)(\alpha_U(x))\lambda_U^\alpha$
for $U\in\Irr(G)$ and $x\in M$.
The dual action $\hal$ of $G$ on $M\rtimes_\alpha G$
has the following properties:
for $s\in G$, $x\in M$ and $U\in\Irr(G)$,
\[
\hal_s(\pi_\alpha(x))=\pi_\alpha(x),
\quad
(\hal_s\otimes\id)(\lambda_U^\alpha)=\lambda_U^\alpha(1\otimes w_U(s)).
\]

\begin{lem}
Let $G$ be a compact group
and $\alpha$ a free action of $\hG$
on a von Neumann algebra $M$.
Suppose that
the associated action $\theta^\alpha$
of $\hG$ on $Z(M)$ is strongly ergodic
and $Z(M)\neq\C$.
Then
there exist a central element $s\in G\setminus\{e\}$
and a unitary $u\in Z(M)$
such that $\hal_s=\Ad \pi_\alpha(u)$ on $M\rtimes_\alpha\hG$.
\end{lem}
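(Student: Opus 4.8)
The plan is to realise $\hal_s$, for a suitable $s\in Z(G)\setminus\{e\}$, as conjugation by a non-scalar unitary of $Z(M)$ which is an eigenvector of $\alpha$. Since $\alpha$ is free, Lemma~\ref{lem:thetaalpha} gives that each $\theta_U^\alpha$ is an automorphism of $Z(M)$ with $\theta_U^\alpha\theta_V^\alpha=\theta_W^\alpha$ whenever $W\prec U\otimes V$; thus $U\mapsto\theta_U^\alpha$ is a grading of $\Rep(G)$ by the abelian subgroup of $\Aut(Z(M))$ it generates, so $\theta^\alpha$ factors through the universal grading group of $\Rep(G)$, which is dual to the center $Z(G)$. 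Hence $\theta^\alpha$ is in effect an action of the discrete abelian group $\widehat{Z(G)}$ on the abelian algebra $Z(M)$. I shall also use the dual fact that a family $(\zeta_U)_{U\in\Irr(G)}$ in $\T$ with $\zeta_W=\zeta_U\zeta_V$ whenever $W\prec U\otimes V$ is exactly the datum of a central element $t\in Z(G)$, via $w_U(t)=\zeta_U\,1_{H_U}$ (Schur's lemma), with $t=e$ precisely when all $\zeta_U$ equal $1$.

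The substantive step is to produce from strong ergodicity of $\theta^\alpha$ together with $Z(M)\neq\C$ a non-scalar unitary $u\in Z(M)$ and scalars $\zeta_U\in\T$ such that $\theta_U^\alpha(u)=\zeta_U\,u$ for every $U$, equivalently $\alpha_U(u)=\zeta_U\,(u\otimes 1_{H_U})$. Multiplicativity $\zeta_W=\zeta_U\zeta_V$ for $W\prec U\otimes V$ is then automatic from $\theta_U^\alpha\theta_V^\alpha=\theta_W^\alpha$, and since ergodicity gives $Z(M)^{\theta^\alpha}=\C$, a non-scalar $u$ necessarily has some $\zeta_U\neq1$. For instance, when $Z(G)$ is finite one may average to a $\theta^\alpha$-invariant state and split $L^2(Z(M))$ into isotypic subspaces for the finite group $\widehat{Z(G)}$: the trivial-isotypic part is $\C$ by ergodicity while the whole space is larger, so some non-trivial isotypic subspace is non-zero, and any non-zero $\xi$ in it has $|\xi|$ invariant, hence a scalar by ergodicity, so $\xi=c\,u$ with $u$ a unitary eigenvector. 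For infinite $Z(G)$ this spectral decomposition is no longer automatic, and the existence of such an eigen-unitary is precisely what the strong ergodicity hypothesis must supply; I expect this step to be the main obstacle, the remaining two steps being routine.

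Granting this, let $s\in Z(G)$ be the central element with $w_U(s)=\overline{\zeta_U}\,1_{H_U}$ for all $U$; it exists by the first paragraph, and $s\neq e$ since $u$ is non-scalar. I claim $\hal_s=\Ad\pi_\alpha(u)$ on $M\rtimes_\alpha\hG$. Both automorphisms fix $\pi_\alpha(M)$ pointwise: $\hal_s$ by definition of the dual action, and $\Ad\pi_\alpha(u)$ because $u\in Z(M)$ and $\pi_\alpha$ is a normal $*$-isomorphism. On the remaining generators, applying the covariance relation $\lambda_U^\alpha(\pi_\alpha(x)\otimes 1_U)=(\pi_\alpha\otimes\id)(\alpha_U(x))\lambda_U^\alpha$ with $x=u^{*}$ gives $\lambda_U^\alpha(\pi_\alpha(u)^{*}\otimes1_U)=\big((\pi_\alpha\otimes\id)(\alpha_U(u))\big)^{*}\lambda_U^\alpha$, whence
\begin{align*}
(\pi_\alpha(u)\otimes1_U)\,\lambda_U^\alpha\,(\pi_\alpha(u)^{*}\otimes1_U)
&=(\pi_\alpha\otimes\id)\big((u\otimes1_U)\,\alpha_U(u)^{*}\big)\,\lambda_U^\alpha\\
&=\overline{\zeta_U}\,\lambda_U^\alpha,
\end{align*}
using $u u^{*}=1$ and $\alpha_U(u)=\zeta_U\,(u\otimes1_U)$; by the choice of $s$ this equals $\lambda_U^\alpha(1\otimes w_U(s))=(\hal_s\otimes\id)(\lambda_U^\alpha)$. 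Since $\pi_\alpha(M)$ together with $\{\lambda_U^\alpha:U\in\Irr(G)\}$ generates $M\rtimes_\alpha\hG$, the two automorphisms coincide.
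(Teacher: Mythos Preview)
Your framework and the final verification are correct and align with the paper. You rightly isolate the production of a non-scalar eigen-unitary $u\in Z(M)$ for $\theta^\alpha$ as the crux, and you explicitly leave the infinite-$Z(G)$ case open. That is precisely the gap.

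Here is how the paper closes it. The acting group---the image $\Gamma$ of $\Irr(G)$ in $\Aut(Z(M))$, a quotient of $\widehat{Z(G)}$---is discrete abelian, hence \emph{amenable}. The decisive fact your argument does not invoke is that a strongly ergodic action of a countable amenable group forces the underlying abelian von~Neumann algebra to be \emph{atomic}; the paper cites \cite[Theorem~4.75]{KL} and \cite[Theorem~2.4]{Sch}. (Heuristically: amenability supplies F\o lner sets, which via a Rokhlin-type construction produce nontrivial asymptotically invariant sequences whenever the space is diffuse, contradicting strong ergodicity.) Once $Z(M)\cong\ell^\infty(X)$ for a countable set $X$, ergodicity becomes transitivity of the $\Gamma$-action on $X$, so $X\cong\Gamma/\Gamma_1$ for the stabiliser $\Gamma_1$ of a basepoint. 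Since $\Gamma$ is abelian, $\Gamma/\Gamma_1$ is a genuine quotient group, and $Z(M)\neq\C$ forces $\Gamma_1\neq\Gamma$. Any nontrivial character $\chi$ of $\Gamma/\Gamma_1$, viewed as a function in $\ell^\infty(\Gamma/\Gamma_1)=Z(M)$, is then a unitary eigenvector with $\theta_U^\alpha(u_\chi)=\overline{\langle\chi,[U]+\Gamma_1\rangle}\,u_\chi$. The associated central element $s_\chi\in G$ satisfies $w_U(s_\chi)=\langle\chi,[U]+\Gamma_1\rangle\,1_{H_U}$, and your covariance computation---which matches the paper's---finishes the proof.

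A minor remark on your finite-$Z(G)$ sketch: the averaging should be carried out in $Z(M)$ itself rather than in $L^2(Z(M))$, so that $|\xi|$ is an honest algebra element whose invariance can be read off directly. In any case the atomic-space argument above handles all cases uniformly, so no separate spectral decomposition is needed.
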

\begin{proof}
Let $\theta^\alpha\colon \Irr(G)\to \Aut(Z(M))$ be the associated
map with $\alpha$,
that is,
$\alpha_U(x)=\theta_U^\alpha(x)\otimes 1_U$
for all $U\in \Irr(G)$ and $x\in Z(M)$
(see \cite[Lemma 2.9]{MT-minimal}).
Let us introduce the equivalence relation on $\Irr(G)$
such that
$U\sim V$ if and only if
$\theta_U^\alpha=\theta_V^\alpha$.
Then the quotient set $\Gamma:=\Irr(G)/{\sim}$ is naturally identified
with the subgroup
$\{\theta_U^\alpha\mid U\in\Irr(G)\}$ in $\Aut(Z(M))$
that is abelian because the fusion rule of $G$ is commutative.
Hence we regard $\Gamma$ as a discrete abelian group.
Let us denote the equivalence class of $U\in\Irr(G)$
by $[U]\in\Gamma$.

The strong ergodicity
implies $Z(M)$ is discrete
(see \cite[Theorem 4.75]{KL} and \cite[Theorem 2.4]{Sch}).
Hence we can find a subgroup $\Gamma_1$ of $\Gamma$
such that $\Gamma_1\neq\Gamma$
and $\theta^\alpha$ is equivariantly isomorphic
to the left transformation
on $\ell^\infty(\Gamma/\Gamma_1)$.

Let $\chi$ be an arbitrary non-zero element
in the Pontryagin dual of $\Gamma/\Gamma_1$.
Let $u_\chi$ be the corresponding unitary in $Z(M)$.
Through the above equivariant identification,
we obtain
$\theta_U^\alpha(u_\chi)=\ovl{\langle \chi,[U]+\Gamma_1\rangle} u_\chi$
for $U\in\Irr(G)$.
Hence we have
$(\Ad \pi_\alpha(u_\chi)\otimes\id)(\lambda_U^\alpha)
=
\langle \chi,[U]+\Gamma_1\rangle
\lambda_U^\alpha
$.

The assignment
$\Irr(G)\ni U\mapsto \langle \chi,[U]+\Gamma_1\rangle1_U\in B(H_U)$
defines a group-like element in $L^\infty(\hG)$,
and
we get a central element $s_\chi\in G$
such that $w_U(s_\chi)=\langle \chi,[U]+\Gamma_1\rangle 1_U$
for all $U\in\Irr(G)$ and $t\in G$.
Then we have $\widehat{\alpha}_{s_\chi}=\Ad \pi_\alpha(u_\chi)$.
\end{proof}

We will prove the enhanced version of Theorem \ref{thm:discKacfull}
for a compact group.

\begin{thm}
\label{thm:enhance}
Let $G$ be a compact group
and $\alpha$ a free action of $\hG$
on a von Neumann algebra $M$ with separable predual.
Suppose that
the crossed product $M\rtimes_\alpha\hG$ is a full factor
and the dual action $\hal$ of $G$ is pointwise outer.
Then $M$ is a full factor.
\end{thm}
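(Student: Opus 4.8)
The plan is to identify $M$ with the fixed-point algebra $\pi_\alpha(M)=\tM^{\hal}$ of the compact group $G$ acting by the dual action $\hal$ on the full factor $\tM:=M\rtimes_\alpha\hG$, and then to run the relative-commutant analysis of discrete inclusions, concluding with Corollary \ref{cor:cntpartfull}. The first task is to show that $M$ is a factor. The point is that fullness of $\tM$ forces the action $\theta^\alpha$ of $\hG$ on $Z(M)$ to be strongly ergodic: if $(z_n)$ is a bounded sequence in $Z(M)$ with $\theta_U^\alpha(z_n)-z_n\to0$ in the strong$*$ topology for every $U\in\Irr(G)$, then $\pi_\alpha(z_n)$ commutes with $\pi_\alpha(M)$ and, by the covariance relation $\lambda_U^\alpha(\pi_\alpha(z_n)\otimes1_U)=(\pi_\alpha(\theta_U^\alpha(z_n))\otimes1_U)\lambda_U^\alpha$, commutes asymptotically with $\C\otimes L^\infty(G)$; since these generate $\tM$, the sequence $(\pi_\alpha(z_n))$ is central in $\tM$, hence trivial, so $(z_n)$ is trivial. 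Now if $Z(M)\neq\C$, the lemma immediately preceding the present theorem applies and produces a central element $s\in G\setminus\{e\}$ and a unitary $u\in Z(M)$ with $\hal_s=\Ad\pi_\alpha(u)$, so $\hal_s$ is inner, contradicting the pointwise outerness of $\hal$. Hence $Z(M)=\C$.

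Next I would record the structure of $\pi_\alpha(M)\subset\tM$. Replacing $M$, $\tM$, $\alpha$ by $M\otimes B(\ell^2)$, $\tM\otimes B(\ell^2)$, $\alpha\otimes\id$ — which preserves all hypotheses and does not affect whether $M$ is a full factor — I may assume $M$ and $\tM$ are properly infinite, so $M=\tM^{\hal}$ is a properly infinite factor. Freeness of $\alpha$ makes $\pi_\alpha(M)\subset\tM$ irreducible: an element of $\pi_\alpha(M)'\cap\tM$ splits into its $\hal$-isotypic components, the $U$-component with $U\neq\btr$ corresponding to an element of the intertwiner space $(\id_M,\rho_U)$ in $M$, which vanishes by Lemma \ref{lem:free}, while the trivial component lies in $Z(\pi_\alpha(M))=\C$. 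Together with the conditional expectation $\tM\to\pi_\alpha(M)$ obtained by averaging $\hal$ over $G$, this exhibits $\pi_\alpha(M)\subset\tM$ as a discrete irreducible inclusion of properly infinite factors on which $\hal$ acts minimally; write $\sC=\Rep(G)$ for the associated C$^*$-tensor category and $\rho$ for the corresponding action of $\sC$ on $M$.

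Finally I would pass to the centrally trivial part $M\subset M_{\rm cnt}\subset\tM$. The objects $U\in\Irr(G)$ with $\rho_U$ centrally trivial form the irreducibles of a full replete tensor subcategory of $\Rep(G)$, hence equal to $\Rep(G/H)$ for a closed normal subgroup $H\trianglelefteq G$, and correspondingly $M_{\rm cnt}=\tM^{\hal|_H}$; in particular $M_{\rm cnt}\subset\tM$ is a discrete inclusion of factors ($M_{\rm cnt}'\cap\tM\subset M'\cap\tM=\C$), and since $M_{\rm cnt}$ is the fixed-point algebra of the minimal action $\hal|_H$ of the compact group $H$, its associated rigid C$^*$-tensor category $\sD$ is $\Rep(H)$, which is amenable. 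By Proposition \ref{prop:cntpart}, every non-identity irreducible endomorphism of $M_{\rm cnt}$ arising from $M_{\rm cnt}\subset\tM$ is centrally non-trivial, hence — $M_{\rm cnt}$ being a factor and these endomorphisms being irreducible — properly centrally non-trivial by Remark \ref{rem:cent-trivial}, so the $\sD$-action on $M_{\rm cnt}$ is centrally free. All hypotheses of Corollary \ref{cor:cntpartfull} are then met, and it gives that $M=\pi_\alpha(M)$ is a full factor.

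The main obstacle, as I see it, is the factoriality step: one must recognize that fullness of the crossed product is exactly what makes $\theta^\alpha$ strongly ergodic, rendering the preceding lemma applicable so that pointwise outerness of $\hal$ can be violated unless $M$ is already a factor. After that the argument is essentially bookkeeping in the discrete-inclusion formalism — irreducibility from freeness, the identification of $M_{\rm cnt}$ with a compact-group fixed-point algebra, and the invocations of Proposition \ref{prop:cntpart} and Corollary \ref{cor:cntpartfull}.
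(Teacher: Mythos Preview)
Your proof is correct and follows essentially the same route as the paper's own argument: reduce to the properly infinite case, deduce factoriality of $M$ from strong ergodicity of $\theta^\alpha$ on $Z(M)$ together with the preceding lemma and pointwise outerness of $\hal$, identify the centrally trivial part $M_{\rm cnt}$ of $M\subset\tM$ with the fixed-point algebra of $\hal$ restricted to a closed normal subgroup (your $H$ is the paper's $K_\alpha$), and then invoke Proposition~\ref{prop:cntpart} and Corollary~\ref{cor:cntpartfull}. The only cosmetic difference is the order in which you tensor with $B(\ell^2)$ and establish factoriality.
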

\begin{proof}
By considering $M\otimes B(\ell^2)$ if necessary,
we may and do assume that $M$
is properly infinite.
When $G$ is a trivial group,
we have nothing to prove.
We assume that $G$ is non-trivial.

The fullness of $M\rtimes_\alpha\hG$ implies the strong ergodicity
of $\alpha$ on $Z(M)$,
and the factoriality of $M$ follows from the previous lemma
and the pointwise outerness of $\hal$.
Let $\Lambda_\alpha$ be the collection of $U\in\Irr(G)$
such that $\alpha_U$ is centrally trivial.
Let $K_\alpha$ be the collection of $s\in G$ such that
$w_U(s)=1_U$ for all $U\in\Lambda_\alpha$.
Then $K_\alpha$ is a closed normal subgroup of $G$
such that $C(G/K_\alpha)$ is equivariantly identified with
the C$^*$-subalgebra generated by the matrix elements
of $\{w_U\}_{U\in \Lambda_\alpha}$
inside $C(G)$.

Set $N:=M\rtimes_\alpha\hG$.
Then we obtain the inclusions $M\subset N^{K_\alpha}\subset N$,
where $N^{K_\alpha}$ denotes the $K_\alpha$-fixed point subalgebra
of $N$ by the restriction of $\hal$.

Note that $\hal$ is a minimal action of $G$,
and the inclusion $R:=N^{K_\alpha}\subset N$ is given
by the minimal action $\hal\!\upharpoonright_{K_\alpha}$
on $N$.
Then $R$ is generated by $M$ and the matrix elements
of $\lambda_U^\alpha$ for $U\in\Lambda_\alpha$,
which means that $R$ is the centrally trivial part of $M\subset N$.

Let $\{\sigma_j\}_j$ be the system of the irreducible endomorphisms on $R$
that is associated with $R\subset N$.
It turns out from Proposition \ref{prop:cntpart}
that each non-identity $\sigma_j$ is centrally non-trivial.
Furthermore,
the rigid C$^*$-tensor category associated with the inclusion
$R\subset N$ is nothing but the representation category of
$K_\alpha$,
and each simple object is of the form $\sigma_j$ for some $j$.
Therefore,
the action of the representation category of $K_\alpha$ on $R$
is centrally free.
Since any compact group is coamenable,
the representation category is amenable.
Hence it follows from Corollary \ref{cor:cntpartfull}
that $M$ is a full factor.
\end{proof}

The following result
is a kind of a generalization of \cite[Proposition 4]{TU}
to a non-commutative compact group.

\begin{thm}\label{thm:minimal}
Let $\alpha$ be a minimal action
of a compact group $G$ on a full factor $M$ with separable predual.
Then the both factors
$M\rtimes_\alpha G$ and $M^\alpha$ are full.
\end{thm}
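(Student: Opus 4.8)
The plan is to deduce both statements from Theorem~\ref{thm:enhance} via the duality theory of minimal compact group actions. First I would reduce to the properly infinite case: replacing $M$ by $M\loti B(\ell^2)$ and $\alpha$ by $\alpha\otimes\id$, one may assume that $M$ is properly infinite and $M^\alpha$ is infinite. This is harmless since this operation carries $M^\alpha$ to $M^\alpha\loti B(\ell^2)$ and $M\rtimes_\alpha G$ to $(M\rtimes_\alpha G)\loti B(\ell^2)$, it preserves minimality (the relevant relative commutant remains $\C$), and fullness of a factor with separable predual is invariant under tensoring with a type~I factor with separable predual. Note that minimality forces $M^\alpha$ to be a factor and $Z(M\rtimes_\alpha G)\subset M'\cap(M\rtimes_\alpha G)=\C$, so both algebras in the statement are factors.

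Next I would use the structure theory recalled in this section (cf.\ \cite{ILP,T-Gal}): since $\alpha$ is minimal and $N:=M^\alpha$ is infinite, $\alpha$ is dual, so there is an action $\beta$ of $\hG$ on $N$, an isomorphism $\Psi\colon N\rtimes_\beta\hG\to M$ with $\Psi\circ\pi_\beta=\id_N$, and $(\Psi\otimes\id)\circ\widehat\beta=\alpha\circ\Psi$, where $\widehat\beta$ denotes the dual $G$-action on $N\rtimes_\beta\hG$. The action $\beta$ is free because $N'\cap(N\rtimes_\beta\hG)\cong N'\cap M=\C=Z(N)$. Furthermore $\widehat\beta$ is conjugate to $\alpha$, which is pointwise outer: a minimal compact group action is automatically pointwise outer (were $\alpha_s$ inner for some $s\neq e$, the implementing unitary in $M$ together with the canonical unitary of $M\rtimes_\alpha G$ implementing $\alpha_s$ would give a nonscalar element of $M'\cap(M\rtimes_\alpha G)$, contradicting minimality).

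Now Theorem~\ref{thm:enhance} applies to the free action $\beta$ of $\hG$ on $N=M^\alpha$: the crossed product $N\rtimes_\beta\hG\cong M$ is a full factor by hypothesis and its dual $G$-action $\widehat\beta\cong\alpha$ is pointwise outer, so $N=M^\alpha$ is a full factor. Finally, $(M,\alpha)\cong(N\rtimes_\beta\hG,\widehat\beta)$ yields $M\rtimes_\alpha G\cong(N\rtimes_\beta\hG)\rtimes_{\widehat\beta}G$, which by Takesaki--Takai duality is isomorphic to $N\loti B(L^2(G))$; since $N=M^\alpha$ is full and $B(L^2(G))$ is a type~I factor with separable predual ($\Irr(G)$ being at most countable), $M\rtimes_\alpha G$ is full as well.

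All the genuine content sits in results already in hand --- Theorem~\ref{thm:enhance}, and behind it the Rohlin tower (Theorem~\ref{thm:Rohlin-tensor}) and Corollary~\ref{cor:cntpartfull}. Inside the present proof the delicate points are only bookkeeping: checking that $\beta$ is free and that $\widehat\beta$ really is conjugate to $\alpha$ under the duality isomorphism, and verifying that the $B(\ell^2)$-reduction interacts correctly with $M$, $M^\alpha$ and $M\rtimes_\alpha G$ simultaneously. I expect the identification of $\widehat\beta$ with $\alpha$ (and hence the applicability of Theorem~\ref{thm:enhance}) to be the step most deserving of an explicit reference.
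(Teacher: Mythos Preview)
Your proof is correct and essentially mirrors the paper's: both deduce the result from Theorem~\ref{thm:enhance} together with Takesaki--Takai duality, the only difference being that the paper applies Theorem~\ref{thm:enhance} to the dual action $\widehat{\alpha}$ on $N:=M\rtimes_\alpha G$ (obtaining fullness of $N$ first via $N\rtimes_{\widehat{\alpha}}\hG\cong M\loti B(L^2(G))$, and then of $M^\alpha$ as a corner of $N$), whereas you apply it to the action $\beta$ on $M^\alpha$ (obtaining fullness of $M^\alpha$ first, and then of $M\rtimes_\alpha G\cong M^\alpha\loti B(L^2(G))$). Since $\widehat{\alpha}$ and $\beta$ are cocycle conjugate, these are two packagings of the same duality argument.
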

\begin{proof}
We put $N:=M\rtimes_\alpha G$.
Applying the previous result to the dual action $\widehat{\alpha}$ on $N$,
we see that $N$ is a full factor
from the duality
$N\rtimes_{\widehat{\alpha}}\hG\cong M\otimes B(L^2(G))$.
Since $M^\alpha$ is a corner von Neumann subalgebra of $N$,
$M^\alpha$ is also full.
\end{proof}

It is not known if we can replace the minimality
with the pointwise outerness
as in the following problem asked by Marrakchi:

\begin{prob}
\label{prob:outerfull}
Let $\alpha$ be an action of a locally compact group
$G$ on a full factor $M$.
Then is $M\rtimes_\alpha G$ a full factor
if $G$ is homeomorphic to the quotient image of
$\alpha(G)$ in $\Out(M)$?
\end{prob}

It is not even known whether
the pointwise outerness would imply
the minimality for a compact group in the problem above.

\section{Classification of centrally free cocycle actions of amenable rigid C$^*$-tensor categories}
\label{sect:classification}
In this section,
we will apply the Rohlin tower construction (Theorem \ref{thm:Rohlin-tensor})
to classification of cocycle actions of amenable rigid C$^*$-tensor categories.

\subsection{Approximation of cocycle actions by cocycle perturbations in $M^\omega$}
\label{subsect:approx}
We will state our main result of this subsection
as follows.

\begin{lem}
\label{lem:cocapprox}
Let $(\alpha,c^\alpha)$
and $(\beta,c^\beta)$
be centrally free cocycle action of an amenable rigid C$^*$-tensor category
$\sC$ on a properly infinite von Neumann algebra $M$ with separable predual.
Suppose that
$\alpha_X$ and $\beta_X$ are approximately unitarily equivalent
for all $X\in\sC$
and there exists a $\theta^\alpha$-invariant
faithful normal state on $Z(M)$.
Then one can take unitaries $\nu_X$ in $M^\omega$ with $X\in\sC$
so that the following equalities hold
for all $X,Y\in\sC$:
\begin{itemize}
\item
$\nu_X\alpha_X(\psi^\omega)\nu_X^*=\beta_X(\psi^\omega)$
for all $\psi\in M_*$.

\item
$\nu_X\alpha_X(\nu_Y)c_{X,Y}^\alpha \nu_{X\otimes Y}^*
=c_{X,Y}^\beta$.

\item
$\nu_YT^\alpha=T^\beta \nu_X$ for all $T\in\sC(X,Y)$.
\end{itemize}
\end{lem}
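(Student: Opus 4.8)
The plan is to bootstrap Lemma~\ref{lem:appunitequiv} by absorbing the discrepancy between the two $2$-cocycles. First I would apply Lemma~\ref{lem:appunitequiv} to obtain unitaries $u_X\in M^\omega$ with $u_X\alpha_X(\psi^\omega)u_X^*=\beta_X(\psi^\omega)$ for $\psi\in M_*$ and $u_YT^\alpha=T^\beta u_X$ for $T\in\sC(X,Y)$, and form the perturbed cocycle action $(\gamma,c^\gamma)$ of $(\alpha,c^\alpha)$ by $u=(u_X)_X$ as in Section~\ref{subsect:slight}. Then $\gamma_X=\beta_X$ on $M^\omega$, $T^\gamma=T^\beta$, $\gamma=\theta^\alpha$ on $Z(M)$, and $(\gamma,c^\gamma)$ is semiliftable and centrally free (the latter since $\gamma_X=\beta_X$ on $M$); moreover $c^\gamma_{X,Y}=u_X\alpha_X(u_Y)c^\alpha_{X,Y}u_{X\otimes Y}^*$. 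It then suffices to produce unitaries $w_X\in M^\omega$ such that (i) $w_X$ commutes with $\beta_X(\psi^\omega)$ for all $\psi\in M_*$, (ii) $w_YT^\beta=T^\beta w_X$ for all $T\in\sC(X,Y)$, and (iii) $w_X\beta_X(w_Y)c^\gamma_{X,Y}w_{X\otimes Y}^*=c^\beta_{X,Y}$. Indeed, setting $\nu_X:=w_Xu_X$, the first and third bullets of the statement are immediate from (i) and (ii), and using $u_X\alpha_X(w_Y)=\beta_X(w_Y)u_X$ (valid since $\gamma_X=\Ad u_X\circ\alpha_X=\beta_X$ on $M^\omega$) one rewrites $\nu_X\alpha_X(\nu_Y)c^\alpha_{X,Y}\nu_{X\otimes Y}^*$ as $w_X\beta_X(w_Y)c^\gamma_{X,Y}w_{X\otimes Y}^*$, which equals $c^\beta_{X,Y}$ by (iii).

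Thus the real content is a second-cohomology-vanishing statement inside $M^\omega$: the $2$-cocycles $c^\gamma$ and $c^\beta$ for one and the same cocycle action $\beta$ differ by a coboundary realized by unitaries that commute with the $\beta_X(\psi^\omega)$ and are natural with respect to intertwiners. Putting $d_{X,Y}:=(c^\beta_{X,Y})^*c^\gamma_{X,Y}$, one checks that $d_{X,Y}$ is a unitary in $\beta_{X\otimes Y}(M)'\cap M^\omega$, natural in $X$ and $Y$, and that the two $2$-cocycle relations together with $\gamma=\beta$ force an $\Ad c^\beta$-twisted $2$-cocycle identity on $d$; the goal is to trivialize $d$ as a coboundary of such unitaries, which is exactly condition (iii).

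To construct the $w_X$ I would run a Rohlin-tower argument for $(\gamma,c^\gamma)$. Extend the given $\theta^\alpha$-invariant faithful normal state on $Z(M)$ to a faithful normal state $\varphi$ on $M$. For each finite $\cF=\ovl{\cF}\subset\Irr(\sC)$, small $\delta>0$, an $(\cF,\delta)$-invariant finite $\cK\subset\Irr(\sC)$, and a countably generated $Q\subset M^\omega$ containing $M$ and all $\alpha_X(u_Y)$ (so that Lemma~\ref{lem:Rohlin-tensor-gamma} applies), take a Rohlin tower $E=(E_X)_X$ for $(\gamma,c^\gamma)$ along with $\cK$. Using $E$ one builds a unitary family $w^{(\cF,\delta)}=(w_X^{(\cF,\delta)})_X$ floor by floor over the tower: on the floor $E_X$ one conjugates the $c^\gamma$-data to the $c^\beta$-data, the naturality and resonance of $E$ making the pieces assemble into a family that is natural and commutes with the $\beta_X(\psi^\omega)$, while the approximate partition-of-unity and approximate-equivariance properties of $E$ guarantee that perturbing $c^\gamma$ by $w^{(\cF,\delta)}$ differs from $c^\beta$ by $O(\delta^{1/2})$ in the seminorms $|\cdot|_{\beta_X(\beta_Y(\varphi^\omega))}$; the quantitative bounds here come from Lemma~\ref{lem:gaphiDe} (applicable since $\beta_X(M)'\cap M=Z(M)$ by freeness) together with the $\theta^\alpha$-invariance of $\varphi$ on $Z(M)$. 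Passing from simple objects to a general $X\in\sC$ by $w_X:=\sum_{Y\in\Irr(\sC)}\sum_{T\in\ONB(Y,X)}T^\beta w_YT^{\beta*}$ and formula (\ref{eq:cST}) reduces everything to the case $X,Y\in\Irr(\sC)$.

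Finally I would pass from approximate to exact: taking an exhaustion $\cF_n\nearrow\Irr(\sC)$ with $\delta_n\to0$, the families $w^{(\cF_n,\delta_n)}$ give, after a fast-reindexation/index-selection over a separable $\alpha$-invariant C$^*$-subalgebra of $\ell^\infty(M^\omega)$ exactly as in the proof of Theorem~\ref{thm:nontrivial}, a single family $(w_X)_{X\in\sC}$ satisfying (i)--(iii) exactly, whence $\nu_X=w_Xu_X$ is as required. The step I expect to be the main obstacle is the floor-by-floor construction: arranging the choices on the tower so that a \emph{single} coherent family $(w_X)_X$ handles all pairs $(X,Y)$ compatibly with the associativity constraint, keeping the estimates uniform although $\Irr(\sC)$ need not be finitely generated, and checking that the limit unitaries are genuinely natural and commute with $\beta_X(\psi^\omega)$ rather than only approximately. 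This is precisely where central freeness (to pin down the relevant relative commutants via Lemma~\ref{lem:relcomm}) and amenability (to supply the $(\cF,\delta)$-invariant $\cK$) both enter.
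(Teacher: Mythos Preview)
Your overall strategy---perturb $(\alpha,c^\alpha)$ by the $u_X$ from Lemma~\ref{lem:appunitequiv} to get $(\gamma,c^\gamma)$, build approximate trivializations of the cocycle discrepancy from Rohlin towers for $(\gamma,c^\gamma)$ via Lemma~\ref{lem:Rohlin-tensor-gamma}, then pass to exact equalities by index selection---is exactly the paper's approach (the towers enter through the explicit Shapiro-type unitaries (\ref{eq:defnv}) and Lemma~\ref{lem:vperturb}).

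There is, however, a genuine slip that propagates through your reduction. You assert that $\gamma_X=\beta_X$ on $M^\omega$; this is false. From $u_X\alpha_X(\psi^\omega)u_X^*=\beta_X(\psi^\omega)$ for $\psi\in M_*$ you only get $\gamma_X=\beta_X$ on $M$ (and on functionals $\psi^\omega$), not on general elements of $M^\omega$. In particular the computation $u_X\alpha_X(w_Y)=\beta_X(w_Y)u_X$ is unjustified for $w_Y\in M^\omega$, and the correct rewriting is
\[
\nu_X\alpha_X(\nu_Y)c^\alpha_{X,Y}\nu_{X\otimes Y}^*
= w_X\,\gamma_X(w_Y)\,c^\gamma_{X,Y}\,w_{X\otimes Y}^*,
\]
so condition (iii) must read $w_X\gamma_X(w_Y)c^\gamma_{X,Y}w_{X\otimes Y}^*=c^\beta_{X,Y}$, not with $\beta_X(w_Y)$. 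This also invalidates your framing as ``two $2$-cocycles for one and the same action $\beta$'': since $c^\gamma$ is a $2$-cocycle for $\gamma$, the defect $d_{X,Y}=(c^\beta_{X,Y})^*c^\gamma_{X,Y}$ satisfies no clean cocycle identity relative to $\beta$ alone, and the tower construction must be carried out for $(\gamma,c^\gamma)$ with $\gamma$ appearing throughout (as in (\ref{eq:defnv}), Lemma~\ref{lem:vXcommute}, and Lemma~\ref{lem:vperturb}). Once you make this replacement $\beta\to\gamma$ in (iii) and in the floor-by-floor construction, your sketch coincides with the paper's proof; without it the argument does not close.
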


Let $(\alpha,c^\alpha)$ and $(\beta,c^\beta)$
be actions
of $\sC$ on a von Neumann algebra $M$
as in the statement of the previous theorem.
Note by approximate unitary equivalence,
we have $\theta^\alpha=\theta^\beta$
on $Z(M)$.
We recall the contents of Section \ref{subsect:slight}.
By Lemma \ref{lem:appunitequiv},
we can take a unitary $u_X\in M^\omega$ for each $X\in\sC$
such that
\begin{itemize}
\item
$u_X\alpha_X(\psi^\omega)u_X^*=\beta_X(\psi^\omega)$
for all $X\in\sC$ and $\psi\in M_*$.

\item
$u_Y T^\alpha=T^\beta u_X$
for all $X,Y\in\sC$ and $T\in\sC(X,Y)$.
\end{itemize}

We set $\gamma_X:=\Ad u_X\circ\alpha_X$
and $c_{X,Y}:=u_X\alpha_X(u_Y)c_{X,Y}^\alpha u_{X\otimes Y}^*$
for $X,Y\in\sC$.
Then $(\gamma,c)$ is a semiliftable centrally free cocycle action
of $\sC$ on $M^\omega$ with $T^\gamma:=u_Y T^\alpha u_X^*=T^\beta$
for all $X,Y\in\sC$ and $T\in\sC(X,Y)$.
Note that
$\gamma_X(\psi^\omega)=\beta_X(\psi^\omega)$
and $\gamma_X(x)=\beta_X(x)$
for all $\psi\in M_*$ and $x\in M$. 
We will denote by $d_{X,Y}$ the 2-cocycle $c_{X,Y}^\beta$
for simplicity.
By definition, we also have
\begin{align}
&d_{X,Y}c_{X,Y}^*\gamma_X(\gamma_Y(\psi^\omega))
=\beta_X(\beta_Y(\psi^\omega))d_{X,Y}c_{X,Y}^*,
\label{eq:dcgamgampsi}
\\
&d_{X,Y}c_{X,Y}^*\gamma_X(T^\gamma)
=
\beta_X(T^\beta)d_{X,Z}c_{X,Z}^*.
\label{eq:dcgamT}
\end{align}
for all $X,Y,Z\in\sC$, $T\in\sC(Z,Y)$ and $\psi\in M_*$,
where we actually have $\gamma_X(T^\gamma)=\beta_X(T^\beta)$
and
$\gamma_X(\gamma_Y(\psi^\omega))
=\beta_X(\beta_Y(\psi^\omega))$.

Let $\cF=\ovl{\cF}$ be a finite subset of $\Irr(\sC)$
and $\delta>0$.
Let $\cK$ be an $(\cF,\delta)$-invariant
finite subset of $\Irr(\sC)$.
Let $Q$ be a $\gamma$-invariant
countably generated von Neumann subalgebra
of $M^\omega$ such that $Q$ contains $M$
and $c_{X,Y}$ for all $X,Y\in\sC_0$.
Actually $Q$ contains $c_{X,Y}$ for all $X,Y\in\sC$.
This follows from (\ref{eq:cST})
and the fact that
any $T^\gamma=T^\beta$ is an element in $M$
for all morphisms $T$ in $\sC$.
Let $\varphi$ be a faithful normal state on $M$
such that $\varphi$ is $\alpha$-invariant
on $Z(M)$.

Let $E:=(E_Y^\gamma)_Y$
be a Rohlin tower along with $\cK$
with respect to $(\gamma,c),\cF,\delta,\cK$ and $Q$
as in Theorem \ref{thm:Rohlin-tensor}
and Lemma \ref{lem:Rohlin-tensor-gamma}.
We set the following element in $M^\omega$:
\begin{equation}
\label{eq:defnv}
v_X
:=\sum_{Y\in\Irr(\sC)}
d(Y)^2
\ovl{R}_Y^{\gamma*} c_{Y,\ovl{Y}}^*
\gamma_Y(d_{\ovl{Y},X}c_{\ovl{Y},X}^*)
E_Y^\gamma
c_{Y,\ovl{Y}}\ovl{R}_Y^\gamma
+p
\quad
\mbox{for }
X\in\sC,
\end{equation}
where $p:=1-\sum_{Y\in\cK}d(Y)^2
\ovl{R}_Y^{\gamma*} c_{Y,\ovl{Y}}^*
E_Y^\gamma
c_{Y,\ovl{Y}}\ovl{R}_Y^\gamma$ that is a projection
in $Q'\cap M_\omega$.
We should note each $v_X$ is a unitary
which follows from
Remark \ref{rem:Rohlin-tensor} (\ref{item:rem-Roh-orthogonal})
and the fact that all $c_{X,Y}$'s are contained in $Q$.
Note here that $E_Y^\gamma$ commutes with
$\gamma_Y(d_{\ovl{Y},X}c_{\ovl{Y},X}^*)$.
We set the partial isometry $v_X^0:=v_X-p$.
We will simply write $E:=E^\gamma$.
Let us keep our notation introduced above throughout this subsection.

\begin{lem}
\label{lem:vXcommute}
The following equalities hold:
\begin{enumerate}
\item
$v_X\gamma_X(\psi^\omega)=\gamma_X(\psi^\omega)v_X$
for all $X\in\sC$ and $\psi\in M_*$.

\item
$v_Y T^\gamma=T^\beta v_X$
for all
$X,Y\in\sC$ and $T\in\sC(X,Y)$.
\end{enumerate}
\end{lem}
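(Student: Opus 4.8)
The plan is to reexpress $v_X$ through the partial isometries coming from the Rohlin tower and then reduce both identities to the intertwiner relations (\ref{eq:alxphi}), (\ref{eq:alpha-bimod-varphi}), the cocycle-comparison relations (\ref{eq:dcgamgampsi})--(\ref{eq:dcgamT}), and the commutation properties of $E^\gamma$. For $Y\in\cK$ set $b_Y:=c_{Y,\ovl Y}\ovl R_Y^\gamma$, an element of $(\id,\gamma_Y\gamma_{\ovl Y})$, i.e.\ $b_Yx=\gamma_Y(\gamma_{\ovl Y}(x))b_Y$, and $W_Y:=d(Y)\ovl R_Y^{\gamma*}c_{Y,\ovl Y}^*E_Y^\gamma=d(Y)b_Y^*E_Y^\gamma$, the partial isometry of Remark \ref{rem:Rohlin-tensor} (\ref{item:rem-Roh-orthogonal}). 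By that remark and the resonance property of Theorem \ref{thm:Rohlin-tensor} (\ref{item:Roh-resonance}) (via Lemma \ref{lem:Rohlin-tensor-gamma}) one has $W_YW_Y^*=w_Y:=d(Y)^2\ovl R_Y^{\gamma*}c_{Y,\ovl Y}^*E_Y^\gamma c_{Y,\ovl Y}\ovl R_Y^\gamma$, the $w_Y$ being mutually orthogonal projections in $Q'\cap M_\omega$ with $\sum_{Y\in\cK}w_Y=1-p$, while $W_Y^*W_Y=E_Y^\gamma$ (equivalently $d(Y)^2E_Y^\gamma b_Yb_Y^*E_Y^\gamma=E_Y^\gamma$). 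Writing $\xi_{Y,X}:=d_{\ovl Y,X}c_{\ovl Y,X}^*$ and using that $E_Y^\gamma$ commutes with $\gamma_Y(\xi_{Y,X})$, the definition (\ref{eq:defnv}) becomes $v_X=\sum_{Y\in\cK}v_X^{0,Y}+p$ with $v_X^{0,Y}:=W_Y\gamma_Y(\xi_{Y,X})W_Y^*$; in particular $v_X^{0,Y}(v_X^{0,Y})^*=(v_X^{0,Y})^*v_X^{0,Y}=w_Y$ and $v_X^{0,Y}=w_Yv_X^{0,Y}=v_X^{0,Y}w_Y$.

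For (1): since $w_Y,p\in M_\omega\subseteq{}_{\gamma_X}M_\omega$, Lemma \ref{lem:freetrace} (1) gives that $w_Y$ and $p$ commute with $\gamma_X(\psi^\omega)$; hence $p\gamma_X(\psi^\omega)p=p\gamma_X(\psi^\omega)$, $v_X^{0,Y}\gamma_X(\psi^\omega)p=v_X^{0,Y}w_Yp\gamma_X(\psi^\omega)=0$, and for $Y\neq Z$ one has $W_Y^*\gamma_X(\psi^\omega)W_Z=W_Y^*\gamma_X(\psi^\omega)w_Yw_ZW_Z=0$, so only the diagonal terms survive. For those the key point is that $\gamma$ and $\beta$ agree on $M_*$, so $\gamma_{\ovl Y}(\gamma_X(\psi^\omega))=\chi^\omega$ with $\chi:=\beta_{\ovl Y}(\beta_X(\psi))\in M_*$; then (\ref{eq:alxphi}) applied to $\ovl R_Y$ together with (\ref{eq:alpha-bimod-varphi}) yields $b_Y\gamma_X(\psi^\omega)=d(Y)^2\gamma_Y(\chi^\omega)b_Y$, and combining this with $E_Y^\gamma b_Yb_Y^*E_Y^\gamma=d(Y)^{-2}E_Y^\gamma$ and the commutation of $E_Y^\gamma$ with $\gamma_Y(\chi^\omega)$ (Lemma \ref{lem:freetrace} (1), as $\chi\in M_*$) gives $W_Y^*\gamma_X(\psi^\omega)W_Y=d(Y)^2\gamma_Y(\chi^\omega)E_Y^\gamma$. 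Relation (\ref{eq:dcgamgampsi}) says precisely that $\xi_{Y,X}$ commutes with $\beta_{\ovl Y}(\beta_X(\psi^\omega))=\chi^\omega$, so $\gamma_Y(\xi_{Y,X})$ commutes with $\gamma_Y(\chi^\omega)$; using this, the commutation of $E_Y^\gamma$ with $\gamma_Y(\xi_{Y,X})$, the unitarity $\gamma_Y(\xi_{Y,X})\gamma_Y(\xi_{Y,X})^*=1$ and $E_Y^\gamma W_Y^*=W_Y^*$, the product $v_X^{0,Y}\gamma_X(\psi^\omega)(v_X^{0,Y})^*$ collapses to $d(Y)^2W_Y\gamma_Y(\chi^\omega)W_Y^*$. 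Finally, since $\gamma_Y(\chi^\omega)=\gamma_Y(\gamma_{\ovl Y}(\gamma_X(\psi^\omega)))$ and, from $b_Y\gamma_X(\psi^\omega)=d(Y)^2\gamma_Y(\gamma_{\ovl Y}(\gamma_X(\psi^\omega)))b_Y$, also $d(Y)^2b_Y^*\gamma_Y(\gamma_{\ovl Y}(\gamma_X(\psi^\omega)))=\gamma_X(\psi^\omega)b_Y^*$, one obtains $W_Y\gamma_Y(\chi^\omega)W_Y^*=d(Y)^{-2}\gamma_X(\psi^\omega)w_Y$, hence $v_X^{0,Y}\gamma_X(\psi^\omega)(v_X^{0,Y})^*=\gamma_X(\psi^\omega)w_Y$. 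Summing and using $\sum_{Y\in\cK}w_Y+p=1$ gives $v_X\gamma_X(\psi^\omega)v_X^*=\gamma_X(\psi^\omega)$, which is (1).

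For (2): here $T^\gamma=T^\beta$, and since $Q$ is $\gamma$-invariant we have $\gamma_{\ovl Z}(T^\gamma)\in Q$, so $E_Z^\gamma$ commutes with $\gamma_Z(\gamma_{\ovl Z}(T^\gamma))$ by Theorem \ref{thm:Rohlin-tensor} (\ref{item:Roh-commute}). Because $b_Z\in(\id,\gamma_Z\gamma_{\ovl Z})$ this yields $W_Z^*T^\gamma=\gamma_Z(\gamma_{\ovl Z}(T^\gamma))W_Z^*$, and taking the adjoint of the corresponding identity for $T^*\in\sC(Y,X)$ gives $T^\beta W_Z=W_Z\gamma_Z(\gamma_{\ovl Z}(T^\gamma))$. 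Therefore $v_Y^{0,Z}T^\gamma=W_Z\gamma_Z(\xi_{Z,Y}\gamma_{\ovl Z}(T^\gamma))W_Z^*$ and $T^\beta v_X^{0,Z}=W_Z\gamma_Z(\gamma_{\ovl Z}(T^\gamma)\xi_{Z,X})W_Z^*$, and these coincide because $\xi_{Z,Y}\gamma_{\ovl Z}(T^\gamma)=\gamma_{\ovl Z}(T^\gamma)\xi_{Z,X}$ is exactly relation (\ref{eq:dcgamT}) (recalling $\gamma_{\ovl Z}(T^\gamma)=\beta_{\ovl Z}(T^\beta)$ since $T^\gamma=T^\beta\in M$). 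As moreover $pT^\gamma=T^\gamma p=T^\beta p$, summation over $Z\in\cK$ gives $v_YT^\gamma=T^\beta v_X$.

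The only genuinely delicate part is the bookkeeping in (1): one must be careful about which operators lie in $M$ and which only in $M^\omega$, about the distinction between the $\gamma$- and the $\beta$-cocycles, and above all about the precise places where Lemma \ref{lem:freetrace} (1) is applicable — namely to $\gamma_Y$ of honest functionals coming from $M_*$. The rewriting $\gamma_{\ovl Y}(\gamma_X(\psi^\omega))=\chi^\omega$ with $\chi\in M_*$, which is legitimate exactly because $\gamma_\bullet(\psi^\omega)=\beta_\bullet(\psi^\omega)$, is what makes the argument exact (no approximation enters); everything else is a routine application of (\ref{eq:alxphi}), (\ref{eq:alpha-bimod-varphi}), the commutation relations of the Rohlin tower, and the resonance identity $W_Y^*W_Y=E_Y^\gamma$.
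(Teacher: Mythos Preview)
Your proof is correct and uses the same ingredients as the paper's: the intertwining property of $b_Y=c_{Y,\ovl Y}\ovl R_Y^\gamma$, the commutation of $E_Y^\gamma$ with $\gamma_Y(Q)$ and with $\gamma_Y(\chi^\omega)$ for $\chi\in M_*$, and the relations (\ref{eq:dcgamgampsi})--(\ref{eq:dcgamT}). The only organizational difference is that you compute $v_X\gamma_X(\psi^\omega)v_X^*$ via the partial isometries $W_Y$ (which requires the resonance identity $W_Y^*W_Y=E_Y^\gamma$ and a diagonal/off-diagonal splitting), whereas the paper simply pushes $\gamma_X(\psi^\omega)$ from right to left through the expression for $v_X^0$ in one pass; this is shorter but relies on exactly the same commutation steps.
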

\begin{proof}
(1).
Since $p\in M_\omega$, $p$
commutes with
$\gamma_X(\psi^\omega)=\beta_X^\omega(\psi^\omega)
=(\beta_X(\psi))^\omega$.
Hence it suffices to show $v_X^0$ commutes
with $\gamma_X(\psi^\omega)$
for all $X\in\sC$.
We have
\begin{align*}
&v_X^0\gamma_X(\psi^\omega)
\\
&=
\sum_{Y\in\Irr(\sC)}
d(Y)^2
\ovl{R}_Y^{\gamma*} c_{Y,\ovl{Y}}^*
\gamma_Y(d_{\ovl{Y},X}c_{\ovl{Y},X}^*)
E_Y
c_{Y,\ovl{Y}}\ovl{R}_Y^\gamma
\gamma_X(\psi^\omega)
\\
&=
\sum_{Y\in\Irr(\sC)}
d(Y)^4
\ovl{R}_Y^{\gamma*} c_{Y,\ovl{Y}}^*
\gamma_Y(d_{\ovl{Y},X}c_{\ovl{Y},X}^*)
E_Y
\gamma_Y(\gamma_{\ovl{Y}}(\gamma_X(\psi^\omega)))
c_{Y,\ovl{Y}}\ovl{R}_Y^\gamma
\\
&=
\sum_{Y\in\Irr(\sC)}
d(Y)^4
\ovl{R}_Y^{\gamma*} c_{Y,\ovl{Y}}^*
\gamma_Y(d_{\ovl{Y},X}c_{\ovl{Y},X}^*
\gamma_{\ovl{Y}}(\gamma_X(\psi^\omega)))
E_Y
c_{Y,\ovl{Y}}\ovl{R}_Y^\gamma
\\
&=
\sum_{Y\in\Irr(\sC)}
d(Y)^4
\ovl{R}_Y^{\gamma*} c_{Y,\ovl{Y}}^*
\gamma_Y(
\gamma_{\ovl{Y}}(\gamma_X(\psi^\omega))
d_{\ovl{Y},X}c_{\ovl{Y},X}^*)
E_Y
c_{Y,\ovl{Y}}\ovl{R}_Y^\gamma
\quad
\mbox{by }
(\ref{eq:dcgamgampsi})
\\
&=
\gamma_X(\psi^\omega)v_X^0.
\end{align*}

(2).
Note $p$ commutes with $T^\gamma=T^\beta\in M$.
We have
\begin{align*}
v_Y^0 T^\gamma
&=
\sum_{Z\in\Irr(\sC)}
d(Z)^2
\ovl{R}_Z^{\gamma*} c_{Z,\ovl{Z}}^*
\gamma_Z(d_{\ovl{Z},Y}c_{\ovl{Z},Y}^*)
E_Z
\gamma_Z(\gamma_{\ovl{Z}}(T^\gamma))
c_{Z,\ovl{Z}}\ovl{R}_Z^\gamma
\\
&=
\sum_{Z\in\Irr(\sC)}
d(Z)^2
\ovl{R}_Z^{\gamma*} c_{Z,\ovl{Z}}^*
\gamma_Z(d_{\ovl{Z},Y}c_{\ovl{Z},Y}^*\gamma_{\ovl{Z}}(T^\gamma))
E_Z
c_{Z,\ovl{Z}}\ovl{R}_Z^\gamma
\\
&=
\sum_{Z\in\Irr(\sC)}
d(Z)^2
\ovl{R}_Z^{\gamma*} c_{Z,\ovl{Z}}^*
\gamma_Z(\beta_{\ovl{Z}}(T^\gamma)
d_{\ovl{Z},X}c_{\ovl{Z},X}^*)
E_Z
c_{Z,\ovl{Z}}\ovl{R}_Z^\gamma
\quad
\mbox{by }
(\ref{eq:dcgamT})
\\
&=
T^\gamma v_X^0.
\end{align*}
\end{proof}

\begin{lem}
\label{lem:vperturb}
For all $Y\in\sC_0$,
one has
\[
\sum_{X\in\cF}d(X)^2
|v_X\gamma_X(v_Y)
-d_{X,Y}v_{X\otimes Y}c_{X,Y}^*
|_{\gamma_X(\gamma_Y(\varphi^\omega))}
\leq
8\delta^{1/2}|\cF|_\sigma.
\]
\end{lem}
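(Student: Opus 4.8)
The plan is to expand both $v_X\gamma_X(v_Y)$ and $d_{X,Y}v_{X\otimes Y}c_{X,Y}^{*}$ by means of the defining formula (\ref{eq:defnv}), to collapse the products of Rohlin projections that arise with the help of the resonance property, and then to estimate the remaining defect by the approximate equivariance and the approximate partition of unity of the tower $E=E^\gamma$. Throughout I would invoke Lemma \ref{lem:Rohlin-tensor-gamma}, which guarantees that all properties of Theorem \ref{thm:Rohlin-tensor} except the splitting property (\ref{item:Roh-split}) remain valid for $(\gamma,c^\gamma)$, together with the commutation relations of Lemma \ref{lem:vXcommute}.

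First I would substitute (\ref{eq:defnv}) for $v_X$, $v_Y$ and $v_{X\otimes Y}$, write $v_X=v_X^0+p$, and expand. In the product $v_X^0\,\gamma_X(v_Y^0)$ a double sum over $Z,W\in\Irr(\sC)$ appears whose terms contain factors of the form $E_Z^\gamma\, c_{Z,\ovl Z}\ovl R_Z^\gamma\,\gamma_Z(\,\cdots\,)\,\gamma_Z(E_W^\gamma)$. Writing $\gamma_Z(E_W^\gamma)=c_{Z,W}E_{Z\otimes W}^\gamma c_{Z,W}^{*}+\Delta_{Z,W}$ with $\Delta_{Z,W}:=\gamma_Z(E_W^\gamma)-c_{Z,W}E_{Z\otimes W}^\gamma c_{Z,W}^{*}$, then decomposing $Z\otimes W$ into irreducibles via $\ONB$-bases, moving the intertwiners past the $E$'s by naturality (Theorem \ref{thm:Rohlin-tensor}(\ref{item:Roh-natural})), and finally applying the resonance relation (Theorem \ref{thm:Rohlin-tensor}(\ref{item:Roh-resonance})) $E_U^\gamma c_{U,\ovl U}\ovl R_U^\gamma E_{U'}^\gamma=\frac{\delta_{U,U'}}{d(U)}\gamma_U(c_{\ovl U,U}R_U^\gamma)E_U^\gamma$, the double sum collapses to a single sum over one irreducible index. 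A parallel but simpler expansion of $d_{X,Y}v_{X\otimes Y}c_{X,Y}^{*}$, using (\ref{eq:dcgamgampsi})--(\ref{eq:dcgamT}) and the $2$-cocycle identities, yields the very same single sum but with $c_{Z,W}E_{Z\otimes W}^\gamma c_{Z,W}^{*}$ in place of $\gamma_Z(E_W^\gamma)$. Consequently $v_X\gamma_X(v_Y)-d_{X,Y}v_{X\otimes Y}c_{X,Y}^{*}$ is, modulo cross-terms involving the defect projection $p$, a finite sum of terms each of which is a contraction (an operator of norm $\le 1$) multiplied by some $\Delta_{Z,W}$ and by a factor of the form $\gamma_Z(c_{\ovl Z,Z}R_Z^\gamma)$ or $c_{Z,\ovl Z}\ovl R_Z^\gamma$.

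For the main contribution I would apply Lemma \ref{lem:gaphiDe} to $(\gamma,c^\gamma)$ with $\Delta_{Z,W}$ playing the role of $\Delta_{X,Y}$ there; its hypotheses hold because $(\alpha,c^\alpha)$ is centrally free, $\beta_X(M)'\cap M=Z(M)$, $\gamma=\beta$ on $M$, and $\varphi$ is $\theta^\alpha=\theta^\beta$-invariant on $Z(M)$. This bounds each $|\,\cdot\,|_{\gamma_X(\gamma_Y(\varphi^\omega))}$-seminorm of such a term by the $|\,\cdot\,|$-seminorm of $\Delta_{Z,W}$ against the appropriate state. Summing over $X\in\cF$ and over the irreducible index with weights $d(\,\cdot\,)^2$, re-indexing by Frobenius reciprocity as in the proof of Lemma \ref{lem:Folner}, and invoking the approximate equivariance bound of Theorem \ref{thm:Rohlin-tensor}(\ref{item:Roh-equivariance}) (valid for $\gamma$), namely $\sum_{X\in\cF}\sum_{W\in\Irr(\sC)}d(X)^2d(W)^2|\Delta_{X,W}|_{\gamma_X(\gamma_W(\varphi^\omega))}\le 6\delta^{1/2}|\cF|_\sigma$, this part is bounded by $6\delta^{1/2}|\cF|_\sigma$. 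The $p$-cross-terms are controlled using $\varphi^\omega(p)=1-b_E\le\delta^{1/2}$ together with the orthogonality (Theorem \ref{thm:Rohlin-tensor}(\ref{item:Roh-orthogonal})) and Remark \ref{rem:Rohlin-tensor}(\ref{item:rem-Roh-orthogonal}); these contribute at most $2\delta^{1/2}|\cF|_\sigma$. Adding the two yields the asserted bound $8\delta^{1/2}|\cF|_\sigma$.

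The step I expect to be the main obstacle is the bookkeeping in the collapse: keeping track of how the unitaries $c^\gamma$, $d=c^\beta$, the standard solutions $R^\gamma,\ovl R^\gamma$ and the orthonormal bases recombine under naturality, resonance and the cocycle identities, so that the ``principal term'' of $v_X\gamma_X(v_Y)$ comes out to be \emph{exactly} $d_{X,Y}v_{X\otimes Y}c_{X,Y}^{*}$ with no residual scalar factor. A secondary subtlety is that every seminorm must be taken against the correct functional, and that the passage between $\gamma_X(\gamma_Y(\varphi^\omega))$ and $\gamma_X(\gamma_W(\varphi^\omega))$ is legitimate --- which is precisely what Lemma \ref{lem:gaphiDe} and Remark \ref{rem:Rohlin-tensor} are designed to handle, but whose hypotheses ($\varphi$ faithful and $\theta^\alpha$-invariant on $Z(M)$, $\gamma=\beta$ on $M$, central freeness) I would check at each application.
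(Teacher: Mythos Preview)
Your overall plan---expand using (\ref{eq:defnv}), isolate a defect $\Delta$, collapse the principal term via resonance, and bound by Theorem~\ref{thm:Rohlin-tensor}(\ref{item:Roh-equivariance}) together with Lemma~\ref{lem:gaphiDe}---is exactly the route the paper takes. But there is one genuine slip that would prevent the argument from closing as written.

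You place the equivariance defect at the wrong pair of indices. In the product $v_X^0\,\gamma_X(v_Y^0)$ the outer map $\gamma_X$ is applied to $v_Y^0$, and $v_Y^0$ contains $E_W^\gamma$; hence what arises is $\gamma_X(E_W^\gamma)$, not $\gamma_Z(E_W^\gamma)$. The element $\gamma_X(E_W^\gamma)$ does \emph{not} lie in $Q$, so you cannot push it past $E_Z^\gamma c_{Z,\ovl Z}\ovl R_Z^\gamma$ using the relation $E_Uc_{U,\ovl U}\ovl R_U^\gamma x=\gamma_U(\gamma_{\ovl U}(x))E_Uc_{U,\ovl U}\ovl R_U^\gamma$, which is only available for $x\in Q$. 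Consequently the defect to be controlled is $\Delta_{X,W}:=\gamma_X(E_W^\gamma)-c_{X,W}E_{X\otimes W}^\gamma c_{X,W}^*$ with $X\in\cF$, not $\Delta_{Z,W}$ with $Z$ running over the tower support $\cK$. This matters because Theorem~\ref{thm:Rohlin-tensor}(\ref{item:Roh-equivariance}) bounds $\sum_{X\in\cF}\sum_{W}d(X)^2d(W)^2|\Delta_{X,W}|$; there is no corresponding bound for $\Delta_{Z,W}$ with $Z\in\cK$, and no Frobenius re-indexing will manufacture one.

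Once the defect is placed correctly, the paper's organization is slightly different from what you sketch and cleaner to execute: first split $\gamma_X(v_Y^0)=A_{X,Y}+B_{X,Y}$ with $A_{X,Y}$ carrying $\Delta_{X,W}$; then compute $v_X^0B_{X,Y}$ using resonance once; then compute $v_X^0B_{X,Y}c_{X,Y}v_{X\otimes Y}^{0*}$, using resonance a second time, and show it equals $d_{X,Y}(1-p)$ exactly. The right Frobenius map is used in the middle of this algebraic computation (to re-index the $\ONB$'s), not in the estimate. The two applications of Lemma~\ref{lem:gaphiDe} and the $p$-terms then give $6\delta^{1/2}|\cF|_\sigma+2\delta^{1/2}|\cF|_\sigma$ as you say. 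Your identification of the ``main obstacle'' is accurate; just be sure the defect sits on the $\gamma_X$-side so that the equivariance bound actually applies.
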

\begin{proof}
(1).
Let $X\in\cF$ and $Y\in\sC_0$.
Then we have
$\gamma_X(v_Y^0)
=
A_{X,Y}
+
B_{X,Y}$,
where
\begin{align*}
A_{X,Y}
&:=
\sum_{Z\in\Irr(\sC)}
d(Z)^2
\gamma_X
(\ovl{R}_Z^{\gamma*} c_{Z,\ovl{Z}}^*)
\cdot
\gamma_X(\gamma_Z(d_{\ovl{Z},Y}c_{\ovl{Z},Y}^*))
\Delta_{X,Z}\cdot
\gamma_X(c_{Z,\ovl{Z}}\ovl{R}_Z^\gamma),
\\
B_{X,Y}
&:=
\sum_{Z\in\Irr(\sC)}
d(Z)^2
\gamma_X
(\ovl{R}_Z^{\gamma*} c_{Z,\ovl{Z}}^*)
\cdot
\gamma_X(\gamma_Z(d_{\ovl{Z},Y}c_{\ovl{Z},Y}^*))
c_{X,Z}E_{X\otimes Z}c_{X,Z}^*
\cdot
\gamma_X(c_{Z,\ovl{Z}}\ovl{R}_Z^\gamma),
\\
\Delta_{X,Z}
&:=\gamma_X(E_Z)-c_{X,Z}E_{X\otimes Z}c_{X,Z}^*.
\end{align*}

We will compute $B_{X,Y}$ as follows
\begin{align*}
B_{X,Y}
&=
\sum_{Z}
d(Z)^2
\gamma_X
(\ovl{R}_Z^{\gamma*} c_{Z,\ovl{Z}}^*)
\cdot
c_{X,Z}
\gamma_{X\otimes Z}(d_{\ovl{Z},Y}c_{\ovl{Z},Y}^*)
E_{X\otimes Z}c_{X,Z}^*
\cdot
\gamma_X(c_{Z,\ovl{Z}}\ovl{R}_Z^\gamma)
\notag
\\
&=
\sum_{V,Z,S}
d(Z)^2
\gamma_X
(\ovl{R}_Z^{\gamma*} c_{Z,\ovl{Z}}^*)
\cdot
c_{X,Z}S^\gamma
\gamma_{V}(d_{\ovl{Z},Y}c_{\ovl{Z},Y}^*)
E_V S^{\gamma*}c_{X,Z}^*
\cdot
\gamma_X(c_{Z,\ovl{Z}}\ovl{R}_Z^\gamma),
\end{align*}
where the summation is taken
for all $V,Z\in \Irr(\sC)$
and $S\in\ONB(V,X\otimes Z)$.

Applying the following equality to $v_X^0 B_{X,Y}$:
\[
E_Uc_{U,\ovl{U}}\ovl{R}_U^\gamma x
=\gamma_U(\gamma_{\ovl{U}}(x))E_Uc_{U,\ovl{U}}\ovl{R}_U^\gamma
\quad
\mbox{for all }
U\in\Irr(\sC),
\
x\in Q,
\]
we have
\begin{align*}
v_X^0B_{X,Y}
&=
\sum_{U,V,Z}
\sum_{S}
d(U)^2d(Z)^2
\ovl{R}_U^{\gamma*} c_{U,\ovl{U}}^*
\gamma_U(d_{\ovl{U},Y}c_{\ovl{U},X}^*)
\\
&\quad
\cdot
\gamma_U(\gamma_{\ovl{U}}(
\gamma_X
(\ovl{R}_Z^{\gamma*} c_{Z,\ovl{Z}}^*)
c_{X,Z}S^\gamma
\gamma_{V}(d_{\ovl{Z},Y}c_{\ovl{Z},Y}^*)))
\cdot
E_U
c_{U,\ovl{U}}\ovl{R}_U^\gamma
E_V
\\
&\quad
\cdot
S^{\gamma*}c_{X,Z}^*
\gamma_X(c_{Z,\ovl{Z}}\ovl{R}_Z^\gamma).
\end{align*}

Using Theorem \ref{thm:Rohlin-tensor} (\ref{item:Roh-resonance})
and Lemma \ref{lem:Rohlin-tensor-gamma}
for $(\gamma,c)$
in the above,
we obtain
\begin{align*}
v_X^0
B_{X,Y}
&=
\sum_{U,Z,S}
d(U)d(Z)^2
\ovl{R}_U^{\gamma*} c_{U,\ovl{U}}^*
\gamma_U(d_{\ovl{U},Y}c_{\ovl{U},X}^*)
\notag
\\
&\quad
\cdot
\gamma_U\big{(}\gamma_{\ovl{U}}(
\gamma_X
(\ovl{R}_Z^{\gamma*} c_{Z,\ovl{Z}}^*)
c_{X,Z}S^\gamma
\gamma_{U}(d_{\ovl{Z},Y}c_{\ovl{Z},Y}^*))\big{)}
\cdot
\gamma_{U}(c_{\ovl{U},U}R_U^\gamma)E_U
\notag
\\
&\quad
\cdot
S^{\gamma*}c_{X,Z}^*
\gamma_X(c_{Z,\ovl{Z}}\ovl{R}_Z^\gamma),
\end{align*}
where the summation is taken for all $U,Z\in\Irr(\sC)$
and $S\in\ONB(V,X\otimes Z)$.

Recall the right Frobenius map $F_r$ defined in (\ref{eq:rightFrob}).
For $T:=F_r(S)$, we obtain
\begin{align*}
\frac{d(U)^{1/2}}{d(X)^{1/2}d(Z)^{1/2}}
c_{U,\ovl{Z}}T^\gamma
&=
c_{U,\ovl{Z}}[S^*\otimes1_{\ovl{Z}}]^\gamma
[1_X\otimes\ovl{R}_Z]^\gamma
\\
&=
S^{\gamma*}
c_{X\otimes Z,\ovl{Z}}[1_X\otimes\ovl{R}_Z]^\gamma
\\
&=
S^{\gamma*}
c_{X,Z}^*\gamma_X(c_{Z,\ovl{Z}})c_{X,Z\otimes\ovl{Z}}
[1_X\otimes\ovl{R}_Z]^\gamma
\\
&=
S^{\gamma*}
c_{X,Z}^*\gamma_X(c_{Z,\ovl{Z}}\ovl{R}_Z^\gamma).
\end{align*}

Then we have
\begin{align*}
v_X^0 B_{X,Y}
&=
\sum_{U,Z,T}
\frac{d(U)^2d(Z)}{d(X)}
\ovl{R}_U^{\gamma*} c_{U,\ovl{U}}^*
\gamma_U(d_{\ovl{U},Y}c_{\ovl{U},X}^*)
\notag
\\
&\quad
\cdot
\gamma_U\big{(}\gamma_{\ovl{U}}(
T^{\gamma*}c_{U,\ovl{Z}}^*
\gamma_{U}(d_{\ovl{Z},Y}c_{\ovl{Z},Y}^*))\big{)}
\cdot
\gamma_{U}(c_{\ovl{U},U}R_U^\gamma)E_U
\cdot
c_{U,\ovl{Z}}T^\gamma,
\label{eq:v0BXY}
\end{align*}
where the summation is taken
for all $U,Z\in\Irr(\sC)$
and $T\in\ONB(X,U\otimes\ovl{Z})$.
Then we have
\begin{align*}
&v_X^0 B_{X,Y}c_{X,Y}v_{X\otimes Y}^{0*}
\notag
\\
&=
\sum_{U,Z,T}
\frac{d(U)^2d(Z)}{d(X)}
\ovl{R}_U^{\gamma*} c_{U,\ovl{U}}^*
\gamma_U\big{(}
d_{\ovl{U},Y}c_{\ovl{U},X}^*
\gamma_{\ovl{U}}(
T^{\gamma*}c_{U,\ovl{Z}}^*
\gamma_{U}(d_{\ovl{Z},Y}c_{\ovl{Z},Y}^*))\big{)}
\notag
\\
&\quad\cdot
\gamma_{U}(c_{\ovl{U},U}R_U^\gamma)E_U
\cdot
c_{U,\ovl{Z}}T^\gamma
c_{X,Y}v_{X\otimes Y}^{0*}.
\end{align*}
Since
\begin{align*}
&\gamma_{U}(c_{\ovl{U},U}R_U^\gamma)E_U
c_{U,\ovl{Z}}T^\gamma
c_{X,Y}v_{X\otimes Y}^{0*}
\\
&=
\gamma_{U}(c_{\ovl{U},U}R_U^\gamma)E_U
c_{U,\ovl{Z}}T^\gamma
c_{X,Y}
\sum_{V\in\Irr(\sC)}
d(V)^2
\ovl{R}_V^{\gamma*}
c_{V,\ovl{V}}^*
\gamma_V(c_{\ovl{V},X\otimes Y}d_{\ovl{V},X\otimes Y}^*)
E_V
c_{V,\ovl{V}}\ovl{R}_V^\gamma
\\
&=
\sum_V
d(V)^2
\gamma_{U}(c_{\ovl{U},U}R_U^\gamma)
E_U
\ovl{R}_V^{\gamma*}
c_{V,\ovl{V}}^*
E_V
\\
&\quad
\cdot
\gamma_V(\gamma_{\ovl{V}}(
c_{U,\ovl{Z}}T^\gamma
c_{X,Y}
))
\gamma_V(c_{\ovl{V},X\otimes Y}d_{\ovl{V},X\otimes Y}^*)
c_{V,\ovl{V}}\ovl{R}_V^\gamma
\\
&=
d(U)
\gamma_{U}(c_{\ovl{U},U}R_U^\gamma
R_U^{\gamma*}c_{\ovl{U},U}^*)E_U
\quad
\mbox{by Theorem \ref{thm:Rohlin-tensor} (\ref{item:Roh-resonance})
and Lemma \ref{lem:Rohlin-tensor-gamma}}
\\
&\quad
\cdot
\gamma_U(\gamma_{\ovl{U}}(
c_{U,\ovl{Z}}T^\gamma
c_{X,Y}
))
\gamma_U(c_{\ovl{U},X\otimes Y}d_{\ovl{U},X\otimes Y}^*)
c_{U,\ovl{U}}\ovl{R}_U^\gamma,
\end{align*}
we have
\begin{align*}
&v_X^0 B_{X,Y}c_{X,Y}v_{X\otimes Y}^{0*}
\\
&=
\sum_{U,Z\in\Irr(\sC)}
\sum_{T\in\ONB(X,U\otimes\ovl{Z})}
\frac{d(U)^3d(Z)}{d(X)}
\ovl{R}_U^*c_{U,\ovl{U}}^*
\gamma_U(C_{U,Y,Z,T})E_U
c_{U,\ovl{U}}\ovl{R}_U^\gamma,
\end{align*}
where
\begin{align*}
C_{U,Y,Z,T}
&:=
d_{\ovl{U},X}
c_{\ovl{U},X}^*
\gamma_{\ovl{U}}(
T^{\gamma*}c_{U,\ovl{Z}}^*
\gamma_{U}(d_{\ovl{Z},Y}c_{\ovl{Z},Y}^*))
\\
&\quad\cdot
c_{\ovl{U},U}
R_U^\gamma
R_U^{\gamma*}
c_{\ovl{U},U}^*
\\
&\quad\cdot
\gamma_{\ovl{U}}(
c_{U,\ovl{Z}}T^\gamma
c_{X,Y}
)
c_{\ovl{U},X\otimes Y}d_{\ovl{U},X\otimes Y}^*.
\end{align*}
By (\ref{eq:dcgamT}), we have
\begin{align*}
&d_{\ovl{U},X}c_{\ovl{U},X}^*
\gamma_{\ovl{U}}(
T^{\gamma*}c_{U,\ovl{Z}}^*
\gamma_{U}(d_{\ovl{Z},Y}c_{\ovl{Z},Y}^*))
\\
&=
\beta_{\ovl{U}}(T^{\beta*})
d_{\ovl{U},U\otimes \ovl{Z}}
c_{\ovl{U},U\otimes \ovl{Z}}^*
\gamma_{\ovl{U}}(c_{U,\ovl{Z}}^*
\gamma_{U}(d_{\ovl{Z},Y}c_{\ovl{Z},Y}^*))
\\
&=
\beta_{\ovl{U}}(T^{\beta*})
d_{\ovl{U},U\otimes \ovl{Z}}
c_{\ovl{U}\otimes U,\ovl{Z}}^*c_{\ovl{U},U}^*
\gamma_{\ovl{U}}(\gamma_{U}(d_{\ovl{Z},Y}c_{\ovl{Z},Y}^*))
\\
&=
\beta_{\ovl{U}}(T^{\beta*})
d_{\ovl{U},U\otimes \ovl{Z}}c_{\ovl{U}\otimes U,\ovl{Z}}^*
\gamma_{\ovl{U}\otimes U}(d_{\ovl{Z},Y}c_{\ovl{Z},Y}^*)
c_{\ovl{U},U}^*
\end{align*}
and
\begin{align*}
&\gamma_{\ovl{U}}(
c_{U,\ovl{Z}}T^\gamma
c_{X,Y})
c_{\ovl{U},X\otimes Y}d_{\ovl{U},X\otimes Y}^*
\\
&=
\gamma_{\ovl{U}}(
c_{U,\ovl{Z}}
c_{U\otimes \ovl{Z},Y}[T\otimes 1_Y]^\gamma)
c_{\ovl{U},X\otimes Y}d_{\ovl{U},X\otimes Y}^*
\\
&=
\gamma_{\ovl{U}}(
c_{U,\ovl{Z}}
c_{U\otimes \ovl{Z},Y})
c_{\ovl{U},U\otimes\ovl{Z}\otimes Y}
[1_{\ovl{U}}\otimes T\otimes 1_Y]^\gamma
d_{\ovl{U},X\otimes Y}^*
\\
&=
\gamma_{\ovl{U}}(
\gamma_U(c_{\ovl{Z},Y})
c_{U,\ovl{Z}\otimes Y})
c_{\ovl{U},U\otimes\ovl{Z}\otimes Y}
[1_{\ovl{U}}\otimes T\otimes 1_Y]^\gamma
d_{\ovl{U},X\otimes Y}^*
\\
&=
c_{\ovl{U},U}
\gamma_{\ovl{U}\otimes U}(c_{\ovl{Z},Y})
c_{\ovl{U},U}^*
\gamma_{\ovl{U}}(c_{U,\ovl{Z}\otimes Y})
c_{\ovl{U},U\otimes\ovl{Z}\otimes Y}
[1_{\ovl{U}}\otimes T\otimes 1_Y]^\gamma
d_{\ovl{U},X\otimes Y}^*
\\
&=
c_{\ovl{U},U}
\gamma_{\ovl{U}\otimes U}(c_{\ovl{Z},Y})
c_{\ovl{U}\otimes U,\ovl{Z}\otimes Y}
[1_{\ovl{U}}\otimes T\otimes 1_Y]^\gamma
d_{\ovl{U},X\otimes Y}^*.
\end{align*}
Thus we have
\begin{align*}
&C_{U,Y,Z,T}
\\
&=
\beta_{\ovl{U}}(T^\beta)
d_{\ovl{U},U\otimes\ovl{Z}}
c_{\ovl{U}\otimes U,\ovl{Z}}^*
\gamma_{\ovl{U}\otimes U}(d_{\ovl{Z},Y}c_{\ovl{Z},Y}^*)
c_{\ovl{U},U}^*
\cdot
c_{\ovl{U},U}
R_U^\gamma
R_U^{\gamma*}
c_{\ovl{U},U}^*
\\
&\quad
\cdot
c_{\ovl{U},U}
\gamma_{\ovl{U}\otimes U}(c_{\ovl{Z},Y})
c_{\ovl{U}\otimes U,\ovl{Z}\otimes Y}
[1_{\ovl{U}}\otimes T\otimes 1_Y]^\gamma
d_{\ovl{U},X\otimes Y}^*
\\
&=
\beta_{\ovl{U}}(T^\beta)
d_{\ovl{U},U\otimes\ovl{Z}}
[R_U\otimes 1_{\ovl{Z}}]^\beta
d_{\ovl{Z},Y}c_{\ovl{Z},Y}^*
c_{\ovl{Z},Y}
[R_U^*\otimes 1_{\ovl{Z}}\otimes 1_Y]^\gamma
[1_{\ovl{U}}\otimes T\otimes 1_Y]^\gamma
d_{\ovl{U},X\otimes Y}^*
\\
&=
d_{\ovl{U},X}
[1_{\ovl{U}}\otimes T^*]^\beta
[R_U\otimes 1_{\ovl{Z}}]^\beta
[R_U^*\otimes 1_{\ovl{Z}}]^\gamma
d_{\ovl{U}\otimes U\otimes \ovl{Z},Y}
[1_{\ovl{U}}\otimes T\otimes 1_Y]^\gamma
d_{\ovl{U},X\otimes Y}^*
\\
&=
d_{\ovl{U},X}
[1_{\ovl{U}}\otimes T^*]^\beta
[R_U\otimes 1_{\ovl{Z}}]^\beta
[R_U^*\otimes 1_{\ovl{Z}}]^\gamma
[1_{\ovl{U}}\otimes T]^\gamma
d_{\ovl{U}\otimes X,Y}
d_{\ovl{U},X\otimes Y}^*.
\end{align*}
Recall the following formula of $\sC$
(\ref{eq:leftinv}):
\[
1_{\ovl{U}}\otimes 1_X
=
\sum_{Z\in\Irr(\sC)}
\sum_{T\in\ONB(X,U\otimes \ovl{Z})}
\frac{d(U)d(Z)}{d(X)}
(1_{\ovl{U}}\otimes T^*)
(R_U\otimes 1_{\ovl{Z}})
(R_U^*\otimes 1_{\ovl{Z}})
(1_{\ovl{U}}\otimes T).
\]
This implies the following:
\[
\sum_{Z\in\Irr(\sC)}
\sum_{T\in\ONB(X,U\otimes \ovl{Z})}
\frac{d(U)d(Z)}{d(X)}
C_{U,Y,Z,T}
=
d_{\ovl{U},X}
d_{\ovl{U}\otimes X,Y}
d_{\ovl{U},X\otimes Y}^*
=
\gamma_{\ovl{U}}(d_{X,Y}).
\]
Hence
\[
v_X^0 B_{X,Y}c_{X,Y}v_{X\otimes Y}^{0*}
=
\sum_{U\in\Irr(\sC)}
d(U)^2
\ovl{R}_U^{\gamma*}c_{U,\ovl{U}}^*
\gamma_U(\gamma_{\ovl{U}}(d_{X,Y}))
E_Uc_{U,\ovl{U}}\ovl{R}_U^\gamma
=
d_{X,Y}(1-p).
\]
This shows that
$v_X^0 B_{X,Y}=d_{X,Y}v_{X\otimes Y}^0 c_{X,Y}^*$.
Note $E_U p=0$ for all $U\in\Irr(\sC)$,
and $E_Vp=0$ for all $V\in\sC$
since any morphisms in $\sC$ commute with $p\in M_\omega$.
By definition of $B_{X,Y}$,
we have $B_{X,Y}p=0=pB_{X,Y}$.
Thus $v_X B_{X,Y}=d_{X,Y}v_{X\otimes Y} c_{X,Y}^*(1-p)$.
Hence we have
\begin{align*}
v_X\gamma_X(v_Y)
&=
v_X\gamma_X(v_Y^0)
+
v_X\gamma_X(p)
\\
&=
v_X A_{X,Y}
+
v_X B_{X,Y}
+
v_X\gamma_X(p)
\\
&=
v_X A_{X,Y}
+
d_{X,Y}v_{X\otimes Y} c_{X,Y}^*
(1-p)
+
v_X\gamma_X(p).
\end{align*}
Let
$v_X\gamma_X(v_Y)-d_{X,Y}v_{X\otimes Y}c_{X,Y}^*
=
w_{X,Y}|v_X\gamma_X(v_Y)-d_{X,Y}v_{X\otimes Y}c_{X,Y}^*|$
be the polar decomposition.
This implies
\begin{align*}
&|v_X\gamma_X(v_Y)-d_{X,Y}v_{X\otimes Y}c_{X,Y}^*
|_{\gamma_X(\gamma_Y(\varphi^\omega))}
\\
&=
\gamma_X(\gamma_Y(\varphi^\omega))(w_{X,Y}^*v_XA_{X,Y})
-
\gamma_X(\gamma_Y(\varphi^\omega))(w_{X,Y}^*d_{X,Y}v_{X\otimes Y}c_{X,Y}^* p)
\\
&\quad+
\gamma_X(\gamma_Y(\varphi^\omega))
(w_{X,Y}^*v_X\gamma_X(p)).
\end{align*}

Applying Lemma \ref{lem:gaphiDe} to
$\psi:=\beta_Y(\varphi)$
(note that $\gamma_Y(\varphi^\omega)=\beta_Y(\varphi^\omega)$),
we obtain
\begin{align*}
|\gamma_X(\gamma_Y(\varphi^\omega))(w_{X,Y}^*v_XA_{X,Y})|
&\leq
\sum_{Z\in\Irr(\sC)}
d(Z)^2
|\Delta_{X,Z}|_{\gamma_X(\gamma_Z(\gamma_Y(\varphi^\omega)))}
\\
&=
\sum_{Z\in\Irr(\sC)}
d(Z)^2
|\Delta_{X,Z}|_{\gamma_X(\gamma_Z(\varphi^\omega))}.
\end{align*}
Since $p\in M_\omega$ commutes with
$\gamma_{X}(\gamma_Y(\varphi^\omega))$,
we have
\[
|\gamma_X(\gamma_Y(\varphi^\omega))(
w_{X,Y}^*d_{X,Y}v_{X\otimes Y}c_{X,Y}^*p)|
\leq
\gamma_X(\gamma_Y(\varphi^\omega))(p)
=
\varphi^\omega(p)
\leq
\delta^{1/2}.
\]
Similarly, we have
\[
|\gamma_X(\gamma_Y(\varphi^\omega))(w_{X,Y}^*v_X\gamma_X(p))|
\leq
\gamma_X(\gamma_Y(\varphi^\omega))(\gamma_X(p))
=\varphi^\omega(p)
\leq
\delta^{1/2}.
\]
Thus we are done.
\end{proof}

\begin{proof}[Proof of Lemma \ref{lem:cocapprox}]
Let $\cF_n=\ovl{\cF_n}$ be an increasing sequence of finite subsets
of $\Irr(\sC)$ such that their union equals $\Irr(\sC)$.
Let $0<\delta_n<n^{-2}|\cF_n|_\sigma^{-2}$.
Let $\cK_n$ be an $(\cF_n,\delta_n)$-invariant
finite subset of $\Irr(\sC)$.
For each $n\geq1$,
we take a unitary $v_X^n$ with $X\in\sC$
as defined in (\ref{eq:defnv}).
Then by Lemma \ref{lem:vperturb},
we know that
\[
\sum_{X\in\cF_n}
d(X)^2
|v_X^n\gamma_X(v_Y^n)
-d_{X,Y}v_{X\otimes Y}^n c_{X,Y}^*
|_{\gamma_X(\gamma_Y(\varphi^\omega))}
<
8/n
\quad
\mbox{for all }
Y\in\sC_0.
\]

Applying the index selection (\cite[Lemma 5.5]{Oc})
to the inequality above
and the equalities in Lemma \ref{lem:vXcommute},
we obtain a unitary $w_X\in M^\omega$ with $X\in\sC_0$
such that
\begin{itemize}
\item
$c_{X,Y}w_{X\otimes Y}^* d_{X,Y}^* w_X\gamma_X(w_Y)=1$
for all 
$X\in\Irr(\sC)$ and $Y\in\sC_0$.

\item
$w_X\gamma_X(\psi^\omega)=\gamma_X(\psi^\omega)w_X$
for all $X\in\sC_0$ and $\psi\in M_*$.

\item
$w_Y T^\gamma=T^\beta w_X$ for all $X,Y\in\sC_0$
and $T\in\sC(X,Y)$.
\end{itemize}

Readers should be careful
because there is an unclear point in the proof of \cite[Lemma 5.5]{Oc}.
However,
we can add the condition of
the equality presented in Lemma \ref{lem:vXcommute} (1)
to his proof,
and
we see the index selection of $v_X^n$'s indeed gives
a unitary which is a multiplier of $\mathcal{T}^\omega(M)$.
Now we will show
\begin{equation}
\label{eq:wXYC0}
w_X\gamma_X(w_Y)=d_{X,Y}w_{X\otimes Y}c_{X\otimes Y}^*
\quad
\mbox{for all }
X,Y\in\sC_0.
\end{equation}
Let $X,Y\in\sC_0$.
Then for $Z\in\Irr(\sC)$ and $S\in \sC(Z,X)$,
we have
\begin{align*}
d_{X,Y}w_{X\otimes Y}c_{X\otimes Y}^*S^\gamma
&=
d_{X,Y}w_{X\otimes Y}[S\otimes 1_Y]^\gamma c_{Z\otimes Y}^*
\\
&=
d_{X,Y}[S\otimes1_Y]^\beta w_{Z\otimes Y}c_{Z\otimes Y}^*
\\
&=
S^\beta d_{Z,Y}w_{Z\otimes Y}c_{Z\otimes Y}^*
\\
&=
S^\beta w_Z\gamma_Z(w_Y)
\\
&=
w_X\gamma_X(w_Y)S^\gamma.
\end{align*}
This shows the equality (\ref{eq:wXYC0}).
Next we will define $w_X$ for a general $X\in\sC$.
For $X\in\sC$,
we take $X_0\in\sC_0$ and a unitary $S\in\sC(X_0,X)$.
Then we set $w_X:=S^\gamma w_{X_0} S^{\gamma*}$.
This does not depend on $X_0$ or $S$.
Indeed, take another $X_1\in\sC_0$ and unitary $T\in\sC(X_1,X)$.
Since $T^*S\in\sC(X_0,X_1)$,
we have
\[
S^\gamma w_{X_0} S^{\gamma*}
=
T^\gamma [T^*S]^\gamma
w_{X_0} S^{\gamma*}
=
T^\gamma 
w_{X_1} [T^*S]^\gamma
S^{\gamma*}
=
T^\gamma w_{X_1} T^{\gamma*}.
\]
We will check (\ref{eq:wXYC0})
for $X,Y\in\sC$.
Take $X_0,Y_0\in\sC_0$
and unitaries $S\in\sC(X_0,X)$
and $T\in\sC(Y_0,Y)$.
Then we have
\begin{align*}
w_X\gamma_X(w_Y)
&=
S^\gamma w_{X_0} S^{\gamma*}
\gamma_X(T^\gamma w_{Y_0} T^{\gamma*})
\\
&=
S^\gamma w_{X_0} 
\gamma_{X_0}(T^\gamma w_{Y_0} T^{\gamma*})
S^{\gamma*}
\\
&=
S^\beta
\beta_{X_0}(T^\beta)
w_{X_0}\gamma_{X_0}(w_{Y_0})
\gamma_{X_0}( T^{\gamma*})
S^{\gamma*}
\\
&=
S^\beta
\beta_{X_0}(T^\beta)
d_{X_0,Y_0}
w_{X_0\otimes Y_0}c_{X_0,Y_0}^*
\gamma_{X_0}( T^{\gamma*})
S^{\gamma*}
\\
&=
d_{X,Y}
[S\otimes T]^\beta
w_{X_0\otimes Y_0}
[S^*\otimes T^*]^\gamma
c_{X,Y}
\\
&=
d_{X,Y}w_{X\otimes Y}c_{X,Y}.
\end{align*}

Recall the unitary $u_X$ which perturbs $(\alpha,c^\alpha)$
to $(\gamma,c)$.
Setting $\nu_X:=w_Xu_X$,
we are done.
\end{proof}
 
\subsection{On the first cohomology vanishing type result}

Let $(\alpha,c^\alpha)$ and $(\beta,c^\beta)$
be centrally free cocycle actions
of an amenable rigid C$^*$-tensor category
$\sC$ on a properly infinite von Neumann algebra $M$
with separable predual.
Suppose $\alpha$ has a faithful normal state $\varphi$ on $M$
such that $\varphi$ is $\alpha$-invariant
on $Z(M)$ as before.
Also suppose we have unitaries $w_X\in M^\omega$ with $X\in\sC$
which are satisfying the following properties:
for all $X,Y\in\sC$,
\begin{itemize}
\item
$w_X\alpha_X(\psi^\omega)w_X^*=\beta_X(\psi^\omega)$
for all $\psi\in M_*$.

\item
$w_X\alpha_X(w_Y)c_{X,Y}^\alpha w_{X\otimes Y}^*
=c_{X,Y}^\beta$.

\item
$w_YT^\alpha=T^\beta w_X$ for all $T\in\sC(X,Y)$.
\end{itemize}

In general,
$T^\alpha$ may not equal $T^\beta$,
and we will introduce the following unitary:
\[
\theta_X^{\beta,\alpha}
:=
\sum_{Z\in\Irr(\sC)}
\sum_{T\in\ONB(Z,X)}
T^\beta T^{\alpha*}
\quad
\mbox{for }
X\in\sC.
\]
Note that $\theta_X^{\beta,\alpha}$
is close to 1 in the strong$*$ topology
if and only if
each morphism $T^\alpha$ is to $T^\beta$.

Let $\cF=\ovl{\cF}$ be a finite subset of $\Irr(\sC)$
and $\delta>0$.
Let $\cK$ be an $(\cF,\delta)$-invariant finite subset
of $\Irr(\sC)$.
Let $Q$ be an $\alpha$-invariant
countably generated von Neumann subalgebra
of $M^\omega$ which contains $M$.
Let $E^\alpha=(E_X^\alpha)_X$ with $X\in\sC$
be a Rohlin tower
with respect to $(\alpha,c^\alpha),\cF,\cK,\delta$ and $Q$
as in Theorem \ref{thm:Rohlin-tensor}.
Then we will introduce the following Shapiro type element:
\[
\mu
:=\sum_{Z\in\Irr(\sC)}
d(Y)^2 \ovl{R}_Z^{\beta*}c_{Z,\ovl{Z}}^{\beta*}
w_Z E_Z^\alpha
c_{Z,\ovl{Z}}^\alpha \ovl{R}_Z^\alpha
+p,
\]
where
and $p:=1-\sum_{Z\in\cK}
d(Z)^2\ovl{R}_Z^{\alpha*}c_{Z,\ovl{Z}}^{\alpha*}
E_Z^\alpha
c_{Z,\ovl{Z}}^\alpha\ovl{R}_Z^\alpha$
that is a projection in $Q'\cap M_\omega$.
Note that $\varphi^\omega(p)\leq\delta^{1/2}$.
As explained in Section \ref{subsect:slight},
we can construct a Rohlin tower $E^\gamma$ of
$(\gamma,c^\gamma):=(\alpha^w,c^{\alpha^w})$
by setting
$E^\gamma:=(E_X^\gamma)_X$
with $E_X^\gamma:=w_X E_X^\alpha w_Y^*$ for $X\in\sC$.
Then it turns out from Remark \ref{rem:Rohlin-tensor}
(\ref{item:Roh-orthogonal}) for both
$(\alpha,c^\alpha)$ and $(\gamma,c^\gamma)$
that $\mu$ is a unitary.
We set $\mu_0:=\mu-p$.

\begin{lem}
\label{lem:wgadec}
Let $\mu$ be as above.
Then the following inequalities hold:
\begin{enumerate}
\item
\begin{align*}
&\sum_{X\in\cF}
d(X)^2|w_X\alpha_X(\mu)-\mu|_{\alpha_X(\varphi^\omega)}
\\
&\leq
8\delta^{1/2}|\cF|_\sigma
+
\sum_{X\in\cF}\sum_{Z\in\cK}
d(X)^2d(Z)^2
\|c_{\ovl{Z},X}^\beta
\theta_{\ovl{Z}\otimes X}^{\beta,\alpha}
c_{\ovl{Z},X}^{\alpha*}
-1\|_{\alpha_{\ovl{Z}}(\alpha_X(\varphi^\omega))}.
\end{align*}

\item
For all $\psi\in M_*$,
\[
\|\mu\psi^\omega-\psi^\omega\mu\|
\leq
\sum_{Y\in\cK}
d(Y)^4
\|\alpha_{\ovl{Y}}(\psi^\omega)-\beta_{\ovl{Y}}(\psi^\omega)\|.
\]
\end{enumerate}
\end{lem}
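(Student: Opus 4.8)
The plan is to establish both inequalities by explicit computations in $M^\omega$, closely parallel to the proof of Lemma~\ref{lem:vperturb}, but with the one-variable Shapiro element $\mu$ in place of the family $(v_X)_X$. Write $\mu=\mu_0+p$, where $p\in Q'\cap M_\omega$ with $\varphi^\omega(p)\le\delta^{1/2}$, and recall that $\mu_0$ and $p$ are supported by orthogonal projections. Put $\gamma:=\alpha^w$, so that $c^\gamma=c^\beta$, $T^\gamma=T^\beta$, and $\gamma$ agrees with $\beta$ on $M$ and on normal functionals, as in Section~\ref{subsect:approx}; recall also that $E^\gamma:=(w_XE_X^\alpha w_X^*)_X$ is a Rohlin tower for $(\gamma,c^\gamma)$ by Lemma~\ref{lem:Rohlin-tensor-gamma}. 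Throughout I use freely the orthogonality, resonance and approximate-equivariance properties of Theorem~\ref{thm:Rohlin-tensor} for both $(\alpha,c^\alpha)$ and $(\gamma,c^\gamma)$, together with the $\theta^\alpha$-invariance of $\varphi$, which makes $\varphi^\omega$ tracial on the algebras ${}_{\alpha_X}M_\omega$ and lets one slide $\tau^\omega$-expectations through $\varphi$.

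For (1), I would apply $\alpha_X$ with $X\in\cF$ to $\mu_0$ and use $w_X\alpha_X(\,\cdot\,)w_X^*=\gamma_X(\,\cdot\,)$ to bring in $\gamma_X(E_Z^\gamma)$; splitting $\gamma_X(E_Z^\gamma)=c_{X,Z}^\beta E_{X\otimes Z}^\gamma c_{X,Z}^{\beta*}+\Delta_{X,Z}$ with $\Delta_{X,Z}:=\gamma_X(E_Z^\gamma)-c_{X,Z}^\beta E_{X\otimes Z}^\gamma c_{X,Z}^{\beta*}$ decomposes $w_X\alpha_X(\mu_0)$ into a ``main part'' and a ``$\Delta$-part''. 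Expanding the main part with the intertwining identities $w_X\alpha_X(w_Y)c_{X,Y}^\alpha w_{X\otimes Y}^*=c_{X,Y}^\beta$ and $w_YT^\alpha=T^\beta w_X$, then collapsing it using the resonance property, the orthogonality relations and the left-invariance formula (\ref{eq:leftinv}) exactly as in Lemma~\ref{lem:vperturb}, the main part telescopes back to $\mu_0(1-p)$; the only residue is that the orthonormal-base manipulations force certain $\alpha$-morphisms to be read as $\beta$-morphisms, and collecting these corrections produces precisely the unitaries $c_{\ovl Z,X}^\beta\theta_{\ovl Z\otimes X}^{\beta,\alpha}c_{\ovl Z,X}^{\alpha*}$, whence the second summand of the asserted bound. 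Passing to a polar decomposition and applying Lemma~\ref{lem:gaphiDe} to $(\gamma,c^\gamma)$ (with $\psi=\varphi$ and the elements $\Delta_{X,Z}$) bounds the $\Delta$-part by $\sum_{X\in\cF}\sum_Z d(X)^2d(Z)^2|\Delta_{X,Z}|_{\gamma_X(\gamma_Z(\varphi^\omega))}\le 6\delta^{1/2}|\cF|_\sigma$ via Theorem~\ref{thm:Rohlin-tensor}~(\ref{item:Roh-equivariance}); the two extra terms produced when restoring $p$ are each at most $\varphi^\omega(p)\,|\cF|_\sigma\le\delta^{1/2}|\cF|_\sigma$, since $p\in M_\omega$ commutes with the relevant $\alpha_X(\varphi^\omega)$. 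Together this is the $8\delta^{1/2}|\cF|_\sigma$ term.

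For (2), I would first observe that $p\in Q'\cap M_\omega\subset M'\cap M^\omega$ commutes with $\psi^\omega$ for every $\psi\in M_*$ by Lemma~\ref{lem:ultra-functional}, so $\|\mu\psi^\omega-\psi^\omega\mu\|=\|\mu_0\psi^\omega-\psi^\omega\mu_0\|\le\sum_{Z\in\cK}\|a_Z\psi^\omega-\psi^\omega a_Z\|$ with $a_Z:=d(Z)^2\ovl R_Z^{\beta*}c_{Z,\ovl Z}^{\beta*}w_ZE_Z^\alpha c_{Z,\ovl Z}^\alpha\ovl R_Z^\alpha$. For a fixed $Z\in\cK$ I would commute $\psi^\omega$ through $a_Z$: on the $\alpha$-side use that $c_{Z,\ovl Z}^\alpha\ovl R_Z^\alpha\in M$, that $E_Z^\alpha$ commutes with $\alpha_Z(Q)\supset\alpha_Z(\alpha_{\ovl Z}(M))$, and the module identity $c_{Z,\ovl Z}^\alpha\ovl R_Z^\alpha y=\alpha_Z(\alpha_{\ovl Z}(y))c_{Z,\ovl Z}^\alpha\ovl R_Z^\alpha$; then pass $\psi^\omega$ past $w_Z$, which turns $\alpha_Z$-conjugation into $\beta_Z$-conjugation ($w_Z\alpha_Z(\psi^\omega)w_Z^*=\beta_Z(\psi^\omega)$), and finally past $c_{Z,\ovl Z}^{\beta*}\ovl R_Z^{\beta*}$ via the $\beta$-analogue. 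The outer $\alpha_Z/\beta_Z$-conjugations cancel, but $w_Z$ leaves the inner $\ovl Z$-conjugation uncorrected, so the discrepancy collapses to $\alpha_{\ovl Z}(\psi^\omega)-\beta_{\ovl Z}(\psi^\omega)$, while the scalar normalization $\phi_Z^\bullet(c_{Z,\ovl Z}^\bullet\ovl R_Z^\bullet\ovl R_Z^{\bullet*}c_{Z,\ovl Z}^{\bullet*})=d(Z)^{-2}$ combined with the prefactor $d(Z)^2$ produces the coefficient $d(Z)^4$. Summing over $Z\in\cK$ gives the claim.

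I expect the main obstacle to be the bookkeeping in part (1): the resonance property and the orthonormal-base rewriting are formulated with $c^\alpha$ and $\alpha$-morphisms on one factor and with $c^\beta$ and $\beta$-morphisms on the other, so one must insert the correction unitaries $\theta^{\beta,\alpha}_U$ (for the relevant objects $U$) at exactly the right places and check that all of them collapse to the single family $c_{\ovl Z,X}^\beta\theta_{\ovl Z\otimes X}^{\beta,\alpha}c_{\ovl Z,X}^{\alpha*}$ appearing in the statement; this is routine but lengthy, just as in the proof of Lemma~\ref{lem:vperturb}. Everything else, namely the two $p$-estimates, the appeal to Lemma~\ref{lem:gaphiDe}, and the shorter computation for (2), is straightforward.
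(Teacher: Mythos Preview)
Your proposal is correct and follows the paper's approach. The decomposition into a $\Delta$-part, a $\theta^{\beta,\alpha}$-correction, and two $p$-terms, with the $\Delta$-part estimated via Lemma~\ref{lem:gaphiDe} and Theorem~\ref{thm:Rohlin-tensor}~(\ref{item:Roh-equivariance}), is exactly what the paper does; your treatment of (2) is likewise the same identity the paper records in one line.

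One small correction: the telescoping of the ``main part'' back to $\mu_0$ does \emph{not} use the resonance property here, unlike in Lemma~\ref{lem:vperturb}. In that lemma, resonance is needed because one multiplies two Shapiro-type elements and must collapse a product $E_U\cdot(\ldots)\cdot E_V$. In the present lemma there is only a single $\mu$, so after expanding $E_{X\otimes Y}^\alpha$ into irreducibles one applies the two Frobenius maps $F_r$, $F_\ell$ directly and then sums via the left-invariance formula~(\ref{eq:leftinv}); this already produces $\mu_0$ together with the residual factor $\alpha_Z(c_{\ovl Z,X}^\beta\theta_{\ovl Z\otimes X}^{\beta,\alpha}c_{\ovl Z,X}^{\alpha*})$. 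So drop the reference to resonance and the analogy to Lemma~\ref{lem:vperturb} is otherwise apt. Note also that whether you phrase the splitting with $\Delta_{X,Z}^\gamma:=\gamma_X(E_Z^\gamma)-c_{X,Z}^\beta E_{X\otimes Z}^\gamma c_{X,Z}^{\beta*}$ (your choice) or with $\Delta_{X,Z}^\alpha:=\alpha_X(E_Z^\alpha)-c_{X,Z}^\alpha E_{X\otimes Z}^\alpha c_{X,Z}^{\alpha*}$ (the paper's choice) is immaterial: the two are unitarily conjugate and have the same trace norm, as observed in the proof of Lemma~\ref{lem:Rohlin-tensor-gamma}.
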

\begin{proof}
We will write $c_{X,Y}:=c_{X,Y}^\alpha$
and $d_{X,Y}:=c_{X,Y}^\beta$ for simplicity.
For $X\in\cF$, we have
\begin{align*}
w_X\alpha_X(\mu_0)
&=
\sum_{Y\in\Irr(\sC)}
d(Y)^2 
w_X\alpha_X(\ovl{R}_Y^\beta d_{Y,\ovl{Y}}^*)
\cdot
\alpha_X(w_Y E_Y^\alpha)
\cdot
\alpha_X(c_{Y,\ovl{Y}}\ovl{R}_Y^\alpha)
\\
&=
\sum_{Y}
d(Y)^2 
\beta_X(\ovl{R}_Y^\beta d_{Y,\ovl{Y}}^*)
w_X
\cdot
\alpha_X(w_Y) \alpha_X(E_Y^\alpha)
\cdot
\alpha_X(c_{Y,\ovl{Y}}\ovl{R}_Y^\alpha).
\end{align*}

Then we have
\[
w_X\alpha_X(\mu_0)
=
\sum_{Y}
d(Y)^2 
\beta_X(\ovl{R}_Y^\beta d_{Y,\ovl{Y}}^*)
w_X
\alpha_X(w_Y) \Delta_{X,Y}
\alpha_X(c_{Y,\ovl{Y}}\ovl{R}_Y^\alpha)
\notag
+
A_X,
\]
where
\begin{align*}
&\Delta_{X,Y}
:= \alpha_X(E_Y^\alpha)-c_{X,Y}E_{X\otimes Y}^\alpha c_{X,Y}^*,
\\
&
A_X=\sum_{Y\in\Irr(\sC)}
d(Y)^2 
\beta_X(\ovl{R}_Y^\beta d_{Y,\ovl{Y}}^*)
w_X\alpha_X(w_Y)
c_{X,Y}E_{X\otimes Y}^\alpha c_{X,Y}^*
\alpha_X(c_{Y,\ovl{Y}}\ovl{R}_Y^\alpha).
\end{align*}
We will compute $A_X$ as follows:
\begin{align}
A_X
&=
\sum_{Y}
d(Y)^2 
\beta_X(\ovl{R}_Y^\beta)
d_{X,Y\otimes\ovl{Y}}d_{X\otimes Y,\ovl{Y}}^*
w_{X\otimes Y}E_{X\otimes Y}^\alpha
c_{X\otimes Y,\ovl{Y}}
c_{X,Y\otimes \ovl{Y}}^*\alpha_X(\ovl{R}_Y^\alpha)
\notag\\
&=
\sum_{Y}
d(Y)^2
[1_X\otimes \ovl{R}_Y^*]^\beta
d_{X\otimes Y,\ovl{Y}}^*
w_{X\otimes Y}
E_{X\otimes Y}^\alpha
c_{X\otimes Y,\ovl{Y}}
[1_X\otimes \ovl{R}_Y]^\alpha.
\end{align}

By irreducible decomposition of $X\otimes Y$,
we have
\begin{align*}
A_X
&=
\sum_{Y,Z,S}
d(Y)^2 
[1_X\otimes \ovl{R}_Y^*]^\beta
d_{X\otimes Y,\ovl{Y}}^*
w_{X\otimes Y}
S^\alpha E_{Z}^\alpha S^{\alpha *}
c_{X\otimes Y,\ovl{Y}}
[1_X\otimes \ovl{R}_Y]^\alpha
\notag
\\
&=
\sum_{Y,Z,S}
d(Y)^2 
[1_X\otimes \ovl{R}_Y^*]^\beta
d_{X\otimes Y,\ovl{Y}}^*
S^\beta w_Z E_{Z}^\alpha c_{Z,\ovl{Y}}
[S^*\otimes 1_{\ovl{Y}}]^\alpha
[1_X\otimes \ovl{R}_Y]^\alpha
\notag
\\
&=
\sum_{Y,Z,S}
d(Y)^2 
[(1_X\otimes \ovl{R}_Y^*)(S\otimes 1_{\ovl{Y}})]^\beta
d_{Z,\ovl{Y}}^*
w_Z E_{Z}^\alpha c_{Z,\ovl{Y}}
[(S^*\otimes 1_{\ovl{Y}})(1_X\otimes \ovl{R}_Y)]^\alpha,
\end{align*}
where the summation is taken for all
$Y,Z\in\Irr(\sC)$ and $S\in\ONB(Z,X\otimes Y)$.

Using the right Frobenius map (\ref{eq:rightFrob})
with $T:=F_r(S)$,
we have 
\[
A_X
=
\sum_{Y,Z\in\Irr(\sC)}
\sum_{T\in\ONB(X,Z\otimes\ovl{Y})}
\frac{d(Y)d(Z)}{d(X)}
T^{\beta*}
d_{Z,\ovl{Y}}^*
w_Z E_Z^\alpha c_{Z,\ovl{Y}}
T^\alpha.
\]
Next by using the left Frobenius map (\ref{eq:leftFrob})
with $U:=F_\ell(T)$,
we have
\[
T=
\frac{d(X)^{1/2}d(Z)^{1/2}}{d(Y)^{1/2}}
(1_Z\otimes U^*)(\ovl{R}_Z\otimes1_X)
\]
and
\begin{align*}
A_X
&=
\sum_{Y,Z}
\sum_{U\in\ONB(\ovl{Y},\ovl{Z}\otimes X)}
d(Z)^2
[(\ovl{R}_Z^*\otimes 1_X)(1_Z\otimes U)]^\beta
\\
&\hspace{140pt}
\cdot
d_{Z,\ovl{Y}}^*
w_Z E_Z^\alpha c_{Z,\ovl{Y}}
[(1_Z\otimes U^*)(\ovl{R}_Z\otimes 1_X)]^\alpha
\\
&=
\sum_{Y,Z,U}
d(Z)^2
[\ovl{R}_Z^*\otimes 1_X]^\beta
d_{Z,\ovl{Z}\otimes X}^*
\beta_Z(U^\beta)
w_Z E_Z^\alpha
\alpha_Z(U^{\alpha *})
c_{Z,\ovl{Z}\otimes X}
[\ovl{R}_Z\otimes 1_X]^\alpha
\\
&=
\sum_{Y,Z,U}
d(Z)^2
[\ovl{R}_Z^*\otimes 1_X]^\beta
d_{Z,\ovl{Z}\otimes X}^*
w_Z E_Z^\alpha
\alpha_Z(U^\beta U^{\alpha *})c_{Z,\ovl{Z}\otimes X}
[\ovl{R}_Z\otimes 1_X]^\alpha
\\
&=
\sum_{Z}
d(Z)^2
[\ovl{R}_Z^*\otimes 1_X]^\beta
d_{Z,\ovl{Z}\otimes X}^*
w_Z E_Z^\alpha
\alpha_Z(\theta_{\ovl{Z}\otimes X}^{\beta,\alpha})
c_{Z,\ovl{Z}\otimes X}
[\ovl{R}_Z\otimes 1_X]^\alpha
\\
&=
\sum_{Z}
d(Z)^2
[\ovl{R}_Z^*\otimes 1_X]^{\beta}
d_{Z\otimes \ovl{Z},X}^*d_{Z,\ovl{Z}}^*\beta_Z(d_{\ovl{Z},X})
w_Z E_Z^\alpha
\\
&\quad
\hspace{30pt}\cdot
\alpha_Z(\theta_{\ovl{Z}\otimes X}^{\beta,\alpha}c_{\ovl{Z},X}^*)
c_{Z,\ovl{Z}}c_{Z\otimes\ovl{Z},X}
[\ovl{R}_Z\otimes 1_X]^\alpha
\\
&=
\sum_{Z}
d(Z)^2
\ovl{R}_Z^{\beta *}d_{Z,\ovl{Z}}^*
w_Z E_Z^\alpha
\alpha_Z(d_{\ovl{Z},X}
\theta_{\ovl{Z}\otimes X}^{\beta,\alpha}c_{\ovl{Z},X}^*)c_{Z,\ovl{Z}}
\ovl{R}_Z^\alpha.
\end{align*}
Thus we have
\begin{align*}
w_X\alpha_X(\mu)-\mu
&=
\sum_{Y\in\cK}
d(Y)^2 
\beta_X(\ovl{R}_Y^\beta d_{Y,\ovl{Y}}^*)
w_X
\alpha_X(w_Y) \Delta_{X,Y}
\alpha_X(c_{Y,\ovl{Y}}\ovl{R}_Y^\alpha)
\\
&\quad
+
\sum_{Z\in\cK}
d(Z)^2
\ovl{R}_Z^{\beta *}d_{Z,\ovl{Z}}^*
w_Z E_Z^\alpha
\alpha_Z(d_{\ovl{Z},X}\theta_{\ovl{Z}\otimes X}^{\beta,\alpha}c_{\ovl{Z},X}^*-1)c_{Z,\ovl{Z}}
\ovl{R}_Z^\alpha
\\
&\quad
+
w_X\alpha_X(p)-p.
\end{align*}

Let $w_X\alpha_X(\mu)-\mu=u_X|w_X\alpha_X(\mu)-\mu|$
be the polar decomposition.
Then
\begin{align}
&|w_X\alpha_X(\mu)-\mu|_{\alpha_X(\varphi^\omega)}
\notag
\\
&=
\sum_{Y\in\cK}
d(Y)^2 
\alpha_X(\varphi^\omega)
\big{(}
u_X^*
\beta_X(\ovl{R}_Y^\beta d_{Y,\ovl{Y}}^*)
w_X
\alpha_X(w_Y) \Delta_{X,Y}
\alpha_X(c_{Y,\ovl{Y}}\ovl{R}_Y^\alpha)
\big{)}
\label{eq:wgawDe}
\\
&\quad
+
\sum_{Z\in\cK}
d(Z)^2
\alpha_X(\varphi^\omega)
\big{(}
u_X^*
\ovl{R}_Z^{\beta *}d_{Z,\ovl{Z}}^*
w_Z E_Z^\alpha
\alpha_Z(d_{\ovl{Z},X}\theta_{\ovl{Z}\otimes X}^{\beta,\alpha}c_{\ovl{Z},X}^*-1)c_{Z,\ovl{Z}}
\ovl{R}_Z^\alpha
\big{)}
\label{eq:wEga}
\\
&\quad
+
\alpha_X(\varphi^\omega)(u_X^*(w_X\alpha_X(p)-p)).
\label{eq:gaphip}
\end{align}
From Lemma \ref{lem:gaphiDe},
we obtain
\[
|(\ref{eq:wgawDe})|
\leq
\sum_{Y\in\cK}
d(Y)^2
|\Delta_{X,Y}|_{\alpha_X(\alpha_Y(\varphi^\omega))}.
\]

We will roughly estimate (\ref{eq:wEga})
as follows:
\begin{align*}
|(\ref{eq:wEga})|
&\leq
\sum_{Z\in\cK}
d(Z)^2
\alpha_X(\varphi^\omega)
\big{(}
\ovl{R}_Z^{\alpha*}
c_{Z,\ovl{Z}}
\alpha_Z(|d_{\ovl{Z},X}\theta_{\ovl{Z}\otimes X}^{\beta,\alpha}c_{\ovl{Z},X}^*-1|^2)c_{Z,\ovl{Z}}
\ovl{R}_Z^\alpha
\big{)}^{1/2}
\\
&=
\sum_{Z\in\cK}
d(Z)^2
\alpha_{\ovl{Z}}(\alpha_X(\varphi^\omega))
(|d_{\ovl{Z},X}\theta_{\ovl{Z}\otimes X}^{\beta,\alpha}
c_{\ovl{Z},X}^*-1|^2)^{1/2}.
\end{align*}

Since $\alpha_X(p)$ and $p$ commutes with $\alpha_X(\varphi^\omega)$,
we have
\[
|(\ref{eq:gaphip})|
\leq
\alpha_X(\varphi^\omega)(\alpha_X(p))
+
\alpha_X(\varphi^\omega)(p)
=2\varphi^\omega(p)
\leq
2\delta^{1/2}.
\]
Hence we obtain the inequality of the statement of this lemma.

(2).
The following equality yields the inequality (2):
\[
\mu\psi^\omega-\psi^\omega\mu
=
\sum_{Y\in\Irr(\sC)}
d(Y)^4 \ovl{R}_Y^{\beta*}
d_{Y,\ovl{Y}}^{*}
\beta_Z(\alpha_{\ovl{Y}}(\psi^\omega)
-\beta_{\ovl{Y}}(\psi^\omega))
w_Y E_Y^\alpha 
c_{Y,\ovl{Y}}^\alpha \ovl{R}_Y^\alpha.
\]
\end{proof}

Taking a unitary representing sequence
of $\mu$ introduced in the previous lemma,
we obtain the following result
by Lemma \ref{lem:ultra-functional}.

\begin{lem}
\label{lem:app1cohvan}
Let $(\alpha,c^\alpha)$ be a centrally free cocycle action
of an amenable rigid C$^*$-tensor category $\sC$
on a properly infinite von Neumann algebra $M$ with separable predual.
Let $\varphi$ be a faithful normal state on $M$
being $\alpha$-invariant on $Z(M)$.
Let $\cF=\ovl{\cF}\subset\Irr(\sC)$ be a finite subset
and $\delta,\varepsilon>0$.
Let $\cK$ be an $(\cF,\delta)$-invariant finite subset
of $\Irr(\sC)$.
Let $u_X\in M$ with $X\in\sC$ be unitaries.
Let $(\beta,c^\beta)$ be the perturbed cocycle action
of $(\alpha,c^\alpha)$ by $u$.
Suppose
the following inequalities hold:
\[
\|c_{\ovl{Y},X}^\beta
\theta_{\ovl{Y}\otimes X}^{\beta,\alpha}
c_{\ovl{Y},X}^{\alpha*}-1\|_{\alpha_{\ovl{Y}}(\alpha_X(\varphi))}
<\varepsilon
\quad
\mbox{for all }
X\in\cF,\ Y\in\cK,
\]
and
\[
\|\beta_{\ovl{Y}}(\psi)
-\alpha_{\ovl{Y}}(\psi)\|<\varepsilon
\quad
\mbox{for all }
Y\in\cK,
\ \psi\in \Psi.
\]
Then for any finite subset $\Psi\subset M_*$,
there exists a unitary $\nu\in M$ such that
\[
\sum_{X\in\cF}
d(X)^2
|u_X\alpha_X(\nu)-\nu|_{\alpha_X(\varphi)}
<
8\delta^{1/2}|\cF|_\sigma
+
\varepsilon|\cF|_\sigma|\cK|_\sigma
\]
and
\[
\|\nu\psi-\psi\nu\|
<
\varepsilon
\sum_{Y\in\cK}
d(Y)^4
\quad
\mbox{for all }
\psi\in\Psi.
\]
\end{lem}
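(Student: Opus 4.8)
The plan is to specialize the construction of the present subsection to the case $w_X:=u_X$, viewed as constant sequences in $M^\omega$, and then descend the estimates of Lemma \ref{lem:wgadec} from $M^\omega$ to $M$ via a representing sequence. With this choice of $w$, the unitary perturbation $(\gamma,c^\gamma)=(\alpha^w,c^{\alpha^w})$ is literally $(\beta,c^\beta)$, and the three compatibility relations demanded in the setup before Lemma \ref{lem:wgadec} — namely $w_X\alpha_X(\psi^\omega)w_X^*=\beta_X(\psi^\omega)$, $w_X\alpha_X(w_Y)c^\alpha_{X,Y}w_{X\otimes Y}^*=c^\beta_{X,Y}$, and $w_YT^\alpha=T^\beta w_X$ — hold by the very definition of the perturbed cocycle action. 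So first I would fix a countably generated $\alpha$-invariant von Neumann subalgebra $Q\subset M^\omega$ containing $M$ (admissible in the sense required by Theorem \ref{thm:Rohlin-tensor}, e.g.\ the one generated by $M$ and all $\alpha_Z(M)$ with $Z\in\sC_0$), apply Theorem \ref{thm:Rohlin-tensor} to obtain a Rohlin tower $E^\alpha=(E_X^\alpha)_X$ along with $\cK$ for the data $(\alpha,c^\alpha),\cF,\cK,\delta,Q$, and form the Shapiro-type element $\mu\in M^\omega$ defined before Lemma \ref{lem:wgadec}; it is a unitary by Remark \ref{rem:Rohlin-tensor} applied to both $(\alpha,c^\alpha)$ and $(\beta,c^\beta)$.

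Next I would feed the two hypotheses of the lemma into Lemma \ref{lem:wgadec}. Since each element $c^\beta_{\ovl{Y},X}\theta^{\beta,\alpha}_{\ovl{Y}\otimes X}c^{\alpha*}_{\ovl{Y},X}-1$ (with $\theta^{\beta,\alpha}_{X}:=\sum_{Z}\sum_{T\in\ONB(Z,X)}T^\beta T^{\alpha*}$) lies in $M$, its $\alpha_{\ovl{Y}}(\alpha_X(\varphi^\omega))$-seminorm equals its $\alpha_{\ovl{Y}}(\alpha_X(\varphi))$-seminorm, which is $<\varepsilon$ by the first hypothesis; hence the error term in Lemma \ref{lem:wgadec}(1) is at most $\sum_{X\in\cF}\sum_{Y\in\cK}d(X)^2d(Y)^2\varepsilon=\varepsilon|\cF|_\sigma|\cK|_\sigma$, giving
\[
\sum_{X\in\cF}d(X)^2\,|u_X\alpha_X(\mu)-\mu|_{\alpha_X(\varphi^\omega)}<8\delta^{1/2}|\cF|_\sigma+\varepsilon|\cF|_\sigma|\cK|_\sigma .
\]
Likewise $\alpha_{\ovl{Y}}(\psi^\omega)=(\alpha_{\ovl{Y}}(\psi))^\omega$ and $\beta_{\ovl{Y}}(\psi^\omega)=(\beta_{\ovl{Y}}(\psi))^\omega$ have $\|\alpha_{\ovl{Y}}(\psi^\omega)-\beta_{\ovl{Y}}(\psi^\omega)\|=\|\alpha_{\ovl{Y}}(\psi)-\beta_{\ovl{Y}}(\psi)\|<\varepsilon$ by the second hypothesis, so Lemma \ref{lem:wgadec}(2) yields $\|\mu\psi^\omega-\psi^\omega\mu\|<\varepsilon\sum_{Y\in\cK}d(Y)^4$ for every $\psi\in\Psi$.

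Finally I would pass to $M$. Choose a representing sequence $\mu=(\nu_n)^\omega$ with each $\nu_n\in M$ a unitary, which is possible since $\mu$ is a unitary of $M^\omega$ and $M_*$ is separable. Using that the absolute value commutes with the ultrapower (the square root being $\sigma$-strongly continuous on bounded sets) together with the definition of $\varphi^\omega$, one has $|u_X\alpha_X(\mu)-\mu|_{\alpha_X(\varphi^\omega)}=\lim_{n\to\omega}|u_X\alpha_X(\nu_n)-\nu_n|_{\alpha_X(\varphi)}$, and by Lemma \ref{lem:ultra-functional} one has $\|\mu\psi^\omega-\psi^\omega\mu\|=\lim_{n\to\omega}\|\nu_n\psi-\psi\nu_n\|$. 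As the two displayed inequalities are strict and involve only a finite sum over $\cF$ (respectively a single commutator norm), for each $\psi\in\Psi$ the set of $n$ with $\sum_{X\in\cF}d(X)^2|u_X\alpha_X(\nu_n)-\nu_n|_{\alpha_X(\varphi)}<8\delta^{1/2}|\cF|_\sigma+\varepsilon|\cF|_\sigma|\cK|_\sigma$ and $\|\nu_n\psi-\psi\nu_n\|<\varepsilon\sum_{Y\in\cK}d(Y)^4$ lies in $\omega$; intersecting over the finitely many $\psi\in\Psi$ gives a nonempty such set, and any $\nu:=\nu_n$ from it works. The whole argument is essentially bookkeeping: the only non-formal inputs are the two standard facts on von Neumann algebra ultrapowers just quoted (that $|(y_n)^\omega|=(|y_n|)^\omega$ and that a unitary of $M^\omega$ admits a unitary representing sequence), so I do not expect a genuine obstacle — all the analytic work is already encapsulated in Lemma \ref{lem:wgadec}.
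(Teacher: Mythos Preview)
Your proposal is correct and follows exactly the approach of the paper, which proves the lemma in a single sentence by taking a unitary representing sequence of the Shapiro element $\mu$ from Lemma~\ref{lem:wgadec} and invoking Lemma~\ref{lem:ultra-functional}. You have simply spelled out the details the paper leaves implicit: the specialization $w_X=u_X$, the identification of the $\varphi^\omega$- and $\varphi$-seminorms on elements of $M$, and the ultrafilter selection of an index $n$ realizing the finitely many strict inequalities simultaneously.
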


\subsection{Intertwining argument}
Let us recall the cocycle conjugacy of cocycle actions
studied in \cite{Iz-near,Mas-Rob}.

\begin{defn}
Let $(\alpha,c^\alpha)$ and $(\beta,c^\beta)$
be cocycle actions of a rigid C$^*$-tensor category $\sC$
on a properly infinite von Neumann algebra $M$.
We will say that they are
\begin{itemize}
\item
\emph{cocycle conjugate}
if there exist a family of unitaries
$v_X\in M$ with $X\in\sC$ and $\theta\in\Aut(M)$
such that
\begin{itemize}
\item
$\Ad v_X\circ\alpha_X=\theta\circ\beta_X\circ\theta^{-1}$
for all $X\in\sC$,

\item
$v_X\alpha_X(v_Y)c_{X,Y}^\alpha v_{X\otimes Y}^*
=\theta(c_{X,Y}^\beta)$
for all $X,Y\in\sC$,

\item
$v_YT^\alpha=\theta(T^\beta)v_X$
for all $X,Y\in\sC$ and $T\in\sC(X,Y)$;
\end{itemize}

\item
\emph{strongly cocycle conjugate}
if we can take a family of unitaries $v_X\in M$
with $X\in \sC$ and an approximately inner automorphism
$\theta$ on $M$ which satisfy the conditions above.
\end{itemize}
\end{defn}

We will state our main result of this section.

\begin{thm}
\label{thm:classification}
Let $\sC$ be an amenable rigid C$^*$-tensor category.
Let $(\alpha,c^\alpha)$ and $(\beta,c^\beta)$
be centrally free and approximately unitarily equivalent
cocycle actions
of $\sC$ on a properly infinite von Neumann algebra $M$
with separable predual.
Suppose that
there exists an $\alpha$-invariant
faithful normal state on $Z(M)$.
Then $(\alpha,c^\alpha)$ and $(\beta,c^\beta)$ are strongly cocycle conjugate.
\end{thm}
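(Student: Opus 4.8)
The plan is to run a Bratteli--Elliott--Evans--Kishimoto intertwining argument whose engine at each stage is the approximate first-cohomology-vanishing statement of Lemma~\ref{lem:app1cohvan}, initialized by the $M^\omega$-level cocycle conjugacy furnished by Lemma~\ref{lem:cocapprox}. Fix a faithful normal state $\varphi$ on $M$ that is $\theta^\alpha$-invariant on $Z(M)$; since $\alpha_X$ and $\beta_X$ are approximately unitarily equivalent one has $\theta^\alpha=\theta^\beta$ on $Z(M)$, so $\varphi$ is $\beta$-invariant on the center as well. Choose an increasing sequence of finite symmetric subsets $\cF_1\subset\cF_2\subset\cdots$ of $\Irr(\sC)$ with union $\Irr(\sC)$, for each $n$ an $(\cF_n,\delta_n)$-invariant finite set $\cK_n\subset\Irr(\sC)$, an increasing sequence of finite sets $\Psi_1\subset\Psi_2\subset\cdots$ in $M_*$ whose union is norm dense, and sequences $\delta_n,\varepsilon_n\downarrow0$ decreasing so fast that both $\sum_n\bigl(8\delta_n^{1/2}|\cF_n|_\sigma+\varepsilon_n|\cF_n|_\sigma|\cK_n|_\sigma\bigr)$ and $\sum_n\varepsilon_n\sum_{Y\in\cK_n}d(Y)^4$ converge; these are exactly the two quantities controlled by the conclusion of Lemma~\ref{lem:app1cohvan}.

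\textbf{The inductive construction.} We build, by induction on $n$, a family of unitaries $v_X^{(n)}\in M$ ($X\in\sC$), with $v_X^{(0)}=1$, defining perturbed cocycle actions $(\alpha^{(n)},c^{(n)}):=(\alpha^{v^{(n)}},c^{\alpha,v^{(n)}})$, together with unitaries $a_n\in M$, so that the inner automorphisms $\Theta_n:=\Ad(a_n\cdots a_1)$ (with $\Theta_0=\id$) satisfy at stage $n$: the $2$-cocycle and intertwiner discrepancies between $(\alpha^{(n)},c^{(n)})$ and $\Theta_n\circ(\beta,c^\beta)\circ\Theta_n^{-1}$ — in the precise sense controlled by the hypotheses of Lemma~\ref{lem:app1cohvan}, i.e. through the unitary $\theta^{\beta,\alpha}$ comparing intertwiners — are bounded by $\varepsilon_n$ over $\cF_n\times\cK_n$, and $\alpha^{(n)}_{\ovl{Y}}$ lies within $\varepsilon_n$ of $\Theta_n\circ\beta_{\ovl{Y}}\circ\Theta_n^{-1}$ on $\Psi_n$. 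For $n=0$ this is supplied by Lemma~\ref{lem:cocapprox}, after choosing a representing sequence of the $M^\omega$-unitaries $\nu_X$ and reindexing so that the finitely many estimates needed at the first stage hold in $M$. For the step $n\to n+1$, the residual one-cocycle $u^{(n)}=(u_X^{(n)})_X$ relating $\alpha^{(n)}$ to $\Theta_n\circ\beta\circ\Theta_n^{-1}$ satisfies the hypotheses of Lemma~\ref{lem:app1cohvan} for the parameters $\cF_{n+1},\cK_{n+1},\delta_{n+1},\varepsilon_{n+1},\Psi_{n+1}$; applying that lemma yields a unitary $\nu=\nu^{(n+1)}\in M$ that nearly trivializes $u^{(n)}$ with the first error bound above and is nearly central, $\|\nu\psi-\psi\nu\|$ small for $\psi\in\Psi_{n+1}$. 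Set $a_{n+1}:=\nu^{(n+1)}$ and let $v_X^{(n+1)}$ absorb $\nu^{(n+1)}$ accordingly; the near-centrality of $\nu^{(n+1)}$ keeps the $2$-cocycle and intertwiner discrepancies at stage $n+1$ within $\varepsilon_{n+1}$, closing the induction.

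\textbf{Passing to the limit, and the main obstacle.} The first estimate of Lemma~\ref{lem:app1cohvan} makes $\sum_n\|v_X^{(n+1)}-v_X^{(n)}\|$ summable for each $X\in\sC$, so $v_X:=\lim_n v_X^{(n)}\in M$ exists; the near-centrality estimate makes the $u$-topology distances of $a_n$ from $1$ summable, so $\Theta_n$ converges in the $u$-topology to some $\theta\in\oInt(M)$. Letting $n\to\infty$ in the two families of approximate relations and using $\cF_n\uparrow\Irr(\sC)$, the density of $\bigcup_n\Psi_n$, and the summability of the $\varepsilon_n$, one obtains $\Ad v_X\circ\alpha_X=\theta\circ\beta_X\circ\theta^{-1}$, $v_X\alpha_X(v_Y)c_{X,Y}^\alpha v_{X\otimes Y}^{*}=\theta(c_{X,Y}^\beta)$, and $v_YT^\alpha=\theta(T^\beta)v_X$ for $T\in\sC(X,Y)$; since $\theta$ is approximately inner, this exhibits $(\alpha,c^\alpha)$ and $(\beta,c^\beta)$ as strongly cocycle conjugate. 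The main obstacle is the bookkeeping inside the induction step: one must check that perturbing $\alpha^{(n)}$ by the near-central unitary $\nu^{(n+1)}$ — which is forced upon us in order to kill the residual $1$-cocycle — leaves the $2$-cocycle discrepancy and the intertwiner discrepancy $\theta^{\beta,\cdot}$ small enough to re-enter Lemma~\ref{lem:app1cohvan} at the next stage. This is precisely where the Rohlin property (Theorem~\ref{thm:Rohlin-tensor}), its perturbed version (Lemma~\ref{lem:Rohlin-tensor-gamma}), and the Cauchy--Schwarz-type estimate of Lemma~\ref{lem:gaphiDe}, all packaged inside Lemma~\ref{lem:app1cohvan}, do the real work; the remainder of the assembly is routine.
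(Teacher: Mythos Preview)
Your proposal has a genuine structural gap. Lemma~\ref{lem:app1cohvan} applies only when $(\beta,c^\beta)$ is an \emph{exact} unitary perturbation of $(\alpha,c^\alpha)$ by unitaries $u_X\in M$; its output is a single unitary $\nu$ with $u_X\alpha_X(\nu)\approx\nu$. In your inductive step you invoke it for ``the residual one-cocycle $u^{(n)}$ relating $\alpha^{(n)}$ to $\Theta_n\circ\beta\circ\Theta_n^{-1}$'', but no such family of unitaries in $M$ exists: after stage $n$ the two cocycle actions are only \emph{approximately} equal (in the $2$-cocycle, intertwiner, and predual senses), not exact perturbations of one another. The paper resolves this by re-applying Lemma~\ref{lem:cocapprox} at \emph{every} step---not just at $n=0$---to produce exact $M^\omega$-perturbing unitaries $\lambda_X$ between $\gamma^{n-1}$ and $\gamma^n$, and only then extracting representatives $v^{n+1}_X\in M$ to which Lemma~\ref{lem:app1cohvan} (applied to the exact perturbation $\gamma^{n-1}\mapsto\Ad v^{n+1}\circ\gamma^{n-1}$) legitimately yields the near-central correcting unitary $w_{n+1}$.

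A second, related problem is your convergence claim. You assert that the first estimate of Lemma~\ref{lem:app1cohvan} makes $\sum_n\|v_X^{(n+1)}-v_X^{(n)}\|$ summable in operator norm, but that lemma only bounds $|u_X\alpha_X(\nu)-\nu|_{\alpha_X(\varphi)}$, an $L^1$-type seminorm; no uniform-norm control is available. This is why the paper runs a genuine \emph{two-sided} intertwining: with $(\gamma^{-1},c^{-1})=(\alpha,c^\alpha)$ and $(\gamma^{0},c^{0})=(\beta,c^\beta)$, the odd and even stages perturb alternately, producing two inner-automorphism sequences $\theta_{2m+1}$, $\theta_{2m}$ and two unitary families $\ovl u_X^{2m+1}$, $\ovl u_X^{2m}$. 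The near-centrality of $w_n$ gives $u$-topology convergence of each $\theta$-subsequence, and the $L^1$-bounds $(n.\ref{item:sumuX})$ combined with $(n.\ref{item:wnpsi})$ force strong$^*$ (not norm) convergence of each $\ovl u_X$-subsequence; the two limits are then matched. Your one-sided scheme, in which only $\alpha$ is perturbed and only $\beta$ is conjugated, cannot close this convergence argument with the estimates at hand.
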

\begin{proof}
We put $\mathcal{S}_0:=\{\btr\}$.
Let $\mathcal{S}_n=\ovl{\mathcal{S}_n}$ with $n\geq0$
be an increasing sequence of finite subsets
of $\Irr(\sC)$ whose union equals $\Irr(\sC)$.
We can inductively construct a sequence of finite subsets
$\cF_n=\ovl{\cF_n},\cK_n\subset\Irr(\sC)$ and $\delta_n>0$
so that for $n\geq1$,
\begin{itemize}
\item
$\cF_0:=\mathcal{S}_0=:\cK_0$, $\delta_0:=3$.

\item
$\cF_n:=\cF_{n-1}\cup\cK_{n-1}\cup\ovl{\cK_{n-1}}\cup\mathcal{S}_n$.

\item
$\delta_n:=16^{-n}|\cF_n|_\sigma^{-2}$.

\item
$\cK_n$ is $(\cF_n,\delta_n)$-invariant.
\end{itemize}
Take $\varepsilon_n>0$ with $n\geq0$
such that
$\varepsilon_n<\delta_n|\cK_n|_\sigma^{-2}$
and $\varepsilon_n<\varepsilon_{n-1}<1$.

Let $\mathcal{G}_n$ be an increasing sequence of finite subsets
in $\sC_0$ whose union equals $\sC_0$.
Let $\varphi$ be a faithful normal state on $M$
such that $\varphi$ is $\alpha$-invariant on $Z(M)$.
Put $\Phi_0:=\{\varphi\}$.
Let $\Phi_n$ with $n\geq0$ be an increasing sequence of finite subsets
of $M_*$ whose union is norm-dense in $M_*$.
We set $\Psi_{-1}:=\Phi_0=:\Psi_0$,
$(\gamma^{-1},c^{-1}):=(\alpha,c^\alpha)$,
$(\gamma^0,c^0):=(\beta,c^\beta)$,
$w_{-1}:=1=:w_0$,
$\theta_{-1}:=\id_M=:\theta_0$
and
$u_X^{-1}:=1=:u_X^0$
and
$\ovl{u}_X^{-1}:=1=:\ovl{u}_X^0$
for all $X\in\sC$.
We will inductively construct the following members with $n\geq1$:

\begin{itemize}
\item
$\Psi_n$:
a finite subset of $M_*$;

\item
$(\gamma^n,c^n)$:
a centrally free cocycle action of $\sC$ on $M$;

\item
$u_X^n$:
a unitary in $M$ for $X\in\sC$;

\item
$w_n$:
a unitary in $M$;

\item
$\theta_n$: an inner automorphism on $M$;

\item
$\ovl{u}_X^n$:
a unitary in $M$ for $X\in\sC$;
\end{itemize}
such that
\begin{enumerate}
\renewcommand{\labelenumi}{$(n.\arabic{enumi})$}
\item
\label{item:Psin}
\begin{align*}
\Psi_{n}
&:=\Phi_{n}
\cup
\Psi_{n-1}
\cup
\theta_{n-1}(\Phi_n)
\\
&\hspace{20pt}
\cup \bigcup_{X\in\cF_{n}}
\{\gamma_X^{n-1}(\Phi_{n}),
\gamma_X^{n-1}(\varphi)\ovl{u}_X^{n-1},
\ovl{u}_X^{n-1}\gamma_X^{n-1}(\varphi),
\ovl{u}_X^{n-1*}\gamma_X^{n-1}(\varphi)\ovl{u}_X^{n-1}
\}.
\end{align*}

\item
\label{item:gammanun}
The unitaries $u^n$ perturb
$(\Ad w_{n}\circ\gamma^{n-2}\circ\Ad w_{n}^*,w_n c^{n-2} w_n^*)$
to
$(\gamma^{n},c^n)$.

\item
\label{item:gammanpsi}
$\|
\gamma_{\ovl{X}}^{n}(\gamma_{Y}^{n}(\psi))
-\gamma_{\ovl{X}}^{n-1}(\gamma_{Y}^{n-1}(\psi))
\|<\varepsilon_{n}$
for all
$X,Y\in \cF_{n+1}$
and $\psi\in \Psi_{n}$.

\item
\label{item:Tgammanpsi}
$\|(T^{\gamma^{n}}-T^{\gamma^{n-1}})\varphi\|
+
\|\varphi\cdot (T^{\gamma^{n}}-T^{\gamma^{n-1}})\|<\varepsilon_n$
for all $T\in\sC(X,Y)$ with $X,Y\in \mathcal{G}_{n}$.

\item
\label{item:cocyclen}
$\|(c_{X,Y}^n-c_{X,Y}^{n-1})\varphi\|
+
\|\varphi\cdot(c_{X,Y}^n-c_{X,Y}^{n-1})\|<\varepsilon_n$
for all $X,Y\in \mathcal{G}_{n}$.

\item
\label{item:thetaYX}
$
\|c_{\ovl{Y},X}^n
\theta_{\ovl{Y}\otimes X}^{\gamma^n,\gamma^{n-1}}
c_{\ovl{Y},X}^{n-1*}-1
\|_{\gamma_{\ovl{Y}}^{n-1}(\gamma_X^{n-1}(\varphi))}<\varepsilon_n$
for all $X,Y\in\cF_{n+1}$.

\item
\label{item:sumuX}
$\sum_{X\in\cF_{n-1}}
|u_X^{n}-1|_{\Ad w_n\circ \gamma_X^{n-2}\circ\Ad w_n^*(\varphi)}
<
13/4^{n-1}$
with $n\geq 2$.

\item
\label{item:wnpsi}
$\|w_n\psi-\psi w_n\|<2\delta_{n-1}$
for $\psi\in\Psi_{n-1}$.

\item
\label{item:thetan}
$\theta_n:=\Ad w_n\circ\theta_{n-2}$.

\item
\label{item:ovlu}
$\ovl{u}_X^n:=u_X^n w_n\ovl{u}_X^{n-2} w_n^*$
for $X\in\sC$.
\end{enumerate}

\noindent
\textbf{Step of $n=1$}.
We set $\Psi_1$ as in (1.\ref{item:Psin}).
Using Lemma \ref{lem:cocapprox},
we obtain unitaries $\lambda_X\in M^\omega$ with $X\in\sC$
such that
for all $X,Y\in\sC$:
\begin{itemize}
\item
$\lambda_X\gamma_X^{-1}(\psi^\omega)\lambda_X^*
=\gamma_X^0(\psi^\omega)$
for all $\psi\in M_*$.

\item
$\lambda_X\gamma_X^{-1}(\lambda_Y)c_{X,Y}^{-1} 
\lambda_{X\otimes Y}^*
=c_{X,Y}^0$.

\item
$\lambda_Y T^{\gamma^{-1}}=T^{\gamma^0} \lambda_X$ for all $T\in\sC(X,Y)$.
\end{itemize}

For each $X\in\sC$,
we take a representing sequence of $\lambda_X=(\lambda_X^m)^\omega$
with each $\lambda_X^m$ being a unitary in $M$.
Then we can check the conditions
(1.\ref{item:gammanpsi}),
(1.\ref{item:Tgammanpsi}), (1.\ref{item:cocyclen})
and (1.\ref{item:thetaYX}) hold
for sufficiently large $m_0$
by putting $\gamma_X^1:=\Ad \lambda_X^m\circ\gamma_X^{-1}$
for $X\in\sC$.
Let $u^1:=\lambda^{m_0}$.
Put $w_1:=1$.
The conditions (1.\ref{item:gammanun}) and (1.\ref{item:wnpsi}) are trivial.
We set $\theta_1:=\id$ and $\ovl{u}_X^1:=u_X^1$ for $X\in\sC$,
and then (1.\ref{item:thetan}) and (1.\ref{item:ovlu}) hold.

\noindent
\textbf{Step of $n+1$}.
Suppose our construction is done up to the $n^{\rm th}$-step.
We set $\Psi_{n+1}$ as in ($n+1$.\ref{item:Psin}).
Again by Lemma \ref{lem:cocapprox},
we obtain unitaries $\lambda_X\in M^\omega$ with $X\in\sC$
such that
for all $X,Y\in\sC$:
\begin{itemize}
\item
$\lambda_X\gamma_X^{n-1}(\psi^\omega)\lambda_X^*
=\gamma_X^n(\psi^\omega)$
for all $\psi\in M_*$.

\item
$\lambda_X\gamma_X^{n-1}(\lambda_Y)c_{X,Y}^{n-1} 
\lambda_{X\otimes Y}^*
=c_{X,Y}^n$.

\item
$\lambda_Y T^{\gamma^{n-1}}=T^{\gamma^n} \lambda_X$ for all $T\in\sC(X,Y)$.
\end{itemize}
Thinking of a representing sequence of $\lambda$
as above,
we obtain unitaries $v_X^{n+1}\in M$ with $X\in \sC$
such that
we have the three conditions
($n+1$,\ref{item:gammanpsi}), ($n+1$,\ref{item:Tgammanpsi}),
($n+1$.\ref{item:cocyclen})
and ($n+1$,\ref{item:thetaYX})
by putting $\gamma^{n+1}:=\Ad v^{n+1}\circ\gamma^{n-1}$
and $c^{n+1}:=(c^{n-1})^{v^{n+1}}$.

Let $X\in\cF_n$ and $Y\in\cK_n$.
Then $X,\ovl{Y}\in\cF_{n+1}$.
Using the equality
\[
c_{\ovl{Y},X}^{n+1}
\theta_{\ovl{Y}\otimes X}^{\gamma^{n+1},\gamma^{n-1}}c_{\ovl{Y},X}^{n-1*}
=c_{\ovl{Y},X}^{n+1}
\theta_{\ovl{Y}\otimes X}^{\gamma^{n+1},\gamma^{n}}c_{\ovl{Y},X}^{n*}
\cdot c_{\ovl{Y},X}^n
\theta_{\ovl{Y}\otimes X}^{\gamma^{n},\gamma^{n-1}}c_{\ovl{Y},X}^{n-1*},
\]
we have
\begin{align*}
&\|c_{\ovl{Y},X}^{n+1}
\theta_{\ovl{Y}\otimes X}^{\gamma^{n+1},\gamma^{n-1}}
c_{\ovl{Y}, X}^{n-1*}
-1\|_{\gamma_{\ovl{Y}}^{n-1}(\gamma_X^{n-1}(\varphi))}
\\
&\leq
\|c_{\ovl{Y}, X}^{n+1}
\theta_{\ovl{Y}\otimes X}^{\gamma^{n+1},\gamma^{n}}
c_{\ovl{Y}, X}^{n*}
(c_{\ovl{Y}, X}^n\theta_{\ovl{Y}\otimes X}^{\gamma^{n},\gamma^{n-1}}
c_{\ovl{Y}, X}^{n-1*}-1)
\|_{\gamma_{\ovl{Y}}^{n-1}(\gamma_X^{n-1}(\varphi))}
\\
&\quad
+
\|
c_{\ovl{Y}, X}^{n+1}\theta_{\ovl{Y}\otimes X}^{\gamma^{n+1},\gamma^{n}}
c_{\ovl{Y}, X}^{n*}-1
\|_{\gamma_{\ovl{Y}}^{n-1}(\gamma_X^{n-1}(\varphi))}
\\
&<
\|
c_{\ovl{Y}, X}^n\theta_{\ovl{Y}\otimes X}^{\gamma^{n},\gamma^{n-1}}
c_{\ovl{Y}, X}^{n-1*}-1
\|_{\gamma_{\ovl{Y}}^{n-1}(\gamma_X^{n-1}(\varphi))}\\
&\quad
+
2
\|\gamma_{\ovl{Y}}^{n-1}(\gamma_X^{n-1}(\varphi))
-\gamma_{\ovl{Y}}^{n}(\gamma_X^{n}(\varphi))\|^{1/2}
+
\|c_{\ovl{Y}, X}^{n+1}\theta_{\ovl{Y}\otimes X}^{\gamma^{n+1},\gamma^{n}}
c_{\ovl{Y}, X}^{n*}-1
\|_{\gamma_{\ovl{Y}}^{n}(\gamma_X^{n}(\varphi))}
\\
&<
\varepsilon_n+2\varepsilon_n^{1/2}+\varepsilon_{n+1}
\quad
\mbox{by }
(n.\ref{item:thetaYX}),
\ (n.\ref{item:gammanpsi}),\ (n+1.\ref{item:thetaYX})
\\
&<4\varepsilon_n^{1/2}.
\end{align*}

Applying Corollary \ref{lem:app1cohvan}
to $\gamma^{n-1}$ and $v^{n+1}$,
we get a unitary $w_{n+1}\in M$ such that
\begin{align}
\sum_{X\in\cF_n}
d(X)^2
|v_X^{n+1}\gamma_X^{n-1}(w_{n+1})-w_{n+1}|_{\gamma_X^{n-1}(\varphi)}
&<
8\delta_n^{1/2}|\cF_n|_\sigma
+
4\varepsilon_n^{1/2}|\cF_n|_\sigma|\cK_n|_\sigma
\notag
\\
&<12/4^{n}
\label{eq:vgawn}
\end{align}
and for $\psi\in \Psi_n$,
\begin{align*}
\|w_{n+1}\psi-\psi w_{n+1}\|
&<\sum_{Z\in\cK_n}
d(Z)^4\|\gamma_{\ovl{Z}}^{n+1}(\psi)-\gamma_{\ovl{Z}}^{n-1}(\psi)\|
\\
&\leq
\sum_{Z\in\cK_n}
d(Z)^4\|\gamma_{\ovl{Z}}^{n+1}(\psi)-\gamma_{\ovl{Z}}^{n}(\psi)\|
+
\sum_{Z\in\cK_n}
d(Z)^4\|\gamma_{\ovl{Z}}^{n}(\psi)-\gamma_{\ovl{Z}}^{n-1}(\psi)\|
\\
&<
\varepsilon_{n+1}|\cK_n|_\sigma^2
+
\varepsilon_{n}|\cK_n|_\sigma^2
\quad
\mbox{by }
(n.\ref{item:gammanpsi}),
\
(n+1,\ref{item:gammanpsi})
\\
&<2\delta_n.
\end{align*}
Thus we obtain ($n+1$,\ref{item:wnpsi}).

Put
$u_X^{n+1}:=v_X^{n+1}\gamma_X^{n-1}(w_{n+1})w_{n+1}^*$
for $X\in\sC$.
Then the condition ($n+1$,\ref{item:gammanun}) holds,
and we have
\begin{align*}
&\sum_{X\in\cF_{n}}
d(X)^2
|u_X^{n+1}
-1|_{\Ad w_{n+1} \circ \gamma_X^{n-1}\circ\Ad w_{n+1}^*(\varphi)}
\\
&=
\sum_{X\in\cF_{n}}
d(X)^2
|v_X^{n+1}\gamma_X^{n-1}(w_{n+1})
-w_{n+1}|_{\gamma_X^{n-1}(w_{n+1}^*\varphi w_{n+1})}
\\
&\leq
\sum_{X\in\cF_{n}}
d(X)^2
|v_X^{n+1}\gamma_X^{n-1}(w_{n+1})
-w_{n+1}|_{\gamma_X^{n-1}(\varphi)}
+
\sum_{X\in\cF_{n}}
2d(X)^2
\|w_{n+1}^*\varphi w_{n+1}-\varphi\|
\\
&<
12/4^{n}
+4\delta_n|\cF_n|_\sigma
\quad
\mbox{by }
(\ref{eq:vgawn}),
\
\varphi\in\Phi_n\subset \Psi_n
\\
&<
13/4^n,
\end{align*}
which shows ($n+1$,\ref{item:sumuX}).
We set $\theta_{n+1}$ and $\ovl{u}^{n+1}$
as in
($n+1$.\ref{item:thetan})
and
($n+1$.\ref{item:ovlu}).
Then our inductive construction is done.

\setcounter{clam}{0}
\begin{clam}
\label{clam:theta10}
The limits $\ovl{\theta}_{-1}:=\lim_{m\to\infty}\theta_{2m+1}$
and $\ovl{\theta}_0:=\lim_{m\to\infty}\theta_{2m}$ exist in $\Aut(M)$
with respect to the $u$-topology.
\end{clam}
\begin{proof}
It follows from ($n$,\ref{item:wnpsi}) and ($n$.\ref{item:thetan}).
\end{proof}

It is not difficult to show the following claim by induction.
Denote by $\epsilon$ the quotient map from $\Z$
onto $\Z/2\Z$ that is identified with $\{-1,0\}$.

\begin{clam}
\label{clam:gautheta}
The unitaries $\ovl{u}^n$ perturbs
the cocycle action
$(\theta_n\circ\gamma^{\epsilon(n)}\circ\theta_n^{-1},
\theta_n(c^{\epsilon(n)}))$
to
the cocycle action $(\gamma^n,c^{n})$
for $n\geq1$.
\end{clam}

\begin{clam}
\label{clam:ovthgamma}
The following inequality hold:
\[
\|\ovl{\theta}_{\epsilon(n)}\circ\gamma_X^{\epsilon(n)}
\circ\ovl{\theta}_{\epsilon(n)}^{-1}(\varphi)
-\theta_{n-2}\circ\gamma_X^{-1}\circ\theta_{n-2}^{-1}(\varphi)\|
\leq
8/16^{n-1}
\quad
\mbox{ for all }
X\in\cF_{n-1},\ n\geq1.
\]
\end{clam}
\begin{proof}[Proof of Claim \ref{clam:ovthgamma}]
We have the following for odd $n\geq1$ and $X\in\cF_{n-1}$:
\begin{align*}
&\|\theta_n\circ\gamma_X^{-1}\circ\theta_n^{-1}(\varphi)
-\theta_{n-2}\circ\gamma_X^{-1}\circ\theta_{n-2}^{-1}(\varphi)\|
\\
&\leq
\|\theta_n^{-1}(\varphi)-\theta_{n-2}^{-1}(\varphi)\|
+
\|\theta_{n}\circ\gamma_X^{-1}\circ\theta_{n-2}^{-1}(\varphi)
-\theta_{n-2}\circ\gamma_X^{-1}\circ\theta_{n-2}^{-1}(\varphi)
\|
\\
&=
\|w_n\varphi w_n^*-\varphi\|
+
\|w_n \ovl{u}_X^{n-2*}\gamma_X^{n-2}(\varphi)
\ovl{u}_X^{n-2}w_n^*
-\ovl{u}_X^{n-2*}\gamma_X^{n-2}(\varphi)
\ovl{u}_X^{n-2}\|
\\
&<
4\delta_{n-1}
\quad
\mbox{by }
(n,\ref{item:wnpsi}),
\
\varphi,\ovl{u}_X^{n-2*}\gamma_X^{n-2}(\varphi)
\ovl{u}_X^{n-2}\in\Psi_{n-1}.
\end{align*}
We obtain a similar estimate for even $n\geq2$,
and we are done.
\end{proof}

\begin{clam}
\label{clam:limhatu}
The strong$*$ limits
$\hat{u}_X^{-1}:=\lim_m \ovl{u}_X^{2m+1}$
and $\hat{u}_X^0:=\lim_m\ovl{u}_X^{2m}$
exist for all $X\in\sC_0$.
\end{clam}
\begin{proof}[Proof of Claim \ref{clam:limhatu}]
In the following,
when two functionals $\psi_1,\psi_2\in M_*$
satisfy
$\|\psi_1-\psi_2\|\leq\varepsilon$ for an $\varepsilon>0$,
we write
$\psi_1\approx_\varepsilon \psi_2$.
Then for $X\in\cF_{n-1}$,
we have
\begin{align*}
&\ovl{u}_X^n
\cdot
\theta_{n-2}\circ\gamma_X^{\epsilon(n)}\circ\theta_{n-2}^{-1}(\varphi)
\\
&=
u_X^n w_n\ovl{u}_X^{n-2}w_n^*
\cdot
\ovl{u}_X^{n-2*} \gamma_X^{n-2}(\varphi)\ovl{u}_X^{n-2}
\\
&
\approx_{2\delta_{n-1}}
u_X^n w_n\ovl{u}_X^{n-2}
\cdot
\ovl{u}_X^{n-2*} \gamma_X^{n-2}(\varphi)\ovl{u}_X^{n-2}
w_n^*
\quad
\mbox{by }
(n.\ref{item:wnpsi}),
\
\ovl{u}_X^{n-2*} \gamma_X^{n-2}(\varphi)\ovl{u}_X^{n-2}
\in\Psi_{n-1}
\\
&=
u_X^n w_n
\gamma_X^{n-2}(\varphi)\ovl{u}_X^{n-2}
w_n^*
\\
&\approx_{2\delta_{n-1}}
u_X^n w_n
\gamma_X^{n-2}(w_n^*\varphi w_n)\ovl{u}_X^{n-2}
w_n^*
\quad
\mbox{by }
(n.\ref{item:wnpsi})
\\
&\approx_{6/2^{n-1}}
w_n
\gamma_X^{n-2}(w_n^*\varphi w_n)\ovl{u}_X^{n-2}
w_n^*
\quad
\mbox{by }
(n,\ref{item:sumuX})
\\
&\approx_{2\delta_{n-1}}
w_n
\gamma_X^{n-2}(\varphi)\ovl{u}_X^{n-2}
w_n^*
\quad
\mbox{by }
(n.\ref{item:wnpsi})
\\
&\approx_{2\delta_{n-1}}
\gamma_X^{n-2}(\varphi)\ovl{u}_X^{n-2}
\quad
\mbox{by }
(n.\ref{item:wnpsi}),
\
\gamma_X^{n-2}(\varphi)\ovl{u}_X^{n-2}
\in\Psi_{n-1}
\\
&=
\ovl{u}_X^{n-2}
\cdot
\theta_{n-2}\circ\gamma_X^{\epsilon(n)}\circ\theta_{n-2}^{-1}(\varphi).
\end{align*}
Thus we have
\[
\ovl{u}_X^n
\cdot
\theta_{n-2}\circ\gamma_X^{\epsilon(n)}\circ\theta_{n-2}^{-1}(\varphi)
\approx_{14/2^{n-1}}
\ovl{u}_X^{n-2}
\cdot
\theta_{n-2}\circ\gamma_X^{\epsilon(n)}\circ\theta_{n-2}^{-1}(\varphi).
\]
From Claim \ref{clam:ovthgamma} and the inequality above,
we have the following: for all $X\in\cF_{n-1}$ and odd $n\geq1$.
\[
\ovl{u}_X^n
\ovl{\theta}_{-1}\circ\gamma_X^{-1}\circ\ovl{\theta}_{-1}^{-1}(\varphi)
\approx_{14/2^{n-1}+8/16^{n-1}}
\ovl{u}_X^{n-2}
\ovl{\theta}_{-1}\circ\gamma_X^{-1}\circ\ovl{\theta}_{-1}^{-1}(\varphi).
\]
Since
$\ovl{\theta}_{-1}\circ\gamma_X^{-1}\circ\ovl{\theta}_{-1}^{-1}(\varphi)$
is a faithful normal state on $M$,
we see $\ovl{u}_X^{2m+1}$
converges in the strong topology for all $X\in\Irr(\sC)$.
By similar estimate of
$\ovl{\theta}_{-1}
\circ\gamma_X^{-1}\circ\ovl{\theta}_{-1}^{-1}(\varphi)
\ovl{u}_X^n$,
we can show the convergence in the strong$*$ topology.
We can also show the strong$*$ convergence of $\ovl{u}_X^{2m}$
for all $X\in\Irr(\sC)$.

For a general $X\in\sC_0$,
we have
\[
\ovl{u}_X^n=\sum_{Z\in\Irr(\sC)}\sum_{T\in\ONB(Z,X)}
T^{\gamma^n}
\ovl{u}_Z^n
w_n T^{\gamma^{n-2}*}w_n^*.
\]
It follows from ($n$.\ref{item:Tgammanpsi}) and ($n$.\ref{item:wnpsi})
that each $T^{\gamma^{n}}$
has the strong$*$ limit
and $w_n$ is a central sequence.
From this we see that $\ovl{u}_X^n$ has the strong$*$ limit.
\end{proof}

Then by ($n$.\ref{item:gammanpsi}),
Claim \ref{clam:gautheta} and Claim \ref{clam:limhatu},
we have
\[
\Ad\hat{u}_X^{-1}\circ\ovl{\theta}_1\circ
\gamma_X^{-1}\circ\ovl{\theta}_1^{-1}
=
\Ad\hat{u}_X^0\circ\ovl{\theta}_0\circ\gamma_X^{0}\circ\ovl{\theta}_0^{-1}
\quad
\mbox{for all }
X\in\sC_0.
\]
The condition ($n$.\ref{item:cocyclen})
shows $c_{X,Y}^n$ converges in the strong $*$-topology
for each $X,Y\in\sC_0$,
and
we see that
$\hat{u}_X^{-1}$
and
$\hat{u}_X^{0}$
perturb the 2-cocycles
$\ovl{\theta}_{-1}(c^{-1})$
and
$\ovl{\theta}_{0}(c^{0})$
to the common 2-cocycle.
On morphisms,
using the equality
$T^{\gamma^n}=\ovl{u}_Y^n \theta_n(T^{\epsilon(n)})\ovl{u}_X^{n*}$
for all $X,Y\in\sC_0$ and $T\in\sC(X,Y)$ and $n\geq1$,
we have
\[
\hat{u}_Y^{-1} \ovl{\theta}_1(T^{\gamma^{-1}})\hat{u}_X^{-1}
=
\hat{u}_Y^0 \ovl{\theta}_0(T^{\gamma^{0}})\hat{u}_X^0.
\]
Finally, we can extend the domain of $\hat{u}^{-1}$ and $\hat{u}^0$
from $\sC_0$ to $\sC$ as in the proof of Lemma \ref{lem:cocapprox}.
\end{proof}

We will state a generalization of the preceding works
by Izumi and Masuda
\cite[Theorem 2.2]{Iz-near}
and \cite[Theorem 3.4]{Mas-Rob}
to possibly non-injective von Neumman algebras.

A unitary tensor functor $(F,L)$ from a strict C$^*$-tensor category
$\sC$ into a strict C$^*$-tensor category $\sD$
means $F\colon \sC\to\sD$ is a C$^*$-functor
between C$^*$-categories
and $L=(L_{X,Y})_{X,Y\in\sC}$ are natural unitary isomorphisms
such that $L_{X,Y}\colon F(X\otimes Y)\to F(X)\otimes F(Y)$.

\begin{cor}
\label{cor:CDclass}
Let $\sC$ and $\sD$ be amenable rigid C$^*$-tensor categories
with a unitary tensor equivalence $(F,L)$ from $\sC$ into $\sD$.
Let $(\alpha,c^\alpha)$ and $(\beta,c^\beta)$ be centrally free cocycle actions
of $\sC$ and $\sD$ on a properly infinite
von Neumann algebra $M$ with separable predual,
respectively
such that $\alpha_X$ and $\beta_{F(X)}$ are approximately unitarily equivalent
for all $X\in\sC$
and $\alpha$ has an invariant faithful normal state on $Z(M)$.
Then there exist
a family of unitaries $v=(v_X)_{X\in\sC}$ in $M$
and an approximately inner automorphism $\theta$ on $M$
such that
\begin{itemize}
\item
$\Ad v_X\circ\alpha_X=\theta\circ\beta_{F(X)}\circ\theta^{-1}$
for all $X\in \sC$.

\item
$v_X \alpha_X(v_Y) c_{X,Y}^\alpha v_{X\otimes Y}^*
=\theta(c_{F(X),F(Y)}^\beta [L_{X,Y}]^\beta)$
for all $X,Y\in\sC$.

\item
$v_Y T^\alpha=\theta(F(T)^\beta)v_X$
for all $X,Y\in\sC$ and $T\in\sC(X,Y)$.
\end{itemize}
\end{cor}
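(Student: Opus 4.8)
The plan is to reduce the statement to Theorem \ref{thm:classification} by pulling the $\sD$-cocycle action back along $(F,L)$. Concretely, I would introduce a cocycle action $(\beta',c^{\beta'})$ of $\sC$ on $M$ defined by $\beta'_X:=\beta_{F(X)}$ for $X\in\sC$, $T^{\beta'}:=F(T)^\beta$ for morphisms $T$ in $\sC$, and
\[
c^{\beta'}_{X,Y}:=c^\beta_{F(X),F(Y)}[L_{X,Y}]^\beta\qquad\text{for }X,Y\in\sC .
\]
First I would check that $(\beta',c^{\beta'})$ is indeed a cocycle action of $\sC$ on $M$, i.e.\ a unitary tensor functor $\sC\to\End(M)_0$. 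Functoriality and $*$-preservation are inherited from those of $F$ and $\beta$. Since $L_{X,Y}\in\sD(F(X\otimes Y),F(X)\otimes F(Y))$ is a unitary isomorphism, $[L_{X,Y}]^\beta$ is a unitary in $(\beta_{F(X\otimes Y)},\beta_{F(X)\otimes F(Y)})$, whence $\beta_{F(X)\otimes F(Y)}=\Ad[L_{X,Y}]^\beta\circ\beta'_{X\otimes Y}$; combining this with $\beta'_X\circ\beta'_Y=\Ad c^\beta_{F(X),F(Y)}\circ\beta_{F(X)\otimes F(Y)}$ gives $\beta'_X\circ\beta'_Y=\Ad c^{\beta'}_{X,Y}\circ\beta'_{X\otimes Y}$. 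The $2$-cocycle identity for $c^{\beta'}$ and the normalization $c^{\beta'}_{\btr,X}=1=c^{\beta'}_{X,\btr}$ then follow by combining the $2$-cocycle identity for $c^\beta$ with the coherence (hexagon) identity and the unit constraints of $(F,L)$, while the compatibility (\ref{eq:cST}) of $c^{\beta'}$ with the intertwiners $T^{\beta'}$ follows from the naturality of $L$ together with (\ref{eq:cST}) for $c^\beta$.

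Next I would verify the hypotheses of Theorem \ref{thm:classification} for $(\alpha,c^\alpha)$ and $(\beta',c^{\beta'})$. As $(F,L)$ is a unitary tensor equivalence it is fully faithful, so $X\in\Irr(\sC)$ if and only if $F(X)\in\Irr(\sD)$ and $F(X)\not\cong\btr$ for $X\not\cong\btr$; hence for $X\in\Irr(\sC)\setminus\{\btr\}$ the endomorphism $\beta'_X=\beta_{F(X)}$ is properly centrally non-trivial because $\beta$ is centrally free, and therefore $(\beta',c^{\beta'})$ is centrally free. The approximate unitary equivalence of $\alpha_X$ and $\beta'_X=\beta_{F(X)}$ for all $X\in\sC$ is exactly the assumption of the corollary, and the existence of an $\alpha$-invariant faithful normal state on $Z(M)$ is assumed. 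Theorem \ref{thm:classification} then produces a family of unitaries $v=(v_X)_{X\in\sC}$ in $M$ and an approximately inner automorphism $\theta$ of $M$ with
\[
\Ad v_X\circ\alpha_X=\theta\circ\beta'_X\circ\theta^{-1},\qquad
v_X\alpha_X(v_Y)c^\alpha_{X,Y}v_{X\otimes Y}^*=\theta(c^{\beta'}_{X,Y}),\qquad
v_YT^\alpha=\theta(T^{\beta'})v_X .
\]
Substituting the definitions $\beta'_X=\beta_{F(X)}$, $c^{\beta'}_{X,Y}=c^\beta_{F(X),F(Y)}[L_{X,Y}]^\beta$ and $T^{\beta'}=F(T)^\beta$ reproduces verbatim the three displayed equalities of the corollary.

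I expect the only genuine work to be the bookkeeping in the first step, namely checking that the pulled-back data $(\beta',c^{\beta'},\{T^{\beta'}\})$ satisfies all the cocycle-action axioms. This is routine diagram chasing with the coherence data of $(F,L)$, but it is precisely where all the hypotheses on $(F,L)$ (unitarity of $L$, the hexagon identity, the unit constraints, naturality) get used; everything afterwards is a direct appeal to Theorem \ref{thm:classification}.
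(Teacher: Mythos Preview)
Your proposal is correct and follows essentially the same approach as the paper: define the pulled-back cocycle action $(\gamma,c^\gamma)$ of $\sC$ on $M$ via $\gamma_X=\beta_{F(X)}$, $c^\gamma_{X,Y}=c^\beta_{F(X),F(Y)}[L_{X,Y}]^\beta$, $T^\gamma=F(T)^\beta$, and then apply Theorem~\ref{thm:classification} directly. You even supply more detail than the paper does on why the pulled-back data satisfies the cocycle-action axioms and why it inherits central freeness.
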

\begin{proof}
We introduce a new cocycle action $(\gamma,c^\gamma)$
of $\sC$ on $M$ as follows:
\begin{itemize}
\item 
$\gamma_X:=\beta_{F(X)}$ for all $X\in\sC$.

\item
$c_{X,Y}^\gamma:=c_{F(X),F(Y)}^\beta [L_{X,Y}]^\beta$
for all $X,Y\in\sC$.

\item
$T^\gamma:=F(T)^\beta$ for all $X,Y\in\sC$ and $T\in\sC(X,Y)$.
\end{itemize}
From Theorem \ref{thm:classification},
$(\alpha,c^\alpha)$ and  $(\gamma,c^\gamma)$ are strongly cocycle conjugate.
Namely, there exist a family of unitaries $v=(v_X)_{X\in\sC}$ in $M$
and an approximately inner automorphism $\theta$ on $M$
such that
\begin{itemize}
\item
$\Ad v_X\circ\alpha_X=\theta\circ\gamma_X\circ\theta^{-1}$
for all $X\in \sC$.

\item
$v_X \alpha_X(v_Y) c_{X,Y}^\alpha v_{X\otimes Y}^*
=\theta(c_{X,Y}^\gamma)$
for all $X,Y\in\sC$.

\item
$v_Y T^\alpha=\theta(T^\gamma)v_X$
for all $X,Y\in\sC$ and $T\in\sC(X,Y)$.
\end{itemize}
Thus we are done.
\end{proof}

An action of a C$^*$-tensor category on a finite von Neumann algebra
is introduced in terms of bimodule categories.
Readers are referred to \cite[Appendix A]{Mas-Rob}.
The following result is an immediate corollary of
Theorem \ref{thm:classification}.
This generalizes the main results of \cite{MT-minimal,MT-III,MT-discrete}.

\begin{cor}
Let $G$ be a coamenable compact quantum group of Kac type
with its complete set of irreducible representations $\Irr(G)$
being at most countable.
Let $(\alpha,c^\alpha)$ and $(\beta,c^\beta)$
be centrally free cocycle actions of $\widehat{G}$
on a von Neumann algebra $M$ with separable predual.
Suppose that $\alpha$ has an invariant faithful normal state on $Z(M)$
and $(\alpha,c^\alpha)$ and $(\beta,c^\beta)$
are approximately unitarily equivalent.
Then they are strongly cocycle conjugate.
\end{cor}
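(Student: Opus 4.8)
The plan is to translate a cocycle action of the discrete quantum group $\hG$ into a cocycle action of its representation category $\sC:=\Rep(G)$ in the sense of Section \ref{subsect:actions}, and then to invoke Theorem \ref{thm:classification}. Since that theorem is stated for properly infinite von Neumann algebras, I would first perform the routine amplification reduction: replacing $M$ by $M\loti B(\ell^2)$ and $(\alpha,c^\alpha),(\beta,c^\beta)$ by their amplifications preserves central freeness, approximate unitary equivalence and the existence of an invariant faithful normal state on the center, and a strong cocycle conjugacy at the amplified level descends to the original actions; hence we may and do assume $M$ is properly infinite.

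Next I would set up the dictionary. For each $U\in\Irr(G)$, choosing $\dim H_U$ mutually orthogonal equivalent projections of $M$ summing to $1$ (possible as $M$ is properly infinite) identifies $M\loti B(H_U)$ with $M$, and composing $\alpha_U\colon M\to M\loti B(H_U)$ with this identification produces an endomorphism $\tilde\alpha_U\in\End(M)_0$; the dual representation of $U$ furnishes a conjugate endomorphism, while the tensor structure and the $2$-cocycle of $\alpha$ translate into intertwiners $T^{\tilde\alpha}$ and a unitary $2$-cocycle $c^{\tilde\alpha}$ making $(\tilde\alpha,c^{\tilde\alpha})$ a unitary tensor functor $\sC\to\End(M)_0$, i.e.\ a cocycle action of $\sC$ on $M$. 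This is exactly the type of computation carried out in Section \ref{sect:discrete} for minimal actions (where $\rho_U=\pi_U\circ\beta_U$), and the same bookkeeping applies to general cocycle actions. As recorded there, central freeness of $\alpha$ as an action of $\hG$ in the sense of \cite[Section 8]{MT-minimal} is equivalent to central freeness of $(\tilde\alpha,c^{\tilde\alpha})$ in the sense of the excerpt, and the restriction of $\tilde\alpha_X$ to $Z(M)$ for $X\in\Irr(\sC)$ recovers the associated $\theta$-action of $\hG$ on $Z(M)$; similarly for $(\beta,c^\beta)$, yielding $(\tilde\beta,c^{\tilde\beta})$.

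It then remains to check the hypotheses of Theorem \ref{thm:classification}. The category $\sC=\Rep(G)$ is amenable because $G$ is coamenable and of Kac type (see \cite[Chapter 2.7]{NeTu}), and $\Irr(\sC)=\Irr(G)$ is at most countable by assumption. The given $\alpha$-invariant faithful normal state on $Z(M)$ is precisely a $\theta^{\tilde\alpha}$-invariant faithful normal state, and approximate unitary equivalence of $(\alpha,c^\alpha)$ and $(\beta,c^\beta)$ unwinds, through the identifications above, to approximate unitary equivalence of $\tilde\alpha_X$ and $\tilde\beta_X$ for all $X\in\sC$ in the sense of Definition \ref{defn:approx}. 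Theorem \ref{thm:classification} then provides a family of unitaries in $M$ and an approximately inner automorphism of $M$ implementing a strong cocycle conjugacy between $(\tilde\alpha,c^{\tilde\alpha})$ and $(\tilde\beta,c^{\tilde\beta})$; transporting this data back through the dictionary produces a strong cocycle conjugacy between the original $\hG$-cocycle actions $(\alpha,c^\alpha)$ and $(\beta,c^\beta)$.

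I expect the only genuine work to be the dictionary itself: checking that the translation is functorial, that the structural identities (associativity of the $2$-cocycle, the intertwiner relations, the unitary-perturbation formulas) are respected on both sides, and that ``strongly cocycle conjugate'' in the categorical sense matches the standard notion for cocycle actions of $\hG$ — together with the (harmless but not entirely formal) amplification reduction. None of this requires new analytic input beyond Theorem \ref{thm:classification} and the coamenability-implies-amenability fact already recalled in the excerpt, which is why the statement is properly viewed as an immediate corollary.
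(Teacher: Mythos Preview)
Your proposal is correct and follows essentially the same approach as the paper: reduce to the properly infinite case by amplifying with $B(\ell^2)$, translate the $\hG$-cocycle action into a cocycle action of $\Rep(G)$ via the dictionary of Section \ref{sect:discrete}, and apply Theorem \ref{thm:classification}. The paper's proof is slightly more explicit about one point in the finite case: it uses that freeness forces $\alpha_X(\tau)=\tau\otimes\tau_X$ for a chosen $\alpha$-invariant tracial state $\tau$, which is what makes the amplification trick and the descent of the approximately inner conjugacy go through cleanly (cf.\ \cite[Appendix A]{Mas-Rob}); this is exactly the ``not entirely formal'' part you flagged.
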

\begin{proof}
If $M$ is properly infinite,
then we have nothing to prove.
Let us consider the case of $M$ being finite.
Let $\tau$ be a faithful normal tracial state on $M$
such that $\tau$ is $\alpha$-invariant on $Z(M)$.
Let us use notations introduced in \cite{MT-minimal}.
The freeness of $\alpha$ implies $\alpha_X(\tau)=\tau\otimes\tau_X$
for all $X\in\Irr(G)$,
where $\tau_X$ is the tracial state on $B(H_X)$.
This enables us to use the trick of taking the tensor product
$B(\ell^2)\otimes M$.
Hence $(\id\otimes\alpha,1\otimes c^\alpha)$
and $(\id\otimes\beta,1\otimes c^\beta)$
are strongly cocycle conjugate.
Then the intertwining automorphism is approximately inner,
and we obtain the strong cocycle conjugacy of
$(\alpha,c^\alpha)$ and $(\beta,c^\beta)$.
\end{proof}

\begin{rem}
Theorem \ref{thm:classification}
is actually sufficient to show the uniqueness part of
Popa's classification of strongly amenable subfactors
stated in \cite[Theorem 5.1]{Popa-endo}
(see Theorem \ref{thm:M1P1isom}).
We, however, postpone to present our proof
since it is better to understand a subfactor
as an action of a rigid C$^*$-2-category.
\end{rem}

The following result is a direct consequence
of the realization result due to Hayashi--Yamagami.
Note we do not assume the finitely generating property
of the set of simple objects.
Let us use the formulation in terms of endomorphisms.

\begin{prop}
\label{prop:HY-real}
Let $\sC$ be an amenable rigid C$^*$-tensor category
and $M$ the injective type II$_\infty$ factor with separable predual.
Then there exists a free cocycle
action $(\alpha,c)$ of $\sC$ on $M$ such that
$\alpha_X$ is an approximately inner endomorphism of rank $d(X)$
for all $X\in\Irr(\sC)$.
\end{prop}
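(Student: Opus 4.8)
The plan is to realize $\sC$ first by bimodules over the hyperfinite II$_1$ factor via the Hayashi--Yamagami theorem, then to transport this realization to endomorphisms of the injective II$_\infty$ factor, and finally to read off freeness and the rank of approximate innerness. First I would invoke the realization theorem of Hayashi--Yamagami \cite{HY}: since $\sC$ is an amenable rigid C$^*$-tensor category with $\Irr(\sC)$ at most countable, there is a fully faithful unitary tensor functor $F$ from $\sC$ into the C$^*$-tensor category of bifinite $R$-$R$ bimodules over the hyperfinite II$_1$ factor $R$, realized as an inductive limit of finite-dimensional commuting squares; in particular each $F(X)$ has coinciding left and right statistical dimension equal to $d(X)$, and $F$ carries standard solutions of the conjugate equations to standard solutions.

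Next I would set $M:=R\loti B(\ell^2)$, the injective type II$_\infty$ factor with separable predual, equipped with its trace $\tau$, and use the standard dictionary between $R$-$R$ bimodules of coinciding left and right dimension and finite-index endomorphisms of $M$ (Connes' relative tensor product corresponding to composition of endomorphisms, bimodule intertwiners to operator intertwiners; see \cite{BKLR,Iz-fusion,LongoII}). This produces from $F$ an assignment $X\mapsto\alpha_X\in\End(M)_0$. Because composition of endomorphisms is strictly associative whereas the relative tensor product is associative only up to coherent natural isomorphism, the resulting functor is unitary tensor only up to a family of unitaries; collecting these gives a $2$-cocycle $c=(c_{X,Y})_{X,Y}$, and after the usual normalizations $\alpha_\btr=\id_M$, $c_{\btr,X}=1=c_{X,\btr}$, $1_X^\alpha=1_M$, one obtains a cocycle action $(\alpha,c)$ of $\sC$ on $M$; the balanced dimensions $\dim_{-}F(X)=\dim_{+}F(X)=d(X)$ translate into the trace-scaling identity $\tau\circ\alpha_X=d(X)\,\tau$ for $X\in\Irr(\sC)$. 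Freeness is then immediate from fully faithfulness: for $X,Y\in\sC$ one has $(\alpha_X,\alpha_Y)=\alpha(\sC(X,Y))$, and since $M$ is a factor $Z(M)=\C1_M$, whence $(\alpha_X,\alpha_Y)=\alpha(\sC(X,Y))Z(M)$, which is the requirement of Definition \ref{defn:freeness} (equivalently $(\alpha_X,\alpha_\btr)\cong\sC(X,\btr)=\{0\}$ for $X\in\Irr(\sC)\setminus\{\btr\}$, which is condition (2) of Lemma \ref{lem:free}).

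It remains to establish that $\alpha_X$ is approximately inner of rank $d(X)$ for each $X\in\Irr(\sC)$, and this is the step where the injectivity of $M$ and the specific model genuinely enter. Since the Hayashi--Yamagami model is an inductive limit of finite-dimensional data, the subfactor $\alpha_X(M)\subset M$ is exhausted by subfactors lying inside finite-dimensional subalgebras, each implemented by $\lceil d(X)\rceil$ partial isometries $v_i$ with $v_i^*v_i=1$ for $i<\lceil d(X)\rceil$ and $\sum_i v_iv_i^*=1$; the trace-scaling constant $d(X)$ is exactly what forces the count to be $\lceil d(X)\rceil$ and makes the finite stages compatible. Passing to $M^\omega$ — equivalently, invoking \cite{MT-app} to the effect that a finite-index endomorphism of the injective II$_\infty$ factor scaling the trace by its dimension is approximately inner of that rank — these assemble into a family $\{v_i\}_{i=1}^{\lceil d(X)\rceil}\subset M^\omega$ with $v_i^*v_i=1$ for $i<\lceil d(X)\rceil$, $\sum_i v_iv_i^*=1$ and $v_i\psi^\omega=d(X)\,\alpha_X(\psi^\omega)v_i$ for all $\psi\in M_*$, which completes the proof. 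The main obstacle is precisely this last verification: extracting the implementing partial isometries from the Hayashi--Yamagami construction and checking that injectivity of $M$ together with the balanced dimension forces the rank to be exactly $d(X)$, rather than merely some finite number.
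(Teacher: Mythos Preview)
Your overall strategy coincides with the paper's: invoke the Hayashi--Yamagami realization in bimodules over the hyperfinite II$_1$ factor, convert to a free cocycle action on the injective II$_\infty$ factor via the dictionary in \cite[Appendix~A]{Mas-Rob}, read off freeness from full faithfulness, and then deduce approximate innerness of rank $d(X)$ from \cite{MT-app}. The gap is in the trace-scaling step. You assert without justification that balanced bimodule dimensions in the Hayashi--Yamagami model translate directly into $\tau\circ\alpha_X=d(X)\,\tau$ (equivalently $\alpha_X(\tau)=d(X)^{-1}\tau$), and your alternative sketch via the inductive-limit structure of the model is too vague to stand on its own.

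This is exactly the point the paper does \emph{not} take for granted. The paper records only that a priori $\alpha_X(\tau)=\lambda(X)d(X)^{-1}\tau$ for some module $\lambda(X)>0$, observes that $\lambda$ is multiplicative in the sense that $\lambda(X)\lambda(Y)=\lambda(Z)$ whenever $Z\prec X\otimes Y$, and then \emph{corrects} the module by tensoring with a trace-scaling action $\beta$ of $\R_+^*$ on a second copy of $M$: setting $\gamma_X:=\beta_{\lambda(X)}\otimes\alpha_X$ on $M\otimes M$ for $X\in\Irr(\sC)$ (and extending naturally to all of $\sC$) yields a free cocycle action with $\gamma_X(\tau\otimes\tau)=d(X)^{-1}\tau\otimes\tau$, whereupon \cite[Theorem~3.15]{MT-app} gives approximate innerness of rank $d(X)$. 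If you want to avoid this correction, you would have to verify carefully that the conversion from HY bimodules to endomorphisms in \cite[Appendix~A]{Mas-Rob} already produces $\lambda(X)=1$; the paper's route sidesteps that bookkeeping entirely.
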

\begin{proof}
Let $N$ be the injective type II$_1$ factor with separable predual.
Thanks to \cite[Theorem 7.6]{HY},
we have a fully faithful unitary tensor functor
$F$ from $\sC$ into ${}_N\sB_N$,
where ${}_N\sB_N$ denotes the bimodule category of $N$
whose objects are $N$-$N$-bimodules with finite indices.
By taking the tensor product $N\otimes B(\ell^2)=:M$
as explained in \cite[Appendix]{Mas-Rob},
we have a fully faithful unitary tensor functor $\alpha\colon \sC\to\End(M)_0$.
In other words,
the functor $\alpha$ is a free cocycle action of $\sC$
with a 2-cocycle $c$.
Note that the freeness and the central freeness are equivalent properties
for the injective type II$_\infty$ factors
(see \cite[Theorem 4.12]{MT-app} or Corollary \ref{cor:modapp}).
We will adjust the rank of the approximate innerness of $\alpha$ as follows.

Let $\tau$ be a faithful normal tracial weight on $M$.
For each $X\in\Irr(\sC)$, $(\alpha_X,\alpha_X)=\alpha(\sC(X,X))=\C$,
and we have a scalar $\lambda(X)>0$, which is called the module of $\alpha_X$,
such that $\alpha_X(\tau)=\lambda(X)d(X)^{-1}\tau$,
where $\alpha_X(\tau):=\tau\circ\phi_X^\alpha$.
It is not difficult to see $\lambda(X)\lambda(Y)=\lambda(Z)$
for $X,Y,Z\in\Irr(\sC)$ with $Z\prec X\otimes Y$.
Let $\Gamma$ be the subgroup generated by $\lambda(X)$ with $X\in\Irr(\sC)$
inside the multiplicative group $\R_+^*$ with the discrete topology.
Take a trace scaling action $\beta$ of $\R_+^*$ on $M$
with $\beta_t(\tau)=t^{-1}\tau$ for $t\in\R_+^*$.
Then we consider
$\gamma_X:=\beta_{\lambda(X)}\otimes \alpha_X \in \End(M\otimes M)_0$
for $X\in\Irr(\sC)$.
For a general $X\in\sC$,
we set
\[
\gamma_X(x):=
\sum_{Y\in\Irr(\sC)}\sum_{T\in\ONB(Y,X)}(1\otimes T^\alpha)
\gamma_Y(x)(1\otimes T^{\alpha*})
\quad
\mbox{for }
x\in M\otimes M.
\]
For a morphism $T$ in $\sC$,
we set $T^\gamma:=1\otimes T^\alpha$.
Then we see $\gamma$ is a free cocycle action of $\sC$ on $M\otimes M$
with the 2-cocycle $1\otimes c$.
By definition, we have $\gamma_X(\tau\otimes\tau)=d(X)^{-1}\tau\otimes\tau$
for all $X\in\Irr(\sC)$.
It turns out from \cite[Theorem 3.15]{MT-app} that
$\gamma_X$ is an approximately inner endomorphism of rank $d(X)$.
\end{proof}

\section{Classification of centrally free cocycle actions of amenable rigid C$^*$-2-categories}

In this section,
we will discuss a cocycle action of a rigid C$^*$-2-categories
on a system of properly infinite von Neumann algebras.
Our references for 2-categories
are \cite{Bena,Hayas-Real,Lack,NY-Tube,Yam-Frobalg},
but we use the tensor product notation to denote the compositions
of 1-morphisms.
Let $\Lambda$ be a set and $\sC:=(\sC_{rs})_{r,s\in\Lambda}$
a rigid C$^*$-2-category.
Namely, $\sC_{rs}$ denotes a non-zero C$^*$-category whose object
is called a 1-morphism $s\to r$.
These $\sC_{rs}$ have the bifunctors
$\otimes$ from $\sC_{rs}\times\sC_{st}$ into $\sC_{rt}$
which are assumed to be strict,
that is, we consider the associators are the identity maps.
Each $\sC_{rr}$ has the tensor unit $\btr$ for the bifunctor $\otimes$.
We assume $\sC_{rr}(\btr,\btr)=\C1$ for all $r\in\Lambda$.
Each object $X\in\sC_{rs}$ has a conjugate object $\ovl{X}\in\sC_{sr}$.
Let us fix a standard solution of the conjugate equations
$(R_X,\ovl{R}_X)$ as in Section \ref{subsect:tensorcat}.
By $d(X)$ we denote the intrinsic dimension of $X$.
Let us denote by $\Irr(\sC)$ the disjoint union of $\Irr(\sC_{rs})$
with $r,s\in\Lambda$ which are assumed to be at most countable.
The measure $\sigma$ on $\Irr(\sC)$ is defined by
$\sigma(X):=d(X)^2$ as before.
In this section,
we only consider the case of $\Lambda$ being the two-point set $\{0,1\}$.

\subsection{Cocycle actions of rigid C$^*$-2-categories}
We will introduce the notion of a cocycle action
of a rigid C$^*$-2-category on a system of properly infinite von Neumann algebras.
Recall our notation $\Mor(N,M)_0$
introduced in Section \ref{subsect:notation}.

\begin{defn}
Let $M:=(M_r)_{r\in\Lambda}=(M_0,M_1)$
be a system of properly infinite von Neumann algebras.
A \emph{cocycle action} $(\alpha,c)$ of $\sC$
on $M$
consists of 
a family of unitary tensor functors
$\alpha=(\alpha_{rs})_{r,s\in\Lambda}$
with
$\alpha^{rs}\colon \sC_{rs}\to \Mor(M_s,M_r)_0$,
unitary elements $c_{X,Y}\in M_r$
with $(X,Y)\in\sC_{rs}\times \sC_{st}$
and $T^\alpha\in (\alpha_X^{rs},\alpha_Y^{rs})\subset M_r$
for $T\in\sC_{rs}(X,Y)$
such that
\begin{itemize}
\item
$\alpha_X^{rs}\circ\alpha_Y^{st}=\Ad c_{X,Y}\circ\alpha_{X\otimes Y}^{rt}$
for $(X,Y)\in\sC_{rs}\times\sC_{st}$ with $r,s,t\in\Lambda$.

\item
$c_{X,Y}c_{X\otimes Y,Z}=\alpha_X^{rs}(c_{Y,Z})c_{X,Y\otimes Z}$
for $(X,Y,Z)\in\sC_{rs}\times\sC_{st}\times\sC_{tu}$
with $r,s,t,u\in\Lambda$.

\item
$c_{X,Y}[S\otimes T]^{\alpha}
=S^{\alpha}\alpha_V^{rs}(T^{\alpha})c_{V,W}$
for $(X,Y,V,W)\in \sC_{rs}\times\sC_{st}\times\sC_{rs}\times\sC_{st}$,
$S\in\sC_{rs}(V,X)$ and $T\in\sC_{st}(W,Y)$.
\end{itemize}
\end{defn}

We will simply write $\alpha$ for $\alpha^{rs}$ if there is no danger
of confusion.
Let $(\alpha,c)$ be as above
and $v_X\in M_r$ unitaries for $X\in\sC_{rs}$
with $r,s\in\Lambda$.
Then the \emph{perturbed cocycle action}
$(\alpha^v,c^v)$ on $M=(M_r)_{r\in\Lambda}$
is defined as follows:
\begin{itemize}
\item
$\alpha_X^v:=\Ad v_X\circ\alpha_X$
for $X\in\sC_{rs}$ with $r,s\in\Lambda$.

\item
$c_{X,Y}^v:=v_X\alpha_X(v_Y)c_{X,Y}v_{X\otimes Y}^*$
for $(X,Y)\in\sC_{rs}\times\sC_{st}$
with $r,s,t\in\Lambda$.

\item
$T^{\alpha^v}:=v_Y T^\alpha v_X^*$
for $X,Y\in\sC_{rs}$
and $T\in\sC_{rs}(X,Y)$
with $r,s\in\Lambda$.
\end{itemize}

\begin{defn}
Two cocycle actions $(\alpha,c^\alpha)$ and $(\beta,c^\beta)$
of a rigid C$^*$-2-category
$\sC=(\sC_{r,s})_{r,s\in\Lambda}$
on $M=(M_r)_{r\in\Lambda}$
are said to be
\begin{itemize}
\item
\emph{conjugate}
if there exists a system of automorphisms
$\theta=(\theta_{r}\colon M_r\to M_r)_{r\in\Lambda}$
such that
\begin{itemize}
\item
$\beta_X=\theta_r\circ\alpha_X\circ\theta_s^{-1}$
for $X\in\sC_{rs}$ with $r,s\in\Lambda$,

\item
$c_{X,Y}^\beta=\theta_r(c_{X,Y}^\alpha)$
for $(X,Y)\in\sC_{rs}\times\sC_{st}$
with $r,s,t\in\Lambda$,

\item
$T^\beta=\theta_r(T^\alpha)$
for $X,Y\in\sC_{rs}$
and $T\in\sC_{rs}(X,Y)$
with $r,s\in\Lambda$;
\end{itemize}

\item
\emph{cocycle conjugate}
if
there exists a unitary perturbation
of $(\alpha,c^\alpha)$ that is conjugate to $(\beta,c^\beta)$;

\item
\emph{strongly cocycle conjugate}
if there exists a unitary perturbation
of $(\alpha,c^\alpha)$ that is conjugate to $(\beta,c^\beta)$
as above
such that each $\theta_r$ is approximately inner.
\end{itemize}
\end{defn}

\subsection{Amenability}
We will say that $\sC=(\sC_{rs})_{r,s\in\Lambda}$
is \emph{amenable}
if the C$^*$-tensor category $\sC_{00}$ is amenable.
This implies the amenability of $\sC_{11}$ as well
(see \cite[Proposition 1.3.4]{Hayas-Real}).

For $\cF\subset \Irr(\sC_{rs})$
and $\cK\subset \Irr(\sC_{st})$,
$\cF\cdot \cK$ denotes the collection of all $Z\in\Irr(\sC_{rt})$
such that $Z\prec X\otimes Y$ for some $X\in\cF$ and $Y\in\cK$.
The following elementary lemma is probably well-known among experts,
but we will present a proof for readers' convenience.

\begin{lem}
\label{lem:prodfinsets}
The measure $\sigma$ on $\Irr(\sC)$ is submultiplicative.
Namely, let
$\cF\subset \Irr(\sC_{rs})$
and $\cG\subset \Irr(\sC_{st})$
finite subsets with $r,s,t\in\Lambda$.
Then one has the inequality
$|\cF\cdot\cG|_\sigma\leq|\cF|_\sigma|\cG|_\sigma$.
\end{lem}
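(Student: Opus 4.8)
The plan is to reduce the desired inequality to the single-pair bound $\sum_{Z\in\Irr(\sC_{rt}),\ Z\prec X\otimes Y} d(Z)^2\le d(X)^2 d(Y)^2$ for $X\in\Irr(\sC_{rs})$ and $Y\in\Irr(\sC_{st})$, and then sum over the pairs in $\cF\times\cG$. First I would note that, directly from the definition of $\cF\cdot\cG$, one has $\cF\cdot\cG=\bigcup_{X\in\cF,\,Y\in\cG}\{Z\in\Irr(\sC_{rt}):Z\prec X\otimes Y\}$; since $\sigma$ is a positive measure on the at most countable set $\Irr(\sC)$, it is countably subadditive, and hence $|\cF\cdot\cG|_\sigma\le\sum_{X\in\cF}\sum_{Y\in\cG}\sum_{Z\prec X\otimes Y} d(Z)^2$, where the innermost sum runs over $Z\in\Irr(\sC_{rt})$.

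It then remains to establish the single-pair bound. Writing $X\otimes Y\cong\bigoplus_{Z\in\Irr(\sC_{rt})} Z^{\oplus N_{X,Y}^Z}$ and using that the intrinsic dimension is multiplicative under $\otimes$ and additive under $\oplus$, one obtains $d(X)d(Y)=d(X\otimes Y)=\sum_{Z\in\Irr(\sC_{rt})} N_{X,Y}^Z d(Z)$. All summands are non-negative and $N_{X,Y}^Z\ge 1$ exactly when $Z\prec X\otimes Y$, so squaring and discarding the cross terms yields $d(X)^2 d(Y)^2=\bigl(\sum_Z N_{X,Y}^Z d(Z)\bigr)^2\ge\sum_Z (N_{X,Y}^Z)^2 d(Z)^2\ge\sum_{Z\prec X\otimes Y} d(Z)^2$. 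Substituting this into the subadditivity estimate above gives $|\cF\cdot\cG|_\sigma\le\sum_{X\in\cF}\sum_{Y\in\cG} d(X)^2 d(Y)^2=|\cF|_\sigma|\cG|_\sigma$, which is the claim.

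I do not expect a genuine obstacle here; the only point worth care is to argue pair by pair rather than attempting a global telescoping over $\Irr(\sC_{rt})$, which would be meaningless when $\Irr(\sC)$ is infinite, and to recall that $(\sum_i a_i)^2\ge\sum_i a_i^2$ for non-negative reals is the sole convexity input needed. As an alternative to the subadditivity step one could fix a choice function $Z\mapsto(X_Z,Y_Z)\in\cF\times\cG$ with $Z\prec X_Z\otimes Y_Z$ and partition $\cF\cdot\cG$ accordingly, but invoking subadditivity of $\sigma$ is cleaner and avoids any appeal to choice.
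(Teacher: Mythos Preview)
Your proof is correct and follows essentially the same approach as the paper: subadditivity of $\sigma$ reduces to the singleton case $|\{X\}\cdot\{Y\}|_\sigma\le d(X)^2d(Y)^2$, which is then derived from $d(X)d(Y)=\sum_Z N_{X,Y}^Z d(Z)$ via the inequality $(\sum_i a_i)^2\ge\sum_i a_i^2$ for non-negative reals. The paper's argument is slightly terser, omitting the intermediate step through $(N_{X,Y}^Z)^2$, but the substance is identical.
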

\begin{proof}
We have
$|\cF\cdot\cG|_\sigma\leq\sum_{(X,Y)\in\cF\times\cG}|\{X\}\cdot\{Y\}|_\sigma$
by subadditivity of the measure $\sigma$.
For $(X,Y)\in\cF\times\cG$,
we have
$d(X)d(Y)=\sum_{Z\in \{X\}\cdot\{Y\}}N_{X,Y}^Z d(Z)$,
and
$|\{X\}\cdot\{Y\}|_\sigma
=\sum_{Z\in \{X\}\cdot\{Y\}}d(Z)^2\leq d(X)^2 d(Y)^2$.
Thus we are done.
\end{proof}

\begin{lem}
\label{lem:FrsKrs}
For any
finite subsets
$\cF_{rs}=\ovl{\cF_{sr}}$
in $\Irr(\sC_{rs})$ with $r,s\in\Lambda$
such that $\btr\in \cF_{00}$ and $\btr\in\cF_{11}$
and $\delta>0$,
there exist finite subsets
$\cK_{00}\subset\Irr(\sC_{00})$
and
$\cK_{10}\subset\Irr(\sC_{10})$
such that
\begin{itemize}
\item
$\cK_{00}\subset \cF_{01}\cdot\cK_{10}$
and
$\cF_{10}\cdot\cK_{00}\subset\cK_{10}$.

\item
$|(\cF_{00}\cdot\cK_{00})\setminus\cK_{00}|_\sigma
<\delta|\cK_{00}|_\sigma$.

\item
$|(\cF_{01}\cdot\cK_{10})\setminus\cK_{00}|_\sigma
<\delta|\cK_{00}|_\sigma$.

\item
$|(\cF_{11}\cdot\cK_{10})\setminus\cK_{10}|_\sigma
<\delta|\cK_{10}|_\sigma$.
\end{itemize}
In particular,
one has $|(\cF_{rs}\cdot\cK_{s0})\setminus\cK_{r0}|_\sigma
<\delta|\cK_{r0}|_\sigma$
for all $r,s\in\Lambda$.
\end{lem}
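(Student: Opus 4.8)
The plan is to derive the whole statement from the F\o lner property of the single rigid C$^*$-tensor category $\sC_{00}$ (available since $\sC$ is amenable), together with two structural facts about the $2$-category: the submultiplicativity of $\sigma$ (Lemma~\ref{lem:prodfinsets}) and the fact that $\btr\prec V\otimes\ovl V$ for every non-zero $V$. First I would arrange, by enlarging $\cF$ if necessary and keeping $\cF_{rs}=\ovl{\cF_{sr}}$, that $\cF_{10}$ (equivalently $\cF_{01}=\ovl{\cF_{10}}$) contains a simple object, and fix one such $V\in\cF_{10}$.

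Next I would set $\cG:=\cF_{00}\cup(\cF_{01}\cdot\cF_{11}\cdot\cF_{10})\subset\Irr(\sC_{00})$, a finite set with $\btr\in\cG$ (from $\cF_{00}$, and also because $\btr\prec\ovl V\otimes V$ with $\ovl V\in\cF_{01}$, $V\in\cF_{10}$, $\btr\in\cF_{11}$). Pick $\delta'>0$ with $(1+|\cF_{10}|_\sigma)\delta'<\delta$ and, using amenability of $\sC_{00}$, choose a $(\cG,\delta')$-invariant finite $\cK_{00}\subset\Irr(\sC_{00})$, so that $|(\cG\cdot\cK_{00})\setminus\cK_{00}|_\sigma\le\delta'|\cK_{00}|_\sigma$; then define $\cK_{10}:=\cF_{10}\cdot\cK_{00}\subset\Irr(\sC_{10})$. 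With these choices the first three displayed conditions are almost immediate: $\cF_{10}\cdot\cK_{00}\subset\cK_{10}$ holds by definition and $\cK_{00}\subset\cF_{01}\cdot\cF_{10}\cdot\cK_{00}=\cF_{01}\cdot\cK_{10}$ since $\btr\in\cF_{01}\cdot\cF_{10}$; moreover $\cF_{00}\subset\cG$ and $\cF_{01}\cdot\cF_{10}\subset\cG$, so $(\cF_{00}\cdot\cK_{00})\setminus\cK_{00}$ and $(\cF_{01}\cdot\cK_{10})\setminus\cK_{00}=(\cF_{01}\cdot\cF_{10}\cdot\cK_{00})\setminus\cK_{00}$ both lie inside $(\cG\cdot\cK_{00})\setminus\cK_{00}$, hence have $\sigma$-measure $\le\delta'|\cK_{00}|_\sigma<\delta|\cK_{00}|_\sigma$.

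For the fourth condition the key idea is to transfer an approximate $\cF_{11}$-invariance problem on $\Irr(\sC_{10})$ back to an approximate $\cG$-invariance problem on $\Irr(\sC_{00})$. Given $Z\in(\cF_{11}\cdot\cK_{10})\setminus\cK_{10}=(\cF_{11}\cdot\cF_{10}\cdot\cK_{00})\setminus(\cF_{10}\cdot\cK_{00})$, write $Z\prec W\otimes U$ with $W\in\cF_{11}\cdot\cF_{10}$ and $U\in\cK_{00}$; inserting $\btr\prec V\otimes\ovl V$ gives $W\prec V\otimes S$ with $S\in\{\ovl V\}\cdot\{W\}\subset\cF_{01}\cdot\cF_{11}\cdot\cF_{10}\subset\cG$, hence $Z\prec V\otimes S'$ with $S'\in\{S\}\cdot\{U\}\subset\cG\cdot\cK_{00}$, and $Z\notin\cF_{10}\cdot\cK_{00}$ forces $S'\notin\cK_{00}$. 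This yields
\[
(\cF_{11}\cdot\cK_{10})\setminus\cK_{10}\subset\cF_{10}\cdot\big((\cG\cdot\cK_{00})\setminus\cK_{00}\big),
\]
so by Lemma~\ref{lem:prodfinsets} one gets $|(\cF_{11}\cdot\cK_{10})\setminus\cK_{10}|_\sigma\le|\cF_{10}|_\sigma\,|(\cG\cdot\cK_{00})\setminus\cK_{00}|_\sigma\le|\cF_{10}|_\sigma\,\delta'\,|\cK_{00}|_\sigma$.

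It remains to see $|\cK_{00}|_\sigma\le|\cK_{10}|_\sigma$, and I expect this to be the only genuinely delicate point. I would prove $|\{V\}\cdot\cK_{00}|_\sigma\ge|\cK_{00}|_\sigma$ via the stochastic kernel $p_V(U,\cdot)$ on $\Irr(\sC_{10})$: from the Frobenius reciprocity $N_{V,U}^Z=N_{\ovl V,Z}^U$ together with additivity and multiplicativity of $d$ one obtains the stationarity identity $\sum_{U\in\Irr(\sC_{00})}d(U)^2p_V(U,Z)=d(Z)^2$, whence $\sum_{U\in\cK_{00}}d(U)^2p_V(U,Z)\le d(Z)^2$ for each $Z$ and $|\cK_{00}|_\sigma=\sum_{Z}\sum_{U\in\cK_{00}}d(U)^2p_V(U,Z)\le\sum_{Z\in\{V\}\cdot\cK_{00}}d(Z)^2=|\{V\}\cdot\cK_{00}|_\sigma\le|\cF_{10}\cdot\cK_{00}|_\sigma=|\cK_{10}|_\sigma$. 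Combining with the previous estimate gives $|(\cF_{11}\cdot\cK_{10})\setminus\cK_{10}|_\sigma\le|\cF_{10}|_\sigma\delta'|\cK_{10}|_\sigma<\delta|\cK_{10}|_\sigma$. Finally, the ``in particular'' clause follows by inspecting the four pairs $(r,s)$: $(0,0)$ and $(1,1)$ are the second and fourth conditions, $(0,1)$ is the third, and $(1,0)$ is trivial since $\cF_{10}\cdot\cK_{00}\subset\cK_{10}$.
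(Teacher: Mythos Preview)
Your argument is correct and follows essentially the same architecture as the paper's proof: choose an auxiliary F\o lner problem in $\sC_{00}$, solve it by amenability, and set $\cK_{10}$ to be a product of a fixed finite set in $\Irr(\sC_{10})$ with $\cK_{00}$, using submultiplicativity of $\sigma$ (Lemma~\ref{lem:prodfinsets}) throughout. The paper organizes the bookkeeping slightly differently: instead of ``enlarging $\cF$'' it introduces an auxiliary finite $\cG_{10}\subset\Irr(\sC_{10})$ with $\cF_{10}\subset\cG_{10}$, $\cF_{00}\subset\ovl{\cG_{10}}\cdot\cG_{10}$ and $\cF_{11}\subset\cG_{10}\cdot\ovl{\cG_{10}}$, then takes $\cK_{00}$ to be $(\ovl{\cG_{10}}\cdot\cG_{10},\varepsilon)$-invariant and puts $\cK_{10}:=\cG_{10}\cdot\cK_{00}$; the inclusion $(\cF_{11}\cdot\cK_{10})\setminus\cK_{10}\subset\cG_{10}\cdot\big((\ovl{\cG_{10}}\cdot\cG_{10}\cdot\cK_{00})\setminus\cK_{00}\big)$ is then read off directly, without your $V\otimes\ovl V$ insertion.

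The one place where your argument genuinely differs is the comparison of $|\cK_{00}|_\sigma$ and $|\cK_{10}|_\sigma$. The paper only needs and only proves the crude bound $|\cK_{00}|_\sigma\le|\cG_{10}|_\sigma\,|\cK_{10}|_\sigma$ (from $\cK_{00}\subset\ovl{\cG_{10}}\cdot\cK_{10}$ and Lemma~\ref{lem:prodfinsets}), and compensates by choosing $\varepsilon<\delta\,|\cG_{10}|_\sigma^{-2}$. Your stationarity computation $\sum_{U}d(U)^2 p_V(U,Z)=d(Z)^2$ yields the sharper $|\cK_{00}|_\sigma\le|\{V\}\cdot\cK_{00}|_\sigma\le|\cK_{10}|_\sigma$; this is a cleaner estimate and lets you take the milder constraint $(1+|\cF_{10}|_\sigma)\delta'<\delta$, though for the purposes of the lemma either bound suffices. (One small remark: your ``enlarge $\cF_{10}$ if necessary'' step is harmless because if $\cF_{10}=\cF_{01}=\emptyset$ the first bullet already forces $\cK_{00}=\emptyset$, so the statement is only meaningful when $\cF_{10}\neq\emptyset$; the paper's proof likewise uses $\cF_{01}\neq\emptyset$ implicitly when it writes $\cK_{00}\subset\cF_{01}\cdot\ovl{\cF_{01}}\cdot\cK_{00}$.)
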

\begin{proof}
For such $\cF_{rs}$'s,
we can take a finite subset $\cG_{10}$ of $\Irr(\sC_{10})$
so that, putting $\cG_{01}:=\ovl{\cG_{10}}$,
we have $\cF_{00}\subset \cG_{01}\cdot \cG_{10}$,
$\cF_{10}\subset \cG_{10}$,
$\cF_{01}\subset \cG_{01}$
and
$\cF_{11}\subset \cG_{10}\cdot \cG_{01}$.

Let $\varepsilon>0$ such that $\varepsilon<\delta|\cG_{10}|_\sigma^{-2}$.
By amenability of $\sC_{00}$,
we can take a $(\cG_{01}\cdot \cG_{10},\varepsilon)$-invariant
finite subset $\cK_{00}$ of $\Irr(\sC_{00})$.
Put $\cK_{10}:=\cG_{10}\cdot\cK_{00}$.
Then we have
\[
\cK_{00}\subset
\cF_{01}\cdot\ovl{\cF_{01}}\cdot\cK_{00}
\subset
\cF_{01}\cdot\cK_{10}
\subset
\cG_{01}\cdot\cG_{10}\cdot\cK_{00},
\]
and
$|(\cF_{01}\cdot\cK_{10})\setminus \cK_{00}|_\sigma
<\varepsilon|\cK_{00}|_\sigma$.
Next we have
\[
\cK_{10}\subset
\cF_{11}\cdot \cK_{10}
=
\cF_{11}\cdot \cG_{10}\cdot \cK_{00}
\subset
\cG_{10}\cdot\cG_{01}\cdot \cG_{10}\cdot \cK_{00},
\]
and
\[
(\cF_{11}\cdot \cK_{10})\setminus \cK_{10}
\subset
\cG_{10}\cdot
\big{(}(\cG_{01}\cdot \cG_{10}\cdot \cK_{00})\setminus\cK_{00}\big{)}.
\]
Hence
\begin{align*}
|(\cF_{11}\cdot \cK_{10})\setminus \cK_{10}|_\sigma
&\leq
|\cG_{10}|_\sigma
|(\cG_{01}\cdot \cG_{10}\cdot \cK_{00})\setminus\cK_{00}|_\sigma
\quad
\mbox{by Lemma }
\ref{lem:prodfinsets}
\\
&\leq
|\cG_{10}|_\sigma
\cdot\varepsilon
|\cK_{00}|_\sigma
\\
&\leq
\varepsilon|\cG_{10}|_\sigma^2|\cK_{10}|_\sigma,
\end{align*}
where, in the last inequality,
we have used $\cK_{00}\subset \cG_{01}\cdot \cK_{10}$
and Lemma \ref{lem:prodfinsets}.
\end{proof}

\subsection{Freeness, Central freeness and the Rohlin property}
Let $(\alpha,c)$ be an action of a rigid C$^*$-2-category
$\sC=(\sC_{rs})_{r,s\in\Lambda}$
on a system of properly infinite
von Neumann algebras $M=(M_r)_{r\in\Lambda}$
whose preduals are not necessarily separable.

Each $\alpha_X$,
$X\in\sC_{rs}$
has
the faithful normal left inverse map
$\phi_X^\alpha \colon M_r\to M_s$ defined as follows:
\[
\phi_X^\alpha(x)
:=R_X^{\alpha*}
c_{\ovl{X},X}^*\alpha_{\ovl{X}}(x)
c_{\ovl{X},X}R_X^\alpha
\quad
\mbox{for }
x\in M_r.
\]
We put $\alpha_X(\psi):=\psi\circ\phi_X^\alpha$
for $X\in\sC_{rs}$ and $\psi\in (M_s)_*$.
Hence $\alpha_X$ induces a map from $(M_s)_*$
into $(M_r)_*$.
In particular, when $M_r$'s have separable preduals,
the unital normal $*$-homomorphism
$\alpha_X^\omega\colon M_s^\omega\to M_r^\omega$
for each $X\in\sC_{rs}$, $r,s\in\Lambda$
is well-defined.
The computation rules
(\ref{eq:alpha-bimod-varphi})
and
(\ref{eq:alxphi}) are also available.

\begin{lem}
\label{lem:free2}
Let $(\alpha,c)$ be a cocycle action of $\sC$ on $M$
as before.
Then the following equalities are equivalent:
\begin{enumerate}
\item
The cocycle action
$(\alpha^{00},c)$ of $\sC_{00}$ on $M_0$ is free.

\item
$(\alpha_X,\alpha_Y)=\alpha(\sC_{01}(X,Y))Z(M_0)$
for all $X,Y\in \sC_{01}$.

\item
$(\alpha_X,\alpha_Y)=\alpha(\sC_{10}(X,Y))\alpha_X(Z(M_0))$
for all $X,Y\in \sC_{10}$.
\end{enumerate}
In particular, if one of the equivalent conditions above holds,
then
$Z(M_1)\subset\alpha_X(Z(M_0))$
and
$\alpha_Y(Z(M_1))\subset Z(M_0)$
for all $(X,Y)\in\Irr(\sC_{10})\times\Irr(\sC_{01})$.
\end{lem}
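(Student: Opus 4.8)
The plan is to mimic the proofs of Lemma~\ref{lem:free} and Lemma~\ref{lem:thetaalpha}, using Frobenius reciprocity to transport intertwiner spaces between the three corners $\sC_{00}$, $\sC_{01}$, $\sC_{10}$ of the $2$-category. As a first step I would record, via Lemma~\ref{lem:free} applied to $(\alpha^{00},c)$, that (1) is equivalent to $(\alpha_V,\alpha_{\btr})=\{0\}$, i.e. $\alpha_V\not\succ\id_{M_0}$, for every nontrivial $V\in\Irr(\sC_{00})$; this is the form in which I would use (1) and the form I would aim to establish in the converse implications.

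For (1)$\Rightarrow$(2) and (1)$\Rightarrow$(3) the computation is the one in the $(2)\Rightarrow(1)$ part of Lemma~\ref{lem:free}, now in the $2$-categorical setting. Given simple $X,Y\in\sC_{01}$ and $a\in(\alpha_X,\alpha_Y)$, the element $\ovl{R}_Y^{\alpha*}c_{Y,\ovl{Y}}^{*}a\,c_{X,\ovl{Y}}$ lies in $(\alpha_{X\otimes\ovl{Y}},\alpha_{\btr})$ with $X\otimes\ovl{Y}\in\sC_{00}$; decomposing $X\otimes\ovl{Y}$ into $\sC_{00}$-irreducibles and invoking freeness of $\sC_{00}$ on $M_0$ to kill every non-$\btr$ summand, only the $\btr$-component (of multiplicity $\delta_{X,Y}$, carried by $\ovl{R}_X$) survives, so after multiplying back $a\in\alpha(\sC_{01}(X,Y))Z(M_0)$; the reverse containment is immediate from (\ref{eq:cST}). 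The implication (1)$\Rightarrow$(3) is the mirror computation with the roles of the two conjugate solutions interchanged; the image $\alpha_X(Z(M_0))$ replaces $Z(M_0)$ because now $\alpha_X\colon M_0\to M_1$ and the relevant central algebra on the $M_1$-side is the pullback along $\alpha_X$.

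For the converses I would argue by contraposition: assume some nontrivial $V\in\Irr(\sC_{00})$ has $0\neq a\in(\alpha_V,\id_{M_0})$ and fix any $Z\in\Irr(\sC_{01})$ (nonzero since all corners are nonzero). If $V\not\prec Z\otimes\ovl{Z}$, then $\sC_{01}(V\otimes Z,Z)=\{0\}$, because its dimension equals the multiplicity of $V$ in $Z\otimes\ovl{Z}$ by Frobenius reciprocity, while $a\,c_{V,Z}\neq 0$ lies in $(\alpha_{V\otimes Z},\alpha_Z)$ with $V\otimes Z,Z\in\sC_{01}$; this contradicts (2), and the parallel manipulation $\alpha_{\ovl{Z}}(a)c_{\ovl{Z},V}\in(\alpha_{\ovl{Z}\otimes V},\alpha_{\ovl{Z}})$ contradicts (3). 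In the remaining case, $V\prec Z\otimes\ovl{Z}$ for \emph{every} $Z\in\Irr(\sC_{01})$ (this can genuinely occur, e.g. when $\sC_{01}$ has a single simple object), I would fix an isometry $S\in\sC_{00}(V,Z\otimes\ovl{Z})$, form the nonzero element $c_{Z,\ovl{Z}}S^{\alpha}a^{*}\in(\id_{M_0},\alpha_Z\alpha_{\ovl{Z}})$, transfer it by Frobenius reciprocity into $(\alpha_Z,\alpha_Z)$ (resp. $(\alpha_{\ovl{Z}},\alpha_{\ovl{Z}})$), and check that it cannot be of the allowed form $\alpha(\sC_{01}(Z,Z))Z(M_0)$ (resp. $\alpha(\sC_{10}(\ovl{Z},\ovl{Z}))\alpha_{\ovl{Z}}(Z(M_0))$): since $S^{*}\ovl{R}_Z\in\sC_{00}(\btr,V)=\{0\}$, the produced intertwiner is orthogonal to the canonical solution $\ovl{R}_Z^{\alpha}$ and, collapsing into the central span, would force $a=0$. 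This last case is the main obstacle — the delicate point being to keep straight, along each Frobenius move, which algebra $M_r$, which $2$-cocycle $c_{X,Y}$ and which standard solution the intertwiner carries, and to use the orthogonality of distinct isotypic components under the left inverses $\phi^{\alpha}$ (as in Lemma~\ref{lem:relcomm}). Finally, the ``in particular'' clause is immediate: for $X\in\Irr(\sC_{10})$ any $z\in Z(M_1)$ commutes with $\alpha_X(M_0)$, hence $z\in(\alpha_X,\alpha_X)=\alpha_X(Z(M_0))$ by (3); and for $Y\in\Irr(\sC_{01})$, $\alpha_Y(Z(M_1))$ commutes with $\alpha_Y(M_1)$, hence $\alpha_Y(Z(M_1))\subseteq(\alpha_Y,\alpha_Y)=Z(M_0)$ by (2).
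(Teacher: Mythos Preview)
Your argument is essentially correct, but it is organised differently from the paper and is more laborious in two places.

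The paper proves the cycle $(1)\Rightarrow(2)\Rightarrow(3)\Rightarrow(1)$. Your $(1)\Rightarrow(2)$ coincides with the paper's. For $(1)\Rightarrow(3)$ you write ``mirror computation'', but a literal mirror of the $(1)\Rightarrow(2)$ step lands in $(\alpha_{X\otimes\ovl{Y}},\id_{M_1})$ with $X\otimes\ovl{Y}\in\sC_{11}$, where freeness of $\sC_{00}$ says nothing; to reach $\sC_{00}$ you must first apply $\alpha_{\ovl{X}}$ (or $\alpha_{\ovl{Y}}$) and then push back with $\alpha_X$ at the end, which is exactly what the paper does in its $(2)\Rightarrow(3)$ step. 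Your hint about ``pullback along $\alpha_X$'' suggests you have this in mind, but it is not a symmetric mirror and deserves to be written out.

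For the converse directions, your contraposition with the Case~1/Case~2 split works, but the paper's $(3)\Rightarrow(1)$ is considerably shorter: given $X\in\Irr(\sC_{00})\setminus\{\btr\}$, one simply chooses $U\in\sC_{10}$ with $X\prec\ovl{U}\otimes U$ (always possible, e.g.\ $U=U_0\oplus U_0\otimes X$ for any $U_0\in\Irr(\sC_{10})$), and a single application of $(3)$ to $(\alpha_{U\otimes\ovl{U}\otimes U},\alpha_U)$ followed by $\phi_U^\alpha$ forces $(\alpha_{\ovl{U}\otimes U},\alpha_\btr)=R_U^{\alpha*}Z(M_0)$, whence $(\alpha_X,\alpha_\btr)=0$. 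This avoids your two cases entirely, and in particular sidesteps the need to invoke the Frobenius bijection $(\alpha_Z,\alpha_Z)\cong(\id_{M_0},\alpha_Z\alpha_{\ovl{Z}})$ that your Case~2 relies on (that bijection is of course true, but you should state it rather than leave it implicit). Your ``in particular'' clause is handled identically to the paper.
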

\begin{proof}
(1) $\Rightarrow$ (2).
It suffices to show the equality for simple $X$ and $Y$.
It is trivial that
the right-hand side is contained in
the left-hand side.
Let $a\in(\alpha_X,\alpha_Y)$.
Then $\ovl{R}_Y^{\alpha*}c_{Y,\ovl{Y}}^*a c_{X,\ovl{Y}}$
is contained in $(\alpha_{X\otimes\ovl{Y}},\alpha_\btr)$
that is equal to
$\alpha(\sC_{00}(X\otimes\ovl{Y},\btr))Z(M_0)
=\delta_{X,Y} \ovl{R}_X^{\alpha*} Z(M_0)$
from the freeness of $(\alpha^{00},c)$.
Take $b\in Z(M_0)$ so that
$\ovl{R}_Y^{\alpha*}c_{Y,\ovl{Y}}^*a c_{X,\ovl{Y}}
=\delta_{X,Y} \ovl{R}_X^{\alpha*}b$.
Then we can check (2) as follows:
\begin{align*}
d(Y)^{-1}a
&=
\ovl{R}_Y^{\alpha*}c_{Y,\ovl{Y}}^*
\alpha_Y(c_{\ovl{Y},Y}R_Y^\alpha)
a
=
\ovl{R}_Y^{\alpha*}c_{Y,\ovl{Y}}^*
a
\alpha_X(c_{\ovl{Y},Y}R_Y^\alpha)
\\
&=
\delta_{X,Y} \ovl{R}_X^{\alpha*}bc_{X,\ovl{Y}}^*
\alpha_X(c_{\ovl{Y},Y}R_Y^\alpha)
\\
&=
\delta_{X,Y}b\ovl{R}_X^{\alpha*}c_{X,\ovl{X}}^*
\alpha_X(c_{\ovl{X},X}R_X^\alpha)
=
\delta_{X,Y}
d(X)^{-1}b.
\end{align*}

(2) $\Rightarrow$ (3).
Let $X,Y\in\sC_{10}$ and $a\in (\alpha_X,\alpha_Y)$.
Put
$b:=\alpha_{\ovl{X}}(\ovl{R}_Y^{\alpha*}c_{Y,\ovl{Y}}^* a)
c_{\ovl{X},X}R_X^\alpha$.
Then $b$ is contained in
$(\alpha_{\ovl{Y}},\alpha_{\ovl{X}})
=\alpha(\sC_{01}(\ovl{Y},\ovl{X}))Z(M_0)$
from the assumption of (2).
Since
$a=d(X)d(Y) \ovl{R}_X^{\alpha*}c_{X,\ovl{X}}^*
\alpha_X(b c_{\ovl{Y},Y}R_Y^\alpha)$,
it suffices to show
$c:=\ovl{R}_X^{\alpha*}c_{X,\ovl{X}}^*
\alpha_X(T^\alpha z c_{\ovl{Y},Y}R_Y^\alpha)$
is in $\alpha(\sC_{10}(X,Y))\alpha_X(Z(M_0))$
for all $T\in \sC_{01}(\ovl{Y},\ovl{X})$ and $z\in Z(M_0)$.
Indeed, we have
$c=[(\ovl{R}_X^*\otimes 1_Y)(1_X\otimes T\otimes 1_Y)
(1_X\otimes R_Y)]^\alpha \alpha_X(z)$.

(3) $\Rightarrow$ (1).
Suppose that $X\in\Irr(\sC_{00})\setminus\{\btr\}$.
By Lemma \ref{lem:free},
we will check $(\alpha_X,\alpha_\btr)=\{0\}$.
Take $U\in\sC_{10}$ so that $X\prec\ovl{U}\otimes U$.
It suffices to show that
$(\alpha_{\ovl{U}\otimes U},\alpha_\btr)$
equals
$\alpha(\sC_{00}(\ovl{U}\otimes U,\btr))Z(M_0)$.
Let $a\in (\alpha_{\ovl{U}\otimes U},\alpha_\btr)$.
Then $\alpha_U(a)c_{U,\ovl{U}\otimes U}$
is an element of
$(\alpha_{U\otimes \ovl{U}\otimes U},\alpha_U)$
which equals
$\alpha(\sC_{10}(U\otimes \ovl{U}\otimes U,U))
\alpha_{U\otimes \ovl{U}\otimes U}(Z(M_0))$
by assumption (3).
Thus we will compute
$\phi_U^\alpha
(T^\alpha \alpha_{U\otimes \ovl{U}\otimes U}(z)
c_{U,\ovl{U}\otimes U}^*)$
for $T\in \sC_{10}(U\otimes \ovl{U}\otimes U,U)$
and $z\in Z(M_0)$
as follows:
\begin{align*}
\phi_U^\alpha
(T^\alpha \alpha_{U\otimes \ovl{U}\otimes U}(z)
c_{U,\ovl{U}\otimes U}^*)
&=
zR_U^{\alpha*}c_{\ovl{U},U}^*
\alpha_{\ovl{U}}
(T^\alpha
c_{U,\ovl{U}\otimes U}^*)
c_{\ovl{U},U}R_U^\alpha
\\
&=
z
R_U^{\alpha*}[1_{\ovl{U}}\otimes T]^\alpha
c_{\ovl{U},U\otimes \ovl{U}\otimes U}^*
\alpha_{\ovl{U}}
(c_{U,\ovl{U}\otimes U}^*)
c_{\ovl{U},U}R_U^\alpha
\\
&=
z
R_U^{\alpha*}[1_{\ovl{U}}\otimes T]^\alpha
c_{\ovl{U}\otimes U,\ovl{U}\otimes U}^*R_U^\alpha
\\
&=
z[R_U^*(1_{\ovl{U}}\otimes T)
(R_U\otimes 1_{\ovl{U}}\otimes 1_U)]^\alpha.
\end{align*}
Hence $a\in \alpha(\sC_{00}(\ovl{U}\otimes U,\btr))Z(M_0)$.
\end{proof}

\begin{lem}
\label{lem:00-11free}
Let $(\alpha,c)$ be a cocycle action of $\sC$ on $M$
as before.
Then the following statements are equivalent:
\begin{enumerate}
\item
$(\alpha^{00},c)$ is free
and $Z(M_0)=\alpha_X(Z(M_1))$
for all $X\in\Irr(\sC_{01})$.

\item
$(\alpha^{00},c)$ is free
and $\alpha_Y(Z(M_0))=Z(M_1)$
for all $Y\in\Irr(\sC_{10})$.

\item
$(\alpha^{00},c)$ and $(\alpha^{11},c)$ are free.
\end{enumerate}
In particular,
one of the equivalent conditions holds,
then the restriction of $\alpha_X$,
which will be denoted by $\theta_X^\alpha$,
induces
an isomorphism from $Z(M_s)$ onto $Z(M_r)$
for $X\in\Irr(\sC_{rs})$ with $r,s\in\Lambda$.
\end{lem}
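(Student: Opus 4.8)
The plan is to derive all three equivalences, and the ``In particular'' clause, purely from Lemma \ref{lem:free2} and its mirror image obtained by interchanging the labels $0$ and $1$ (legitimate, since the hypotheses on $\sC$ and on $M=(M_0,M_1)$ are symmetric in the two indices). Concretely, Lemma \ref{lem:free2} says $(\alpha^{00},c)$ is free iff either of
\[
\text{(A)}\ (\alpha_X,\alpha_Y)=\alpha(\sC_{01}(X,Y))Z(M_0)\ (X,Y\in\sC_{01}),\qquad
\text{(B)}\ (\alpha_X,\alpha_Y)=\alpha(\sC_{10}(X,Y))\alpha_X(Z(M_0))\ (X,Y\in\sC_{10})
\]
holds, and, applying the mirror, $(\alpha^{11},c)$ is free iff either of
\[
\text{(C)}\ (\alpha_X,\alpha_Y)=\alpha(\sC_{10}(X,Y))Z(M_1)\ (X,Y\in\sC_{10}),\qquad
\text{(D)}\ (\alpha_X,\alpha_Y)=\alpha(\sC_{01}(X,Y))\alpha_X(Z(M_1))\ (X,Y\in\sC_{01})
\]
holds; here for $X,Y\in\sC_{01}$ the space $(\alpha_X,\alpha_Y)$ sits in $M_0$ and for $X,Y\in\sC_{10}$ it sits in $M_1$. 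The key remark is that (A) and (D) have identical shape except that $Z(M_0)$ is replaced by $\alpha_X(Z(M_1))$, and likewise (B) and (C) differ only in $\alpha_X(Z(M_0))$ versus $Z(M_1)$; moreover on a simple self-object $X\in\Irr(\sC_{01})$ (resp. $\Irr(\sC_{10})$) we have $\alpha(\sC_{01}(X,X))=\C 1$, so the two shapes collapse to $Z(M_0)$ and $\alpha_X(Z(M_1))$ (resp. to $\alpha_X(Z(M_0))$ and $Z(M_1)$).

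For $(3)\Rightarrow(1)$ and $(3)\Rightarrow(2)$: if $(\alpha^{00},c)$ and $(\alpha^{11},c)$ are free, then (A) and (D) both hold, so their right-hand sides coincide for every $X,Y\in\sC_{01}$; taking $X=Y\in\Irr(\sC_{01})$ gives $Z(M_0)=\alpha_X(Z(M_1))$, the extra clause of (1). Comparing (B) with (C) in the same way gives $\alpha_X(Z(M_0))=Z(M_1)$ for $X\in\Irr(\sC_{10})$, the extra clause of (2). In both cases the freeness of $(\alpha^{00},c)$ required by (1) resp. (2) is contained in (3).

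For $(1)\Rightarrow(3)$ (and symmetrically $(2)\Rightarrow(3)$): the freeness of $(\alpha^{00},c)$ in (1) gives (A). Using the extra clause of (1) I first rewrite (A) on pairs of simple objects into the form of (D) on simple objects; then I pass to arbitrary $X,Y\in\sC_{01}$ by decomposing $X=\bigoplus_i X_i$, $Y=\bigoplus_j Y_j$ with isometries $t_i\in\sC_{01}(X_i,X)$, $s_j\in\sC_{01}(Y_j,Y)$, using the standard identity $(\alpha_X,\alpha_Y)=\spa\{\,s_j^\alpha\,b\,t_i^{\alpha*}: b\in(\alpha_{X_i},\alpha_{Y_j}),\ \text{all }i,j\,\}$ together with $\alpha_{X_i}(w)t_i^{\alpha*}=t_i^{\alpha*}\alpha_X(w)$ for $w\in Z(M_1)$ (valid since $t_i^{\alpha*}\in(\alpha_X,\alpha_{X_i})$), which lets the central factors slide past the morphisms. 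This yields (D) for all $X,Y\in\sC_{01}$, hence $(\alpha^{11},c)$ is free, and with the already-assumed freeness of $(\alpha^{00},c)$ this is (3). The symmetric manipulation turns (B) plus the extra clause of (2) into (C).

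For the ``In particular'': under any of the equivalent conditions $(\alpha^{00},c)$ and $(\alpha^{11},c)$ are free, so by Lemma \ref{lem:thetaalpha}(1) the restriction $\theta_X^\alpha=\alpha_X|_{Z(M_r)}$ is an automorphism of $Z(M_r)$ for $X\in\Irr(\sC_{rr})$, while for $X\in\Irr(\sC_{10})$ (resp. $\Irr(\sC_{01})$) the map $\alpha_X|_{Z(M_0)}$ (resp. $\alpha_X|_{Z(M_1)}$) is injective (as $\alpha_X$ is) and surjective onto $Z(M_1)$ (resp. $Z(M_0)$) by (2) (resp. (1)). The argument is essentially all bookkeeping; the one step requiring a little care is the passage from simple to general objects in $(1)\Rightarrow(3)$, and the main thing to watch throughout is whether a given intertwiner space lives in $M_0$ or in $M_1$, so that the $0\leftrightarrow1$-mirrored form of Lemma \ref{lem:free2} is invoked in the correct place.
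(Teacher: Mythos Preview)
Your proof is correct and establishes all the equivalences via Lemma~\ref{lem:free2} and its $0\leftrightarrow 1$ mirror. The route you take differs from the paper's in one place: the paper proves the cycle $(1)\Rightarrow(2)\Rightarrow(3)\Rightarrow(1)$, and for $(1)\Rightarrow(2)$ it carries out an explicit computation with the left inverse $\phi_Y^\alpha$, namely showing that if $y\in Z(M_1)$ and $x:=\alpha_{\ovl Y}(y)\in Z(M_0)$ then $\phi_Y^\alpha((\alpha_Y(x)-y)^*(\alpha_Y(x)-y))=0$, so $\alpha_Y(x)=y$ and hence $\alpha_Y(Z(M_0))\supset Z(M_1)$ (the reverse inclusion being the ``in particular'' of Lemma~\ref{lem:free2}). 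You instead bypass $(1)\Rightarrow(2)$ entirely and go straight to $(1)\Rightarrow(3)$ by substituting the hypothesis $Z(M_0)=\alpha_X(Z(M_1))$ into the right-hand side of (A) to obtain (D), which by the mirrored Lemma~\ref{lem:free2} gives freeness of $(\alpha^{11},c)$.

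Your approach is more symmetric and avoids the $\phi_Y^\alpha$ argument altogether; the paper's direct computation for $(1)\Rightarrow(2)$, on the other hand, makes the concrete mechanism transparent (that $\alpha_Y$ and $\alpha_{\ovl Y}$ restrict to mutually inverse isomorphisms between the centers). One minor remark on your write-up: the passage from simple to general $X,Y\in\sC_{01}$ in your $(1)\Rightarrow(3)$ step is fine as written, but it is not really needed---the proof of Lemma~\ref{lem:free2} already notes that each of the conditions (A)--(D) is equivalent to its restriction to simple objects, so once you have (D) on $\Irr(\sC_{01})$ you may invoke the mirrored Lemma~\ref{lem:free2} directly.
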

\begin{proof}
(1) $\Rightarrow$ (2).
Let $Y\in\Irr(\sC_{10})$.
By Lemma \ref{lem:free2},
we see $Z(M_1)\subset \alpha_Y(Z(M_0))$.
We will show the converse inclusion.
Let $x\in Z(M_0)$.
By (1), we can take $y\in Z(M_1)$
with $x=\alpha_{\ovl{Y}}(y)$.
Using the centrality of $x$,
we have $\phi_Y^\alpha(y)=x$
and $\phi_Y^\alpha(y^*y)=x^*x$.
Hence $\alpha_Y(x)=y$ since
$\phi_Y^\alpha
((\alpha_Y(x)-y)^*
(\alpha_Y(x)-y))
=0$.

(2) $\Rightarrow$ (3).
This implication follows from Lemma \ref{lem:free2} by symmetry.
The implication (3) $\Rightarrow$ (1) is trivial
from Lemma \ref{lem:free2}.
\end{proof}

\begin{defn}
\label{defn:free2}
Let $(\alpha,c)$ be a cocycle action of a rigid C$^*$-2-category
on a system of properly infinite von Neumann algebras as before.
We will say $(\alpha,c)$ is \emph{free}
when one of the equivalent conditions in Lemma \ref{lem:00-11free}
holds.
\end{defn}

\begin{rem}
By the previous lemma,
if $(\alpha,c)$ is free,
then $M_0$ is a factor if and only if $M_1$ is.
Note $\alpha$ preserves the type components of von Neumann algebras.
\end{rem}

The following lemma is an application of Lemma \ref{lem:ee}
to $(\alpha^{00},c)$.

\begin{lem}
\label{lem:ee2}
Let $(\alpha,c)$ be a cocycle action
of a rigid C$^*$-2-category $\sC=(\sC_{rs})_{r,s\in\Lambda}$
on a system of properly infinite von Neumann algebras
$M=(M_r)_{r\in\Lambda}$ with separable preduals.
Let $\cF_{r0}$ be a finite subset of $\Irr(\sC_{r0})$
with $r\in\Lambda$.
Suppose that $(\alpha^{00},c)$ is centrally free
and 
there exists a faithful normal state
$\varphi_0\in (M_{0})_*$
being $\alpha^{00}$-invariant on $Z(M_0)$.
Let $Q_0$ be a countably generated von Neumann subalgebra
in $M_0^\omega$
and $\delta>0$.
Then
there exist $n\in\N$ and a partition of unity
$\{e_k\}_{k=0}^n$ in $Q_0'\cap (M_0)_\omega$
such that
\begin{enumerate}
\item
$|e_0|_{\varphi^\omega}<\delta$.

\item
$e_k\alpha_{\ovl{X}\otimes Y}(e_k)=0$
for all $k=1,\dots,n$ and $X,Y\in \cF_{r0}$
with $X\neq Y$ and $r\in\Lambda$.
\end{enumerate}
\end{lem}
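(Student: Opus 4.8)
The plan is to reduce the assertion to Lemma~\ref{lem:ee} applied to the centrally free cocycle action $(\alpha^{00},c)$ of the rigid C$^*$-tensor category $\sC_{00}$ on the properly infinite von Neumann algebra $M_0$. Replacing $\delta$ by $\min\{\delta,1/2\}$ at the outset, we may and do assume $0<\delta<1$.

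First I would collect the simple objects of $\sC_{00}$ that can occur in the tensor products in question:
\[
\cF:=\{Z\in\Irr(\sC_{00}) : Z\prec\ovl{X}\otimes Y\ \mbox{for some}\ r\in\Lambda,\ X,Y\in\cF_{r0}\ \mbox{with}\ X\neq Y\}.
\]
Here I use that for $X\in\sC_{r0}$ one has $\ovl{X}\in\sC_{0r}$, so that $\ovl{X}\otimes Y\in\sC_{00}$ and $\alpha_{\ovl{X}\otimes Y}\in\End(M_0)_0$; since $\Lambda=\{0,1\}$ and each $\cF_{r0}$ is finite, $\cF$ is a finite subset of $\Irr(\sC_{00})$. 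The key point, which is what makes Lemma~\ref{lem:ee} available, is that $\btr\notin\cF$: for distinct $X,Y\in\Irr(\sC_{r0})$ we have $\sC_{r0}(X,Y)=\{0\}$, and the Frobenius reciprocity yields $\sC_{00}(\btr,\ovl{X}\otimes Y)\cong\sC_{r0}(X,Y)=\{0\}$, so $\btr\not\prec\ovl{X}\otimes Y$. Hence $\cF\subset\Irr(\sC_{00})\setminus\{\btr\}$.

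Next I would apply Lemma~\ref{lem:ee} to the centrally free cocycle action $(\alpha^{00},c)$ of $\sC_{00}$ on $M_0$, the faithful normal state $\varphi_0$ (which by hypothesis is $\alpha^{00}$-invariant on $Z(M_0)$, exactly the invariance required in Lemma~\ref{lem:qq}), the countably generated von Neumann subalgebra $Q_0\subset M_0^\omega$, the finite set $\cF$, and $\delta$. This produces $n\in\N$ and a partition of unity $\{e_k\}_{k=0}^n$ in $Q_0'\cap(M_0)_\omega$ with $|e_0|_{\varphi_0^\omega}<\delta$, which is (1), and with $e_k\alpha_Z(e_k)=0$ for all $k=1,\dots,n$ and all $Z\in\cF$.

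Finally, I would deduce (2) by an irreducible decomposition. Fix $r\in\Lambda$ and $X\neq Y$ in $\cF_{r0}$, and choose isometries $S_i\in\sC_{00}(Z_i,\ovl{X}\otimes Y)$ with $Z_i\in\Irr(\sC_{00})$, $S_i^*S_j=\delta_{ij}1_{Z_i}$ and $\sum_i S_iS_i^*=1_{\ovl{X}\otimes Y}$; then each $Z_i\in\cF$. Since $S_i^\alpha\in(\alpha_{Z_i},\alpha_{\ovl{X}\otimes Y})$ and $\sum_i S_i^\alpha S_i^{\alpha*}=1_{M_0}$, one has $\alpha_{\ovl{X}\otimes Y}(x)=\sum_i S_i^\alpha\alpha_{Z_i}(x)S_i^{\alpha*}$ for all $x\in M_0$, and because $e_k\in(M_0)_\omega$ commutes with the operators $S_i^\alpha\in M_0$,
\[
e_k\alpha_{\ovl{X}\otimes Y}(e_k)=\sum_i S_i^\alpha\,e_k\,\alpha_{Z_i}(e_k)\,S_i^{\alpha*}=0.
\]
This is (2). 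There is no genuine difficulty here, the statement being essentially a corollary of Lemma~\ref{lem:ee}; the only things to watch are the index bookkeeping for the C$^*$-2-category (so that $\ovl{X}\otimes Y$ really lands in $\sC_{00}$) and the observation that $\btr\notin\cF$, without which Lemma~\ref{lem:ee} could not be invoked.
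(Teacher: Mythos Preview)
Your argument is correct and is precisely the intended one: the paper states Lemma~\ref{lem:ee2} without proof, noting only that it ``is an application of Lemma~\ref{lem:ee} to $(\alpha^{00},c)$,'' and you have supplied exactly those details --- collecting the irreducible constituents of the $\ovl{X}\otimes Y$ into a finite $\cF\subset\Irr(\sC_{00})\setminus\{\btr\}$, invoking Lemma~\ref{lem:ee}, and recovering the non-simple case via the decomposition $\alpha_{\ovl{X}\otimes Y}(e_k)=\sum_i S_i^\alpha\alpha_{Z_i}(e_k)S_i^{\alpha*}$ together with $e_k\in(M_0)_\omega\subset M_0'$.
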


The proof of Lemma \ref{lem:ortho} is applicable
to a rigid C$^*$-2-category
and we have the following result.

\begin{lem}
\label{lem:ortho2}
Let $(\alpha,c)$ be a cocycle action
of a rigid C$^*$-2-category $\sC=(\sC_{rs})_{r,s\in\Lambda}$
on a system of properly infinite von Neumann algebras
$N=(N_r)_{r\in\Lambda}$
with not necessarily separable preduals.
Let $\cK_{r0}$ be a finite subset of $\Irr(\sC_{r0})$
with $r\in\Lambda$.
Suppose that a projection $e\in N_0$
satisfies $e\alpha_X(e)=0$
for all $X\in (\ovl{\cK_{r0}}\cdot \cK_{r0})\setminus\{\btr\}$
and moreover
$e$ commutes with all morphisms in $\sC_{00}\sqcup \sC_{01}$
and all unitaries $c_{X,Y}$, $(X,Y)\in\sC_{0t}\times \sC_{tu}$
with $t,u\in\Lambda$.
Then the following statements hold for all $r\in\Lambda$:
\begin{enumerate}
\item
$\alpha_X(e)\alpha_Y(e)=0$
for $X\neq Y\in\cK_{r0}$.

\item
$d(X)^2\phi_{\ovl{X}}^\alpha(e)$
with
$X\in\cK_{r0}$
are mutually
orthogonal projections in $(N_r)_\omega$.

\item
For each $X\in\cK_{r0}$,
$d(X)^2\phi_{\ovl{X}}^\alpha(e)$
is the minimum of projections
$f\in N_r$ such that $f\alpha_X(e)=\alpha_X(e)$,
$fc_{X,\ovl{X}}\ovl{R}_{X}^\alpha
=c_{X,\ovl{X}}\ovl{R}_{X}^\alpha f$.

\item
For each $X\in\cK_{r0}$,
$d(X)^2\alpha_X(e)
c_{\ovl{X},X}\ovl{R}_X^\alpha\ovl{R}_X^{\alpha*}c_{\ovl{X},X}^*\alpha_X(e)
=\alpha_X(e)$.
\end{enumerate}
\end{lem}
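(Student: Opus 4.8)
The plan is to transplant the proof of Lemma~\ref{lem:ortho} into the $2$-category setting, the only genuinely new ingredient being the bookkeeping of the grading: one checks that every object, morphism and cocycle unitary entering the computation has a degree compatible with the $2$-categorical composition, and that each element lands in the correct algebra of the system $N=(N_0,N_1)$. Concretely, $e\in N_0$; for $X\in\cK_{r0}$ one has $\alpha_X(e)\in N_r$, the left inverse $\phi_{\ovl{X}}^\alpha$ carries $N_0$ into $N_r$, one has $c_{\ovl{X},X}R_X^\alpha\in N_0$ (since $\ovl{X}\otimes X\in\sC_{00}$) and $c_{X,\ovl{X}}\ovl{R}_X^\alpha\in N_r$, and the hypotheses on $e$ are tailored exactly so that $e$ commutes with everything landing in $N_0$ that the argument needs (recall $\Lambda=\{0,1\}$, so $\sC_{00}\sqcup\sC_{01}$ exhausts the morphism spaces with first index $0$, and $\bigsqcup_{t,u}\sC_{0t}\times\sC_{tu}$ exhausts the cocycle unitaries lying in $N_0$).

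First I would treat (1). For $X\neq Y$ in $\cK_{r0}$ the object $\ovl{X}\otimes Y$ lies in $\sC_{00}$ and does not contain $\btr$, so every simple summand $Z\prec\ovl{X}\otimes Y$ belongs to $(\ovl{\cK_{r0}}\cdot\cK_{r0})\setminus\{\btr\}\subset\Irr(\sC_{00})$; decomposing $\alpha_{\ovl{X}\otimes Y}(e)$ over the partial isometries $SS^*$ with $S\in\ONB(Z,\ovl{X}\otimes Y)$, and using that $e$ commutes with the $S^\alpha$ and with $c_{\ovl{X},Y}$, one obtains $e\,\alpha_{\ovl{X}}(\alpha_Y(e))=0$. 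Applying the faithful left inverse $\phi_X^\alpha$ to the positive element $\alpha_X(e)\alpha_Y(e)\alpha_X(e)$, together with $\phi_X^\alpha\circ\alpha_X=\id$ and the commutation of $e$ with $c_{\ovl{X},X}$ and $R_X^\alpha$, gives $\phi_X^\alpha(\alpha_X(e)\alpha_Y(e)\alpha_X(e))=R_X^{\alpha*}c_{\ovl{X},X}^*\,e\,\alpha_{\ovl{X}}(\alpha_Y(e))\,e\,c_{\ovl{X},X}R_X^\alpha=0$, hence $\alpha_X(e)\alpha_Y(e)=0$.

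For (2) I would first record the identity $e\,\alpha_{\ovl{X}}(\alpha_X(e))=c_{\ovl{X},X}R_X^\alpha R_X^{\alpha*}c_{\ovl{X},X}^*\,\alpha_{\ovl{X}}(\alpha_X(e))$, which follows in the same way by decomposing $\alpha_{\ovl{X}\otimes X}(e)$ over the simple summands of $\ovl{X}\otimes X$ and noting that $e\,\alpha_Z(e)=0$ for $Z\neq\btr$. Applying $\phi_{\ovl{X}}^\alpha$ then yields $\phi_{\ovl{X}}^\alpha(e)\alpha_X(e)=d(X)^{-2}\alpha_X(e)$. Next, exactly as in Lemma~\ref{lem:ortho} and using $\alpha_{\ovl{X}}(c_{Y,Z})=c_{X,Y}c_{X\otimes Y,Z}c_{X,Y\otimes Z}^*$ and $\alpha_{\ovl{X}}(T^\alpha)=c_{\ovl{X},Z}[1_{\ovl{X}}\otimes T]^\alpha c_{\ovl{X},Y}^*$ (all of the appropriate degree), one checks that $\phi_{\ovl{X}}^\alpha(e)$ commutes with the images in $N_r$ of all morphisms of $\sC$ and with all cocycle unitaries lying in $N_r$; squaring by means of the conjugate equation shows $d(X)^2\phi_{\ovl{X}}^\alpha(e)$ is a projection, and $e\,\alpha_{\ovl{X}}(\phi_{\ovl{Y}}^\alpha(e))=0$ for $X\neq Y$ (a consequence of (1) plus the commutation of $e$ with $\ovl{R}_Y^\alpha$, $c_{Y,\ovl{Y}}$) gives the orthogonality.

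Finally, (3) and (4) would go through verbatim. For (3), $d(X)^2\phi_{\ovl{X}}^\alpha(e)\geq\alpha_X(e)$ and is central for the relevant morphisms and cocycles by the previous step; for a competing projection $f\in N_r$ as in the statement, applying $\phi_{\ovl{X}}^\alpha$ to $\alpha_{\ovl{X}}(f^\perp)\,e\,\alpha_{\ovl{X}}(f^\perp)$ forces this to vanish, whence $\alpha_{\ovl{X}}(f)e=e$ and $f\phi_{\ovl{X}}^\alpha(e)=\phi_{\ovl{X}}^\alpha(e)$. For (4), $d(X)\alpha_X(e)c_{X,\ovl{X}}\ovl{R}_X^\alpha$ is a partial isometry by (2), the element $p:=d(X)^2\alpha_X(e)c_{X,\ovl{X}}\ovl{R}_X^\alpha\ovl{R}_X^{\alpha*}c_{X,\ovl{X}}^*\alpha_X(e)$ is a projection dominated by $\alpha_X(e)$, and $\phi_X^\alpha(p)=d(X)^2\,e\,\phi_X^\alpha(c_{X,\ovl{X}}\ovl{R}_X^\alpha\ovl{R}_X^{\alpha*}c_{X,\ovl{X}}^*)\,e=e$ by the normalized conjugate equation, so $p=\alpha_X(e)$ by faithfulness of $\phi_X^\alpha$. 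The main thing to watch is precisely the grading bookkeeping in (2): one must confirm that every cocycle unitary produced by the standard-solution manipulations for $\ovl{X}\otimes X$ and for $Y\otimes Z$ really has its first leg in $\sC_{0\bullet}$, so that the commutation hypotheses on $e$ apply — but this is immediate from $\ovl{X}\in\sC_{0r}$ and from the degrees of the morphisms involved, so no substantial new difficulty arises.
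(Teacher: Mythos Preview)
Your proposal is correct and takes exactly the approach the paper intends: the paper does not give an independent proof of this lemma at all, stating only that ``The proof of Lemma~\ref{lem:ortho} is applicable to a rigid C$^*$-2-category and we have the following result.'' Your write-up is in fact more detailed than the paper, carefully tracking the grading so that every morphism and cocycle unitary that $e$ must commute with indeed lies in $N_0$ (hence is covered by the hypotheses), which is precisely the only point requiring attention in the passage from the tensor-category case to the 2-category case.
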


Let us state our main theorem of this subsection.

\begin{thm}\label{thm:Rohlin-tensor2}
Let $(\alpha,c)$ be a cocycle action of
an amenable rigid C$^*$-2-category $\sC=(\sC_{rs})_{r,s\in\Lambda}$
on a system of properly infinite von Neumann algebras
$M=(M_r)_{r\in\Lambda}$ with separable preduals.
Suppose the following conditions hold:
\begin{itemize}
\item 
$\alpha_X(Z(M_s))=Z(M_r)$
for $X\in\Irr(\sC_{rs})$ with $r,s\in\Lambda$.

\item
There exist faithful normal states $\varphi_r$ on $M_r$
with $r\in\Lambda$
such that $\alpha_X(\varphi_s)=\varphi_r$ on $Z(M_r)$
for all $X\in\sC_{rs}$ and $r,s\in\Lambda$.

\item
$(\alpha^{00},c)$ is centrally free.
\end{itemize}

Let $0<\delta<1$ and $\cF_{rs},\cK_{r0}$ with $r,s\in\Lambda$
be finite subsets as in Lemma \ref{lem:FrsKrs}.
Then for any countably generated von Neumann subalgebras
$Q_r\subset M_r^\omega$ with $r\in\Lambda$,
there exist two families of projections
$E^{00}:=(E_X^{00})_{X\in\sC_{00}}$ in $M_0^\omega$
and
$E^{10}:=(E_X^{10})_{X\in\sC_{10}}$ in $M_1^\omega$
satisfying the following conditions:
\begin{enumerate}
\item
\label{item:Roh-natural2}
(Naturality)
$E_Y^{r0} T^\alpha=T^\alpha E_X^{r0}$
for all $r\in\Lambda$,
$X,Y\in\sC_{r0}$ and $T\in\sC_{r0}(X,Y)$.

\item
\label{item:Roh-support2}
$E_X^{r0}=0$ for all $r\in\Lambda$,
$X\in\Irr(\sC_{r0})\setminus\cK_{r0}$.

\item
\label{item:Roh-commute2}
$E_X^{r0}\in\alpha_X(Q_0)'\cap {}_{\alpha_X}(M_0)_\omega$
for all $r\in\Lambda$ and $X\in\Irr(\sC_{r0})$,
where
${}_{\alpha_X}(M_0)_\omega:=\alpha_X((M_0)_\omega)\vee (M_r)_\omega$.

\item
(Splitting property)
\label{item:Roh-split2}
$\tau^\omega(E_X^{r0} x)
=\tau^\omega(E_X^{r0})\tau^\omega(x)$
for all $r\in\Lambda$,
$X\in\sC_{r0}$ and $x\in Q_r$.

\item
\label{item:Roh-orthogonal2}
$\{d(X)^2\ovl{R}_X^{\alpha*}c_{X,\ovl{X}}^*
E_X^{r0}
c_{X,\ovl{X}}\ovl{R}_X^\alpha\}_{X\in\cK_{r0}}$
is a family of orthogonal projections
in $Q_r'\cap (M_r)_\omega$.

\item
\label{item:Roh-partition2}
(Approximate partition of unity)
\begin{align*}
&\sum_{X\in\cK_{r0}}\varphi_r^\omega
(d(X)^2\ovl{R}_X^{\alpha*}
c_{X,\ovl{X}}^*E_X^{r0}c_{X,\ovl{X}}\ovl{R}_X^\alpha)
\geq
1-7\delta^{1/2}
\quad
\mbox{for all }
r\in\Lambda.
\end{align*}

\item
\label{item:Roh-equivariance2}
(Approximate joint equivariance)
For all $r,s\in\Lambda$,
\[
\sum_{X\in\cF_{rs}}
\sum_{Y\in\Irr(\sC_{s0})}
d(X)^2d(Y)^2
|\alpha_X(E_Y^{s0})
-
c_{X,Y}E_{X\otimes Y}^{r0}c_{X,Y}^*
|_{\alpha_X(\alpha_Y(\varphi_0^\omega))}
\leq
6\delta^{1/2}|\cF_{rs}|_\sigma.
\]
\item
\label{item:Roh-resonance2}
(Resonance property)
\[E_X^{r0}
c_{X,\ovl{X}}\ovl{R}_X^\alpha
E_Y^{r0}
=
\frac{\delta_{X,Y}}{d(X)}
\alpha_{X}(c_{\ovl{X},X}R_X^\alpha)E_X^{r0}
\]
for all
$X,Y\in\Irr(\sC_{r0})$
and
$r\in\Lambda$.
\end{enumerate}
\end{thm}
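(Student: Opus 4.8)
The plan is to transplant the maximality argument that proves Theorem~\ref{thm:Rohlin-tensor} to the two-column setting $\sC_{00}\sqcup\sC_{10}$, replacing the single F\o lner set by the pair $(\cK_{00},\cK_{10})$ of Lemma~\ref{lem:FrsKrs} and replacing Lemmas~\ref{lem:ee}, \ref{lem:ortho} by Lemma~\ref{lem:ee2} (i.e.\ Lemma~\ref{lem:ee} applied to $(\alpha^{00},c)$ on $M_0$) and Lemma~\ref{lem:ortho2}. As in the remarks after Theorem~\ref{thm:Rohlin-tensor}, I would first enlarge $Q_0$ and $Q_1$ so that they are $\alpha$-invariant, contain $M_0$, $M_1$, and contain enough morphisms of $\sC$ and unitaries $c_{X,Y}$. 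Let $\mathcal{J}$ be the set of pairs $E=(E^{00},E^{10})$ of families of projections satisfying the structural conditions (\ref{item:Roh-natural2}), (\ref{item:Roh-support2}), (\ref{item:Roh-commute2}), (\ref{item:Roh-split2}), (\ref{item:Roh-orthogonal2}) and (\ref{item:Roh-resonance2}); it contains the zero pair. On $\mathcal{J}$ put
\begin{align*}
b_E&:=\sum_{X\in\cK_{00}}d(X)^2\varphi_0^\omega(E_X^{00}),
\\
a_E&:=\sum_{r,s\in\Lambda}\frac{1}{|\cF_{rs}|_\sigma}\sum_{X\in\cF_{rs}}\sum_{Y\in\Irr(\sC_{s0})}d(X)^2d(Y)^2\bigl|\alpha_X(E_Y^{s0})-c_{X,Y}E_{X\otimes Y}^{r0}c_{X,Y}^*\bigr|_{\alpha_X(\alpha_Y(\varphi_0^\omega))}.
\end{align*}
By (\ref{item:Roh-orthogonal2}) one has $0\le b_E\le 1$, and $b_E=\varphi_0^\omega\bigl(\sum_{X\in\cK_{00}}d(X)^2\ovl{R}_X^{\alpha*}c_{X,\ovl{X}}^*E_X^{00}c_{X,\ovl{X}}\ovl{R}_X^\alpha\bigr)$ as in Theorem~\ref{thm:Rohlin-tensor}.

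The heart of the matter is the $2$-category analogue of Lemma~\ref{lem:ab}: if $E\in\mathcal{J}$ and $b_E<1-\delta^{1/2}$, there is $E'\in\mathcal{J}$ with $0<(\delta^{1/2}/2)\sum_{r\in\Lambda}\sum_{X\in\cK_{r0}}d(X)^2\varphi_r^\omega(|\widetilde E_X^{r0}-E_X^{r0}|)\le b_{E'}-b_E$ (writing $\widetilde E^{r0}$ for the new families) and $a_{E'}-a_E\le 6\delta^{1/2}(b_{E'}-b_E)$. To prove it I would fix $\delta_1>0$ with $b_E<(1-\delta_1)(1-\delta^{1/2})$, apply Lemma~\ref{lem:ee} to $(\alpha^{00},c)$ and to the finite set $\bigl((\ovl{\cK_{00}}\cdot\cK_{00})\cup(\ovl{\cK_{10}}\cdot\cK_{10})\bigr)\setminus\{\btr\}\subset\Irr(\sC_{00})\setminus\{\btr\}$ to obtain a partition of unity $\{e_k\}$ in $Q_0'\cap (M_0)_\omega$ with $|e_0|_{\varphi_0^\omega}<\delta_1$ and $e_k\alpha_Z(e_k)=0$ for $k\ge1$ and all $Z$ in that set; after fast reindexation against $Q_0$ and $Q_1$ and an averaging argument exactly as in Claim~1 of Lemma~\ref{lem:ab} I would pick $e:=e_k$ with $\sum_{X\in\cK_{00}}d(X)^2\varphi_0^\omega(E_X^{00}f_0)<(1-\delta^{1/2})\varphi_0^\omega(f_0)$, where $f_r:=\sum_{X\in\cK_{r0}}d(X)^2\phi_{\ovl{X}}^\alpha(e)$ is a projection in $Q_r'\cap (M_r)_\omega$ by Lemma~\ref{lem:ortho2}; then define $\widetilde E_X^{r0}:=E_X^{r0}f_r^\perp+\alpha_X(e)[P_X^{\cK_{r0}}]^\alpha$ for $X\in\sC_{r0}$, $r\in\Lambda$. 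That $E'=(\widetilde E^{00},\widetilde E^{10})\in\mathcal{J}$ follows from Lemma~\ref{lem:ortho2} in both columns as in Claim~2 of Lemma~\ref{lem:ab}. The hypotheses $\alpha_X(Z(M_s))=Z(M_r)$ and $\alpha_X(\varphi_s)=\varphi_r$ on centers give $\varphi_0^\omega(f_0)=|\cK_{00}|_\sigma\varphi_0^\omega(e)$, $\varphi_1^\omega(f_1)=|\cK_{10}|_\sigma\varphi_0^\omega(e)$ and $\varphi_1^\omega(\alpha_X(e))=\varphi_0^\omega(e)$ for $X\in\cK_{10}$, hence $b_{E'}-b_E\ge\delta^{1/2}\varphi_0^\omega(f_0)>0$, the stated second inequality, and $\sum_{X\in\cK_{10}}d(X)^2\varphi_1^\omega(\widetilde E_X^{10})\ge\sum_{X\in\cK_{10}}d(X)^2\varphi_1^\omega(E_X^{10})$ (column $1$ never shrinks); the bound on $a_{E'}-a_E$ is obtained as in Lemma~\ref{lem:ab}, now using the three inequalities $|(\cF_{rs}\cdot\cK_{s0})\setminus\cK_{r0}|_\sigma<\delta|\cK_{r0}|_\sigma$ of Lemma~\ref{lem:FrsKrs}, Lemmas~\ref{lem:Folner}, \ref{lem:average}, \ref{lem:prodfinsets}, and the commutation $f_r\alpha_X(f_s)=\alpha_X(f_s)f_r$ for $X\in\cF_{rs}$.

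Given this, order $\mathcal{I}:=\{E\in\mathcal{J}:a_E\le 6\delta^{1/2}b_E\}$ as in the proof of Theorem~\ref{thm:Rohlin-tensor} (an inductive order, with the zero pair as smallest element); a maximal $E$ has $b_E\ge1-\delta^{1/2}$, which yields (\ref{item:Roh-partition2}) for $r=0$ (with the better bound $1-\delta^{1/2}$), and $a_E\le6\delta^{1/2}$, which yields (\ref{item:Roh-equivariance2}) for every $(r,s)$. It remains to derive (\ref{item:Roh-partition2}) for $r=1$. Using $\cK_{00}\subset\cF_{01}\cdot\cK_{10}$ from Lemma~\ref{lem:FrsKrs}, every $Z\in\cK_{00}$ satisfies $Z\prec W\otimes X$ for some $W\in\cF_{01}$, $X\in\cK_{10}$, and by naturality (\ref{item:Roh-natural2}) together with the $(r,s)=(0,1)$ case of (\ref{item:Roh-equivariance2}) the projection $E_Z^{00}$ differs, in $\varphi_0^\omega$-measure, from a subprojection of a component of $c_{W,X}^*\alpha_W(E_X^{10})c_{W,X}$ by an amount governed by $a_E$. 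Combining this with the formula (\ref{eq:leftinv}) in $\sC_{00}$, Lemma~\ref{lem:prodfinsets}, and the identity $\sum_{W\in\cF_{01}}d(W)^2\varphi_0^\omega(c_{W,X}^*\alpha_W(y)c_{W,X})=|\cF_{01}|_\sigma\,\varphi_1^\omega(y)$ for $\tau^\omega$-central $y$ (from $\alpha_W(\varphi_1)=\varphi_0$ on $Z(M_0)$), one gets $b_E\le\sum_{X\in\cK_{10}}d(X)^2\varphi_1^\omega(E_X^{10})+6\delta^{1/2}$; since $c_{X,\ovl{X}}\ovl{R}_X^\alpha\in M_1$ and $\tau^\omega(E_X^{10})\in Z(M_1)$, the right-hand sum equals the quantity in (\ref{item:Roh-partition2}) for $r=1$, which is therefore $\ge1-7\delta^{1/2}$.

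The step I expect to be the main obstacle is precisely this last transfer — there is no counterpart to it in the single-category Theorem~\ref{thm:Rohlin-tensor}, and it is what forces the weaker constant $1-7\delta^{1/2}$ in place of $1-\delta^{1/2}$. A secondary difficulty is the cross-column bookkeeping in the analogue of Lemma~\ref{lem:ab}: the terms $\alpha_X(E_Y^{s0})$ with $r\neq s$ move one column into the other, and controlling them requires the joint F\o lner estimates of Lemma~\ref{lem:FrsKrs} and the submultiplicativity of $\sigma$ (Lemma~\ref{lem:prodfinsets}); everything else is a routine rerun of the proofs of Lemmas~\ref{lem:ab}, \ref{lem:ortho} and of Theorem~\ref{thm:Rohlin-tensor}.
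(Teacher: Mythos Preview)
Your overall plan is right, but there is a genuine gap in the bookkeeping: tracking only the single quantity $b_E:=\sum_{X\in\cK_{00}}d(X)^2\varphi_0^\omega(E_X^{00})$ cannot support the claimed inequality $a_{E'}-a_E\le 6\delta^{1/2}(b_{E'}-b_E)$. In the analogue of Lemma~\ref{lem:ab}, after you pass to the new tower $\widetilde E_X^{r0}=E_X^{r0}f_r^\perp+\alpha_X(e)[P_X^{\cK_{r0}}]^\alpha$, the increment $a_{E'}-a_E$ is bounded (exactly as in the paper's computation) by a constant multiple of $\delta\bigl(\varphi_0^\omega(f_0)+\varphi_1^\omega(f_1)\bigr)=\delta\bigl(|\cK_{00}|_\sigma+|\cK_{10}|_\sigma\bigr)\varphi_0^\omega(e)$: the cross-column terms $\alpha_X(E_Y^{s0})$ with $r\ne s$ force both $f_0$ and $f_1$ into the estimate. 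On the other hand your Claim~1 only yields $b_{E'}-b_E>\delta^{1/2}\varphi_0^\omega(f_0)=\delta^{1/2}|\cK_{00}|_\sigma\varphi_0^\omega(e)$. The ratio $|\cK_{10}|_\sigma/|\cK_{00}|_\sigma$ is \emph{not} controlled by $\delta$ (in the construction of Lemma~\ref{lem:FrsKrs} one has $\cK_{10}=\cG_{10}\cdot\cK_{00}$, and $|\cG_{10}|_\sigma$ depends on the fixed $\cF_{rs}$, not on $\delta$), so the inequality you assert fails in general. The same objection breaks your second claimed inequality, which bounds a two-column sum $\sum_{r}\sum_{X\in\cK_{r0}}d(X)^2\varphi_r^\omega(|\widetilde E_X^{r0}-E_X^{r0}|)$ by the one-column increment $b_{E'}-b_E$.

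The paper fixes this by making $b$ two-component from the start: set $b_E^r:=\sum_{X\in\cK_{r0}}d(X)^2\varphi_r^\omega(E_X^{r0})$ and run the averaging argument (your Claim~1) with the \emph{weighted} hypothesis $\sum_{r}|\cK_{r0}|_\sigma\,b_E^r<(1-\delta^{1/2})\sum_r|\cK_{r0}|_\sigma$, which produces $e$ with $\sum_{r}\sum_{X\in\cK_{r0}}d(X)^2\varphi_r^\omega(E_X^{r0}f_r)<(1-\delta^{1/2})\sum_r\varphi_r^\omega(f_r)$. One then gets $\sum_r(b_{E'}^r-b_E^r)>\delta^{1/2}\sum_r\varphi_r^\omega(f_r)$ and, for each $(r,s)$, $a_{E'}^{r,s}-a_E^{r,s}\le 3\delta^{1/2}\sum_t(b_{E'}^t-b_E^t)$; this is Lemma~\ref{lem:ab2}. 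The maximal element of $\mathcal I=\{E:a_E^{r,s}\le 3\delta^{1/2}\sum_t b_E^t\ \text{for all }r,s\}$ then satisfies $\sum_r|\cK_{r0}|_\sigma b_E^r\ge(1-\delta^{1/2})\sum_r|\cK_{r0}|_\sigma$, so at least one $b_E^r\ge 1-\delta^{1/2}$. The transfer to the other column is then the one-line Lemma~\ref{lem:bsrbrr}, namely $|b_E^0-b_E^1|\le a_E^{1,0}\le 3\delta^{1/2}\sum_t b_E^t\le 6\delta^{1/2}$, which is precisely the content of your last paragraph (and is indeed where the $7\delta^{1/2}$ comes from) --- but it requires the symmetric two-column $b$ to make the maximality argument run in the first place.
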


\begin{rem}
\label{rem:EZ}
Let $Z\in\Irr(\sC_{r0})$ with $r\in\Lambda$.
Since $(\alpha^{00},c)$ is free,
$\tau^\omega(E_Z^{r0})\in(\alpha_Z,\alpha_Z)=\alpha_Z(Z(M_0))$,
which also equals $Z(M_1)$ by our assumption.
Thus
$\alpha_Z(\varphi_0^\omega)(E_Z^{r0})=\varphi_r^\omega(E_Z^{r0})$.
\end{rem}

Our proof of the previous theorem
is almost parallel to that of Theorem \ref{thm:Rohlin-tensor}.
Let us introduce the set $\mathcal{J}$
that is the collection of a pair of families of projections
$E:=(E^{00},E^{10})$,
where
$E^{r0}=(E_X^{r0})_{X\in\sC_{r0}}$
which satisfies the conditions of
Theorem \ref{thm:Rohlin-tensor2}
(\ref{item:Roh-natural2}),
(\ref{item:Roh-support2}),
(\ref{item:Roh-commute2}),
(\ref{item:Roh-split2}),
(\ref{item:Roh-orthogonal2})
and
(\ref{item:Roh-resonance2}).
Trivially, the pair of $E^{00}=(0)_X$ and $E^{10}=(0)_Y$
is a member of $\mathcal{J}$.
We set the functions $a^{r,s},b^{r}$
with $r,s\in\Lambda$
from $\mathcal{J}$ into $\R_+$
by
\begin{align*}
a_E^{r,s}
&:=
\frac{1}{|\cF_{rs}|_\sigma}
\sum_{X\in\cF_{rs}}
\sum_{Y\in\Irr(\sC_{s0})}
d(X)^2d(Y)^2
|\alpha_X(E_Y^{s0})
-c_{X,Y}E_{X\otimes Y}^{r0}c_{X,Y}^*
|_{\alpha_{X}(\alpha_Y(\varphi_0^\omega))},
\\
b_E^{r}
&:=
\sum_{X\in\cK_{r0}}d(X)^2\varphi_r^\omega(E_X^{r0}).
\end{align*}
Note that we see $b_E^r\leq1$ as before.

\begin{lem}
\label{lem:bsrbrr}
One has
$|b_E^{0}-b_E^{1}|\leq a_E^{1,0}$
for $E\in \mathcal{J}$.
\end{lem}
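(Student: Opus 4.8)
The plan is to fix an arbitrary $X\in\cF_{10}$ and to rewrite \emph{both} $b_E^0$ and $b_E^1$ as one and the same weighted sum $\sum_{Y\in\Irr(\sC_{00})}d(Y)^2(\,\cdot\,)$, the two evaluations differing exactly in the two projections $\alpha_X(E_Y^{00})$ and $c_{X,Y}E_{X\otimes Y}^{10}c_{X,Y}^{*}$ whose difference is measured by the inner sum in the definition of $a_E^{1,0}$. Granting this, $|b_E^0-b_E^1|$ will be dominated by that inner sum for each $X\in\cF_{10}$, and multiplying by $d(X)^2/|\cF_{10}|_\sigma$ and summing over $X\in\cF_{10}$ yields $|b_E^0-b_E^1|\le a_E^{1,0}$. (Throughout, all sums over $\Irr(\sC_{r0})$ are finite since $E_Z^{r0}=0$ for $Z\notin\cK_{r0}$, so $b_E^r=\sum_{Z\in\Irr(\sC_{r0})}d(Z)^2\varphi_r^\omega(E_Z^{r0})$.)

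For the level-$0$ side I would use $\phi_X^\alpha\circ\alpha_X=\id$ to get $\alpha_X(\alpha_Y(\varphi_0^\omega))(\alpha_X(E_Y^{00}))=\alpha_Y(\varphi_0^\omega)(E_Y^{00})$; then, since $(\alpha^{00},c)$ is centrally free (hence free), $\varphi_0$ is $\theta^\alpha$-invariant on $Z(M_0)$ and $E_Y^{00}\in{}_{\alpha_Y}(M_0)_\omega$, Lemma~\ref{lem:freetrace}(3) applied to the C$^*$-tensor category $\sC_{00}$ gives $\alpha_Y(\varphi_0^\omega)(E_Y^{00})=\varphi_0^\omega(E_Y^{00})$, so that $b_E^0=\sum_Y d(Y)^2\,\alpha_X(\alpha_Y(\varphi_0^\omega))(\alpha_X(E_Y^{00}))$. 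For the level-$1$ side I would use $\alpha_X(\alpha_Y(\varphi_0^\omega))=c_{X,Y}\alpha_{X\otimes Y}(\varphi_0^\omega)c_{X,Y}^{*}$ and unitarity of $c_{X,Y}$ to reduce to $\alpha_{X\otimes Y}(\varphi_0^\omega)(E_{X\otimes Y}^{10})$, decompose $E_{X\otimes Y}^{10}=\sum_{Z}\sum_{S\in\ONB(Z,X\otimes Y)}S^\alpha E_Z^{10}S^{\alpha*}$ by naturality (Theorem~\ref{thm:Rohlin-tensor2}(\ref{item:Roh-natural2})), apply Lemma~\ref{lem:linv2} with $S^{\alpha*}S^\alpha=1$ to obtain $\alpha_{X\otimes Y}(\varphi_0^\omega)(S^\alpha E_Z^{10}S^{\alpha*})=d(Z)d(X)^{-1}d(Y)^{-1}\alpha_Z(\varphi_0^\omega)(E_Z^{10})$, rewrite $\alpha_Z(\varphi_0^\omega)(E_Z^{10})=\varphi_1^\omega(E_Z^{10})$ by Remark~\ref{rem:EZ}, and finally carry out $\sum_Y d(Y)^2(\cdot)$ using $\sum_{Y}N_{X,Y}^Z d(Y)=d(X)d(Z)$; this collapses the double sum to $\sum_Z d(Z)^2\varphi_1^\omega(E_Z^{10})=b_E^1$.

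Subtracting the two identities gives
\[
b_E^0-b_E^1=\sum_{Y\in\Irr(\sC_{00})}d(Y)^2\,\alpha_X(\alpha_Y(\varphi_0^\omega))\bigl(\alpha_X(E_Y^{00})-c_{X,Y}E_{X\otimes Y}^{10}c_{X,Y}^{*}\bigr).
\]
By Lemma~\ref{lem:freetrace}(1), $E_Y^{00}$ commutes with $\alpha_Y(\psi^\omega)$ for all $\psi\in(M_0)_*$ and $E_{X\otimes Y}^{10}$ commutes with $\alpha_{X\otimes Y}(\psi^\omega)$, so $\alpha_X(E_Y^{00})$ and $c_{X,Y}E_{X\otimes Y}^{10}c_{X,Y}^{*}$ are projections commuting with $\alpha_X(\alpha_Y(\varphi_0^\omega))$; their difference is therefore a self-adjoint element of the centralizer of $\alpha_X(\alpha_Y(\varphi_0^\omega))$, and $|\alpha_X(\alpha_Y(\varphi_0^\omega))(a)|\le|a|_{\alpha_X(\alpha_Y(\varphi_0^\omega))}$. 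Hence $|b_E^0-b_E^1|\le\sum_Y d(Y)^2\,|\alpha_X(E_Y^{00})-c_{X,Y}E_{X\otimes Y}^{10}c_{X,Y}^{*}|_{\alpha_X(\alpha_Y(\varphi_0^\omega))}$ for every $X\in\cF_{10}$, and averaging this over $X\in\cF_{10}$ with weights $d(X)^2/|\cF_{10}|_\sigma$ yields the assertion.

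I expect the only delicate point to be the level-$1$ collapse: one must keep track of the $2$-cocycles $c_{X,Y}$ and the intrinsic-dimension normalizations carefully so that, after Lemma~\ref{lem:linv2}, Remark~\ref{rem:EZ}, and the summation identity $\sum_Y N_{X,Y}^Z d(Y)=d(X)d(Z)$, the double sum over $Y$ and over the irreducible constituents of $X\otimes Y$ reassembles exactly into $b_E^1$. The remaining manipulations are routine bookkeeping with the left-inverse and intertwiner formulas of Section~\ref{subsect:actions}.
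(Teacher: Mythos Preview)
Your proof is correct and follows essentially the same route as the paper's: both rewrite $b_E^0$ and $b_E^1$ as $\sum_Y d(Y)^2\,\alpha_X(\alpha_Y(\varphi_0^\omega))$ applied to $\alpha_X(E_Y^{00})$ and $c_{X,Y}E_{X\otimes Y}^{10}c_{X,Y}^*$ respectively, use that these projections commute with the state so that $|\psi(a)|\le|a|_\psi$, and collapse the level-$1$ side via the decomposition into irreducibles together with $\sum_Y d(Y)^2 p_X(Y,Z)=d(Z)^2$ and Remark~\ref{rem:EZ}. The only cosmetic difference is that you fix a single $X\in\cF_{10}$, obtain the bound for that $X$, and then average, whereas the paper carries the full sum $\sum_{X\in\cF_{10}}d(X)^2(\cdot)$ from the outset; the computations are otherwise identical.
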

\begin{proof}
In the defining equality of $a_E^{1,0}$,
$\alpha_X(E_Y^{00})$
and
$c_{X,Y}E_{X\otimes Y}^{10}c_{X,Y}^*$
are
commuting with
$\alpha_X(\alpha_Y(\varphi_0^\omega))$.
Hence
\begin{align*}
|\cF_{10}|_\sigma
a_E^{1,0}
&\geq
\bigg{|}
\sum_{X\in\cF_{10}}
\sum_{Y\in\Irr(\sC_{00})}
d(X)^2d(Y)^2
\alpha_{X}(\alpha_Y(\varphi_0^\omega))
\big{(}
\alpha_X(E_Y^{00})
-c_{X,Y}E_{X\otimes Y}^{10}c_{X,Y}^*
\big{)}
\bigg{|}
\\
&=
\bigg{|}
\sum_{X\in\cF_{10}}
\sum_{Y\in\Irr(\sC_{00})}
d(X)^2d(Y)^2
\varphi_0^\omega(E_Y^{00})
-\alpha_{X\otimes Y}(\varphi_0^\omega)(E_{X\otimes Y}^{10})
\bigg{|}
\\
&=
\bigg{|}
|\cF_{10}|_\sigma b_E^0
-
\sum_{X\in\cF_{10}}
\sum_{Y\in\Irr(\sC_{00})}
d(X)^2d(Y)^2
\alpha_{X\otimes Y}(\varphi_0^\omega)(E_{X\otimes Y}^{10})
\bigg{|}.
\end{align*}
Using the following equality, we are done:
\begin{align*}
&\sum_{X\in\cF_{10}}
\sum_{Y\in\Irr(\sC_{00})}
d(X)^2d(Y)^2
\alpha_{X\otimes Y}(\varphi_0^\omega)(E_{X\otimes Y}^{10})
\\
&=
\sum_{X\in\cF_{10}}
\sum_{Y\in\Irr(\sC_{00})}
\sum_{Z\in\Irr(\sC_{10})}
d(X)^2d(Y)^2
p_X(Y,Z)\alpha_Z(\varphi_0^\omega)(E_Z^{10})
\\
&=
\sum_{X\in\cF_{10}}
\sum_{Z\in\cK_{10}}
d(X)^2d(Z)^2
\alpha_Z(\varphi_0^\omega)(E_Z^{10})
\\
&=
|\cF_{10}|_\sigma b_E^1
\quad
\mbox{by Remark }\ref{rem:EZ}.
\end{align*}
\end{proof}

\begin{lem}
\label{lem:ab2}
Suppose that an element
$E=(E^{00},E^{10})$ of $\mathcal{J}$
satisfies the inequality
$\sum_{r\in\Lambda}|\cK_{r0}|_\sigma b_E^{r}
<(1-\delta^{1/2})\sum_{r\in\Lambda}|\cK_{r0}|_\sigma$.
Then there exists $E'=(E'^{00},E'^{10})$
in $\mathcal{J}$
such that
\begin{enumerate}
\item
$a_{E'}^{r,s}-a_E^{r,s}
\leq 3\delta^{1/2}
\sum_{t\in\Lambda}(b_{E'}^t-b_E^t)$
for all $r,s\in\Lambda$.

\item
$0<(\delta^{1/2}/2)
\sum_{r\in\Lambda}
\sum_{Y\in\cK_{r0}}
d(Y)^2\varphi_r^\omega(|E_Y'^{r0}-E_Y^{r0}|)
\leq
\sum_{r\in\Lambda}
(b_{E'}^{r}-b_E^{r})$.
\end{enumerate}
\end{lem}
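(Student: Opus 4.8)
The plan is to run the proof of Lemma~\ref{lem:ab} for the cocycle action $(\alpha^{00},c)$ on $M_0$, producing a \emph{single} new partition–of–unity element in $(M_0)_\omega$ which improves the Rohlin towers of both colours at once. First I would fix $\delta_1>0$ with $\sum_{r\in\Lambda}|\cK_{r0}|_\sigma b_E^r<(1-\delta_1)(1-\delta^{1/2})\sum_{r\in\Lambda}|\cK_{r0}|_\sigma$, and enlarge $(Q_0,Q_1)$ to a jointly $\alpha$-invariant countably generated system $(Q_0^{(1)},Q_1^{(1)})$, with $Q_r^{(1)}\subset M_r^\omega$ containing $M_r$, all $T^\alpha$, all $c_{X,Y}$, and all $E_X^{r0}$ and $\alpha_X(E_Y^{s0})$. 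Put $\cK_{r0}^{(1)}:=\cK_{r0}\cup\bigcup_{s\in\Lambda}(\cF_{rs}\cdot\cK_{s0})$. Applying Lemma~\ref{lem:ee2} (with its finite sets enlarged so that its conclusion is exactly the hypothesis of Lemma~\ref{lem:ortho2} for the sets $\cK_{r0}^{(1)}$), for $Q_0^{(1)}$ and $\delta_1$, I obtain a partition of unity $\{e_k\}_{k=0}^n$ in $(Q_0^{(1)})'\cap(M_0)_\omega$ with $|e_0|_{\varphi_0^\omega}<\delta_1$ to which Lemma~\ref{lem:ortho2} applies for every $\cK_{r0}^{(1)}$; after a fast reindexation (\cite[Lemma~3.10]{MT-minimal}) I may also assume $\tau^\omega(x\alpha_X(e_k))=\tau^\omega(x)\tau^\omega(\alpha_X(e_k))$ for all $x\in Q_r^{(1)}$, $X\in\sC_{r0}$ and $k$.

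Next I would establish the joint analogue of Claim~1 in the proof of Lemma~\ref{lem:ab}: there is $k\ge1$ with
\[
\sum_{r\in\Lambda}\sum_{X,Y\in\cK_{r0}}d(X)^2d(Y)^2\,\varphi_r^\omega\bigl(E_X^{r0}\phi_{\ovl Y}^\alpha(e_k)\bigr)
<(1-\delta^{1/2})\Bigl(\sum_{r\in\Lambda}|\cK_{r0}|_\sigma\Bigr)\varphi_0^\omega(e_k).
\]
If this failed for every $k\ge1$, summing over $k$ by means of $\sum_{k\ge1}e_k=e_0^\perp$, using $\sum_{Y\in\cK_{r0}}d(Y)^2\phi_{\ovl Y}^\alpha(e_0^\perp)=|\cK_{r0}|_\sigma I_{\cK_{r0}}^\alpha(e_0^\perp)\le|\cK_{r0}|_\sigma$, and exploiting that $E_X^{r0}\in Q_r^{(1)}$ commutes with $I_{\cK_{r0}}^\alpha(e_0^\perp)\in(Q_r^{(1)})'$, one gets $\sum_r|\cK_{r0}|_\sigma b_E^r\ge(1-\delta_1)(1-\delta^{1/2})\sum_r|\cK_{r0}|_\sigma$, a contradiction. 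Fix such $e:=e_k$, and set $f_r:=|\cK_{r0}|_\sigma I_{\cK_{r0}}^\alpha(e)\in(M_r)_\omega$, a projection by Lemma~\ref{lem:ortho2}, with $\varphi_r^\omega(f_r)=|\cK_{r0}|_\sigma\varphi_0^\omega(e)$ by the centre-invariance of the $\varphi_r$'s. Define $E_X'^{r0}:=E_X^{r0}f_r^\perp+\alpha_X(e)[P_X^{\cK_{r0}}]^\alpha$ for $X\in\sC_{r0}$, $r\in\Lambda$. That $E'=(E'^{00},E'^{10})$ again lies in $\mathcal{J}$ is checked exactly as in Claim~2 of Lemma~\ref{lem:ab}, the resonance relation~\ref{item:Roh-resonance2} being verified inside each $\sC_{r0}$ by the verbatim computation (using $e\alpha_{\ovl X\otimes Y}(e)=0$ for $X\ne Y$ in $\cK_{r0}$ and Lemma~\ref{lem:ortho2}).

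Finally I would do the two estimates. From $b_{E'}^r=b_E^r-\sum_{X\in\cK_{r0}}d(X)^2\varphi_r^\omega(E_X^{r0}f_r)+|\cK_{r0}|_\sigma\varphi_0^\omega(e)$, summing over $r$ and inserting the joint Claim~1 gives $\sum_r(b_{E'}^r-b_E^r)>\delta^{1/2}\sum_r\varphi_r^\omega(f_r)>0$; combined with $\sum_r\sum_{X\in\cK_{r0}}d(X)^2\varphi_r^\omega(|E_X'^{r0}-E_X^{r0}|)\le2\sum_r\varphi_r^\omega(f_r)$ — obtained colour by colour from the triangle inequality for $|\cdot|_{\varphi_r^\omega}$ on the $\varphi_r^\omega$-centralizer, the orthogonality relation~\ref{item:Roh-orthogonal2}, and the splitting property~\ref{item:Roh-split2}, exactly as in Lemma~\ref{lem:ab} — this yields item~(2). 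For item~(1) I would repeat, for each fixed $r,s$, the computation of $a_{E'}^{r,s}-a_E^{r,s}$ from Lemma~\ref{lem:ab}, now applying Lemma~\ref{lem:Folner} and Lemma~\ref{lem:average} to $\cK_{s0}$ and the bipartite $(\cF_{rs},\delta)$-invariance furnished by Lemma~\ref{lem:FrsKrs}; this produces $a_{E'}^{r,s}-a_E^{r,s}\le C\delta\,\varphi_s^\omega(f_s)$ for an absolute constant $C$, and since $\varphi_s^\omega(f_s)\le\sum_t\varphi_t^\omega(f_t)<\delta^{-1/2}\sum_t(b_{E'}^t-b_E^t)$, the stated inequality follows. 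The only genuinely new point is the colour bookkeeping — arranging that one $e\in(M_0)_\omega$ drives both towers simultaneously and that the mass increment is measured as the sum over $\Lambda$; once that is set up there is no obstacle beyond the $\sC_{00}$-argument of Lemma~\ref{lem:ab}, so that is the step on which I would spend the most care.
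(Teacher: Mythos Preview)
Your plan is essentially the paper's own proof: choose $\delta_1$, enlarge the $Q_r$'s, apply Lemma~\ref{lem:ee2} to the sets $\cL_r=\cK_{r0}\cup\bigcup_s(\cF_{rs}\cdot\cK_{s0})$, select a good index $k$ by a joint version of Claim~1, set $f_r=\sum_{Y\in\cK_{r0}}d(Y)^2\phi_{\ovl Y}^\alpha(e)$ and $E_X'^{r0}=E_X^{r0}f_r^\perp+\alpha_X(e)[P_X^{\cK_{r0}}]^\alpha$, and estimate. Two small corrections are worth making. First, the intermediate claim $a_{E'}^{r,s}-a_E^{r,s}\le C\delta\,\varphi_s^\omega(f_s)$ is not what the bipartite computation yields: both the projection term and the $|\alpha_X(f_s)-f_r|$ term contribute quantities of size $\delta|\cK_{r0}|_\sigma\varphi_0^\omega(e)$ \emph{and} $\delta|\cK_{s0}|_\sigma\varphi_0^\omega(e)$, so the honest bound is $3\delta\sum_{t\in\Lambda}\varphi_t^\omega(f_t)$ directly (this is exactly the constant $3$ the lemma requires). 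Your subsequent weakening to $\sum_t$ is therefore not a weakening but the actual output. Second, Lemmas~\ref{lem:Folner} and~\ref{lem:average} are stated for a single tensor category and a single $\cK$; they do not apply verbatim to the cross terms $\cF_{rs}\cdot\cK_{s0}$ versus $\cK_{r0}$, so here one must redo the short explicit computation using the four inequalities $|(\cF_{rs}\cdot\cK_{s0})\setminus\cK_{r0}|_\sigma<\delta|\cK_{r0}|_\sigma$ from Lemma~\ref{lem:FrsKrs} (and their $\ovl{\cF_{rs}}$-versions), together with the commutation of $\alpha_X(f_s)$ and $f_r$ that follows from the choice of $\cL_r$. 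With these adjustments your argument coincides with the paper's.
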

\begin{proof}
Take $\delta_1>0$ so that
$\sum_{r\in\Lambda}|\cK_{r0}|_\sigma b_E^{r}
<(1-\delta_1)(1-\delta^{1/2})\sum_{r\in\Lambda}|\cK_{r0}|_\sigma$.
We may and do assume that
each $Q_r$ contains $M_r$
and
$\alpha_X(E^{s0})$
for $s\in\Lambda$ and $X\in\Irr(\sC_{rs})$
and moreover that $\alpha_X(Q_s)\subset Q_r$
for all $X\in\Irr(\sC_{rs})$ and $r,s\in\Lambda$.
We let
$\cL_r:=\cK_{r0}\cup \bigcup_{s\in\Lambda}(\cF_{rs}\cdot\cK_{s0})$
that is a finite subset of $\Irr(\sC_{r0})$.
By Lemma \ref{lem:ee2},
we can take a partition of unity $\{e_t\}_{t=0}^n$
in $Q_0'\cap (M_0)_\omega$
so that
$|e_0|_{\varphi_0^\omega}<\delta_1$
and
$e_k\alpha_X(e_k)=0$
for all
$X\in \bigcup_{r\in\Lambda}(\ovl{\cL_r}\cdot\cL_r)\setminus\{\btr\}$.
By fast reindexation,
we may and do assume that
$\tau^\omega(x\alpha_X(e_k))=\tau^\omega(x)\tau^\omega(\alpha_X(e_k))$
for all $r\in\Lambda$, $x\in Q_r$, $X\in\Irr(\sC_{r0})$
and $k=0,\dots,n$.
Then as Claim 1 in the proof of Lemma \ref{lem:ab},
we can show the following inequality holds for some $k\geq1$:
\[
\sum_{r\in\Lambda}
\sum_{X,Y\in\cK_{r0}}
d(X)^2d(Y)^2
\varphi_r^\omega(E_X^{r0}\phi_{\ovl{Y}}^\alpha(e_k))
<
(1-\delta^{1/2})
\sum_{r\in\Lambda}|\cK_{r0}|_\sigma
\varphi_0^\omega(e_k).
\]
We put $e:=e_k\in Q_0'\cap (M_0)_\omega$.
We let
\[
f_r:=\sum_{X\in\cK_{r0}}
d(X)^2\phi_{\ovl{X}}^\alpha(e)
\quad
\mbox{for }
r\in\Lambda,
\]
which are projections in $Q_r'\cap (M_r)_\omega$.
Then $\varphi_r^\omega(f_r)=|\cK_{r0}|_\sigma\varphi_0^\omega(e)$,
and
\begin{equation}
\label{eq:XKf2}
\sum_{r\in\Lambda}
\sum_{X\in\cK_{r0}}
d(X)^2
\varphi_r^\omega(E_X^{r0} f_r)
<
(1-\delta^{1/2})
\sum_{r\in\Lambda}\varphi_r^\omega(f_r).
\end{equation}

For $X\in\sC_{r0}$ with $r\in\Lambda$,
we put
\[
E_X'^{r0}:=E_X^{r0}f_r^\perp+\alpha_X(e)
[P_X^{\cK_{r0}}]^\alpha.
\]
Then we can see the pair $E':=(E'^{00},E'^{10})$
is indeed the member of $\mathcal{J}$.
We have
\begin{align}
\sum_{r\in\Lambda}
b_{E'}^r
&=
\sum_{r\in\Lambda}
\sum_{X\in\cK_{r0}}
d(X)^2
\varphi_r^\omega(E_X^{r0}f_r^\perp+\alpha_X(e))
\notag
\\
&=
\sum_{r\in\Lambda}
b_{E}^r
-
\sum_{r\in\Lambda}
\sum_{X\in\cK_{r0}}
d(X)^2
\varphi_r^\omega(E_X^{r0}f_r)
+
\sum_{r\in\Lambda}
\varphi_r^\omega(f_r)
\notag
\\
&>
\sum_{r\in\Lambda}
b_{E}^r
-
(1-\delta^{1/2})
\sum_{r\in\Lambda}
\varphi_r^\omega(f_r)
+
\sum_{r\in\Lambda}
\varphi_r^\omega(f_r)
\quad
\mbox{by }
(\ref{eq:XKf2})
\notag
\\
&=
\sum_{r\in\Lambda}
b_{E}^r
+
\delta^{1/2}
\sum_{r\in\Lambda}
\varphi_r^\omega(f_r).
\label{eq:bEE'2}
\end{align}
Now we have
\begin{align*}
\sum_{Y\in\cK_{r0}}
d(Y)^2\varphi_r^\omega(|E_Y'^{r0}-E_Y^{r0}|)
&=
\sum_{Y\in\cK_{r0}}
d(Y)^2\varphi_r^\omega(|-E_Y^{r0}f_r+\alpha_Y(e)|)
\\
&\leq
\sum_{Y\in\cK_{r0}}
d(Y)^2\varphi_r^\omega(E_Y^{r0}f_r+\alpha_Y(e)).
\end{align*}
Applying the splitting property of $\tau^\omega$
to $E_Y^{r0}$ and $f_r$,
we obtain
$\varphi_r^\omega(E_Y^{r0}f_r)
=\varphi_r^\omega
(\ovl{R}_Y^{\alpha*}c_{Y,\ovl{Y}}^*
E_Y^{r0}c_{Y,\ovl{Y}}\ovl{R}_Y^{\alpha}
f_r)$.
Hence
\begin{align*}
&\sum_{Y\in\cK_{r0}}
d(Y)^2\varphi_r^\omega(|E_Y'^{r0}-E_Y^{r0}|)
\\
&\leq
\varphi_r^\omega
(\sum_{Y\in\cK_{r0}}d(Y)^2
\ovl{R}_Y^{\alpha*}
c_{Y,\ovl{Y}}^*
E_Y^{r0}c_{Y,\ovl{Y}}\ovl{R}_Y^{\alpha}f_r)
+
\sum_{Y\in\cK_{r0}}
d(Y)^2\varphi_0^\omega(e)
\\
&\leq
2\varphi_r^\omega(f_r).
\end{align*}
From (\ref{eq:bEE'2}) and the inequality above,
we have the inequality in (2) of this lemma.

Let $r,s\in\Lambda$.
For $X\in\Irr(\sC_{rs})$ and $Y\in\Irr(\sC_{s0})$,
we have
\begin{align*}
&\alpha_X(E_Y'^{s0})-c_{X,Y}E_{X\otimes Y}'^{r0}c_{X,Y}^*
\\
&=
\alpha_X(E_Y^{s0})(\alpha_X(f_s^\perp)-f_r^\perp)
+
(\alpha_X(E_Y^{s0})-c_{X,Y}E_{X\otimes Y}^{r0}c_{X,Y}^*)f_r^\perp
\\
&\quad
+\alpha_X(\alpha_Y(e))(\alpha_X([P_{Y}^{\cK_{s0}}]^\alpha)
-c_{X,Y}[P_{X\otimes Y}^{\cK_{r0}}]^\alpha c_{X,Y}^*).
\end{align*}
Then
\begin{align*}
&|\alpha_X(E_Y'^{s0})
-c_{X,Y}E_{X\otimes Y}'^{r0}c_{X,Y}^*|_{\alpha_X(\alpha_Y(\varphi_0^\omega))}
-
|\alpha_X(E_Y^{s0})
-c_{X,Y}E_{X\otimes Y}^{r0}c_{X,Y}^*|_{\alpha_X(\alpha_Y(\varphi_0^\omega))}
\\
&\leq
\varphi_r^\omega(\alpha_X(E_Y^{s0})|\alpha_X(f_s)-f_r|)
\\
&\quad
+
\varphi_0^\omega(e)
\big{(}
1_{\cK_{s0}}(Y)\sum_{Z\in\cK_{r0}^c}p_X(Y,Z)
+1_{\cK_{s0}^c}(Y)\sum_{Z\in\cK_{r0}}p_X(Y,Z)
\big{)},
\end{align*}
where we have used our assumption
that
$\alpha_U(\varphi_1^\omega)=\varphi_0^\omega$
on $Z(M_0)$ for all $U\in\Irr(\sC_{01})$
and $\cK_{r0}^c$ denotes the complement of $\cK_{r0}$
in $\Irr(\sC_{r0})$.
Using Lemma \ref{lem:FrsKrs},
We have the following inequalities:
\begin{align*}
&\sum_{(X,Y,Z)\in\cF_{rs}\times\cK_{s0}\times\cK_{r0}^c}
d(X)^2d(Y)^2 p_X(Y,Z)
\leq
\delta |\cF_{rs}|_\sigma|\cK_{r0}|_\sigma,
\\
&\sum_{(X,Y,Z)\in\cF_{rs}\times\cK_{s0}^c\times\cK_{r0}}
d(X)^2d(Y)^2 p_X(Y,Z)
\leq
\delta |\cF_{rs}|_\sigma|\cK_{s0}|_\sigma.
\end{align*}
Hence we obtain
\begin{align*}
|\cF_{rs}|_\sigma(a_{E'}^{r,s}-a_{E}^{r,s})
\leq
\delta|\cF_{rs}|_\sigma
\sum_{t\in\Lambda}\varphi_t^\omega(f_t)
+
\sum_{X\in\cF_{rs}}
d(X)^2\varphi_r^\omega(|\alpha_X(f_s)-f_r|).
\end{align*}
It turns out from the definition of $\mathcal{L}_r$
that $\alpha_X(f_s)$ and $f_r$ are commuting operators
for $X\in\cF_{rs}$,
and $|\alpha_X(f_s)-f_r|=\alpha_X(f_s)f_r^\perp
+\alpha_X(f_s^\perp)f_r$
(cf. Claim 4 in the proof of Lemma \ref{lem:ab}).
Thus
\begin{align*}
\varphi_r^\omega(|\alpha_X(f_s)-f_r|)
&=
\varphi_s^\omega(f_s-f_s\phi_X^\alpha(f_r))
+
\varphi_s^\omega(\alpha_X(f_s^\perp)f_r)
\\
&\leq
\varphi_s^\omega(|f_s-\phi_X^\alpha(f_r)|)
+
\varphi_r^\omega(\phi_{\ovl{X}}^\alpha(f_s^\perp)f_r)
\\
&\leq
\varphi_s^\omega(|f_s-\phi_X^\alpha(f_r)|)
+
\varphi_r^\omega(|f_r-\phi_{\ovl{X}}^\alpha(f_s)|).
\end{align*}
Using
\begin{align*}
&\phi_X^\alpha(f_r)
=
\sum_{(U,V)\in\cK_{r0}\times\Irr(\sC_{s0})}
d(U)^2p_{\ovl{X}}(U,V)\phi_{\ovl{V}}^\alpha(e),
\\
&\phi_{\ovl{X}}^\alpha(f_s)
=
\sum_{(U,V)\in\Irr(\sC_{r0})\times\cK_{s0}}
d(V)^2p_X(V,U)\phi_{\ovl{U}}^\alpha(e),
\end{align*}
and
\begin{align*}
&f_r=
\sum_{(U,V)\in\cK_{r0}\times\Irr(\sC_{s0})}
d(V)^2p_{X}(V,U)\phi_{\ovl{U}}^\alpha(e),
\\
&f_s=
\sum_{(U,V)\in\Irr(\sC_{r0})\times\cK_{s0}}
d(U)^2p_{\ovl{X}}(U,V)\phi_{\ovl{V}}^\alpha(e),
\end{align*}
we obtain
\begin{align*}
&\sum_{X\in\cF_{rs}}
d(X)^2
\varphi_s^\omega(|f_s-\phi_X^\alpha(f_r)|)
\\
&\leq
\sum_{(X,U,V)\in\cF_{rs}\times \cK_{r0}^c\times\cK_{s0}}
d(X)^2d(U)^2p_{\ovl{X}}(U,V)\varphi_0^\omega(e)
\\
&\quad
+
\sum_{(X,U,V)\in\cF_{rs}\times \cK_{r0}\times\cK_{s0}^c}
d(X)^2d(U)^2p_{\ovl{X}}(U,V)\varphi_0^\omega(e)
\\
&\leq
|\cF_{rs}|_\sigma|(\cF_{rs}\cdot\cK_{s0})\setminus\cK_{r0}|_\sigma
\varphi_0^\omega(e)
\\
&\quad
+
|\cF_{rs}|_\sigma|(\ovl{\cF_{rs}}\cdot\cK_{r0})\setminus\cK_{s0}|_\sigma
\varphi_0^\omega(e)
\\
&\leq
\delta|\cF_{rs}|_\sigma|\cK_{r0}|_\sigma\varphi_0^\omega(e)
+
\delta|\cF_{rs}|_\sigma|\cK_{s0}|_\sigma\varphi_0^\omega(e)
\quad
\mbox{by Lemma \ref{lem:FrsKrs}}
\\
&=
\delta|\cF_{rs}|_\sigma
\sum_{t\in\Lambda} \varphi_t^\omega(f_t)
\end{align*}
and
\begin{align*}
&\sum_{X\in\cF_{rs}}
d(X)^2
\varphi_r^\omega(|f_r-\phi_{\ovl{X}}^\alpha(f_s)|)
\\
&=
\sum_{(X,U,V)\in\cF_{rs}\times\cK_{r0}^c\times\cK_{s0}}
d(X)^2d(V)^2p_X(V,U)\varphi_0^\omega(e)
\\
&\quad
+
\sum_{(X,U,V)\in\cF_{rs}\times\cK_{r0}\times\cK_{s0}^c}
d(X)^2d(V)^2p_X(V,U)\varphi_0^\omega(e)
\\
&\leq
|\cF_{rs}|_\sigma
|(\cF_{rs}\cdot\cK_{s0})\setminus\cK_{r0}|_\sigma
\varphi_0^\omega(e)
\\
&\quad
+
|\cF_{rs}|_\sigma
|(\ovl{\cF_{rs}}\cdot\cK_{r0})\setminus\cK_{s0}|_\sigma
\varphi_0^\omega(e)
\\
&\leq
\delta|\cF_{rs}|_\sigma|\cK_{r0}|_\sigma\varphi_0^\omega(e)
+
\delta|\cF_{rs}|_\sigma|\cK_{s0}|_\sigma\varphi_0^\omega(e)
\quad
\mbox{by Lemma \ref{lem:FrsKrs}}
\\
&=
\delta|\cF_{rs}|_\sigma
\sum_{t\in\Lambda} \varphi_t^\omega(f_t).
\end{align*}
Hence by (\ref{eq:bEE'2}),
we have
\[
(a_{E'}^{r,s}-a_E^{r,s})
\leq
3\delta
\sum_{t\in\Lambda}
\varphi_t^\omega(f_t)
<
3\delta^{1/2}
\sum_{t\in\Lambda}
(b_{E'}^t-b_E^t).
\]
\end{proof}

\begin{proof}[Proof of Theorem \ref{thm:Rohlin-tensor2}]
Let $\mathcal{I}$ be the subset of $\mathcal{J}$
which consists of $E=(E^{00},E^{10})$
such that $a_E^{r,s}\leq 3\delta^{1/2}\sum_{t\in\Lambda}b_E^t$
for all $r,s\in\Lambda$.
We give the order on $\mathcal{I}$ by $E\leq E'$
if $E=E'$ or the inequalities in Lemma \ref{lem:ab2} hold.
Then the order is inductive as shown in \cite[Proof of Theorem 5.9]{Oc}.
Take a maximal element $E=(E^{00},E^{10})\in\mathcal{I}$.
Then
$\sum_{r\in\Lambda}
|\cK_{r0}|_\sigma
b_E^{r}\geq(1-\delta^{1/2})\sum_{r\in\Lambda}
|\cK_{r0}|_\sigma$
from Lemma \ref{lem:ab2}.
Thus $b_E^{0}\geq 1-\delta^{1/2}$
or $b_E^{1}\geq 1-\delta^{1/2}$.
When $b_E^{0}\geq 1-\delta^{1/2}$,
we have the following estimate from Lemma \ref{lem:bsrbrr}:
\[
b_{E}^{1}\geq -a_E^{1,0}+b_E^{0}
\geq -3\delta^{1/2}
\sum_{t\in\Lambda}b_E^t+b_E^{0}
\geq
1-7\delta^{1/2}.
\]
Similarly, when $b_E^{1}\geq 1-\delta^{1/2}$,
we also have $b_E^{0}\geq 1-7\delta^{1/2}$.
\end{proof}

\subsection{Approximation of cocycle actions by cocycle perturbations
in $M^\omega$}

We will state the following rigid C$^*$-2-category version
of Lemma \ref{lem:cocapprox}.

\begin{lem}
\label{lem:cocapprox2}
Let $(\alpha,c^\alpha)$
and $(\beta,c^\beta)$
be cocycle actions of an amenable rigid C$^*$-2-category
$\sC=(\sC_{r,s})_{r,s\in\Lambda}$
on a system of properly infinite von Neumann algebras
$M=(M_r)_{r\in\Lambda}$ with separable preduals.
Suppose that the following four conditions hold:
\begin{itemize}
\item 
$\alpha_X(Z(M_s))=Z(M_r)$
for $X\in\Irr(\sC_{rs})$ with $r,s\in\Lambda$.

\item
There exist faithful normal states $\varphi_r$ on $M_r$
with $r\in\Lambda$
such that $\alpha_X(\varphi_s)=\varphi_r$ on $Z(M_r)$
for all $X\in\sC_{rs}$ and $r,s\in\Lambda$.

\item
$(\alpha^{00},c)$ is centrally free.

\item
$\alpha_X$ and $\beta_X$ are approximately unitarily equivalent
for all $X\in\sC$.
\end{itemize}
Then one can take unitaries $\nu_X$ in $M_r^\omega$ with $X\in\sC_{rs}$
and $r,s\in\Lambda$
so that the following equalities hold:
\begin{itemize}
\item
$\nu_X\alpha_X(\psi^\omega)\nu_X^*=\beta_X(\psi^\omega)$
for all $X\in\sC$ and $\psi\in M_*$.

\item
$\nu_X\alpha_X(\nu_Y)c_{X,Y}^\alpha \nu_{X\otimes Y}^*
=c_{X,Y}^\beta$
for all $X\in\sC_{rs}$ and $Y\in\sC_{st}$
with $r,s,t\in\Lambda$.

\item
$\nu_YT^\alpha=T^\beta \nu_X$ for all $X,Y\in\sC_{rs}$,
$T\in\sC(X,Y)$
and $r,s\in\Lambda$.
\end{itemize}
\end{lem}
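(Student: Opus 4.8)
The plan is to follow the proof of Lemma \ref{lem:cocapprox} line by line, substituting the single Rohlin tower of Theorem \ref{thm:Rohlin-tensor} by the pair of towers furnished by Theorem \ref{thm:Rohlin-tensor2} and the F\o lner data by the sets produced in Lemma \ref{lem:FrsKrs}. First I would record the $2$-category analogue of Lemma \ref{lem:appunitequiv}: by approximate unitary equivalence on $\Irr(\sC)$ and decomposition of a general $X\in\sC_{rs}$ into irreducibles, one obtains unitaries $u_X\in M_r^\omega$ with $u_X\alpha_X(\psi^\omega)u_X^*=\beta_X(\psi^\omega)$ and $u_YT^\alpha=T^\beta u_X$ for $T\in\sC_{rs}(X,Y)$. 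Setting $\gamma_X:=\Ad u_X\circ\alpha_X$ and $c_{X,Y}:=u_X\alpha_X(u_Y)c_{X,Y}^\alpha u_{X\otimes Y}^*$ produces a semiliftable cocycle action $(\gamma,c)$ of $\sC$ on $M$ which agrees with $\beta$ on each $M_r$ and on all morphisms, with $(\gamma^{00},c)$ centrally free; put $d_{X,Y}:=c_{X,Y}^\beta$. The transfer identities $(\ref{eq:dcgamgampsi})$ and $(\ref{eq:dcgamT})$ then hold verbatim, and Theorem \ref{thm:Rohlin-tensor2} applies to $(\gamma,c)$ after the harmless extension to unitary perturbations carried out for the $1$-object case in Lemma \ref{lem:Rohlin-tensor-gamma}.

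Second, fix $\cF_{rs}=\ovl{\cF_{sr}}$ with $\btr\in\cF_{00}$, $\btr\in\cF_{11}$, a $\delta>0$, the sets $\cK_{00},\cK_{10}$ of Lemma \ref{lem:FrsKrs}, and $\gamma$-invariant countably generated $Q_r\supset M_r$ containing all $c_{X,Y}$ with $X,Y$ in a fixed countable dense subcategory. Let $E^{00}=(E_X^{00})_{X\in\sC_{00}}$ in $M_0^\omega$ and $E^{10}=(E_X^{10})_{X\in\sC_{10}}$ in $M_1^\omega$ be the corresponding Rohlin towers for $(\gamma,c)$. For $X\in\sC_{rs}$ ($r,s\in\Lambda$) I would define, in parallel with $(\ref{eq:defnv})$,
\[
v_X:=\sum_{Y\in\Irr(\sC_{r0})}d(Y)^2\,\ovl{R}_Y^{\gamma*}c_{Y,\ovl{Y}}^*\,\gamma_Y\!\big(d_{\ovl{Y},X}c_{\ovl{Y},X}^*\big)\,E_Y^{r0}\,c_{Y,\ovl{Y}}\ovl{R}_Y^\gamma+p_r,
\]
where $p_r:=1-\sum_{Y\in\cK_{r0}}d(Y)^2\ovl{R}_Y^{\gamma*}c_{Y,\ovl{Y}}^*E_Y^{r0}c_{Y,\ovl{Y}}\ovl{R}_Y^\gamma\in Q_r'\cap(M_r)_\omega$; every factor here lives in $M_0$ or $M_r$ consistently, and $v_X\in M_r^\omega$ is a unitary by the same argument as before. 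The $2$-category analogues of Lemma \ref{lem:vXcommute} (giving $v_X\gamma_X(\psi^\omega)=\gamma_X(\psi^\omega)v_X$ and $v_YT^\gamma=T^\beta v_X$) and of Lemma \ref{lem:vperturb} (giving $\sum_{X\in\cF_{rs}}d(X)^2|v_X\gamma_X(v_Y)-d_{X,Y}v_{X\otimes Y}c_{X,Y}^*|_{\gamma_X(\gamma_Y(\varphi_0^\omega))}\leq8\delta^{1/2}|\cF_{rs}|_\sigma$ for the relevant $Y$) are obtained by exactly the same computations, now invoking the resonance property and the approximate joint equivariance of Theorem \ref{thm:Rohlin-tensor2}, the left-inversion formula $(\ref{eq:leftinv})$ inside the relevant block, and the $2$-category version of Lemma \ref{lem:gaphiDe}; the last holds verbatim, since its proof uses only the left-inverse formalism and the relations $\beta_Y(M_s)'\cap M_r=Z(M_r)$ for $Y\in\Irr(\sC_{rs})$, which follow from freeness via Lemma \ref{lem:00-11free} and Lemma \ref{lem:free2}.

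Third, running this over an exhausting sequence $\cF_n$ of F\o lner data with $\delta_n\to0$ and applying the index selection of \cite[Lemma 5.5]{Oc} --- with the same caveat as in the proof of Lemma \ref{lem:cocapprox}, i.e.\ carrying along the commutation relation of Lemma \ref{lem:vXcommute} so that the selected operators are genuine multipliers of $\mathcal{T}^\omega(M)$ --- one gets, for $X$ in a countable dense subcategory $\sC_0$, honest unitaries $w_X\in M_r^\omega$ with $w_X\gamma_X(\psi^\omega)=\gamma_X(\psi^\omega)w_X$, $w_YT^\gamma=T^\beta w_X$, and the approximate cocycle relation $c_{X,Y}w_{X\otimes Y}^*d_{X,Y}^*w_X\gamma_X(w_Y)=1$ for composable $X,Y\in\sC_0$. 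Testing against isometries $S\in\sC_{rs}(Z,X)$ and decomposing into irreducibles upgrades this to the exact identity $w_X\gamma_X(w_Y)=d_{X,Y}w_{X\otimes Y}c_{X,Y}^*$ on $\sC_0$, and the assignment $w_X:=S^\gamma w_{X_0}S^{\gamma*}$ for a unitary $S\in\sC(X_0,X)$ with $X_0\in\sC_0$ extends $w$ to all of $\sC$, well-definedness and the $2$-cocycle identity being checked exactly as at the end of the proof of Lemma \ref{lem:cocapprox}. Then $\nu_X:=w_Xu_X$ has the three required properties. The step I expect to be the main obstacle is purely organisational but genuinely delicate: keeping exact track of which algebra $M_0$ or $M_1$ each operator belongs to, and checking that every use of a tensor-categorical identity (conjugate equations, Frobenius reciprocity, the formula $(\ref{eq:leftinv})$, the resonance relation) either stays within a single hom-category $\sC_{rs}$ or is transported correctly across the two blocks by the $2$-cocycles $c^\alpha$ and $c^\beta$; once this bookkeeping is in place, the estimates underlying the $2$-category versions of Lemmas \ref{lem:vXcommute}, \ref{lem:vperturb} and the index-selection step are mechanical transcriptions of the already-established C$^*$-tensor category case.
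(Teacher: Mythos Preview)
Your proposal is correct and follows essentially the same approach as the paper's own proof: perturb $(\alpha,c^\alpha)$ to $(\gamma,c)$ via the $2$-category analogue of Lemma~\ref{lem:appunitequiv}, build the Shapiro-type unitaries $v_X$ from the paired Rohlin towers of Theorem~\ref{thm:Rohlin-tensor2}, transcribe Lemmas~\ref{lem:vXcommute}, \ref{lem:vperturb} and \ref{lem:gaphiDe} to the $2$-category setting, and finish by index selection and extension from $\sC_0$ to $\sC$. The only discrepancy is your constant $8\delta^{1/2}$ in the analogue of Lemma~\ref{lem:vperturb}: since Theorem~\ref{thm:Rohlin-tensor2}(\ref{item:Roh-partition2}) gives $\varphi_r^\omega(p_r)\leq 7\delta^{1/2}$ rather than $\delta^{1/2}$, the bound becomes $20\delta^{1/2}|\cF_{rs}|_\sigma$ (as the paper records in Lemma~\ref{lem:wgadec2}), but this is immaterial for the index-selection step.
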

\begin{proof}
Let $\cF_{rs}$, $\cK_{rs}$ and $E^{r0}$
be as in Theorem \ref{thm:Rohlin-tensor2}.
Then the same proof of Lemma \ref{lem:cocapprox} and Lemma \ref{lem:vperturb}
is applicable to
the unitaries in place of $v_X$ in (\ref{eq:defnv}) as follows:
for $X\in\sC_{rs}$ with $r,s\in\Lambda$,
\[
v_X
:=\sum_{Y\in\Irr(\sC_{r0})}
d(Y)^2
\ovl{R}_Y^{\gamma*} c_{Y,\ovl{Y}}^*
\gamma_Y(d_{\ovl{Y},X}c_{\ovl{Y},X}^*)
E_Y^{\gamma,r0}
c_{Y,\ovl{Y}}\ovl{R}_Y^\gamma
+p_r,
\]
where $p_r:=1-\sum_{Y\in\cK_{r0}}d(Y)^2
\ovl{R}_Y^{\gamma*} c_{Y,\ovl{Y}}^*
E_Y^{\gamma,r0}
c_{Y,\ovl{Y}}\ovl{R}_Y^\gamma$.
Note here that $(\gamma,c^\gamma)$
is a unitary perturbation of $(\alpha,c)$ by unitaries $u_X$'s
as introduced in Lemma \ref{lem:appunitequiv}
and $E_Y^{\gamma,r0}$ of course denotes $u_Y E_Y^{r0} u_Y^*$.
In the proof of Lemma \ref{lem:vperturb},
we have used Lemma \ref{lem:gaphiDe},
which should be now changed as follows.
Let $\psi_t\in (M_t)_*$ be a faithful normal state with
$t\in\Lambda$ such that
$\gamma_X(\psi_s)=\psi_r$ on $Z(M_r)$
for all $X\in \sC_{rs}$ and $r,s\in\Lambda$.
Let $\Delta_{X,Y}\in M_r^\omega$ be a self-adjoint element
commuting with $\gamma_X(\gamma_Y(M_t))$
and $\gamma_X(\gamma_Y(\psi_t^\omega))$
for $(X,Y)\in\sC_{rs}\times\sC_{st}$ and $r,s,t\in\Lambda$.
Then for all $y\in M_r^\omega$,
we have
\[
|\gamma_X(\psi_s^\omega)
(y\Delta_{X,Y}\gamma_X(c_{Y,\ovl{Y}}^\gamma\ovl{R}_Y^\gamma))|
\leq
\|y\|
|\Delta_{X,Y}|_{\gamma_{X}(\gamma_Y(\psi_t^\omega))}.
\]
\end{proof}

\subsection{On the first cohomology vanishing type result}
Let $(\alpha,c^\alpha)$ and $(\beta,c^\beta)$
be cocycle actions
of $\sC=(\sC_{rs})_{r,s\in\Lambda}$ on $M=(M_r)_{r\in\Lambda}$
such that the four conditions in Lemma \ref{lem:cocapprox2}
are satisfied.
Also suppose we have unitaries $w_X\in M_r^\omega$ with $X\in\sC_{rs}$
which are satisfying the following properties:
for all $r,s\in \Lambda$ and $X,Y\in\sC_{rs}$,
\begin{itemize}
\item
$w_X\alpha_X(\psi^\omega)w_X^*=\beta_X(\psi^\omega)$
for all $\psi\in (M_s)_*$.

\item
$w_X\alpha_X(w_Y)c_{X,Y}^\alpha w_{X\otimes Y}^*
=c_{X,Y}^\beta$.

\item
$w_YT^\alpha=T^\beta w_X$ for all $T\in\sC_{rs}(X,Y)$.
\end{itemize}

Let us introduce the following unitary:
\begin{equation}
\label{eq:thetabeal}
\theta_X^{\beta,\alpha}
:=
\sum_{Z\in\Irr(\sC_{rs})}
\sum_{T\in\ONB(Z,X)}
T^\beta T^{\alpha*}
\quad
\mbox{for }
X\in\sC_{rs},
\
r,s\in\Lambda.
\end{equation}

Let $\delta>0$.
Take finite subsets
$\cF_{rs}$ and $\cK_{r0}$ as in Lemma \ref{lem:FrsKrs}.
Let $Q_r$
be a countably generated von Neumann subalgebras of $M_r^\omega$
with $r\in\Lambda$.
We may and do assume that
each $Q_r$ is an $\alpha^{rr}$-invariant
and contains $M_r$ for $r=0,1$.
Let $E^\alpha=(E^{00},E^{10})$
be a Rohlin tower
with respect to $(\alpha,c)$ as in Theorem \ref{thm:Rohlin-tensor2}.
Then we will introduce the following Shapiro type unitary element:
\[
\mu_r
:=\sum_{Z\in\Irr(\sC_{r0})}
d(Y)^2 \ovl{R}_Z^{\beta*}c_{Z,\ovl{Z}}^{\beta*}
w_Z E_Z^{r0}
c_{Z,\ovl{Z}}^\alpha \ovl{R}_Z^\alpha
+p_{r},
\]
where
and $p_r:=1-\sum_{Z\in\cK_{r0}}
d(Z)^2\ovl{R}_Z^{\alpha*}c_{Z,\ovl{Z}}^{\alpha*}
E_Z^{r0}
c_{Z,\ovl{Z}}^\alpha\ovl{R}_Z^\alpha$
that is a projection in $Q_r'\cap (M_r)_\omega$.
Then the same proof of Lemma \ref{lem:wgadec} is applicable
to a cocycle action of a rigid C$^*$-2-category.
Note that $\varphi^\omega(p_r)\leq7\delta^{1/2}$.

\begin{lem}
\label{lem:wgadec2}
Let $\mu=(\mu_r)_{r\in\Lambda}$ be as above.
Then the following inequalities hold:
\begin{enumerate}
\item
For all $r,s\in\Lambda$,
\begin{align*}
&\sum_{X\in\cF_{rs}}
d(X)^2|w_X\alpha_X(\mu_s)-\mu_r|_{\alpha_X(\varphi_s^\omega)}
\\
&\leq
20\delta^{1/2}|\cF_{rs}|_\sigma
+
\sum_{X\in\cF_{rs}}\sum_{Y\in\cK_{r0}}
d(X)^2d(Y)^2
\|c_{\ovl{Y},X}^\beta
\theta_{\ovl{Y}\otimes X}^{\beta,\alpha}
c_{\ovl{Y},X}^{\alpha*}
-1\|_{\alpha_{\ovl{Y}}(\alpha_X(\varphi_s^\omega))}.
\end{align*}

\item
For all $r\in\Lambda$
and
$\psi\in (M_r)_*$,
\[
\|\mu_r\psi^\omega-\psi^\omega\mu_r\|
\leq
\sum_{Y\in\cK_{r0}}
d(Y)^4
\|\alpha_{\ovl{Y}}(\psi^\omega)-\beta_{\ovl{Y}}(\psi^\omega)\|.
\]
\end{enumerate}
\end{lem}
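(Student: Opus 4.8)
The plan is to run the proof of Lemma \ref{lem:wgadec} essentially verbatim, with three bookkeeping adjustments: the Rohlin tower is now the pair $E^\alpha=(E^{00},E^{10})$ produced by Theorem \ref{thm:Rohlin-tensor2}; every composition of $1$-morphisms has to be tracked together with its source and target colors in $\Lambda$; and Lemma \ref{lem:gaphiDe} is replaced throughout by its $\Lambda$-indexed analogue recorded at the end of the proof of Lemma \ref{lem:cocapprox2} (self-adjoint $\Delta_{X,Y}\in M_r^\omega$ commuting with $\alpha_X(\alpha_Y(M_t))$ and $\alpha_X(\alpha_Y(\psi_t^\omega))$, with $\psi_t=\varphi_t$). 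Write $c_{X,Y}:=c_{X,Y}^\alpha$, $d_{X,Y}:=c_{X,Y}^\beta$ and $\mu_{r,0}:=\mu_r-p_r$. For $X\in\cF_{rs}$, the first step is to expand $w_X\alpha_X(\mu_{s,0})$ by inserting the definition of $\mu_s$ and moving $w_X$ past $\beta_X(\ovl{R}_Y^{\beta}c_{Y,\ovl{Y}}^{\beta*})$ via $w_X\alpha_X(w_Y)c_{X,Y}=d_{X,Y}w_{X\otimes Y}$; this decomposes $w_X\alpha_X(\mu_{s,0})$ into a sum over $Y\in\Irr(\sC_{s0})$ of terms carrying the factor $\Delta_{X,Y}:=\alpha_X(E_Y^{s0})-c_{X,Y}E_{X\otimes Y}^{r0}c_{X,Y}^*$, plus a remainder $A_X\in M_r^\omega$.

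The second step evaluates $A_X$. This is the same purely categorical manipulation as in Lemma \ref{lem:wgadec}: decompose $X\otimes Y$ into simple summands in $\sC_{r0}$, apply the right and left Frobenius maps (\ref{eq:rightFrob}) and (\ref{eq:leftFrob}) to rewrite the bases, collapse the $E_Z^{r0}$'s with the resonance property Theorem \ref{thm:Rohlin-tensor2} (\ref{item:Roh-resonance2}), and resum using the identity (\ref{eq:leftinv}) applied colorwise. The outcome is that $w_X\alpha_X(\mu_r)-\mu_r$ equals a sum over $Y\in\cK_{r0}$ of $\Delta$-type terms, plus a sum over $Z\in\cK_{r0}$ of terms whose inner factor is $\alpha_Z(d_{\ovl{Z},X}\theta_{\ovl{Z}\otimes X}^{\beta,\alpha}c_{\ovl{Z},X}^{*}-1)$, plus $w_X\alpha_X(p_s)-p_r$, exactly paralleling the display preceding the polar decomposition in the proof of Lemma \ref{lem:wgadec}.

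For (1), take the polar decomposition $w_X\alpha_X(\mu_r)-\mu_r=u_X|w_X\alpha_X(\mu_r)-\mu_r|$ and bound the three groups separately. The $\Delta$-group is controlled by the $\Lambda$-indexed Lemma \ref{lem:gaphiDe}, applied with $\varphi_0$ and $\Delta_{X,Y}$ (which commutes with $\alpha_X(\alpha_Y(M_0))$ and $\alpha_X(\alpha_Y(\varphi_0^\omega))$ by the commutation properties of the Rohlin tower), giving at most $\sum_{Y\in\cK_{r0}}d(Y)^2|\Delta_{X,Y}|_{\alpha_X(\alpha_Y(\varphi_0^\omega))}$, which sums over $\cF_{rs}$ to at most $6\delta^{1/2}|\cF_{rs}|_\sigma$ by Theorem \ref{thm:Rohlin-tensor2} (\ref{item:Roh-equivariance2}). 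The $\theta^{\beta,\alpha}$-group is bounded by Cauchy--Schwarz, producing the stated sum of $\|c_{\ovl{Y},X}^{\beta}\theta_{\ovl{Y}\otimes X}^{\beta,\alpha}c_{\ovl{Y},X}^{\alpha*}-1\|_{\alpha_{\ovl{Y}}(\alpha_X(\varphi_s^\omega))}$. The $p$-group, whose factors commute with $\alpha_X(\varphi_s^\omega)$, is at most $2\varphi_r^\omega(p_r)\le14\delta^{1/2}$ by Theorem \ref{thm:Rohlin-tensor2} (\ref{item:Roh-partition2}). Adding $6+14$ yields the constant $20\delta^{1/2}$; this is the only numerical change from Lemma \ref{lem:wgadec}, forced by the weaker bound $\varphi_r^\omega(p_r)\le 7\delta^{1/2}$ in place of $\varphi^\omega(p)\le\delta^{1/2}$. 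Inequality (2) is immediate from the explicit formula
\[
\mu_r\psi^\omega-\psi^\omega\mu_r
=\sum_{Y\in\Irr(\sC_{r0})}
d(Y)^4\ovl{R}_Y^{\beta*}d_{Y,\ovl{Y}}^{*}
\beta_Y\big(\alpha_{\ovl{Y}}(\psi^\omega)-\beta_{\ovl{Y}}(\psi^\omega)\big)
w_Y E_Y^{r0}c_{Y,\ovl{Y}}^{\alpha}\ovl{R}_Y^{\alpha},
\]
obtained by commuting $\psi^\omega$ through the defining sum of $\mu_r$ and using $E_Y^{r0}c_{Y,\ovl{Y}}^{\alpha}\ovl{R}_Y^{\alpha}x=\alpha_Y(\alpha_{\ovl{Y}}(x))E_Y^{r0}c_{Y,\ovl{Y}}^{\alpha}\ovl{R}_Y^{\alpha}$ for $x\in Q_r$.

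The only genuine difficulty is bookkeeping: one must verify at every step that the $2$-cocycle relations, the conjugate equations for $(R_X,\ovl{R}_X)$, and the naturality, orthogonality, splitting and resonance properties of $E^{r0}$ are invoked with consistent source and target colors in $\Lambda$, and that each use of Lemma \ref{lem:gaphiDe} in the original argument is replaced by its $\Lambda$-indexed version. No idea beyond those already present in Section \ref{subsect:approx} and the proof of Lemma \ref{lem:cocapprox2} is needed.
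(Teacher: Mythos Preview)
Your approach is correct and is exactly the paper's own: the paper simply states that the proof of Lemma~\ref{lem:wgadec} carries over verbatim, noting only the bound $\varphi_r^\omega(p_r)\le 7\delta^{1/2}$ that forces the constant $20$ in place of $8$. Your write-up in fact supplies more detail than the paper does.

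A few harmless index slips to clean up, precisely of the bookkeeping sort you warn about: in several places you write $w_X\alpha_X(\mu_r)-\mu_r$ where it should read $w_X\alpha_X(\mu_s)-\mu_r$ (since $X\in\cF_{rs}$); the $\Delta$-terms are indexed by $Y\in\cK_{s0}$, not $\cK_{r0}$ (they carry $E_Y^{s0}$); and the $p$-group bound is $\varphi_s^\omega(p_s)+\varphi_r^\omega(p_r)\le 14\delta^{1/2}$ rather than $2\varphi_r^\omega(p_r)$, using $\alpha_X(\varphi_s^\omega)(p_r)=\varphi_r^\omega(p_r)$ from the hypothesis $\alpha_X(\varphi_s)=\varphi_r$ on $Z(M_r)$. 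None of these affects the argument or the final constant.
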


Then we obtain the following result corresponding
to Lemma \ref{lem:app1cohvan}.

\begin{lem}
\label{lem:app1cohvan2}
Let $(\alpha,c^\alpha)$ be a centrally free cocycle action
of $\sC=(\sC_{rs})_{r,s\in\Lambda}$
on $M=(M_r)_{r\in\Lambda}$
such that the four conditions in Lemma \ref{lem:cocapprox2}
are satisfied.
Let $\delta>0$ and $\cF_{rs},\cK_{r0}\subset\Irr(\sC)$
be as in Lemma \ref{lem:FrsKrs}.
Let $\varepsilon>0$.
Let $u_X\in M$ with $X\in\sC$ be unitaries
and $(\beta,c^\beta)$ the perturbed cocycle action
of $(\alpha,c^\alpha)$ by the unitary $u$.
Suppose the following inequality holds:
\[
\|c_{\ovl{Y},X}^\beta
\theta_{\ovl{Y}\otimes X}^{\beta,\alpha}
c_{\ovl{Y},X}^{\alpha*}-1\|_{\alpha_{\ovl{Y}}(\alpha_X(\varphi_s))}
<\varepsilon
\quad
\mbox{for all }
r,s\in\Lambda,
\
X\in\cF_{rs},
\
Y\in\cK_{r0}
\]
and
\[
\|\beta_{\ovl{Y}}(\psi)
-\alpha_{\ovl{Y}}(\psi)\|<\varepsilon
\quad
\mbox{for all }
Y\in\cK_{r0},
\ \psi\in \Psi_r,
\ r\in\Lambda.
\]
Then for any finite subset $\Psi_r\subset (M_r)_*$ with $r\in\Lambda$,
there exists a unitary $\nu_r\in M_r$ with $r\in\Lambda$
such that for all $r,s\in\Lambda$,
the following inequalities hold:
\[
\sum_{X\in\cF_{rs}}
d(X)^2
|u_X\alpha_X(\nu_s)-\nu_r|_{\alpha_X(\varphi_s)}
<
20\delta^{1/2}|\cF_{rs}|_\sigma
+
\varepsilon|\cF_{rs}|_\sigma|\cK_{r0}|_\sigma
\]
and
\[
\|\nu_r\psi-\psi\nu_r\|
<
\varepsilon
\sum_{Y\in\cK_{r0}}
d(Y)^4
\quad
\mbox{for all }
\psi\in\Psi_r.
\]
\end{lem}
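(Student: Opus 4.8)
The plan is to transcribe the proof of Lemma \ref{lem:app1cohvan} to the C$^*$-2-category setting, using the pair of Rohlin towers produced by Theorem \ref{thm:Rohlin-tensor2} in place of the single tower of Theorem \ref{thm:Rohlin-tensor}. First I would observe that since $(\beta,c^\beta)=(\alpha^u,c^{\alpha^u})$ is literally the unitary perturbation of $(\alpha,c^\alpha)$ by the honest unitaries $u_X$, one may take $w_X:=u_X$ in the set-up of the subsection on the first cohomology vanishing for rigid C$^*$-2-categories: the three compatibility relations $w_X\alpha_X(\psi^\omega)w_X^*=\beta_X(\psi^\omega)$, $w_X\alpha_X(w_Y)c^\alpha_{X,Y}w_{X\otimes Y}^*=c^\beta_{X,Y}$ and $w_YT^\alpha=T^\beta w_X$ are immediate from the definition of the perturbed cocycle action. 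Moreover the four hypotheses of Lemma \ref{lem:cocapprox2} hold for $(\alpha,c^\alpha)$ by assumption (approximate unitary equivalence of $\alpha_X$ and $\beta_X=\Ad u_X\circ\alpha_X$ being witnessed by $u_X$ itself), so Theorem \ref{thm:Rohlin-tensor2} applies. Without loss of generality $\delta<1$, since otherwise both asserted inequalities are trivial.

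Next, given the finite subsets $\cF_{rs}$, $\cK_{r0}$ as in Lemma \ref{lem:FrsKrs}, I would pick countably generated $\alpha^{rr}$-invariant von Neumann subalgebras $Q_r\subset M_r^\omega$ containing $M_r$, apply Theorem \ref{thm:Rohlin-tensor2} to get Rohlin towers $E^\alpha=(E^{00},E^{10})$, and form the Shapiro type unitaries $\mu=(\mu_r)_{r\in\Lambda}$ exactly as defined before Lemma \ref{lem:wgadec2}. Lemma \ref{lem:wgadec2} then gives, for all $r,s\in\Lambda$,
\begin{align*}
&\sum_{X\in\cF_{rs}}d(X)^2|w_X\alpha_X(\mu_s)-\mu_r|_{\alpha_X(\varphi_s^\omega)}\\
&\qquad\le
20\delta^{1/2}|\cF_{rs}|_\sigma
+\sum_{X\in\cF_{rs}}\sum_{Y\in\cK_{r0}}d(X)^2d(Y)^2
\|c^\beta_{\ovl{Y},X}\theta^{\beta,\alpha}_{\ovl{Y}\otimes X}c^{\alpha*}_{\ovl{Y},X}-1\|_{\alpha_{\ovl{Y}}(\alpha_X(\varphi_s^\omega))},
\end{align*}
and $\|\mu_r\psi^\omega-\psi^\omega\mu_r\|\le\sum_{Y\in\cK_{r0}}d(Y)^4\|\alpha_{\ovl{Y}}(\psi^\omega)-\beta_{\ovl{Y}}(\psi^\omega)\|$ for $\psi\in(M_r)_*$. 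Since the arguments of the norms and seminorms above lie in $M_r$ resp.\ $M_0$, the ultrapower states restrict to $\varphi_s$ resp.\ $\psi$, so the two standing hypotheses on $\|c^\beta_{\ovl{Y},X}\theta^{\beta,\alpha}_{\ovl{Y}\otimes X}c^{\alpha*}_{\ovl{Y},X}-1\|_{\alpha_{\ovl{Y}}(\alpha_X(\varphi_s))}$ and on $\|\beta_{\ovl{Y}}(\psi)-\alpha_{\ovl{Y}}(\psi)\|$ turn these into $\sum_{X\in\cF_{rs}}d(X)^2|w_X\alpha_X(\mu_s)-\mu_r|_{\alpha_X(\varphi_s^\omega)}<20\delta^{1/2}|\cF_{rs}|_\sigma+\varepsilon|\cF_{rs}|_\sigma|\cK_{r0}|_\sigma$ and $\|\mu_r\psi^\omega-\psi^\omega\mu_r\|<\varepsilon\sum_{Y\in\cK_{r0}}d(Y)^4$ for $\psi\in\Psi_r$.

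Finally, I would fix unitary representing sequences $\mu_r=(\mu_r^m)^\omega$ with each $\mu_r^m$ a unitary in $M_r$. Using that $|x|=(|x_m|)^\omega$ for any $x=(x_m)^\omega\in M_r^\omega$ (the positive square root being $\sigma$-strongly continuous on bounded sets, and $\mathcal{M}^\omega(M_r)$ being a C$^*$-subalgebra of $\ell^\infty(M_r)$) together with $\alpha_X^\omega((\mu_s^m)^\omega)=(\alpha_X(\mu_s^m))^\omega$, the two left-hand quantities just bounded are the limits along $\omega$ of $\sum_{X\in\cF_{rs}}d(X)^2|u_X\alpha_X(\mu_s^m)-\mu_r^m|_{\alpha_X(\varphi_s)}$ and of $\|\mu_r^m\psi-\psi\mu_r^m\|$; for the latter this is exactly Lemma \ref{lem:ultra-functional}. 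Hence on a set belonging to $\omega$ the sequence members satisfy the very same strict inequalities, and choosing such an index $m$ and putting $\nu_r:=\mu_r^m$ completes the proof. I expect the only genuinely delicate point to be this last passage from the ultrapower estimates of Lemma \ref{lem:wgadec2} to honest estimates in the $M_r$'s — one must carry the several finite index sets $\cF_{rs}$, $\cK_{r0}$ ($r,s\in\{0,1\}$) simultaneously and check that the seminorms $|\cdot|_{\alpha_X(\varphi_s)}$ and the commutator norms are compatible with taking representing sequences — while the substantive estimates are already packaged in Lemma \ref{lem:wgadec2}, whose proof is the routine C$^*$-2-category transcription of that of Lemma \ref{lem:wgadec}.
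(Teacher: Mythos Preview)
Your proposal is correct and follows essentially the same route as the paper: the paper simply remarks that Lemma \ref{lem:app1cohvan2} is the result ``corresponding to Lemma \ref{lem:app1cohvan}'', and Lemma \ref{lem:app1cohvan} in turn is obtained by taking a unitary representing sequence of the Shapiro element $\mu$ from Lemma \ref{lem:wgadec} and invoking Lemma \ref{lem:ultra-functional}. Your write-up spells out exactly this argument in the 2-category setting (set $w_X=u_X$, build $\mu_r$ from the towers of Theorem \ref{thm:Rohlin-tensor2}, apply Lemma \ref{lem:wgadec2}, feed in the two $\varepsilon$-hypotheses, then pass to representing sequences), with the only addition being your explicit treatment of the passage from ultrapower to honest estimates, which the paper leaves implicit.
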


\subsection{Intertwining argument}
Lemma \ref{lem:cocapprox2} and Lemma \ref{lem:app1cohvan2}
are all we need to proceed the intertwining argument.
Our main result of this section is as follows.

\begin{thm}
\label{thm:classification2}
Let $\sC=(\sC_{rs})_{r,s\in\Lambda}$
be an amenable rigid C$^*$-2-category
with $\Lambda=\{0,1\}$.
Let $(\alpha,c^\alpha)$ and $(\beta,c^\beta)$
be cocycle actions
of $\sC$ on a system of properly infinite von Neumann algebras
$M=(M_r)_{r\in\Lambda}$
with separable preduals.
Suppose that the following conditions hold:
\begin{itemize}
\item 
$\alpha_X(Z(M_s))=Z(M_r)$
for $X\in\Irr(\sC_{rs})$ with $r,s\in\Lambda$.

\item
There exist faithful normal states $\varphi_r$ on $M_r$
with $r\in\Lambda$
such that $\alpha_X(\varphi_s)=\varphi_r$
on $Z(M_r)$
for all $X\in\sC_{rs}$ and $r,s\in\Lambda$.

\item
$(\alpha^{00},c^\alpha)$ is centrally free.

\item
$\alpha_X$ and $\beta_X$ are approximately unitarily equivalent
for all $X\in\sC$.
\end{itemize}
Then $(\alpha,c^\alpha)$ and $(\beta,c^\beta)$ are strongly cocycle conjugate.
\end{thm}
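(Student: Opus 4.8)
The plan is to run the Bratteli--Elliott--Evans--Kishimoto intertwining argument of Theorem \ref{thm:classification} essentially verbatim, with the system $M=(M_r)_{r\in\Lambda}$ in place of a single algebra and with Lemma \ref{lem:cocapprox2} and Lemma \ref{lem:app1cohvan2} playing the roles of Lemma \ref{lem:cocapprox} and Lemma \ref{lem:app1cohvan}. First I would fix exhausting sequences: increasing finite subsets $\mathcal{S}_n=\ovl{\mathcal{S}_n}\subset\Irr(\sC)$ with union $\Irr(\sC)$; for each $n$ finite subsets $\cF_{rs}^n=\ovl{\cF_{sr}^n}\subset\Irr(\sC_{rs})$ and $\cK_{r0}^n\subset\Irr(\sC_{r0})$ chosen as in Lemma \ref{lem:FrsKrs} for a tolerance $\delta_n$ decaying like $16^{-n}|\cF^n|_\sigma^{-2}$; auxiliary $\varepsilon_n>0$; increasing finite subsets $\mathcal{G}_n\subset\sC_0$ exhausting $\sC_0$; and increasing finite subsets $\Phi_n^r\subset (M_r)_*$ with norm-dense union. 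The states $\varphi_r$ from the hypotheses satisfy $\alpha_X(\varphi_s)=\varphi_r$ on $Z(M_r)$, and, together with the isomorphisms $\theta_X^\alpha$ of Lemma \ref{lem:00-11free} (recall $(\alpha^{00},c^\alpha)$ centrally free implies $(\alpha,c^\alpha)$ free in the sense of Definition \ref{defn:free2}), they play the role that the single $\theta^\alpha$-invariant state plays in Theorem \ref{thm:classification}, keeping all the $|\cdot|_\varphi$-estimates two-sided exactly as in Remark \ref{rem:Rohlin-tensor}(1). Using these data I would inductively construct centrally free cocycle actions $(\gamma^n,c^n)$ of $\sC$ on $M$, unitaries $u_X^n\in M_r$ and $w_n^r\in M_r$, inner automorphisms $\theta_n^r$ on $M_r$, and unitaries $\ovl{u}_X^n\in M_r$, obeying the componentwise analogues over $r,s\in\Lambda$ of the conditions $(n.\ref{item:Psin})$--$(n.\ref{item:ovlu})$ in the proof of Theorem \ref{thm:classification}.

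In the step from $n$ to $n+1$, Lemma \ref{lem:cocapprox2}, applied to the centrally free $(\gamma^{n-1},c^{n-1})$ and the target (which is approximately unitarily equivalent to it since $\alpha_X$ and $\beta_X$ are), yields unitaries $\lambda_X\in M_r^\omega$ perturbing $(\gamma^{n-1},c^{n-1})$ to a new cocycle action; passing to a representing sequence one gets unitaries $v_X^{n+1}\in M_r$ and a perturbed cocycle action $(\gamma^{n+1},c^{n+1})$ agreeing with $(\gamma^{n-1},c^{n-1})$ to within $\varepsilon_{n+1}$ on the current finite data (conditions $(n{+}1.\ref{item:gammanpsi})$, $(n{+}1.\ref{item:Tgammanpsi})$, $(n{+}1.\ref{item:cocyclen})$, $(n{+}1.\ref{item:thetaYX})$). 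A telescoping estimate identical to the one in the proof of Theorem \ref{thm:classification}, using $(n.\ref{item:thetaYX})$, $(n.\ref{item:gammanpsi})$ and $(n{+}1.\ref{item:thetaYX})$, bounds $\|c_{\ovl{Y},X}^{n+1}\theta_{\ovl{Y}\otimes X}^{\gamma^{n+1},\gamma^{n-1}}c_{\ovl{Y},X}^{n-1*}-1\|$ by $4\varepsilon_n^{1/2}$. Then Lemma \ref{lem:app1cohvan2}, applied to $\gamma^{n-1}$ and $v^{n+1}$, produces the correcting unitaries $w_{n+1}^r\in M_r$ with $\sum_{X\in\cF_{rs}^n}d(X)^2|v_X^{n+1}\gamma_X^{n-1}(w_{n+1}^s)-w_{n+1}^r|_{\gamma_X^{n-1}(\varphi_s)}<C/4^{n}$ and $\|w_{n+1}^r\psi-\psi w_{n+1}^r\|<2\delta_n$ for $\psi\in\Psi_n^r$; setting $u_X^{n+1}:=v_X^{n+1}\gamma_X^{n-1}(w_{n+1}^s)(w_{n+1}^r)^*$, $\theta_{n+1}^r:=\Ad w_{n+1}^r\circ\theta_{n-1}^r$ and $\ovl{u}_X^{n+1}:=u_X^{n+1}w_{n+1}^r\ovl{u}_X^{n-1}(w_{n+1}^r)^*$ closes the induction, with the summability bound $(n{+}1.\ref{item:sumuX})$ following just as before. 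Since $\Lambda$ is finite, all these estimates are uniform over the finitely many pairs $(r,s)$, so the two-index bookkeeping is no heavier than in the C$^*$-tensor category case.

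Finally I would pass to the limit: by $(n.\ref{item:wnpsi})$ and $(n.\ref{item:thetan})$ the even and odd subsequences of $(\theta_n^r)_n$ converge in the $u$-topology to approximately inner automorphisms $\ovl{\theta}_{-1}^r,\ovl{\theta}_0^r$ of $M_r$; by the analogue of Claim \ref{clam:ovthgamma} and Claim \ref{clam:limhatu}, and using $(n.\ref{item:Tgammanpsi})$ together with the centrality of the $w_n^r$ to handle the morphisms, the unitaries $\ovl{u}_X^n$ converge in the strong$*$ topology to unitaries $\hat{u}_X^{-1},\hat{u}_X^0\in M_r$, first for $X\in\sC_0$ and then for all $X$ by the extension procedure in the proof of Lemma \ref{lem:cocapprox}; $(n.\ref{item:cocyclen})$ gives convergence of the $2$-cocycles $c^n$. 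Passing to the limit in the analogues of Claim \ref{clam:gautheta} then yields $\Ad\hat{u}_X^{-1}\circ\ovl{\theta}_{-1}^r\circ\alpha_X\circ(\ovl{\theta}_{-1}^s)^{-1}=\Ad\hat{u}_X^{0}\circ\ovl{\theta}_0^r\circ\beta_X\circ(\ovl{\theta}_0^s)^{-1}$ for all $X\in\sC_{rs}$, with the matching identities on the $2$-cocycles and on morphisms; composing the two intertwiners gives a unitary perturbation of $(\alpha,c^\alpha)$ conjugate to $(\beta,c^\beta)$ via the approximately inner automorphisms $(\ovl{\theta}_{-1}^r)^{-1}\ovl{\theta}_0^r$, which is the desired strong cocycle conjugacy. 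I expect no genuinely new difficulty beyond Theorem \ref{thm:classification}: the one structurally new ingredient --- simultaneously invariant F\o lner sets in $\Irr(\sC_{00})$ and $\Irr(\sC_{10})$, and the resulting joint Rohlin tower --- has already been absorbed into Lemma \ref{lem:FrsKrs}, Theorem \ref{thm:Rohlin-tensor2}, Lemma \ref{lem:cocapprox2} and Lemma \ref{lem:app1cohvan2}, so the remaining work is the (lengthy but routine) transcription of the estimates and of Lemma \ref{lem:ultra-functional}-type arguments with two indices and with the states $\varphi_r$ propagated consistently.
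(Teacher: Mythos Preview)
Your proposal is correct and follows essentially the same approach as the paper: the paper's own proof consists precisely of setting up the two-indexed data $\cF_n^{rs},\cK_n^{r0},\delta_n,\varepsilon_n,\Psi_n^r,\mathcal{G}_n^{rs}$, listing the componentwise analogues of conditions $(n.\ref{item:Psin})$--$(n.\ref{item:ovlu})$ from Theorem \ref{thm:classification} (with Lemma \ref{lem:cocapprox2} and Lemma \ref{lem:app1cohvan2} replacing Lemma \ref{lem:cocapprox} and Lemma \ref{lem:app1cohvan}, and with the constant $24/4^n$ in place of $13/4^{n-1}$ in $(n.\ref{item:sumuX})$ because of the $20\delta^{1/2}$ in Lemma \ref{lem:app1cohvan2}), and then declaring that the intertwining argument of Theorem \ref{thm:classification} goes through verbatim. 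Your write-up is in fact more detailed than the paper's.
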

\begin{proof}
For each $r,s\in\Lambda$,
we take an increasing sequence of finite subsets
$\mathcal{S}_n^{rs}$ in $\Irr(\sC_{rs})$
such that
the union of $\mathcal{S}_n^{rs}$ with $n\geq0$
equals $\Irr(\sC_{rs})$
and $\ovl{\mathcal{S}_n^{rs}}=\mathcal{S}_n^{sr}$.
We may and do assume that $\mathcal{S}_0^{00}=\{\btr\}$
and $\mathcal{S}_0^{11}=\{\btr\}$.
By Lemma \ref{lem:FrsKrs},
we can inductively construct a sequence of finite subsets
$\cF_n^{rs}=\ovl{\cF_n^{sr}}\subset\Irr(\sC_{rs})$
and $\cK_n^{r0}\subset\Irr(\sC_{r0})$
with $r,s\in\Lambda$
and $\delta_n>0$ for $n\geq0$
so that
\begin{itemize}
\item
$\cF_0^{rr}:=\mathcal{S}_0^{rr}=:\cK_0^{rr}$ with $r\in\Lambda$,
$\cF_0^{01}=\ovl{\cF_0^{10}}\neq\emptyset$ and $\delta_0:=3$.

\item
$\cF_n^{00}
:=\cF_{n-1}^{00}\cup\cK_{n-1}^{00}\cup\ovl{\cK_{n-1}^{00}}
\cup\mathcal{S}_n^{00}$
for $n\geq1$.

\item
$\cF_n^{10}
:=\cF_{n-1}^{10}\cup\cK_{n-1}^{10}
\cup\mathcal{S}_n^{10}$
for $n\geq1$.

\item
$\cF_n^{01}:=\ovl{\cF_n^{10}}$
for $n\geq1$.

\item
$\cF_n^{11}
:=\cF_{n-1}^{11}\cup\mathcal{S}_n^{11}$
for $n\geq1$.

\item
$\delta_n:=16^{-n}\min_{r,s\in\Lambda}|\cF_n^{rs}|_\sigma^{-2}$
for $n\geq1$.

\item
$|(\cF_n^{rs}\cdot\cK_n^{s0})\setminus\cK_n^{r0}|_\sigma
<\delta_n|\cK_n^{r0}|_\sigma$
for $n\geq1$.
\end{itemize}

Take $\varepsilon_n>0$ with $n\geq0$
such that
$\varepsilon_n
<\delta_n
\min_{r\in\Lambda} |\cK_n^{r0}|_\sigma^{-2}$
and $\varepsilon_n<\varepsilon_{n-1}<1$.
Let $\sD=(\sD_{rs})_{rs\in\Lambda}$ be dense subsets in $\sC$
such that each $\sD_{rs}$ is dense in $\sC_{rs}$
and $\sD$ is closed under the conjugation and the tensor products.
Let $\mathcal{G}_n=(\mathcal{G}_n^{rs})_{r,s\in\Lambda}$
be an increasing sequence of finite subsets
in $\sD$ such that $\mathcal{G}_n^{rs}\subset\sD_{rs}$
and the union of $\mathcal{G}_n^{rs}$ with $n\geq0$ equals $\sD_{rs}$.

For $r\in\Lambda$,
we set $\Phi_0^{r}:=\{\varphi_r\}$.
Let $\Phi_n^r$ with $n\geq0$ be an increasing sequence of finite subsets
of $(M_r)_*$ whose union is norm-dense in $(M_r)_*$.
We set $\Psi_{-1}^r:=\Phi_0^r=:\Psi_0^r$,
$(\gamma^{-1},c^{-1}):=(\alpha,c^\alpha)$,
$(\gamma^0,c^0):=(\beta,c^\beta)$,
$w_{-1}^r:=1=:w_0^r$,
$\theta_{-1}^r:=\id_{M_r}=:\theta_0^r$
and
$u_X^{-1}:=1=:u_X^0\in M_r$
and
$\ovl{u}_X^{-1}:=1=:\ovl{u}_X^0\in M_r$
for all $X\in\sC_{rs}$ with $r,s\in\Lambda$.
We can inductively construct the following members with $n\geq1$
in the same way as Theorem \ref{thm:classification2}:
\begin{itemize}
\item
$\Psi_n^r$:
a finite subset of $(M_r)_*$ with $r\in\Lambda$;

\item
$(\gamma^n,c^n)$:
a centrally free cocycle action of $\sC$
on $M=(M_r)_{r\in\Lambda}$;

\item
$u_X^{n}$:
a unitary in $M_r$ for $X\in\sC_{rs}$
with $r,s\in\Lambda$;

\item
$w_n^r$:
a unitary in $M_r$ with $r\in\Lambda$;

\item
$\theta_n^r$:
an inner automorphism on $M_r$ with $r\in\Lambda$;

\item
$\ovl{u}_X^{n}$:
a unitary in $M_r$
for $X\in\sC_{rs}$
with $r,s\in\Lambda$;
\end{itemize}
such that
\begin{enumerate}
\renewcommand{\labelenumi}{$(n.\arabic{enumi})$}
\item
\label{item:Psin2}
For $r\in\Lambda$,
\begin{align*}
\Psi_{n}^r
&:=\Phi_{n}^r
\cup
\Psi_{n-1}^r
\cup
\theta_{n-1}^r(\Phi_n^r)
\\
&\hspace{20pt}
\cup \bigcup_{s=0}^1\bigcup_{X\in\cF_{n}^{rs}}
\{\gamma_X^{n-1}(\Phi_{n}^r),
\gamma_X^{n-1}(\varphi_s)\ovl{u}_X^{n-1},
\ovl{u}_X^{n-1}\gamma_X^{n-1}(\varphi_s),
\ovl{u}_X^{n-1*}\gamma_X^{n-1}(\varphi_s)\ovl{u}_X^{n-1}
\}.
\end{align*}

\item
\label{item:gammanun2}
The unitaries $u^n$ perturbs
$(\Ad w_{n}\circ\gamma^{n-2}\circ\Ad w_{n}^*,w_n c^{n-2} w_n^*)$
to
$(\gamma^{n},c^n)$,
where
$\Ad w_{n}\circ\gamma^{n-2}\circ\Ad w_{n}^*$
means a system of maps
$\Ad w_{n}^{r}\circ\gamma_X^{n-2}\circ\Ad w_{n}^{s*}$
from $M_{s}$ into $M_{r}$
for $X\in\sC_{rs}$ with $r,s\in\Lambda$.

\item
\label{item:gammanpsi2}
$\|
\gamma_{\ovl{X}}^{n}(\gamma_{Y}^{n}(\psi))
-\gamma_{\ovl{X}}^{n-1}(\gamma_{Y}^{n-1}(\psi))
\|<\varepsilon_{n}$
for all
$(X,Y)\in \cF_{n+1}^{rs}\times\cF_{n+1}^{rt}$
and $\psi\in \Psi_{n}^t$
with $r,s,t\in \Lambda$.

\item
\label{item:Tgammanpsi2}
$\|(T^{\gamma^{n}}-T^{\gamma^{n-1}})\varphi_r\|
+
\|\varphi_r\cdot (T^{\gamma^{n}}-T^{\gamma^{n-1}})\|<\varepsilon_n$
for all $T\in\sC(X,Y)$
with $X,Y\in \mathcal{G}_{n}^{rs}$
and $r,s\in\Lambda$.

\item
\label{item:cocyclen2}
$\|(c_{X,Y}^n-c_{X,Y}^{n-1})\varphi_r\|
+
\|\varphi_r\cdot(c_{X,Y}^n-c_{X,Y}^{n-1})\|<\varepsilon_n$
for all $X,Y\in \mathcal{G}_{n}^{rs}$
with $r,s\in\Lambda$.

\item
\label{item:thetaYX2}
$
\|c_{\ovl{Y},X}^n
\theta_{\ovl{Y}\otimes X}^{\gamma^n,\gamma^{n-1}}
c_{\ovl{Y},X}^{n-1*}-1
\|_{\gamma_{\ovl{Y}}^{n-1}(\gamma_X^{n-1}(\varphi_s))}<\varepsilon_n$
for all $X\in\cF_{n+1}^{rs}$ and $Y\in\cK_{n+1}^{r0}$
with $r,s\in\Lambda$.

\item
\label{item:sumuX2}
$\sum_{X\in\cF_{n-1}^{rs}}
|u_X^{n}-1|_{\Ad w_n^r\circ \gamma_X^{n-2}\circ\Ad w_n^{s*}(\varphi_s)}
<
24/4^n$
with $r,s\in\Lambda$ and $n\geq 2$.

\item
\label{item:wnpsi2}
$\|w_n^r\psi-\psi w_n^r\|<2\delta_{n-1}$
for $\psi\in\Psi_{n-1}^r$ with $r\in\Lambda$.

\item
\label{item:thetan2}
$\theta_n^r:=\Ad w_n^r\circ\theta_{n-2}^r$ with $r\in\Lambda$.

\item
\label{item:ovlu2}
$\ovl{u}_X^n:=u_X^n w_n\ovl{u}_X^{n-2} w_n^*$
for $X\in\sC$.
\end{enumerate}
Then it is not so difficult to see that
the intertwining method similar to
the proof of Theorem \ref{thm:classification}
works, and we are done.
\end{proof}

We will state a rigid C$^*$-2-category version of
Corollary \ref{cor:CDclass}.
The proof is straightforward.

\begin{cor}
\label{cor:CDclass2}
Let $\sC=(\sC_{rs})_{r,s\in\Lambda}$
and $\sD=(\sD_{rs})_{r,s\in\Lambda}$
be amenable rigid C$^*$-2-categories
with a unitary tensor equivalence $(F,L)$ from $\sC$ into $\sD$.
Let $(\alpha,c^\alpha)$ and $(\beta,c^\beta)$
be cocycle actions
of $\sC$ and $\sD$ on a system of properly infinite
von Neumann algebras with separable preduals
$M=(M_r)_{r\in\Lambda}$.
Suppose that the following conditions hold:
\begin{itemize}
\item 
$\alpha_X(Z(M_s))=Z(M_r)$
for $X\in\Irr(\sC_{rs})$ with $r,s\in\Lambda$.

\item
There exist faithful normal states $\varphi_r$ on $M_r$
with $r\in\Lambda$
such that $\alpha_X(\varphi_s)=\varphi_r$
on $Z(M_r)$
for all $X\in\sC_{rs}$ and $r,s\in\Lambda$.

\item
$(\alpha^{00},c^\alpha)$ is centrally free.

\item
$\alpha_X$ and $\beta_{F(X)}$ are approximately unitarily equivalent
for all $X\in\sC$.
\end{itemize}
Then there exist
a family of unitaries $v=(v_X)_{X\in\sC}$ in $M$
and an approximately inner automorphism $\theta_r$ on $M_r$
with $r\in\Lambda$
such that
\begin{itemize}
\item
$\Ad v_X\circ\alpha_X=\theta_r\circ\beta_{F(X)}\circ\theta_s^{-1}$
for all $X\in \sC_{rs}$
with $r,s\in\Lambda$.

\item
$v_X \alpha_X(v_Y) c_{X,Y}^\alpha v_{X\otimes Y}^*
=\theta(c_{F(X),F(Y)}^\beta [L_{X,Y}]^\beta)$
for all $(X,Y)\in\sC_{rs}\times\sC_{st}$
with $r,s,t\in\Lambda$.

\item
$v_Y T^\alpha=\theta(F(T)^\beta)v_X$
for all $X,Y\in\sC$ and $T\in\sC(X,Y)$.
\end{itemize}
\end{cor}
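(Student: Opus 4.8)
The plan is to deduce Corollary \ref{cor:CDclass2} from Theorem \ref{thm:classification2} by transporting the cocycle action $(\beta,c^\beta)$ of $\sD$ back to $\sC$ along the unitary tensor equivalence $(F,L)$, exactly as in the proof of Corollary \ref{cor:CDclass}. First I would define a new cocycle action $(\gamma,c^\gamma)$ of $\sC$ on $M=(M_r)_{r\in\Lambda}$ by setting $\gamma_X:=\beta_{F(X)}$ for $X\in\sC_{rs}$, $c_{X,Y}^\gamma:=c_{F(X),F(Y)}^\beta [L_{X,Y}]^\beta$ for $(X,Y)\in\sC_{rs}\times\sC_{st}$, and $T^\gamma:=F(T)^\beta$ for $T\in\sC_{rs}(X,Y)$. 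One checks that the naturality of $L$ and the unitary tensor functor axioms for $(F,L)$ together with the cocycle action axioms of $(\beta,c^\beta)$ make $(\gamma,c^\gamma)$ a genuine cocycle action of $\sC$; this is the same verification as in the C$^*$-tensor category case, now carried out in each bidegree $rs$ of the $2$-category.

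Next I would verify that $(\gamma,c^\gamma)$ satisfies the hypotheses of Theorem \ref{thm:classification2}. The four conditions there are inherited from those imposed on $(\alpha,c^\alpha)$ and from the assumption that $\alpha_X$ and $\beta_{F(X)}=\gamma_X$ are approximately unitarily equivalent for all $X\in\sC$: the center-preservation $\gamma_X(Z(M_s))=Z(M_r)$ and the existence of the states $\varphi_r$ with $\gamma_X(\varphi_s)=\varphi_r$ on $Z(M_r)$ follow because approximately unitarily equivalent endomorphisms agree on centers and on normal states restricted to centers; central freeness of $(\gamma^{00},c^\gamma)$ follows since $\gamma_X^{00}$ is approximately unitarily equivalent to the properly centrally non-trivial $\alpha_X^{00}$ for $X\in\Irr(\sC_{00})\setminus\{\btr\}$ (note $F$ preserves simplicity and the unit object), and proper central non-triviality is stable under approximate unitary equivalence. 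Finally, $\alpha_X$ and $\gamma_X$ are approximately unitarily equivalent by hypothesis. Thus Theorem \ref{thm:classification2} applies and yields unitaries $v=(v_X)_{X\in\sC}$ in $M$ and approximately inner automorphisms $\theta_r$ on $M_r$ implementing a strong cocycle conjugacy of $(\alpha,c^\alpha)$ and $(\gamma,c^\gamma)$.

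It then remains only to rewrite the three conclusions of the strong cocycle conjugacy in terms of $\beta$ and $F$, substituting $\gamma_X=\beta_{F(X)}$, $c_{X,Y}^\gamma=c_{F(X),F(Y)}^\beta[L_{X,Y}]^\beta$ and $T^\gamma=F(T)^\beta$. This gives precisely the displayed identities $\Ad v_X\circ\alpha_X=\theta_r\circ\beta_{F(X)}\circ\theta_s^{-1}$, $v_X\alpha_X(v_Y)c_{X,Y}^\alpha v_{X\otimes Y}^*=\theta(c_{F(X),F(Y)}^\beta[L_{X,Y}]^\beta)$, and $v_Y T^\alpha=\theta(F(T)^\beta)v_X$. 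Since $(F,L)$ is assumed to be a unitary tensor \emph{equivalence}, no additional argument is needed to pass between $\Irr(\sC)$ and $\Irr(\sD)$; the translation of hypotheses is automatic. I do not expect a genuine obstacle here: the content is entirely in Theorem \ref{thm:classification2}, and the only mild care required is in checking that the structure maps $c^\gamma$ and $T^\gamma$ defined via $(F,L)$ really satisfy the $2$-category cocycle action axioms in every bidegree — a routine diagram chase using naturality of $L$ and the hexagon/pentagon-type coherence built into a unitary tensor functor.
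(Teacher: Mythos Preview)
Your proposal is correct and matches the paper's approach exactly: the paper states that the proof is ``straightforward'' and a direct 2-category analogue of the proof of Corollary~\ref{cor:CDclass}, which is precisely the pull-back construction $(\gamma,c^\gamma)$ you describe followed by an application of Theorem~\ref{thm:classification2}. One small economy: the first three hypotheses of Theorem~\ref{thm:classification2} are stated only for $(\alpha,c^\alpha)$, so you do not actually need to verify center-preservation, the invariant states, or central freeness for $(\gamma,c^\gamma)$ --- only the approximate unitary equivalence of $\alpha_X$ and $\gamma_X=\beta_{F(X)}$ is required, and that is given.
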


\subsection{Isomorphic property}
In Theorem \ref{thm:classification2},
we have classified cocycle actions of a rigid C$^*$-2-category
such that $(\alpha^{00},c^\alpha)$ is centrally free on $M_0$.
We will show $M_1$ is actually determined by
the data of $(\alpha^{00},c^\alpha)$ and $\sC$.

\begin{thm}
\label{thm:M1P1isom}
Let $(\alpha,c^\alpha)$ and $(\beta,c^\beta)$
be cocycle actions of an amenable rigid C$^*$-2-category
$\sC=(\sC_{rs})_{r,s\in\Lambda}$
on systems of properly infinite von Neumann algebras
$M=(M_r)_{r\in\Lambda}$
and $P=(P_r)_{r\in\Lambda}$ with separable preduals,
respectively.
Suppose that
$(\alpha^{00},c^\alpha)$ and $(\beta^{00},c^\beta)$
are cocycle conjugate,
that is,
there exist a $*$-isomorphism
$\pi_0\colon M_0\to P_0$ and
unitaries $u_X\in P_0$ with $X\in\sC_{00}$
such that
\begin{itemize}
\item
$\pi_0\circ\alpha_X\circ\pi_0^{-1}
=\Ad u_X\circ\beta_X$
for all $X\in\sC_{00}$.

\item
$\pi_0(c_{X,Y}^\alpha)
=u_X\beta_X(u_Y)c_{X,Y}^\beta u_{X\otimes Y}^*$
for all $X,Y\in\sC_{00}$.

\item
$\pi_0(T^\alpha)
=u_Y T^\beta u_X^*$
for all $X,Y\in\sC_{00}$
and $T\in\sC_{00}(X,Y)$.
\end{itemize}
Then for each $Z\in\sC_{10}$,
there exists an isomorphism $\pi_Z\colon M_1\to P_1$
such that
\begin{itemize}
\item 
$\pi_Z\circ\alpha_Z=\beta_Z\circ\pi_0$.

\item
$\pi_Z(T^\alpha)=T^\beta$
for all $T\in\sC_{10}(Z,Z)$.
\end{itemize}
\end{thm}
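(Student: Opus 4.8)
The plan is to build $\pi_Z$ explicitly out of $\pi_0$, the perturbation unitaries $u_X$, and the intertwiner spaces attached to $Z$. Since $\sC_{10}$ has a conjugate object $\ovl Z\in\sC_{01}$, we have $Z\otimes\ovl Z\in\sC_{11}$ and $\ovl Z\otimes Z\in\sC_{00}$, and Lemma \ref{lem:00-11free} guarantees $\theta_Z^\alpha\colon Z(M_0)\to Z(M_1)$ is an isomorphism (and similarly for $\beta$). The naive guess is $\pi_Z:=\beta_Z\circ\pi_0\circ\phi_Z^\alpha$ restricted appropriately, but $\phi_Z^\alpha$ is only a completely positive left inverse, not a homomorphism, so this needs the standard trick: realize $M_1$ inside a corner of $M_0$ via the Pimsner--Popa type expansion associated with $Z$. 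Concretely, using the normalized solution $(R_Z,\ovl R_Z)$ of the conjugate equations for $Z,\ovl Z$, every $y\in M_1$ is recovered from $\alpha_Z(y)\in M_0$ by $y=\phi_Z^\alpha(\alpha_Z(y))$, and the key point is that on the range $\alpha_Z(M_1)$ the map $\phi_Z^\alpha$ is multiplicative. So I would \emph{define} $\pi_Z$ by the requirement $\pi_Z\circ\alpha_Z=\beta_Z\circ\pi_0$ on $M_1$, after first checking this is well defined and surjective.

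First I would verify well-definedness: since $\alpha_Z$ is a faithful normal $*$-homomorphism $M_1\to M_0$, it has a normal left inverse, so $\alpha_Z(M_1)$ is a von Neumann subalgebra of $M_0$ isomorphic to $M_1$; the cocycle conjugacy data $(\pi_0,u)$ then transports $\alpha_Z(M_1)$ into $P_0$, and I must identify the image with $\beta_Z(P_1)$. For this I would use the characterization $\alpha_Z(M_1)=\{x\in M_0 : x\,c_{\ovl Z,Z}^\alpha R_Z^\alpha = \alpha_Z(\phi_Z^\alpha(x))\,c_{\ovl Z,Z}^\alpha R_Z^\alpha\}$ together with the intertwiner relations; applying $\pi_0$ and the perturbation formulas $\pi_0(T^\alpha)=u_Y T^\beta u_X^*$, $\pi_0(c^\alpha_{X,Y})=u_X\beta_X(u_Y)c^\beta_{X,Y}u_{X\otimes Y}^*$, one rewrites this membership condition for $\pi_0(x)$ in terms of $\beta_Z$. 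The point is that although $\pi_0$ is only defined on $M_0$ (not on $M_1$), all the relevant morphisms $T\in\sC_{10}(Z,Z)$ and cocycles $c^\alpha_{\ovl Z,Z}$, $c^\alpha_{Z,\ovl Z}$ live in $M_0$ (as $M_0$- or $M_1$-valued, but those with left index $0$ lie in $M_0$), so $\pi_0$ can be applied to them. After this identification one gets a $*$-isomorphism $\pi_Z\colon M_1\to P_1$ intertwining $\alpha_Z$ and $\beta_Z$ by construction; the relation $\pi_Z(T^\alpha)=T^\beta$ for $T\in\sC_{10}(Z,Z)$ then follows because $\alpha_Z(\phi_Z^\alpha(T^\alpha))$ and $T^\alpha$ agree up to the cocycle structure, and one pushes the equation $\pi_Z\circ\alpha_Z=\beta_Z\circ\pi_0$ through, using $\pi_0(c^\alpha)=\dots$ to cancel the $u$'s.

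The subtlety I expect to be the main obstacle is independence of the construction from $Z$ (the theorem asserts one $\pi_Z$ for \emph{each} $Z$, but one must make sure each individual construction actually closes up) and, more seriously, checking that $\pi_Z$ is genuinely an algebra homomorphism rather than merely a normal positive map. Multiplicativity of $\phi_Z^\alpha$ on $\alpha_Z(M_1)$ is the crux: it holds because for $y_1,y_2\in M_1$, $\phi_Z^\alpha(\alpha_Z(y_1)\alpha_Z(y_2)) = \phi_Z^\alpha(\alpha_Z(y_1y_2)) = y_1y_2$, which is just the left-inverse property $\phi_Z^\alpha\circ\alpha_Z=\id_{M_1}$, so in fact this is automatic. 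The real work is bookkeeping the $u_X$'s and $c^\beta_{X,Y}$'s so that the composite $\pi_Z:=(\text{inverse of }\beta_Z)\circ(\text{transport by }\pi_0,u)\circ\alpha_Z$ is independent of the choices made in writing $\phi_Z^\alpha$, and confirming $\pi_Z(T^\alpha)=T^\beta$ on the nose (not up to a scalar or a unitary). I would handle this by writing everything in terms of the normalized standard solutions and the identity $d(Z)\,S^\alpha\alpha_Z(\psi)=d(Z')\,\alpha_{Z'}(\psi)S^\alpha$ for intertwiners (equation \eqref{eq:alxphi}), which forces the scalars to match, and then the uniqueness of the left inverse $\phi_Z$ (independence of conjugate and standard solution, noted after Lemma \ref{lem:linv2}) removes any residual ambiguity.
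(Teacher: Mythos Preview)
There is a genuine gap rooted in a direction error. For $Z\in\sC_{10}$ the functor sends $\sC_{10}\to\Mor(M_0,M_1)_0$, so $\alpha_Z\colon M_0\to M_1$ and $\phi_Z^\alpha\colon M_1\to M_0$. Consequently $\alpha_Z(M_0)\subset M_1$ (not $\alpha_Z(M_1)\subset M_0$), the identity is $\phi_Z^\alpha\circ\alpha_Z=\id_{M_0}$ (not $\id_{M_1}$), and for $T\in\sC_{10}(Z,Z)$ the element $T^\alpha$ lies in $M_1$, so $\pi_0$ cannot be applied to it. With the directions corrected, your defining equation $\pi_Z\circ\alpha_Z=\beta_Z\circ\pi_0$ only pins down $\pi_Z$ on the proper subalgebra $\alpha_Z(M_0)\subset M_1$; it says nothing about the rest of $M_1$, and the characterization you wrote for ``$\alpha_Z(M_1)$'' inside $M_0$ does not exist. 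The alternative of using $\alpha_{\ovl Z}\colon M_1\to M_0$ and trying to identify $\alpha_{\ovl Z}(M_1)\subset M_0$ purely from the $\sC_{00}$-data that $(\pi_0,u)$ intertwines is also not available: that subalgebra is not cut out by any condition expressible solely in terms of $\alpha^{00}$, $c^\alpha|_{\sC_{00}\times\sC_{00}}$, and morphisms in $\sC_{00}$.

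What the paper does is precisely to bridge from $\alpha_Z(M_0)$ to all of $M_1$. It builds explicit Pimsner--Popa type elements $v_j^X:=c_{Z,X}^\alpha[T_j^X]^\alpha\in(\alpha_Z,\alpha_Z\alpha_X)\subset M_1$ out of an orthonormal basis $\{S_j^X\}$ of $\sC_{00}(X,\ovl Z\otimes Z)$ for each simple $X\prec\ovl Z\otimes Z$, shows $\sum_{X,j}v_j^{X*}e_Z^\alpha v_j^X=1$ so that $M_1=\operatorname{span}\{v_j^{X*}\alpha_Z(M_0)\}$, and then defines a unitary $V$ between GNS spaces by $V v_j^{X*}\alpha_Z(x)\xi_\psi=w_j^{X*}\beta_Z(u_X^*\pi_0(x))\xi_\chi$, where $w_j^X$ are the analogous elements for $\beta$. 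The verification that $V$ implements an isomorphism $\pi_Z=\Ad V$ uses exactly the cocycle-conjugacy formulas you listed, but applied to the $\sC_{00}$-morphisms $S_j^X$ (which \emph{do} live in $M_0$) rather than to $\sC_{10}$-morphisms. The identity $\pi_Z(T^\alpha)=T^\beta$ is then obtained from the matrix coefficients $\mu_{jk}^X(T):=S_j^{X*}(1_{\ovl Z}\otimes T)S_k^X\in\C$, which are the same abstract scalars on both sides.
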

\begin{proof}
Let $\varphi\in M_0$ be a faithful normal state.
Fix $Z\in\sC_{10}$.
In the following,
we will compare the inclusions $\alpha_Z(M_0)\subset M_1$
to $\beta_Z(P_0)\subset P_1$ via GNS Hilbert spaces
(cf. discrete decompositions treated in Section \ref{sect:discrete}).

Let $E_Z^\alpha:=\alpha_Z\circ\phi_Z^\alpha$
be a conditional expectation from $M_1$ onto $\alpha_Z(M_0)$.
Then $\psi:=\alpha_Z(\varphi)\in (M_1)_*$
satisfies $\psi\circ E_Z^\alpha=\psi$.
Let $L^2(M_1,\psi)$ be the GNS Hilbert space of $M_1$
with respect to $\psi$.
The GNS cyclic vector is denoted by $\xi_\psi$.
Then the Jones projection $e_Z^\alpha$ on $L^2(M_1,\psi)$
satisfies
$e_Z x\xi_\psi=E_Z^\alpha(x)\psi$ for $x\in M_1$.

Similarly, we put $E_Z^\beta:=\beta_Z\circ\phi_Z^\beta$
and $\chi:=\beta_Z(\pi_0(\varphi))\in (P_1)_*$.
Then $L^2(P_1,\chi)$, $\xi_\chi$ and $e_Z^\beta$
denote the GNS Hilbert space, the GNS cyclic vector
and the Jones projection, respectively.

For each $X\in\Irr(\sC_{00})$
with $X\prec\ovl{Z}\otimes Z$,
we take an orthonormal base $\{S_j^X\}_{j\in I_X}$
of $\sC_{00}(X,\ovl{Z}\otimes Z)$.
We put
\[
T_j^X:=d(Z)(1_Z\otimes S_j^{X*})(\ovl{R}_Z\otimes1_Z)
\in \sC_{10}(Z,Z\otimes X).
\]

Then we let
\[
v_j^X:=c_{Z,X}^\alpha[T_j^X]^\alpha
=d(Z)\alpha_Z([S_j^{X*}]^\alpha c_{\ovl{Z},Z}^{\alpha *})
c_{Z,\ovl{Z}}^\alpha\ovl{R}_Z^\alpha
\in (\alpha_Z,\alpha_Z\circ \alpha_X)
\subset M_1,
\]
and
\[
w_j^X:=c_{Z,X}^\beta
[T_j^X]^\beta
=d(Z)\beta_Z([S_j^{X*}]^\beta c_{\ovl{Z},Z}^{\beta *})
c_{Z,\ovl{Z}}^\beta\ovl{R}_Z^\beta
\in (\beta_Z,\beta_Z\circ\beta_X)
\subset P_1.
\]

\setcounter{clam}{0}
\begin{clam}
\label{clam:eZalpha}
The following statements hold:
\begin{enumerate}
\item
For $X,Y\in\Irr(\sC_{00})$
contained in $\ovl{Z}\otimes Z$,
one has
$\phi_Z^\alpha(v_j^X v_k^{Y*})=\delta_{X,Y}\delta_{j,k}$
and
$\phi_Z^\beta(w_j^X w_k^{Y*})=\delta_{X,Y}\delta_{j,k}$
for $j\in I_X$ and $k\in I_Y$.

\item
The equalities
$\sum_{X}\sum_{j\in I_X}v_j^{X*}e_Z^\alpha v_j^X=1$
and
$\sum_{X}\sum_{j\in I_X}w_j^{X*}e_Z^\beta w_j^X=1$
hold,
where the summation is taken for
$X\in\Irr(\sC_{00})$
with $X\prec\ovl{Z}\otimes Z$.

\item
The linear span of $v_j^{X*}\alpha_Z(M_0)$
for $X\in\Irr(\sC_{00})$ with $X\prec\ovl{Z}\otimes Z$
equals $M_1$.
Similarly,
the linear span of $w_j^{X*}\beta_Z(M_0)$
for the same $X$'s
equals $P_1$.
\end{enumerate}
\end{clam}
\begin{proof}[Proof of Claim 1]
We will verify the statements only for $(\alpha,c^\alpha)$.
(1).
We have
\begin{align*}
\phi_Z^\alpha(v_j^X v_k^{Y*})
&=
d(Z)^2
\phi_Z^\alpha(
\alpha_Z([S_j^{X*}]^\alpha)c_{Z,\ovl{Z}}^\alpha\ovl{R}_Z^\alpha
\ovl{R}_Z^{\alpha*}c_{Z,\ovl{Z}}^{\alpha*}\alpha_Z([S_k^{Y}]^\alpha))
\\
&=
d(Z)^2
[S_j^{X*}]^\alpha
\phi_Z^\alpha(
c_{Z,\ovl{Z}}^\alpha\ovl{R}_Z^\alpha
\ovl{R}_Z^{\alpha*}c_{Z,\ovl{Z}}^{\alpha*})
[S_k^{Y}]^\alpha
\\
&=
[S_j^{X*}]^\alpha[S_k^{Y}]^\alpha
=
\delta_{X,Y}\delta_{j,k}.
\end{align*}

(2).
By (1).
$p^\alpha:=\sum_{X}\sum_{j\in I_X}v_j^{X*}e_Z^\alpha v_j^X$
is a projection in the basic extension $\langle M_1,e_Z^\alpha\rangle$.
Let $\widehat{E}_Z^\alpha$ be the dual operator valued weight
of $E_Z^\alpha$.
Using $\widehat{E}_Z^\alpha(e_Z^\alpha)=1$
and the $M_1$-$M_1$-bimodularity of $\widehat{E}_Z^\alpha$,
we get
\begin{align*}
\widehat{E}_Z^\alpha(p^\alpha)
&=
\sum_{X}\sum_{j\in I_X}v_j^{X*}v_j^X
=
\sum_{X}\sum_{j\in I_X}
d(Z)^2
\ovl{R}_Z^{\alpha *}c_{Z,\ovl{Z}}^{\alpha*}
\alpha_Z([S_j^XS_j^{X*}]^\alpha)
c_{Z,\ovl{Z}}^\alpha\ovl{R}_Z^{\alpha}
\\
&=
\sum_{X}
d(Z)^2
\phi_{\ovl{Z}}^\alpha([P_{\ovl{Z}\otimes Z}^X]^\alpha)
=d(Z)^2.
\end{align*}
It is elementary to check
$b:=d(Z)\ovl{R}_Z^{\alpha *}c_{Z,\ovl{Z}}^{\alpha*}$
gives a Pimsner--Popa basis of $\alpha_Z(M_0)\subset M_1$,
and we have $be_Z^\alpha b^*=1$.
This implies that
$\widehat{E}_Z^\alpha(1)
=\widehat{E}_Z^\alpha(be_Z^\alpha b^*)=bb^*=d(Z)^2$.
Hence $p^\alpha=1$.

(3). This is trivial from (2).
\end{proof}

It turns out from the previous claim
that
$L^2(M_1,\psi)$ and $L^2(P_1,\chi)$
have the orthogonal decomposition to
$v_j^{X*} \alpha_Z(M_0)\xi_\psi$
and
$w_j^{X*} \beta_Z(M_0)\xi_\chi$
with $X\in \Irr(\sC_{00})$, $X\prec\ovl{Z}\otimes Z$
and $j\in I_X$,
respectively.

Let us compare the structures of the inner products.
For $x,y\in M_0$,
$X,Y\in\Irr(\sC_{00})$
with $X,Y\prec\ovl{Z}\otimes Z$,
$j\in I_X$ and $k\in I_Y$,
we have
\begin{align*}
\psi(\alpha_Z(y^*)v_k^Yv_j^{X*} \alpha_Z(x))
&=
\varphi(y^* \phi_Z^\alpha(v_k^Yv_j^{X*})x)
\\
&=
\delta_{X,Y}\delta_{j,k}
\varphi(y^*x)
\quad
\mbox{by Claim \ref{clam:eZalpha} (1)}
\\
&=
\chi(\beta_Z(\pi_0(y^*)u_Y^*)w_k^Y w_j^{X*} \beta_Z(u_X^*\pi_0(x))).
\end{align*}

Hence we have shown the following claim.

\begin{clam}
There exists a unitary $V$ from $L^2(M_1,\psi)$
onto $L^2(P_1,\chi)$
such that
$V v_j^{X*} \alpha_Z(x)\xi_\psi
=
w_j^{X*} \beta_Z(u_X^*\pi_0(x))\xi_\chi$
for all $x\in M_0$,
$X\in\Irr(\sC_{00})$ with $X\prec\ovl{Z}\otimes Z$
and $j\in I_X$.
\end{clam}

Now we will show that
$V$ implements the isomorphism
between the inclusions
$\alpha_Z(M_0)\subset M_1$
and
$\beta_Z(M_0)\subset P_1$.
For $x\in M_0$,
we have $V\alpha_Z(x)=\beta_Z(\pi_0(x))V$.
Indeed for $y\in M_0$,
$X\in\Irr(\sC_{00})$ with $X\prec\ovl{Z}\otimes Z$
and $j\in I_X$,
we have
\begin{align*}
V \alpha_Z(x)v_j^{X*} \alpha_Z(y)\xi_\psi
&=
V v_j^{X*} \alpha_Z(\alpha_X(x)y)\xi_\psi
=
w_j^{X*}\beta_Z(u_X^*\pi_0(\alpha_X(x)y))\xi_\chi
\\
&=
w_j^{X*}
\beta_Z\big{(}\beta_X(\pi_0(x))u_X^*\pi_0(y)\big{)}\xi_\chi
\\
&=
\beta_Z(\pi_0(x))w_j^{X*}\beta_Z(u_X^*\pi_0(y))\xi_\chi.
\end{align*}
Next let
$Y\in\Irr(\sC_{00})$ and $k\in I_Y$.
Then for $x\in M_0$,
$X\in\Irr(\sC_{00})$ with $X\prec Z\otimes\ovl{Z}$
and $j\in I_X$,
we have
\begin{align*}
v_k^{Y*} v_j^{X*}\alpha_Z(x)\xi_\psi
&=
\sum_{U}\sum_{\ell\in I_U}
v_\ell^{U*}e_Z^\alpha v_\ell^U
v_k^{Y*} v_j^{X*}\alpha_Z(x)\xi_\psi
\quad
\mbox{by Claim \ref{clam:eZalpha} (2)}
\\
&=
\sum_{U}\sum_{\ell\in I_U}
v_\ell^{U*}
E_Z^\alpha(v_\ell^U v_k^{Y*} v_j^{X*})\alpha_Z(x)\xi_\psi,
\end{align*}
where the summation is taken for
$U\in\Irr(\sC_{00})$
with
$U\prec\ovl{Z}\otimes Z$.
Thus we have
\[
Vv_k^{Y*} v_j^{X*}\alpha_Z(x)\xi_\psi
=
\sum_{U}\sum_{\ell\in I_U}
w_\ell^{U*}
\beta_Z
\big{(}u_U^*\pi_0(\phi_Z^\alpha(v_\ell^U v_k^{Y*} v_j^{X*})x)\big{)}\xi_\chi.
\]
Now we have
\begin{align*}
v_\ell^U v_k^{Y*} v_j^{X*}
&=
d(Z)^2
\alpha_Z([S_\ell^{U*}]^\alpha c_{\ovl{Z},Z}^{\alpha*})c_{Z,\ovl{Z}}^\alpha
\ovl{R}_Z^\alpha
\ovl{R}_Z^{\alpha*}c_{Z,\ovl{Z}}^{\alpha*}
\alpha_Z(c_{\ovl{Z},Z}^\alpha[S_k^{Y}]^\alpha)
v_j^{X*}
\\
&=
d(Z)^2
\alpha_Z([S_\ell^{U*}]^\alpha c_{\ovl{Z},Z}^{\alpha*})
c_{Z,\ovl{Z}}^\alpha
\ovl{R}_Z^\alpha\ovl{R}_Z^{\alpha*}
c_{Z,\ovl{Z}}^{\alpha*}
v_j^{X*}
\alpha_Z(\alpha_X(c_{\ovl{Z},Z}^\alpha[S_k^Y]^\alpha)).
\end{align*}
This implies
\begin{align*}
&\phi_Z^\alpha(v_\ell^U v_k^{Y*} v_j^{X*})
\\
&=
d(Z)^2
[S_\ell^{U*}]^\alpha c_{\ovl{Z},Z}^{\alpha*}
\phi_Z^\alpha(c_{Z,\ovl{Z}}^\alpha
\ovl{R}_Z^\alpha\ovl{R}_Z^{\alpha*}
c_{Z,\ovl{Z}}^{\alpha*}
v_j^{X*})
\alpha_X(c_{\ovl{Z},Z}^\alpha[S_k^Y]^\alpha)
\\
&=
d(Z)
[S_\ell^{U*}]^\alpha c_{\ovl{Z},Z}^{\alpha*}
\alpha_{\ovl{Z}}(\ovl{R}_Z^{\alpha*}
c_{Z,\ovl{Z}}^{\alpha*} v_j^{X*})
c_{\ovl{Z},Z}^\alpha
R_Z^\alpha
\alpha_X(c_{\ovl{Z},Z}^\alpha[S_k^Y]^\alpha)
\\
&=
d(Z)^2
[S_\ell^{U*}]^\alpha c_{\ovl{Z},Z}^{\alpha*}
\alpha_{\ovl{Z}}
(\ovl{R}_Z^{\alpha*}c_{Z,\ovl{Z}}^{\alpha*}
\ovl{R}_Z^{\alpha*}c_{Z,\ovl{Z}}^{\alpha*})
c_{\ovl{Z},Z}^\alpha
R_Z^\alpha
c_{\ovl{Z},Z}^\alpha
[S_j^X]^\alpha
\alpha_X(c_{\ovl{Z},Z}^\alpha[S_k^Y]^\alpha)
\\
&=
d(Z)
[S_\ell^{U*}]^\alpha c_{\ovl{Z},Z}^{\alpha*}
\alpha_{\ovl{Z}}
(\ovl{R}_Z^{\alpha*}c_{Z,\ovl{Z}}^{\alpha*})
c_{\ovl{Z},Z}^\alpha
[S_j^X]^\alpha
\alpha_X(c_{\ovl{Z},Z}^\alpha[S_k^Y]^\alpha)
\\
&=
d(Z)[S_\ell^{U*}(1_{\ovl{Z}}\otimes \ovl{R}_Z^*\otimes 1_Z)
(S_j^X\otimes 1_{\ovl{Z}}\otimes1_Z)
(1_X\otimes S_k^Y)]^\alpha
c_{X,Y}^{\alpha*}.
\end{align*}
Thus
\begin{align*}
&\pi_0(\phi_Z^\alpha(v_\ell^U v_k^{Y*} v_j^{X*}))
\\
&=
d(Z)
\pi_0\big{(}
[S_\ell^{U*}(1_{\ovl{Z}}\otimes \ovl{R}_Z^*\otimes 1_Z)
(S_j^X\otimes 1_{\ovl{Z}}\otimes1_Z)
(1_X\otimes S_k^Y)]^\alpha\big{)}
\pi_0(c_{X,Y}^{\alpha*})
\\
&=
d(Z)
u_U[S_\ell^{U*}(1_{\ovl{Z}}\otimes \ovl{R}_Z^*\otimes 1_Z)
(S_j^X\otimes 1_{\ovl{Z}}\otimes1_Z)
(1_X\otimes S_k^Y)]^\beta u_{X\otimes Y}^*
\\
&\quad
\cdot
u_{X\otimes Y}c_{X,Y}^{\beta*}\beta_X(u_Y^*)u_X^*
\\
&=
u_U
\phi_Z^\beta(w_\ell^U w_k^{Y*} w_j^{X*})
\beta_X(u_Y^*)u_X^*.
\end{align*}
Therefore,
we obtain
\begin{align*}
Vv_k^{Y*} v_j^{X*}\alpha_Z(x)\xi_\psi
&=
\sum_{U}\sum_{\ell\in I_U}
w_\ell^{U*}
\beta_Z\big{(}
u_U^*u_U \phi_Z^\beta(w_\ell^U w_k^{Y*} w_j^{X*})
\beta_X(u_Y^*)u_X^*\pi_0(x)
\big{)}\xi_\chi
\\
&=
\sum_{U}\sum_{\ell\in I_U}
w_\ell^{U*}e_Z^\beta w_\ell^U
w_k^{Y*} w_j^{X*}
\beta_Z(\beta_X(u_Y^*)u_X^*\pi_0(x))\xi_\chi
\\
&=
w_k^{Y*} w_j^{X*}\beta_Z(\beta_X(u_Y^*)u_X^*\pi_0(x))\xi_\chi
=w_k^{Y*}\beta_Z(u_Y^*)
Vv_j^{X*}\alpha_Z(x)\xi_\psi.
\end{align*}
Hence $V v_k^{Y*}=w_k^{Y*}\beta_Z(u_Y^*)V$.
Putting $\pi_Z:=\Ad V$,
we see $\pi_Z$ gives an isomorphism from $M_1$ onto $P_1$
satisfying $\pi_Z\circ\alpha_Z=\beta_Z\circ\pi_0$
by Claim \ref{clam:eZalpha} (3).

Let $T\in\sC_{10}(Z,Z)$.
For $X\in\Irr(\sC_{00})$
with $X\prec\ovl{Z}\otimes Z$
and $j\in I_X$,
we have
\begin{align*}
T_j^XT
&=
d(Z)(1_Z\otimes S_j^{X*})(1_Z\otimes 1_{\ovl{Z}}\otimes T)
(\ovl{R}_Z\otimes 1_Z)
\\
&=
\sum_Y\sum_{k\in I_Y}
d(Z)
\big{(}
1_Z\otimes (S_j^{X*}(1_{\ovl{Z}}\otimes T)S_k^Y S_k^{Y*})
\big{)}
(\ovl{R}_Z\otimes 1_Z)
\\
&=
\sum_{k\in I_X}
\mu_{jk}^X(T)T_k^X,
\end{align*}
where the summation is taken for $Y\in\Irr(\sC_{00})$
with $Y\prec\ovl{Z}\otimes Z$
and $\mu_{jk}^X(T):=S_j^{X*}(1_{\ovl{Z}}\otimes T)S_k^X
\in\sC_{00}(X,X)=\C$.
Note here that
$S_j^{X*}(1_{\ovl{X}}\otimes T)S_k^Y=0$
if $X\neq Y$.
Hence we have
$v_j^X T^\alpha=\sum_{k\in I_X}\mu_{jk}^X(T)v_k^X$
and
$w_j^X T^\beta=\sum_{k\in I_X}\mu_{jk}^X(T)w_k^X$
for all $T\in\sC_{10}(Z,Z)$.
This implies the following for all $x\in M_0$:
\begin{align*}
V T^\alpha v_j^{X*}\alpha_Z(x)\xi_\psi
&=
\sum_{k\in I_X}
\ovl{\mu_{jk}^X(T^*)}
V v_k^{X*}\alpha_Z(x)\xi_\psi
\\
&=
\sum_{k\in I_X}
\ovl{\mu_{jk}^X(T^*)}
w_k^{X*}\beta_Z(u_X^*\pi_0(x))\xi_\chi\\
&=
T^\beta w_j^{X*}\beta_Z(u_X^*\pi_0(x))\xi_\chi.
\end{align*}
Hence $\pi_Z(T^\alpha)=T^\beta$.
\end{proof}

\subsection{Cores and canonical extensions}
Let us quickly review the notion of the core of a von Neumann algebra
\cite{Fal-Tak}
and the canonical extension of
an endomorphism due to Izumi \cite{Iz-can}.

The \emph{core} $\tM$ of a von Neumann algebra $M$
is generated by the copy of $M$ and one-parameter unitary groups
$\lambda^\varphi(t)$, $t\in\R$,
where $\varphi$ denotes any faithful normal semifinite weights.
Their computation rules are described as follows:
\[
\lambda^\varphi(t)x=\sigma_t^\varphi(x)\lambda^\varphi(t),
\quad
\lambda^\varphi(t)=[D\varphi:D\psi]_t \lambda^\psi(t)
\]
for all $t\in\R$, $x\in M$ and faithful semifinite normal weights
$\varphi$ and $\psi$ on $M$.
Then it is known that $\tM$ is canonically isomorphic to
the crossed product $M\rtimes_{\sigma^\varphi}\R$.
There exists a unique $\R$-action $\theta^M$ on $\tM$
such that $\theta_s^M(x)=x$
and $\theta_s^M(\lambda^\varphi(t))=e^{-ist}\lambda^\varphi(t)$
for all $x\in M$, $s,t\in\R$
and faithful normal semifinite weights $\varphi$.
The restriction of $\theta^M$ on the center $Z(\tM)$ is called
the \emph{flow of weights} \cite{CT}.

In the following,
we are able to treat a normal $*$-homomorphism,
but we will think of only cocycle actions of $\sC$.
Then we do not need to discuss dimension theory of $*$-homomorphisms
between von Neumann algebras.

Let $(\alpha,c)$ be a cocycle action of a rigid C$^*$-2-category
$\sC=(\sC_{rs})_{r,s\in\Lambda}$
on a system of properly infinite von Neumann algebras
$M=(M_r)_{r\in\Lambda}$ with separable preduals as before.
Then $(\alpha,c)$ extends to the cocycle action
$(\tal,c)$ on $\tM:=(\tM_r)_{r\in\Lambda}$
as follows:
for $X\in\sC_{rs}$ with $r,s\in\Lambda$,
\[
\tal_X(x):=\alpha_X(x),
\quad
\tal_X(\lambda^\varphi(t))
:=
d(X)^{it}
\lambda^{\alpha_X(\varphi)}(t)
\]
for all $x\in M_s$ and faithful normal semifinite weights
$\varphi$ on $M_s$.
We will use the same $T^\alpha$ for a morphism $T$ in $\sC$
and the same 2-cocycle $c$, too.
We can see $(\tal,c)$ is indeed a cocycle action
from (\ref{eq:alxphi}) and the proof of \cite[Proposition 2.5]{Iz-can},
where the statistical dimension is used.
Note $\tal_X$ is commuting with the flows,
that is,
$\tal_X\circ\theta_t^{M_s}=\theta_t^{M_r}\circ\tal_X$
for all $X\in\sC_{rs}$ with $r,s\in\Lambda$ and $t\in\R$.

\begin{defn}
We will call the cocycle action $(\tal,c)$ introduced above
is called the \emph{canonical extension} of $(\alpha,c)$.
\end{defn}

\begin{rem}
When $\sC$ is amenable and $M_r$ are factors,
the statistical dimension of the endomorphism $\alpha_X$
and the intrinsic dimension of $X$ are equal
for all $X\in\sC$
(see \cite[Corollary 2.7.9]{NeTu}).
Thus our definition of $\tal_X$ coincide with the definition by Izumi
\cite[Theorem 2.4]{Iz-can}.
\end{rem}

\subsection{Modular freeness for injective von Neumann algebras}

The approximate innerness and the central triviality
seem to be technical properties,
but it is actually known that they are characterized
in terms of the canonical extension
for injective von Neumann algebras.
Let us recall the modular freeness introduced
in \cite[Definition 4.1]{MT-discrete} for automorphic actions.

\begin{defn}
Let $(\alpha,c)$ be a cocycle action of a rigid C$^*$-2-category
$\sC=(\sC_{rs})_{r,s\in\Lambda}$
on a system of properly infinite von Neumann algebras
$M=(M_r)_{r\in\Lambda}$ with separable preduals as before.
We will say that $(\alpha,c)$ is \emph{modularly free}
when its canonical extension $(\tal,c)$ is free.
\end{defn}

\begin{rem}
\label{rem:modular-corner}
A few remarks are in order.
\begin{enumerate}
\item
By Lemma \ref{lem:00-11free},
we see that
$(\alpha,c)$ is modularly free
if and only if $(\tal^{00},c)$ is free
and $\tal_X(Z(\tM_0))=Z(\tM_1)$ for all $X\in\Irr(\sC_{10})$.
(cf. the preceding paragraph of \cite[Theorem 3.7]{Iz-can}
on the graph change for subfactors with the common flow of
weights.)

\item
For $X\in\Irr(\sC_{rs})$,
we will say $\alpha_X$ has the \emph{Connes--Takesaki module}
when
$\tal_X$ induces the isomorphism denoted by $\mo(\alpha_X)$
from $Z(\tM_s)$ onto $Z(\tM_r)$.
If $(\alpha,c)$ is modularly free,
all $\alpha_X$ with $X\in\Irr(\sC)$ have the Connes--Takesaki module.
\end{enumerate}
\end{rem}

\begin{lem}
\label{lem:cent-modular}
Let $(\alpha,c)$ be a cocycle action of $\sC=(\sC_{rs})_{r,s\in\Lambda}$
on $M=(M_r)_{r\in\Lambda}$
as before.
Consider the following properties:
\begin{enumerate}
\item
$(\alpha^{00},c)$ is centrally free.

\item
$(\tal^{00},c)$ is free.

\item
$(\alpha^{00},c)$ is free.
\end{enumerate}
Then the implications (1) $\Rightarrow$ (2) $\Rightarrow$ (3) hold.
If $M_0$ is injective,
then (1) and (2) are equivalent.
\end{lem}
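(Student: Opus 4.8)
Throughout I would work only inside the $(0,0)$-corner, that is, with the cocycle action $(\alpha^{00},c)$ of the rigid C$^*$-tensor category $\sC_{00}$ on the properly infinite von Neumann algebra $M_0$ and its canonical extension $(\tal^{00},c)$ on $\tM_0=M_0\rtimes_{\sigma^{\varphi_0}}\R$; the $2$-category structure plays no role for this statement. Recall that by Remark \ref{rem:cent-trivial} and \cite[Lemma 8.2]{MT-minimal} central freeness of $(\alpha^{00},c)$ is the same as strong freeness, i.e.\ strong outerness of every $\alpha_X$ with $X\in\Irr(\sC_{00})\setminus\{\btr\}$.

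For (2) $\Rightarrow$ (3) the plan is to use the functoriality of the canonical extension on intertwiner spaces. By Lemma \ref{lem:free} it suffices, for each simple $X\neq\btr$, to exclude a non-zero $a\in M_0$ with $ax=\alpha_X(x)a$ for all $x\in M_0$. Given such an $a$, viewing $a\in M_0\subset\tM_0$ and testing the intertwining relation on the generator $\lambda^{\varphi_0}(t)$ by means of the defining formula $\tal_X(\lambda^{\varphi_0}(t))=d(X)^{it}\lambda^{\alpha_X(\varphi_0)}(t)$ together with the chain rule for Connes cocycles --- this is exactly the computation behind \cite[Proposition 2.5]{Iz-can} --- one finds that $a$ lies in the corresponding intertwiner space inside $\tM_0$, so $(\tal^{00},c)$ is not free. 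Equivalently $(\alpha_X,\alpha_\btr)_{M_0}\subseteq(\tal_X,\tal_\btr)_{\tM_0}$, and a second application of Lemma \ref{lem:free} gives the claim.

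For (1) $\Rightarrow$ (2) I would argue by contraposition: assume $(\tal_X,\tal_\btr)\neq\{0\}$ for some simple $X\neq\btr$, pick a non-zero element there and normalise it to a partial isometry $b$; then $b^*b\in Z(\tM_0)$, $bb^*\in\tal_X(\tM_0)'\cap\tM_0$, and the relation $\tal_X(\lambda^{\varphi_0}(t))=d(X)^{it}\lambda^{\alpha_X(\varphi_0)}(t)$ makes $b$ homogeneous for the dual flow $\theta^{M_0}$ on $\tM_0$ in a precise sense. Unwinding this is the cocycle-action version of Connes' description of central triviality through the canonical extension (for automorphic actions this is the content of \cite[Section 4]{MT-discrete}, again via \cite[Proposition 2.5]{Iz-can}): it forces $\alpha_X$ to be, up to a modular correction and an inner perturbation, trivial on $(M_0)_\omega$, i.e.\ produces a non-zero $a\in M_0$ with $ax=\alpha_X(x)a$ for all $x\in(M_0)_\omega$, so $\alpha_X$ is not properly centrally non-trivial and $(\alpha^{00},c)$ is not centrally free. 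I expect this modular/central transfer, and in particular the treatment of the case $Z(M_0)\neq\C$ via the central decomposition (using that $\alpha$ preserves the centre, Lemma \ref{lem:thetaalpha}), to be the main technical point of this half.

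Finally, for the equivalence when $M_0$ is injective only (2) $\Rightarrow$ (1) remains, and here I would simply invoke the modular characterisation of central freeness for injective von Neumann algebras: since $\tM_0=M_0\rtimes_{\sigma^{\varphi_0}}\R$ is again injective, properly infinite and has separable predual, outerness of each $\tal_X$ with $X\neq\btr$ already forces $\alpha_X$ to be strongly outer, equivalently properly centrally non-trivial. This is precisely \cite[Theorem 4.12]{MT-app} (equivalently Corollary \ref{cor:modapp}), and it is the only point in the proof where injectivity is used.
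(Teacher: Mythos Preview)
Your (2) $\Rightarrow$ (3) is fine and is what the paper means by ``trivial'': intertwiners for $\alpha$ lift to intertwiners for $\tal$ by the very construction of the canonical extension, so $(\alpha_X,\alpha_\btr)\subset(\tal_X,\tal_\btr)$ and Lemma \ref{lem:free} finishes it.

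For (1) $\Rightarrow$ (2) your sketch is too indirect and anticipates the wrong difficulty. No modular correction, inner perturbation, or central decomposition is needed here. The paper uses two facts: (i) the natural embedding $(M_0)_\omega\hookrightarrow(\tM_0)_\omega$ (proof of \cite[Lemma 4.11]{MT-app} or \cite[Lemma 3.3]{T-ultra}), and (ii) the fixed-point algebra of the dual flow $\theta^{M_0}$ on $\tM_0$ is exactly $M_0$. Given a non-zero partial isometry $v\in(\tal_\btr,\tal_X)\subset\tM_0\subset(\tM_0)^\omega$, for $x\in(M_0)_\omega$ one has both $vx=\alpha_X^\omega(x)v$ (intertwining, extended to the ultrapower) and $vx=xv$ (centrality in $(\tM_0)^\omega$), hence $(\alpha_X^\omega(x)-x)vv^*=0$. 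Since $\tal$ commutes with $\theta^{M_0}$ and $\theta^{M_0}$ fixes $(M_0)_\omega$ pointwise, the same holds with $vv^*$ replaced by each $\theta_t^{M_0}(vv^*)$, hence by their supremum $e$, which is $\theta^{M_0}$-invariant and therefore lies in $M_0$. This non-zero $e\in M_0$ witnesses the failure of proper central non-triviality of $\alpha_X$. The argument is short and uniform in $Z(M_0)$.

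The real gap is in (2) $\Rightarrow$ (1) under injectivity. First, Corollary \ref{cor:modapp} is a classification statement whose proof \emph{uses} the present lemma, so invoking it here is circular; and in any case it does not assert the equivalence you need. Second, \cite[Theorem 4.12]{MT-app} concerns endomorphisms of injective infinite \emph{factors}; it does not directly apply to a non-factorial $M_0$. The paper argues by contraposition: if some $\alpha_X$ is not properly centrally non-trivial, take the maximal projection $e_X\in M_0$ with $\alpha_X^\omega(x)e_X=xe_X$ on $(M_0)_\omega$; freeness of $(\alpha^{00},c)$ (available from (2) $\Rightarrow$ (3)) forces $e_X\in Z(M_0)$, and a short computation with $\ovl{X}$ shows $\alpha_X(e_X)=e_X=e_{\ovl{X}}$, so $\alpha_X$ restricts to $M_0e_X$. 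Only then does one disintegrate $M_0$ over $Z(M_0)$: almost every fiber is an injective infinite factor on which the fibered $\alpha_X^\gamma$ is centrally trivial, so \cite[Theorem 4.12]{MT-app} applies fiber-wise to make $\tal_X^\gamma$ modular, and this reassembles to show $(\tal^{00},c)$ is not free. The central decomposition you were expecting belongs here, not in the first implication.
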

\begin{proof}
(1) $\Rightarrow$ (2).
Suppose $(\tal^{00},c)$ is not free.
Then there exists $X\in\Irr(\sC_{00})\setminus\{\btr\}$
such that $(\tal_\btr,\tal_X)\neq\{0\}$.
Take a non-zero partial isometry $v$ in $(\tal_\btr,\tal_X)$.
Recall $(M_0)_\omega$ is naturally contained in $(\tM_0)_\omega$
(see the proof of \cite[Lemma 4.11]{MT-app} or \cite[Lemma 3.3]{T-ultra}).
Hence we have
$\al_X^\omega(x)v=xv$ for all $x\in (M_0)_\omega$.
Since $\tal$ and $\theta^{M_0}$ are commuting,
we have
$\al_X^\omega(x)\theta_t^{M_0}(vv^*)=x\theta_t^{M_0}(vv^*)$
for all $x\in (M_0)_\omega$ and $t\in\R$.
Let $e$ be the supremum of the projections
$\theta_t^{M_0}(vv^*)$ with $t\in\R$.
Then $e$ is contained in the $\theta^{M_0}$-fixed point algebra
of $\tM_0$ which equals $M_0$.
Since
$\al_X^\omega(x)e=xe$ for all $x\in (M_0)_\omega$,
$\alpha_X$ is not properly centrally non-trivial.

(2) $\Rightarrow$ (3).
This implication is trivial.

(2) $\Rightarrow$ (1) for injective $M_0$ and $M_1$.
For each $X\in\Irr(\sC_{00})$,
we take the maximal projection $e_X\in M_0$
such that $\alpha_X(x)e_X=xe_X$ for all $x\in (M_0)_\omega$.
The maximality shows $e_X\in \alpha_X(M_0)'\cap M_0$,
and $e_X$ is a central projection by freeness of $(\alpha^{00},c)$.

\begin{clm}
We have $\alpha_X(e_X)=e_X=e_{\ovl{X}}$.
\end{clm}
\begin{proof}[Proof of Claim]
For $x\in (M_0)_\omega$, we have
\begin{align*}
x\alpha_{\ovl{X}}(e_X)c_{\ovl{X},X}R_X^\alpha
&=
\alpha_{\ovl{X}}(e_X)c_{\ovl{X},X}R_X^\alpha x
=
\alpha_{\ovl{X}}(e_X\alpha_X(x))c_{\ovl{X},X}R_X^\alpha
\\
&=
\alpha_{\ovl{X}}(e_X x)c_{\ovl{X},X}R_X^\alpha
=
\alpha_{\ovl{X}}(x)\alpha_{\ovl{X}}(e_X)c_{\ovl{X},X}R_X^\alpha.
\end{align*}
Thus
the support projection
of $\alpha_{\ovl{X}}(e_X)
c_{\ovl{X},X}R_X^\alpha R_X^{\alpha*}c_{\ovl{X},X}^*\alpha_{\ovl{X}}(e_X)$
is less than or equal to $e_{\ovl{X}}$.
Applying the left inverse $\phi_{\ovl{X}}^\alpha$,
we see $e_X\leq \alpha_X(e_{\ovl{X}})$.
Similarly, we have $e_{\ovl{X}}\leq \alpha_{\ovl{X}}(e_X)$,
and we have
$e_X=\alpha_X(e_{\ovl{X}})$
since $\alpha_{\ovl{X}}\circ\alpha_X=\id$
on $Z(M_0)$.
Then we have
$e_X=\alpha_X(e_{\ovl{X}})e_X
=
e_{\ovl{X}}e_X$,
which shows $e_X\leq e_{\ovl{X}}$.
Similarly, the converse inequality holds,
and we have the equality $e_X=e_{\ovl{X}}$.
\end{proof}

Suppose that $(\alpha^{00},c)$ is not properly centrally non-trivial.
Take $X\in\Irr(\sC_{00})\setminus\{\btr\}$
so that $e:=e_X$ is non-zero.
Then $\alpha_X$ is centrally trivial on $M_0 e$.
Let $M_0 =\int_\Omega (M_0)_\gamma\,d\mu(\gamma)$ be the disintegration
over $Z(M_0)=L^\infty(\Omega,\mu)$,
where $(\Omega,\mu)$ denotes a standard measure space.
Let $E\subset \Omega$ be a measurable set associated with $e$.
Then the fibers $\alpha_X^\gamma \in \End(M_0^\gamma)$
are centrally trivial
for almost every $\gamma\in E$
(see the proof of \cite[Theorem 9.14]{MT-Roh}).
Note the fibers $(M_0)^\gamma$
are injective infinite factors with separable predual
for almost every $\gamma$.
From \cite[Theorem 4.12]{MT-app},
$\alpha_X^\gamma$ is modular,
and $\tal_X$ has the inner part by disintegration,
which shows $(\tal^{00},c)$ is not free.
\end{proof}

\begin{rem}
\label{rem:cent-modular}
A few remarks are in order.
\begin{enumerate}
\item 
If a cocycle action $(\alpha,c)$ on $M=(M_0,M_1)$ is given,
then $M_0$ is an injective von Neumann algebra
if and only if $M_1$ is
since the inclusion $\alpha_X(M_s)\subset M_r$
has a conditional expectation $\alpha_X\circ\phi_X^\alpha$
for $X\in\sC_{rs}$ with $r,s\in\Lambda$.

\item
Thanks to classification of injective factors with separable preduals
due to Connes, Haagerup and Krieger \cite{Co-inj,Co-III1,Ha-III1,Kri},
we see
if $(\alpha,c)$ is modularly free
and $M_0$ and $M_1$ are injective properly infinite
von Neumann algebras of type III with separable preduals,
then $M_0$ and $M_1$ are isomorphic
(see also \cite[Theorem 6.17]{MT-Roh} for non-factorial case).
Hence when we have interest
in a classification of such cocycle actions
on a system of injective properly infinite
von Neumann algebras $M=(M_0,M_1)$ of type III with separable preduals,
we may assume $M_0=M_1$.
\end{enumerate}
\end{rem}

\subsection{Approximate unitary equivalence of cocycle actions}
\label{subsect:appuec}
We will discuss the approximate unitary equivalence
of two centrally free cocycle actions
in terms of the Connes--Takesaki module.

\begin{lem}
\label{lem:CTmodule2}
Let $(\alpha,c^\alpha)$ and $(\beta,c^\beta)$
be cocycle actions of a rigid C$^*$-2-category $\sC=(\sC_{rs})_{r,s\in\Lambda}$
on a system of injective infinite factors
$M=(M_r)_{r\in\Lambda}$ with separable preduals.
Suppose that $(\alpha_X,\alpha_X)=\C$
and $\tal_X(Z(\tM_s))=Z(\tM_r)=\widetilde{\beta}_X(Z(\tM_s))$
for all $X\in\Irr(\sC_{rs})$ with $r,s\in\Lambda$.
Then for all $X\in\Irr(\sC_{rs})$ with $r,s\in\Lambda$,
$\alpha_X$ and $\beta_X$
are approximately unitarily equivalent
if and only if
$\mo(\alpha_X)=\mo(\beta_X)$.
\end{lem}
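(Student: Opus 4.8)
The plan is to prove this by reducing to the classification theorem for cocycle actions of $C^*$-2-categories (Theorem \ref{thm:classification2} or its consequences) applied to the canonical extensions. First I would observe that the ``only if'' direction is the easy one: if $\alpha_X$ and $\beta_X$ are approximately unitarily equivalent, say via a unitary $u\in M_r^\omega$ with $u\alpha_X(\psi^\omega)u^*=\beta_X(\psi^\omega)$ for all $\psi\in (M_s)_*$, then passing to a representing sequence $u=(u_n)^\omega$ of unitaries in $M_r$, the automorphisms $\Ad u_n\circ\alpha_X$ converge to $\beta_X$ in a suitable sense, and since the Connes--Takesaki module is an invariant that factors through $\Out$-type equivalence and is continuous under the $u$-topology, we get $\mo(\beta_X)=\mo(\Ad u_n\circ\alpha_X)=\mo(\alpha_X)$. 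Concretely one checks directly from the defining formula $\tal_X(\lambda^\varphi(t))=d(X)^{it}\lambda^{\alpha_X(\varphi)}(t)$ that $\mo$ is unchanged under perturbing $\alpha_X$ by an inner automorphism, and that approximately unitarily equivalent endomorphisms induce the same map on $Z(\tM)$ because the cocycle $[D(\alpha_X(\varphi))_n:D(\beta_X(\varphi))]_t$ becomes asymptotically central.

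For the ``if'' direction, which is the substance, I would set up the hypotheses so as to invoke the uniqueness part of the classification of centrally free cocycle actions of $C^*$-2-categories. The idea is to build from $X$ a small $C^*$-2-category: take the full rigid $C^*$-2-subcategory $\sD$ of $\sC$ generated by $X$ (so $\sD_{00},\sD_{11}$ are the tensor categories generated by $\ovl{X}\otimes X$ and $X\otimes\ovl{X}$, and $\sD_{10},\sD_{01}$ contain $X,\ovl{X}$). By injectivity and Lemma \ref{lem:cent-modular}, the conditions $(\alpha_X,\alpha_X)=\C$ together with modular freeness of the relevant extension are equivalent to central freeness of $(\alpha^{00},c)$; one must first verify that $\tal_X(Z(\tM_s))=Z(\tM_r)$ and the factoriality hypothesis $(\alpha_X,\alpha_X)=\C$ force $(\tilde\alpha^{00},c)$ to be free (via Lemma \ref{lem:00-11free} and Lemma \ref{lem:free2}), hence $(\alpha^{00}|_{\sD},c)$ is centrally free since $M_0$ is injective. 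The equality $\mo(\alpha_X)=\mo(\beta_X)$ should then be upgraded to: $\tal_X$ and $\widetilde\beta_X$ are approximately unitarily equivalent as endomorphisms of the core, which by a result on injective factors (the core of an injective factor is the injective type II$_\infty$ factor tensored with a diffuse center, and approximate unitary equivalence there is governed exactly by the module) gives approximate unitary equivalence of all the endomorphisms $\tal_Y,\widetilde\beta_Y$ for $Y\in\sD$.

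Then I would apply Theorem \ref{thm:classification2} (or Corollary \ref{cor:CDclass2}) to the canonical extensions $(\tal|_\sD,c^\alpha)$ and $(\widetilde\beta|_\sD,c^\beta)$: these are cocycle actions of the amenable rigid $C^*$-2-category $\sD$ on the system $\tM=(\tM_0,\tM_1)$, with $\tal_Y(Z(\tM_s))=Z(\tM_r)$ by modular freeness, with an invariant faithful normal state on each center (pull back a dominant weight's restriction, or use that the flows of weights agree), with $(\tal^{00}|_\sD,c^\alpha)$ centrally free, and with all $\tal_Y$ approximately unitarily equivalent to $\widetilde\beta_Y$. The conclusion is that $(\tal|_\sD,c^\alpha)$ and $(\widetilde\beta|_\sD,c^\beta)$ are strongly cocycle conjugate via a system $(\theta_r)$ of approximately inner automorphisms of $\tM_r$ commuting appropriately; restricting and tracking that the conjugating automorphisms can be taken to commute with the flows $\theta^{M_r}$ (this is where one uses that approximately inner automorphisms of the core descend, or more carefully, one works from the start with $M$ itself and Theorem \ref{thm:classification2} applied directly to $(\alpha|_\sD,c^\alpha)$ and $(\beta|_\sD,c^\beta)$ once central freeness is established), one reads off in particular that $\alpha_X$ and $\beta_X$ are approximately unitarily equivalent.

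The main obstacle I anticipate is the bookkeeping around the center and the flow: verifying that $\mo(\alpha_X)=\mo(\beta_X)$ really does supply the approximate unitary equivalence of the core endomorphisms $\tal_X,\widetilde\beta_X$ (and not merely agreement of an invariant), and that the strong cocycle conjugacy produced by Theorem \ref{thm:classification2} at the level of cores is compatible with the $\R$-actions so that it descends to $M$. This requires combining the injectivity of $M_r$ (so the core is semidiscrete with the expected structure), the characterization of approximate unitary equivalence via the module on such factors, and care that the separable subcategory $\sD$ and the separable-predual hypotheses are all in force. Everything else — the ``only if'' direction and the reduction of central freeness to the factoriality plus module hypotheses — should be routine given the lemmas already proved in the excerpt.
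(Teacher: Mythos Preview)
Your proposal takes a very different route from the paper and contains a genuine gap. The paper's proof is short and direct: after reducing to $M_0=M_1$, the ``only if'' part is dispatched by the continuity of the normalized canonical extension \cite[Lemma 3.7]{MT-app}, and for the ``if'' part one observes that $\mo(\beta_{\ovl{X}}\circ\alpha_X)=\mo(\beta_X)^{-1}\mo(\alpha_X)$ is trivial, so by \cite[Theorem 3.15]{MT-app} the composite $\beta_{\ovl{X}}\circ\alpha_X$ is an approximately inner endomorphism of rank $d(X)^2$; then Proposition~\ref{prop:rhosigma} (using $(\alpha_X,\alpha_X)=\C$) gives that $\alpha_X$ and $\beta_X$ are approximately unitarily equivalent. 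No tensor-category machinery, no amenability, no classification theorem.

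Your approach, by contrast, tries to feed the problem back into Theorem~\ref{thm:classification2}. This fails on two counts. First, the lemma does \emph{not} assume $\sC$ (or the sub-2-category $\sD$ generated by $X$) is amenable, and Theorem~\ref{thm:classification2} requires amenability; so your argument proves a strictly weaker statement. Second, there is a circularity: to apply Theorem~\ref{thm:classification2} on $M$ you must already know $\alpha_Y$ and $\beta_Y$ are approximately unitarily equivalent for all $Y\in\sD$, which is exactly the conclusion you want. Your workaround of passing to the cores does not escape this: you assert that equal modules give approximate unitary equivalence of $\tal_X$ and $\widetilde\beta_X$ on $\tM$ ``by a result on injective factors,'' but that result (essentially \cite[Theorem 3.15]{MT-app} plus something like Proposition~\ref{prop:rhosigma}) is precisely the direct argument the paper uses, and once you have it you are done without ever invoking Theorem~\ref{thm:classification2}. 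The descent from the core back to $M$, which you flag as an obstacle, is then unnecessary --- and indeed would be delicate, since strong cocycle conjugacy of the canonical extensions does not obviously restrict to $M$ without additional flow-equivariance bookkeeping.

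In short: the ``only if'' part of your sketch is fine in spirit, but for the ``if'' part you should abandon the classification-theorem route and argue directly via $\beta_{\ovl{X}}\circ\alpha_X$ having trivial module, hence being approximately inner of the correct rank, and then invoke Proposition~\ref{prop:rhosigma}.
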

\begin{proof}
We may and do assume that
$M_0=M_1$ as mentioned in Remark \ref{rem:cent-modular} (3).
The ``only if" part follows from
the continuity of the normalized canonical extension \cite[Lemma 3.7]{MT-app}.
We will show the ``if" part.
Since the Connes--Takesaki module of
$\beta_{\ovl{X}}\circ\alpha_X$ is trivial,
$\beta_{\ovl{X}}\circ\alpha_X$ is an approximately inner endomorphism
of rank $d(X)^2$ from \cite[Theorem 3.15]{MT-app}.
From Proposition \ref{prop:rhosigma},
$\alpha_X$ and $\beta_X$ are approximately unitarily equivalent.
Otherwise, we can use a more general result proved by Shimada
\cite[Theorem 1]{Shima}.
\end{proof}

The next result follows from
Theorem \ref{thm:classification2},
Lemma \ref{lem:cent-modular}
and
Lemma \ref{lem:CTmodule2}.

\begin{cor}
\label{cor:modapp}
If $(\alpha,c^\alpha)$ and $(\beta,c^\beta)$
are free cocycle actions of an amenable rigid C$^*$-2-category
$\sC=(\sC_{rs})_{r,s\in\Lambda}$
on a system of injective infinite factors
$M=(M_r)_{r\in\Lambda}$
with separable preduals.
Then they are strongly cocycle conjugate
if both following conditions are satisfied:
\begin{enumerate}
\item
$(\alpha,c^\alpha)$ and $(\beta,c^\beta)$
are modularly free.

\item
$\mo(\alpha_X)=\mo(\beta_X)$ for all $X\in\Irr(\sC)$.
\end{enumerate}
\end{cor}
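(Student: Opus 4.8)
\textbf{Proof proposal for Corollary \ref{cor:modapp}.}
The plan is to deduce this directly from Theorem \ref{thm:classification2} by checking that its four hypotheses hold for the pair $(\alpha,c^\alpha)$ and $(\beta,c^\beta)$ under the assumptions that both cocycle actions are modularly free and share the same Connes--Takesaki modules on all simple objects. Since $\sC$ is amenable and the $M_r$ are factors, there is nothing to arrange regarding the category; the work is entirely on the side of the actions.

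First I would observe that because $(\alpha,c^\alpha)$ is free, Lemma \ref{lem:00-11free} (applied as in Lemma \ref{lem:thetaalpha}) already gives $\alpha_X(Z(M_s))=Z(M_r)$ for $X\in\Irr(\sC_{rs})$; as the $M_r$ are factors this simply reads $\C=\C$, so the first bullet of Theorem \ref{thm:classification2} is automatic, and the same for $\beta$. The second bullet, the existence of faithful normal states $\varphi_r$ on $M_r$ with $\alpha_X(\varphi_s)=\varphi_r$ on $Z(M_r)=\C$, is again vacuous in the factor case: any choice of faithful normal states works. For the third bullet I would invoke Lemma \ref{lem:cent-modular}: since $(\alpha,c^\alpha)$ is modularly free, $(\tal^{00},c^\alpha)$ is free by definition, and because $M_0$ is injective the equivalence (1)$\Leftrightarrow$(2) in that lemma yields that $(\alpha^{00},c^\alpha)$ is centrally free. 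The fourth bullet, approximate unitary equivalence of $\alpha_X$ and $\beta_X$ for all $X\in\sC$, is where condition (2) of the corollary enters: it suffices to treat $X\in\Irr(\sC_{rs})$, and there I would apply Lemma \ref{lem:CTmodule2}. Its hypotheses hold because $(\alpha_X,\alpha_X)=\C1$ (the $M_r$ are factors and $\alpha^{rs}$ is a fully faithful tensor functor, so $\alpha(\sC_{rs}(X,X))=\C$ for simple $X$), and modular freeness of both actions gives $\tal_X(Z(\tM_s))=Z(\tM_r)=\widetilde{\beta}_X(Z(\tM_s))$ (Remark \ref{rem:modular-corner} (1)). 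Hence $\mo(\alpha_X)=\mo(\beta_X)$ implies $\alpha_X\sim\beta_X$ approximately unitarily; for a general $X\in\sC_{rs}$ one decomposes $X$ into simple summands and patches the unitaries as in Lemma \ref{lem:appunitequiv}.

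With all four hypotheses verified, Theorem \ref{thm:classification2} produces a family of unitaries $v=(v_X)_{X\in\sC}$ in $M$ and approximately inner automorphisms $\theta_r$ of $M_r$ implementing a strong cocycle conjugacy between $(\alpha,c^\alpha)$ and $(\beta,c^\beta)$, which is exactly the conclusion. The main obstacle is the verification of the fourth bullet, i.e.\ passing from equality of Connes--Takesaki modules to approximate unitary equivalence of the underlying endomorphisms; but this is precisely packaged in Lemma \ref{lem:CTmodule2} (resting on \cite[Theorem 3.15]{MT-app} and Proposition \ref{prop:rhosigma}, or alternatively Shimada's theorem), so in the end the corollary is a clean assembly of results already established. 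I would also note that in the factor case the somewhat elaborate center-invariance conditions of Theorem \ref{thm:classification2} trivialize, so the only substantive inputs are modular freeness (to get central freeness of $\alpha^{00}$) and the module hypothesis (to get approximate unitary equivalence).
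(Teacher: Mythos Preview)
Your proposal is correct and follows exactly the route the paper indicates: the corollary is stated as an immediate consequence of Theorem~\ref{thm:classification2}, Lemma~\ref{lem:cent-modular}, and Lemma~\ref{lem:CTmodule2}, and you have unpacked precisely how the four hypotheses of Theorem~\ref{thm:classification2} are verified from modular freeness, injectivity, factoriality, and the module condition. One small point: your justification of $(\alpha_X,\alpha_X)=\C$ for simple $X$ should appeal directly to freeness (Lemma~\ref{lem:free2} and Lemma~\ref{lem:00-11free}) together with $Z(M_r)=\C$, rather than to ``full faithfulness'' of $\alpha^{rs}$, which is not an a~priori hypothesis but itself a consequence of freeness in the factor case.
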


If injective factors $M_r$ are of type III$_1$,
then any irreducible modular endomorphism
is unitarily equivalent to a modular automorphism
\cite[Remark 3.8]{Iz-can}
and any endomorphism with finite index
is an approximately inner endomorphism of rank $1$
\cite[Corollary 3.16]{MT-app}.
Thus we have the following simple statement.

\begin{cor}
A modularly free cocycle action of an amenable rigid C$^*$-2-category
$\sC=(\sC_{rs})_{r,s\in\Lambda}$
on a system of injective type III$_1$ factors
$M=(M_r)_{r\in\Lambda}$ with separable preduals
is unique up to strong cocycle conjugacy.
\end{cor}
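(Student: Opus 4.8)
The plan is to obtain the statement as an immediate consequence of Corollary~\ref{cor:modapp}, so the only real task is to check that the two conditions of that corollary are automatically met when the $M_r$ are injective type III$_1$ factors. Let $(\alpha,c^\alpha)$ and $(\beta,c^\beta)$ be modularly free cocycle actions of the amenable rigid C$^*$-2-category $\sC$ on $M=(M_0,M_1)$. Both $M_0$ and $M_1$ are injective type III$_1$ factors: this is the hypothesis, but in any case injectivity and the type are transferred from $M_0$ to $M_1$ through the conditional expectations $\alpha_X\circ\phi_X^\alpha$ onto $\alpha_X(M_0)\subset M_1$, using Remark~\ref{rem:cent-modular}~(1) and the fact that $\alpha$ preserves type components.

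First I would dispose of the module condition (2) of Corollary~\ref{cor:modapp}. For an injective type III$_1$ factor $M_r$ the continuous core $\tM_r\cong M_r\rtimes_{\sigma^{\varphi_r}}\R$ is an injective type II$_\infty$ factor, hence $Z(\tM_r)=\C1$. Since $(\alpha,c^\alpha)$ is modularly free, each $\alpha_X$ with $X\in\Irr(\sC_{rs})$ has a Connes--Takesaki module $\mo(\alpha_X)\colon Z(\tM_s)\to Z(\tM_r)$ by Remark~\ref{rem:modular-corner}~(2); but a unital $*$-isomorphism $\C\to\C$ is the identity, so $\mo(\alpha_X)$ is forced, and likewise $\mo(\beta_X)$. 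Therefore $\mo(\alpha_X)=\mo(\beta_X)$ for all $X\in\Irr(\sC)$, so condition (2) holds trivially.

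Next I would verify that $(\alpha,c^\alpha)$ and $(\beta,c^\beta)$ are free in the sense required by Corollary~\ref{cor:modapp}. Modular freeness of $(\alpha,c^\alpha)$ means $(\tal,c^\alpha)$ is free; in particular $(\tal^{00},c^\alpha)$ is free, so by Lemma~\ref{lem:cent-modular} and the injectivity of $M_0$ the action $(\alpha^{00},c^\alpha)$ is centrally free, hence free. Since the $M_r$ are factors, $Z(M_r)=\C=\alpha_X(Z(M_s))$ for every $X$, so Lemma~\ref{lem:00-11free} yields that $(\alpha,c^\alpha)$ is free, and condition (1) of Corollary~\ref{cor:modapp} is precisely the assumed modular freeness. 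The same applies to $(\beta,c^\beta)$. With all hypotheses of Corollary~\ref{cor:modapp} in hand, we conclude that $(\alpha,c^\alpha)$ and $(\beta,c^\beta)$ are strongly cocycle conjugate.

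Finally, I would add a short remark on where the special nature of the injective type III$_1$ factor is really used: inside Corollary~\ref{cor:modapp}, through Lemma~\ref{lem:CTmodule2}, the inputs are that every irreducible modular endomorphism is unitarily equivalent to a modular automorphism \cite[Remark 3.8]{Iz-can} and that every finite-index endomorphism is an approximately inner endomorphism of rank $1$ \cite[Corollary 3.16]{MT-app}; combined with Proposition~\ref{prop:appuer}, these already make $\alpha_X$ and $\beta_X$ approximately unitarily equivalent for all $X\in\sC$, which is exactly what Theorem~\ref{thm:classification2} needs (an alternative, more hands-on route that bypasses Corollary~\ref{cor:modapp}). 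There is essentially no hard step here beyond this bookkeeping: all the analytic weight is carried by the Rohlin-tower construction and the intertwining argument underlying Theorem~\ref{thm:classification2}. The only points demanding a little care are the passage between ``modularly free'' and ``centrally free'' via Lemma~\ref{lem:cent-modular} and the triviality of the flow of weights of a type III$_1$ factor, and neither is a genuine obstacle.
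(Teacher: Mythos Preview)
Your proposal is correct and follows the same route as the paper: the corollary is deduced from Corollary~\ref{cor:modapp} by observing that for injective type III$_1$ factors the continuous core is a factor (so $Z(\tM_r)=\C$ and the Connes--Takesaki module condition is vacuous), and that modular freeness yields freeness via Lemma~\ref{lem:cent-modular}. The paper's own justification is the brief paragraph preceding the corollary, which records the two type III$_1$ facts you cite in your final remark; you have simply unpacked the bookkeeping in full.
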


When we study a cocycle action of a C$^*$-tensor category
or a rigid C$^*$-2-category on finite von Neumann algebras,
we should treat functors into bimodule categories
(see \cite[Appendix A]{Mas-Rob}).
Then our classification holds for the injective type II$_1$
factor
as proved in \cite[Theorem A.1]{Mas-Rob}).

The following result is a rigid C$^*$-2-category version
of Proposition \ref{prop:HY-real}.
The proof is identical to that of Proposition \ref{prop:HY-real},
where the result due to Hayashi \cite[Theorem 3.2]{Hayas-Real}
should be used.

\begin{prop}
Let $\sC=(\sC_{rs})_{r,s\in\Lambda}$
be an amenable rigid C$^*$-2-category.
Let $(N,N)$ be a system of the injective
type II$_\infty$ factor $N$ with separable preduals.
Then there exists a free cocycle action $(\alpha,c)$
of $\sC$ on $(N,N)$ such that
$\alpha_X$ is an approximately inner endomorphism of rank $d(X)$
for all $X\in\Irr(\sC)$.
\end{prop}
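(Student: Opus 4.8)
The plan is to transcribe the proof of Proposition~\ref{prop:HY-real} to the $2$-categorical setting, using Hayashi's realization theorem \cite[Theorem 3.2]{Hayas-Real} in place of the Hayashi--Yamagami theorem.

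First I would apply \cite[Theorem 3.2]{Hayas-Real}: since $\sC=(\sC_{rs})_{r,s\in\Lambda}$ is amenable, there is a fully faithful unitary $2$-functor $F=(F_{rs})_{r,s\in\Lambda}$ from $\sC$ into the C$^*$-$2$-category of finite-index bimodules over the injective type II$_1$ factor $N_0$, with both labels $0,1$ assigned the single algebra $N_0$, so $F_{rs}\colon\sC_{rs}\to{}_{N_0}\sB_{N_0}$. Tensoring with $B(\ell^2)$ as in \cite[Appendix A]{Mas-Rob}, i.e.\ replacing $N_0$ by the injective type II$_\infty$ factor $N:=N_0\otimes B(\ell^2)$, turns $F$ into a fully faithful unitary $2$-functor $\alpha$ from $\sC$ into $\End(N)_0$. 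By construction $\alpha$ is a cocycle action $(\alpha,c)$ of $\sC$ on the system $(N,N)$, and full faithfulness gives $(\alpha_X,\alpha_Y)=\alpha(\sC_{rs}(X,Y))$ for all $X,Y\in\sC_{rs}$; in particular $(\alpha^{00},c)$ is free, hence $(\alpha,c)$ is free in the sense of Definition~\ref{defn:free2} by Lemma~\ref{lem:free2} (and, $N$ being injective, $(\alpha^{00},c)$ is even centrally free by \cite[Theorem 4.12]{MT-app}, cf.\ Lemma~\ref{lem:cent-modular}).

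Next I would rescale to pin down the rank, exactly as in Proposition~\ref{prop:HY-real}. Fix a faithful normal tracial weight $\tau$ on $N$. For $X\in\Irr(\sC_{rs})$ we have $(\alpha_X,\alpha_X)=\C$, so there is a scalar $\lambda(X)>0$, the module of $\alpha_X$, with $\alpha_X(\tau)=\lambda(X)d(X)^{-1}\tau$ where $\alpha_X(\tau):=\tau\circ\phi_X^\alpha$; the cocycle identity together with $(\ref{eq:alxphi})$ shows $\lambda(X)\lambda(Y)=\lambda(Z)$ whenever $X\in\Irr(\sC_{rs})$, $Y\in\Irr(\sC_{st})$, $Z\in\Irr(\sC_{rt})$ and $Z\prec X\otimes Y$. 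Choose a trace-scaling action $\beta$ of $\R_+^*$ (with the discrete topology) on $N$ with $\beta_t(\tau)=t^{-1}\tau$, set $\gamma_X:=\beta_{\lambda(X)}\otimes\alpha_X\in\End(N\otimes N)_0$ for $X\in\Irr(\sC_{rs})$, and extend $\gamma$ to all $X$ through orthonormal bases of the spaces $\sC_{rs}(Y,X)$ as in Proposition~\ref{prop:HY-real}, with $T^\gamma:=1\otimes T^\alpha$ for morphisms $T$. Then $(\gamma,1\otimes c)$ is again a free cocycle action of $\sC$ on the system $(N\otimes N,N\otimes N)$, now with $\gamma_X(\tau\otimes\tau)=d(X)^{-1}\tau\otimes\tau$ for all $X\in\Irr(\sC)$, so $\gamma_X$ has trivial module and, by \cite[Theorem 3.15]{MT-app}, is an approximately inner endomorphism of rank $d(X)$. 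Since $N\otimes N\cong N$, this $(\gamma,1\otimes c)$ is the sought cocycle action.

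The one point demanding care is the $2$-categorical bookkeeping in the first step: one must make sure Hayashi's realization can be taken over a single copy of the injective II$_1$ factor (so that the output is a cocycle action on $(N,N)$ and not on two a priori unrelated algebras), and one must check that $X\mapsto\lambda(X)$ is well defined on unitary isomorphism classes and multiplicative across all of $\sC$, not merely within $\sC_{00}$. Both facts follow from \cite{Hayas-Real} and the left-inverse calculus recalled in Section~\ref{subsect:actions}, but they require tracking the object labels throughout; apart from this, the argument is a verbatim copy of the tensor-category case.
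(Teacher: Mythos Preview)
Your proposal is correct and matches the paper's approach exactly: the paper states that the proof is identical to that of Proposition~\ref{prop:HY-real}, with Hayashi's realization result \cite[Theorem 3.2]{Hayas-Real} used in place of the Hayashi--Yamagami theorem. Your careful remarks about the $2$-categorical bookkeeping (single copy of $N_0$, multiplicativity of $\lambda$ across all of $\sC$) are apt but do not go beyond what the paper implicitly assumes.
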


\section{Classification of amenable subfactors}
In this section,
we will discuss how our classification
of (cocycle) actions of C$^*$-tensor categories
(Theorem \ref{thm:classification} and Theorem \ref{thm:classification2})
can be applied to classification of amenable subfactors.

\subsection{Standard invariants}
We will quickly review the notion of the standard invariant
introduced by Popa.
Our references are
\cite{Popa-acta,Popa-WorldSci,Popa-mathlett,Popa-axiom,Popa-endo,Popa-some}
and \cite{Iz-near,Mas-Rob}.
Let $\sC=(\sC_{rs})_{r,s\in\Lambda}$
be a rigid C$^*$-2-category
that is generated by an object $\rho\in \sC_{10}$,
where $\rho$ implicitly means a $*$-homomorphism.
Namely,
any object in $\sC$ is isomorphic to the tensor products
of an alternative words in $\rho$ and $\orho$.
Let us choose and fix a conjugate object of $\orho\in\sC_{01}$
and a standard solution $(R_\rho,\ovl{R}_\rho)$
of the conjugate equations for $\rho$ and $\orho$.
We will often omit the tensor symbol $\otimes$.
Then for $n\geq0$,
we set
\begin{align*}
A_{00}^n
&:=\sC_{00}((\orho\rho)^n,(\orho\rho)^n),
\\
A_{01}^n
&:=\sC_{01}((\orho\rho)^n\orho,(\orho\rho)^n\orho).
\\
A_{10}^n
&:=\sC_{10}
((\rho\orho)^n\rho,(\rho\orho)^n\rho),
\\
A_{11}^n
&:=\sC_{11}((\rho\orho)^n,(\rho\orho)^n).
\end{align*}
Then they form the following commutative diagram of C$^*$-algebras:
\[
\xymatrix{
&
A_{00}^0
\ar[d]_{1_\rho\otimes}
\ar[r]^(0.50){\otimes1_{\orho}}
&
A_{01}^0
\ar[d]_{1_\rho\otimes}
\ar[r]^(0.50){\otimes1_{\rho}}
&
\cdots
\ar[r]^(0.50){\otimes1_{\rho}}
&
A_{00}^n
\ar[d]_{1_{\rho}\otimes}
\ar[r]^(0.50){\otimes1_{\ovl{\rho}}}
&
A_{01}^n
\ar[d]_{1_{\rho}\otimes}
\ar[r]^(0.50){\otimes1_{\rho}}
&
A_{00}^{n+1}
\ar[d]_{1_{\rho}\otimes}
\ar[r]^(0.50){\otimes1_{\ovl{\rho}}}
&
\cdots
\\
A_{11}^0
\ar[r]_(0.50){\otimes1_{\rho}}
&
A_{10}^0
\ar[r]_(0.50){\otimes1_{\orho}}
&
A_{11}^1
\ar[r]_(0.50){\otimes1_{\rho}}
&
\cdots
\ar[r]_(0.50){\otimes1_{\rho}}
&
A_{10}^n
\ar[r]_(0.50){\otimes1_{\ovl{\rho}}}
&A_{11}^{n+1}
\ar[r]_(0.50){\otimes1_{\rho}}
&
A_{10}^{n+1}
\ar[r]_(0.50){\otimes1_{\ovl{\rho}}}
&
\cdots
}
\]
Each vertical embedding $1_\rho\otimes \cdot$
has the left inverse map $\tr_\rho\otimes\id$,
which is defined by
$(\tr_\rho\otimes\id)(T)
:=(R_\rho^*\otimes1)(1_{\orho}\otimes T)(R_\rho\otimes1)$.
The left inverses for the embedding $\cdot\otimes 1_{\orho}$
and $\cdot \otimes 1_\rho$ are similarly introduced.

For $n\geq0$,
we set the projections called the Jones projections
as follows:
\begin{align*}
e_{00}^n
:=
1_{(\orho\rho)^n}\otimes R_\rho R_\rho^* \in A_{00}^{n+1},
&
\quad
e_{01}^n
:=
1_{\orho(\rho\orho)^n}\otimes \ovl{R}_\rho \ovl{R}_\rho^* \in A_{01}^{n+1},
\\
e_{10}^n
:=1_{\rho(\orho\rho)^n}\otimes R_\rho R_\rho^* \in A_{10}^{n+1},
&
\quad
e_{11}^n
:=1_{(\rho\orho)^n}\otimes \ovl{R}_\rho \ovl{R}_\rho^* \in A_{11}^{n+1}.
\end{align*}
Actually, $e_{01}^n$ are $e_{10}^n$ are determined by $e_{00}^n$
and $e_{11}^n$ as $e_{01}^n=(\tr_\rho\otimes\id)(e_{11}^{n+1})$
and $e_{10}^n=1_\rho\otimes e_{00}^n$.

The \emph{standard invariant}
$\mathcal{G}_{\rho,\orho}(\sC)$ of a rigid C$^*$-2-category $\sC$
with respect to $(\rho,\orho)$
consists of
the nest of finite dimensional C$^*$-algebras $A_{rs}^n$
as above
and the left inverses $\tr_\rho\otimes\id$, $\id\otimes\tr_\rho$,
$\id\otimes\tr_{\orho}$
and the Jones projections $e_{rs}^n$.
When we have interest in study of subfactors
with non-minimal expectations,
we need to add data of another Jones projections
to the standard invariant introduced above.
We will explain this more precisely.
Let $(S_\rho,\ovl{S}_\rho)$ be another solution of
the conjugate equations of $\rho$ and $\orho$.
Then we set the following projections:
\begin{align*}
e_{00}^{n,S}
:=
1_{(\orho\rho)^n}\otimes S_\rho S_\rho^* \in A_{00}^{n+1},
&
\quad
e_{01}^{n,S}
:=
1_{\orho(\rho\orho)^n}\otimes \ovl{S}_\rho \ovl{S}_\rho^* \in A_{01}^{n+1},
\\
e_{10}^{n,S}
:=1_{\rho(\orho\rho)^n}\otimes S_\rho S_\rho^* \in A_{10}^{n+1},
&
\quad
e_{11}^{n,S}
:=1_{(\rho\orho)^n}\otimes \ovl{S}_\rho \ovl{S}_\rho^* \in A_{11}^{n+1}.
\end{align*}
Then the standard invariant $\mathcal{G}_{\rho,\orho,S_\rho,\ovl{S}_\rho}(\sC)$ of $\sC$
with respect to $(\rho,\orho,S_\rho,\ovl{S}_\rho)$
consists of
$\mathcal{G}_{\rho,\orho}(\sC)$
and the projections $e_{rs}^{n,S}$.

Let $\sC$ and $\sD$ be rigid C$^*$-2-categories.
Suppose that $\rho\in\sC_{10}$ generates $\sC$
and $\sigma\in\sD_{10}$ does $\sD$.
Choose their conjugate objects $\orho$ and $\osi$
and the standard solutions of the conjugate equations as before.
We will write $A_{rs}^{\rho,n}$, $A_{rs}^{\sigma,n}$,
$e_{rs}^{\rho,n}$ and $e_{rs}^{\sigma,n}$
for $A_{rs}^n$ and $e_{rs}^n$
with respect to $\rho$ and $\sigma$.

An isomorphism 
from $\mathcal{G}_{\rho,\orho}(\sC)$
to $\mathcal{G}_{\sigma,\ovl{\sigma}}(\sD)$
consists of
$*$-isomorphisms
$\pi_{rs}^n\colon A_{rs}^{\rho,n}\to A_{rs}^{\sigma,n}$
which satisfies $\pi_{rs}^n(e_{rs}^{\rho,n})=e_{rs}^{\sigma,n}$
and preserves the commutative diagram
and the left inverses.
Similarly, we can introduce an isomorphism
from $\mathcal{G}_{\rho,\orho,S_\rho,\ovl{S}_\rho}(\sC)$
to $\mathcal{G}_{\sigma,\ovl{\sigma},S_\sigma,\ovl{S}_\sigma}(\sD)$.
Note that
if we choose another conjugate object of $\rho$,
then the resulting standard invariant is canonically isomorphic
to the original one.

\subsection{From subfactors to rigid C$^*$-2-categories}
Our standard references are \cite{Iz-SubI,Iz-SubII}.
Let $N,M$ be infinite factors with separable preduals.
Let $\rho\in \Mor(N,M)_0$,
that is,
$\rho$ is a faithful normal unital $*$-homomorphism from $N$ into $M$
with a conjugate map $\orho\in\Mor(M,N)_0$.
Then $\rho$ makes the rigid C$^*$-2-category $\sC^\rho$
as follows.
Let $\sC_{00}^\rho$
be the full subcategory of $\End(N)_0$
whose object is isomorphic
to a direct summand of $(\ovl{\rho}\rho)^n$
with some $n\geq0$.
$\sC_{11}^\rho$ is defined similarly via $(\rho\ovl{\rho})^n$.
Next let $\sC_{10}^\rho$
be the full subcategory of $\Mor(N,M)_0$
similarly defined via a direct summand of $\rho(\ovl{\rho}\rho)^n$
with some $n\geq0$.
The $\sC_{01}^\rho$ is defined similarly by using $\orho(\rho\orho)^n$.
Note that each category $\sC_{rs}^\rho$ does not depend on a choice
of the conjugate object $\orho$.

Thus we obtain the system
of the C$^*$-categories
$\sC^\rho:=(\sC_{rs}^\rho)_{r,s\in\Lambda}$.
The tensor product operation of objects
is defined by the composition of maps.
Then $\sC^\rho$ is a rigid C$^*$-2-category.
Hence we can consider the standard invariant
$\mathcal{G}_{\rho,\orho}(\sC^\rho)$
that is called the standard invariant
of the subfactor $\rho(N)\subset M$.
Note $\sC^\rho$ has the canonical free action $\alpha^\rho$
on the system $(N,M)=(M_0,M_1)$.
Namely, for $\lambda\in\sC_{rs}^\rho$,
we have $\alpha_\lambda^\rho(x)=\lambda(x)$
for $x\in M_s$.

Let $\phi_\rho(\cdot)=R_\rho^*\orho(\cdot)R_\rho$
and $\phi_{\orho}(\cdot)=\ovl{R}_\rho^*\rho(\cdot)\ovl{R}_\rho$
be the left inverses as before.
Then $E_\rho:=\rho\circ\phi_\rho$
and $E_{\orho}:=\orho\circ\phi_{\orho}$
are the minimal conditional expectations of $\rho(N)\subset M$
and $\orho(M)\subset N$, respectively.
They are dual to each other.
Indeed, the projection $\ovl{R}_\rho\ovl{R}_\rho^*\in M$
gives the Jones projection for
$\rho\orho(M)\subset\rho(N)$ with respect to the conditional expectation
$\rho\circ E_\orho\circ\rho^{-1}$.

Next we consider the subfactor $\rho(N)\stackrel{E}{\subset}M$,
where $E$ is a fixed (not necessarily minimal)
faithful normal conditional expectation $E$.
Then there uniquely exists a positive invertible operator
$h_\rho \in \rho(N)'\cap M=(\rho,\rho)$
such that $E(\cdot)=E_\rho(h_\rho\cdot h_\rho)$
(see the proof of \cite[Theorem 1]{Hi} for example).

Denote by $\Ind E$ the Jones index of $E$
introduced by Kosaki to subfactors of arbitrary type \cite{Jo,Ko}.
Then the pair
$S_\rho:=\orho(h_\rho)R_\rho$
and $\ovl{S}_\rho:=d(\rho)(\Ind E)^{-1/2}h_\rho^{-1}\ovl{R}_\rho$
solves the conjugate equations for $\rho$ and $\orho$,
that is,
$S_\rho^*\orho(\ovl{S}_\rho)=(\Ind E)^{-1/2}1_\orho$
and
$\ovl{S}_\rho^*\rho(S_\rho)=(\Ind E)^{-1/2}1_\rho$.
The associated left inverses of $\rho$ and $\orho$
are defined by the following, respectively:
\[
\phi_\rho^S(x)=S_\rho^*\orho(x)S_\rho,
\quad
\phi_{\orho}^S(y)=\ovl{S}_{\rho}^*\rho(y)\ovl{S}_{\rho}
\quad
\mbox{for }
x\in M,\ y\in N.
\]
Then $E=\rho\circ\phi_\rho^S$ and its dual expectation
from $N\to\orho(M)$ is given by $\orho\circ\phi_\orho^S$.
The standard invariant of the inclusion $\rho(N)\stackrel{E}{\subset}M$
means $\mathcal{G}_{\rho,\orho,S_\rho,\ovl{S}_\rho}(\sC^\rho)$.

Consider two inclusions of infinite factors
$\rho\colon N\to M$
and
$\sigma\colon Q\to P$.
Let $\sC^\rho$ and $\sC^\sigma$ be the associated
rigid C$^*$-2-categories.
Suppose that we have a unitary tensor equivalence
$(G,c)\colon \sC^\rho\to\sC^\sigma$
such that $G(\rho)=\sigma$ and
$c_{\lambda,\mu}\colon G(\lambda\mu)\to G(\lambda)G(\mu)$
are natural unitary 2-cocycles.
For a word $w=(\lambda_1,\dots,\lambda_n)\in (\sC^\rho)^n$
such that $\lambda_1\lambda_2\cdots\lambda_n$ is well-defined,
we can define a natural unitary morphism
$c_w\colon G(\lambda_1\cdots\lambda_n)
\to G(\lambda_1)\cdots G(\lambda_n)$
which extends $c_{\lambda,\mu}$ by coherence theorem.
We will see $G$ implements an isomorphism
between $\mathcal{G}_{\rho,\orho,S_\rho,\ovl{S}_\rho}(\sC^\rho)$
and $\mathcal{G}_{\sigma,\ovl{\sigma},S_\sigma,\ovl{S}_\sigma}(\sC^\sigma)$.

The pair $(c_{\orho,\rho}G(R_\rho),c_{\rho,\orho}G(\ovl{R}_\rho))$
is a solution of
the conjugate equations of $(G(\rho),G(\orho))=(\sigma,G(\orho))$.
Thus $G(\orho)$ is isomorphic to $\ovl{\sigma}$.
Since $G$ is an equivalence, we have $d(\rho)=d(\sigma)$.
Hence the pair above is actually a standard solution
of the conjugate equations,
and there exists a unique unitary $u\in Q$ such that
\[
c_{\orho,\rho}G(R_\rho)
=
uR_\sigma,
\quad
c_{\rho,\orho}G(\ovl{R}_\rho)
=
\sigma(u)\ovl{R}_\sigma.
\]

Replacing $\osi$ with $\Ad u\circ\osi$,
we may and do assume that $u=1$ and $G(\orho)=\osi$.
Recall $A_{rs}^{\rho,n}$ defined as before.
We introduce the isomorphism $\pi_{rs}^n$
from $A_{rs}^{\rho,n}$ into $A_{rs}^{\sigma,n}$
as follows:
$\pi_{rs}^n(x):=c_{w_{rs}^n} G(x)c_{w_{rs}^n}^*$ for $x\in A_{rs}^{\rho,n}$,
where $w_{rs}^n$ denotes the alternating words
consisting of $\rho$ and $\orho$.
For instance, $w_{10}^2$ represents the object
$\rho(\orho\rho)^2\in\sC_{10}$, that is,
$w_{10}^2=(\rho,\orho,\rho,\orho,\rho)$.
Then it is not difficult to see $\pi_{rs}^n$'s
gives an isomorphism from
$\mathcal{G}_{\rho,\orho}(\sC^\rho)$
and $\mathcal{G}_{\sigma,\ovl{\sigma}}(\sC^\sigma)$.

Next let us consider
the inclusions $\rho(N)\stackrel{E}{\subset}M$
and $\sigma(Q)\stackrel{F}{\subset}P$.
Take $(S_\rho,\ovl{S}_\rho)$ and $(S_\sigma,\ovl{S}_\sigma)$
as before.
Suppose we are given a unitary tensor equivalence
$(G,c)\colon \sC^\rho\to\sC^\sigma$ with $G(\rho)=\sigma$
as above.
We may assume that
$G(\orho)=\osi$,
$R_\sigma=c_{\orho,\rho}G(R_\rho)$
and
$\ovl{R}_\sigma=c_{\rho,\orho}G(\ovl{R}_\rho)$.
We further assume
$S_\sigma=c_{\orho,\rho}G(S_\rho)$
and
$\ovl{S}_\sigma=c_{\rho,\orho}G(\ovl{S}_\rho)$.
This assumption is equivalent to say $G(h_\rho)=h_\sigma$.
Then $G$ implements an isomorphism
from
$\mathcal{G}_{\rho,\orho,S_\rho,\ovl{S}_\rho}(\sC^\rho)$
into
$\mathcal{G}_{\sigma,\ovl{\sigma},S_\sigma,\ovl{S}_\sigma}(\sC^\sigma)$
as verified above.

We have seen a unitary tensor equivalence between rigid C$^*$-2-categories
implements an isomorphism between those standard invariants.
It is actually well-known among experts that the converse statement
holds.
Namely,
$\sC^\rho$ and $\sC^\sigma$ are unitarily tensor equivalent
via a unitary tensor functor mapping
$\rho$ to $\sigma$
if and only if
their standard invariants
$\mathcal{G}_{\rho,\orho}(\sC^\rho)$
and
$\mathcal{G}_{\sigma,\ovl{\sigma}}(\sC^\sigma)$
are isomorphic.
The similar statement for
$\mathcal{G}_{\rho,\orho,S_\rho,\ovl{S}_\rho}(\sC^\rho)$
and
$\mathcal{G}_{\sigma,\ovl{\sigma},S_\sigma,\ovl{S}_\sigma}(\sC^\sigma)$
also holds.

\subsection{Amenable subfactors}

In the series of papers
\cite{Popa-acta,Popa-WorldSci,Popa-mathlett,Popa-axiom,Popa-endo},
Popa has succeeded in proving the standard invariants are
complete invariants for strongly amenable subfactors.
Let us denote by $N\stackrel{E}{\subset} M$
an inclusion of factors $N$ and $M$
with a faithful normal conditional expectation $E$
from $M$ onto $N$.
The approximate innerness and the central freeness
of $N\stackrel{E}{\subset} M$
are introduced in \cite[Definition 2.1, 3.1]{Popa-endo}.
Note that the approximate innerness depends on the choice of $E$
while the central freeness does not.
Popa's celebrated classification result
is stated as follows \cite[Theorem 5.1]{Popa-endo}:

\begin{thm}[Popa]
\label{thm:Popasubfactor}
Let $N\stackrel{E}{\subset} M$ be an inclusion of factors
with separable preduals and finite index.
Suppose the following conditions hold:
\begin{itemize}
\item 
The extensions of $E$ implement a trace
on the higher relative commutants.

\item
$N$ is a McDuff factor.

\item
The standard invariant of $N\stackrel{E}{\subset} M$
is strongly amenable.

\item
$N\subset M$ is centrally free.

\item
$N\stackrel{E}{\subset} M$ is approximately inner.
\end{itemize}
Then $N\stackrel{E}{\subset} M$
is isomorphic to
$(N^{\rm st}\stackrel{F}{\subset} M^{\rm st})\otimes M$,
where $N^{\rm st}\subset M^{\rm st}$
denotes the standard inclusion
and
$F$ the trace preserving expectation.
\end{thm}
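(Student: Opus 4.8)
The plan is to recover the uniqueness half of Theorem~\ref{thm:Popasubfactor} (this is exactly the content of Theorem~\ref{thme:subfactor2}, specialized to the case where one side is the standard model) from the classification of centrally free cocycle actions. The idea is to translate the subfactor $N\stackrel{E}{\subset}M$ into a free cocycle action of its standard invariant, realized as an amenable rigid C$^*$-2-category, and then to compare it with the standard model using Theorem~\ref{thm:classification} together with the isomorphic property Theorem~\ref{thm:M1P1isom}.

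First I would set things up. After tensoring with $B(\ell^2)$ we may assume $N$ and $M$ are infinite factors with separable preduals; choose $\rho\in\Mor(N,M)_0$ representing the inclusion, together with the solution $(S_\rho,\ovl{S}_\rho)$ of the conjugate equations encoding $E$ (equivalently the operator $h_\rho\in(\rho,\rho)$ with $E=E_\rho(h_\rho\,\cdot\,h_\rho)$). Let $\sC:=\sC^\rho=(\sC^\rho_{rs})_{r,s}$ be the rigid C$^*$-2-category generated by $\rho$, with its canonical free cocycle action $\alpha^\rho$ on the system $(N,M)$. Strong amenability of the standard invariant $\mathcal{G}_{\rho,\orho,S_\rho,\ovl{S}_\rho}(\sC^\rho)$ is precisely the statement that the C$^*$-tensor category $\sC_{00}$ satisfies the F\o lner condition, i.e.\ that $\sC$ is amenable. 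Central freeness of $N\subset M$ says that $(\alpha^{\rho,00},1)$ is a centrally free cocycle action of $\sC_{00}$ on $N$, while approximate innerness of $N\stackrel{E}{\subset}M$, together with the trace condition on the higher relative commutants, says that each $\alpha^\rho_X$, $X\in\sC$, is an approximately inner endomorphism of rank $d(X)$. Since $Z(N)=\C$, the required $\theta^{\alpha}$-invariant faithful normal state is automatic.

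Next I would build the standard model. By the Hayashi realization result for rigid C$^*$-2-categories (the 2-category analogue of Proposition~\ref{prop:HY-real}), the abstract amenable rigid C$^*$-2-category $\sC$ admits a free cocycle action $(\alpha^{\mathrm{st}},c)$ on a system of injective type II$_\infty$ factors in which every $\alpha^{\mathrm{st}}_X$ is approximately inner of rank $d(X)$; choosing on the subfactor side the solution of the conjugate equations corresponding to $h_\rho$, the associated inclusion is a model of the standard inclusion $N^{\mathrm{st}}\stackrel{F}{\subset}M^{\mathrm{st}}$ with the prescribed standard invariant (after absorbing a copy of the hyperfinite II$_1$ factor). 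Tensoring by $\mathrm{id}_N$ gives a cocycle action $\beta:=\alpha^{\mathrm{st}}\otimes\mathrm{id}_N$ of $\sC$ on the system $(N^{\mathrm{st}}\otimes N,\ M^{\mathrm{st}}\otimes N)$, whose $00$-part $\beta^{00}$ is again centrally free. Because $\alpha^\rho_X$ and $\beta_X$ are both approximately inner of rank $d(X)$, Propositions~\ref{prop:appuer} and~\ref{prop:rhosigma} show that $\alpha^{\rho,00}_X$ and $\beta^{00}_X$ are approximately unitarily equivalent for every $X\in\sC_{00}$, where we identify $N^{\mathrm{st}}\otimes N\cong N$ using that $N$ is McDuff.

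Now the classification applies: Theorem~\ref{thm:classification}, for the amenable rigid C$^*$-tensor category $\sC_{00}$, gives that $(\alpha^{\rho,00},1)$ and $(\beta^{00},c)$ are strongly cocycle conjugate, and then Theorem~\ref{thm:M1P1isom} applied to the full 2-category $\sC$ with $Z=\rho$ (iterated over $Z=\rho(\orho\rho)^n$ to cover all levels) produces an isomorphism $M\cong M^{\mathrm{st}}\otimes N$ intertwining $\alpha^\rho_\rho$ with $\beta_\rho=\alpha^{\mathrm{st}}_\rho\otimes\mathrm{id}_N$ and carrying the Jones projections of $N\stackrel{E}{\subset}M$ to those of the right-hand side. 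Translating this through the dictionary of \S7.2 between cocycle conjugacy of actions (with Jones projections and the $(S_\rho,\ovl{S}_\rho)$-data) and isomorphism of the corresponding inclusions, and using once more $N\cong M$ and $M^{\mathrm{st}}\otimes N\cong M^{\mathrm{st}}\otimes M$ for amenable McDuff subfactors, one obtains $N\stackrel{E}{\subset}M\cong(N^{\mathrm{st}}\stackrel{F}{\subset}M^{\mathrm{st}})\otimes M$. The main obstacle I anticipate lies in the second and third steps: converting the hypothesis ``$N\stackrel{E}{\subset}M$ is approximately inner'' into ``$\alpha^\rho_X$ is approximately inner of rank $d(X)$ for all $X$'' with the non-minimal expectation correctly handled, realizing the standard model with exactly the operator $h_\rho$ built in, and checking that the isomorphism produced by Theorem~\ref{thm:M1P1isom} sends $E$ to $F\otimes\mathrm{id}$; the McDuff-absorption identifications are routine but must be threaded carefully so that all underlying systems genuinely coincide before Theorem~\ref{thm:classification} is invoked.
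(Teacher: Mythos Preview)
The paper does not give its own proof of this theorem: it is quoted as Popa's result (\cite[Theorem~5.1]{Popa-endo}), and the only argument the paper offers is Remark~\ref{rem:ergodicity}, which says that Theorem~\ref{thm:Popasubfactor} follows from Theorem~\ref{thm:subfactor2} together with \cite[Lemma~5.2]{Popa-endo}. Concretely: Popa's Lemma~5.2 verifies that the standard model $(N^{\rm st}\stackrel{F}{\subset}M^{\rm st})\otimes M$ is itself centrally free and approximately inner with the same standard invariant, and then Theorem~\ref{thm:subfactor2} compares the two inclusions directly.

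Your proposal is essentially the proof of Theorem~\ref{thm:subfactor2} (Proposition~\ref{prop:centappsubfactor} to translate the subfactor hypotheses into properties of $\alpha^\rho$, then Theorem~\ref{thm:classification} on $\sC_{00}$, then Theorem~\ref{thm:M1P1isom}) specialized to the case where the second inclusion is the standard model. The genuine difference is that you propose to \emph{construct} the standard model via the Hayashi realization (the 2-category analogue of Proposition~\ref{prop:HY-real}) and then verify its properties by hand, whereas the paper simply cites \cite[Lemma~5.2]{Popa-endo} for those properties. Your route is self-contained but longer; the paper's is shorter but imports an external lemma. Both end at the same place.

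Two small corrections. First, ``strongly amenable'' in Popa's sense is not just the F\o lner condition on $\sC_{00}$: it also includes ergodicity of the standard invariant, which is what guarantees that $N^{\rm st}\subset M^{\rm st}$ is an inclusion of \emph{factors} and that the trace on the higher relative commutants is unique (so that $\varphi^S$ coincides with the categorical trace and the rank in Proposition~\ref{prop:centappsubfactor}(2) really is $d(\lambda)$, via Remark~\ref{rem:ergodicity} and the remark following Proposition~\ref{prop:centappsubfactor}). Second, the identification $N\cong M$ you invoke at the end is not automatic from McDuff alone; it is a consequence of the conclusion (since $N\cong N^{\rm st}\otimes M\cong M$ once $M$ is McDuff), so you should be careful not to use it as an input before Theorem~\ref{thm:M1P1isom} is applied.
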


We will reformulate the previous result as follows.

\begin{thm}
\label{thm:subfactor2}
Let $N\stackrel{E}{\subset} M$
and $Q\stackrel{F}{\subset} P$ be an inclusion of factors
with separable preduals and finite indices.
Suppose the following conditions hold:
\begin{itemize}

\item
$N$ and $Q$ are isomorphic.

\item
The standard invariants
of $N\stackrel{E}{\subset} M$ and $Q\stackrel{F}{\subset} P$
are amenable and isomorphic.

\item
$N\subset M$ and $Q\subset P$ are centrally free.

\item
$N\stackrel{E}{\subset} M$ and $Q\stackrel{F}{\subset} P$
are approximately inner.
\end{itemize}
Then $N\stackrel{E}{\subset} M$
is isomorphic to $Q\stackrel{F}{\subset} P$.
\end{thm}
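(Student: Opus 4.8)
The plan is to recast both subfactors as canonical cocycle actions of a rigid C$^*$-2-category and then invoke Theorem \ref{thm:classification} together with the isomorphic property Theorem \ref{thm:M1P1isom}. Write $\rho\colon N\to M$ and $\sigma\colon Q\to P$ for the inclusion $*$-homomorphisms, choose conjugates $\orho,\osi$ and standard solutions $(R_\rho,\ovl{R}_\rho)$, $(R_\sigma,\ovl{R}_\sigma)$ of the conjugate equations, and let $\sC^\rho=(\sC^\rho_{rs})$, $\sC^\sigma=(\sC^\sigma_{rs})$ be the associated rigid C$^*$-2-categories carrying their canonical free cocycle actions $\alpha^\rho$ on the system $(N,M)$ and $\alpha^\sigma$ on $(Q,P)$, as recalled above. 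The hypothesis that the standard invariants are isomorphic supplies, by the correspondence discussed in this section, a unitary tensor equivalence $(G,c)\colon\sC^\rho\to\sC^\sigma$ with $G(\rho)=\sigma$; after replacing $\osi$ by a unitary conjugate we may arrange $G(\orho)=\osi$, $R_\sigma=c_{\orho,\rho}G(R_\rho)$, $\ovl{R}_\sigma=c_{\rho,\orho}G(\ovl{R}_\rho)$, and, since the isomorphism of standard invariants also matches the Jones projections coming from $E$ and $F$, $G(h_\rho)=h_\sigma$, so that $G$ carries $E$ to $F$. Transporting $\alpha^\sigma$ along $G$ yields a cocycle action $(\beta,c^\beta)$ of $\sC^\rho$ on the system $(Q,P)$, with $\beta_X:=\alpha^\sigma_{G(X)}$ and the intertwiners and $2$-cocycle of $\beta$ assembled from $G$ and $c$.

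Next I would compare the $(00)$-corners. Using the given isomorphism I identify $N=Q$ (and, tensoring all four algebras with $B(\ell^2)$ if $N$ is finite, I may take $N$ properly infinite, the finite case being absorbed exactly as in the corollaries to Theorem \ref{thm:classification}). Then $(\alpha^{\rho,00},c)$ and the transported $(\beta^{00},c^\beta)$ are cocycle actions on the factor $N$ of the rigid C$^*$-tensor category $\sC^\rho_{00}$, which is amenable because the standard invariant is; and $Z(N)=\C$, so $\alpha^{\rho,00}$ trivially has an invariant faithful normal state on its center. Central freeness of the inclusions $N\subset M$ and $Q\subset P$ — a property not depending on the expectation — is equivalent to proper central non-triviality of the canonical endomorphisms, that is, to central freeness of $(\alpha^{\rho,00},c)$ and of $(\beta^{00},c^\beta)$. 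Finally, approximate innerness of $N\stackrel{E}{\subset}M$ and of $Q\stackrel{F}{\subset}P$ translates, in the endomorphism picture, into approximate innerness of rank equal to the intrinsic dimension of each canonical endomorphism; since $d(G(X))=d(X)$, Proposition \ref{prop:appuer} yields that $\alpha^{\rho,00}_X$ and $\beta^{00}_X$ are approximately unitarily equivalent for every $X\in\Irr(\sC^\rho_{00})$. Theorem \ref{thm:classification} then gives that $(\alpha^{\rho,00},c)$ and $(\beta^{00},c^\beta)$ are (strongly) cocycle conjugate.

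With the $(00)$-corners cocycle conjugate, Theorem \ref{thm:M1P1isom} applies to $(\alpha^\rho,c)$ and $(\beta,c^\beta)$ and, taking $Z=\rho$, produces a $*$-isomorphism $\pi_\rho\colon M\to P$ with $\pi_\rho\circ\rho=\sigma\circ\pi_0$ and $\pi_\rho(T)=G(T)$ for all $T\in\sC^\rho_{10}(\rho,\rho)=\rho(N)'\cap M$. The first relation shows that $\pi_\rho$ carries the subfactor $\rho(N)\subset M$ onto $\sigma(Q)\subset P$; applying the second to $T=h_\rho$ and using $G(h_\rho)=h_\sigma$, together with the compatibility of the solutions of the conjugate equations (so that $\pi_\rho$ intertwines the minimal expectations $E_\rho,E_\sigma$), gives $\pi_\rho(h_\rho)=h_\sigma$ and hence $\pi_\rho\circ E=F\circ\pi_\rho$. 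Therefore $N\stackrel{E}{\subset}M$ is isomorphic to $Q\stackrel{F}{\subset}P$.

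The hard part will be the middle paragraph: faithfully translating Popa's notion of approximate innerness of an inclusion into approximate innerness of rank $d(X)$ of the canonical endomorphisms, so that Proposition \ref{prop:appuer} can be invoked to furnish the approximate unitary equivalence that Theorem \ref{thm:classification} requires; and, running alongside this, keeping precise track of all the identifications — the standard solutions $(R_\rho,\ovl{R}_\rho)$ versus $(R_\sigma,\ovl{R}_\sigma)$, the Jones projections, and the operators $h_\rho,h_\sigma$ implementing $E,F$ — so that the isomorphism $\pi_\rho$ produced by Theorem \ref{thm:M1P1isom} really intertwines $E$ and $F$, not merely the minimal expectations. The finite-versus-properly-infinite dichotomy for $N$ is a further, routine, technical point, handled by stabilizing with $B(\ell^2)$ or via the bimodule-category formulation.
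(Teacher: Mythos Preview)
Your overall strategy matches the paper's: set up $\sC^\rho$ and $\sC^\sigma$, transport along the equivalence $(G,c)$, apply Theorem~\ref{thm:classification} (via Corollary~\ref{cor:CDclass}) to the $(00)$-corners, then invoke Theorem~\ref{thm:M1P1isom} at $Z=\rho$, and finally check that $\pi_\rho(h_\rho)=G(h_\rho)=h_\sigma$ forces $\pi_\rho\circ E=F\circ\pi_\rho$. That last step and the overall architecture are correct.

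The gap is in your identification of the rank of approximate innerness. Proposition~\ref{prop:centappsubfactor}(2) does \emph{not} give rank $d(\lambda)$ in general: for $\lambda\in\Irr(\sC_{00}^\rho)$ with $\lambda\prec(\orho\rho)^n$ it gives rank $d(\rho)^{2n}\varphi_\rho^S(TT^*)=d(\lambda)\mu^\lambda_j$, where $T\in\sC_{00}^\rho(\lambda,(\orho\rho)^n)$ is an eigenvector, with eigenvalue $\mu^\lambda_j$, of the positive invertible operator $a_{\orho\rho}^n$ built from $h_\rho$ and $k_{\orho}$. When $E$ is not the minimal expectation, $h_\rho\neq 1$ and $\mu^\lambda_j\neq 1$ in general, so your justification ``$d(G(X))=d(X)$'' is not what makes Proposition~\ref{prop:appuer} applicable. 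What the paper does --- and you already have the key ingredient, since you recorded $G(h_\rho)=h_\sigma$ --- is show $c_{w_{00}^n}G(a_{\orho\rho}^n)c_{w_{00}^n}^*=a_{\osi\sigma}^n$, so that $c_{w_{00}^n}G(T)$ is an eigenvector of $a_{\osi\sigma}^n$ with the same eigenvalue; a direct computation with $c_{w_{00}^n}G(S_\rho^n)=S_\sigma^n$ then yields $\varphi_\sigma^S(c_{w_{00}^n}G(TT^*)c_{w_{00}^n}^*)=\varphi_\rho^S(TT^*)$, hence $G(\lambda)$ is approximately inner of the \emph{same} rank as $\lambda$, and now Proposition~\ref{prop:appuer} applies.

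For the finite case, the paper's descent from $N\otimes B$ back to $N$ is not quite ``exactly as in the corollaries'': one uses that the conjugacy $\pi_0$ on $N\otimes B$ is approximately inner, hence preserves $\tau_N\otimes\Tr$, together with $\tau_N\circ E=\tau_M$ (from approximate innerness of $E$, via \cite[Proposition~2.6]{Popa-endo}), to see that $\pi_1$ preserves $\tau_M\otimes\Tr$; only then can one untwist by inner unitaries to get isomorphisms on $N$ and $M$. This is routine but requires these ingredients, not just stabilization.
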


\begin{rem}
\label{rem:ergodicity}
If we also assume the ergodicity of the standard invariant
of $N\stackrel{E}{\subset} M$,
then \cite[Lemma 5.2]{Popa-endo} and Theorem \ref{thm:subfactor2}
imply Theorem \ref{thm:Popasubfactor}.
The fact that the ergodicity is unnecessary for the classification
of amenable subfactors has been announced by Popa
\cite[Problem 5.4.7]{Popa-acta}.
\end{rem}

Let us translate the centrally freeness
and
the approximate innerness of subfactors
in terms of actions of tensor categories.

Let $N$ and $M$ be infinite factors with separable preduals.
Let $\rho\colon N\to M$ be a normal $*$-homomorphism of finite index
and $E\colon M\to \rho(N)$ a faithful normal conditional expectation.
Take $h_\rho$, $S_\rho$, $\ovl{S}_\rho$, $\phi_\rho^S$
and $\phi_\orho^S$ as in the previous subsection.
We also prepare the positive invertible operator
$k_{\orho}\in (\orho,\orho)\subset N$
such that
$\orho(h_\rho)R_\rho=k_{\orho}^{-1}R_\rho$
and
$h_\rho^{-1}\ovl{R}_\rho=\rho(k_{\orho})\ovl{R}_\rho$.
Then
$\phi_\rho^S(x)=\phi_\rho(h_\rho x h_\rho)$
and
$\phi_{\orho}^S(y)=\phi_{\orho}(k_{\orho} yk_{\orho})$
for $x\in M$ and $y\in N$.

Set the the left inverse of $(\orho\rho)^n$
defined by $\phi_{(\orho\rho)^n}^S:=(\phi_\rho^S\circ\phi_{\orho}^S)^n$
for $n\geq0$.
Let us denote by $A_{\orho\rho}$ the C$^*$-subalgebra of $N$
that is the norm closure of the union
of $((\orho\rho)^n,(\orho\rho)^n)$ for $n\geq0$.
Let $\varphi^S$ be the state on $A_{\orho\rho}$ defined by
$\varphi^S(x):=\lim_{n}\phi_{(\orho\rho)^n}^S(x)$
for $x\in A_{\orho\rho}$.
Let us keep these notations in this subsection.

\begin{prop}
\label{prop:centappsubfactor}
Let $N$ and $M$ be infinite factors with separable preduals
and $\rho\colon N\to M$ an inclusion of finite index.
Then the following statements hold:
\begin{enumerate}
\item
The subfactor $\rho(N)\subset M$
is centrally free in the sense of
\cite[Definition 3.1]{Popa-endo}
if and only if
the action $\alpha^\rho$ of $\sC_{00}^{\rho}$
on $N$ is.

\item
Suppose that
for a faithful normal conditional expectation
$E\colon M\to \rho(N)$,
the subfactor $\rho(N)\stackrel{E}{\subset}M$
is approximately inner in the sense of
\cite[Definition 2.1]{Popa-endo}.
Take $(S_\rho,\ovl{S}_\rho)$ and $\varphi^S$ as above.
Then for each $\lambda\in\Irr(\sC_{00}^{\rho})$,
there exists an isometry $T\in\sC_{00}^\rho(\lambda,(\orho\rho)^n)$
with $n\in\N$ such that
$\lambda$ is an approximately inner endomorphism
of rank $d(\rho)^{2n} \varphi^S(TT^*)$
in the sense of \cite[Definition 2.4]{MT-app}.
\end{enumerate}
\end{prop}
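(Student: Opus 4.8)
The plan is to translate Popa's subfactor-theoretic notions of central freeness and approximate innerness into the language of the canonical action $\alpha^\rho$ of $\sC^\rho_{00}$ on $N$, using the realization of the higher relative commutants as intertwiner spaces $A_{rs}^n$. For part (1), recall that by \cite[Definition 3.1]{Popa-endo} the subfactor $\rho(N)\subset M$ is centrally free precisely when the inclusion of asymptotic centralizer algebras $(N^\omega\cap M)'\cap \widetilde{M}$ behaves trivially, or more concretely (in the infinite-factor formulation, after tensoring with $B(\ell^2)$ if necessary) when no nonzero element of $M_\omega$ intertwines the identity with a nontrivial component of $\orho\rho$. First I would unwind the tower picture: the relative commutants $\rho(N)'\cap M_k$ are exactly the spaces $\sC_{00}^\rho((\orho\rho)^n,(\orho\rho)^n)$ together with the odd steps, and a central sequence in $M$ that is ``nontrivially intertwined'' through the tower produces precisely a nonzero $a\in M_\omega$ with $a x=\alpha_\lambda^\rho(x)a$ for all $x\in M_\omega$ and some $\lambda\in\Irr(\sC_{00}^\rho)\setminus\{\btr\}$. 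Conversely such an $a$ feeds back into a nontrivial central sequence in the tower. So part (1) is an essentially formal matching of definitions once the identification of the standard invariant with the $A_{rs}^n$ is in hand; the only care needed is to handle the passage between $N$ and $M$, which is bridged by the conjugate maps $\rho,\orho$ and the isometries realizing the irreducible summands.

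For part (2), the plan is as follows. By \cite[Definition 2.1]{Popa-endo} approximate innerness of $\rho(N)\stackrel{E}{\subset}M$ says the tunnel/tower embeddings can be approximated by inner perturbations compatible with $E$; transported to the canonical action, this should say that each basic endomorphism appearing in the tower, namely $\orho\rho$ on $N$, is an approximately inner endomorphism of an appropriate rank measured by the index $\Ind E=d(\rho)^2\|h_\rho\|_?$-type quantity — more precisely of rank $d(\rho)^2\varphi^S(1)=d(\rho)^2$ in the minimal case and weighted by the operator $h_\rho$ in general. Iterating, $(\orho\rho)^n$ is approximately inner of rank $d(\rho)^{2n}$ relative to the left inverse $\phi_{(\orho\rho)^n}^S$. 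Then for a general simple $\lambda\in\Irr(\sC_{00}^\rho)$ one picks $n$ large enough that $\lambda\prec(\orho\rho)^n$, fixes an isometry $T\in\sC_{00}^\rho(\lambda,(\orho\rho)^n)$, and uses Lemma \ref{lem:linv} together with the local quantization / support-projection estimates (Lemma \ref{lem:support-estimate}, Lemma \ref{lem:ee}) to cut down the Rohlin-type family of partial isometries witnessing approximate innerness of $(\orho\rho)^n$ by $T$. The rank gets multiplied by the ``relative size'' of the $\lambda$-component, which is exactly $\phi_{(\orho\rho)^n}^S(TT^*)\cdot d(\rho)^{2n}$; since $TT^*$ commutes with the tail of the tower, $\phi_{(\orho\rho)^n}^S(TT^*)$ is a scalar and equals $\varphi^S(TT^*)$. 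This yields that $\lambda$ is approximately inner of rank $d(\rho)^{2n}\varphi^S(TT^*)$ in the sense of \cite[Definition 2.4]{MT-app}.

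Concretely the key steps, in order, would be: (i) identify $\rho(N)'\cap M_{2n}\cong A_{00}^n$ and record how the asymptotic (central) version of this isomorphism intertwines the canonical action with the tower structure; (ii) prove part (1) by reading off from this identification that a nonzero intertwiner $a\in M_\omega$ for a nontrivial $\lambda$ exists iff the tower has a nontrivial central sequence, i.e.\ iff $\rho(N)\subset M$ fails to be centrally free; (iii) for part (2), show $\orho\rho$ is approximately inner of rank $d(\rho)^2$ weighted by $h_\rho$ directly from the definition of approximate innerness of the subfactor, realizing the witnessing partial isometries from the Pimsner--Popa basis $\{d(\rho)\ovl{S}_\rho^*\}$ of $E$; (iv) iterate to get $(\orho\rho)^n$ approximately inner of rank $d(\rho)^{2n}$ with respect to $\phi_{(\orho\rho)^n}^S$; (v) cut by an isometry $T\in\sC_{00}^\rho(\lambda,(\orho\rho)^n)$ and compute the new rank using $\phi_{(\orho\rho)^n}^S(TT^*)=\varphi^S(TT^*)$ and the multiplicativity of ranks under compression, finishing via Proposition \ref{prop:appuer} to normalize the witnessing family. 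The main obstacle I expect is step (v): keeping track of how the rank of approximate innerness transforms under compression by a non-full projection coming from a subobject, and verifying that the weighted count $d(\rho)^{2n}\varphi^S(TT^*)$ is precisely the correct rank rather than merely an upper or lower bound — this requires carefully matching the normalization in \cite[Definition 2.4]{MT-app} (partial isometries summing to $1$ with the covariance $v_i\psi^\omega=r\lambda(\psi^\omega)v_i$) against the tower data and the operator $h_\rho$ entering through $S_\rho=\orho(h_\rho)R_\rho$.
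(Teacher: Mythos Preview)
Your outline has the right shape but misidentifies the working parts in both halves.

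For part (1), this is not just a formal matching of definitions. The ``if'' direction needs the strong relative-commutant computation of Lemma~\ref{lem:relcomm}: from central freeness of $\alpha^\rho$ on $N$ one obtains
\[
(\orho\rho)^{k+1}(N_\omega)'\cap N^\omega=((\orho\rho)^{k+1},(\orho\rho)^{k+1})\vee(\orho\rho)^{k+1}(N_\omega'\cap N^\omega),
\]
and then a further conditional-expectation argument is required to deduce Popa's condition \cite[Proposition~3.2~(3)]{Popa-endo} on $\sigma(N_\omega)'\cap M^\omega$. Conversely, the ``only if'' direction goes through that same Popa proposition: if some $\lambda$ is centrally trivial, one manufactures an element $\rho(TS^*)\in\sigma(N_\omega)'\cap M^\omega$, forces it into $\sigma(N)\vee(\sigma(N)'\cap M)$, and reads off $\lambda=\btr$. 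Note also that the action $\alpha^\rho$ of $\sC_{00}^\rho$ lives on $N$, so the intertwiners you are looking for sit in $N$ (not $M_\omega$ as you wrote).

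For part (2), the Rohlin-type lemmas (Lemma~\ref{lem:support-estimate}, Lemma~\ref{lem:ee}) and Proposition~\ref{prop:appuer} are not the right tools here; they play no role. The paper's mechanism is direct: by \cite[Proposition~2.7~(ii)]{Popa-endo} the deep inclusion $(\orho\rho)^n(N)\stackrel{E^S_{(\orho\rho)^n}}{\subset}N$ is itself approximately inner, so one has Pimsner--Popa bases $\{m_i^k\}$ that asymptotically commute with $\psi\circ\phi^S_{(\orho\rho)^n}$. One then sets $x_{ij}^k:=T_j^{\lambda*}m_i^k S_\rho^n$, where $S_\rho^n\in(\btr,(\orho\rho)^n)$ is the canonical isometry, and checks the covariance $x_{ij}^k\psi\approx d(\lambda)\mu_j^\lambda\,\lambda(\psi)\,x_{ij}^k$ together with $\sum_i x_{ij}^k x_{ij}^{k*}=d^S(\orho\rho)^n$, concluding via \cite[Proposition~2.8]{MT-app}. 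The specific rank $d(\rho)^{2n}\varphi^S(TT^*)$ arises because $T=T_j^\lambda$ must be taken to be an \emph{eigenvector} of the positive operator $a_{\orho\rho}^n\in((\orho\rho)^n,(\orho\rho)^n)$ encoding the deviation of $E$ from minimality; this eigenvector choice is what your sketch is missing, and without it the ``multiplicativity of ranks under compression'' you allude to does not go through for a general isometry $T$.
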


\begin{rem}
If we further assume the condition of
Theorem \ref{thm:Popasubfactor} (1)
in the statement of Proposition \ref{prop:centappsubfactor} (2),
then the positive operators $a_{\orho\rho}^n$, which will be introduced
in the following proof of (2),
are central elements in $((\orho\rho)^n,(\orho\rho)^n)$.
This implies a rank of the approximate innerness does not depend on a choice
of an isometry.
Note that a rank is in general not necessarily uniquely determined
for an endomorphism (see \cite{MT-app}).
\end{rem}

\begin{proof}
(1).
We will show the ``only if" part.
Suppose $\rho(N)\subset M$ is a centrally free inclusion.
Let $\lambda\in\Irr(\sC_{00}^\rho)$.
Take $k\geq0$ and an isometry $S\in\sC_{00}^\rho(\lambda,(\orho\rho)^k)$.
Note that $\lambda\in\End(N)_0$ and $S\in N$.
Suppose that $\lambda$ is centrally trivial,
that is, $\lambda(x)=x$ for all $x\in N_\omega$.
Denote by $\sigma:=\rho(\orho\rho)^k\in\Mor(N,M)_0$.
Take an isometry $T\in \sC_{00}^\rho(\btr,(\orho\rho)^k)$.
Then for $x\in N_\omega$,
we have
\[
\rho(T S^*) \sigma(x)
=\rho(T\lambda(x)S^*)
=
\rho(TxS^*)
=
\sigma(x)\rho(TS^*).
\]
Namely, $\rho(TS^*)\in \sigma(N_\omega)'\cap M^\omega$.
Then by \cite[Proposition 3.2 (3)]{Popa-endo},
we have $\rho(TS^*)\in \sigma(N)\vee(\sigma(N)'\cap M)$.
Thus $1=\rho(T^*)\rho(TS^*)\rho(S)$
is contained in
$\rho(N)(\rho\lambda,\rho)$.

Take an orthonormal base $\{v_j\}_j$ of $(\rho\lambda,\rho)$
with respect to the inner product
$(v,w)1:=\phi_\rho(vw^*)$ for $v,w\in(\rho\lambda,\rho)$.
Then there uniquely exist $a_j\in N$ such that
$1=\sum_j \rho(a_j)v_j$.
For $x\in N$, we have
\[\sum_j\rho(a_jx)v_j
=
\sum_j\rho(a_j)v_j\rho\lambda(x)
=
\rho\lambda(x)
=
\sum_j\rho(\lambda(x)a_j)v_j.
\]
By orthogonality of $v_j$'s,
we have $a_j x=\lambda(x)a_j$ for all $x\in N$ and $j$.
Namely, $\lambda=\btr$,
and the action $\alpha^\rho$ of $\sC_{00}^\rho$ on $N$
is centrally free from \cite[Lemma 8.3]{MT-minimal}.

We will show the ``if" part.
Let $k\geq0$ and $\sigma:=\rho(\orho\rho)^k$.
We will check the condition \cite[Proposition 3.2 (3)]{Popa-endo}.
Since the action $\alpha^\rho$ of $\sC_{00}^\rho$ on $N$
is centrally free,
we have the following from Lemma \ref{lem:relcomm}:
\[
(\orho\rho)^{k+1}(N_\omega)'\cap \orho(M^\omega)
\subset
(\orho\rho)^{k+1}(N_\omega)'\cap N^\omega
=
((\orho\rho)^{k+1},(\orho\rho)^{k+1})
\vee
(\orho\rho)^{k+1}(N_\omega'\cap N^\omega).
\]
Applying the left inverse
$\phi_\orho(\cdot)=\ovl{R}_\rho^*\rho(\cdot)\ovl{R}_\rho$
to the both sides,
we obtain
\[
\sigma(N_\omega)'\cap M^\omega
\subset
(\sigma,\sigma)
\vee
\sigma(N_\omega'\cap N^\omega).
\]
Note the converse inclusion trivially holds,
and we actually have the equality.

For a simple $\lambda\prec \sigma$,
take an orthonormal base $\{T_j^\lambda\}_{j\in I_\lambda}$
in $(\lambda,\sigma)$.
Let $\varphi\in N_*$ be a faithful normal state
and $F$ the $\sigma(\varphi^\omega)$-preserving conditional expectation
from $M^\omega$ onto $\sigma(N_\omega)'\cap M^\omega$.
Let $x\in M$.
Then we have $x_{ij}^\lambda\in N_\omega'\cap N^\omega$
for $i,j\in I_\lambda$ and $\lambda\prec\sigma$
such that
\begin{equation}
\label{eq:FTT}
F(x)=\sum_{\lambda\prec\sigma}\sum_{i,j\in I_\lambda}
T_i^\lambda T_j^{\lambda*}\sigma(x_{ij}^\lambda).
\end{equation}
Let us take $y\in N_\omega'\cap N^\omega$,
a simple $\mu\prec\sigma$
and $m,n\in I_\mu$.
On the one hand,
by definition of $F$,
we obtain
\begin{align*}
\sigma(\varphi^\omega)(F(x)T_m^\mu T_n^{\mu*}\sigma(y))
&=
\sigma(\varphi^\omega)(xT_m^\mu T_n^{\mu*}\sigma(y))
=
\varphi^\omega(\phi_\sigma(xT_m^\mu T_n^{\mu*})y).
\end{align*}
On the other hand,
we have
\begin{align}
\sigma(\varphi^\omega)(F(x)T_m^\mu T_n^{\mu*}\sigma(y))
&=
\sum_{i\in I_\mu}
\sigma(\varphi^\omega)(T_i^\mu T_n^{\mu*}\sigma(x_{im}^\mu y))
\quad
\mbox{by (\ref{eq:FTT})}
\notag
\\
&=
\sum_{i\in I_\mu}
\varphi^\omega(\phi_\sigma(T_i^\mu T_n^{\mu*})x_{im}^\mu y).
\label{eq:TTxy}
\end{align}
Since $\phi_\sigma(T_i^\mu T_n^{\mu*})\in \sC_{00}^\rho(\btr,\btr)=\C$,
we have
\[
\phi_\sigma(T_i^\mu T_n^{\mu*})
=
\sigma(\varphi^\omega)(T_i^\mu T_n^{\mu*})
=
\frac{d(\mu)}{d(\sigma)}
\mu(\varphi^\omega)(T_n^{\mu*}T_i^\mu)
=
\delta_{i,n}\frac{d(\mu)}{d(\sigma)}.
\]
This implies that (\ref{eq:TTxy}) equals
$d(\mu)d(\sigma)^{-1}\varphi^\omega(x_{nm}^\mu y)$,
and we get $\phi_\sigma(xT_m^\mu T_n^{\mu*})=d(\mu)d(\sigma)^{-1}x_{nm}^\mu$,
which shows $x_{nm}^\mu\in N$.
Therefore we have
$F(M)=(\sigma,\sigma)\vee \sigma(N)$.

Similarly, we can show
$G(N)=((\orho\rho)^k,(\orho\rho)^k)\vee (\orho\rho)^k(N)$ for $k\geq0$,
where $G$ denotes the $(\orho\rho)^k(\varphi^\omega)$-preserving
conditional expectation from $N^\omega$
onto $(\orho\rho)^k(N_\omega)'\cap N^\omega$.
Hence the inclusion $\rho(N)\subset M$ is centrally free.

(2).
Fix $n\in\N$.
We set $a_{\orho\rho}:=\orho(h_\rho)k_{\orho}$
and
$a_{\orho\rho}^{n}
:=a_{\orho\rho}\orho\rho(a_{\orho\rho})
\cdots (\orho\rho)^{n-1}(a_{\orho\rho})$
that is a positive invertible operator in $((\orho\rho)^n,(\orho\rho)^n)$.
Then we have
$\phi_{(\orho\rho)^n}^S(\cdot)
=\phi_{(\orho\rho)^n}(a_{\orho\rho}^{n}\cdot a_{\orho\rho}^{n})$,
and from Lemma \ref{lem:linv} we have
\[
\phi_{(\orho\rho)^n}^S(x)
=
\sum_{\lambda}
\sum_{T}
\frac{d(\lambda)}{d(\orho\rho)^n}
\phi_{\lambda}(T^* a_{\orho\rho}^n x a_{\orho\rho}^n T)
\]
where the summation is taken for $\lambda \in\Irr(\sC_{00}^\rho)$
and $T\in \ONB(\lambda,(\orho\rho)^n)$.

The operator $a_{\orho\rho}^n\in (\orho\rho,\orho\rho)$
is acting on the Hilbert space $(\lambda,\orho\rho)$
by multiplication
$T\mapsto a_{\orho\rho}^nT$ as a positive invertible operator,
we can take an orthonormal base $\{T_j^\lambda\}_{j\in I_\lambda}$
of $(\lambda,(\orho\rho)^n)$
and positive scalars
$\{\mu_j^\lambda\}_{j\in I_\lambda}$
such that
$a_{\orho\rho}^nT_j^\lambda=\sqrt{\mu_j^\lambda} T_j^\lambda$
for $j\in I_\lambda$.
Then we have
\begin{equation}
\label{eq:rhoorhoS}
\phi_{(\orho\rho)^n}^S(x)
=
\sum_{\lambda,j}
\frac{d(\lambda)\mu_j^\lambda}{d(\orho\rho)^n}
\phi_\lambda(T_j^{\lambda*}x T_j^\lambda)
\quad
\mbox{for }
x\in N.
\end{equation}
Set the isometry
$S_{\rho}^n\in (\btr,(\orho\rho)^n)$
as follows:
$S_{\rho}^1:=S_\rho$
and
for $k\geq1$
\begin{align*}
S_\rho^{2k+1}
&:=
(\orho\rho)^{k}(S_\rho)
\cdot
(\orho\rho)^{k-1}(\orho(\ovl{S}_\rho)S_\rho)
\cdots
\orho\rho(\orho(\ovl{S}_\rho)S_\rho)
\cdot
\orho(\ovl{S}_\rho)S_\rho,
\\
S_\rho^{2k}
&:=
(\orho\rho)^{k-1}(\orho(\ovl{S}_\rho)S_\rho)
\cdots
\orho\rho(\orho(\ovl{S}_\rho)S_\rho)
\cdot
\orho(\ovl{S}_\rho)S_\rho.
\end{align*}
Readers should not confuse $S_\rho^n$
with the $n$-times product $(S_\rho)^n$.
Then we have
$a_{\orho\rho}^n S_{\rho}^n=S_{\rho}^n$
and
$\phi_{(\orho\rho)^n}^S(x)=S_\rho^{2n*}(\orho\rho)^n(x)S_\rho^{2n}$
for $x\in N$.
Also note that
$\varphi^S(T_j^\lambda T_j^{\lambda*})
=d(\lambda)\mu_j^\lambda/d(\rho)^{2n}$.

We set $E_{(\orho\rho)^n}^S:=(\orho\rho)^n\circ \phi_{(\orho\rho)^n}^S$
that is a faithful normal conditional expectation
from $N$ onto $(\orho\rho)^n(N)$.
It is not difficult to show that
$E_{(\orho\rho)^n}^S$ is the composition
of conditional expectations associated with
the following tunnel starting from the conditional expectation
$\orho\circ\phi_{\orho}^S\colon N\to\orho(M)$:
\[
(\orho\rho)^n(N)\subset \cdots
\subset \orho\rho(N)\subset \orho(M)
\subset N.
\]
Thanks to \cite[Proposition 2.7 (ii)]{Popa-endo},
we see the inclusion $(\orho\rho)^n(N)\stackrel{E_{(\orho\rho)^n}^S}\subset N$
is approximately inner.
Thus we have a sequence of Pimsner--Popa bases $\{m_i^k\}_{i\in I}$
for $k\in\N$
with respect to $E_{(\orho\rho)^n}^S$
such that $\|[m_i^k,\psi\circ\phi_{(\orho\rho)^n}^S]\|_{N_*}\to0$
for all $\psi\in N_*$
as $k\to\infty$ (see \cite[Definition 2.1]{Popa-endo}).

We put $x_{ij}^k:= T_j^{\lambda*} m_i^{k}S_\rho^n$
for $i\in I$, $j\in I_\lambda$ and $k\in\N$.
Then for $\psi\in N_*$,
we have
\begin{align*}
x_{ij}^k\psi
&=
d(\orho\rho)^n
T_j^{\lambda*} m_i^{k} (\psi\circ\phi_{(\orho\rho)^n}^S)
S_\rho^n
\quad
\mbox{by (\ref{eq:rhoorhoS})}
\\
&\approx
d(\orho\rho)^n
T_j^{\lambda*}(\psi\circ\phi_{(\orho\rho)^n}^S)
m_i^{k}
S_\rho
\\
&=
d(\lambda)\mu_j^\lambda
(\psi\circ\phi_\lambda)x_{ij}^k
\quad
\mbox{by (\ref{eq:rhoorhoS})},
\end{align*}
where $\approx$ means the norm of the difference converges to 0
as $k\to\infty$.
Thus
\[
\lim_{k\to\infty}
\|
x_{ij}^k\psi-d(\lambda)\mu_j^\lambda\lambda(\psi)x_{ij}^k
\|=0
\quad
\mbox{for all }
i\in I,\ j\in I_\lambda.
\]
Now for each $j\in I_\lambda$ and $k\in\N$,
we have
\begin{align*}
\sum_{i\in I}
x_{ij}^k x_{ij}^{k*}
&=
\sum_{i\in I}
T_j^{\lambda*} m_i^{k}S_\rho^n S_\rho^{n*} m_i^{k*}T_j^{\lambda*}
=
T_j^{\lambda*} E_{(\orho\rho)^n}^{-1}(S_\rho^n S_\rho^{n*})T_j^{\lambda}
\\
&=
T_j^{\lambda*} \cdot d^S(\orho\rho)^n S_\rho^{2n*}
\cdot S_\rho^n S_\rho^{n*}\cdot d^S(\orho\rho)^n
S_\rho^{2n}T_j^{\lambda}
\\
&=
d^S(\orho\rho)^{n},
\end{align*}
where $E_{(\orho\rho)^n}^{-1}$ denotes the dual operator valued weight
from $(\orho\rho)^n(N)'$ onto $N'$
(see \cite[Remark in p.62]{Wata} for example).
We have also used the fact
that $(\orho\rho)^n(N)\stackrel{E_{(\orho\rho)^n}^S}{\subset}N$
has an orthonormal base $d^S(\orho\rho)^n S_\rho^{2n*}$.
Therefore by \cite[Proposition 2.8]{MT-app},
we see $\lambda$ is an approximately inner endomorphism of rank
$d(\lambda)\mu_j^\lambda=d(\rho)^{2n}\varphi^S(T_j^\lambda T_j^{\lambda*})$.
\end{proof}

\begin{proof}[Proof of Theorem \ref{thm:subfactor2}]
We may and do assume that $N=Q$.
Let us first consider the case of $M$, $N$ and $P$ being infinite factors.
Let $\rho\colon N\to M$ and $\sigma\colon N\to P$ be the inclusion maps.
Let $\sC^\rho$ and $\sC^\sigma$ be the rigid C$^*$-2-category
associated with $\rho$ and $\sigma$, respectively.
Their natural actions on $(N,M)$ and $(N,P)$
are denoted by $\alpha^\rho$ and $\alpha^\sigma$.

Let $(S_\rho,\ovl{S}_\rho)$ and $(S_\sigma,\ovl{S}_\sigma)$
be the solutions of the conjugate equations of $\rho$ and $\sigma$,
respectively,
such that $E(x)=\rho(S_\rho^* \orho(x) S_\rho)$
and $F(y)=\sigma(S_\sigma^*\ovl{\sigma}(y)S_\sigma)$
for $x\in M$ and $y\in P$ as before.
Let $\varphi_\rho^S:=\lim_n \phi_{(\orho\rho)^n}^S$
and $\varphi_\sigma^S:=\lim_n \phi_{(\osi\sigma)^n}^S$
be states on $A_{\orho\rho}$ and $A_{\osi\sigma}$, respectively.

By assumption of the isomorphic standard invariants,
we have a unitary tensor equivalence
$(G,c)\colon \sC^\rho\to\sC^\sigma$
such that $G(\rho)=\sigma$,
$(c_{\orho,\rho}G(R_\rho),c_{\rho,\orho}G(\ovl{R}_\rho))
=(R_\sigma,\ovl{R}_\sigma)$
and $(c_{\orho,\rho}G(S_\rho),c_{\rho,\orho}G(\ovl{S}_\rho))
=(S_\sigma,\ovl{S}_\sigma)$.
Let $S_\rho^n$ and $S_\sigma^n$ be as defined
in the proof of Lemma \ref{prop:centappsubfactor} (2).
Let $h_\rho,k_\orho$
and $h_\sigma,k_\osi$
be the positive invertible operators
introduced after Remark \ref{rem:ergodicity}.
Then $G(h_\rho)=h_\sigma$ and $G(k_{\orho})=k_{\osi}$,
and $c_{\orho,\rho}G(a_{\orho\rho})c_{\orho,\rho}^*
=\osi(h_\sigma)k_\osi=a_{\osi\sigma}$.
This implies
$c_{w_{00}^n}G(a_{\orho\rho}^n)c_{w_{00}^n}^*
=a_{\osi\sigma}^n$,
where $w_{00}^n$ is the word expressing
$(\orho\rho)^n$, that is,
$w_{00}^n=(\orho,\rho,\dots,\orho,\rho)$
of length $2n$.

It follows from Lemma \ref{prop:centappsubfactor} (1)
that the actions $\alpha^\rho$ of $\sC_{00}^\rho$ and
$\alpha^\sigma$ of $\sC_{00}^\sigma$ on $N$
are centrally free.
We will show $\alpha_\lambda^\rho=\lambda$
and $\alpha_{G(\lambda)}^\sigma=G(\lambda)$
are approximately unitarily equivalent for all $\lambda\in\sC_{00}^\rho$.
It suffices to consider $\lambda$ being a simple object.
Take $n\geq1$ so that $\lambda\prec(\orho\rho)^n$.
Then from the proof of Lemma \ref{prop:centappsubfactor} (2),
$\lambda\in\End(N)_0$ is an approximately inner endomorphism
of rank $d(\rho)^{2n}\varphi_\rho^S(TT^*)$ for an isometry
$T\in\sC_{00}^\rho(\lambda,(\orho\rho)^n)$
that is an eigenvector of $a_{\orho\rho}^n$.

Then we have
$a_{\osi\sigma}^n c_{w_{00}^n}G(T)=c_{w_{00}^n}G(a_{\orho\rho}^n T)$,
and $c_{w_{00}^n}G(T)\in\sC_{00}^\sigma(G(\lambda),(\osi\sigma)^n)$
is an eigenvector of $a_{\osi\sigma}^n$.
Hence
$G(\lambda)\in\End(N)_0$
is an approximately inner endomorphism of rank
$d(\sigma)^{2n}\varphi_\sigma^{S}(c_{w_{00}^n}G(T)G(T)^*c_{w_{00}^n}^*)$,
which equals $d(\rho)^{2n}\varphi_\rho^{S}(TT^*)$.
Indeed, using $c_{w_{00}^n}G(S_\rho^n)=S_\sigma^n$,
we have
\begin{align*}
\varphi_\sigma^{S}(c_{w_{00}^n}G(T)G(T)^*c_{w_{00}^n}^*)
&=
\phi_{(\ovl{\sigma}\sigma)^n}^{S}(c_{w_{00}^n}G(T)G(T)^*c_{w_{00}^n}^*)
\\
&=
S_\sigma^{2n*}
(\ovl{\sigma}\sigma)^n
(c_{w_{00}^n}G(T)G(T)^*c_{w_{00}^n}^*)
S_\sigma^{2n}
\\
&=
G(S_\rho^{2n})^*
c_{w_{00}^{2n}}^*
(\ovl{\sigma}\sigma)^n
(c_{w_{00}^n}G(T)G(T)^*c_{w_{00}^n}^*)
c_{w_{00}^{2n}}
G(S_\rho^{2n})
\\
&=
G(S_\rho^{2n})^*
c_{(\orho\rho)^n,(\orho\rho)^n}^*
G((\orho\rho)^n)
(G(TT^* ))
c_{(\orho\rho)^n,(\orho\rho)^n}
G(S_\rho^{2n})
\\
&=
G(S_\rho^{2n})^*
G((\orho\rho)^n(TT^*))
G(S_\rho^{2n})
\\
&=
G(\phi_{(\orho\rho)^n}^S(TT^*))
=\varphi_\rho^S(TT^*),
\end{align*}
where
in the fourth equality,
we have used the following equalities:
\[
c_{w_{00}^{2n}}
=(\osi\sigma)^n(c_{w_{00}^n})c_{w_{00}^n}
c_{(\orho\rho)^n,(\orho\rho)^n},
\quad
\Ad c_{w_{00}^n}^*\circ(\osi\sigma)^n
=
G((\orho\rho)^n).
\]
By Proposition \ref{prop:appuer},
we see $\lambda$ and $G(\lambda)$ are approximately unitarily
equivalent.

Then it turns out from Corollary \ref{cor:CDclass}
that the action $\alpha^\rho$ and the cocycle action
$(\alpha_{G(\cdot)}^\sigma,c)$ of $\sC_{00}^\rho$
are strongly cocycle conjugate
via an approximate inner conjugacy $\pi_0\colon N\to N$.
Applying Theorem \ref{thm:M1P1isom} to $Z:=\rho$,
we have an isomorphism
$\pi_1\colon M\to P$
such that
$\pi_1\circ\alpha_Z^\rho=\alpha_{G(Z)}^\sigma\circ\pi_0$
and $\pi_1(T)=G(T)$ for all $T\in(\rho,\rho)$.
Hence we obtain $\pi_1\rho=\sigma\pi_0$,
which shows the inclusions $\rho(N)\subset M$
and $\sigma(Q)\subset P$ are isomorphic,
but we must further consider the conditional expectation
$\pi_1(E):=\pi_1\circ E\circ\pi_1^{-1}$
from $P$ onto $\sigma(Q)$.
By uniqueness of minimal expectation,
we have $\pi_1(E_\rho)=E_\sigma$,
that is, $\pi_1\circ E_\rho=E_\sigma\circ\pi_1$.
Since $\pi_1(h_\rho)=G(h_\rho)=h_\sigma$,
we obtain
\begin{align*}
\pi_1(E)(x)
&=
\pi_1(E(\pi_1^{-1}(x)))
=
\pi_1(E_\rho(h_\rho \pi_1^{-1}(x) h_\rho))
\\
&=
E_\sigma(\pi_1(h_\rho) x \pi_1(h_\rho))
=
E_\sigma(h_\sigma x h_\sigma)
=
F(x).
\end{align*}
Hence $\pi_1$ is an isomorphism
from $\rho(N)\stackrel{E}{\subset}M$
to $\sigma(Q)\stackrel{F}{\subset}P$.

Next we consider the case of $M$, $N$ and $P$ being finite factors.
Let $B$ be the infinite dimensional type I factor with separable predual.
Denote by $\rho$ and $\sigma$ be the inclusion maps
of $N$ into $M$ and $P$, respectively.
We consider the inclusions
$N\otimes B\stackrel{E\otimes\id}{\subset} M\otimes B$
and
$N\otimes B\stackrel{F\otimes\id}{\subset} P\otimes B$.
Then by the discussion above,
we obtain isomorphisms $\pi_0\colon N\otimes B\to N\otimes B$
and $\pi_1\colon M\otimes B\to P\otimes B$
such that
$\pi_1\circ(\rho\otimes\id)\circ\pi_0^{-1}=(\sigma\otimes\id)$
and
$\pi_0\circ (E\otimes\id)\circ\pi_1^{-1}=F\otimes\id$.

By $\tau_M$ and $\tau_P$ we denote the unique tracial states
on $M$ and $P$, respectively.
The canonical tracial weight on $B$ is denoted by $\Tr$.
Since we can take $\pi_0$ being
an approximately inner automorphism on $N\otimes B$,
and we have $\pi_0(\tau_N\otimes\Tr)=\tau_N\otimes\Tr$.
Note the approximate innerness of $E$ and $F$ implies
$\tau_N\circ E=\tau_M$ and $\tau_N\circ F=\tau_P$
from \cite[Proposition 2.6]{Popa-endo}.
Then we have
\begin{align*}
\pi_1(\tau_M\otimes\Tr)
&=
(\tau_M\otimes\Tr)\circ\pi_1^{-1}
=
(\tau_N\otimes\Tr)\circ(E\otimes\id)\circ\pi_1^{-1}
\\
&=\pi_0(\tau_N\otimes\Tr)\circ(F\otimes\id)
=\tau_P\otimes\Tr.
\end{align*}

Hence there exist unitaries $v\in N\otimes B$
and $w\in M\otimes B$
such that
$\Ad v\circ\pi_0(1\otimes x)=1\otimes x$
and
$\Ad w\circ\pi_1(1\otimes x)=1\otimes x$
for $x\in B$.
Thus we can take isomorphisms $\alpha\colon N\to N$
and $\beta\colon M\to P$
such that
$\Ad v\circ\pi_0(x\otimes1)=\alpha(x)\otimes1$
and
$\Ad w\circ\pi_1(y\otimes 1)=\beta(y)\otimes 1$
for $x\in N$ and $y\in M$.
Then we have
$\beta\rho\alpha^{-1}(x)\otimes1=\Ad (w(\sigma\otimes\id)(v^*))(\sigma(x)\otimes1)$
for all $x\in N$.
By standard argument,
we can take a unitary $u\in P$ so that
$\beta\rho\alpha^{-1}=\Ad u\circ \sigma$,
and we are done.
\end{proof}

\end{document}